\newcommand{\xdownarrow}[1]{
  \ensuremath{\begin{turn}{90}{\tiny${#1 }$}\end{turn}
    \left\downarrow\vbox to 0.4cm{}\right.\kern-\nulldelimiterspace
  }
}
\newcommand{\xuparrow}[1]{
  \ensuremath{
    \begin{turn}{90}{\tiny ${#1}$}\end{turn} \left\uparrow\vbox to 0.4cm{}\right. \kern-\nulldelimiterspace
  } 
}
\newcommand{\rt}{\to}
\newcommand{\hooklongrightarrow}{\lhook\joinrel\longrightarrow}
 \DeclareMathOperator{\weirdleq}{\substack{  _{} \vspace{-3.8pt} \\ \eqslantless }}
\newcommand{\mf}[1]{\mathfrak{#1}}
\newcommand{\mc}[1]{\mathcal{#1}}
\newcommand{\mm}[1]{\mathrm{#1}}
\newcommand{\cat}[1]{
\StrLen{#1}[\mystrlen]
\ifnum\mystrlen=1 \mathscr{#1}
\else \mathrm{#1}
\fi}
\newcommand{\KK}{{R}}  
\DeclareMathOperator{\dap}{dap}  
\newcommand{\ZZ}{\mathbb{Z}}
\newcommand{\EE}{\mathbb{E}}
\newcommand{\CC}{\mathbb{C}}
\newcommand{\LL}{\mathbb{L}}
\DeclareMathOperator*{\colim}{colim}
\newcommand*\cocolon{%
        \nobreak
        \mskip6mu plus1mu
        \mathpunct{}%
        \nonscript
        \mkern-\thinmuskip
        {:}%
        \mskip2mu
        \relax
}
\DeclareMathOperator{\an}{an} 
\DeclareMathOperator{\ft}{ft} 
\newcommand{\op}[0]{\mm{op}}
\newcommand{\aft}[0]{\mm{aft}}
\newcommand{\laft}[0]{\mm{laft}}
\newcommand{\coft}[0]{\mm{coft}}
\newcommand{\prored}[0]{\mm{prored}}
\newcommand{\red}[0]{\mm{red}}
\newcommand{\conv}{\mathrm{conv}}
\newcommand{\coh}{\mathrm{coh}}
\DeclareMathOperator{\aug}{aug} 
\DeclareMathOperator{\der}{der}
\DeclareMathOperator{\Zar}{Zar}
\newcommand{\et}{\mathrm{et}}
\DeclareMathOperator{\id}{id}
\DeclareMathOperator{\Mod}{Mod}
\newcommand{\Vect}{\cat{Vect}}
\newcommand{\perf}{\mathrm{Perf}}
\newcommand{\Coh}{\cat{Coh}}
\newcommand{\QC}{\cat{QC}}
\newcommand{\APerf}{\cat{APerf}}
\newcommand{\dAPerf}{\cat{dAPerf}}
\newcommand{\Flat}{\cat{Flat}}
\newcommand{\cAMod}[2]{\cat{Der}_{#1/#2}}
\newcommand{\caMod}{\cat{Der}_{/\KK}}
\newcommand{\cAAPerf}[2]{\cat{Der}_{#1/#2, \geq 0}^{\mm{aperf}}}
\DeclareMathOperator{\Alg}{Alg} 
\DeclareMathOperator{\CAlg}{CAlg}  
\NewDocumentCommand{\DAlg}{e{^_}}{\cat{CAlg}^{\mathrm{der}\IfValueT{#1}{,#1}}_{\IfValueT{#2}{#2}}}
\NewDocumentCommand{\SCR}{e{^_}}{\cat{CAlg}^{\mathrm{an}\IfValueT{#1}{,#1}}_{\IfValueT{#2}{#2}}}
\NewDocumentCommand{\Art}{e{^_}}{\mathrm{Art}^{\IfValueT{#1}{,#1}}_{\IfValueT{#2}{#2}}}
\NewDocumentCommand{\AArt}{e{^_}}{\mathrm{AArt}^{\IfValueT{#1}{,#1}}_{\IfValueT{#2}{#2}}}
\NewDocumentCommand{\FilSCR}{e{^_}}{\cat{FilCAlg}^{\mathrm{an}\IfValueT{#1}{,#1}}_{\IfValueT{#2}{#2}}}
\NewDocumentCommand{\FilDAlg}{e{^_}}{\cat{FilCAlg}^{\der\IfValueT{#1}{,#1}}_{\IfValueT{#2}{#2}}}
\NewDocumentCommand{\grSCR}{e{^_}}{\cat{GrCAlg}^{\an\IfValueT{#1}{,#1}}_{\IfValueT{#2}{#2}}}
\NewDocumentCommand{\grDAlg}{e{^_}}{\cat{GrCAlg}^{\der\IfValueT{#1}{,#1}}_{\IfValueT{#2}{#2}}}
\NewDocumentCommand{\SurSCR}{e{^_}}{\cat{CAlg}^{\an, \Delta^1\IfValueT{#1}{,#1}}_{\IfValueT{#2}{#2}}}
\NewDocumentCommand{\SurDAlg}{e{^_}}{\cat{CAlg}^{\der, \Delta^1\IfValueT{#1}{,#1}}_{\IfValueT{#2}{#2}}}
\newcommand{\GrMod}{\cat{GrMod}}
\newcommand{\FilMod}{\cat{FilMod}}
\newcommand{\sS}{\cat{S}}
\newcommand{\Sp}{\cat{Sp}}
\newcommand{\Cat}{\cat{Cat}}
\newcommand{\PrStk}{\mathrm{PrStk}}
\newcommand{\PreSt}{\PrStk}
\DeclareMathOperator{\Stk}{Stk} 
\newcommand{\Aff}{\cat{Aff}}
\newcommand{\Sch}{\mm{Sch}}
\newcommand{\fmp}{\cat{Moduli}}
\newcommand{\FMP}{\fmp}
\newcommand{\modulistk}{\mathrm{ModuliStk}}
\newcommand{\Poly}{\mathrm{Poly}}
\NewDocumentCommand{\LieAlg}{e{_}}{%
\cat{LieAlg}^\pi_{\Delta\IfValueT{#1}{,#1}}%
}
\NewDocumentCommand{\LieAlgd}{e{_}}{%
\cat{LieAlgd}^\pi_{\Delta\IfValueT{#1}{,#1}}%
}
\DeclareMathOperator{\fib}{fib} 
\DeclareMathOperator{\cofib}{cofib} 
\DeclareMathOperator{\triv}{triv}
\DeclareMathOperator{\Free}{Free}  
\newcommand{\free}{\Free}
\DeclareMathOperator{\forget}{forget}
\DeclareMathOperator{\End}{End} 
 \renewcommand{\hom}{\mathrm{Hom}}
 \DeclareMathOperator{\Map}{Map} 
 \newcommand{\Fun}{\mathrm{Fun}}
 \newcommand{\sym}{\mathrm{Sym}}
\newcommand{\LSym}{\mathbb{L}\sym}
 \DeclareMathOperator{\Tot}{Tot}
\newcommand{\gr}{\mathrm{Gr}} 
\newcommand{\sqz}{\mathrm{sqz}}
\newcommand{\adic}{\mathrm{adic}}
\DeclareMathOperator{\Spec}{Spec} 
\newcommand{\Spf}{\mathrm{Spf}}
\DeclareMathOperator{\Lie}{Lie} 
\DeclareMathOperator{\coLie}{coLie}
\DeclareMathOperator{\Ind}{Ind}
\newcommand*{\defeq}{\mathrel{\vcenter{\baselineskip0.5ex \lineskiplimit0pt
                     \hbox{\scriptsize.}\hbox{\scriptsize.}}}%
                     =}
\theoremstyle{definition}
\newtheorem{definition}{Definition}[section]
\newaliascnt{cons}{definition}
\newaliascnt{construction}{definition}
\newtheorem{construction}[construction]{Construction}
\newaliascnt{example}{definition}
\newtheorem{example}[example]{Example}
\newtheorem*{example*}{Example}
\newaliascnt{notation}{definition}
\newtheorem{notation}[notation]{Notation}
\newaliascnt{remark}{definition}
\newtheorem{remark}[remark]{Remark}
\theoremstyle{plain}
\newaliascnt{proposition}{definition}
\newtheorem{proposition}[proposition]{Proposition}
\newaliascnt{lemma}{definition}
\newtheorem{lemma}[lemma]{Lemma}
\newaliascnt{corollary}{definition}
\newtheorem{corollary}[corollary]{Corollary}
\newaliascnt{warning}{definition}
\newtheorem{warning}[warning]{Warning}
\newaliascnt{observation}{definition}
\newtheorem{observation}[observation]{Observation}
\newaliascnt{claim}{definition}
\newaliascnt{exercise}{definition}
\newaliascnt{theorem}{definition}
\newtheorem{theorem}[theorem]{Theorem}
\newaliascnt{superlemma}{definition}
\newaliascnt{fact}{definition}
\newtheorem*{theorem*}{Theorem}
\crefname{definition}{Definition}{Definitions}
\crefname{question}{Question}{Questions}
\crefname{cons}{Construction}{Constructions}
\crefname{construction}{Construction}{Constructions}
\crefname{example}{Example}{Examples}
\crefname{notation}{Notation}{Notations}
\crefname{remark}{Remark}{Remarks}
\crefname{proposition}{Proposition}{Propositions}
\crefname{lemma}{Lemma}{Lemmas}
\crefname{corollary}{Corollary}{Corollaries}
\crefname{warning}{Warning}{Warnings}
\crefname{observation}{Observation}{Observations}
\crefname{claim}{Claim}{Claims}
\crefname{exercise}{Exercise}{Exercises}
\crefname{theorem}{Theorem}{Theorems}
\crefname{superlemma}{Super Lemma}{Super Lemmas}
\crefname{fact}{Fact}{Facts}
\title{Formal Integration of Derived Foliations}
\author{Lukas Brantner}
\address{Oxford University, Universit\'{e} Paris–Saclay (CNRS)}
\email{brantner@maths.ox.ac.uk} 
\author{Kirill Magidson}
\address{Northwestern University}
\email{kirill.magidson@northwestern.edu} 
\author{Joost Nuiten}
\address{Institut de Mathématiques de Toulouse, Université de Toulouse}
\email{joost.nuiten@math.univ-toulouse.fr}
\begin{document} 
	\begin{abstract}
		Frobenius' theorem in differential geometry asserts that 
		every involutive subbundle of
		the tangent bundle of a manifold $M$ integrates to a decomposition of $M$ into smooth leaves.
		
		We   prove an infinitesimal   analogue  of this result
		for  locally coherent qcqs schemes $X$ over coherent rings. More precisely, we integrate partition Lie algebroids on $X$ to formal moduli \mbox{stacks $X \rightarrow \widehat{S}$,} where $\widehat{S}$ is the formal leaf space and the fibres of $X \rightarrow \widehat{S}$ are  the formal leaves.  	
		We  deduce that deformations of $X$-families of algebro-geometric objects
		are  controlled by partition Lie algebroids on $X$.
		Combining our integration equivalence with a result of Fu, we deduce that To\"{e}n--Vezzosi's infinitesimal derived foliations (under suitable finiteness hypotheses)  are formally integrable.

	\end{abstract}
	\setcounter{tocdepth}{1}
	\maketitle 
	\tableofcontents

	\section{Introduction}
	A  {smooth  algebraic foliation} $\mathcal{F}$ on  a  smooth  complex  variety $X$  
	is a subbundle $$ E_{\mathcal{F}} \subseteq T_X$$ which is closed under the commutator  bracket of vector fields, i.e.\ a locally free Lie algebroid whose anchor map is the inclusion of a subbundle.
	We   call  $ \mathcal{F}$   \textit{algebraically integrable} if there is a decomposition 
	$$ X = \coprod_{\alpha  } \mathcal{F}_{\alpha} $$ of $X$ 
	into smooth complex subvarieties $\mathcal{F}_{\alpha}$ --  the leaves -- whose tangent vectors span  $E_{\mathcal{F}} \subset T_X $. Note that algebraic integrability is  a highly nontrivial condition.
	One can approach the problem of integrating a given smooth foliation $ \mathcal{F}$  in three steps.

	First, we integrate $ \mathcal{F}$  \textit{formally}, which  means that for every   point $x \in M$, we find power series $$f_1,\ldots, f_k \in \CC[[z_1,\ldots,z_n]]$$
	in local coordinates $z_1,\ldots, z_n$ such that  a local vector field $\alpha $  lies in $E_{\mathcal{F}}$ \mbox{if and only if   $\alpha . df_i = 0$  for all $i$. }
	The formal leaf   passing through $x$ is then encoded by the
	functor  $$\mathcal{F}^\wedge_x:  A \mapsto  \{a_1,\ldots a_n \in A \ | \  \forall i: \ f_i(a_1,\ldots a_n ) = 0\} $$ from local Artinian $\CC$-algebras to sets.
		Varying $x \in X$,  we obtain a `quotient' map of prestacks $$p:X \rightarrow \widehat{S}$$ to the formal leaf space such that $\mathcal{F}^\wedge_x$ sends   $A$ to the set of (isomorphism classes of) dotted lifts
	\begin{equation*}\begin{tikzcd}
			\Spec(\CC)\arrow[r, "x"]\arrow[d] & X\arrow[d]\\
			\Spec(A)\arrow[r,  "\overline{x}"]\arrow[ru, dotted] & \widehat{S}.
	\end{tikzcd}\end{equation*}
	
	Next, we check the local convergence of these power series
	to obtain   complex manifolds $\mathcal{F}_x$ passing through all points of $X$, thereby verifying  Frobenius theorem for complex manifolds.
	
	Finally, we examine  whether or not the various leaves $\mathcal{F}_x$ are algebraic subvarieties of $X$. If this is the case, we obtain a  submersion  of Deligne--Mumford stacks $p: X \rightarrow S$ such that $X \rightarrow \widehat{S}$ is the formal completion of $X$ along $p$.

	\begin{figure}[h!]
		\centering
		\includegraphics[width=0.35\textwidth]{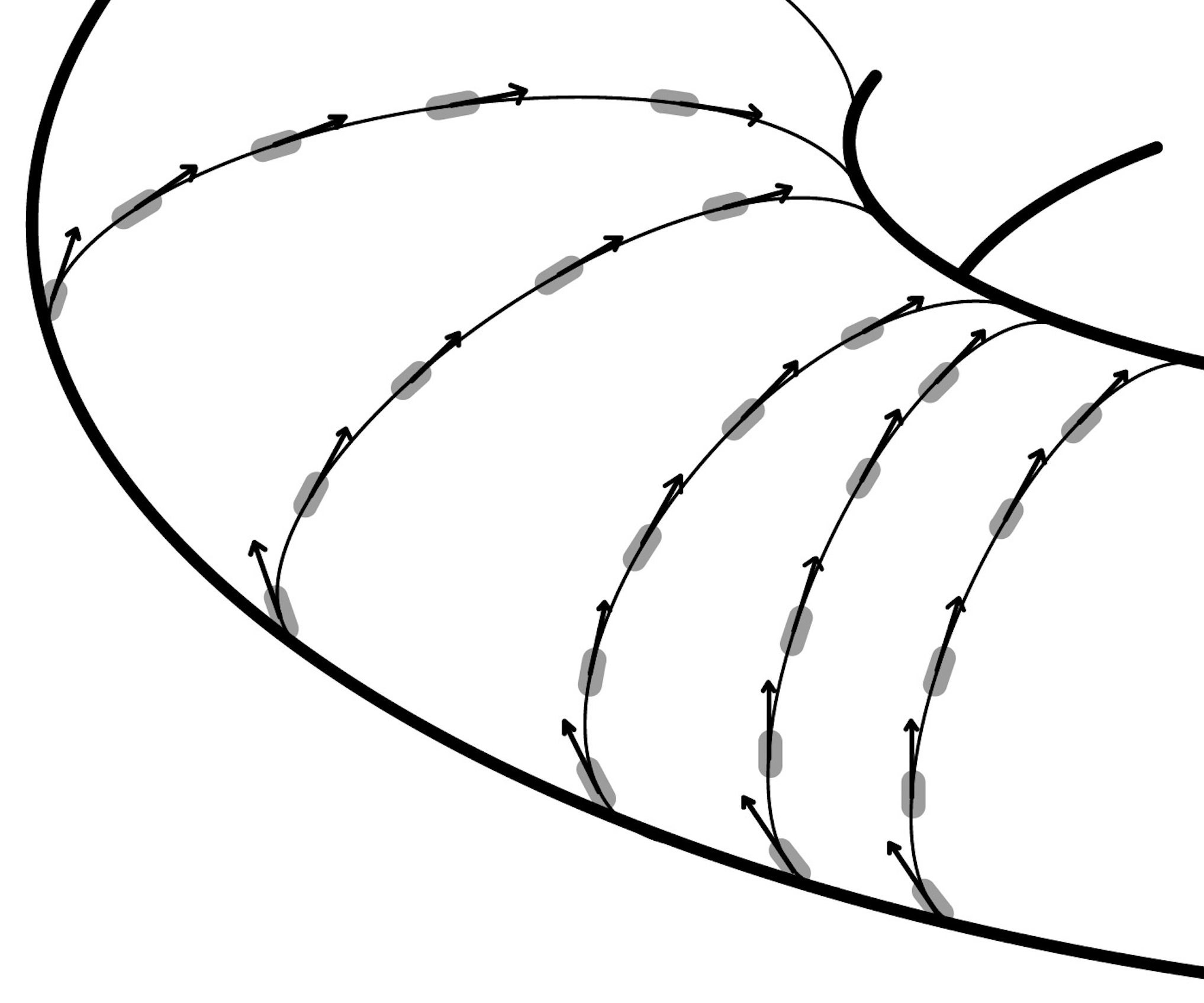}
		\caption{A foliation (depicted by tangent vectors) along with its formal  leaves (in gray) and leaves (in black).}
	\end{figure}

	In this article, we generalise the first step to more general Lie algebroids   on more general schemes.
	More precisely, we replace complex varieties by locally coherent qcqs schemes $X$  over \mbox{coherent rings $R$,} both possibly derived, and formally integrate \textit{partition Lie algebroids} $$\rho: E \rightarrow T_{X/R}[1]$$  with $\rho$ not necessarily injective and $E$ not \mbox{necessarily locally free.} 
	The result of our integration procedure is a \textit{formal  moduli stack on $X$ } --  around each point $x$ in $X$, we obtain a formal moduli problem $\mathcal{F}_x^{\wedge}$ encoding the  formal leaf  passing  \mbox{through $x$}, which can be  non-smooth, stacky, and derived.  
	
	\begin{example}
	Let us illustrate our integration procedure in the following example suggested by Abramovich. Consider a foliation on $\mathbb{A}^2$ generated by a single vector field, such as
	\[
	v = (x+y) \frac{\partial}{\partial x} + y^2 \frac{\partial}{\partial y}.
	\] 
First, assume that the base field $k$ has characteristic 0. Then this foliation can be encoded by the free Lie algebroid generated by $v$ in $T_{\mathbb{A}^2}$. To integrate it, we note that a (formal) curve $\gamma(t) = (x(t),y(t))$ in $\mathbb{A}^2$ is tangent to $v$ if $x’(t) = x(t)+y(t)$ and $y’(t) = y(t)^2$.
Setting $x(t) = \sum_n a_n t^n$ and $y(t) = \sum_n b_n t^n$, we obtain the equations
\[
(n+1) a_{n+1} = a_n + b_n\qquad\qquad\qquad (n+1) b_{n+1} = \sum_{i+j=n} b_i b_j.
\]
Solving recursively in terms of $a_0$ and $b_0$, we obtain elements $x(t),y(t)\in k[a_0,b_0][\![t]\!]$, which give rise to a map $k[x,y]\to k[a_0, b_0][\![t]\!]$ and hence a map
\[
\widehat{\mathbb{G}}_a \times \mathbb{A}^2 \to \mathbb{A}^2.
\]
Here $\widehat{\mathbb{G}}_a = \Spf(k[\![t]\!])$ denotes the formal additive group. This map in fact determines an action of $\widehat{\mathbb{G}}_a$ on $\mathbb{A}^2$, and the formal moduli stack $\mathbb{A}^2\to S$ corresponding to the Lie algebroid generated by the vector field $v$ is the homotopy quotient of this action, i.e., the geometric realisation of the simplicial diagram
\[\begin{tikzcd}
\dots \arrow[r, yshift=1.5ex]\arrow[r, yshift=0.5ex]\arrow[r, yshift=-1.5ex]\arrow[r, yshift=-0.5ex] & \widehat{\mathbb{G}}_a \times \widehat{\mathbb{G}}_a \times\mathbb{A}^2  \arrow[r, yshift=1ex]\arrow[r, yshift=0ex]\arrow[r, yshift=-1ex] & \widehat{\mathbb{G}}_a \times \mathbb{A}^2 \arrow[r, yshift=-0.5ex]\arrow[r, yshift=0.5ex]&  \mathbb{A}^2.
\end{tikzcd}\] 
The formal leaf passing through any point $z$ is $\mathcal{F}^\wedge_z\cong \widehat{\mathbb{G}}_a$. For a generic point $z$ the map $\mathcal{F}^\wedge_z\to \mathbb{A}^2$ is a formal immersion, but at the origin the map is simply constant.

When $k$ has characteristic $p>0$, the foliation gives rise to the free \emph{partition} Lie algebroid on one generator $v$. To integrate it to a formal moduli stack, we replace $\widehat{\mathbb{G}}_a$ by 
\[
\widehat{\mathbb{G}}_a^{\sharp} = \Spf(k\langle t\rangle^\wedge) = \colim_m \Spec\big(k\langle t\rangle/(\gamma_n(t) \colon n>m)\big),
\]
where $k\langle t\rangle^\wedge$ denotes the completion of the free divided power algebra on one generator $t$. This time, we set $x(t) = \sum_n a_n \gamma_n(t)$ and $y = \sum_n b_n \gamma_n(t)$, and the equations $x’(t)=x(t)+y(t)$ and $y’(t)=y(t)^2$ become 
\[
a_{n+1} = a_n + b_n \qquad\qquad\qquad b_{n+1} = \sum_{i+j = n} {n \choose i} b_i b_j.
\]
Solving recursively, we obtain elements in $k[a_0,b_0]\langle t\rangle^\wedge$, which give rise to an action $\widehat{\mathbb{G}}_a^{\sharp}\times\mathbb{A}^2\to\mathbb{A}^2$. 
Taking the homotopy quotient of this action as above gives the desired formal moduli stack $\mathbb{A}^2\to S$, and the formal leaf passing through any point is $\widehat{\mathbb{G}}_a^{\sharp}$. Note that, in contrast to the characteristic zero case, these leaves are all singular.
	\end{example}
	
	\begin{remark}
	The theory of partition Lie algebroids on $\mathbb{F}_p$-schemes is more subtle than that of Lie algebroids on $\mathbb{Q}$-schemes. For example, consider the formal moduli stack $\mathbb{A}^1 \to \mathbb{A}^1/\widehat{\mathbb{G}}_m$ obtained as the quotient of the weight $n$ action of the formal multiplicative group $\widehat{\mathbb{G}}_m\times\mathbb{A}^1\to \mathbb{A}^1; \ (z, x)\mapsto z^n x$. Explicitly, this action is given by the ring map 
	$$
	k[x]\to k[x][\![t]\!] ; \qquad x\mapsto (1+t)^nx,
	$$
	where $z=1+t$. The formal moduli stack $\mathbb{A}^1 \to \mathbb{A}^1/\widehat{\mathbb{G}}_m$ arises as the integration of a partition Lie algebroid with underlying anchor map $\rho \colon \mathcal{O}_{\mathbb{A}^1}\to T_{\mathbb{A}^1}$ sending the generator to a multiple of the Euler vector field $n\cdot x\frac{\partial}{\partial x}$. These partition Lie algebroids exhibit the following phenomena that are not present over $\mathbb{Q}$:
	\begin{enumerate}
	\item The partition Lie algebroids for the $\mathbb{G}_m$-actions of weight $n$ and weight $nl$ are equivalent when $l$ is coprime to $p$: they are identified by restricting the action along the étale map $(-)^l\colon \mathbb{G}_m\to \mathbb{G}_m$. However, the partition Lie algebroids of weights $p^k$ are not equivalent.
	
	\item For the $\mathbb{G}_m$-action of weight $p^k$, the underlying anchor map $\mathcal{O}_{\mathbb{A}^1}\to T_{\mathbb{A}^1}$ is zero even though the $\widehat{\mathbb{G}}_m$-action on $\mathbb{A}^1$ is non-trivial.
	\end{enumerate}
	 The second point shows that it is not possible to obtain the formal moduli stack $\mathbb{A}^1 \to \mathbb{A}^1/\widehat{\mathbb{G}}_m$ as the formal integration of a classical Lie algebroid or $p$-restricted Lie algebroid, as considered in \cite{Ekedahl}: since the anchor map is zero, any such integration would come with a retraction to $\mathbb{A}^1$.
	\end{remark}
	
	Formal moduli stacks $X \rightarrow \widehat{S}$  also arise in deformation theory, where they capture 
	infinitesimal derived deformation functors of $X$-indexed families of algebro-geometric objects  
	\begin{equation*}\begin{tikzcd}
			M\arrow[d]\\
			X.
	\end{tikzcd}\end{equation*} 
	Our integration equivalence then implies that every such deformation functor is governed by an essentially unique partition Lie algebroid  on $X$, generalising various earlier equivalences in characteristic $0$ (\cite{drinfeld1988letter, deligne1986letter, hinich2001dg, DAG-X, pridham2010unifying,  Hennion, GR, nuiten2019koszul}) and $p$ (\cite{BM19, brantner2020purely}).
	
	Foliations on  smooth complex varieties $X$ can also be defined as  quotient bundles $$\Omega_X \twoheadrightarrow \Omega_{\mathcal{F}}$$  inducing maps of commutative differential graded algebras $$ (\Lambda^\ast \Omega_{X}, d_{\mathrm{dR}}) \rightarrow (\Lambda^\ast \Omega_{\mathcal{F}}, \overline{d}_{\mathrm{dR}}).$$ 
This definition was   generalised  by To\"{e}n--Vezzosi \cite{toen2023infinitesimal}    to  the non-smooth derived setting -- they call the resulting structure an 
  \textit{infinitesimal derived foliation}.  \mbox{Fu} \cite{FuThesis} recently constructed an equivalence  between  almost perfect
	infinitesimal derived foliations   and    dually almost perfect partition Lie algebroids. Combining  Fu's   equivalence with our main result, we deduce that  almost perfect   infinitesimal derived foliations  (in the sense of To\"{e}n--Vezzosi) are formally integrable.

	\subsection{Statement of results}
	A Lie algebroid on a smooth manifold $X$ 
	consists of an anchor map $$\rho: E \rightarrow T_X$$   of vector bundles  
	together with a bilinear Lie bracket $[-,-]$ on the space $\Gamma(X,E)$ of global sections such that for all global sections $s_1, s_2 \in \Gamma(M,E)$ and all smooth functions $f$, we have $$[s_1,fs_2 ]= \rho(s_1)(f) \cdot s_2 + f[s_1,s_2].$$
	Lie algebroids were  first introduced by Pradines \cite{pradines1967theorie} and have since found numerous applications.
	
	In this article, we will  first generalise the theory of Lie algebroids   the setting of algebraic geometry over \mbox{very general bases,} and then formally integrate them.
	
	\subsubsection*{Setup.} Let us fix a  coherent base  ring $R$, i.e.\  a commutative ring  whose  finitely generated ideals are   also  finitely presented.
	Let $X$ be a    quasi-compact quasi-separated (`qcqs') derived $R$-scheme   which is locally coherent,
meaning that it	admits an open cover by affine derived schemes 
	$\Spec(B)$ with $\pi_0(B)$ coherent and $\pi_i(B)$ \mbox{finitely presented over $\pi_0(B)$. }
	
	Every such scheme admits a  quasi-coherent cotangent complex $L_{X/R} $ and a   {tangent complex} $$ T_{X/R}  = L_{X/R}^{\vee}.$$
This tangent complex belongs to the $\infty$-category $ \QC^{\vee}_X = \Ind(\Coh_X^{\op})$  of pro-coherent sheaves on $X$, which is the natural home of duals of quasi-coherent sheaves.
Pro-coherent sheaves  were first introduced by  Deligne \cite[Appendix]{hartshorne1966residues};  we refer to  \Cref{procohappendix} for further details.
	
	\subsubsection*{Partition Lie algebroids.} 
	In   \Cref{PartitionLieAlgebroids}, we
	construct the $\infty$-category 	of \textit{partition Lie algebroids}
	$$ \LieAlgd_{X/\KK}$$ 
 on the scheme $X$. These homotopical objects exhibit many expected properties:
	\begin{enumerate}
		\item (Forgetful functor) There is a monadic forgetful functor $\LieAlgd_{X/\KK} \rightarrow (\QC^{\vee}_X)_{/T_{X/R}[1]}$; hence partition Lie algebroids are maps of pro-coherent sheaves 
		$$ \mathfrak{g} \xrightarrow{\ \ \rho \ \ } T_{X/R}[1]$$
		equipped  with additional Lie algebraic structure.
		\item (Global sections) Taking derived global sections gives rise to  a functor $$ \LieAlgd_{X/\KK} \rightarrow  (\LieAlg_{\KK})_{/R\Gamma(X,T_{X/R}[1])},$$
		where $\LieAlg_{\KK} =  \LieAlgd_{\Spec(\KK)/\KK}$ denotes the $\infty$-category of $R$-partition Lie algebras   and 
		$$R\Gamma(X,T_{X/R}[1])$$ is the Kodaira--Spencer partition Lie algebra encoding infinitesimal deformations of the $R$-scheme $X$ (cf.\ \Cref{sec:underlying lie}).  
 
		\item (Descent) Every \'{e}tale map $$Y \rightarrow X$$ induces a functor $ \LieAlgd_{X/\KK} \rightarrow  \LieAlgd_{Y/\KK}$ lifting the pullback functor on the level of pro-coherent sheaves.
		Moreover, there is a canonical equivalence of $\infty$-categories $$
		\LieAlgd_{X/\KK} \xrightarrow{\ \simeq \ } \lim_{U\subseteq X \text{ affine open}} \LieAlgd_{U/\KK}.
		$$  
		
	\end{enumerate}
		Before proceeding to formal integration, let us highlight various special cases of our construction:

	\begin{enumerate}[label=(\alph*)]
		\item For $k$ a field of characteristic $0$ and $X = \Spec(k)$, the $\infty$-category $ \LieAlgd_{{X/k}} =\LieAlg_k $ is modelled by  (shifted)  differential graded Lie algebras over $k$.    
		For $X = \Spec(A)$ an affine $k$-scheme, $ \LieAlgd_{{X/k}}  $ arises from  the category of differential graded Lie algebroids on $X$ with the tame semi-model structure, \mbox{see \cite[Variant 3.12]{nuiten2019homotopical}.}
		
		\item For $k$  a field of characteristic $p$ and  $X = \Spec(k)$, we recover the  $\infty$-category of  {partition Lie algebras}   introduced in \cite{BM19}. These can
		be modelled by cosimplicial-simplicial $k$-vector spaces with additional operations parametrised by nested chains of partitions, see \cite[Definition 5.43]{BCN21}. For $X = \Spec(F)$ with $F/k$ a 
		finite purely inseparable field extension, $\LieAlgd_{{X/k}}$ recovers the $\infty$-category of $F/k$-partition Lie algebroids appearing in the purely inseparable Galois correspondence \cite{brantner2020purely}.
		For $R= A $  complete local Noetherian with residue field $k$ and $X = \Spec(k)$, the $\infty$-category $\LieAlgd_{{X/A}}$ is equivalent to the mixed partition Lie algebras appearing in \cite[Definition 6.22]{BM19}.

		\item For $R$ a coherent ring and $X = \Spec(R)$, the $\infty$-category $\LieAlgd_{{X/R}}=\LieAlg_R$ appears in  \cite[Section 3]{BCN21}. It admits an explicit model in terms of simplicial-cosimplicial objects, given in \cite[Theorem 5.42]{BCN21}.  	
	\end{enumerate}  
	
	\subsubsection*{Formal moduli stacks}
Given $X$ and $R$ as above, we will now introduce    \textit{formal moduli stacks}
	$$ X \rightarrow \widehat{S}$$ under $X$. Informally, these   can be   thought of  
	in at least two different   ways:
	\begin{enumerate}
		\item as  decompositions of $X$ into of formal leaves $\mathcal{F}^{\wedge}_x$ parametrised by the points $x$ of $\widehat{S}$;
		\item as  infinitesimal   deformation functors of   $X$-indexed families of  algebro-geometric objects.
	\end{enumerate}
	To formalise the notion of a formal moduli stack, we will use  \textit{animated rings}.
	\begin{notation}
	We write  $\SCR = \cat{P}_{\Sigma}(\Poly)$ for the  $\infty$-category of animated rings, which is obtained by freely adding sifted colimits to the category of finitely generated polynomial rings. Note that $\SCR $ is also sometimes  denoted  by $\mathrm{Ani}$, $\cat{DAlg}_{\geq 0},$ $\CAlg^{\Delta}$, or $\mathrm{SCR}$ in the literature. The last notation emphasises that animated rings can be  modelled by simplicial commutative rings.

	\end{notation}
Formal moduli stacks will belong to the  $\infty$-category of \textit{$R$-prestacks} $$\PrStk_R $$ i.e.\ the $\infty$-category of  (accessible) functors    from animated $R$-algebras to spaces. Every derived $R$-scheme $X$ gives a prestack $ R \mapsto \Map_{\Sch^{\der}}(\Spec(R),X)$, which we  \mbox{denote by the same name.}

	\begin{definition}[Formal moduli stacks]
		A \textit{formal moduli stack} $\mathcal{F}$ under an $R$-prestack $X$ is a map of $R$-prestacks $$ X \longrightarrow \widehat{S} $$
		satisfying the following conditions:
		\begin{enumerate}
			\item $\widehat{S} $ has deformation theory,  which means that it preserves limits of Postnikov towers and pullbacks along nilpotent extensions of animated $R$-algebras, see \Cref{hasdeftheory};
			\item $X\rightarrow \widehat{S} $ is locally almost of finite presentation, see \Cref{laftdef};
			\item $X \rightarrow \widehat{S} $ restricts to an equivalence on all reduced affine  schemes.
		\end{enumerate} 
Write $\modulistk_{X/R} \subset (\PrStk_R)_{X/}$ for the full subcategory of formal moduli stacks under $X$.
	\end{definition}

	\begin{remark}[Formal leaves of formal moduli stacks] 
		Given an $R$-point $x$ in  $X$ and a 	formal moduli stack 
		$X \rightarrow \widehat{S}$, the formal leaf $\mathcal{F}^{\wedge}_x$ passing  through $x$ is the formal moduli problem 
		sending an augmented local Artinian animated $R$-algebra $A$ to the space of dotted lifts
		\begin{equation*}\begin{tikzcd}
				\Spec(R)\arrow[r, "x"]\arrow[d] & X\arrow[d]\\
				\Spec(A)\arrow[r,  "\overline{x}"]\arrow[ru, dotted] & \widehat{S}.
		\end{tikzcd}\end{equation*}
	\end{remark}

	\begin{remark}[Deformation functors and formal moduli stacks]
		Let $B$ be a coherent eventually coconnective animated $R$-algebra. Formal deformation functors of (suitably nice) $B$-indexed families of algebro-geometric objects  over $R$ are then parametrised by functors
		$$\begin{tikzcd}
			F\colon \Art_{\KK/B}\arrow[r] & \sS
		\end{tikzcd}$$
		from the $\infty$-category $$\Art_{\KK/B} \subset \SCR_{\KK/B}$$ of Artinian extensions of $B$ (cf.\ \Cref{def:Artinian}) to the $\infty$-category $\cat{S}$ of spaces
		satisfying
		\begin{enumerate}
			\item $F(B)\simeq \ast$;
			\item $F$ preserves pullbacks along nilpotent extensions.
		\end{enumerate}
		
		In \Cref{sec:formal integration}, we show that the $\infty$-category of such functors is equivalent to the $\infty$-category of formal moduli stacks under $X= \Spec(B)$; this statement is essentially contained in \cite[5.1.2]{GRII}.
		Indeed, if $X\rt  \widehat{S}$ is a formal moduli stack, then the corresponding functor $\Art_{R/B}\rt \sS$ sends an Artinian extension $A\rt B$ over $R$ to the space of dotted lifts in the following diagram:
		$$\begin{tikzcd}
			{X}\arrow[r]\arrow[d] &  \widehat{S}\arrow[d]\\
			\Spec(A)\arrow[r]\arrow[ru, dotted] & \Spec(\KK).
		\end{tikzcd} $$ 	 \end{remark}

	\subsubsection*{The main theorem} 
 Given a formal moduli stack $X \rightarrow \widehat{S}, $
we can consider the    sheaf of `derived  tangent vectors'    \mbox{to the formal leaves $\mathcal{F}_x^{\wedge}$.} 
 To make this idea precise, 
 we use that the $\infty$-category $\QC^{\vee}_X$ of pro-coherent sheaves can be identified with the $\infty$-category of   functors
  $\Coh_{X,\geq 0} \rightarrow \cat{S}$ 
 preserving   terminal objects  and pullbacks along $\pi_0$-surjections. Here $\Coh_{X,\geq 0} $ denotes the $\infty$-category of connective coherent sheaves on $X$.
 
 The  pro-coherent tangent sheaf $$T_{X/\widehat{S}} $$ then
 corresponds to the functor   sending $I \in \Coh_{X,\geq 0} $ to  the space of dotted arrows in the following square:
	\begin{equation*}\begin{tikzcd}
			X\arrow[r,  ]\arrow[d] & X\arrow[d]\\
			\sqz_X(I)\arrow[r,   ]\arrow[ru, dotted] & \widehat{S}.
	\end{tikzcd}\end{equation*}
	Here $\sqz_X(I)$ denotes the   derived scheme  $(X, \mathcal{O}_X \oplus I)$, where $\mathcal{O}_X \oplus I$ is the trivial square-zero extension of the structure sheaf $\mathcal{O}_X$ by $I$. There is a natural map $$T_{X/\widehat{S}}  \rightarrow T_{X/R}, $$
 which captures the derived tangents to the formal leaves. We then prove:
	\begin{theorem}[Main theorem] \label{main} 
		Let $X$ be a locally coherent qcqs derived $\KK$-scheme over a coherent animated ring $R$. 
			The shifted tangent fibre functor $$\modulistk_{X/R} \longrightarrow (\QC^{\vee}_X)_{T_{X/R}[1]} $$
			$$ (X \rightarrow Y) \ \  \mapsto  \ \ (T_{X/Y}[1] \rightarrow  T_{X/R}[1])$$
			lifts to an equivalence of $\infty$-categories $$T_{X/-}[1]: \modulistk_{X/R} \xrightarrow{\simeq} \LieAlgd_{X/\KK}.$$
			Hence every partition Lie algebroid on $X$ integrates uniquely to a  formal moduli \mbox{stack under $X$.}
			
		 	This equivalence is functorial with respect to almost finitely presented maps.   
	\end{theorem}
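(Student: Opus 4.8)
\emph{Reduction to the affine case.} The plan is to first reduce to $X=\Spec(B)$. I would check that $X\mapsto\modulistk_{X/R}$ satisfies Zariski descent -- the three defining conditions are local on $X$, and the restriction of a formal moduli stack along an open immersion $U\hookrightarrow X$ is its base change along $U\hookrightarrow X$ -- so that $\modulistk_{X/R}\xrightarrow{\ \simeq\ }\lim_{U\subseteq X\text{ affine open}}\modulistk_{U/R}$. Since cotangent complexes localise, the shifted tangent fibre functor commutes with restriction to opens; combined with the descent statement for $\LieAlgd_{X/\KK}$ in property (3), it then suffices to produce the equivalence, compatibly with restriction to opens, in the case $X=\Spec(B)$. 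There I would invoke the unstraightening equivalence of \cite{GRII} (see also the Remark above) to identify $\modulistk_{\Spec(B)/R}$ with the $\infty$-category of formal deformation functors $F\colon\Art_{\KK/B}\to\sS$ with $F(B)\simeq\ast$ preserving pullbacks along nilpotent extensions, under which the functor of the theorem becomes $F\mapsto(T_F[1]\to T_{B/R}[1])$, with $T_F\in\QC^\vee_{\Spec(B)}$ the tangent complex classifying $I\mapsto\fib\bigl(F(B\oplus I)\to F(B)\bigr)$ on $\Coh_{\Spec(B),\geq 0}$. The coherence hypotheses on $R$ and $B$ are precisely what make $\QC^\vee_{\Spec(B)}=\Ind(\Coh_{\Spec(B)}^{\op})$ and $\Coh_{\Spec(B),\geq 0}$ behave well enough for $T_F$ to exist and for its formation to commute with sifted colimits.

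\emph{A monadic adjunction.} Next I would show that $T_{(-)}[1]$ is monadic. It admits a left adjoint: on a pro-coherent sheaf $V\to T_{B/R}[1]$, this left adjoint forms the free partition Lie algebroid on $V$ and then passes to its associated deformation functor via a (partition-refined) Chevalley--Eilenberg / Maurer--Cartan construction, which can be made sense of directly on free inputs. To invoke the Barr--Beck--Lurie theorem it remains to verify conservativity of $T_{(-)}[1]$ and preservation of geometric realisations; the latter follows from the previous paragraph, and for the former I would argue by dévissage -- the ``deformation theory'' and ``equivalence on reduced affines'' conditions recover a formal moduli stack from its values on square-zero extensions, which become contractible once $T_F=0$, and here local coherence over a coherent ring is used to guarantee that $\Coh_{\Spec(B),\geq 0}$ resolves everything appearing. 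Barr--Beck--Lurie then presents $T_{(-)}[1]$ as the forgetful functor of its induced monad on $(\QC^\vee_{\Spec(B)})_{/T_{B/R}[1]}$.

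\emph{Identification of the monad.} This is the technical core, and the step I expect to be the main obstacle. One must identify the monad just produced with the partition Lie algebroid monad of property (1), and I would do this by computing both on free objects $V\to T_{B/R}[1]$, naturally in $V$. The free partition Lie algebroid admits, following \cite{BM19, BCN21}, an explicit description as a sum of Tate-twisted derived symmetric powers of shifts of $V$ relative to $B$ -- the partition-complex formula -- whereas $T_{(-)}[1]$ applied to the deformation functor of a free algebroid computes, by Koszul duality against Artinian extensions, the $\mathbb{D}$-dual of the corresponding genuine (partition-refined) cofree coalgebroid; the same partition-complex combinatorics identify the two. Away from characteristic zero this genuine refinement is essential -- naive (co)free constructions do not match -- and realising the relevant symmetric-power and Tate constructions over an arbitrary coherent base, rather than over a field, is a further source of difficulty. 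Granting the identification, $T_{(-)}[1]$ exhibits an equivalence $\modulistk_{\Spec(B)/R}\xrightarrow{\ \simeq\ }\LieAlgd_{\Spec(B)/\KK}$ over $(\QC^\vee_{\Spec(B)})_{/T_{B/R}[1]}$; by construction this is the asserted lift of the shifted tangent fibre functor, and assembling over the affine opens of $X$ via the two descent statements of the first paragraph yields the global equivalence.

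\emph{Functoriality.} Finally, for an almost finitely presented map $f\colon X'\to X$ there is a pullback $f^\ast\colon\LieAlgd_{X/\KK}\to\LieAlgd_{X'/\KK}$ covering pullback of pro-coherent sheaves along the natural map $f^\ast T_{X/R}[1]\to T_{X'/R}[1]$, together with a base-change functor $\modulistk_{X/R}\to\modulistk_{X'/R}$ sending $X\to\widehat S$ to $X'\to X'\sqcup_X\widehat S$. The base-change formula $L_{X'/(X'\sqcup_X\widehat S)}\simeq f^\ast L_{X/\widehat S}$ for cotangent complexes shows that $T_{(-)}[1]$ intertwines these, so the equivalence of the theorem upgrades to an equivalence of functors on the category of locally coherent qcqs derived $\KK$-schemes with almost finitely presented morphisms.
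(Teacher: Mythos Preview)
Your overall architecture---reduce to affines via Zariski descent, prove monadicity, identify the monad---matches the paper's, and your conservativity argument is essentially \Cref{prop:conservativity}. But the monad-identification step has real gaps. The partition-complex formula you invoke describes free partition Lie \emph{algebras} (the case $\KK=B$), not algebroids: for nonzero anchor the free algebroid is only a filtered deformation of that formula (\Cref{rem:free partition Lie algebroid}), with no comparable closed expression. Your construction of the left adjoint---``form the free partition Lie algebroid, then pass to its deformation functor via Chevalley--Eilenberg''---presupposes the very functor $\LieAlgd\to\FMP$ you are trying to build, so is circular. And even granting a match of endofunctors on free inputs, you give no argument that the monad \emph{structures} agree.

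The paper's route in the affine case is different: rather than proving monadicity first and then identifying the monad, it directly establishes $\FMP_{B/\KK}\simeq\LieAlgd_{B/\KK}$ via Lurie's axiomatic deformation-theory framework (\Cref{thm:def functor vs Lie}, \Cref{thm:lurie}). The two substantive inputs---absent from your proposal---are the comonadicity of the cotangent fibre on complete almost-finitely-augmented algebras (\Cref{thm:comonad}), which yields a fully faithful Koszul-dual functor $\mathfrak{D}\colon\Art_{\KK/B}\to\LieAlgd_{B/\KK}^{\op}$, and the fact that $\mathfrak{D}$ sends the defining pullback squares of formal moduli problems to pushouts of Lie algebroids (\Cref{preservation_of_pullbacks}), proved by a delicate argument with filtered cosimplicial resolutions. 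With the affine equivalence in hand, monadicity of $T_{X/-}[1]$ and the identification of its monad become formal consequences (\Cref{monadicity}, \Cref{monadident}). \Cref{preservation_of_pullbacks} is where the genuine work lies, and ``partition-complex combinatorics'' does not substitute for it.
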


	A key ingredient in the proof of \Cref{main} is a Koszul equivalence for animated rings. Before stating it, we introduce some notation.

	\begin{definition}
		An animated $R$-algebra $A$ over $B$ is said to be \mbox{\textit{complete almost finitely augmented} if }
		\begin{enumerate}
			\item the induced map $\pi_0(A) \rightarrow \pi_0(B)$ is surjective on $\pi_0$ with  kernel $I$;
			\item  the animated ring $A$ is derived $I$-complete;
			\item the $A$-module $B$ is almost perfect as an $A$-module.
		\end{enumerate}  
		Write $\SCR^{ \aft,\wedge}_{R/B} \subset \SCR_{R/B}$ for the full subcategory spanned by those animated $R$-algebras which are complete almost finitely augmented over $B$.
	\end{definition} 
	
	\begin{example}
		For $R=B$ a complete local Noetherian animated ring, we can identify $\SCR^{ \aft,\wedge}_{R/B }$ with the $\infty$-category of augmented animated $R$-algebras  which are complete, local, and Noetherian.  
	\end{example}
	
	In \Cref{PartitionLieAlgebroids}, we lift the assignment $A \mapsto (L_{B/A}^{\vee}[1]\rightarrow L_{B/R}^{\vee}[1])$ to a functor $$\mathfrak{D}: \SCR_{R/B} \rightarrow \Lie^{\pi}_{\Delta, X/R}$$
	for $X = \Spec(B)$ and  establish the following result:
	\begin{theorem}\label{mainaffine}
		The functor $\mathfrak{D}$ restricts to an equivalence between the full subcategory  $$\SCR^{ \aft,\wedge}_{R/B } \subset \SCR_{R/B } $$ of complete almost finitely augmented objects and the full subcategory
		$$\Lie^{\pi,\dap, \weirdleq 0}_{\Delta, X/R}  \subset \Lie^{\pi}_{\Delta, X/R}$$
		of partition Lie algebroids $\mathfrak{g} \rightarrow L_{B/R}^{\vee}[1]$ with $\mathfrak{g}$ dually  almost perfect of Tor-amplitude $\leq 0$.  
	\end{theorem}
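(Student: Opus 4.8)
\emph{Setup.} The plan is to realise $\mathfrak{D}$ as one half of a Koszul-duality adjunction. I would construct a Chevalley--Eilenberg cochain functor $\mathfrak{C}\colon \Lie^{\pi}_{\Delta, X/R}\to \SCR_{R/B}$ in the other direction, sending a partition Lie algebroid $\mathfrak{g}\to L_{B/R}^\vee[1]$ to the animated $R$-algebra over $B$ computing functions on the formal leaf-quotient $B /\!/ \mathfrak{g}$; on underlying objects $\mathfrak{C}(\mathfrak{g})$ is a completed divided-power symmetric construction on a shift of $\mathfrak{g}^\vee$, twisted by the anchor, which can be written down explicitly using the monad of \Cref{PartitionLieAlgebroids}. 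Granting the adjunction $(\mathfrak{D},\mathfrak{C})$, \Cref{mainaffine} reduces to two assertions: (i) the unit $A\to\mathfrak{C}\,\mathfrak{D}(A)$ is an equivalence whenever $A$ is complete almost finitely augmented over $B$, and (ii) the counit $\mathfrak{D}\,\mathfrak{C}(\mathfrak{g})\to\mathfrak{g}$ is an equivalence whenever $\mathfrak{g}$ is dually almost perfect of Tor-amplitude $\leq 0$. I would first record that the restriction is well defined: if $A\in\SCR^{\aft,\wedge}_{R/B}$ then almost perfectness of $B$ over $A$ makes $L_{B/A}$ almost perfect, while surjectivity of $\pi_0(A)\to\pi_0(B)$ forces $\Omega_{\pi_0(B)/\pi_0(A)}=0$, hence $L_{B/A}$ is $1$-connective and $\mathfrak{D}(A)=L_{B/A}^\vee[1]$ is dually almost perfect of Tor-amplitude $\leq 0$; conversely $\mathfrak{C}(\mathfrak{g})$ is derived complete along its augmentation ideal and $B$ is almost perfect over it.

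\emph{Base case.} Both (i) and (ii) I would reduce to a square-zero base case: for (i), that $\mathfrak{C}\,\mathfrak{D}(B\oplus M[n])\simeq B\oplus M[n]$ for $M$ connective and almost perfect over $B$ (and the degree-zero variant), and for (ii), the corresponding statement for abelian partition Lie algebroids on a dually almost perfect object of the appropriate Tor-amplitude. This is where the explicit form of the partition Lie algebroid monad from \Cref{PartitionLieAlgebroids} is essential: it makes the comparison map computable weight-by-weight, and the connectivity and Tor-amplitude constraints guarantee that no summands survive outside the expected range, so that the divided-power symmetric and cosymmetric functors are Koszul self-dual in this range. I expect the absolute case $B=R$ of partition Lie algebras to be available from \cite{BM19, BCN21} (and the characteristic-$0$ algebroid case from \cite{Hennion, nuiten2019koszul}), so the new input here is the $B$-linear-plus-anchor bookkeeping.

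\emph{D\'evissage.} Next I would check that $\mathfrak{D}$ sends pullbacks along square-zero extensions of animated $R$-algebras to pullbacks of partition Lie algebroids --- because $L_{-/R}$ turns such pullbacks into pushouts and dualisation is exact on the bounded almost perfect objects in play --- and, dually, that $\mathfrak{C}$ sends square-zero extensions of algebroids to pushouts of $B$-algebras. Now every complete almost finitely augmented $A$ is the limit of a tower of square-zero extensions whose associated graded pieces have the form $B\oplus M[n]$: one combines the Postnikov tower of $A$ with the adic filtration of its augmentation ideal (finitely generated since $R$, and hence $\pi_0(B)$, is coherent), and the tower converges precisely because $A$ is derived complete. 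Symmetrically, a dually almost perfect $\mathfrak{g}$ of Tor-amplitude $\leq 0$ is a convergent limit of a tower of abelian square-zero extensions of algebroids, convergence being exactly the Tor-amplitude bound. Since the unit and counit are equivalences on the base cases and compatible with these towers, (i) and (ii) follow, so $\mathfrak{D}$ and $\mathfrak{C}$ restrict to mutually inverse equivalences between $\SCR^{\aft,\wedge}_{R/B}$ and $\Lie^{\pi,\dap, \weirdleq 0}_{\Delta, X/R}$; as a byproduct one gets $\mathfrak{C}\,\mathfrak{D}(A)\simeq A^{\wedge}_I$, matching the heuristic that Koszul duality implements adic completion.

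\emph{Main obstacle.} The hard part will be two intertwined points. First, controlling the a priori divergent weight and Postnikov filtrations so that both the base-case computation and the d\'evissage actually converge: this is exactly where the finiteness hypotheses ($B$ almost perfect over $A$, respectively $\mathfrak{g}$ dually almost perfect of Tor-amplitude $\leq 0$) are indispensable, since without them the unit and counit genuinely fail to be equivalences. Second, the Koszul self-duality of the divided-power symmetric and cosymmetric functors in the relative (anchored) setting over an arbitrary coherent base, which requires the explicit monad of \Cref{PartitionLieAlgebroids} together with careful connectivity and coconnectivity bookkeeping. The remaining reductions should be routine: passing from Noetherian to general coherent bases, and from the affine statement to the qcqs one, follows by approximation in the category of pro-coherent sheaves (\Cref{procohappendix}).
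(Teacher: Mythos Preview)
Your approach differs substantially from the paper's, and there is a genuine gap.

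The paper does not construct a Chevalley--Eilenberg right adjoint $\mathfrak{C}$ and check unit/counit. Instead, Theorem~\ref{thm:comonad} proves directly via Barr--Beck--Lurie that the cotangent-fibre functor $\cot\colon \SCR^{\wedge,\aft}_{R/B}\to \cAAPerf{B}{R}$ is comonadic for the comonad $\coLie^\pi_\Delta=\cot\circ\sqz$; the Barr--Beck hypotheses are verified by passing to the adic filtration, where on the associated graded everything becomes a free derived symmetric algebra on the cotangent fibre (Lemma~\ref{lem:graded adic}) and the relevant totalisations are split. The partition Lie algebroid monad is then \emph{defined} (Proposition~\ref{prop:pla affine}) as the unique sifted-colimit-preserving extension of $(\coLie^\pi_\Delta)^{\op}$ under pro-coherent duality, so that the equivalence (Corollary~\ref{cor:complete KD}) is tautological once comonadicity is in hand. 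The Chevalley--Eilenberg functor $C^*$ appears only afterwards, as a consequence of the equivalence rather than an ingredient in its proof.

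The gap in your d\'evissage is the assertion that ``$L_{-/R}$ turns such pullbacks into pushouts and dualisation is exact''. The cotangent complex is not linear in $A$: for a pullback $A'=A\times_{A_0}A_1$ along a square-zero extension there is no formal reason that $L_{B/A'}$ is the pushout of the $L_{B/A_i}$. That $\mathfrak{D}$ sends such pullbacks to pushouts of partition Lie algebroids is precisely the content of Proposition~\ref{preservation_of_pullbacks}, whose proof \emph{uses} the comonadicity theorem (via the cobar resolution of Corollary~\ref{cor:comonadic cobar}) together with a delicate filtered argument; so your d\'evissage is circular as written. Relatedly, Section~\ref{PartitionLieAlgebroids} provides no ``explicit form'' of the partition Lie algebroid monad to compute with weight-by-weight --- it is characterised only as an abstract sifted-colimit extension --- so the base-case computation you sketch for $\mathfrak{C}\mathfrak{D}$ and $\mathfrak{D}\mathfrak{C}$ has no available input from that section.
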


	\Cref{main} and \Cref{mainaffine} generalise  various earlier results:
	\begin{enumerate}[label=(\alph*)]
		\item Let $k$ be a field of characteristic $0$. For $R=k$  and $X = \Spec(k)$ a point, our results specialise to Lurie--Pridham's equivalence between formal moduli problems and differential graded Lie algebras \cite{DAG-X,pridham2010unifying}, which is in turn extends earlier work work by Deligne \cite{deligne1986letter}, Drinfel'd \cite{drinfeld1988letter},  Hinich \cite{hinich2001dg}, Manetti \cite{manetti2009differential},   and others.
		
		Now assume $A$ is a  Noetherian and eventually coconnective animated $k$-algebra. For $R=k$ and $X = \Spec(A)$, our result is contained in \cite{nuiten2019koszul}, whereas for $R = A$ and $X = \Spec(A)$, it was proven by Hennion \cite{Hennion}. 
		
		In characteristic $0$, the monadicity of the tangent fibre functor  is also proven in the work of Gaitsgory--Rozenblyum  \cite[Chapter 8]{GRII}. Note, however, that their proof 
		crucially relies on the fact that  looped Lie algebras in characteristic $0$ are abelian --- this fails away from characteristic $0$ due to the presence  of power operations.
		
		\item Let $k$ be a field of characteristic $p$. For    $X = \Spec(k)$ and  $R=k$ or, more generally,  $R=A$ a complete local Noetherian ring with residue field $k$, our results are contained in   \cite{BM19}. For $X=\Spec(L)$ with $L$ a finite purely inseparable field extension over $R=k$,  our Koszul statement \Cref{mainaffine} recovers  \cite[Theorem 3.15]{brantner2020purely}.
	\end{enumerate}

	\subsection{Acknowledgements}
	We are grateful to Dan Abramovich, Jiaqi Fu, Sof\'{i}a Marlasca Aparicio, Bertrand To\"{e}n, and Nikola Tomi\'{c} 	for various discussions related to the content of this paper.
	
	L.B.\ was supported as a Royal Society University Research Fellow at Oxford University  through grant URF$\backslash$R1$\backslash$211075 and by the Centre national de la recherche scientifique (CNRS) at Orsay.
 K.M.\ was supported by NSF grants DMS-2152235 and DMS-21002010. J.N.\ was supported by the CNRS, under the programme PEPS JCJC, and the ANR project ANR-24-CE40-5367-01 (``LieDG'').

\newpage

\section{Affine Koszul duality}\label{section:affineKD} 
We write $\DAlg_{\KK}$ for the $\infty$-category of derived commutative rings and
$\SCR_{\KK} $  for the $\infty$-category of animated $\KK$-algebras; a summary of their theory is given in \Cref{sec:deralg}.
Note that animated rings are the same as connective derived rings:
 $$
 `\text{animated } = \text{ derived }+\text{ connective'}.
 $$

The goal of this section will be to provide some results about derived rings that are derived $I$-adically complete for ideals $I$ that are of ``sufficiently finite type''; most importantly, we prove a comonadicity result (\Cref{thm:comonad}) generalising results from \cite{BM19}.

We will start by recalling a derived analogue of endowing a ring with the $I$-adic filtration with respect to an ideal $I$. Instead of working directly with ideals $I\subseteq A$, it is more convenient to work with the corresponding closed immersion $A\rt A/I=B$. We will refer to objects $A \rt A/I=B$ as \textit{arrows} and use the notation $(A\rightarrow B)$.

\subsection{Derived filtered and graded rings}
To describe the adic filtration associated to an arrow $(A\rightarrow B)$, we first recall how one can obtain the $\infty$-category of derived filtered rings as the $\infty$-category of $\LSym$-algebras in the $\infty$-category of filtered modules.
\begin{notation}
We will consider the following $\infty$-categories:
\begin{enumerate}[label=(\alph*)]
\item The $\infty$-category $\Mod_{\ZZ}^{\Delta^{1}}:=\Fun(\Delta^1, \Mod_{\ZZ})$ of maps of $\ZZ$-modules $(X \to Y)$. 

\item The $\infty$-category $\FilMod_{\ZZ}=\Fun\big((\mathbb{Z}_{\geq 0}, \leq)^{\op}, \Mod_{\ZZ}\big)$ of non-negatively filtered $\ZZ$-modules, i.e. diagrams in $\Mod_{\ZZ}$ of the form
$$
\dots\to F^1V\to F^0V=V.
$$
We will denote objects of $\FilMod_{\ZZ}$ as $F^{\star}V$, and often denote the underlying object $F^{0}V$ of the filtration simply as $V$. 

\item The $\infty$-category $\GrMod_{\ZZ}=\Fun((\ZZ_{\geq 0}, =), \Mod_{\ZZ})$ of non-negatively graded $\ZZ$-modules. We use the notation $V^{\bullet}$ for objects of $\GrMod_{\ZZ}$ to emphasise the grading.
\end{enumerate}
We will be interested in the following three functors relating these $\infty$-categories:
$$\begin{tikzcd}
\GrMod_{\ZZ} &\FilMod_{\ZZ}\arrow[l, "\gr"{swap}] \arrow[r, "\aug"{swap}, yshift=-1ex]  &  \Mod^{\Delta^{1}}_{\ZZ} \arrow[l, "\ker"{swap}, yshift=1ex].
\end{tikzcd}$$
Here $\gr^{\bullet}(V)$ is the associated graded of a filtered module $F^{\star}V$, i.e.\ $\gr^i(V)=F^iV/F^{i+1}V$. The functor $\aug$ sends a filtered module $F^{\star}V$ to the natural augmentation $(V\to \gr^0(V) )$ to its $0$-th graded piece. The functor $\ker$ is its \textit{left} adjoint and sends an arrow $(p\colon V\to V_0)$ to the filtered module $\dots\to 0 \to 0\to \fib(p)\to V$.
\end{notation}
We endow each of these $\infty$-categories with a $t$-structure making it a stably projectively generated $\infty$-category in the sense of  \Cref{def:proj gend}, as well as a closed symmetric monoidal structure:
\begin{enumerate}[label=(\alph*)]
\item We endow $\Mod_{\ZZ}^{\Delta^1}$ with the \emph{surjective $t$-structure}, in which an object $X \rightarrow Y$ is connective if $Y$ and $\mm{fib}(X\to Y)$ (and hence also $X$) are connective $\ZZ$-modules. In other words, $X\to Y$ is connective if $X$ and $Y$ are connective and $\pi_0(X)\to \pi_0(Y)$ is surjective. An object $X\to Y$ is coconnective if $X$ and $\mm{fib}(X \rightarrow Y)$ are coconnective $\ZZ$-modules. The monoidal structure is the levelwise tensor product 
$$
(X\to Y)\otimes (X'\to Y') = (X\otimes X'\to Y\otimes Y').
$$

\item We endow $\FilMod_{\ZZ}$ with the Day convolution product and the $t$-structure in which $F^\star V$ is (co)connective if each $F^iV$ is (co)connective.

\item Likewise, $\GrMod_{\ZZ}$ is equipped with the Day convolution product and the $t$-structure in which $V^\bullet$ is (co)connective if each $V^\bullet$ is (co)connective.
\end{enumerate}
The generators of $\Mod_{\ZZ}^{\Delta^1}$ are the surjections $\mathbb{Z}^{\oplus m+n}\to \mathbb{Z}^{\oplus n}$ and the generators of $\FilMod_{\ZZ}$ and $\GrMod_{\ZZ}$ are the finitely generated free abelian groups, where each generator is of a certain (filtration) weight. Each of these $\infty$-categories is then a derived algebraic context in the sense of  \Cref{def:alg context} (see \cite[Section 4.2]{R20}) and hence comes with a notion of derived commutative algebra: these are  algebras over the monad obtained by right-left extending the (strict) symmetric algebra monad on the categories of surjections of flat abelian groups and of flat filtered/graded abelian groups. 
 
\begin{definition}
We define the $\infty$-categories of \emph{animated} (resp.\ \emph{derived}) \emph{filtered rings}, \emph{animated} (resp.\ \emph{derived}) \emph{graded rings} and \emph{animated} (resp.\ \emph{derived}) \emph{surjections} to be the $\infty$-categories
\begin{align*}
\FilSCR &=\SCR(\FilMod_{\ZZ, \geq 0}) & \FilDAlg&=\DAlg(\FilMod_{\ZZ})\\
\grSCR &=\SCR(\cat{GrMod}_{\ZZ, \geq 0}) & \grDAlg&=\DAlg(\cat{GrMod}_{\ZZ})\\
\SurSCR&= \SCR(\Mod_{\ZZ, \geq 0}^{\Delta^1}) & \SurDAlg &=\DAlg(\Mod_{\ZZ}^{\Delta^1})
\end{align*}
of animated and derived commutative algebras in the sense of  \Cref{def:derived comm}.
\end{definition}
\begin{remark}
An animated surjection is simply a map of animated rings $A\to B$ which is surjective on $\pi_0$; these have been introduced by Z.\ Mao in \cite{M21}, where they are called animated pairs.
\end{remark}
\begin{remark}
By Remark \ref{rem:Eoo}, every derived filtered or graded ring has an underlying $\mathbb{E}_\infty$-algebra with respect to  Day convolution, and modules are  modules over this underlying $\mathbb{E}_\infty$-algebra.
\end{remark}
Taking associated graded objects and augmentations commutes with the $\LL\sym$-monads, so that we obtain functors
$$\begin{tikzcd}
\grDAlg & \FilDAlg\arrow[l, "\gr"{swap}]\arrow[r, "\aug"] &  \SurDAlg.
\end{tikzcd}$$
Both functors preserve limits, colimits and connective objects, and the functor $\aug$ sends a filtered algebra $F^{\star}A$ to the augmentation arrow $F^0A\to \gr^0(A)$.
\begin{definition}
\mbox{The \textit{adic filtration} functor $\adic\colon \SurDAlg\rt \FilDAlg$ is the left adjoint to $\aug$.}
\end{definition}
Since the augmentation functor is left $t$-exact, taking adic filtrations preserves connective objects. We therefore obtain a diagram of the form
$$\begin{tikzcd}
\grSCR & \FilSCR\arrow[l, "\gr"{swap}] \arrow[r, "\aug"{swap}, yshift=-1ex]  & \SurSCR \arrow[l, "\adic"{swap}, yshift=1ex].
\end{tikzcd}$$
where all functors are left adjoints. Note that taking the associated graded does not preserve limits at the connective level. 

We give a familiar description of the adic filtration on the level of $0$-th homotopy groups. 

\begin{proposition}\label{prop:I-adic on pi_0}
Let $(A \to B)$ be an animated surjection, and $I=\ker(\pi_{0} A \rightarrow \pi_{0}B)$. Then $\pi_{0}\big(\adic(A\to B)\big)$ is given by the commutative algebra in $\Fun((\mathbb{Z}, \geq), \Mod_{\KK}^\heartsuit)$
$$\begin{tikzcd}[column sep=1pc]
\dots\arrow[r] & \sym^3_{\pi_0(A)}(I)\arrow[r] & \sym^2_{\pi_0(A)}(I)\arrow[r] & I \arrow[r] & \pi_0(A).
\end{tikzcd}$$
\end{proposition}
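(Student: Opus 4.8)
The plan is to reduce to the universal case and compute. First I would observe that $\pi_0$ commutes with colimits and with the relevant left adjoints, so it suffices to understand $\pi_0 \circ \adic$ on a class of generators of $\SurDAlg$ and then glue. Concretely, both $\adic$ and $\pi_0$ preserve colimits, and $\adic$ is left adjoint to $\aug$, which on underlying objects sends a filtered ring $F^\star A$ to $(F^0 A \to \gr^0 A)$. Dually, $\adic$ is the derived/animated left Kan extension of the strict $I$-adic filtration functor on the subcategory of polynomial surjections $(P \to P/J)$ with $P$ a finitely generated polynomial ring and $J$ generated by a subset of the variables, since these generate $\SurDAlg$ under sifted colimits (this is where the ``derived algebraic context'' formalism of \Cref{def:alg context} is used). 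On such a generator $(\KK[x_1,\dots,x_n] \to \KK[x_1,\dots,x_n]/(x_1,\dots,x_m))$, the strict $I$-adic filtration is literally the Rees-type diagram $\cdots \to J^2 \to J \to P$, and its associated graded is the polynomial ring $\sym_{P/J}(J/J^2)$ on a free module; here $J/J^2$ is free and everything is flat, so there is nothing derived happening and the claim holds strictly.

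**Passing to the general case.** Next I would argue that applying $\pi_0$ to $\adic(A \to B)$ for a general animated surjection recovers the \emph{classical} $I$-adic construction on $\pi_0$. Writing $(A \to B)$ as a sifted colimit of polynomial surjections $(P_\alpha \to P_\alpha/J_\alpha)$, we get $\adic(A\to B) \simeq \colim_\alpha \adic(P_\alpha \to P_\alpha/J_\alpha)$ in $\FilDAlg$, and since $\pi_0$ preserves sifted colimits of connective objects and is monoidal in the appropriate sense, $\pi_0$ of this colimit is the colimit in $\Fun((\ZZ,\geq), \Mod_{\KK}^\heartsuit)$ of the diagrams $\cdots \to \sym^2_{P_\alpha/J_\alpha}(J_\alpha/J_\alpha^2) \to J_\alpha/J_\alpha^2 \to P_\alpha/J_\alpha$. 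One then checks weight by weight: in weight $0$ the colimit is $\pi_0(A)$; in weight $1$, the colimit of $J_\alpha/J_\alpha^2$ computes $I$ (this uses that $\gr^1$ of the adic filtration is the conormal module, and that $\sym^1 = \id$); and in weight $w \geq 2$ the colimit of $\sym^w_{P_\alpha/J_\alpha}(J_\alpha/J_\alpha^2)$ is $\sym^w_{\pi_0 A}(I)$, because the classical symmetric power functor over a varying base commutes with filtered colimits of rings and surjective maps on the generating module. Finally, the transition maps $\sym^{w+1}_{\pi_0 A}(I) \to \sym^w_{\pi_0 A}(I)$ are the ones induced by multiplication $I \otimes \sym^w I \to \sym^w I$ followed by $\sym^{w+1} I \to I \otimes \sym^w I$... more precisely they are the canonical surjections in the Rees algebra picture, i.e. the maps exhibiting $\bigoplus_w \sym^w_{\pi_0 A}(I)$ as mapping onto $\bigoplus_w I^w$.

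**The main obstacle.** The step I expect to require the most care is the commutation of $\pi_0$ with $\adic$ --- equivalently, showing that the derived/animated symmetric power $\LSym^w$ appearing implicitly in $\adic$ becomes the classical $\sym^w$ after applying $\pi_0$, with the \emph{correct base ring} $\pi_0(A)$ rather than some derived base. The subtlety is that $\adic$ is defined via a left adjoint (hence a nontrivial left Kan extension) in the derived setting, so a priori $\pi_0 \adic(A \to B)$ could differ from the naive $I$-adic filtration by contributions from the higher homotopy of the $\LSym$-monad and from the fact that $B$ is not flat over $A$. The resolution is that $\pi_0$ is a \emph{monoidal} left adjoint from connective objects to discrete objects which carries the derived $\LSym$-monad to the classical $\sym$-monad on $\Mod_{\KK}^\heartsuit$ (this is a standard property of derived algebraic contexts, cf.\ \cite[Section 4.2]{R20}), and $\pi_0$ of a left adjoint composed with a left adjoint is computed by first applying $\pi_0$; so the whole computation descends to the abelian-category level, where $\adic$ on $\pi_0$ is visibly the classical $I$-adic filtration and its associated graded is $\bigoplus_w \sym^w_{\pi_0 A}(I)$ by the elementary fact that $\gr$ of the $I$-adic filtration on $\pi_0(A)$ receives a surjection from this symmetric algebra. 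One then identifies the filtered object itself (not just its graded) with the displayed diagram by noting that both are the initial filtered algebra equipped with a map to $\pi_0(A)$ sending $F^1$ into $I$.
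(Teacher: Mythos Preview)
Your overall strategy of resolving by polynomial surjections is workable in principle, but there is a concrete error in the execution, and the whole approach is considerably more involved than necessary.

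The error: in ``Passing to the general case'' you write the filtered pieces on the polynomial generators as $\sym^w_{P_\alpha/J_\alpha}(J_\alpha/J_\alpha^2)$ with weight-$0$ term $P_\alpha/J_\alpha$. That is the associated \emph{graded}, not the filtered object; you had it right one paragraph earlier ($\cdots \to J_\alpha^2 \to J_\alpha \to P_\alpha$). Taking the sifted colimit of the graded pieces would produce $\sym^w_{\pi_0 B}(I/I^2)$ in weight $w$ and $\pi_0 B$ in weight $0$, not $\sym^w_{\pi_0 A}(I)$ and $\pi_0 A$ --- contradicting your own next sentence. The same confusion recurs in the ``Main obstacle'' paragraph, where you assert that the associated graded of the $I$-adic filtration is $\bigoplus_w \sym^w_{\pi_0 A}(I)$; in fact that expression is the filtered object itself, and the entire content of the proposition is that for non-flat $I$ this differs from the classical tower $\cdots \to I^2 \to I \to \pi_0 A$.

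The paper's proof bypasses all of this. Since $\adic$ is left adjoint to $\aug$ and $\pi_0$ is left adjoint to the inclusion of discrete filtered algebras, the object $\pi_0\adic(A\to B)$ is initial among discrete filtered algebras $F^\star R'$ equipped with a map of surjections $(\pi_0 A \to \pi_0 B) \to (R' \to \gr^0 R')$, i.e.\ a ring map $\pi_0 A \to R'$ carrying $I$ into the image of $F^1 R'$. One then checks directly, by writing down the map $\sym^n_{\pi_0 A}(I) \to F^n R'$ sending $x_1\cdots x_n$ to the product of the images, that the filtered algebra $\sym^\star_{\pi_0 A}(I)$ has exactly this universal property. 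Your very last sentence is essentially this observation (with the direction of the structure map reversed), and it is in fact the whole proof --- the resolution by polynomial surjections and the weight-by-weight colimit computation are unnecessary detours.
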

In the situation where $I$ is flat as a $\pi_0(A)$-module, we have that $\sym^n_{\pi_0(A)}(I)\simeq I^n$. Each map is then an inclusion and $\pi_{0}\big(\adic(A\to B)\big)$ is given by the classical $I$-adic filtration on $\pi_0(A)$.
\begin{proof}
We can assume that both $A$ and $B$ are discrete. The filtered algebra $\pi_{0}\adic(A\to B)$ has the universal property that given any discrete filtered algebra $F^{\star} R'$, any map $(A \to B) \rightarrow (R' \to \gr^{0}(R')) $ extends uniquely to a filtered map $\pi_{0}\adic(A \to B) \rightarrow F^{\star} R'$. Note that the category of discrete filtered algebras is equivalent to the category of commutative algebras in $\FilMod_{\KK}^\heartsuit=\Fun((\mathbb{Z}, \geq), \Mod_{\KK}^\heartsuit)$; in other words, $F^{n+1}R'\to F^nR'$ need not be injective.

The data of a map of surjections $(A\to B) \rightarrow (R' \to \gr^{0}(R'))$ is the same as the data of a map $(A,I) \rightarrow (R', F^{1}R')$ of algebras equipped with an ideal. Let $f_{1}\colon I \rightarrow F^{1}R'$ be induced map on ideals. This extends uniquely to a map of discrete filtered algebra $\sym_A^\star I \rt F^\star R'$, given in weight $n$ by
$$\begin{tikzcd}
\sym_A^n(f)\colon \sym^{n}_{A}(I) \arrow[r]& F^{n}R; \qquad f^{\otimes n} (x_{1}\otimes \dots \otimes x_{n}) =  f_{1}(x_{1})\dots  f_{1}(x_{n}),
\end{tikzcd}$$
where the product is taken inside $F^{\star}R$. 
\end{proof}

Finally, recall that a filtered module is \textit{complete} if $\lim F^nV=0$ and that a derived filtered commutative ring is complete if it is complete as a filtered module. Equivalently, a filtered module is complete if it is local with respect to all maps inducing equivalences on the associated graded. Since taking symmetric algebras preserves such graded equivalences, it follows that the completion of a derived filtered ring is again a derived filtered ring. In good situations, one can relate the completeness of a filtered derived algebra $F^\star A$ to completeness of the underlying algebra $F^0A$ with respect to its augmentation ideal:
\begin{lemma}\label{lem:completeness}
Let $A$ be an animated filtered ring and let $I$ denote the kernel of the augmentation $\pi_0(F^0A)\rt \pi_0(\gr^0(A))$. Suppose that $I$ is a finitely generated ideal of $\pi_0(F^0A)$. Then the following hold:
\begin{enumerate}
\item If $M$ is a complete filtered $A$-module, then the underlying $F^0A$-module $F^0M$ is $I$-complete \cite[Definition 7.3.1.1]{SAG}.

\item Suppose that $A=\adic(F^0A\to \gr^0(A))$ is the adic filtration of an animated surjection, and let $A^\wedge$ be its completion. If $F^0A$ is $I$-complete and $\gr^0(A)\otimes_{F^0A} F^0A^\wedge\rt \gr^0(A)$ is an equivalence, then $A\simeq A^\wedge$.
\end{enumerate}
\end{lemma}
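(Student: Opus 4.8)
The plan is to prove the two statements in turn, both by reducing to standard properties of derived $I$-completeness. For (1), the starting point is that completeness of $M$ expresses its underlying object as a limit of finite approximations: the cofibre sequences $F^nM\to F^0M\to F^0M/F^nM$ fit into a tower in $n$, and since $\lim_n F^nM\simeq 0$, passing to the limit identifies $F^0M\xrightarrow{\simeq}\lim_n\big(F^0M/F^nM\big)$. As derived $I$-complete $F^0A$-modules are closed under limits, it suffices to show each $Q_n:=\cofib(F^nM\to F^0M)$ is derived $I$-complete. Such a $Q_n$ lies in the stable subcategory generated by $\gr^0M,\dots,\gr^{n-1}M$ (it is built from them by finitely many cofibre sequences), and since derived $I$-complete modules form a stable subcategory, it is enough to treat a single $\gr^iM$. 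For this I would use that $\gr^\bullet M$ is a graded module over the graded ring $\gr^\bullet A$, so that $\gr^iM$ is a module over $\gr^0A$ and its $F^0A$-module structure factors through $F^0A\to \gr^0A$; since each generator $f_1,\dots,f_k$ of $I$ maps to $0$ in $\pi_0(\gr^0A)$, multiplication by $f_j$ on $\gr^iM$ is nullhomotopic, so the tower $\cdots\xrightarrow{f_j}\gr^iM\xrightarrow{f_j}\gr^iM$ is pro-zero and $\gr^iM$ is derived $f_j$-complete. As $I$ is finitely generated, being derived $f_j$-complete for every $j$ gives derived $I$-completeness, which finishes (1).

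For (2), I would first reduce $A\simeq A^\wedge$ to the claim that the underlying map $u\colon F^0A\to F^0A^\wedge$ is an equivalence: completion preserves associated graded, so $\fib(A\to A^\wedge)$ has vanishing associated graded, hence constant underlying filtration, hence vanishes as soon as its weight-$0$ part $\fib(u)$ does. Now $F^0A$ is derived $I$-complete by assumption, and $F^0A^\wedge$ is derived $I$-complete by part (1) applied to the complete filtered $A$-module $A^\wedge$; so it remains to see that the map $u$ between derived $I$-complete modules is an equivalence, and for this it suffices to check that $\fib(u)\otimes_{F^0A}\gr^0A\simeq 0$. Indeed $\gr^0A$ is an $F^0A$-algebra with $\pi_0(\gr^0A)=\pi_0(F^0A)/I$, so the category $\Mod_{\gr^0A}$ is generated under colimits by $\gr^0A$, whence $\pi_0(F^0A)/I$ lies in the colimit closure of $\gr^0A$ inside $\Mod_{F^0A}$; a derived $I$-complete module annihilated by $-\otimes_{F^0A}\gr^0A$ is thus annihilated by $-\otimes_{F^0A}\pi_0(F^0A)/I$, so (using finite generation of $I$) has trivial $I$-power-torsion part, so vanishes by derived Nakayama. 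Finally, the second hypothesis of (2) says exactly that $\gr^0A\otimes_{F^0A}F^0A^\wedge\to \gr^0A$ is an equivalence, and composing it with $u\otimes_{F^0A}\gr^0A$ yields the identity of $\gr^0A$; hence $u\otimes_{F^0A}\gr^0A$ is an equivalence and $\fib(u)\otimes_{F^0A}\gr^0A\simeq 0$, completing (2).

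The step I expect to be genuinely delicate is the module-theoretic bookkeeping around derived $I$-completeness in (2) rather than anything formal: even with $F^0A$ complete, the completed filtered ring $A^\wedge$ can fail to equal $A$ simply because the higher pieces $F^nA$ are not complete, so one cannot compare $F^nA$ with $F^nA^\wedge$ directly and must instead detect equivalences of complete modules via base change along $\gr^0A$ — which is precisely what the second hypothesis in (2) is designed to enable, and the statement fails without it. A related technical point, needed in both parts, is that $\fib(u)$ need not be connective, so one must use the form of derived Nakayama valid for unbounded complexes — a derived $I$-complete module whose $I$-power-torsion subobject vanishes is zero — as in \cite{SAG}; this is also where finite generation of $I$ enters crucially.
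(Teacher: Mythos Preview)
Your argument for part (1) is correct and matches the paper's: both present $F^0M$ as $\lim_n F^0M/F^nM$ and show each term is $I$-nilpotent. The paper does this in one stroke (every $x\in I$ lifts to filtration weight $\geq 1$, so $x^n$ is null on $F^0M/F^nM$), while you d\'evisse one step further to the $\gr^iM$; the content is identical.

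For part (2) both you and the paper reduce to showing $u\colon F^0A\to F^0A^\wedge$ is an equivalence, and both observe that $u\otimes_{F^0A}\gr^0A$ is an equivalence because it is a section of the assumed equivalence $\gr^0A\otimes_{F^0A}F^0A^\wedge\to\gr^0A$. The genuine difference is the Nakayama step. The paper uses the adic hypothesis through Proposition~\ref{prop:I-adic on pi_0} to identify $\pi_0(F^0A^\wedge)$ with the classical $I$-adic completion of $\pi_0(F^0A)$, so that $\pi_0(u)$ is surjective; then $\cofib(u)$ is connective and the connective derived Nakayama (Lemma~\ref{lem:nakayama}) applies directly. You avoid Proposition~\ref{prop:I-adic on pi_0} entirely and instead apply an \emph{unbounded} Nakayama to $\fib(u)$: since the Koszul complex $K(f_1,\dots,f_k)$ has homotopy groups that are $\pi_0(F^0A)/I$-modules, it lies (via its Postnikov tower) in the localising subcategory generated by $\gr^0A$, so $\fib(u)\otimes_{F^0A}\gr^0A=0$ forces $\fib(u)\otimes_{F^0A}K=0$, and an $I$-complete module with vanishing Koszul quotient is zero. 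This is correct, and has the advantage of not appealing to the explicit $\pi_0$-computation of the adic filtration; the price is that the unbounded Nakayama you invoke is not the statement recorded in the paper (which is connective-only) and needs the extra Koszul/local-duality input you sketch. The paper's route is more elementary precisely because it exploits the adic structure to stay in the connective regime.
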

The proof of  \Cref{lem:completeness} makes use of the following version of the ``derived Nakayama lemma'':
\begin{lemma}\label{lem:nakayama}
Let $A\to B$ be a map of connective $\mathbb{E}_2$-rings inducing a surjection on $\pi_0$ and suppose that $I=\ker(\pi_0(A)\to \pi_0(B))$ is a finitely generated ideal. If $M$ is a connective $I$-complete $A$-module such that $B\otimes_A M\simeq 0$, then $M\simeq 0$.
\end{lemma}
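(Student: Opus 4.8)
The plan is to reduce the statement to a situation where the classical Nakayama lemma applies, using $I$-completeness to control the behaviour of $\pi_0(M)$. First I would observe that since $M$ is connective, it suffices to show $\pi_0(M) = 0$: indeed, if $\pi_0(M) = 0$ then $M$ is $1$-connective, and one can run an induction on connectivity. So the real content is the $\pi_0$-statement, and here I would pass to $\pi_0$-level data. Set $A_0 = \pi_0(A)$, $B_0 = \pi_0(B)$, so that $I = \ker(A_0 \to B_0)$ is a finitely generated ideal and $B_0 = A_0/I$. The hypothesis $B \otimes_A M \simeq 0$ gives, after applying $\pi_0$ (which is right exact), that $B_0 \otimes_{A_0} \pi_0(M) = \pi_0(B \otimes_A M) = 0$, i.e.\ $\pi_0(M)/I\pi_0(M) = 0$, so $\pi_0(M) = I \pi_0(M)$.

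Next I would invoke $I$-completeness. The key point is that for a connective $A$-module $M$, derived $I$-completeness in the sense of \cite[Definition 7.3.1.1]{SAG} implies that $\pi_0(M)$ is a \emph{classically} $I$-complete (in particular, $I$-adically separated) $A_0$-module, or at least that $\bigcap_n I^n \pi_0(M) $ behaves well enough. Concretely, I would argue: derived $I$-completeness of $M$ implies derived $I$-completeness of the truncation $\pi_0(M)$ (as a discrete $A$-module), and for a discrete module derived $I$-completeness implies $\bigcap_n I^n \pi_0(M) = 0$ once $I$ is finitely generated — this is where finite generation of $I$ is essential. Combined with $\pi_0(M) = I\pi_0(M)$, iterating gives $\pi_0(M) = I^n \pi_0(M)$ for all $n$, hence $\pi_0(M) \subseteq \bigcap_n I^n \pi_0(M) = 0$. (Alternatively, one can phrase this via the classical Nakayama lemma applied to the finitely generated ideal $I$ acting on the complete module $\pi_0(M)$: $IM' = M'$ with $M'$ $I$-complete forces $M' = 0$.)

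Finally I would complete the induction on connectivity: having shown $\pi_0(M) = 0$, the module $M$ is $1$-connective, so $M[-1]$ is connective; it is still $I$-complete (shifts of complete modules are complete) and still satisfies $B \otimes_A M[-1] \simeq (B\otimes_A M)[-1] \simeq 0$, so by the already-established $\pi_0$-case applied to $M[-1]$ we get $\pi_1(M) = \pi_0(M[-1]) = 0$; continuing, $\pi_n(M) = 0$ for all $n \ge 0$, hence $M \simeq 0$.

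The main obstacle I anticipate is the precise interface between \emph{derived} $I$-completeness of $M$ and the classical finiteness/separatedness property of $\pi_0(M)$ needed to run Nakayama — one must be careful that derived $I$-completeness does pass to $\pi_0$ and does yield $I$-adic separatedness of the discrete module for a finitely generated ideal (this uses that $A$ is only an $\mathbb{E}_2$-ring, so one cannot be cavalier about which module-theoretic facts are available; the relevant statements are nonetheless standard, cf.\ \cite[Section 7.3]{SAG}). The rest is a routine connectivity induction.
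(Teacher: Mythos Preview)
Your overall strategy coincides with the paper's: reduce to the lowest nonvanishing homotopy group, use that $I$-completeness passes to homotopy groups (this is \cite[Theorem 7.3.4.1]{SAG}, which you correctly anticipate), and then invoke a Nakayama-type statement for derived $I$-complete discrete modules. The paper phrases the reduction as ``let $n$ be minimal with $\pi_n(M)\neq 0$'' rather than as an induction, but this is cosmetic.

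There is, however, a genuine gap in your main line of argument. The claim ``for a discrete module, derived $I$-completeness implies $\bigcap_n I^n\pi_0(M)=0$'' is \emph{false} in general, even for $I$ finitely generated. For instance, take $A=\mathbb{Z}$, $I=(p)$, and $M=\prod_{n}\mathbb{Z}_p/\bigoplus_n\mathbb{Z}_p$: this is a quotient of derived $p$-complete modules and hence derived $p$-complete, but the class of the sequence $(p^n)_n$ is a nonzero element lying in $\bigcap_k p^kM$. So you cannot deduce $\pi_0(M)=0$ from $I$-adic separatedness.

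What \emph{is} true, and what you mention parenthetically as an alternative, is the direct Nakayama statement: if $N$ is a discrete derived $I$-complete module with $IN=N$ and $I$ finitely generated, then $N=0$. This is precisely what the paper invokes, citing \cite[Proposition 6.5]{dwyer2002complete}. The proof of that statement does not pass through separatedness; rather, one reduces to a single generator $f$ and uses that derived $f$-completeness forces the inverse limit along multiplication by $f$ to vanish. So your parenthetical is the correct route, and once you replace the separatedness argument by a citation (or the one-generator argument just sketched), your proof is complete and matches the paper's.
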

\begin{proof}
Suppose that there exists a smallest integer $n$ such that $\pi_n(M)\neq 0$. Then $\pi_n(M)$ is an $I$-complete $\pi_0(A)$-module \cite[Theorem 7.3.4.1]{SAG} such that $\pi_n(M)\otimes_{\pi_0(A)}\pi_0(B)\simeq 0$. But \cite[Proposition 6.5]{dwyer2002complete} would then imply that $\pi_n(M)=0$.
\end{proof}
\begin{proof}[Proof of  \Cref{lem:completeness}]
If $M$ is a complete filtered module, then $F^0M\simeq \lim F^0M/F^nM$ is a limit of $I$-complete $F^0A$-modules: since each $x\in I$ is of weight $\geq 1$, $x^n\colon F^0M/F^nM\to F^0M/F^nM$ is null.

For (2), the map $A\rt A^\wedge$ induces an equivalence on the associated graded, so that it suffices to verify that $F^0A\rt F^0A^\wedge$ is an equivalence. To see this, let us start by observing that since $A$ arises as an adic filtration,  \Cref{prop:I-adic on pi_0} implies that
$$
\pi_0(F^0A^\wedge)\cong \lim_n \mm{coker}\big(\pi_0(F^nA)\to \pi_0(F^0A)\big) \cong \pi_0(F^0A)^\wedge_I
$$
agrees with the (classical) $I$-adic completion of $\pi_0(F^0A)$. Since $\pi_0(F^0A)$ is $I$-complete, the map $\pi_0(F^0A)\rt \pi_0(F^0A^\wedge)$ is surjective. Consequently, the kernel $I'=\ker\big(\pi_0(F^0A^\wedge)\rt \pi_0(\gr^0(A))\big)$ is the image of $I$; in particular, it is finitely generated. It follows from part (1) that $F^0A^\wedge$ is $I'$-complete and hence $I$-complete as an $A$-module.  \Cref{lem:nakayama} then implies that $F^0A\rt F^0A^\wedge$ is an equivalence if and only if $\gr^0(A)\rt \gr^0(A)\otimes_{F^0A} F^0A^\wedge$, is an equivalence. This map is a section of the natural map $\gr^0(A)\otimes_{F^0A} F^0A^\wedge\rt \gr^0(A)$.
\end{proof}

\subsection{Cotangent fibre}
The usual cotangent complex formalism for derived rings extends in an evident way to the graded/filtered/arrow setting; we refer to Appendix \ref{sec:comm} for a general discussion. We will be mainly interested in the relative cotangent complexes of augmentation maps:

\begin{definition}[Cotangent fibre]
For every $A$ in   $\grDAlg$ or  $\FilDAlg$, we define the \textit{cotangent fibre} to be the desuspended relative cotangent complex 
$$
\cot(A)=L_{\gr^0(A)/A}[-1].
$$
Here we view $\gr^0(A)$ as a derived graded/filtered algebra concentrated in weight $0$. Note that $\cot(A)$ is concentrated in weights $\geq 1$, i.e.\ $\gr^0\big(\cot(A)\big)=0$.

Likewise, for every derived arrow $(A \to B)$ we define the \textit{cotangent fibre} to be the desuspended relative cotangent complex
$$
\cot(A \to B)=L_{B/A}[-1].
$$
One can also identify this with the relative cotangent complex of the map of arrows $(A \to B)\rt ( B \to B)$, which is given by the $(B \to B)$-module $\big(\cot(A \to B), 0\big)$.
\end{definition}
Note that the cotangent fibre is connective for each animated graded/filtered ring and each animated surjection.
\begin{remark}\label{rem:cotangent adic}
For any filtered derived ring $A$ and any derived arrow $(B' \to B)$, there are natural equivalences
\begin{align*}
\gr(\cot(A))&\simeq \cot(\gr(A))\\
\aug(\cot(A))&\simeq \cot(\aug(A))\\
F^{\star\leq 1}\cot(B' \to B)&\simeq \cot(\adic(B' \to B))
\end{align*}
where $F^{\star\leq 1}\cot(B' \to B)$ denotes the $B$-module $\cot(B' \to B)$ put in filtration weight $1$. The first two equivalences follow from the fact that the functors $\gr$ and $\aug$ preserve cotangent complexes (since they commute with taking derived symmetric algebras) and the last equivalence follows by adjunction from the fact that $\aug$ preserves square zero extensions.
\end{remark} 

\begin{lemma}[{\cite[Remark 4.25]{BM19}}]\label{lem:graded cot}
Let $f\colon A\rt B$ be a map between derived graded rings. Then the following are equivalent:
\begin{enumerate}
\item $f$ is an equivalence.
\item $A^{0} \rt B^{0}$ is an equivalence and $\cot(A)\rt \cot(B)$ is an equivalence of graded modules.
\end{enumerate}
Similarly, a map between complete animated filtered rings $f\colon A\rt B$ is an equivalence if and only if $\gr^0(A)\rt \gr^0(B)$ is an equivalence and $\cot(A)\rt \cot(B)$ induces an equivalence on the associated graded.
\end{lemma}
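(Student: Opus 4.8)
The plan is to reduce both halves of the lemma to a single conservativity statement, namely the graded assertion itself: that the functor $A\mapsto(A^{0},\cot(A))$ is conservative on graded derived rings. The forward implication $(1)\Rightarrow(2)$ is immediate from functoriality of $\cot$ and of $A\mapsto A^0$; the ``only if'' in the filtered case is likewise immediate, using the compatibility $\gr\circ\cot\simeq\cot\circ\gr$ of \Cref{rem:cotangent adic}. So all the content is in the converses.

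For the graded statement I would first identify $R:=A^{0}\xrightarrow{\sim}B^{0}$ via the given equivalence and regard $f\colon A\to B$ as a map of augmented graded $R$-algebras (augmented via the weight-$0$ projection $A\to A^0$); transitivity for $R\to A\to R$ gives $\cot(A)\simeq L_{A/R}\otimes_A R$. The structural input is the weight-truncation tower $\dots\to A_{\le 1}\to A_{\le 0}=R$, obtained by localising graded modules at the objects concentrated in weights $>N$, so that $A_{\le N}$ has underlying graded module $(A^{0},\dots,A^{N},0,\dots)$; since $A$ is ($\ZZ_{\ge 0}$-)graded, this tower has limit $A$, and it therefore suffices to prove that $A_{\le N}\to B_{\le N}$ is an equivalence for every $N$. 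I would argue by induction on $N$, the case $N=0$ being the hypothesis $A^0\simeq B^0$. For the step, observe that for $N\ge 1$ the map $A_{\le N}\to A_{\le N-1}$ is a square-zero extension whose kernel ideal is $A^{N}$ placed in weight $N$ (the ideal squares to zero because $A^{N}\cdot A^{N}$ has weight $2N>N$). Transitivity for $R\to A_{\le N}\to A_{\le N-1}$, followed by base change along the augmentation $A_{\le N-1}\to R$, yields a cofibre sequence of graded $R$-modules
$$\cot(A_{\le N})\longrightarrow\cot(A_{\le N-1})\longrightarrow R\otimes_{A_{\le N-1}}L_{A_{\le N-1}/A_{\le N}},$$
natural in $A$, whose third term has weight-$N$ component $A^{N}[1]$ — the relative cotangent complex of a square-zero quotient contributes only the conormal module in its lowest weight. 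Two complementary facts close the step. First, the weight-$w$ component of $\cot(A)$ depends only on $A_{\le w}$; this is visible from the reduced bar construction computing $L_{R/A}[-1]$, whose $n$-simplices are $n$-fold tensor powers of the weight-$\ge 1$ augmentation ideal, so that only weights $\le w$ enter the weight-$w$ part. Hence the weight-$N$ components of $\cot(A_{\le N})$ and of $\cot(A)$ agree. Second, the inductive hypothesis identifies $A_{\le N-1}$ with $B_{\le N-1}$ compatibly with $f$. Taking weight-$N$ components of the cofibre sequence, the hypothesis $(2)$ identifies the first terms for $A$ and $B$, the inductive hypothesis identifies the second terms, and both identifications are compatible with $f$; therefore the induced map on third terms, namely $f^{N}[1]\colon A^{N}[1]\to B^{N}[1]$, is an equivalence, and so is $f^{N}$. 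This completes the induction.

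The filtered statement then follows from the graded one. By \Cref{rem:cotangent adic} there is a natural equivalence $\gr(\cot(A))\simeq\cot(\gr(A))$, so the hypothesis that $\cot(f)$ is an equivalence on associated gradeds says exactly that $\cot(\gr f)\colon\cot(\gr A)\to\cot(\gr B)$ is an equivalence of graded modules; combined with $(\gr A)^{0}=\gr^{0}(A)\xrightarrow{\sim}\gr^{0}(B)=(\gr B)^{0}$, the graded statement gives that $\gr(f)$ is an equivalence. Since $A$ and $B$ are complete filtered rings, a map between them inducing an equivalence on associated gradeds is itself an equivalence — its fibre is a complete filtered module with vanishing associated graded, hence zero — and the forgetful functor to filtered modules is conservative; thus $f$ is an equivalence.

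The step I expect to require the most care is the inductive one: verifying that the weight-truncation maps $A_{\le N}\to A_{\le N-1}$ really are square-zero extensions in the precise $\infty$-categorical sense, so that the transitivity sequence has the stated shape and its third term is identified, functorially in $A$, with $A^{N}[1]$ in weight $N$. This rests on the square-zero extension formalism for derived commutative rings transported to the graded context (the analogue of the statement that Postnikov towers are square-zero extensions). Once that bookkeeping is in place, the bar-construction computation of $\cot$ in low weights and the filtered reduction are routine.
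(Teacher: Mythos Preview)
Your approach is correct and genuinely different from the paper's. The paper argues by contradiction: assuming a lowest weight $n\ge 1$ where $\cofib(A^n\to B^n)\neq 0$, it forms the pushout $B'=A\otimes_{\LSym_A(A^{\ge n})}\LSym_A(B^{\ge n})$, shows that $B'\to B$ is an equivalence in weights $\le n$ so that $L_{B/A}^n\simeq L_{B'/A}^n\simeq\cofib(A^n\to B^n)\ne 0$, and then invokes the graded Nakayama lemma (\Cref{lem:graded nakayama}) to conclude that $\cofib(\cot(A)\to\cot(B))\neq 0$. Your route via direct induction on weight truncations and the bar-construction observation that $\cot(A)^N$ depends only on $A_{\le N}$ avoids Nakayama entirely; the paper's route avoids having to analyse the weight-truncation maps. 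Both exploit the same underlying ``lowest weight'' phenomenon.

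One point deserves care. Your parenthetical justification that $A_{\le N}\to A_{\le N-1}$ is a square-zero extension (``the ideal squares to zero because $A^N\cdot A^N$ has weight $2N>N$'') is only the \emph{classical} statement that $I^2=0$; it does not by itself give a derived square-zero extension (i.e.\ a pullback $A_{\le N-1}\times_{A_{\le N-1}\oplus A^N(N)[1]}A_{\le N-1}$), and the identification $L_{A_{\le N-1}/A_{\le N}}\simeq A^N(N)[1]$ you use is the hallmark of the derived notion. Fortunately you do not need the full square-zero structure: what you actually use is only the weight-$N$ statement $(L_{A_{\le N-1}/A_{\le N}})^N\simeq A^N[1]$, and this follows from the same bar-construction reasoning you already invoke. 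Indeed, for any surjection $C\to D$ there is a natural conormal map $\fib(C\to D)\otimes_C D\,[1]\to L_{D/C}$; here the source is $A^N(N)[1]$, the target is concentrated in weights $\ge N$ (the map is an equivalence in weights $<N$), and in weight $N$ the map is an equivalence because all higher-order contributions (from $\LSym^{\ge 2}$ of the fibre, or equivalently from bar-simplices of length $\ge 2$) land in weights $\ge 2N>N$. With this in hand your induction closes exactly as written, naturally in $A$; no appeal to a derived square-zero structure is required. The filtered reduction via $\gr\circ\cot\simeq\cot\circ\gr$ is unchanged and matches the paper.
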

\begin{proof} 
The complete filtered case follows from the graded case since equivalences between complete derived animated rings are detected on the associated graded and because the associated graded commutes with taking cotangent complexes. For the graded case, suppose that $f$ satisfies condition (2) and that there exists a lowest weight $n\geq 1$ such that $\cofib(A^n\to B^n)$ is not zero. Let $A^{\geq n}$ and $B^{\geq n}$ denote the weight $\geq n$ parts of $A$ and $B$, both of which are graded $A$-modules, and consider the natural map
$$
B'=A\otimes_{\LSym_A(A^{\geq n})} \LSym_A(B^{\geq n})\rt B
$$
induced by $f$ and the inclusion $B^{\geq n}\to B$. Using that $\LSym^{\geq 2}_A(B^{\geq n})$ and $\LSym^{\geq 2}_A(B^{\geq n})$ are concentrated in weights $\geq 2n$ (hence in weights $\geq n+1$), one sees that $B'\to B$ is an equivalence in weights $\leq n$. It follows that $L_{B/B'}$ is zero in weights $\leq n$; this can be seen for instance by using the bar resolution to resolve $B$ by free $B'$-algebras, each of which agrees with $B'$ in weight $\leq n$. Consequently, in weight $n$ we find that $L_{B/A}^n\simeq L_{B'/A}^n\simeq \cofib(A^n\to B^n)$ is non-zero. By part (1) of  \Cref{lem:graded nakayama}, this implies that $B^0\otimes_B L_{B/A}\simeq \cofib(\cot(A)\to \cot(B))$ is non-zero, contradicting the assumption that $\cot(A)\simeq \cot(B)$.
\end{proof}
 
\begin{lemma}[{\cite[Proposition 4.26]{BM19}}]\label{lem:graded adic}
For any derived arrow $(A \to B)$, there is a natural equivalence of derived graded rings
$$
\LSym_{B}(\cot(A\to B)(1))\xrightarrow{\sim} \gr(\adic(A\to B))
$$
where $\cot(A\to B)(1)$ denotes the cotangent fibre concentrated in weight $1$.
\end{lemma}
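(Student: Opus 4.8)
The plan is to build a natural comparison map
$$
\phi_{(A\to B)}\colon\ \LSym_B\big(\cot(A\to B)(1)\big)\ \longrightarrow\ \gr(\adic(A\to B))
$$
of graded derived $B$-algebras and then to show it is an equivalence via \Cref{lem:graded cot}, i.e.\ by checking it separately on weight-$0$ parts and on cotangent fibres. As a preliminary, note that the weight-$0$ part of $\gr(\adic(A\to B))$ is $B$: the functors $\adic$ and $\aug$ both preserve colimits, and on a free arrow $(\LSym_\ZZ V\to\LSym_\ZZ W)$ one has $\adic(\LSym_\ZZ V\to\LSym_\ZZ W)\simeq\LSym^{\fil}(\ker(V\to W))$ (the free derived filtered ring on the filtered $\ZZ$-module $\ker(V\to W)$, since $\adic$ and $\aug$ commute with the $\LSym$-monads), while $\aug\circ\ker\simeq\id$ already at the level of modules; hence the unit $(A\to B)\to\aug(\adic(A\to B))$ is an equivalence, and in particular $\gr^0(\adic(A\to B))\simeq B$.

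To construct $\phi$, set $R=\gr(\adic(A\to B))$, a graded derived $B$-algebra with $R^0\simeq B$. Combining the equivalence $\gr\circ\cot\simeq\cot\circ\gr$ with the equivalence $\cot(\adic(A\to B))\simeq F^{\star\leq 1}\cot(A\to B)$ — both from \Cref{rem:cotangent adic} — gives a natural equivalence $\cot(R)\simeq\cot(A\to B)(1)$; in particular $\cot(R)$ is concentrated in weight $1$. Since all decomposables of a graded ring lie in weight $\geq 2$, the natural ``linearisation'' map $\fib(R\to R^0)\to\cot(R)$ is an equivalence in weight $1$, so we obtain a natural chain of maps of graded $B$-modules $\cot(A\to B)(1)\simeq\cot(R)\simeq R^1(1)\hookrightarrow R$; its adjoint under $\LSym_B\dashv\forget$ is the desired map $\phi_{(A\to B)}$, which is manifestly natural in $(A\to B)$.

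Finally I check $\phi_{(A\to B)}$ is an equivalence using \Cref{lem:graded cot}. On weight-$0$ parts it is $\id_B$. On cotangent fibres, the source has cotangent fibre $\cot(A\to B)(1)$ (the cotangent fibre of a free graded algebra is its module of generators), the target has cotangent fibre $\cot(R)\simeq\cot(A\to B)(1)$ by the equivalence above, and unwinding the construction shows $\cot(\phi_{(A\to B)})$ is exactly this equivalence — the composite $R^1(1)\hookrightarrow R\to\cot(R)$ recovers the inverse of the weight-$1$ identification used to define $\phi$. By \Cref{lem:graded cot}, $\phi$ is therefore an equivalence. (Alternatively, one can bypass \Cref{lem:graded cot} and verify $\phi$ on free arrows, where both $\gr(\adic(\LSym_\ZZ V\to\LSym_\ZZ W))$ and $\LSym_B(\cot(\LSym_\ZZ V\to\LSym_\ZZ W)(1))$ compute to $\LSym_B\big((B\otimes_\ZZ\ker(V\to W))(1)\big)$, and then conclude since $\gr\circ\adic$ preserves colimits and $(A\to B)\mapsto\LSym_B(\cot(A\to B)(1))$ preserves sifted colimits.) The step demanding the most care is the construction of $\phi$ — pinning down the weight-$1$ part of the cotangent fibre inside $\gr(\adic(A\to B))$ compatibly with \Cref{rem:cotangent adic}, and establishing $\gr^0(\adic(A\to B))\simeq B$; once these are in hand, \Cref{lem:graded cot} reduces the claim to the identification of cotangent fibres, which is immediate.
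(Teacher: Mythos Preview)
Your proof is correct and follows essentially the same approach as the paper's: both construct the comparison map by identifying the weight-$1$ part of $R=\gr(\adic(A\to B))$ with $\cot(A\to B)$ via \Cref{rem:cotangent adic} and the general fact that $R^1\simeq\cot(R)^1$ for any graded algebra, then conclude by \Cref{lem:graded cot}. You supply a few details the paper leaves implicit (the identification $\gr^0(\adic(A\to B))\simeq B$ and the verification that $\cot(\phi)$ is the expected equivalence), but the strategy is identical.
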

\begin{proof}
For any graded algebra $B^{\bullet}$, the map $\LSym_{B^{0}}(B^{1})\rt B^{\bullet}$ induces an equivalence in degrees $\leq 1$ and hence induces a natural equivalence of $B^{0}$-modules $B^{1}\rt \cot(B)^{1}$. Since 
$$
\cot\big(\gr(\adic(A\to B))\big)\simeq \cot(A\to B)(1)
$$
is concentrated in weight $1$, we obtain an equivalence $\cot(A\to B)\simeq \gr(\adic(A \to B))^{1}$ of $B$-modules. This induces a map of derived graded rings as in the lemma, which is an equivalence by  \Cref{lem:graded cot}.
\end{proof}

\subsection{Almost finite type conditions}
We will use the cotangent fibre to study finiteness properties of animated (graded or filtered) rings. To this end, let us recall from  \Cref{def:almost finite} that a map of animated (graded or filtered) rings $A\to B$ is \emph{almost finitely presented} if $B$ is an almost compact object in the $\infty$-category of animated  (graded or filtered) $A$-algebras.
 
\begin{lemma}[{\cite[Proposition 5.15]{BM19}}]\label{lem:graded aft}
\mbox{The following are equivalent for an animated graded ring $A$:}
\begin{enumerate}
\item $\cot(A)$ is an almost perfect graded $A^{0} $-module.
\item the map of graded algebras $A\rt A^{0}$ is almost finitely presented.
\item the map of graded algebras $A^{0} \rt A$ is almost finitely presented.
\end{enumerate}
\end{lemma}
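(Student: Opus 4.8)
The plan is to route everything through condition $(1)$: I would prove $(2)\Rightarrow(1)$, $(3)\Rightarrow(1)$, $(1)\Rightarrow(2)$ and $(1)\Rightarrow(3)$, so that the almost perfectness of the cotangent fibre acts as a hub. The structural observation to start from is the transitivity cofibre sequence for the composite $A^{0}\to A\to A^{0}$ (which is the identity): since $L_{A^{0}/A^{0}}=0$, it produces a natural equivalence of graded $A^{0}$-modules
$$
\cot(A)=L_{A^{0}/A}[-1]\simeq L_{A/A^{0}}\otimes_{A}A^{0},
$$
where $A^{0}$ is regarded as an $A$-module via the weight-$0$ projection $A\to A^{0}$. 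I will also use the standard criterion — a mild variant in the present derived algebraic context of the $\mathbb{E}_\infty$-statement — that a map $C\to D$ of animated graded rings is almost finitely presented if and only if $L_{D/C}$ is an almost perfect graded $D$-module and $\pi_{0}(C)\to\pi_{0}(D)$ is a finitely presented map of graded rings.

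The implications into $(1)$ are the easy ones. For $(2)\Rightarrow(1)$: if $A\to A^{0}$ is almost finitely presented then $L_{A^{0}/A}$ is almost perfect over $A^{0}$, hence so is its desuspension $\cot(A)$. For $(3)\Rightarrow(1)$: if $A^{0}\to A$ is almost finitely presented then $L_{A/A^{0}}$ is almost perfect over $A$, and since base change along $A\to A^{0}$ preserves almost perfectness, the displayed equivalence shows $\cot(A)$ is almost perfect over $A^{0}$.

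For the implications out of $(1)$, assume $\cot(A)$ is almost perfect over $A^{0}$, and first dispatch the $\pi_{0}$-level statements. Writing $I=\ker(\pi_{0}A\to\pi_{0}A^{0})$ for the ideal of positive-weight elements, the low-degree homotopy of a relative cotangent complex gives $\pi_{0}\cot(A)\cong\pi_{1}L_{A^{0}/A}\cong I/I^{2}$, a finitely generated graded $\pi_{0}(A^{0})$-module; graded Nakayama (\Cref{lem:graded nakayama}) then forces $I$ to be a finitely generated graded ideal, so $\pi_{0}(A)\to\pi_{0}(A)/I=\pi_{0}(A^{0})$ is a finitely presented map of graded rings. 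The next homotopy group of $\cot(A)$ is again finitely generated and bounds the relations among a finite set of homogeneous generators of $I$, so likewise $\pi_{0}(A^{0})\to\pi_{0}(A)$ is finitely presented as graded rings. With the $\pi_{0}$-conditions in hand, $(1)\Rightarrow(2)$ follows because $L_{A^{0}/A}\simeq\cot(A)[1]$ is almost perfect over $A^{0}$, so the criterion applies to $A\to A^{0}$; and $(1)\Rightarrow(3)$ follows because $L_{A/A^{0}}$ is a connective $A$-module with $L_{A/A^{0}}\otimes_{A}A^{0}\simeq\cot(A)$ almost perfect over $A^{0}$, whence the almost-perfect form of graded Nakayama shows $L_{A/A^{0}}$ is almost perfect over $A$ and the criterion applies to $A^{0}\to A$.

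The step I expect to be the main obstacle is the graded Nakayama input used for $(1)\Rightarrow(3)$ — that almost perfectness of a connective graded $A$-module can be detected after base change along the augmentation $A\to A^{0}$ — together with the $\pi_{0}$-level bookkeeping turning finite generation of the low homotopy of $\cot(A)$ into finite presentation of $\pi_{0}(A)$ over $\pi_{0}(A^{0})$; these are graded analogues of facts familiar in the connective setting, and the work is in confirming that the right versions are available in \Cref{lem:graded nakayama} and \Cref{def:almost finite}. An alternative, more hands-on route to $(1)\Rightarrow(3)$ — likely the one behind \cite[Proposition 5.15]{BM19} — would bypass the abstract criterion altogether: build $A$ as a graded $A^{0}$-algebra by successively attaching free algebras on finitely generated free modules placed in bounded weights, with the cells in weight $w$ and homotopical degree $d$ governed by $\pi_{d}$ of $\cot(A)$, proceeding by a double induction on $w$ and $d$ and using that in each fixed weight only finitely many cells contribute; this exhibits $A$ as an almost compact object of graded $A^{0}$-algebras, which is exactly condition $(3)$, and running the bookkeeping backwards recovers $(1)$.
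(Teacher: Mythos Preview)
Your overall strategy is sound and genuinely different from the paper's. The paper argues $(3)\Rightarrow(2)\Rightarrow(1)$ briefly and then proves $(1)\Rightarrow(3)$ by a \emph{single} induction on weight: it builds a tower $A^{0}=A_{(0)}\to A_{(1)}\to\cdots\to A$ with
\[
A_{(n)}=A_{(n-1)}\otimes_{\LSym_{A^{0}}(A_{(n-1)}^{n})}\LSym_{A^{0}}(A^{n}),
\]
identifies the weight-$n$ attaching module with $\cot(A)^{n}$, and uses that an almost perfect graded $A^{0}$-module has graded pieces that are almost perfect and increasingly connective. This bypasses \Cref{prop:aft conditions} entirely; in particular the $\pi_{0}$-condition never has to be checked separately. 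Your sketch of the ``hands-on'' alternative is close but not quite this: the paper attaches almost perfect $A^{0}$-modules, not finite free ones, and the induction is on weight only.

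The real soft spot in your argument is the $\pi_{0}$-verification for $(1)\Rightarrow(3)$. The sentence ``the next homotopy group of $\cot(A)$ bounds the relations'' is not right as stated: $\pi_{1}\cot(A)$ is computed from the \emph{animated} map $A^{0}\to A$, while the relations you need to control live in $\pi_{1}L_{\pi_{0}(A)/\pi_{0}(A^{0})}$. To bridge these you must first observe (via the transitivity sequences for $A^{0}\to\pi_{0}(A^{0})\to\pi_{0}(A)$ and $A^{0}\to A\to\pi_{0}(A)$, using that $L_{\pi_{0}(A^{0})/A^{0}}$ and $L_{\pi_{0}(A)/A}$ are $2$-connective) that $\pi_{i}L_{\pi_{0}(A)/\pi_{0}(A^{0})}\cong\pi_{i}\big(L_{A/A^{0}}\otimes_{A}\pi_{0}(A)\big)$ for $i\leq 1$; only then does almost perfectness of $L_{A/A^{0}}$ (which you obtain from \Cref{lem:graded nakayama}) give finite presentation of $\Omega^{1}_{\pi_{0}(A)/\pi_{0}(A^{0})}$ and finite generation of $\pi_{1}L_{\pi_{0}(A)/\pi_{0}(A^{0})}$. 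From there, choosing a graded polynomial surjection $P\twoheadrightarrow\pi_{0}(A)$ with kernel $J$, the standard exact sequence
\[
0\to\pi_{1}L_{\pi_{0}(A)/\pi_{0}(A^{0})}\to J/J^{2}\to\bigoplus_{i}\pi_{0}(A)\,dx_{i}\to\Omega^{1}_{\pi_{0}(A)/\pi_{0}(A^{0})}\to 0
\]
together with Schanuel shows $J/J^{2}$ is finitely generated over $\pi_{0}(A)$, and a second application of graded Nakayama (now for $P\to P^{0}=\pi_{0}(A^{0})$) gives $J$ finitely generated. So your route can be completed, but it takes several more steps than you indicate, and it is precisely this bookkeeping that the paper's direct construction is designed to avoid.
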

\begin{proof} 
It is clear that (3) $\Rightarrow$ (2) $\Rightarrow$ (1), so assume that $\cot(A)$ is almost perfect. We can then define a sequence of animated graded algebras $A^0=A_{(0)}\to A_{(1)}\to \dots$ with colimit $A$, where
$$
A_{(n)}=A_{(n-1)}\otimes_{\LSym_{A^0}(A_{(n-1)}^{n})} \LSym_{A^0}(A^{n})
$$
and $A_{(0)}=A^0$. The natural map $A_{(n)}\to A$ is an equivalence in weight $\leq n$, so that the colimit is indeed $A$. Furthermore, an inductive argument shows that $\cot(A_{(n)})$ is precisely the weight $\leq n$ part of $\cot(A)$. Using this, there are equivalences of $A^0$-modules concentrated in weight $n$
$$
(A/A_{(n-1)})^n\simeq L_{A/A_{(n-1)}}^n\simeq (A^0\otimes_A L_{A/A_{(n-1)}})^n\simeq \cot(A)^n.
$$
Here the first two equivalences use that $A_{(n-1)}\to A$ is an equivalence in weight $\leq n-1$. The last equivalence uses the cofibre sequence $\cot(A_{(n-1)})\to \cot(A)\to A^0\otimes_A L_{A/A_{(n-1)}}$ and the fact that $\cot(A_{(n-1)})$ coincides with the weight $\leq n-1$ part of $\cot(A)$. 
By assumption, each $\cot(A)^n$ is an almost perfect $A^0$-module that becomes increasingly connective as $n$ tends to $\infty$. It follows that each $A_{(n-1)}\to A_{(n)}$ is almost finitely presented, with cofibre becoming increasingly connective as $n$ tends to $\infty$. This implies that the colimit $A$ is almost finitely presented as well.
\end{proof}

\begin{proposition}\label{prop:filtered aft}
Let $A\rt B$ be a map of animated filtered rings inducing a surjection on $\pi_0$. Then the following are equivalent:
\begin{enumerate}
\item $B$ is an almost finitely presented filtered $A$-algebra.
\item $L_{B/A}$ is an almost perfect filtered $B$-module and the map of filtered algebras $\pi_0(A)\rt \pi_0(B)$ is of finite presentation.
\item $B$ is almost perfect as a filtered $A$-module.
\end{enumerate}
The same assertions apply to animated rings and animated graded rings.
\end{proposition}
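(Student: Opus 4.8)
The plan is to prove the two equivalences $(1)\Leftrightarrow(2)$ and $(1)\Leftrightarrow(3)$ separately, working uniformly in whichever of the three contexts $\FilMod_{\ZZ,\geq 0}$, $\Mod_{\ZZ,\geq 0}$ or $\GrMod_{\ZZ,\geq 0}$ we are in; the only context-dependent ingredients are the axioms of a derived algebraic context with its projective $t$-structure (\Cref{def:alg context}, \Cref{def:proj gend}) together with the computations with derived symmetric powers appearing in the proof of \Cref{lem:graded aft}. The equivalence $(1)\Leftrightarrow(2)$ is the standard cotangent-complex criterion: a morphism $A\to B$ is almost finitely presented precisely when $\pi_0(A)\to\pi_0(B)$ is of finite presentation in the underlying abelian context and $L_{B/A}$ is almost perfect over $B$. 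The implication $(1)\Rightarrow(2)$ is immediate, since the cotangent complex commutes with sifted colimits and sends a free algebra on finitely many generators to a finite free module, while $\pi_0$ of an almost finitely presented algebra map is of finite presentation; the reverse implication is an instance of the cell-attachment argument below, and for plain rings one may also cite \cite{SAG}.

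The heart of the matter is $(1)\Leftrightarrow(3)$, and this is where the hypothesis that $\pi_0(A)\to\pi_0(B)$ is surjective becomes essential. For $(1)\Rightarrow(3)$: since $\pi_0(A)\to\pi_0(B)$ is surjective and of finite presentation, its kernel is a finitely generated ideal, and killing generators produces a filtered $A$-algebra $C$ with $\pi_0(C)\xrightarrow{\ \sim\ }\pi_0(B)$ which is \emph{perfect} as an $A$-module (a finite Koszul-type complex). Because $C\to B$ is an isomorphism on $\pi_0$, one may present $B$ as the colimit of a tower $C=C_0\to C_1\to\cdots$ in which each transition map attaches algebra cells of homological degree $\geq 1$ — finitely many in each degree, by almost finite presentation. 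The key point is that attaching such a cell — forming $\LSym$ on a finite free module placed in homological degree $\geq 1$, or the associated bar construction imposing finitely many relations of positive homological degree — carries an algebra that is almost perfect over $A$ again to one, with cofibre over $A$ that is more connective the higher the degree of the cell (by the same computations with derived symmetric powers as in \Cref{lem:graded aft}). Passing to the colimit shows that $B$ is almost perfect as a filtered $A$-module.

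For $(3)\Rightarrow(1)$, I would run essentially the same construction in the other direction. If $B$ is almost perfect as a filtered $A$-module then $\pi_0(B)$ is a finitely presented filtered $\pi_0(A)$-module, so the kernel of $\pi_0(A)\to\pi_0(B)$ is finitely generated and $\pi_0(B)$ is a finitely presented filtered $\pi_0(A)$-algebra. One then builds a tower $A=C_0\to C_1\to C_2\to\cdots$ of filtered $A$-algebras over $B$ with $C_n\to B$ an isomorphism on $\pi_{<n}$ and a surjection on $\pi_n$, passing from $C_n$ to $C_{n+1}$ by attaching finitely many relation cells of homological degree $n$ to upgrade $\pi_n$ to an isomorphism and finitely many generator cells of homological degree $n+1$ to restore surjectivity on $\pi_{n+1}$. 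Finiteness at each stage holds because $\fib(C_n\to B)$ is almost perfect over $A$ — being the fibre of a map between the $A$-module-almost-perfect objects $C_n$ (a finite iterated cell attachment of $A$) and $B$ — so that its lowest non-vanishing homotopy group, equivalently the relevant homotopy group of the almost perfect $B$-module $L_{B/C_n}$, is finitely generated over $\pi_0(A)$. The colimit of the tower is $B$, and since each $C_n$ is almost finitely presented over $A$ and the maps $C_n\to B$ agree on truncations in an ever-growing range, $B$ is almost finitely presented as a filtered $A$-algebra.

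The step I expect to be the main obstacle is the Postnikov-tower bookkeeping in $(3)\Rightarrow(1)$: verifying that the passage between consecutive stages can indeed be effected by attaching the prescribed finite families of cells, and that the objects $C_n$, $\fib(C_n\to B)$ and $L_{B/C_n}$ all remain almost perfect throughout so that the construction does not run away. Isolating the lemma that attaching a homological-degree-$m$ cell to an algebra which is almost perfect over $A$ again yields such an algebra, with $\geq m$-connective cofibre, is precisely what makes the surjectivity hypothesis indispensable: it confines the homological-degree-$0$ cells — whose cofibres are only perfect, rather than suitably connective — to the single initial stage $A\to C$. Since this lemma is insensitive to which of the three contexts one works in, so is the whole argument. (When $A$ itself arises as an adic filtration, the identification $\gr\big(\adic(A\to B)\big)\simeq\LSym_B\big(\cot(A\to B)(1)\big)$ of \Cref{lem:graded adic} together with the completeness criterion of \Cref{lem:completeness} suggests a more structural route.)
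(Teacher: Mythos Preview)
Your proposal is correct and follows the same strategy as the paper. The paper's proof of this proposition is a one-line reference to \Cref{prop:aft conditions}, the general statement for an arbitrary derived algebraic context, and the proof of that proposition (via the cell decompositions of Construction~\ref{con:cell}) is exactly the tower-of-cell-attachments argument you outline: $(1)\Leftrightarrow(2)$ by the standard cotangent-complex criterion, and the equivalence with $(3)$ by building $B$ as a colimit of successive cell attachments and tracking almost-perfectness along the tower.

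There is one minor variation worth noting in your $(3)\Rightarrow(1)$. You choose \emph{finite} attaching modules $M_n$ at each stage and keep the intermediate algebras $C_n$ almost perfect over $A$; this is what lets you bound the number of cells needed at stage $n$ by the finite generation of $\pi_n\fib(C_n\to B)$. The paper instead takes the full module $M_n=(B/B_n)[-1]$ as the attaching datum and argues inductively that $B$ is almost perfect over each $B_n$, which requires the graded Nakayama lemma (\Cref{lem:graded nakayama}) and the filtered almost-perfectness criterion (\Cref{lem:aperf filtered}). Your variant is a bit more elementary and leans more directly on the $\pi_0$-surjectivity hypothesis; the paper's version has the advantage that its $(3)\Rightarrow(1)$ does not actually need surjectivity (cf.\ assertion~(3) of \Cref{prop:aft conditions}). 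Both routes are valid here. Your mention of ``generator cells'' in the inductive step is superfluous in the surjective setting---relation cells alone suffice, as you yourself observe in the final paragraph---but this does no harm.
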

\begin{proof}
This is a special case of  \Cref{prop:aft conditions}.
\end{proof}
 
\begin{definition}\label{def:complete aft}
We will say that:
\begin{enumerate}
\item an animated graded algebra $A$ is \textit{almost finitely augmented} if it satisfies the equivalent conditions of  \Cref{lem:graded aft}.
\item an animated filtered algebra $A$ is \textit{complete almost finitely augmented} if it is complete and the (filtered) map $A\rightarrow \gr^{0}(A)$ satisfies the equivalent conditions of  \Cref{prop:filtered aft}.
\item an animated surjection $(A\to B)$ is \textit{complete almost finitely augmented} if $\adic(A\to B)$ is complete almost finitely augmented.
\end{enumerate}
\end{definition}

\begin{lemma}\label{lem:aperf over complete}
Let $A$ be a complete filtered animated ring and let $M$ be a connective filtered $A$-module. Then $M$ is an almost perfect $A$-module if and only if it is complete and $\gr(M)$ is an almost perfect graded $\gr(A)$-module.
\end{lemma}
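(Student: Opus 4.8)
The statement to prove is \Cref{lem:aperf over complete}: for a complete filtered animated ring $A$ and a connective filtered $A$-module $M$, being almost perfect is equivalent to being complete with $\gr(M)$ almost perfect over $\gr(A)$.

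\medskip

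\noindent\textbf{Forward direction.} First I would handle the implication that almost perfect implies complete and $\gr(M)$ almost perfect. Completeness: almost perfect filtered modules are built from compact ones (finite free filtered modules, i.e.\ shifts of $A$ in various weights) by finitely many cofibre sequences in each homotopy degree, together with a connectivity-increasing colimit. Since finite free filtered modules over a complete $A$ are complete, and completeness is closed under finite colimits and under the relevant filtered colimits (colimits along maps that are highly connected in each filtration weight), $M$ is complete. For $\gr(M)$: the functor $\gr$ is symmetric monoidal and preserves colimits, hence sends almost compact objects to almost compact objects; more precisely $\gr$ carries the generating compact objects of $\FilMod$ (free filtered modules on generators in a given weight) to the generating compact objects of $\GrMod$, so it preserves almost perfectness. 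This gives one direction cheaply.

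\medskip

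\noindent\textbf{Reverse direction — the main obstacle.} The hard part is the converse: assuming $M$ complete and $\gr(M)$ almost perfect over $\gr(A)$, deduce $M$ almost perfect over $A$. The plan is an induction on the connectivity, building an approximation of $M$ by finitely-presented filtered $A$-modules. Concretely: since $\gr(M)$ is almost perfect and connective, one can choose, degree by degree, a map $P_0 \to M$ from a finite free filtered $A$-module inducing a surjection on $\pi_0$ of the associated graded; because $A$ is complete and $M$ is complete, \Cref{lem:graded cot}-style reasoning (equivalences of complete filtered objects are detected on $\gr$) lets one transfer graded control back to the filtered level. Iterating, one produces a sequence of finite free filtered modules killing homotopy of the fibre in increasingly high degrees on the associated graded; the key point is that the fibre at each stage is again complete (finite colimit of completes), so that "$\gr$ of the fibre is $n$-connective" forces the fibre itself to be $n$-connective. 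This last step is exactly where completeness is essential and is the analogue of the derived Nakayama lemma used earlier in the section (cf.\ \Cref{lem:nakayama}): a complete filtered module whose associated graded is $n$-connective is itself $n$-connective, since $\lim F^i = 0$ lets one reassemble the module from its graded pieces without losing connectivity bounds.

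\medskip

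\noindent\textbf{Assembly.} Putting these together: the colimit of the $P_i$ maps to $M$, is almost finitely presented (each stage adds a finite free module with increasingly connective cofibre), and the induced map is an equivalence because it is so on the associated graded and both sides are complete. Hence $M$ is almost perfect. I expect the only real subtlety to be the connectivity-detection lemma for complete filtered modules and the bookkeeping of "increasingly connective in each weight"; everything else is a formal consequence of $\gr$ being a symmetric monoidal colimit-preserving functor sending generators to generators, combined with the completeness hypotheses already packaged into the section's earlier lemmas.
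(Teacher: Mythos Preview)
Your proposal is correct and follows essentially the same approach as the paper. The paper's proof is a one-line reference to \Cref{lem:aperf filtered}, whose proof matches your outline: the forward direction uses that $\gr$ preserves compact objects and that almost perfect modules are approximated by perfect (hence complete) ones, with the Milnor sequence controlling the remainder; the reverse direction is exactly your inductive build of a finite free approximation lifted from the graded level, with completeness plus the Milnor sequence giving the connectivity-detection step you flagged as the key point.
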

\begin{proof}
This is a special case of  \Cref{lem:aperf filtered}.
\end{proof}
We now verify that  this agrees with the notions introduced in \cite{BM19} (in the pointed setting):
\begin{lemma}[{\cite[Proposition 5.28]{BM19}}]\label{lem:complete aft}
Let $A$ be a complete animated filtered ring. Then $A$ is complete almost finitely augmented if and only if $\gr(A)$ is an almost finitely augmented graded ring.
\end{lemma}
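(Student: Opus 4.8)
The plan is to reduce the statement to the graded case via the previously established ``complete $\Leftrightarrow$ detectable on the associated graded'' principle. First I would unwind the definitions: $A$ is complete almost finitely augmented means (by \Cref{def:complete aft}(2)) that $A$ is complete and the filtered map $A \to \gr^0(A)$ satisfies the equivalent conditions of \Cref{prop:filtered aft}; since we are already assuming $A$ is complete, the content is that $\gr^0(A)$ is almost perfect as a filtered $A$-module, equivalently $L_{\gr^0(A)/A} = \cot(A)[1]$ is an almost perfect filtered $A$-module together with finite presentation of $\pi_0(A) \to \pi_0(\gr^0(A))$ on $\pi_0$. On the other side, $\gr(A)$ is almost finitely augmented means (by \Cref{def:complete aft}(1) and \Cref{lem:graded aft}) that $\cot(\gr(A))$ is an almost perfect graded $\gr^0(A)$-module.

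The key step is to pass between these via \Cref{rem:cotangent adic}, which gives $\gr(\cot(A)) \simeq \cot(\gr(A))$, and \Cref{lem:aperf over complete}: since $A$ is complete and $\cot(A)$ is connective, $\cot(A)$ is almost perfect as a filtered $A$-module if and only if it is complete and $\gr(\cot(A))$ is almost perfect as a graded $\gr(A)$-module. So I would argue as follows. If $A$ is complete almost finitely augmented, then $\cot(A)$ is almost perfect over $A$ (it is the fibre of $\cot(A)[1] = L_{\gr^0(A)/A}$ composed with connectivity bookkeeping — more precisely \Cref{prop:filtered aft} already packages this), hence by \Cref{lem:aperf over complete} its associated graded $\gr(\cot(A)) \simeq \cot(\gr(A))$ is almost perfect over $\gr(A)$, and since $\cot(\gr(A))$ is concentrated in weights $\geq 1$ with $\gr^0(A)$ sitting in weight $0$, being almost perfect over $\gr(A)$ is the same as being almost perfect over $\gr^0(A)$ (the higher-weight part of $\gr(A)$ acts by increasing weight, so an almost perfect resolution over $\gr(A)$ restricts to one over $\gr^0(A)$ weight by weight). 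This gives condition (1) of \Cref{lem:graded aft}, so $\gr(A)$ is almost finitely augmented. Conversely, if $\gr(A)$ is almost finitely augmented, then $\cot(\gr(A)) \simeq \gr(\cot(A))$ is almost perfect over $\gr(A)$; since $A$ is complete, $\cot(A)$ is a complete filtered module (completeness of $A$ forces completeness of $\cot(A)$ as it is built from $A$ by colimits and desuspension, all preserving completeness, or one cites that $L_{\gr^0(A)/A}$ of a complete ring is complete), so \Cref{lem:aperf over complete} applies in reverse to show $\cot(A)$ is almost perfect over $A$; combined with the fact that $\pi_0(A) \to \pi_0(\gr^0(A))$ is of finite presentation — which follows because $\pi_0(\gr(A))$ is finitely presented over $\pi_0(\gr^0(A)) = \pi_0(A)/\pi_0(F^1 A)$, forcing the needed finite presentation on $\pi_0$ — we get condition (2) of \Cref{prop:filtered aft}, hence $A$ is complete almost finitely augmented.

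The main obstacle I anticipate is the bookkeeping around the weight-$0$ piece: one must be careful that ``almost perfect as a graded $\gr(A)$-module'' and ``almost perfect as a graded $\gr^0(A)$-module'' genuinely coincide for modules concentrated in positive weights, and that the finite-presentation-on-$\pi_0$ clause in \Cref{prop:filtered aft} transfers correctly — the associated graded does not preserve limits at the connective level (as flagged after \Cref{def:complete aft}'s surrounding discussion), so one cannot be cavalier about which direction of implication uses completeness. The cleanest route is to invoke \Cref{prop:filtered aft} to phrase everything in terms of $\cot(A)$ being almost perfect plus the $\pi_0$-condition, invoke \Cref{lem:aperf over complete} for the completeness-to-graded translation of the almost-perfectness, and handle the $\pi_0$-condition separately and explicitly by noting $\pi_0(\gr^0(\adic(A \to \gr^0 A)))$ computations from \Cref{prop:I-adic on pi_0}. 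Everything else is a formal chase through \Cref{rem:cotangent adic} and \Cref{lem:graded aft}.
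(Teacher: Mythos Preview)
Your strategy---reduce to the associated graded via \Cref{lem:aperf over complete}---is exactly the paper's, but your choice to route through condition~(2) of \Cref{prop:filtered aft} (almost perfectness of $\cot(A)$ together with a $\pi_0$-clause) rather than condition~(3) (almost perfectness of $\gr^0(A)$ as a filtered $A$-module) introduces a real gap. In the converse direction you need $\cot(A)$ to be complete in order to invoke \Cref{lem:aperf over complete}, and your justification (``built from $A$ by colimits and desuspension, all preserving completeness'') is wrong: colimits do \emph{not} preserve completeness, and the cotangent complex is a sifted colimit construction. Without an independent argument that $\cot(A)$ is complete, the converse does not go through. You also slide between ``almost perfect over $A$'' and ``almost perfect over $\gr^0(A)$'' for $\cot(A)$; these are not interchangeable until you already know one of the finiteness conditions you are trying to establish.

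The paper avoids all of this by applying \Cref{lem:aperf over complete} directly to the module $\gr^0(A)$: since $\gr^0(A)$ is concentrated in weight~$0$, it is trivially complete as a filtered module, so \Cref{lem:aperf over complete} gives that $\gr^0(A)$ is almost perfect over $A$ if and only if $\gr^0(A)$ is almost perfect over $\gr(A)$, and the latter is equivalent to $\gr(A)$ being almost finitely augmented again by condition~(3) of \Cref{prop:filtered aft}. No cotangent complex, no $\pi_0$-bookkeeping, no completeness obstacle. The lesson: when you have equivalent conditions, pick the one whose auxiliary hypotheses are automatic.
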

\begin{proof}
By  \Cref{prop:filtered aft}, $A$ is almost finitely augmented if and only $\gr^0(A)$ is almost perfect as a filtered $A$-module. By  \Cref{lem:aperf over complete}, this equivalent to $\gr^0(A)$ being an almost perfect graded $\gr(A)$-module, which in turn is equivalent to $\gr(A)$ being almost finitely augmented by  \Cref{prop:filtered aft}.
 
\end{proof}
 
\begin{proposition}\label{prop:complete aft} 
Let $(A \to B)$ be an animated surjection and let $I=\ker(\pi_0(A)\rightarrow \pi_0(B))$. Then the following are equivalent:
\begin{enumerate}
\item The map $(A \to B)$ is complete almost finitely augmented.
\item The map $(A\to B)$ is almost finitely presented and $A$ is $I$-complete.
\item $B$ is almost perfect as an $A$-module and $A$ is $I$-complete.
\end{enumerate}
\end{proposition}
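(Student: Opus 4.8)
The plan is to reduce this statement, which concerns an animated surjection $(A \to B)$ directly, to the already-established filtered and graded statements via the adic filtration. Recall that by \Cref{def:complete aft}(3), the map $(A \to B)$ is by definition complete almost finitely augmented precisely when the filtered ring $\adic(A \to B)$ is complete almost finitely augmented, so (1) is literally a statement about $\adic(A\to B)$. The main structural input is \Cref{lem:graded adic}, which identifies $\gr(\adic(A \to B)) \simeq \LSym_B(\cot(A \to B)(1))$, together with \Cref{rem:cotangent adic}, which gives $F^{\star \leq 1}\cot(B \to B) \simeq \cot(\adic(A\to B))$ — so that the cotangent fibre of the adic filtration recovers $L_{B/A}[-1]$ in weight $1$.

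First I would prove (1) $\Leftrightarrow$ (2). By definition, (1) says $\adic(A \to B)$ is complete and the map $\adic(A\to B) \to \gr^0(\adic(A\to B)) = B$ satisfies the conditions of \Cref{prop:filtered aft}. Now $\gr^0$ of the adic filtration is $B$, and by \Cref{prop:I-adic on pi_0} the underlying ring $F^0\adic(A\to B)$ is $A$ itself (at the level of $\pi_0$ it is $\pi_0(A)$ with its $I$-adic filtration, and the full filtered ring has underlying ring $A$). By \Cref{prop:filtered aft}, the condition on the augmentation $\adic(A\to B) \to B$ of being almost finitely presented is equivalent to almost finite presentation of $(A\to B)$ as an animated surjection (this is precisely the content of \Cref{def:complete aft}(3) combined with the observation that being almost finitely presented as a filtered $A$-algebra, with the adic filtration in the source, matches almost finite presentation of the arrow). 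It thus remains to match "completeness of $\adic(A\to B)$ as a filtered ring" with "$A$ is $I$-complete". This is exactly \Cref{lem:completeness}: part (1) gives that completeness of the filtered ring implies $I$-completeness of $F^0 = A$, and part (2) gives the converse, using that $\gr^0(\adic(A\to B)) \otimes_A A^\wedge \to \gr^0(\adic(A\to B))$, i.e.\ $B \otimes_A A^\wedge \to B$, is an equivalence — which holds because $\pi_0(A) \to \pi_0(B)$ is surjective with kernel $I$ and $B$ is an $A/I$-algebra, so tensoring up along $A \to A^\wedge_I$ does not change $B$. (Here one uses that $\pi_0(A) \to \pi_0(B)$ already kills $I$, so the base change along the $I$-adic completion is an equivalence on $\pi_0$, and then a Postnikov/connectivity argument promotes this to all of $B$.)

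Next I would prove (2) $\Leftrightarrow$ (3) under the standing hypothesis that $A$ is $I$-complete. By \Cref{prop:filtered aft} (applied now not to the adic filtration but directly, in the "animated rings" case asserted in its last sentence), or more simply by the standard characterization of almost finite presentation for animated rings, the map $(A \to B)$ is almost finitely presented if and only if $L_{B/A}$ is an almost perfect $B$-module and $\pi_0(A) \to \pi_0(B)$ is of finite presentation; and, again by \Cref{prop:aft conditions}-type reasoning, this is equivalent to $B$ being almost perfect as an $A$-module — provided $A$ is Noetherian-like enough, but in general the equivalence "$L_{B/A}$ almost perfect $+$ $\pi_0$ finitely presented $\Leftrightarrow$ $B$ almost perfect over $A$" for a surjection $A \to B$ is exactly \Cref{prop:filtered aft}(2)$\Leftrightarrow$(3) in the ungraded, unfiltered case. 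So (2) and (3) differ only in how they phrase the finiteness of $B$ over $A$, and these phrasings coincide. The $I$-completeness clause is identical in both.

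The main obstacle I anticipate is the careful bookkeeping in the first equivalence: namely, verifying that "almost finitely presented as an animated surjection" (Definition \ref{def:complete aft}, via $\adic$) is the same as the condition appearing in \Cref{prop:filtered aft} for the filtered map $\adic(A\to B) \to B$, and simultaneously the same as "$L_{B/A}$ almost perfect and $\pi_0(A)\to\pi_0(B)$ finitely presented". One must check that the filtered cotangent complex $L_{B/\adic(A\to B)}$ — computed in filtered modules — has associated graded an almost perfect $\gr(\adic(A\to B))$-module iff $L_{B/A}$ is almost perfect over $B$, which uses \Cref{lem:aperf over complete} together with the identification $\gr(\adic(A\to B)) \simeq \LSym_B(L_{B/A}[-1](1))$ from \Cref{lem:graded adic} and the computation $\cot(\adic(A\to B)) \simeq L_{B/A}[-1]$ in weight $1$ from \Cref{rem:cotangent adic}. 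Once these identifications are in place, the three conditions line up and the proposition follows by assembling \Cref{prop:filtered aft}, \Cref{lem:completeness}, and \Cref{lem:graded adic}. I would also remark that this proposition is the unfiltered shadow of \Cref{lem:complete aft}, and the proof is structurally parallel.
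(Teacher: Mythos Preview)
Your overall strategy matches the paper's: reduce to \Cref{prop:filtered aft} for the finiteness and to \Cref{lem:completeness} for the completeness, with \Cref{lem:graded adic} and \Cref{rem:cotangent adic} as the bridges. The equivalence (2)$\Leftrightarrow$(3) is exactly \Cref{prop:filtered aft} applied to animated rings, as you say.

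However, your argument for (2)$\Rightarrow$(1) has a genuine gap at the key step. You invoke \Cref{lem:completeness}(2), whose hypothesis requires that $B\otimes_A F^0(\adic(A\to B)^\wedge)\to B$ be an equivalence, where $(-)^\wedge$ denotes the \emph{filtered} completion. You justify this by saying ``$B$ is an $A/I$-algebra, so tensoring up along $A\to A^\wedge_I$ does not change $B$''. This conflates two different objects: $F^0$ of the filtered completion of $\adic(A\to B)$ is not a priori the derived $I$-adic completion of $A$, and ``$B$ is an $A/I$-algebra'' has no clear derived meaning here. Even granting that $A$ is $I$-complete, you have no direct handle on $F^0(\adic^\wedge)$, and the Postnikov hand-wave does not go through. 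The paper's proof is genuinely more delicate: one first observes that $\adic(A\to B)^\wedge$ is complete almost finitely augmented (connective by Milnor, with $\gr$ almost finitely augmented by \Cref{lem:graded adic}, then apply \Cref{lem:complete aft}); this makes $\cot(\adic^\wedge)$ almost perfect, hence complete. Then the map $B\otimes_A F^0(\adic^\wedge)\to B$ is an isomorphism on $\pi_0$ (since $\pi_0F^0(\adic^\wedge)\cong\pi_0(A)^\wedge_I$ classically), and its cotangent complex is the underlying module of $\cofib\big(\cot(\adic)\to\cot(\adic^\wedge)\big)$, which is null because both terms are complete with the same associated graded.

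You also gloss over the passage between filtered and unfiltered almost-finite-presentation in both directions of (1)$\Leftrightarrow$(2). These are not formal: for (1)$\Rightarrow$(2) one uses that $F^0\colon\FilDAlg\to\DAlg$ preserves almost finitely presented maps (its right adjoint, the constant filtration, preserves filtered colimits and coconnectivity), and for (2)$\Rightarrow$(1) one uses that $\adic$ preserves almost finitely presented maps (its right adjoint $\aug$ preserves filtered colimits and shifts coconnectivity by one). Your ``observation that being almost finitely presented as a filtered $A$-algebra \ldots\ matches almost finite presentation of the arrow'' is precisely the statement that needs these adjoint-functor arguments.
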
 
\begin{proof}
Write $A_{\mm{ad}}=\adic(A\to B)$. Assertions (2) and (3) are equivalent by  \Cref{prop:filtered aft}. For (1) $\Rightarrow$ (2), suppose that the adic filtration $A_{\mm{ad}}$ is complete almost finitely augmented and note that $A\to B$ is equivalent to $F^0A_{\mm{ad}}\to \gr^0(A_{\mm{ad}})$. To see that this map is almost finitely presented, note that $F^0\colon \FilDAlg_{\KK}\rt \DAlg_{\KK}$ preserves almost finitely presented maps. Indeed, its right adjoint, sending a derived algebra $A'$ to the filtered algebra $\dots\to A'\to A'\to A'$, preserves filtered colimits and $n$-coconnective objects.

 Since $A_{\mm{ad}}\to \gr^0(A_\mm{ad})$ is almost finitely presented, we conclude that $F^0A_\mm{ad}\to \gr^0(A_\mm{ad})$ is almost finitely presented as well. Finally, $A$ is $I$-complete by  \Cref{lem:completeness}.

Let us now assume that condition (2) holds and show that $A_{\mm{ad}}$ is complete almost finitely augmented. The functor $\adic$ preserves almost finitely presented objects, since its right adjoint preserves filtered colimits and sends $n$-coconnective objects to $(n+1)$-coconnective objects. Since the map of animated surjections $(A\to B)\rt (B\to B)$ is almost finitely presented, we conclude that $A_{\mm{ad}}\to \gr^0(A_\mm{ad})$ is almost finitely presented. It follows that $A^\wedge_{\mm{ad}}$ is complete, connective (by the Milnor sequence) and $\gr(A^\wedge_\mm{ad})\simeq \gr(A_\mm{ad})$ is almost finitely augmented.  \Cref{lem:complete aft} therefore implies that $A^\wedge_\mm{ad}$ is almost finitely augmented.

It therefore remains to show that $A_\mm{ad}\rt A^{\wedge}_{\mm{ad}}$ is an equivalence. By  \Cref{lem:completeness} and the fact that $A_\mm{ad}$ is $I$-complete, it suffices to verify that $$f\colon \gr^0(A)\otimes_A F^0A^\wedge_\mm{ad}\rt \gr^0(A)$$ is an equivalence. This map is an isomorphism on $\pi_0$ because $\pi_0(F^0A_\mm{ad}^\wedge)\cong \pi_0(A)^\wedge_I$ and $\pi_0(\gr^0(A))\cong \pi_0(A)/I$. It then suffices to verify that its cotangent complex vanishes. Unravelling the definitions, the cotangent complex of $f$ is the underlying module of the filtered module $\cofib(\cot(A_\mm{ad})\to \cot(A^\wedge_{\mm{ad}}))$, which is indeed null: its associated graded is null and $\cot(A_\mm{ad})$ and $\cot(A^\wedge_\mm{ad})$ are both complete, by Remark \ref{rem:cotangent adic} and  \Cref{prop:complete aft} (and since  almost perfect $\gr^0(A)$-modules are complete).
\end{proof}

\begin{corollary}[{\cite[Proposition 5.23]{BM19}}]\label{cor:completeness axiom b}
Let $A$ be an animated filtered ring that is complete almost finitely augmented. Then the induced pair $F^0A\to \gr^0 A$ is complete almost finitely presented.
\end{corollary}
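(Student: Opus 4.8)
The plan is to reduce the statement to \Cref{prop:complete aft} and verify its condition (2) for the induced animated surjection $(F^0A\to \gr^0A)$. Writing $I=\ker\big(\pi_0(F^0A)\to\pi_0(\gr^0A)\big)$, it suffices to check two things: that $F^0A\to\gr^0A$ is almost finitely presented as a map of animated rings, and that $F^0A$ is $I$-complete.

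For the first point, I would unwind the hypothesis: by \Cref{def:complete aft} together with \Cref{prop:filtered aft}, $A$ being complete almost finitely augmented entails that the \emph{filtered} map $A\to\gr^0A$ is almost finitely presented. I would then apply the functor $F^0\colon\FilDAlg_{\KK}\to\DAlg_{\KK}$. As already observed in the proof of \Cref{prop:complete aft}, $F^0$ preserves almost finitely presented maps, since its right adjoint $A'\mapsto(\dots\to A'\to A'\to A')$ preserves filtered colimits and $n$-coconnective objects; hence $F^0A\to F^0(\gr^0A)=\gr^0A$ is almost finitely presented. In particular $\pi_0(F^0A)\to\pi_0(\gr^0A)$ is of finite presentation, so $I$ is a finitely generated ideal.

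For the second point, I would regard $A$ as a filtered module over itself, which is complete by assumption. Since $I$ has just been shown to be finitely generated, part (1) of \Cref{lem:completeness} applied to $M=A$ yields that $F^0A=F^0M$ is $I$-complete. Combining the two points verifies condition (2) of \Cref{prop:complete aft}, so the pair $(F^0A\to\gr^0A)$ is complete almost finitely presented, i.e.\ complete almost finitely augmented in the sense of \Cref{def:complete aft}(3).

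I do not anticipate a real obstacle: the argument is a short chain through \Cref{def:complete aft}, \Cref{prop:filtered aft}, \Cref{lem:completeness}, and \Cref{prop:complete aft}. The only point needing care is ensuring that the ideal $I$ is finitely generated before invoking \Cref{lem:completeness}; this is why I would establish the almost-finite-presentation half first, since it immediately supplies finite presentation of $\pi_0(F^0A)\to\pi_0(\gr^0A)$.
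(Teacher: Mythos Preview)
Your proof is correct and follows essentially the same route as the paper's: establish that $F^0A\to\gr^0A$ is almost finitely presented, use \Cref{lem:completeness} to get $I$-completeness, and conclude via \Cref{prop:complete aft}. The only difference is that the paper uses the functor $\aug$ (arguing it preserves free algebras, hence almost finitely presented maps) rather than $F^0$; your choice of $F^0$ is just as efficient, and your explicit verification that $I$ is finitely generated before invoking \Cref{lem:completeness} is a point the paper leaves implicit.
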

\begin{proof}
Since the functor $\aug$ preserves free algebras, it sends almost finitely presented maps of filtered animated rings to almost finitely presented maps of pairs. In particular, $F^0A\to \gr^0 A$ is almost finitely presented.  \Cref{lem:completeness} implies that $A$ is complete with respect to the kernel of $\pi_0(F^0A)\to \pi_0\gr^0(A)$, and we conclude by  \Cref{prop:complete aft}. 
\end{proof}
\begin{lemma}\label{lem:sqz is aft}
Let $B$ be an animated (filtered, graded) ring, let $M$ be an almost perfect connective (filtered, graded) $B$-module and let $f\colon B_\alpha=B\oplus_\alpha M\to B$ be a square zero extension of $B$ by $M$ classified by a map $\alpha\colon L_{B}[-1]\rt M$. Then the following assertions hold:
\begin{enumerate}
\item Let $\cat{K}$ be the thick subcategory of (filtered, graded) $B_\alpha$-modules generated by the $B_\alpha$-modules underlying an almost perfect $B$-module. Then $\cat{K}=\APerf_{B_\alpha}$.
\item $f$ is almost of finite presentation.
\end{enumerate}
\end{lemma}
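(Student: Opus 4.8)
The plan is to establish almost finite presentation of $f$ first — notwithstanding the ordering in the statement — and then to read off both inclusions in (1), the only non-formal ingredient being that the cotangent fibre of $f$ is almost perfect.

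\emph{The inclusion $\APerf_{B_\alpha}\subseteq\cat K$} requires no finiteness. Being a square-zero extension, $f$ sits in a cofibre sequence of $B_\alpha$-modules
\[
M\longrightarrow B_\alpha\xrightarrow{\ f\ }B ,
\]
where $M$ carries the $B_\alpha$-module structure obtained by restriction along $f$. Tensoring over $B_\alpha$ with an arbitrary $N\in\APerf_{B_\alpha}$ yields a cofibre sequence $M\otimes_{B_\alpha}N\to N\to B\otimes_{B_\alpha}N$. Now $B\otimes_{B_\alpha}N$ is almost perfect over $B$ by base change along $f$, and $M\otimes_{B_\alpha}N\simeq M\otimes_B\big(B\otimes_{B_\alpha}N\big)$ is a tensor product over $B$ of two bounded-below almost perfect $B$-modules, hence almost perfect over $B$. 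So both outer terms belong to $\cat K$, and as $\cat K$ is thick and $N$ is the fibre of a map between (shifts of) them, $N\in\cat K$. The same argument applies in the filtered and graded settings.

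\emph{Almost finite presentation of $f$, hence $\cat K\subseteq\APerf_{B_\alpha}$ and (2).} I would verify the criterion of \Cref{prop:filtered aft}, which is valid for animated, graded and filtered rings: it suffices that $\pi_0(B_\alpha)\to\pi_0(B)$ be of finite presentation and that $L_{B/B_\alpha}$ be almost perfect over $B$. The first holds because $\pi_0(B_\alpha)\to\pi_0(B)$ is surjective with kernel the image of $\pi_0(M)$, a finitely generated square-zero ideal — finitely generated since $M$, being almost perfect over $B$, has $\pi_0(M)$ finitely presented over $\pi_0(B)$ — so $\pi_0(B)$ is the quotient of $\pi_0(B_\alpha)$ by a finitely generated ideal. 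Granting that $L_{B/B_\alpha}$ is almost perfect over $B$, \Cref{prop:filtered aft} gives that $f$ is almost of finite presentation, which is (2), and equivalently that $B$ is an almost perfect $B_\alpha$-module. Restriction of scalars along $f$ then preserves almost perfect modules, so each generator of $\cat K$ — a $B_\alpha$-module underlying an almost perfect $B$-module — lies in the thick subcategory $\APerf_{B_\alpha}$, and $\cat K\subseteq\APerf_{B_\alpha}$; together with the previous paragraph this proves (1).

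\emph{The cotangent fibre $\cot(f)=L_{B/B_\alpha}[-1]$ is almost perfect over $B$ --- the crux.} This genuinely uses both that $f$ is square-zero and that $M$ is almost perfect; the optimistic guess $L_{B/B_\alpha}\simeq M[1]$ already fails for $B=k$, $M=k$, $B_\alpha=k[\varepsilon]$. One exploits the square-zero structure of $B_\alpha=B\oplus_\alpha M$: the fibre sequences relating the cotangent complexes of $B_\alpha$ and of $B$ to $M$, together with the transitivity triangle $B\otimes_{B_\alpha}L_{B_\alpha}\to L_B\to L_{B/B_\alpha}$, exhibit $\cot(f)$ as assembled from $M$ by finite colimits and increasingly connective colimits --- each of which preserves almost perfectness --- so that $\cot(f)$ is almost perfect once $M$ is. I expect this extraction of $\cot(f)$ from $M$ to be the main obstacle; the remaining steps are formal, given \Cref{prop:filtered aft} and the standard stability properties of almost perfect modules (base change, restriction of scalars along an almost perfect algebra map, tensor products, and closure of $\APerf$ under finite colimits, shifts, and increasingly connective colimits).
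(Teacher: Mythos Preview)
Your argument for $\APerf_{B_\alpha}\subseteq\cat{K}$ is correct and is exactly the paper's argument. Your deduction of $\cat{K}\subseteq\APerf_{B_\alpha}$ from (2) is also correct, as is the observation that $\pi_0(B_\alpha)\to\pi_0(B)$ is of finite presentation.

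The gap is the step you yourself flag as ``the main obstacle'': you do not actually prove that $L_{B/B_\alpha}$ is almost perfect. The sentence ``the fibre sequences relating the cotangent complexes of $B_\alpha$ and of $B$ to $M$, together with the transitivity triangle \ldots\ exhibit $\cot(f)$ as assembled from $M$ by finite colimits and increasingly connective colimits'' is not an argument. The transitivity triangle involves the absolute cotangent complexes $L_{B_\alpha}$ and $L_B$, which have no reason to be almost perfect, so the triangle alone does not let you conclude anything about $L_{B/B_\alpha}$. Nor is the defining pullback square of $B_\alpha$ a pushout of animated rings in general (already for $B=k$, $M=k$ one has $k\otimes_{k[\varepsilon]}k$ with homotopy $k$ in every non-negative degree, not $k\oplus k[1]$), so one cannot simply base-change to reduce to the trivial extension. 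In the paper's logic the almost perfectness of $\cot(f)$ is a \emph{consequence} of this lemma (via \Cref{lem:coLie properties}(1)), not an input; proving it independently would require a separate argument (e.g.\ via the filtration of Remark~\ref{rem:colie filtration} together with a graded computation), which you have not supplied.

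The paper avoids this entirely by reversing your order of implications: it proves (1) directly and then reads off (2) from \Cref{prop:filtered aft} using that $B\in\cat{K}=\APerf_{B_\alpha}$. For the inclusion $\cat{K}\subseteq\APerf_{B_\alpha}$, the paper observes two elementary facts: first, every $N\in\cat{K}$ is eventually connective with $\pi_k(N)$ finitely generated over $\pi_0(B_\alpha)$ whenever $N$ is $k$-connective (these properties hold for almost perfect $B$-modules and are stable under fibres, cofibres and retracts); second, $B_\alpha$ itself lies in $\cat{K}$, since it sits in the cofibre sequence $M\to B_\alpha\to B$ with $M,B\in\cat{K}$. Given a $k$-connective $N\in\cat{K}$, one then chooses a map $B_\alpha[k]^{\oplus n}\to N$ hitting generators of $\pi_k(N)$; its cofibre $N'$ is $(k+1)$-connective and again lies in $\cat{K}$. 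Iterating produces, for each $n$, a perfect $B_\alpha$-module mapping to $N$ with $n$-connective cofibre, so $N\in\APerf_{B_\alpha}$. This is more elementary than computing $L_{B/B_\alpha}$ and uses nothing beyond the module-level cofibre sequence $M\to B_\alpha\to B$.
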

\begin{proof} 
Assuming (1), we see that $B$ is almost perfect as a $B_\alpha$-module, so that (2) follows from  \Cref{prop:filtered aft}. To see that $\APerf_{B_\alpha}\subseteq \cat{K}$, notice that every almost perfect $B_\alpha$-module $N$ fits into a cofibre sequence where the outer terms arise from almost perfect $B$-modules
$$\begin{tikzcd}
M\otimes_B (B\otimes_{B_\alpha} N)\simeq M\otimes_{B_\alpha} N\arrow[r] & N\arrow[r] & B\otimes_{B_\alpha} N.
\end{tikzcd}$$
To see that $\cat{K}\subseteq \APerf_{B_\alpha}$,   observe that any object $N\in \cat{K}$ is eventually connective and that $\pi_k(N)$ is a finitely generated module over $\pi_0(B_\alpha)$ if $N$ is $k$-connective. Indeed, these properties hold for almost perfect $B$-modules and are stable under (co)fibres and retracts. Consequently, every $k$-connective $N\in \cat{K}$ fits into a cofibre sequence $B_\alpha[k]^{\oplus n}\to N\to N'$ where $N'$ is $(k+1)$-connective. Notice that $B_\alpha$ is contained in $\cat{K}$, since it fits into a cofibre sequence $M\to B_\alpha\to B$ where both $M$ and $B$ are almost perfect $B$-modules. It follows that $N'$ defines a $(k+1)$-connective object in $\cat{K}$. 

Given a module $N\in \cat{K}$ and $n\geq 0$, we can therefore apply this construction iteratively to obtain a cofibre sequence $K\to N\to N/K$ where $K$ is a perfect $B_\alpha$-module and $N/K$ is $n$-connective. This implies that $N$ is an almost perfect $B_\alpha$-module.
\end{proof}
\begin{corollary}\label{cor:sqz is complete aft}
Let $B$ be an animated ring, $M\in \APerf(B)_{\geq 0}$ and $B\oplus_\alpha M$ a square zero extension of $B$ by $M$ classified by a map $\alpha\colon L_{B}[-1]\rt M$. Then the arrow $(B\oplus_\alpha M \to B)$ is complete almost finitely augmented.
\end{corollary}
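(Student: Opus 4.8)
The plan is to deduce the corollary directly from \Cref{lem:sqz is aft} and \Cref{prop:complete aft}, writing $B_\alpha = B\oplus_\alpha M$. By part (2) of \Cref{lem:sqz is aft} (applied to ordinary animated rings, i.e.\ the non-filtered, non-graded case), the square-zero extension $f\colon B_\alpha\to B$ is almost of finite presentation; equivalently, by part (1) of that lemma, $B$ is almost perfect as a $B_\alpha$-module. In view of the equivalences (1) $\Leftrightarrow$ (2) $\Leftrightarrow$ (3) in \Cref{prop:complete aft}, it then remains only to check that $B_\alpha$ is an animated surjection onto $B$ and that $B_\alpha$ is derived $I$-complete, where $I=\ker\big(\pi_0(B_\alpha)\to \pi_0(B)\big)$.

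First I would identify the ideal $I$. The defining fibre sequence $M\to B_\alpha\to B$ of the square-zero extension yields, since $M$ is connective, an exact sequence $\pi_1(B)\to \pi_0(M)\to \pi_0(B_\alpha)\to \pi_0(B)\to 0$, so that $\pi_0(B_\alpha)\to\pi_0(B)$ is surjective (hence $(B_\alpha\to B)$ is indeed an animated surjection) and $I$ is the image of $\pi_0(M)$. Since $M$ is connective and almost perfect, $\pi_0(M)$ is a finitely generated $\pi_0(B)$-module, so $I$ is a finitely generated ideal of $\pi_0(B_\alpha)$. Moreover, because $B_\alpha$ is a square-zero extension of $B$ by $M$, the relevant multiplication $M\otimes_B M\to M$ is null, and therefore $I^2=0$: any product of two elements of $I$ is the image of a product of two elements of $\pi_0(M)$ under the (square-zero) multiplication, which vanishes.

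Now I would observe that derived $I$-completeness is automatic for such an $I$. Choosing finitely many generators $x_1,\dots,x_n$ of $I$, each $x_i$ satisfies $x_i^2\in I^2=0$, so multiplication by $x_i$ is nilpotent on $B_\alpha$; hence the tower $\cdots\xrightarrow{x_i} B_\alpha\xrightarrow{x_i}B_\alpha$ has vanishing derived limit and $B_\alpha$ is derived $x_i$-complete for every $i$. It follows that $B_\alpha$ is derived $I$-complete. Feeding this back into \Cref{prop:complete aft} (via criterion (2), using that $f$ is almost finitely presented, or equivalently via (3), using that $B$ is almost perfect over $B_\alpha$) shows that $(B_\alpha\to B)$ is complete almost finitely augmented, as desired.

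I do not expect a genuine obstacle here: all of the substantive content is already packaged in \Cref{lem:sqz is aft} and \Cref{prop:complete aft}, and the only point requiring care is the elementary verification that the ``new'' ideal $I$ is finitely generated and square-zero, which is precisely what makes the derived completeness hypothesis vacuous in this case.
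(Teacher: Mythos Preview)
Your proof is correct and follows essentially the same approach as the paper: reduce via \Cref{prop:complete aft} to showing that $B_\alpha\to B$ is almost finitely presented (from \Cref{lem:sqz is aft}) and that $B_\alpha$ is $I$-complete, then observe that $I$ is finitely generated with $I^2=0$, making $I$-completeness immediate. The paper's version is simply more terse.
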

\begin{proof}
Let $I$ be the kernel of $\pi_0(B\oplus_\alpha M)\rt \pi_0(B)$. This is a finitely generated ideal, with generators arising from the generators of $\pi_0(M)$.
By  \Cref{prop:complete aft}, it suffices to prove that $B\oplus_\alpha M\to B$ is almost of finite presentation and that $B\oplus_\alpha M$ is $I$-complete. The first condition follows from  \Cref{lem:sqz is aft} and the second condition is immediate since $I^2=0$. 
\end{proof}
\begin{corollary}\label{cor:sqz aft}
Let $(A \to B)$ be an animated arrow which is complete almost finitely presented. For any almost perfect connective $A$-module $M$ and any square zero extension of $A$ by $M$, the arrow $(A\oplus_\alpha M \to B)$ is again complete almost finitely presented. 
\end{corollary}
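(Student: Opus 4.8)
The plan is to verify, for the arrow $(A' \to B)$ with $A' := A \oplus_\alpha M$, the two equivalent conditions characterising complete almost finitely presented arrows in \Cref{prop:complete aft}. Writing $I' := \ker\big(\pi_0(A') \to \pi_0(B)\big)$, I must produce: (i) that $A' \to B$ is almost of finite presentation, and (ii) that $A'$ is derived $I'$-complete. Condition (i) is the easy half: the square-zero extension $A' \to A$ is almost of finite presentation by part (2) of \Cref{lem:sqz is aft} (applied with base ring $A$ and the almost perfect connective module $M$), the map $A \to B$ is almost of finite presentation since $(A \to B)$ is complete almost finitely presented, and almost finitely presented maps of animated rings are stable under composition; hence $A' \to B$ is almost of finite presentation.

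For condition (ii), the observation driving the argument is that passing from $A$ to $A'$ is a square-zero, hence nilpotent, thickening on $\pi_0$. Since $M$ is connective, $\pi_0(A') \to \pi_0(A)$ is surjective with square-zero kernel $J$ (the image of $\pi_0(M)$), so $I'$ is exactly the preimage of $I := \ker(\pi_0 A \to \pi_0 B)$. Choosing a finite generating set of $I$ (finite because $A \to B$ is almost of finite presentation) and lifting it to $\pi_0(A')$ gives a finitely generated ideal $\tilde I$ with $I' = \tilde I + J$ and $J$ nilpotent; since derived completion along a finitely generated ideal depends only on its radical, a module is derived $I'$-complete if and only if it is derived $\tilde I$-complete. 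Now I would use the fibre sequence of $A'$-modules $M \to A' \to A$ underlying the square-zero extension: derived $\tilde I$-complete $A'$-modules form a stable subcategory closed under fibres, so it suffices to see that $A$ and $M$ are derived $\tilde I$-complete as $A'$-modules. As the $A'$-action on each factors through $A' \to A$ and the generators of $\tilde I$ map to those of $I$, this reduces to $A$ and $M$ being derived $I$-complete as $A$-modules. The first is part of the hypothesis (via \Cref{prop:complete aft}).

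The remaining point — and the one genuinely non-formal step — is that the connective almost perfect $A$-module $M$ is automatically derived $I$-complete, because its base $A$ is. One can simply cite \cite[Section 7.3]{SAG}; alternatively, it follows directly from the fact that derived $I$-completion is an exact endofunctor of $\Mod_A$ which fixes perfect $A$-modules (as $A$ is derived $I$-complete) and lowers connectivity by at most a bounded amount, so applying it to a sequence of perfect approximations $P_n \to M$ with increasingly connective cofibres (which exist by almost perfectness of $M$) shows that $M \to M^{\wedge}_I$ has $\infty$-connective, hence vanishing, cofibre. Granting this, $A$ and $M$ are derived $\tilde I$-complete, so $A'$ is derived $\tilde I$-complete and therefore derived $I'$-complete; combining with (i) and \Cref{prop:complete aft} gives that $(A' \to B)$ is complete almost finitely presented.
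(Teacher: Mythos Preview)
Your proof is correct and follows essentially the same approach as the paper: both apply \Cref{prop:complete aft}, obtain almost finite presentation by composing $A'\to A$ (via \Cref{lem:sqz is aft}) with $A\to B$, split the generators of $I'$ into a nilpotent part coming from $\pi_0(M)$ and lifts of generators of $I$, and then use the fibre sequence $M\to A'\to A$ to conclude completeness. The only differences are cosmetic: the paper disposes of the nilpotent generators by noting directly that $A'$ is complete with respect to square-zero elements, whereas you invoke that completion depends only on the radical; and you spell out why the almost perfect $A$-module $M$ is $I$-complete, which the paper leaves implicit.
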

\begin{proof}
We apply  \Cref{prop:complete aft}. As the composite $A\oplus_\alpha M\rt A\rt B$ of two maps that are almost finitely presented, $A\oplus_\alpha M\rt B$ is almost finitely presented. Let $I$ be the kernel of $\pi_0(A\oplus_\alpha M)\rt \pi_0(B)$. This is generated by finitely many generators $x_i$ of $\pi_0(M)$, which square to zero, and finitely many elements $y_i$ that lift the generators of $\ker(\pi_0(A)\to \pi_0(B))$. Then $A\oplus_\alpha M$ is evidently $(x_i)$-complete, and it is $(y_i)$-complete since it is an extension of $A$ by $M$, both of which are $(y_i)$-complete. We conclude that $A\oplus_\alpha M$ is $I$-complete.
\end{proof}

\subsection{Comonadicity of the cotangent fibre}
Throughout this section, we fix a base animated ring $\KK$. For each animated surjection $(A\to B)$ of $\KK$-algebras, the cotangent fibre $\cot(A \to B)=L_{B/A}[-1]$ is a connective $B$-module that comes equipped with a natural map 
$$
L_{B/\KK}[-1]\rt \cot(A\to B).
$$
The goal of this section will be to show that when $(A\to B)$ is complete almost finitely augmented, its structure can be completely encoded in terms of its cotangent fibre.
\begin{notation}
Let $B$ be an animated $\KK$-algebra. We will write
$$\begin{tikzcd}
\SCR^{\wedge \aft}_{\KK/B}\arrow[r, hookrightarrow] & \SCR_{\KK/B}
\end{tikzcd}$$
for the full subcategory of connective derived $\KK$-algebras $A$ endowed with a map $A\to B$ that exhibits $A$ as complete almost finitely augmented (\Cref{def:complete aft}).
\end{notation}
\begin{definition}\label{def:derivation}
Let $B$ be a derived $\KK$-algebra. We define a \emph{derivation} $(M, \alpha)$ on $B$ to be the datum of a $B$-module $M$, together with a map $\alpha\colon L_{B/\KK}[-1]\rt M$. Write
$$
\cAMod{B}{\KK} = (\Mod_B)_{L_{B/\KK}[-1]/}
$$
for the $\infty$-category of derivations $(M, \alpha)$ on $B$. If $B$ is connective,   let $\cAAPerf{B}{\KK}\subseteq \cAMod{B}{\KK}$ denote the full subcategory spanned by the derivations $(M, \alpha)$, where $M$ is an almost perfect connective.
\end{definition}
Our goal will then be the following:
\begin{theorem}\label{thm:comonad}
Let $B$ be an animated $\KK$-algebra. Then the the cotangent fibre defines the left adjoint in a comonadic adjunction
\begin{equation}\label{diag:ft cot-sqz adjunction}\begin{tikzcd}
\cot\colon \SCR^{\wedge \aft}_{\KK/B}\arrow[r, yshift=1ex] & \cAAPerf{B}{\KK}\colon \sqz.\arrow[l, yshift=-1ex]
\end{tikzcd}\end{equation}
\end{theorem}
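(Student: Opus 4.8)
The plan is to produce the adjunction \eqref{diag:ft cot-sqz adjunction} directly and then to deduce comonadicity from the comonadic (opposite) form of the Barr--Beck--Lurie theorem, whose hypotheses are that $\cot$ is conservative and that $\SCR^{\wedge\aft}_{\KK/B}$ admits totalisations of $\cot$-split cosimplicial objects which are preserved by $\cot$. For the adjunction itself, the right adjoint sends a derivation $(M,\alpha)$ to the square-zero extension $\sqz(M,\alpha)=\big(B\oplus_\alpha M\rt B\big)$, which lies in $\SCR^{\wedge\aft}_{\KK/B}$ by \Cref{cor:sqz is complete aft}; conversely, for $(A\rt B)\in\SCR^{\wedge\aft}_{\KK/B}$ the cotangent fibre $\cot(A\rt B)=L_{B/A}[-1]$ is connective --- since $A\rt B$ is surjective on $\pi_0$, so $\pi_0 L_{B/A}=\Omega^1_{\pi_0(B)/\pi_0(A)}$ vanishes --- and almost perfect over $B$, as $A\rt B$ is almost finitely presented by \Cref{prop:complete aft}, so $\cot$ does restrict to a functor into $\cAAPerf{B}{\KK}$. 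To see $\cot\dashv\sqz$, I would use that $B$ is terminal in $\SCR_{\KK/B}$ and present $B\oplus_\alpha M\simeq B\times_{B\oplus M[1]}B$ as a pullback: then $\Map_{\SCR_{\KK/B}}\big(A,B\oplus_\alpha M\big)$ is the space of paths in $\Map_{\Mod_B}\big(B\otimes_A L_{A/\KK},M[1]\big)$ from the zero derivation to the restriction of $\alpha$ along $B\otimes_A L_{A/\KK}\rt L_{B/\KK}$, and the transitivity cofibre sequence $B\otimes_A L_{A/\KK}\rt L_{B/\KK}\rt L_{B/A}$ identifies this, naturally in both arguments, with $\Map_{\cAMod{B}{\KK}}\big(\cot(A\rt B),(M,\alpha)\big)$.

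For conservativity I would pass to adic filtrations. Using the identification of $(A\rt B)$ with $\big(F^0\adic(A\rt B)\rt\gr^0\adic(A\rt B)\big)$ from the proof of \Cref{prop:complete aft}, the functor $\adic$ is conservative on $\SCR^{\wedge\aft}_{\KK/B}$, sends $A$ to a \emph{complete} filtered ring with $\gr^0=B$, and (by \Cref{lem:graded adic} and \Cref{rem:cotangent adic}) satisfies $\gr\big(\cot(\adic(A\rt B))\big)\simeq\cot(A\rt B)$ placed in weight $1$. Hence if $\cot(f)$ is an equivalence, then $\adic(f)$ is a map of complete animated filtered rings which is an equivalence on $\gr^0$ and on the associated graded of cotangent fibres; by \Cref{lem:graded cot} it is an equivalence, and therefore so is $f$.

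Now the crux: totalisations of $\cot$-split diagrams. Let $X^\bullet$ be a cosimplicial object of $\SCR^{\wedge\aft}_{\KK/B}$ with $\cot(X^\bullet)$ split, and set $c^{-1}=\lim_{\Delta}\cot(X^\bullet)$. Since split cosimplicial objects are carried to limit diagrams by every functor, $\gr\,\adic(X^\bullet\rt B)\simeq\LSym_B\big(\cot(X^\bullet)(1)\big)$ is split with totalisation $\LSym_B(c^{-1}(1))$. I would then form $Y'\defeq\lim_{\Delta}\adic(X^\bullet\rt B)$, working first in the non-connective derived filtered rings, where taking the associated graded commutes with limits; this yields $\gr(Y')\simeq\LSym_B(c^{-1}(1))$ (in particular $\gr^0(Y')\simeq B$), and since the target is connective a Milnor-sequence argument shows $Y'$ is connective, so the limit in fact lives in $\FilSCR$. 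As $c^{-1}$ is almost perfect and connective, $\LSym_B(c^{-1}(1))$ is almost finitely augmented (\Cref{lem:graded aft}), whence $Y'$ is complete almost finitely augmented by \Cref{lem:complete aft}. Put $Y\defeq F^0(Y')$; since $F^0$ preserves limits and $F^0\adic(X^n\rt B)\simeq X^n$, we get $Y\simeq\lim_{\Delta}X^\bullet$ computed in $\SCR_{\KK/B}$, and by \Cref{cor:completeness axiom b} together with \Cref{prop:complete aft} the arrow $(Y\rt B)=\big(F^0 Y'\rt\gr^0 Y'\big)$ is complete almost finitely augmented, so $Y\in\SCR^{\wedge\aft}_{\KK/B}$; full faithfulness of $\SCR^{\wedge\aft}_{\KK/B}\hookrightarrow\SCR_{\KK/B}$ then makes $Y$ the totalisation there. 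Finally, for preservation I would verify $\cot(Y')\simeq c^{-1}(1)$: the natural map is an equivalence on associated graded, since $\gr\,\cot(Y')\simeq\cot(\gr Y')\simeq\cot\big(\LSym_B(c^{-1}(1))\big)\simeq c^{-1}(1)$, and both sides are complete filtered $B$-modules --- $\cot(Y')$ because it is almost perfect over $Y'$ (the map $Y'\rt\gr^0 Y'$ being almost finitely presented) and hence complete by \Cref{lem:aperf over complete}, and $c^{-1}(1)$ because it is concentrated in a single weight. Applying the symmetric monoidal, limit-preserving functor $F^0$, which commutes with cotangent complexes, then gives $\cot(Y\rt B)\simeq F^0\cot(Y')\simeq c^{-1}=\lim_{\Delta}\cot(X^\bullet)$.

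With conservativity and this totalisation statement in hand, the comonadic Barr--Beck--Lurie theorem identifies $\SCR^{\wedge\aft}_{\KK/B}$ with the $\infty$-category of comodules over the comonad $\cot\circ\sqz$ on $\cAAPerf{B}{\KK}$, which is the assertion. I expect the last step to be the main obstacle: neither the almost-finiteness condition nor the cotangent fibre is preserved by arbitrary totalisations, so the $\cot$-split hypothesis genuinely has to be exploited, which is done by descending to the associated graded of the adic filtration --- where the relevant cosimplicial objects become split and the free-algebra functor $\LSym$ makes the finiteness and limit-commutation bookkeeping tractable --- and then transporting the conclusion back to animated rings via \Cref{lem:complete aft}, \Cref{cor:completeness axiom b} and \Cref{lem:graded cot}, with some care needed around connectivity since the associated graded does not commute with limits at the connective level.
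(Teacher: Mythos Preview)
Your proposal is correct and follows essentially the same route as the paper's proof: both verify Barr--Beck--Lurie by passing to the adic filtration, where conservativity reduces to \Cref{lem:graded cot}, and where the $\cot$-split totalisation becomes a split totalisation on the associated graded via \Cref{lem:graded adic}, so that $\LSym_B$ commutes with it and the finiteness is controlled by \Cref{lem:complete aft}. The only cosmetic differences are that you build the adjunction explicitly from the transitivity sequence (the paper instead invokes the relative adjunction of \Cref{def:sqz-cot adjunction} and restricts via \Cref{cor:sqz is complete aft}), and you phrase the transport back to unfiltered rings through $F^0$ rather than $\aug$; since $\cot(Y')$ has vanishing $\gr^0$ these agree. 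Two small imprecisions worth tightening: $\cot(Y')$ is almost perfect as a filtered $B$-module (not ``over $Y'$''), and the Milnor-sequence step for connectivity of $Y'$ uses that the transition maps in the tower $F^iY'/F^{i+k}Y'$ are $\pi_0$-surjective (their fibres being connective), so that the relevant $\lim^1$ vanishes.
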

Let us first construct the desired adjoint pair. In fact, for later purposes we will construct a version of this adjunction that also encodes naturality in the animated $\KK$-algebra $B$.
\begin{construction}\label{cons:modules under cotangent}
Let $\Mod$ be the $\infty$-category of pairs $(B, M)$ consisting of a derived $\KK$-algebra and a $B$-module (cf.\ Construction \ref{cons:cotangent complex}). We then have a commuting diagram as follows:
$$\begin{tikzcd}
\DAlg^{\Delta^{1}}_\KK \arrow[rr, yshift=1ex, "L"]\arrow[rd, "\mm{ev}_1"{swap}] & &\Mod\times_{\DAlg} \DAlg_{\KK} \arrow[ll, yshift=-1ex, "\mm{triv}"]\arrow[ld]\\
& \DAlg_\KK.
\end{tikzcd}$$
Here the two downwards pointing functors send a map of derived rings $A\to B$, resp.\ a derived ring with a module $(B, M)$, to $B$; both functors are cocartesian and cartesian fibrations. The pair $(L, \triv)$ is the relative adjunction given by the cotangent complex and trivial square zero extension functors
$$
L(A\to B)=\big(B, L_{A/\KK}\otimes_A B\big) \qquad\qquad \triv(B, M)=(B\oplus M\to B).
$$
 
We abuse notation and also write $L\colon \DAlg_{\KK} \rt \Mod$ for the functor sending $A\mapsto (A, L_{A/\KK})$. We then define the $\infty$-category $\caMod$ of \emph{derivations} to be the fibre product
$$\begin{tikzcd}
\caMod\arrow[r]\arrow[d] & \Fun([1], \Mod)\times_{\Fun([1], \DAlg)} \DAlg_{\KK}\arrow[d, "\mm{ev}_0"]\\
\DAlg_{\KK} \arrow[r, "{L[-1]}"] & \Mod\times_{\DAlg} \DAlg_{\KK}.
\end{tikzcd}$$
An object of $\caMod$ is given by a tuple $\big(B, M, \alpha\big)$ consisting of a derived ring $B$ and a derivation on $B$, i.e.\ a $B$-module $M$ and a map $\alpha\colon L_{B/\KK}[-1]\rt M$. By adjunction, one can also identify this with a tuple $(B, M, \alpha)$ of a derived ring, a module and a map of derived pairs $\alpha\colon (B, B) \rt \big(B\oplus M[1], B)$ which is the identity on the codomains.
\end{construction}
\begin{remark}\label{rem:pushout along cotangent}
The projection $\caMod\rt \DAlg_\KK;\ (B, M, \alpha)\longmapsto B$ is a cartesian fibration by construction, classifying the functor 
$$
B\longmapsto \cAMod{B}{\KK}.
$$
Each map $f\colon B\rt B'$ induces a functor $f_*\colon \cAMod{B'}{\KK}\rt \cAMod{B}{\KK}$ by restriction of scalars, sending a derivation $(N, \beta)$ to $f_*(N, \beta)=(f_*N, \beta')$ with
$$\begin{tikzcd}
\beta'\colon L_{B/\KK}[-1]\arrow[r] & f_*L_{B'/\KK}[-1]\arrow[r, "f_*\beta"] & f_*N
\end{tikzcd}$$
This admits a left adjoint, sending a derivation $(M, \alpha)$ to the derivation computed as the pushout of the diagram of $B'$-modules
$$\begin{tikzcd}
L_{B'/\KK}[-1] & f^*L_{B/\KK}[-1]\arrow[r, "f^*\alpha"]\arrow[l] & f^*M.
\end{tikzcd}$$
Consequently, the projection $\caMod\rt \DAlg_\KK$ is a cocartesian fibration as well.
\end{remark}
\begin{remark}\label{rem:cocartesian etale case}
Let $f\colon B\rt B'$ be a map of derived rings with $L_{B'/B}\simeq 0$. Then the induced adjoint pair from Remark \ref{rem:pushout along cotangent} has a simpler description: the left adjoint sends $(M, \alpha)$ to the derivation $(f^*M, f^*\alpha)$, with $f^*\alpha\colon L_{B'/\KK}[-1]\simeq f^*L_{B/\KK}[-1]\rt f^*M$. In other words, the adjoint pair is simply given by the functors
$$\begin{tikzcd}
f^*\colon \cAMod{B}{\KK}\arrow[r, yshift=1ex] & \cAMod{B'}{\KK}\arrow[l, yshift=-1ex]\colon f_*
\end{tikzcd}$$
taking base change and direct image at the level of modules.
\end{remark}
\begin{definition}\label{def:sqz-cot adjunction}
We will write
\begin{equation}\label{diag:sqz-cot adjunction}\begin{tikzcd}
\DAlg_{\KK}^{\Delta^{1}}\arrow[rr, "\cot", yshift=1ex]\arrow[rd, "\mm{ev}_1"{swap}] & & \caMod\arrow[ld]\arrow[ll, yshift=-1ex, "\sqz"] \\
& \DAlg_\KK.
\end{tikzcd}\end{equation}
for the relative adjunction defined as follows:
\begin{enumerate}
\item The right adjoint $\sqz$ sends a derivation $\big(B, M, \alpha\big)$ to the square zero extension $B\oplus_\alpha M\to B$ classified by $\alpha$. More precisely, this is given by the fibre product in the $\infty$-category of maps of derived $\KK$-algebras
$$\begin{tikzcd}
{[B\oplus_\alpha M\to B]}\arrow[d]\arrow[r] & {[B\to B]}\arrow[d, "0"]\\
{[B\to B]}\arrow[r, "\alpha"] & {[B\oplus M[1]\to B]}.
\end{tikzcd}$$
\item The left adjoint $\cot$ is given by
$$\begin{tikzcd}
{[A\to B]}\arrow[r, mapsto] & \big(B, \  L_{B/\KK}[-1]\to L_{B/A}[-1]\big).
\end{tikzcd}$$
\end{enumerate}
\end{definition}

\begin{proof}[Proof of  \Cref{thm:comonad}]
The proof is essentially the same as in \cite{BM19}, although one cannot apply the axiomatic argument from loc.\ cit.\ (nor the modification in Section 6) stricto sensu. To start, notice that Corollary \ref{cor:sqz is complete aft} implies that for each animated $\KK$-algebra $B$, the adjoint pair \eqref{diag:sqz-cot adjunction} restricts to an adjoint pair $\cot\colon \SCR_{\KK/B}^{\wedge \aft}\leftrightarrows \cAAPerf{B}{\KK}\colon \sqz$. We have to verify that this adjoint pair is comonadic. To see that $\cot$ is conservative, let $(A\to B)\rt (A'\to B)$ be a map in $\SCR_{\KK/B}^{\wedge \aft}$ that induces an equivalence on cotangent fibres. The induced map $\adic(A\to B)\rt \adic(A'\to B)$ then induces an equivalence on the associated graded by  \Cref{lem:graded adic}. Since the adic filtrations are complete by definition, this implies that $A\rt A'$ is an equivalence.

Next, let $A^\bullet$ be a cosimplicial diagram in $\SCR^{\wedge \aft}_{\KK/B}$ such that $\cot(A^\bullet)$ admits a splitting in $\cAAPerf{B}{\KK}$. Let $\Tot(A^\bullet)$ be the totalisation of $A^\bullet$, computed in the $\infty$-category $\DAlg_{\KK/B}$ of (not necessarily connective) derived $\KK$-algebras augmented over $B$. We need to show that $\Tot(A^\bullet)$ is complete almost finitely presented and that the natural map $\cot(\Tot(A^\bullet))\rt \Tot(\cot(A^\bullet))$ is an equivalence.

To see this, let $A^\bullet_{\mm{ad}}=\adic(A^\bullet\to B)$ denote the associated diagram of complete almost finitely presented derived filtered algebras. Recall that there are functors
$$\begin{tikzcd}
\grDAlg_\KK & \FilDAlg_\KK \arrow[r, "\aug"]\arrow[l, "\gr"{swap}] & \SurDAlg_\KK
\end{tikzcd}$$
preserving both limits (computed in unbounded $\KK$-modules) and colimits, and commuting with the functors taking cotangent fibres. In particular, $\aug(\Tot(A^\bullet_\mm{ad}))\simeq \Tot(\aug(A^\bullet_\mm{ad}))\simeq \Tot(A^\bullet)$ and there is a commuting square
$$\begin{tikzcd}
\aug\big(\cot(\Tot(A^\bullet_\mm{ad}))\big)\arrow[d, "\sim"{swap}]\arrow[r] & \aug\big(\Tot(\cot(A^\bullet_\mm{ad}))\big) \arrow[d, "\sim"]\\
\cot(\Tot(A^\bullet))\arrow[r] & \Tot(\cot(A^\bullet)).
\end{tikzcd}$$
It therefore suffices to verify that:
\begin{enumerate}
\item[(a)] the derived filtered ring $\Tot(A^\bullet_\mm{ad})$ is complete almost finitely augmented (hence connective).
\item[(b)] the filtered cotangent fibre preserves this totalisation (in unbounded filtered modules).
\end{enumerate}
For (a), note that $\Tot(A^\bullet_\mm{ad})$ is complete as   a limit of complete algebras. Its associated graded is 
$$
\gr(\Tot(A^\bullet_\mm{ad}))\simeq \Tot\big(\gr(A^\bullet_\mm{ad})\big)\simeq \Tot\big[\LSym_{B}\big(\cot(A^\bullet \to B)\big)\big]\simeq \LSym_{B}\big(\Tot[\cot(A^\bullet\to B)]\big).
$$ 
Here the second equivalence is  \Cref{lem:graded adic} and the last equivalence uses that the augmented cosimplicial diagram $\Tot[\cot(A^\bullet \to B)]\rt \cot(A^\bullet \to B)$ admits a splitting, so that $\LSym_B$ (or any other functor) preserves this totalisation. Since $\Tot[\cot(A^\bullet \to B)]$ is almost perfect by assumption, we conclude that $\gr\big(\Tot(A^\bullet_\mm{ad})\big)$ is an almost finitely augmented graded algebra (in particular connective). It follows that the complete derived filtered algebra $\Tot(A^\bullet_\mm{ad})$ is connective as well, and  \Cref{lem:complete aft} implies that it is complete almost finitely augmented.

For (b), note that there is a commuting square
$$\begin{tikzcd}
\gr\big(\cot(\Tot(A^\bullet_\mm{ad}))\big)\arrow[d, "\sim"{swap}]\arrow[r] & \gr\big(\Tot(\cot(A^\bullet_\mm{ad}))\big) \arrow[d, "\sim"]\\
\cot\big(\Tot[\gr(A^\bullet_\mm{ad})]\big)\arrow[r] & \Tot\big[\cot(\gr(A^\bullet_\mm{ad}))\big]
\end{tikzcd}$$
since taking the associated graded commutes with totalisations (in the unbounded setting) and with cotangent complexes. Since the cosimplicial diagram $\gr(A^\bullet_\mm{ad})\simeq \LSym_{B}\big(\cot(A^\bullet \to B)\big)$ has a \textit{split} totalisation (preserved by any functor), the bottom map is an equivalence. Consequently, the map 
\begin{equation}\label{eq:filtered tot-cot}
\cot(\Tot(A^\bullet_\mm{ad}))\rt \Tot(\cot(A^\bullet_\mm{ad}))
\end{equation}
induces an equivalence on the associated graded. The domain is an almost perfect connective filtered module: this follows from  \Cref{prop:filtered aft}, since $\Tot(A^\bullet_\mm{ad})$ is complete almost finitely presented by part (a). In particular, $\cot(\Tot(A^\bullet_\mm{ad}))$ is a complete filtered module. The codomain of the map \eqref{eq:filtered tot-cot} is a limit of complete (in fact, almost perfect connective) filtered $B$-modules and is therefore complete itself. We conclude that \eqref{eq:filtered tot-cot} is an equivalence, as desired.
\end{proof}

In particular,  \Cref{thm:comonad} shows that for every animated surjection $(A\to B)$ which is complete almost finitely augmented, there exists a cosimplicial diagram of square zero extensions of $B$ whose totalisation is equivalent to $A$: this is simply the comonadic cobar resolution of $A$. For later purposes, we record the following variant of this construction:
\begin{corollary}\label{cor:comonadic cobar}
Let $A\to B\in \SCR_{\KK/B}^{\wedge, \aft}$ and let $M\in \APerf_{B, \geq 0}$. For any map $\alpha\colon A\rt B\oplus M$ of animated $\KK$-algebras augmented over $B$, there exists a natural coaugmented cosimplicial diagram 
$$\begin{tikzcd}
A\arrow[r] & A^\bullet\arrow[r] & B\oplus M
\end{tikzcd}$$
in $(\SCR_{\KK/B}^{\wedge, \aft})_{/B\oplus M}$ with the following properties:
\begin{enumerate}
\item For each $i$, the map $A^i\to B\oplus M$ is equivalent to $\sqz(M^i, \alpha^i)\to \sqz(M, 0)$ for a map $(M^i, \alpha^i)\to (M, 0)$ in $\cAAPerf{B}{\KK}$.
\item The augmented cosimplicial diagram $\cot(A)\to \cot(A^\bullet)$ in $\cAAPerf{B}{\KK}$ is split.
\item $A\simeq \Tot(A^\bullet)$.
\end{enumerate}
\end{corollary}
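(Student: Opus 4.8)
The plan is to obtain the coaugmented cosimplicial diagram $A \to A^\bullet \to B \oplus M$ by applying the comonadic cobar construction of \Cref{thm:comonad} \emph{relative to} the augmentation $\alpha\colon A \to B \oplus M$, rather than relative to $A \to B$. Concretely, let $\cat{C}$ be the comonad $\cot \circ \sqz$ acting on $\cAAPerf{B}{\KK}$; since $\cot\colon \SCR_{\KK/B}^{\wedge,\aft} \to \cAAPerf{B}{\KK}$ is comonadic, $A$ is recovered as the totalisation of the cosimplicial cobar object $\sqz(\cat{C}^{\bullet+1}(\cot A))$. The map $\alpha$, being a morphism over $B$ from $A$ to the square-zero extension $\sqz(M,0)$, corresponds under the adjunction to a map $\cot(A) \to (M,0)$ in $\cAAPerf{B}{\KK}$. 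Feeding this into the cobar construction, one takes $(M^i,\alpha^i) \defeq \cat{C}^{i+1}(\cot A)$, with the canonical map $\cat{C}^{i+1}(\cot A) \to (M,0)$ obtained by iterating the counit (in the first $i$ slots) composed with the chosen map $\cot A \to (M,0)$ (in the last slot); one then sets $A^i \defeq \sqz(M^i,\alpha^i)$. This produces the cosimplicial object in $(\SCR_{\KK/B}^{\wedge,\aft})_{/B\oplus M}$ required for (1).

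For (2), the augmented cosimplicial object $\cot(A) \to \cat{C}^{\bullet+1}(\cot A)$ is the standard cobar resolution attached to the comonad $\cat{C}$, hence admits the usual extra codegeneracies coming from the counit; therefore it is split as an augmented cosimplicial object in $\cAAPerf{B}{\KK}$. Since $\cot$ preserves this split totalisation automatically, and since $\cot(A^i) = \cot(\sqz(\cat{C}^{i+1}(\cot A)))$ with $\cot\sqz = \cat{C}$, the augmented diagram $\cot(A) \to \cot(A^\bullet)$ is precisely this split cobar resolution; this gives (2). For (3), $\cot$ is comonadic by \Cref{thm:comonad}, so it detects the totalisation: the natural map $A \to \Tot(A^\bullet)$ becomes an equivalence after applying $\cot$ (it is then the equivalence $\cot(A) \xrightarrow{\sim} \Tot(\cot(A^\bullet))$ coming from the split resolution), and conservativity of $\cot$ together with the fact that $\SCR_{\KK/B}^{\wedge,\aft}$ is closed under the relevant totalisations — established inside the proof of \Cref{thm:comonad} — upgrades this to an equivalence $A \simeq \Tot(A^\bullet)$ in $\SCR_{\KK/B}^{\wedge,\aft}$ itself.

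The main point requiring care is not any of the three listed properties in isolation but the bookkeeping that makes everything \emph{natural} in the data $(A \to B, M, \alpha)$, as asserted in the statement. For this I would package the construction functorially: the comonad $\cat{C}$ and its cobar resolution are functorial in $B$ via \Cref{cons:modules under cotangent} and Remark \ref{rem:pushout along cotangent}, and the assignment sending $\alpha$ to the map $\cot(A)\to (M,0)$ is visibly functorial; composing these gives a functor from the category of such triples to cosimplicial objects. The one genuinely substantive input — that each $A^i$ lands in $\SCR_{\KK/B}^{\wedge,\aft}$ and that the totalisation stays there — is exactly \Cref{cor:sqz is complete aft} together with the stability-under-split-totalisation argument already carried out in the proof of \Cref{thm:comonad}, so no new estimates are needed; the work is purely organisational.
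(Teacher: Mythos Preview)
Your approach is correct and essentially the same as the paper's: both take the comonadic cobar resolution furnished by \Cref{thm:comonad}. The paper streamlines the bookkeeping by observing that a comonadic adjunction $U\colon \cat{D}\leftrightarrows \cat{C}\colon R$ remains comonadic on slice categories $\cat{D}_{/R(c)}\leftrightarrows \cat{C}_{/c}$, so the cobar resolution of $\alpha\colon A\to \sqz(M,0)$ taken \emph{in the slice} already carries the structure maps to $B\oplus M$, making property (1) automatic rather than something to assemble by hand. One minor indexing slip: since $A^i=(\sqz\circ\cot)^{i+1}(A)=\sqz\big(\cat{C}^{i}(\cot A)\big)$, the correct identification is $(M^i,\alpha^i)=\cat{C}^{i}(\cot A)$ rather than $\cat{C}^{i+1}(\cot A)$.
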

\begin{proof}
Suppose that $U\colon \cat{D}\leftrightarrows \cat{C}\colon R$ is a comonadic adjunction. For every object $c\in \cat{C}$, this induces an adjoint pair $U\colon \cat{D}_{/R(c)} \leftrightarrows \cat{C}_{/c}\colon C$ on over-categories, which is comonadic as well. It follows that every object $\alpha\colon d\to R(c)$ in $\cat{D}_{/R(c)}$ admits a comonadic cobar resolution of the form 
$$\begin{tikzcd}
d\arrow[r] & (RU)^\bullet(d) \arrow[r] & R(c).
\end{tikzcd}$$
Here each $\alpha^n\colon (RU)^n(d)\to R(c)$ is the image under $R$ of the map $U(RU)^{n-1}(d)\to UR(c)\to c$, where the first map is the image of $\alpha^{n-1}$ under the forgetful functor $U$. This has the property that the augmented cosimplicial object $U(d)\to U(RU)^\bullet(d)$ in $\cat{C}_{/c}$ is split and (hence) that $d\simeq \Tot((RU)^\bullet(d))$. We now apply this to the situation where $(U, R)=(\cot, \sqz)$ is the comonadic adjunction \eqref{diag:ft cot-sqz adjunction}, $c$ is the derivation $(M, 0)\in \cAAPerf{B}{\KK}$ and $d\rt R(c)$ is the map $\alpha\colon A\rt B\oplus M$.
\end{proof}

\newpage

\section{Partition Lie algebroids}\label{PartitionLieAlgebroids}
Recall that a \emph{Lie algebroid} over a smooth scheme $Z$ over a field $k$ is a quasi-coherent sheaf $\mathfrak{g}\in \QC^{\heartsuit}_Z$ equipped with an $\mathcal{O}_Z$-linear anchor map $\rho\colon \mf{g}\to T_{Z/\KK}$ to the tangent sheaf and a $k$-linear Lie bracket $[-, -]\colon \mf{g}\otimes_k \mf{g}\to \mf{g}$ satisfying the Leibniz rule. The goal of this section will be to introduce a derived (and shifted) variant of such Lie algebroids over a qcqs locally coherent derived $\KK$-scheme $X$, which we will refer to as \emph{partition Lie algebroids}. More precisely, we will define a partition Lie algebroid to be given by a map $\rho\colon \mathfrak{g}\rt T_{X/\KK}[1]$ in the $\infty$-category
$$
\QC^\vee_X=\text{Ind}(\Coh_X^{\op})
$$
of pro-coherent sheaves on $X$, together with the structure of an algebra over a certain monad
$$\begin{tikzcd}
\Lie^\pi_{\Delta, X/\KK}\colon \big(\QC^\vee_X\big)_{/T_{X/\KK}[1]}\arrow[r] & \big(\QC^\vee_X\big)_{/T_{X/\KK}[1]}.
\end{tikzcd}$$
Here $T_{X/\KK}=L_{X/\KK}^\vee$ denotes the pro-coherent dual of the cotangent sheaf of $X$ (Example \ref{ex:dual cotangent}). One can therefore think of $\rho\colon \mf{g}\to T_{X/\KK}[1]$ as the \emph{anchor map} underlying the partition Lie algebroid.

Our construction of the monad $\Lie^\pi_{\Delta, X/\KK}$ will be based on two ingredients:
\begin{enumerate}\setlength{\itemsep}{5pt}
\item Let $\cAAPerf{X}{\KK}$ denote the $\infty$-category of derivations $\alpha\colon L_{X/\KK}[-1]\to M$ with values in a connective, almost perfect quasi-coherent sheaf on $X$. We then consider the comonad
$$\begin{tikzcd}
\coLie^\pi_{\Delta, X/\KK}\colon \cAAPerf{X}{\KK}\arrow[r] & \cAAPerf{X}{\KK}; \quad (M, \alpha)\arrow[r] & \cot\big(\mc{O}_X\oplus_\alpha M\big)
\end{tikzcd}$$ 
sending each derivation $\alpha\colon L_{X/\KK}[-1]\to M$ to the cotangent fibre of the square zero extension $\mc{O}_X\oplus_\alpha M$.

\item Taking pro-coherent duals determines a fully faithful functor (\Cref{lem:slice duality})
$$\begin{tikzcd}
(-)^\vee\colon \cat{Der}_{X/\KK, \geq 0}^{\mm{aperf}, \op} \arrow[r, hookrightarrow]  & \big(\QC^\vee_X\big)_{/T_{X/\KK}[1]}.
\end{tikzcd}$$
Its essential image consists of maps $F\to T_{X/\KK}[1]$ where $F$ is dually almost perfect (\Cref{def:dually almost perfect}) and of non-positive tor-amplitude (\Cref{def:tor-ampl}).
\end{enumerate}
Our main aim will then be to prove the following:
\begin{theorem}\label{thm:pla scheme}
Let $X$ be a locally coherent qcqs derived $\KK$-scheme. There exists a unique extension
$$\begin{tikzcd}[column sep=4pc]
\cat{Der}_{X/\KK, \geq 0}^{\mm{aperf}, \op}\arrow[r, "(\coLie^\pi_{\Delta, X/\KK})^{\op}"]\arrow[d, hookrightarrow, "(-)^\vee"{swap}] & \cat{Der}_{X/\KK, \geq 0}^{\mm{aperf}, \op}\arrow[d, hookrightarrow, "(-)^\vee"]\\ 
\big(\QC^\vee_X\big)_{/T_{X/\KK}[1]}\arrow[r, "\Lie^\pi_{\Delta, X/\KK}", dotted] & \big(\QC^\vee_X\big)_{/T_{X/\KK}[1]}
\end{tikzcd}$$
to a monad $\Lie^\pi_{\Delta, X/\KK}$ that preserves sifted colimits.
\end{theorem}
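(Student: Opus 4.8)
The plan is to construct the monad $\Lie^\pi_{\Delta, X/\KK}$ by transporting the comonad $\coLie^\pi_{\Delta, X/\KK}$ across the fully faithful embedding $(-)^\vee$, and then to argue that this transported endofunctor extends in an essentially unique way along the inclusion $\cat{Der}_{X/\KK, \geq 0}^{\mm{aperf}, \op}\hookrightarrow \big(\QC^\vee_X\big)_{/T_{X/\KK}[1]}$ to a sifted-colimit-preserving endofunctor, after which one checks the monad structure is inherited. The key inputs are: (i) the comonadicity statement \Cref{thm:comonad} (which gives, upon dualising, that $\coLie^\pi_{\Delta, X/\KK}$ is well-behaved on $\cat{Der}_{X/\KK, \geq 0}^{\mm{aperf}}$); (ii) the fact that $\big(\QC^\vee_X\big)_{/T_{X/\KK}[1]}$ is generated under sifted colimits by the essential image of $(-)^\vee$, i.e.\ by maps $F\to T_{X/\KK}[1]$ with $F$ dually almost perfect of tor-amplitude $\leq 0$; and (iii) that $\coLie^\pi_{\Delta, X/\KK}$ itself, being built from cotangent fibres of square-zero extensions, sends the relevant diagrams to diagrams whose pro-coherent duals have a colimit presentation.

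First I would work affine-locally. For $X = \Spec(B)$ with $B$ a coherent eventually coconnective animated $\KK$-algebra, $\cat{Der}_{X/\KK, \geq 0}^{\mm{aperf}} = \cAAPerf{B}{\KK}$ and the comonad $\coLie^\pi_{\Delta, B/\KK}(M,\alpha) = \cot(B\oplus_\alpha M)$ is exactly the comonad of the comonadic adjunction $(\cot, \sqz)$ of \Cref{thm:comonad}, via $\SCR^{\wedge\aft}_{\KK/B}$. Dualising the inclusion $\cAAPerf{B}{\KK}\hookrightarrow \big(\QC^\vee_B\big)_{/T_{B/\KK}[1]}$, the composite $(-)^\vee\circ (\coLie^\pi_{\Delta, B/\KK})^{\op}$ is a functor $\cat{Der}^{\mm{aperf},\op}_{B/\KK,\geq 0}\to \big(\QC^\vee_B\big)_{/T_{B/\KK}[1]}$, and I would extend it along $(-)^\vee$ by left Kan extension. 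To see the Kan extension computes a monad, I would invoke the general principle that a comonad on $\cat{C}$ whose underlying endofunctor extends to a sifted-colimit-preserving endofunctor on an $\infty$-category $\cat{D}\supseteq \cat{C}^{\op}$ generated under sifted colimits by $\cat{C}^{\op}$, with $\cat{C}^{\op}$ closed under the relevant bar-type diagrams, transports to a monad on $\cat{D}$; concretely, the comonad comultiplication and counit, being natural transformations between functors that preserve the generating sifted colimits, extend uniquely. This is the same mechanism used for partition Lie algebras in \cite{BM19} and its globalisation in \cite{BCN21}; the role of eventual coconnectivity and coherence is to ensure $\Coh_X$ is well-behaved and that $(-)^\vee$ lands where claimed, and that tor-amplitude $\leq 0$ is preserved by $\coLie^\pi$ applied to square-zero extensions by connective almost perfect modules.

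Next I would globalise via descent. By property (3) of partition Lie algebroids (the descent statement) combined with the \'etale-local compatibility encoded in Remarks \ref{rem:pushout along cotangent} and \ref{rem:cocartesian etale case}, the locally defined monads $\Lie^\pi_{\Delta, U/\KK}$ for $U\subseteq X$ affine open glue: one checks that for an \'etale map $Y\to X$ the square relating $\coLie^\pi_{\Delta, Y/\KK}$ and $\coLie^\pi_{\Delta, X/\KK}$ commutes (using that $L_{Y/X}\simeq 0$, so pullback of derivations is just $f^*$ on modules, and that pro-coherent duality is compatible with this pullback), hence the dual squares for the monads commute, and the monad on $X$ is recovered as a limit over the affine opens. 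Uniqueness of the extension on $X$ then follows from uniqueness on each $U$ together with uniqueness of the limit.

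The main obstacle I expect is step two: showing that left Kan extension along $(-)^\vee$ of the dualised comonad is again a \emph{monad} and not merely an endofunctor — i.e.\ controlling the interaction between the comonad structure maps and the Kan extension, which requires knowing that the relevant structure maps are compatible with passage from $\cAAPerf{B}{\KK}$ to the larger category. The crux is that $\coLie^\pi_{\Delta, B/\KK}$, expressed as $\cot\circ\sqz$, when dualised becomes (essentially) a bar construction $\Barr$ of the shifted tangent complex functor, and one must check this bar construction commutes with the sifted colimits used in the Kan extension and that its iterates reproduce the comonad's iterates; this is where the hypotheses (coherence of $\KK$, local coherence of $X$, almost perfectness) are genuinely used, to guarantee finiteness so that duality is fully faithful and colimit presentations behave. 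Once the affine case is pinned down, the schematic statement is a formal descent argument.
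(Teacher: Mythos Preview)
Your overall architecture---establish the affine case, then globalise by \'etale/Zariski descent---matches the paper's proof exactly. The descent step is essentially as you describe: using the naturality of the affine monads under \'etale base change (\Cref{prop:lie algebroid naturality}) one takes the limit of the adjunctions $(\QC^\vee_U)_{/T_{U/\KK}[1]}\leftrightarrows \LieAlgd_{\mc{O}(U)/\KK}$ over affine opens, and uniqueness follows from \Cref{lem:lke procoh scheme}.

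However, your affine argument has a real gap at its technical core. You state that $(\QC^\vee_B)_{/T_{B/\KK}[1]}$ is ``generated under sifted colimits'' by the image of $(-)^\vee$, and propose to extend by left Kan extension. This is not quite right: $(\QC^\vee_B)_{/T_{B/\KK}[1]}$ is the sifted colimit completion of $(\dAPerf_B^{\weirdleq 0})_{/T_{B/\KK}[1]}$ \emph{relative to} a set $\cat{R}_B$ of colimit diagrams that already exist in the small category---namely, the duals of (a) finite totalisations in $\APerf_{B,\geq 0}$ and (b) limits of almost eventually constant towers (see \Cref{prop:colimit completion slice} and \cite[Proposition 4.20]{BCN21}). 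For a sifted-colimit-preserving extension of $(\coLie^\pi)^{\op}$ to exist, the comonad must preserve \emph{these specific limits} in $\cAAPerf{B}{\KK}$. This is \Cref{lem:coLie properties}(3),(4), and it is not formal: the proof of (3) requires the filtration of Remark~\ref{rem:colie filtration} to reduce to the trivial-derivation case, where one can invoke excisivity of $\LSym^n$. Your phrase ``sends the relevant diagrams to diagrams whose pro-coherent duals have a colimit presentation'' gestures at this but does not identify the diagrams or the mechanism.

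Relatedly, the worry you flag about the monad structure is dissolved by the paper's framework rather than by ad hoc extension of structure maps: the universal property of the relative colimit completion gives a \emph{monoidal} equivalence between sifted-colimit-preserving endofunctors of $(\QC^\vee_B)_{/T_{B/\KK}[1]}$ preserving the small subcategory and endofunctors of $(\dAPerf_B^{\weirdleq 0})_{/T_{B/\KK}[1]}$ preserving $\cat{R}_B$. Monads therefore correspond to monads automatically; no separate check of comultiplication compatibility is needed. Finally, two minor points: the comonadicity \Cref{thm:comonad} is not used in constructing the monad (only \Cref{lem:coLie properties} is), and $B$ need not be eventually coconnective.
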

\begin{definition}\label{def:pla scheme}
We will refer to the monad $\Lie^\pi_{\Delta, X/\KK}$ of  \Cref{thm:pla scheme} as the \emph{partition Lie algebroid monad} and define the $\infty$-category of \emph{partition Lie algebroids}
$$
\LieAlgd_{X/\KK} = \Alg_{\Lie^\pi_{\Delta, X/\KK}}\Big(\big(\QC^\vee_X\big)_{/T_{X/\KK}[1]}\Big)
$$
to be its $\infty$-category of algebras.
\end{definition}
We will prove the affine case of  \Cref{thm:pla scheme} in Section \ref{sec:pla affine}. In Section \ref{sec:pla functorial} we will study the naturality of the $\infty$-category $\LieAlgd_{B/\KK}$ in the coherent animated $\KK$-algebra $B$. More precisely, we will prove the following in  \Cref{prop:lie algebroid naturality}:
\begin{enumerate}
\item[(a)] For each map $f\colon B\rt B'$ of that is almost finitely presented, there exists a functor
$$\begin{tikzcd}
f^\sharp\colon \LieAlgd_{B/\KK}\arrow[r] & \LieAlgd_{B'/\KK}.
\end{tikzcd}$$

\item [(b)] When $f$ is étale, the functor $f^\sharp=f^*$ is given by the usual inverse image at the level of the underlying pro-coherent module, and admits a right adjoint.
\end{enumerate}
We will then use this in Section \ref{sec:pla schemes} to prove  \Cref{thm:pla scheme} by a descent argument.

\subsection{Partition Lie algebroids on affine schemes}\label{sec:pla affine}
Let $B$ be an animated $\KK$-algebra and recall the adjoint pair from  \Cref{def:sqz-cot adjunction}
$$\begin{tikzcd}
\cot\colon \DAlg_{\KK/B}\arrow[r, yshift=1ex] & \cAMod{B}{\KK}\cocolon \sqz.\arrow[l, yshift=-1ex]
\end{tikzcd}$$
Here $\cAMod{B}{\KK}$ is the $\infty$-category of derivations $(M, \alpha)=(M, \alpha\colon L_{B/\KK}[-1]\to M)$ on $B$ (\Cref{def:derivation}). 
\begin{notation}\label{not:coLie algebroid comonad}
Let us write $\coLie^{\pi}_{\Delta, B/\KK}:=\cot\circ \sqz$ for the composite comonad
$$\begin{tikzcd}
\coLie^{\pi}_{\Delta, B/\KK}=\cot\circ \sqz\colon \cAMod{B}{\KK}\arrow[r] & \cAMod{B}{\KK}.
\end{tikzcd}$$
\end{notation}
\begin{remark}\label{rem:colie filtration}
Given a derivation $(M, \alpha)$, one can endow $\coLie^\pi_{\Delta, B/\KK}(M, \alpha)$ with a natural decreasing filtration, as follows. Consider $L_{B/\KK}[-1]$ as a filtered module in weight $0$ (i.e.\ $F^1=0$) and consider $M$ as a filtered module in weight $1$, i.e., $F^2M=0$ and $F^1M=M$. The derivation $\alpha$ then lifts to a derivation $\alpha\colon L_{B/\KK}[-1]\to F^\star M$ and we obtain a filtration
$$
F^\star\coLie^\pi_{\Delta, B/\KK}(M, \alpha) = \cot(B\oplus_\alpha F^\star M).
$$
Note that $L_{B/\KK}[-1]\to F^\star M$ is null on the associated graded, so that the associated graded of this filtration is equivalent to $\coLie^\pi_{\Delta, B/\KK}(M, 0)$.
\end{remark}

We will use the following notation throughout this paper: 
\begin{definition}\label{def:almost eventually constant}
	Let $\cat{C}$ be an $\infty$-category such that the inclusion $\tau_{\leq n}\cat{C}\subseteq \cat{C}$ of its $n$-truncated objects admits a left adjoint $\tau_{\leq n}$ for each $n\in \mathbb{N}$. We will say that a tower $$\dots\to X_1\to X$$ is \emph{almost eventually constant} if for each $n\in \mathbb{N}$, there exists an $m$ such that $$\tau_{\leq n}X_{i+1}\to \tau_{\leq n}X_i$$ is an equivalence for all $i\geq m$.
\end{definition}

\begin{lemma}\label{lem:coLie properties}
The comonad $\coLie^{\pi}_{\Delta, B/\KK}\colon \cAMod{B}{\KK}\rt \cAMod{B}{\KK}$ has the following properties:
\begin{enumerate}
\item It preserves derivations $(M, \alpha)$ where $M$ is an almost perfect connective $B$-module.
\item It preserves sifted colimits.
\item For each finite cosimplicial diagram $(M^\bullet, \alpha^\bullet)$ in $\cAMod{B}{\KK}$ such that $\Tot(M^\bullet)$ and all $M^t$ are in $\APerf_{B,\geq 0}$, there is a natural equivalence
$$
\coLie^\pi_{\Delta, B/\KK}\big(\Tot(M^\bullet, \alpha^\bullet)\big)\simeq \Tot\big(\coLie^\pi_{\Delta, B/\KK}(M^\bullet, \alpha^\bullet)\big).
$$
\item For any tower $\dots\to (M_2, \alpha_2)\to (M_1, \alpha_1)$ in $\cAMod{B}{\KK}$ such that 
$ \dots\to M_2\to M_1 $ is eventually constant (i.e.\ 
the connectivity of $\fib(M_{n+1}\to M_n)$ tends to $\infty$), there is an equivalence
$$
\coLie^\pi_{\Delta, B/\KK}(\lim_n (M_n, \alpha_n))\simeq \lim_n\big(\coLie^\pi_{\Delta, B/\KK}(M_n, \alpha_n)\big).
$$
\end{enumerate}
\end{lemma}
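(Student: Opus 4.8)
The plan is to factor $\coLie^{\pi}_{\Delta,B/\KK}=\cot\circ\sqz$ and exploit that, by \Cref{def:sqz-cot adjunction}, $\sqz$ is a right adjoint (so preserves all limits) while $\cot$ is a left adjoint (so preserves all colimits). Assertion (1) is then immediate: if $M$ is almost perfect connective then $\sqz(M,\alpha)=B\oplus_\alpha M$ is complete almost finitely augmented by \Cref{cor:sqz is complete aft}, so $\coLie^{\pi}_{\Delta,B/\KK}(M,\alpha)=\cot(B\oplus_\alpha M)$ lands in $\cAAPerf{B}{\KK}$ by the comonadic adjunction of \Cref{thm:comonad}. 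For (2): since $\cot$ preserves colimits, it suffices that $\sqz$ preserve sifted colimits, and I would check this on underlying $\KK$-modules. The forgetful functor $\DAlg_{\KK/B}\to\Mod_\KK$ is conservative and preserves sifted colimits, and the underlying $\KK$-module of $\sqz(M,\alpha)=B\oplus_\alpha M$ is $\fib\big(B\to M[1]\big)$ — the fibre of the underlying morphism of the classifying derivation — which depends on $(M,\alpha)$ in a way preserving all colimits, since $\fib$ agrees with $\cofib[-1]$ in the stable $\infty$-category $\Mod_\KK$. One also uses that sifted colimits and equivalences in the coslice $\cAMod{B}{\KK}=(\Mod_B)_{L_{B/\KK}[-1]/}$ are created by the forgetful functor to $\Mod_B$.

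\textbf{Reducing (3) and (4).} The functor $\sqz$ again settles the easy half: since it preserves limits, $\sqz(\Tot(M^\bullet,\alpha^\bullet))\simeq\Tot\,\sqz(M^\bullet,\alpha^\bullet)$, and similarly for the tower in (4). Each term, and the limit itself — which has the shape $B\oplus_\beta N$ with $N$ the underlying module of the limit, almost perfect connective by hypothesis in (3) and by eventual constancy in (4) — is complete almost finitely augmented by \Cref{cor:sqz is complete aft}. So both statements reduce to showing that \emph{$\cot$ preserves the relevant limit of complete almost finitely augmented algebras}. I would prove this by the filtered-to-graded device of the proof of \Cref{thm:comonad}: pass to the adic filtrations, use that $\aug$ and $\gr$ preserve limits and commute with the cotangent fibre (\Cref{rem:cotangent adic}), and thereby reduce to showing (a) the totalization, resp.\ sequential limit, of the adic filtrations is complete almost finitely augmented, and (b) the \emph{filtered} cotangent fibre preserves it. Unlike in \Cref{thm:comonad}, these limits are not split, so splitness is unavailable; instead I would use the weight decomposition $\coLie^{\pi}_{\Delta,B/\KK}(-,0)\simeq\bigoplus_{n\geq1}\coLie^{\pi,(n)}_{\Delta,B/\KK}(-,0)$ (cf.\ \Cref{rem:colie filtration}) together with the connectivity estimate — established via the partition complex, as in \cite{BM19} — that $\coLie^{\pi,(n)}_{\Delta,B/\KK}(M,0)$ is roughly $(n-1)$-connective for $M$ connective. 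This makes the sum finite in each homotopical degree, so it is enough that each weight-$n$ summand — a degree-$n$ polynomial functor assembled from the $n$-fold $B$-linear tensor power, a fixed complex, and $\Sigma_n$-(co)invariants — commute with the limit at hand.

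\textbf{Where the difficulty sits.} The two cases diverge at precisely this point, and I expect (3) to be the main obstacle. Case (4) is comparatively soft: $\LSym_B$ and each $\coLie^{\pi,(n)}_{\Delta,B/\KK}$ send maps with highly connective fibre to maps with highly connective fibre, hence carry eventually constant towers to eventually constant towers, and the truncations $\tau_{\leq k}\coLie^{\pi}_{\Delta,B/\KK}(M)$ depend only on the truncations $\tau_{\leq k'}M$ for a suitable $k'$; one then checks the comparison map $\coLie^{\pi}_{\Delta,B/\KK}(\lim_n(M_n,\alpha_n))\to\lim_n\coLie^{\pi}_{\Delta,B/\KK}(M_n,\alpha_n)$ after applying each $\tau_{\leq k}$, where the tower is literally constant for $n$ large, and concludes because equivalences in $\cAMod{B}{\KK}$ are detected on underlying modules. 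For the finite totalizations of (3), totalization can lower connectivity, so the truncation trick no longer closes the argument, and the hypothesis that $\Tot(M^\bullet)$ be almost perfect connective becomes essential. Here I would reduce the $n$-fold tensor power — and hence each polynomial summand — to the case of \emph{perfect} $B$-modules, for which tensor powers commute with finite limits, since perfect modules are dualizable and a finite totalization of perfect modules is perfect; one then bootstraps to almost perfect connective modules using the coherence of $\KK$, which guarantees that $\APerf_{B,\geq0}$ is stable under the relevant finite limits and is well approximated by perfect complexes. Carrying out this reduction to the perfect case compatibly in the cosimplicial variable is the technical heart of the proof of (3).
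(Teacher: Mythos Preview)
Your treatment of (1), (2) and (4) is essentially the paper's. For (1) the paper cites \Cref{lem:sqz is aft} directly rather than routing through \Cref{thm:comonad}, but your version is fine. For (4) the paper gives exactly your connectivity argument, phrased without the weight decomposition: if $\fib(M_{n+1}\to M_n)$ is $i$-connective then so is the fibre of $B\oplus_{\alpha_{n+1}}M_{n+1}\to B\oplus_{\alpha_n}M_n$, hence the relative cotangent complex is $(i+1)$-connective, hence $\coLie^\pi_{\Delta}(M_{n+1})\to\coLie^\pi_{\Delta}(M_n)$ has $i$-connective fibre and the tower stabilises after each truncation.

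For (3) your route and the paper's diverge, and the paper's is considerably shorter. You propose to use a connectivity estimate to reduce to individual weight pieces, then for each weight piece reduce to perfect modules via dualizability, then bootstrap back to almost perfect. This can be pushed through, but it is not how the paper argues, and you are missing the single fact that makes the whole thing immediate: \emph{$n$-excisive functors between stable $\infty$-categories preserve finite totalisations} \cite[Proposition 3.37]{BM19}. The paper lifts to the filtered setting via \Cref{rem:colie filtration}, observes that the filtered cotangent fibre $\cot(B\oplus_\alpha F^\star M)$ is almost perfect (hence complete) whenever $M$ is almost perfect, by \Cref{prop:filtered aft} and \Cref{lem:sqz is aft}; this reduces the comparison map to the associated graded, where $\alpha$ becomes null. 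One is then left with showing that the functor $M\mapsto \cot(B\oplus M(1))$ from $\Mod_B$ to $\GrMod_B^{\geq 1}$ preserves finite totalisations --- for \emph{all} $M$, with no perfectness hypothesis. Via the bar resolution this functor is built from the $\LSym^n_B$ by compositions and colimits; each $\LSym^n_B$ is $n$-excisive, hence preserves finite totalisations, and the class of functors preserving finite totalisations is closed under composition and (since finite limits commute with all colimits in a stable $\infty$-category) under colimits. So the almost-perfectness hypothesis is used only for completeness of the filtration, not in the weight-by-weight analysis, and no reduction to perfect modules or bootstrap is needed.
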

\begin{proof}
Property (1) follows from  \Cref{lem:sqz is aft} and (2) follows from the fact that $\cot$ and $\sqz$ both preserve sifted colimits. For (3), we use the filtration from Remark \ref{rem:colie filtration}. Since limits in $\cAMod{B}{\KK}$ are computed on the underlying $B$-modules, it then suffices to show that the natural map
$$
\mu\colon \cot\big(\Tot(B\oplus_{\alpha^\bullet} F^\star M^\bullet)\big)\rt \Tot\big(\cot(B\oplus_{\alpha^\bullet} F^\star M^\bullet)\big)
$$
is an equivalence of filtered modules. \Cref{prop:filtered aft} and  \Cref{lem:sqz is aft} imply that $\cot\big(B\oplus_{\alpha} F^\star M\big)$ is an almost perfect filtered $B$-module whenever $M$ is almost perfect. In particular, the domain and codomain of $\mu$ are complete, and it remains to verify that the induced map on the associated graded is an equivalence. As the maps $\alpha^\bullet$ are null on the associated graded, it will be enough to show that the functor
$$\begin{tikzcd}
F\colon \Mod_B\arrow[r] & \GrMod_B^{\geq 1}; \quad M\arrow[r] & \cot\big(B\oplus M(1)\big)
\end{tikzcd}$$
preserves finite totalisations, where $\GrMod_B^{\geq 1}$ is the $\infty$-category of \textit{strictly} positively graded $B$-modules. Using the bar resolution, the functor $F$ can be obtained via colimits and compositions from the functors $\LSym^n_B\colon \GrMod_B^{\geq 1}\rt \GrMod_B^{\geq 1}$. These are $n$-excisive and hence preserve finite totalisations \cite[Proposition 3.37]{BM19}. Since the functors preserving finite totalisations are stable under composition and colimits, the result follows.

For (4), note that if $\fib(M_{n+1}\to M_n)$ is $i$-connective, then $B\oplus_{\alpha_{n+1}} M_{n+1}\rt B\oplus_{\alpha_n} M_n$ has $i$-connective fibres. Consequently, its relative cotangent complex is $(i+1)$-connective \cite[Corollary 25.3.6.4]{SAG}. Taking cotangent fibres, one finds that $\coLie^\pi_\Delta(M_{n+1})\rt \coLie^\pi_\Delta(M_n)$ has $i$-connective fibres. The tower therefore stabilises after $k$-truncation for any $k$, from which the result follows.
\end{proof}
Let us now suppose that $B$ is a coherent animated $\KK$-algebra and consider the dual picture, using linear duality in the $\infty$-category $\QC^\vee_B=\mm{Ind}(\Coh_B^{\op})$ of pro-coherent $B$-modules. We refer to Appendix \ref{sec:dag} for the theory of pro-coherent sheaves and just recall that there are left adjoint functors (cf.\ Observation \ref{obs:t-structure} and  \Cref{def:pro-coh dual})
$$\begin{tikzcd}
\upiota\colon \Mod_B\arrow[r] & \QC^\vee_B & &  (-)^\vee\colon \Mod_B\arrow[r] & \QC^{\vee, \op}_B.
\end{tikzcd}$$
The first functor is fully faithful on eventually connective $B$-modules and the second is fully faithful on almost perfect $B$-modules. We will write $\dAPerf_B\subseteq \QC^\vee_B$ for the full subcategory of \emph{dually almost perfect} pro-coherent $B$-modules (\Cref{def:dually almost perfect}). Explicitly, one can identify $\QC^\vee_B$ with the $\infty$-category of exact functors $F\colon \Coh_B\rt \Sp$. The pro-coherent module $\upiota(M)$ then sends $K\mapsto K\otimes_B M$ and the pro-coherent dual $M^\vee$ sends $K\mapsto \hom_B(M, K)$.
\begin{notation}
Given a coherent animated $\KK$-algebra $B\in \SCR_{\KK}$, we will write $T_{B/\KK}=L_{B/\KK}^\vee$ for the pro-coherent dual of the cotangent complex and refer to it as the (pro-coherent) \emph{tangent complex}. 
\end{notation}
By  \Cref{lem:slice duality}, pro-coherent duality determines an equivalence
$$\begin{tikzcd}
(-)^\vee\colon \cat{Der}_{B/\KK}^{\mm{aperf}, \op}=\big((\APerf_{B})_{L_{B/\KK}[-1]/}\big)^{\op}\arrow[r, "\sim"] & \big(\dAPerf_{B}\big)_{/T_{B/\KK}[1]}.
\end{tikzcd}$$

Using this, we construct the partition Lie algebroid monad in the affine setting:
\begin{proposition}\label{prop:pla affine}
Let $B$ be a coherent animated $\KK$-algebra. Then there exists a unique extension
$$\begin{tikzcd}[column sep=4pc]
\cat{Der}_{B/\KK, \geq 0}^{\mm{aperf}, \op}\arrow[r, "(\coLie^\pi_{\Delta, B/\KK})^{\op}"]\arrow[d, hookrightarrow, "(-)^\vee"{swap}] & \cat{Der}_{B/\KK, \geq 0}^{\mm{aperf}, \op}\arrow[d, hookrightarrow, "(-)^\vee"]\\ 
\big(\QC^\vee_B\big)_{/T_{B/\KK}[1]}\arrow[r, "\Lie^\pi_{\Delta, B/\KK}", dotted] & \big(\QC^\vee_B\big)_{/T_{B/\KK}[1]}
\end{tikzcd}$$
to a monad $\Lie^\pi_{\Delta, B/\KK}$ that preserves sifted colimits.
\end{proposition}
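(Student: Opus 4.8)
The plan is to construct $\Lie^\pi_{\Delta, B/\KK}$ as a left Kan extension of the pro-coherent dual of the comonad $\coLie^\pi_{\Delta, B/\KK} = \cot\circ\sqz$ of \Cref{not:coLie algebroid comonad}, and to extract its monad structure from the finiteness properties recorded in \Cref{lem:coLie properties}; this follows the strategy of \cite[\S 5, \S 6]{BM19}, although --- as with \Cref{thm:comonad} --- their axiomatic argument does not apply verbatim. Concretely: by \Cref{lem:slice duality} pro-coherent duality is a fully faithful functor $(-)^\vee\colon \cat{Der}^{\mm{aperf}, \op}_{B/\KK, \geq 0}\hookrightarrow \big(\QC^\vee_B\big)_{/T_{B/\KK}[1]}$ onto the full subcategory $\mc{C}_0$ of anchors $F\to T_{B/\KK}[1]$ with $F$ dually almost perfect (\Cref{def:dually almost perfect}) of Tor-amplitude $\leq 0$ (\Cref{def:tor-ampl}). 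Since, by part (1) of \Cref{lem:coLie properties}, $\coLie^\pi_{\Delta, B/\KK}$ preserves derivations into connective almost perfect modules, transporting it along $(-)^\vee$ gives a functor $\cat{Der}^{\mm{aperf}, \op}_{B/\KK, \geq 0}\to \mc{C}_0\subseteq \big(\QC^\vee_B\big)_{/T_{B/\KK}[1]}$, and I would set $\Lie^\pi_{\Delta, B/\KK}$ to be its left Kan extension along $(-)^\vee$; by full faithfulness it restricts on $\mc{C}_0$ to the transported endofunctor.

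It then remains to check that $\Lie^\pi_{\Delta, B/\KK}$ preserves sifted colimits and inherits a monad structure from $\coLie^\pi_{\Delta, B/\KK}$. The key input is a presentation of $\big(\QC^\vee_B\big)_{/T_{B/\KK}[1]}$ coming from the pro-coherent formalism of Appendix \ref{sec:dag}: it is generated from the compact projective objects $P^\vee\to T_{B/\KK}[1]$ ($P$ a connective perfect $B$-module) by sifted colimits and limits of eventually constant towers, every object of $\mc{C}_0$ is such a limit of compact projectives, and $\mc{C}_0$ is closed under finite coproducts. Granting this, restriction to $\mc{C}_0$ yields an equivalence --- compatible with composition of endofunctors --- between the endofunctors of $\big(\QC^\vee_B\big)_{/T_{B/\KK}[1]}$ preserving sifted colimits and limits of eventually constant towers and a suitable $\infty$-category of functors out of $\mc{C}_0$. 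By parts (2)--(4) of \Cref{lem:coLie properties}, $\coLie^\pi_{\Delta, B/\KK}$ preserves sifted colimits, the finite totalizations of almost perfect diagrams, and limits of eventually constant towers; hence $\Lie^\pi_{\Delta, B/\KK}$ and all its iterated composites lie in the image of this equivalence, so the multiplication and unit of the transported comonad pass to a monad structure on $\Lie^\pi_{\Delta, B/\KK}$, which still preserves sifted colimits. Uniqueness of the extension follows from the same universal property.

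The step I expect to be the main obstacle is exactly this structural input: identifying the right class of compact projective generators and auxiliary limits for $\big(\QC^\vee_B\big)_{/T_{B/\KK}[1]}$, and verifying that $\coLie^\pi_{\Delta, B/\KK}$ is compatible with all of them so that the restriction-of-endofunctors equivalence is genuinely monoidal. This is where the subtleties of the pro-coherent setting enter --- the failure of $M^\vee$ to be compact for a non-perfect $M$, the interaction between the $t$-structure and Tor-amplitude, and the need to work in a slice over the possibly non-connective object $T_{B/\KK}[1]$ --- and where one departs from the axiomatics of \cite{BM19}; once \Cref{lem:coLie properties} is in hand, the remaining transfer is formal.
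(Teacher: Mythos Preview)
Your strategy is the paper's: transport $\coLie^\pi_{\Delta,B/\KK}$ through $(-)^\vee$ to an endofunctor of $\mc{C}_0:=(\dAPerf_B^{\weirdleq 0})_{/T_{B/\KK}[1]}\simeq \cat{Der}^{\mm{aperf},\op}_{B/\KK,\geq 0}$ and extend, using \Cref{lem:coLie properties} as the computational input. The obstacle you flag is indeed the crux, but your formulation of the structural input is off in a way that would derail the argument if you tried to carry it out as written.

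You speak of endofunctors of the big slice category preserving sifted colimits \emph{and limits of eventually constant towers}, and you route the construction through duals of connective perfects as an intermediate class of generators. Neither is what is needed. After dualizing, the almost-eventually-constant tower limits in $\APerf_{B,\geq 0}$ become \emph{filtered colimits} in $\mc{C}_0$, and the finite totalizations become geometric realizations --- both are already sifted colimit diagrams living in $\mc{C}_0$. The paper invokes \cite[Proposition~4.20]{BCN21} to say that $\dAPerf_B^{\weirdleq 0}\hookrightarrow \QC^\vee_B$ exhibits the target as the sifted colimit completion of the source \emph{relative to} exactly this class $\cat{R}_B$ of colimit diagrams, and then proves a general slice lemma (\Cref{prop:colimit completion slice}) to pass to the slice over $T_{B/\KK}[1]$. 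This gives directly a monoidal equivalence between sifted-colimit-preserving endofunctors of $\big(\QC^\vee_B\big)_{/T_{B/\KK}[1]}$ that preserve $\mc{C}_0$, and endofunctors of $\mc{C}_0$ that preserve the $\cat{R}_B$-colimits; no tower-limit condition is imposed on the big category, and no separate layer of perfect generators enters. Parts (3) and (4) of \Cref{lem:coLie properties} then say precisely that $(\coLie^\pi_{\Delta,B/\KK})^{\op}$ preserves the $\cat{R}_B$-colimits, so it defines an algebra object on the small side and the monad structure and uniqueness of $\Lie^\pi_{\Delta,B/\KK}$ follow from the monoidal equivalence.
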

\begin{definition}\label{def:pla affine}
Following  \Cref{def:pla scheme}, we will write $\LieAlgd_{B/\KK}$ for the $\infty$-category of partition Lie algebroids over $B$, that is, algebras over the monad $\Lie^\pi_{\Delta, B/\KK}$.
\end{definition}
Before turning to the proof of  \Cref{prop:pla affine}, we record some first consequences of the above definition.
\begin{example}\label{ex:partition lie algebra}
If $B=\KK$ is a coherent animated ring, then the monad $\Lie^\pi_{\Delta, B/B}$ coincides with the partition Lie algebra monad from \cite{BM19, BCN21}, so that $\LieAlgd_{B/B}\simeq \LieAlg_{B}$ is equivalent to the $\infty$-category of partition Lie algebras over $B$. In particular, the free partition Lie algebra on a dually almost perfect $B$-module can be computed explicitly in terms of the partition complex, using the derived functor of $\Sigma_r$-invariants (see \cite[Section 3.6]{BCN21})
$$
\Lie^\pi_{\Delta, B/B}(M)\simeq \sum_{r} \big(\widetilde{C}^*(\Sigma|\Pi_r|^{\diamond}, B)\otimes_B M^{\otimes r}\big)^{\Sigma_r}.
$$
\end{example}
\begin{remark}\label{rem:free partition Lie algebroid}
For a coherent animated $\KK$-algebra $B$, the free partition Lie algebroid $\Lie^\pi_{\Delta, B/\KK}(M, \alpha)$ can be viewed as a deformation of the free $B$-linear partition Lie algebra on $M$. More precisely, $\Lie^\pi_{\Delta, B/\KK}(M, \alpha)$ admits a natural exhaustive increasing filtration whose associated graded is equivalent to the free partition Lie algebra $\Lie^\pi_{\Delta, B/\KK}(M, 0)\simeq \Lie^\pi_{\Delta, B/B}(M)$. Indeed, Remark \ref{rem:colie filtration} and  \Cref{lem:aperf filtered} provide such a filtration when $M$ is the pro-coherent dual of a connective almost perfect $B$-module; one then extends by sifted colimits. The resulting filtration is analogous to the filtration on free Lie algebroids constructed in \cite{kapranov2007free}.
\end{remark}
\begin{proposition}\label{prop:fib anchor}
Let $B$ be a coherent animated $\KK$-algebra. Then the following hold:
\begin{enumerate}
\item There is an equivalence $(\LieAlgd_{B/\KK})_{/0}\simeq \LieAlg_B$.

\item Taking the fibre of the anchor map defines a sifted colimit preserving right adjoint functor $\LieAlgd_{B/\KK}\to \LieAlg_{B}$.
\end{enumerate}
\end{proposition}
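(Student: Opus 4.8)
The plan is to obtain both statements from the monadic presentation $\LieAlgd_{B/\KK} = \Alg_{\Lie^\pi_{\Delta, B/\KK}}\big((\QC^\vee_B)_{/T_{B/\KK}[1]}\big)$ of \Cref{prop:pla affine}, by restricting the partition Lie algebroid monad to the locus of vanishing anchors. For part (1), I would first observe that $0 = (0 \to T_{B/\KK}[1])$ is the initial partition Lie algebroid: it is the free algebroid on the initial object of $(\QC^\vee_B)_{/T_{B/\KK}[1]}$, and $\coLie^\pi_{\Delta, B/\KK}(0,0) = \cot(B \to B) = 0$, so the free functor carries the initial object to itself. Consequently $\Lie^\pi_{\Delta, B/\KK}$ preserves the slice over the initial object, and slicing the monadic adjunction over $0$ produces a monadic adjunction
\[
(\LieAlgd_{B/\KK})_{/0} \;\rightleftarrows\; \big((\QC^\vee_B)_{/T_{B/\KK}[1]}\big)_{/(0 \to T_{B/\KK}[1])}.
\]
By the standard identity for iterated slices the target is $(\QC^\vee_B)_{/0} \simeq \QC^\vee_B$, and tracing the identifications shows that the induced monad on $\QC^\vee_B$ sends $N$ to the pro-coherent module underlying $\Lie^\pi_{\Delta, B/\KK}\big(N \xrightarrow{\,0\,} T_{B/\KK}[1]\big)$.

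The crux of part (1) is then to identify this restricted monad with the partition Lie algebra monad $\Lie^\pi_{\Delta, B/B}$ of \Cref{ex:partition lie algebra}. For $N = M^\vee$ with $M$ a connective almost perfect $B$-module, using the duality of \Cref{lem:slice duality} this reduces to the equivalence $\coLie^\pi_{\Delta, B/\KK}(M, 0) \simeq \coLie^\pi_{\Delta, B/B}(M)$, which I would establish from two observations: first, the $B$-module underlying $\cot(B \oplus M \to B)$ is $L_{B/(B\oplus M)}[-1]$, which is computed over $B$ and is manifestly independent of $\KK$; and second, the trivial square-zero extension $B \oplus M \to B$ admits a ring-theoretic section, which trivialises the canonical derivation $L_{B/\KK}[-1]\to L_{B/(B\oplus M)}[-1]$, so that the output lies in the null-anchor locus and agrees with $\coLie^\pi_{\Delta, B/B}(M)$ on the nose. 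Passing to sifted colimits extends the equivalence to all of $\QC^\vee_B$; granting its compatibility with the monad structures (see the final remark), one concludes $(\LieAlgd_{B/\KK})_{/0}\simeq \Alg_{\Lie^\pi_{\Delta, B/B}}(\QC^\vee_B) = \LieAlg_B$.

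For part (2), I would use part (1) to reinterpret the claimed functor. The forgetful functor $(\LieAlgd_{B/\KK})_{/0}\simeq\LieAlg_B \to \LieAlgd_{B/\KK}$ out of the slice preserves all colimits (colimits of a slice are computed downstairs) and is accessible, so, $\LieAlgd_{B/\KK}$ being presentable, it admits a right adjoint, necessarily $\mathfrak{g}\mapsto \mathfrak{g}\times_{\LieAlgd_{B/\KK}} 0$. Since the monadic forgetful functor to $(\QC^\vee_B)_{/T_{B/\KK}[1]}$ preserves limits, the object underlying $\mathfrak{g}\times 0$ is the fibre product $(\mathfrak{g}\xrightarrow{\rho} T_{B/\KK}[1])\times_{T_{B/\KK}[1]}(0\to T_{B/\KK}[1]) = \fib(\rho)$, so this right adjoint is precisely ``take the fibre of the anchor''. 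Finally, sifted colimits in $\LieAlgd_{B/\KK}$ and in $\LieAlg_B$ are created by their monadic forgetful functors (the monads preserve sifted colimits by \Cref{prop:pla affine}), so it is enough that $\fib(-)\colon (\QC^\vee_B)_{/T_{B/\KK}[1]}\to\QC^\vee_B$ preserves sifted colimits; this holds because $\fib(\rho) = \cofib(\rho)[-1]$ is a pushout in $\mathfrak{g}$ and therefore commutes with all colimits, while sifted colimits in the slice are computed on underlying objects.

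I expect the main obstacle to be the monad identification in part (1), namely upgrading the equivalence of underlying endofunctors to an equivalence of monads. I would handle this on the dual side, checking that the comonad $\coLie^\pi_{\Delta, B/\KK}$ of \Cref{not:coLie algebroid comonad} carries the full subcategory of derivations with null structure map into itself and there restricts --- compatibly with comultiplication and counit --- to the partition co-Lie comonad $\coLie^\pi_{\Delta, B/B}$, since every square-zero extension arising from iterating the comonad on this locus is trivial and its relative cotangent complex over $B$ ignores $\KK$. The mild subtlety is that this null-derivation locus is not literally $\Mod_B$ --- its mapping spaces acquire an extra loop factor --- so the argument should be run through the slice picture of part (1), after which one transports everything back along pro-coherent duality.
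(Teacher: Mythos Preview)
Your approach is essentially the same as the paper's, just spelled out in more detail. The paper's proof of (1) says only that the forgetful functor $(\LieAlgd_{B/\KK})_{/0}\to \QC^\vee_B$ is monadic for a sifted-colimit-preserving monad which, ``unravelling the definitions'', coincides with $\Lie^\pi_{\Delta,B/B}$; and (2) is dispatched in one line by observing that pullback along $0\to T_{B/\KK}[1]$ preserves limits and sifted colimits. Your argument for (2) is exactly this, and your argument for (1) is a correct unpacking of the paper's ``unravelling''.

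Your concern about upgrading the endofunctor identification to a monad identification is legitimate, but the resolution is cleaner than your final paragraph suggests and is implicit in the paper's appeal to \Cref{prop:pla affine}. The key is to work with \emph{coslices} on the comonad side rather than with the full subcategory of null derivations. The adjunction $\cot\dashv\sqz$ of \Cref{def:sqz-cot adjunction}, cosliced under $(B\to B)\in\DAlg_{\KK/B}$ and under $\cot(B\to B)=(0,0)\in\cAAPerf{B}{\KK}$, becomes literally the $(\cot,\sqz)$ adjunction relative to $B$; under the standard equivalence $(\cAAPerf{B}{\KK})_{(0,0)/}\simeq\APerf_{B,\geq 0}$ this identifies the cosliced comonad with $\coLie^\pi_{\Delta,B/B}$ \emph{as a comonad}. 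Dualising and invoking the uniqueness clause of \Cref{prop:pla affine} (applied with $B$ in place of $\KK$) then forces the sliced monad on $\QC^\vee_B$ to agree with $\Lie^\pi_{\Delta,B/B}$. So there is no genuine obstacle, and your worry about the extra loop factor in the ``null-derivation locus'' dissolves once you pass to the coslice (which is honestly equivalent to $\Mod_B$).
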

It follows that one can identify $\LieAlgd_{B/\KK}$ with the $\infty$-category of algebras over a monad on $\LieAlg_{B}$ whose underlying endofunctor sends $\mf{g}$ to $\mf{g}\times T_{B/\KK}$.
\begin{proof}
For (1), note that the forgetful functor $(\LieAlgd_{B/\KK})_{/0}\to \QC^ \vee_B$ exhibits $(\LieAlgd_{B/\KK})_{/0}$ as the $\infty$-category of algebras over a sifted colimit preserving monad on $\QC^\vee_B$. Unravelling the definitions, this monad coincides with $\Lie^\pi_{\Delta, B/B}$, so that the result follows from Example \ref{ex:partition lie algebra}. Part (2) follows from part (1) and the fact that taking pullbacks along the map of partition Lie algebroids $0\to T_{B/\KK}[1]$ preserves limits and sifted colimits.
\end{proof}
Finally, we record the following reformulation of  \Cref{thm:comonad}, using pro-coherent duality:
\begin{corollary}\label{cor:complete KD}
Let $B$ be a coherent animated $\KK$-algebra. Then there is a fully faithful functor
$$\begin{tikzcd}
\mathfrak{D}\colon \SCR^{\wedge \aft}_{\KK/B}\arrow[r, "\cot"] & \cat{Coalg}_{\mm{coLie}^\pi_\Delta}(\cAAPerf{B}{\KK})\arrow[r, "{(-)^\vee}"] & (\LieAlgd_{B/\KK})^{\op}
\end{tikzcd}$$
whose essential image consists of the partition Lie algebroids whose underlying pro-coherent $B$-module is dually almost perfect of non-positive tor-amplitude.
\end{corollary}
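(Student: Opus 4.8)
The plan is to deduce this formally from \Cref{thm:comonad} and pro-coherent duality, with no new input beyond those two ingredients; the statement is a bookkeeping repackaging rather than a genuinely new result.

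The first arrow of the composite is handled by \Cref{thm:comonad}: it asserts that $\cot\colon \SCR^{\wedge \aft}_{\KK/B}\to \cAAPerf{B}{\KK}$ is the left adjoint of a comonadic adjunction, so the comparison functor exhibits $\SCR^{\wedge \aft}_{\KK/B}$ as \emph{equivalent} to the $\infty$-category $\cat{Coalg}_{\coLie^{\pi}_{\Delta, B/\KK}}(\cAAPerf{B}{\KK})$ of coalgebras over the comonad $\coLie^{\pi}_{\Delta, B/\KK}=\cot\circ\sqz$ of \Cref{not:coLie algebroid comonad}. Thus $\mathfrak{D}$ is the composite of this equivalence with the functor induced on coalgebras by pro-coherent duality, and it suffices to analyse the latter.

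For the second step I would transport coalgebras across pro-coherent duality. By \Cref{lem:slice duality}, the functor $(-)^\vee$ is a fully faithful embedding $\cat{Der}_{B/\KK, \geq 0}^{\mm{aperf}, \op}\hookrightarrow \big(\QC^\vee_B\big)_{/T_{B/\KK}[1]}$ whose essential image is precisely the anchored pro-coherent modules $\mathfrak{g}\to T_{B/\KK}[1]$ with $\mathfrak{g}$ dually almost perfect of non-positive tor-amplitude. By the defining commuting square of \Cref{prop:pla affine}, this embedding exhibits $\Lie^{\pi}_{\Delta, B/\KK}$ as the extension of $(\coLie^{\pi}_{\Delta, B/\KK})^{\op}$ along $(-)^\vee$, compatibly with the (co)monad structures; in particular $\Lie^{\pi}_{\Delta, B/\KK}$ preserves the above essential image. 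Passing to algebras and using that the forgetful functor $\LieAlgd_{B/\KK}\to \big(\QC^\vee_B\big)_{/T_{B/\KK}[1]}$ is conservative while the relevant subcategory is full, one concludes that $(-)^\vee$ identifies $\cat{Alg}_{(\coLie^{\pi}_{\Delta, B/\KK})^{\op}}\big(\cat{Der}_{B/\KK, \geq 0}^{\mm{aperf}, \op}\big)$ with the full subcategory of $\LieAlgd_{B/\KK}$ spanned by the partition Lie algebroids whose underlying pro-coherent module is dually almost perfect of non-positive tor-amplitude. Since $\cat{Alg}_{(\coLie^{\pi}_{\Delta, B/\KK})^{\op}}\big(\cat{Der}_{B/\KK, \geq 0}^{\mm{aperf}, \op}\big)$ is canonically the opposite of $\cat{Coalg}_{\coLie^{\pi}_{\Delta, B/\KK}}(\cAAPerf{B}{\KK})$, this furnishes a fully faithful functor $\cat{Coalg}_{\coLie^{\pi}_{\Delta, B/\KK}}(\cAAPerf{B}{\KK})\hookrightarrow (\LieAlgd_{B/\KK})^{\op}$ with the asserted essential image. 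Composing with the equivalence of the previous paragraph gives $\mathfrak{D}$ fully faithful with the stated image.

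I expect the only delicate point to be the bookkeeping with the two layers of opposite categories, and specifically the claim that a $\Lie^{\pi}_{\Delta, B/\KK}$-algebra structure on an object in the image of $(-)^\vee$ is the same datum as an algebra structure for the restricted monad $(\coLie^{\pi}_{\Delta, B/\KK})^{\op}$. This rests on conservativity of the forgetful functor out of $\LieAlgd_{B/\KK}$ together with the fact, built into \Cref{prop:pla affine}, that $\Lie^{\pi}_{\Delta, B/\KK}$ restricts to $(\coLie^{\pi}_{\Delta, B/\KK})^{\op}$ on the full subcategory cut out by $(-)^\vee$. Everything else is formal.
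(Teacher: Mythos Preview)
Your proposal is correct and is precisely the argument the paper has in mind: the corollary is stated without proof because it follows formally from \Cref{thm:comonad} (giving the equivalence with coalgebras) together with \Cref{prop:pla affine} and \Cref{lem:slice duality} (transporting across pro-coherent duality and identifying the essential image), exactly as you have written. The only point worth noting is that the passage from $\Lie^{\pi}_{\Delta, B/\KK}$-algebras in the image of $(-)^\vee$ to $(\coLie^{\pi}_{\Delta, B/\KK})^{\op}$-algebras is the general fact that for a monad $T$ on $\cat{C}$ preserving a full subcategory $\cat{C}_0$, the $T$-algebras with underlying object in $\cat{C}_0$ coincide with algebras for the restricted monad; this is immediate since the forgetful functor is conservative and all structure maps lie in $\cat{C}_0$, so your caveat in the last paragraph is well-placed but easily resolved.
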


Let us now turn to the proof of  \Cref{prop:pla affine}. We will show that the $\infty$-category $(\QC^\vee_B)_{/T_{B/\KK}[1]}$ is generated under sifted colimits by the full subcategory of $M\to T_{B/\KK}[1]$ where $M$ is dually almost perfect of nonpositive tor-amplitude (\Cref{def:tor-ampl}). To make this more precise,  recall the process of adding colimits to $\infty$-categories from \cite[Section 5.3.6]{HTT}:
\begin{definition}\label{def:colimit completion}
Let $\cat{K}$ be a class of small $\infty$-categories. An $\infty$-category $\cat{V}$ is said to be $\cat{K}$-complete if it admits all colimits of diagrams indexed by $\infty$-categories in $\cat{K}$.

Suppose that $\cat{C}$ is a small $\infty$-category equipped with a set of colimiting cocones $\cat{R}=\{K_\alpha^{\rhd}\to \cat{C}\}$ where each $K_\alpha\in \cat{K}$. We will say that a functor $j\colon \cat{C}\rt \cat{V}$ exhibits $\cat{V}$ as the \emph{$\cat{K}$-completion of $\cat{C}$ relative to $\cat{R}$} if it is the initial $\cat{K}$-complete $\infty$-category with a functor from $\cat{C}$ that preserves the colimit diagrams in $\cat{R}$.
\end{definition}
By \cite[Proposition 5.3.6.2]{HTT}, the $\cat{K}$-completion of $\cat{C}$ relative to $\cat{R}$ always exists and the functor $j\colon \cat{C}\to \cat{V}$ is fully faithful.
\begin{proposition}\label{prop:colimit completion slice}
Let $\cat{K}$ be a class of small $\infty$-categories, each of which is contractible. Suppose that $j\colon \cat{C}\hookrightarrow \cat{V}$ exhibits $\cat{V}$ as the $\cat{K}$-completion of $\cat{C}$ relative to a set of colimit diagrams $\cat{R}=\{K_\alpha^{\rhd}\to \cat{C}\}$. For any object $X\in \cat{V}$, the induced fully faithful functor
$$\begin{tikzcd}
j\colon \cat{C}_{/X}\arrow[r, hookrightarrow] & \cat{V}_{/X}
\end{tikzcd}$$
exhibits $\cat{V}_{/X}$ as the $\cat{K}$-completion of $\cat{C}_{/X}$ relative to the set $\cat{R}_{/X}$ of colimit diagrams $K_\alpha^{\rhd}\to \cat{C}_{/X}$ whose image in $\cat{C}$ is contained in $\cat{R}$.
\end{proposition}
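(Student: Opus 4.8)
The plan is to verify the universal property of Definition~\ref{def:colimit completion} for the functor $j\colon\cat{C}_{/X}\to\cat{V}_{/X}$ directly, reducing the real content to a statement about slices of presheaf $\infty$-categories. The soft points come first: since $j\colon\cat{C}\to\cat{V}$ is fully faithful, so is $j\colon\cat{C}_{/X}=\cat{C}\times_{\cat{V}}\cat{V}_{/X}\to\cat{V}_{/X}$, being a base change of $j$. Because each $\infty$-category in $\cat{K}$ is weakly contractible, the forgetful functor $\cat{V}_{/X}\to\cat{V}$ creates colimits of such diagrams (a standard property of slices, \cite{HTT}); since $\cat{V}$ is $\cat{K}$-complete it follows that $\cat{V}_{/X}$ is $\cat{K}$-complete, that a cocone $K_\alpha^{\rhd}\to\cat{V}_{/X}$ is colimiting as soon as its image in $\cat{V}$ is, and hence that $j$ carries every cocone in $\cat{R}_{/X}$ to a colimiting cocone. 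It therefore remains to show that for every $\cat{K}$-complete $\infty$-category $\cat{D}$, restriction along $j$ induces an equivalence $\Fun^{\cat{K}}(\cat{V}_{/X},\cat{D})\xrightarrow{\ \sim\ }\Fun_{\cat{R}_{/X}}(\cat{C}_{/X},\cat{D})$, where the right-hand side consists of functors sending the cocones in $\cat{R}_{/X}$ to colimiting cocones.

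For this step I would pass to the standard presheaf model. Recall from \cite[\S5.3.6]{HTT} that $\cat{V}\simeq\mathcal{P}^{\cat{K}}_{\cat{R}}(\cat{C})$ can be realized inside $\mathcal{P}(\cat{C})=\Fun(\cat{C}^{\op},\cat{S})$ as a reflective localization of the free $\cat{K}$-cocompletion $\mathcal{P}^{\cat{K}}(\cat{C})\subseteq\mathcal{P}(\cat{C})$, which is closed under $\cat{K}$-colimits, at the saturated class generated by the canonical maps $\theta_\alpha\colon\colim_{K_\alpha}(y\circ p_\alpha)\to y(\bar{p}_\alpha(\infty))$ attached to the cocones $\bar{p}_\alpha\in\cat{R}$; in particular, the presheaf $F$ corresponding to $X\in\cat{V}$ is local for $\{\theta_\alpha\}$. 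Since $j$ is fully faithful, $\cat{C}_{/X}$ is identified with the category of elements $\cat{C}_{/F}=\cat{C}\times_{\mathcal{P}(\cat{C})}\mathcal{P}(\cat{C})_{/F}$, and straightening--unstraightening yields a Yoneda-compatible equivalence $\mathcal{P}(\cat{C})_{/F}\simeq\mathcal{P}(\cat{C}_{/F})$, carrying each $(y(c)\to F)$ to the representable at the corresponding object of $\cat{C}_{/F}$. The goal becomes to prove that this equivalence identifies the full subcategory $\cat{V}_{/X}\subseteq\mathcal{P}(\cat{C})_{/F}$ with $\mathcal{P}^{\cat{K}}_{\cat{R}_{/X}}(\cat{C}_{/F})\subseteq\mathcal{P}(\cat{C}_{/F})$, intertwining $j$ with the canonical functor; since ``being a $\cat{K}$-completion relative to $\cat{R}_{/X}$'' is a property of a functor invariant under composition with equivalences, this gives the proposition.

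I would break the identification into two steps. (i) Using weak contractibility of the $K_\alpha$, a transfinite induction along the construction of $\mathcal{P}^{\cat{K}}(\cat{C})$ (using again that slices create $\cat{K}$-colimits) shows that $\mathcal{P}^{\cat{K}}(\cat{C})_{/F}$ is the smallest full subcategory of $\mathcal{P}(\cat{C})_{/F}$ that is closed under $\cat{K}$-colimits and contains the objects $(y(c)\to F)$; hence the equivalence above restricts to $\mathcal{P}^{\cat{K}}(\cat{C})_{/F}\simeq\mathcal{P}^{\cat{K}}(\cat{C}_{/F})$. (ii) Since $F$ is itself local for $\{\theta_\alpha\}$, a morphism into an object $(G\to F)$ of $\mathcal{P}^{\cat{K}}(\cat{C})_{/F}$ splits as a (necessarily $\theta_\alpha$-invertible) map into $F$ together with a map into $G$; this identifies the reflective subcategory $\cat{V}_{/X}=\mathcal{P}^{\cat{K}}_{\cat{R}}(\cat{C})_{/F}$ of $\mathcal{P}^{\cat{K}}(\cat{C})_{/F}$ with the objects that are local for the morphisms $\theta_\alpha$ equipped with a map from the cocone point $\bar{p}_\alpha(\infty)$ to $F$. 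Such a map is exactly a lift of $\bar{p}_\alpha$ to a cocone in $\cat{C}_{/F}$, that is, an element of $\cat{R}_{/X}$, and under the equivalence of (i) the corresponding sliced $\theta_\alpha$ goes to the canonical map $\theta'$ of that element of $\cat{R}_{/X}$ (again since the relevant $\cat{K}$-colimit commutes with slicing). Matching local objects yields $\cat{V}_{/X}\simeq\mathcal{P}^{\cat{K}}_{\cat{R}_{/X}}(\cat{C}_{/F})$; because the equivalence matches the two reflective localizations it also intertwines $j$ with the canonical functor, which is what we wanted.

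The main obstacle is step (ii): keeping track of how the ``relations'' defining the relative $\cat{K}$-completion transform under passage to a slice. The argument there relies essentially on both hypotheses being used in tandem --- that $X$ is already local for $\cat{R}$, so that the localizing class on the slice is generated by the sliced $\theta_\alpha$'s and nothing else, and that the index $\infty$-categories are weakly contractible, so that $\cat{K}$-colimits, and with them the maps $\theta_\alpha$, may be computed after slicing. Granted that bookkeeping, the remaining steps are routine.
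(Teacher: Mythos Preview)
Your proof is correct and follows essentially the same route as the paper's: both reduce to the presheaf model via the equivalence $\cat{P}(\cat{C})_{/X}\simeq\cat{P}(\cat{C}_{/X})$ and use contractibility of the $K_\alpha$ to ensure that $\cat{K}$-colimits are created by the forgetful functor from the slice. The only cosmetic difference is the order of the two reductions: the paper first passes to $\cat{P}_{\cat{R}}(\cat{C})$ (presheaves sending $\cat{R}$ to limits) and then identifies $\cat{V}_{/X}$ as the smallest $\cat{K}$-closed subcategory of $\cat{P}_{\cat{R}_{/X}}(\cat{C}_{/X})$ containing the representables, whereas you first pass to the free $\cat{K}$-cocompletion $\mathcal{P}^{\cat{K}}(\cat{C})$ and then localize at the sliced $\theta_\alpha$'s --- the paper's ordering makes step~(ii) a one-liner (the identification $\cat{P}_{\cat{R}}(\cat{C})_{/X}\simeq\cat{P}_{\cat{R}_{/X}}(\cat{C}_{/X})$ is immediate from the definition of $\cat{P}_{\cat{R}}$), avoiding your fiberwise locality check.
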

\begin{proof}
Let $\cat{P}_{\cat{R}}(\cat{C})\subseteq \cat{P}(\cat{C})$ be the $\infty$-category of presheaves on $\cat{C}$ that send the colimit diagrams in $\cat{R}$ to limits. By the proof of \cite[Proposition 5.3.6.2]{HTT}, we can then identify $\cat{V}\subseteq \cat{P}_{\cat{R}}(\cat{C})$ with the smallest full subcategory that contains the representable presheaves and is closed under $\cat{K}$-indexed colimits. Using this, we can identify $X$ with a presheaf $X\colon \cat{C}^{\op}\rt \sS$ and $\cat{C}_{/X}$ is the full subcategory of representable presheaves over $X$. 

Now recall that the inclusion $\cat{C}_{/X}\hookrightarrow \cat{P}(\cat{C})_{/X}$ induces a natural equivalence $\cat{P}(\cat{C}_{/X})\rt \cat{P}(\cat{C})_{/X}$ \cite[Corollary 5.1.6.12]{HTT} (the proof in loc.\ cit.\ also applies to non-representable $X$). This equivalence restricts to an equivalence $\cat{P}_{\cat{R}}(\cat{C})_{/X}\simeq \cat{P}_{\cat{R}_{/X}}(\cat{C}_{/X})$. Since the forgetful functor $\cat{P}_{\cat{R}_{/X}}(\cat{C}_{/X})\rt \cat{P}_{\cat{R}}(\cat{C})$ detects colimits of $\cat{K}$-indexed diagrams (which are all contractible), it follows that $\cat{V}_{/X}\subseteq \cat{P}_{\cat{R}_{/X}}(\cat{C}_{/X})$ is the smallest full subcategory that contains the representable presheaves and is closed under $\cat{K}$-indexed colimits. The result then follows from \cite[Proposition 5.3.6.2]{HTT}.
\end{proof}

\begin{proof}[Proof of  \Cref{prop:pla affine}]
Let $\dAPerf_{B}^{\weirdleq 0}\subseteq \QC^\vee_B$  denote the full subcategory of dually almost perfect objects of non-positive tor-amplitude (\Cref{def:tor-ampl}). This $\infty$-category is equivalent to $\APerf_{B, \geq 0}^{\op}$ by pro-coherent duality.
It now follows from \cite[Proposition 4.20]{BCN21} that $\dAPerf_{B}^{\weirdleq 0}\rt \QC^\vee_B$ exhibits $\QC^\vee_B$ as the sifted colimit completion of $\dAPerf_{B}^{\weirdleq 0}$ relative to the set $\cat{R}_B$ of colimit diagrams in $\dAPerf_{B}^{\weirdleq 0}$ that are dual to the following diagrams in $\APerf_{B, \geq 0}$:
\begin{enumerate}[label=(\alph*)]
\item Augmented cosimplicial diagrams $M^\bullet\colon\Delta_+\rt \APerf_{B, \geq 0}$ such that $M^{-1}\simeq \Tot(M^\bullet)$ (computed in $\Mod_B$).

\item Limits $M_\infty\to \dots \to M_2\to M_1$ of towers in $\APerf_{B, \geq 0}$ that are almost eventually constant (see \Cref{def:almost eventually constant}).
\end{enumerate}
By  \Cref{prop:colimit completion slice}, $(\QC^\vee_B)_{/T_{B/\KK}[1]}$ is the sifted colimit completion of $(\dAPerf_{B}^{\weirdleq 0})_{/T_{B/\KK}[1]}$ relative to $(\cat{R}_B)_{/T_{B/\KK}[1]}$. This implies that restriction along $j$ determines an equivalence between:
\begin{enumerate}
\item the (monoidal) $\infty$-category of sifted colimit preserving endofunctors of $(\QC^\vee_B)_{/T_{B/\KK}[1]}$ that preserve the full subcategory $(\dAPerf_B^{\weirdleq 0})_{/T_{B/\KK}[1]}$.

\item the (monoidal) $\infty$-category of endofunctors of $(\dAPerf_B^{\weirdleq 0})_{/T_{B/\KK}[1]}$ that preserve all sifted colimits of the form (a) and (b) above.
\end{enumerate}
Using that $(\dAPerf_B^{\weirdleq 0})_{/T_{B/\KK}[1]}\simeq \cat{Der}^{\mm{aperf}, \op}_{B/R, \geq 0}$ by pro-coherent duality (\Cref{lem:slice duality}), the result then follows from  \Cref{lem:coLie properties}.
\end{proof}

\subsection{Base change}\label{sec:pla functorial}
We will now study the functoriality of the $\infty$-categories $\LieAlgd_{B/\KK}$ of partition Lie algebroids with respect to the animated $\KK$-algebra $B$. We will do this by directly construct the cartesian fibration classifying $B\longmapsto \LieAlgd_{B/\KK}$. This is somewhat technical, and we refer to  \Cref{prop:lie algebroid naturality} for the final result.

We start by studying the functoriality of the $\infty$-category $(\QC^\vee_B)_{/T_{B/\KK}[1]}$ in the coherent animated $\KK$-algebra $B$. To do this, we will organise these $\infty$-categories into a cartesian fibration over the opposite of a certain subcategory of animated $\KK$-algebras.
\begin{notation}
Let $\SCR_{\KK}^{\coft}\hookrightarrow \SCR_{\KK}$ denote the subcategory whose objects are coherent animated $\KK$-algebras and whose morphisms are maps $A\rt B$ that are almost finitely presented.
More generally, for any functor $\cat{M}\rt \SCR_{\KK}$ or $\cat{N}\rt \SCR_{\KK}^{\op}$, we denote
$$
\cat{M}^{\coft} = \cat{M}\times_{\SCR_{\KK}} \SCR_{\KK}^{\coft}\hspace{40pt} \cat{N}^{\coft} = \cat{N}\times_{\SCR_{\KK}^{\op}} \SCR_{\KK}^{\coft, \op}.
$$
\end{notation}
Recall the $\infty$-category $\caMod$ of derivations $(A, M, \alpha)=(A, \alpha\colon L_{A/\KK}[-1]\to M)$ from Construction \ref{cons:modules under cotangent}, and let $\cat{Der}_{/\KK, \geq 0}^\mm{aperf}\subseteq \caMod$ denote the full subcategories of derivations $(A, M, \alpha)$ where $M$ is a connective almost perfect $A$-module.
\begin{lemma}\label{lem:derivations cocart}
The projection $\pi\colon \mm{Der}_{/\KK, \geq 0}^{\mm{aperf}, \coft}\to \SCR_{\KK}^{\coft}$ is a cocartesian fibration.
\end{lemma}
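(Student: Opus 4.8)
The plan is to deduce this from the ambient cocartesian fibration $\caMod\to\DAlg_\KK$ of Remark~\ref{rem:pushout along cotangent}. First I would base change along $\SCR_{\KK}^{\coft}\hookrightarrow\DAlg_\KK$: since cocartesian fibrations are stable under pullback, the projection $\caMod^{\coft}=\caMod\times_{\DAlg_\KK}\SCR_{\KK}^{\coft}\to\SCR_{\KK}^{\coft}$ is again a cocartesian fibration, and for a map $f\colon A\to A'$ its cocartesian pushforward carries a derivation $(M,\alpha)$ to the pushout derivation $f_!(M,\alpha)$ with underlying $A'$-module $f^*M\sqcup_{f^*L_{A/\KK}[-1]}L_{A'/\KK}[-1]$. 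Since $\mm{Der}_{/\KK,\geq 0}^{\mm{aperf},\coft}$ is the full subcategory of $\caMod^{\coft}$ spanned by those $(A,M,\alpha)$ with $M$ connective and almost perfect, the task is to show that the restricted projection $\pi$ still admits cocartesian lifts landing in this full subcategory, and to identify them.

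The heart of the matter is a finiteness estimate for $f_!$. Fix an almost finitely presented $f\colon A\to A'$ in $\SCR_{\KK}^{\coft}$ and a derivation $(M,\alpha)$ over $A$ with $M\in\APerf_{A,\geq 0}$. Cobase change along the transitivity cofibre sequence $f^*L_{A/\KK}\to L_{A'/\KK}\to L_{A'/A}$ places the underlying module of $f_!(M,\alpha)$ in a cofibre sequence $f^*M\to(-)\to L_{A'/A}[-1]$. Now $f^*M$ is again connective and almost perfect, because base change preserves connective almost perfect modules, and $L_{A'/A}$ is almost perfect over $A'$ precisely because $f$ is almost of finite presentation; as almost perfect modules are closed under (co)fibres, it follows that the pushforward has almost perfect underlying module, with its connectivity controlled by that of $L_{A'/A}[-1]$. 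This is the input needed to produce and identify the $\pi$-cocartesian lift of $f$ at $(M,\alpha)$ inside $\mm{Der}_{/\KK,\geq 0}^{\mm{aperf},\coft}$; the verification that the resulting edge has the correct universal property is a direct mapping-space computation, using that a map into a derivation with connective almost perfect target module is detected on bounded pieces.

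I expect the main obstacle to be exactly this interplay between the almost-finite-presentation hypothesis on morphisms of $\SCR_{\KK}^{\coft}$ and the connectivity and almost-perfectness conditions cutting out the fibres: one must check that the pushforward constructed above is compatible with composition, so that the locally cocartesian edges it produces are honestly cocartesian. This compatibility rests on the almost-perfectness of $L_{A'/A}$ for almost finitely presented $f$ together with the stability of almost finitely presented maps under composition, which ensures that the same bookkeeping governs a composite $gf\colon A\to A'\to A''$. The remaining assertions --- that $\pi$ is an inner fibration, and that the pushforward functors assemble coherently --- are then formal consequences of the corresponding facts for $\caMod^{\coft}\to\SCR_{\KK}^{\coft}$.
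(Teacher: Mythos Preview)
Your overall strategy --- pull back the cocartesian fibration $\caMod\to\DAlg_\KK$ along $\SCR_\KK^{\coft}\hookrightarrow\DAlg_\KK$ and argue that the full subcategory $\mm{Der}_{/\KK,\geq 0}^{\mm{aperf},\coft}$ is stable under the ambient cocartesian pushforward --- is exactly the paper's approach.

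Both arguments, however, share a genuine gap: the connectivity of the pushforward. Your cofibre sequence $f^*M\to N\to L_{A'/A}[-1]$ shows that $N$ is almost perfect, but since $L_{A'/A}[-1]$ is only $(-1)$-connective it does \emph{not} show that $N$ is connective; your phrase ``its connectivity controlled by that of $L_{A'/A}[-1]$'' is precisely where the argument breaks, and the paper makes the parallel slip when it calls the horizontal cofibre ``almost perfect connective''. This gap cannot be filled. Take $A=\KK$, $A'=\KK[x]$, and $(M,\alpha)=(0,0)$, which is certainly connective almost perfect. Then $f^*L_{A/\KK}[-1]=0=f^*M$, so the pushout is
\[
N\;\simeq\;L_{\KK[x]/\KK}[-1]\;\simeq\;\KK[x][-1],
\]
with $\pi_{-1}(N)\cong\KK[x]\neq 0$. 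Hence the cocartesian lift in $\caMod$ leaves the full subcategory. In fact no $\pi$-cocartesian lift exists in this example: since mapping out of $(\KK,0,0)$ over $f$ is always contractible, such a lift would have to land on an initial object of the fibre $(\APerf_{\KK[x],\geq 0})_{L_{\KK[x]/\KK}[-1]/}$, but for nonzero connective $P$ the space $\Map_{\KK[x]}(\KK[x][-1],P)\simeq\Omega^\infty P[1]$ is connected yet not contractible, and one checks this obstructs the existence of an initial object. So the statement as written appears to fail, and your additional worry about compatibility with composition is beside the point --- the problem is already at the level of a single arrow.
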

\begin{proof}
Let $f\colon A\rt B$ be a map of animated $\KK$-algebras. Recall from Remark \ref{rem:pushout along cotangent} that a map $(A, M, \alpha)\rt (B, N, \beta)$ covering $f$ is a cocartesian arrow in $\caMod$ if the induced square of $B$-modules
$$\begin{tikzcd}
f^*L_{A/\KK}[-1]\arrow[r]\arrow[d, "{f^*(\alpha)}"{swap}] & L_{B/\KK}[-1]\arrow[d, "\beta"]\\
f^*M\arrow[r] & N
\end{tikzcd}$$
is cocartesian. If $f$ is almost finitely presented, then the cofibres of the horizontal maps are almost perfect connective $B$-modules. Consequently, $N$ will be an almost perfect $B$-module if $M$ is an almost perfect connective $A$-module. We conclude that $\mm{Der}_{/\KK, \geq 0}^{\mm{aperf}, \coft}\hookrightarrow \caMod$ is stable under taking cocartesian lifts of maps in $\SCR_{\KK}^{\coft}$, so that $\pi$ is indeed a cocartesian fibration.
\end{proof}
\begin{construction}\label{con:anchor functorial}
Let $\mm{Der}_{/\KK, \geq 0}^{\mm{aperf}, \coft, \op}\to \SCR_{\KK}^{\coft, \op}$ be the opposite of the cocartesian fibration from  \Cref{lem:derivations cocart}. This classifies a functor $\SCR_{\KK}^{\coft}\rt \Cat_\infty$ sending $A\mapsto \cat{Der}^{\mm{aperf}, \op}_{A/\KK, \geq 0}$ and sending each map $f\colon A\to B$ to the functor 
\begin{equation}\label{diag:coanchor functorial}\begin{tikzcd}
\big(L_{A/\KK}[-1]\to M\big)\arrow[r, mapsto] & \Big(L_{B/\KK}[-1] \to L_{B/\KK}[-1]\oplus_{f^*L_{A/\KK}[-1]} f^*M\Big).
\end{tikzcd}\end{equation}
Write $\cat{R}_A$ for the collection of colimit diagrams in $\cat{Der}^{\mm{aperf}, \op}_{A/\KK, \geq 0}$ opposite to pullback squares of derivations along $\pi_0$-surjections, as well as limits of towers of derivations that are almost eventually constant (cf.\ \Cref{def:almost eventually constant}).

 By  \Cref{prop:colimit completion slice} and  \Cref{lem:pro-coh over coherent ring}, the fully faithful functor
\begin{equation}\label{diag:coanchor to anchor}\begin{tikzcd}
(-)^\vee\colon \cat{Der}^{\mm{aperf}, \op}_{A/\KK, \geq 0}\arrow[r] & \big(\QC^\vee_A\big)_{/T_{A/\KK}[1]}
\end{tikzcd}\end{equation}
exhibits $(\QC^\vee_A)_{/T_{A/\KK}[1]}$ as the completion of $\cat{Der}^{\mm{aperf}, \op}_{A/\KK, \geq 0}$ under colimits relative to the set of colimit diagrams $\cat{R}_A$ from the proof of  \Cref{prop:pla affine} (see  \Cref{def:colimit completion}). 

Each base change functor $\cat{Der}^{\mm{aperf}, \op}_{A/\KK, \geq 0}\rt \cat{Der}^{\mm{aperf}, \op}_{B/\KK, \geq 0}$ sends colimit diagrams in $\cat{R}_A$ to colimit diagrams in $\cat{R}_B$. The universal property of $(\QC^\vee_A)_{/T_{A/\KK}[1]}$ therefore implies that the maps \eqref{diag:coanchor to anchor} assemble into a natural transformation between two diagrams of $\infty$-categories. Explicitly, this sends $f\colon A\to B$ to a commuting square
$$\begin{tikzcd}
\cat{Der}^{\mm{aperf}, \op}_{A/\KK, \geq 0}\arrow[r, hookrightarrow, "(-)^\vee"]\arrow[d] & \big(\QC^\vee_A\big)_{/T_{A/\KK}[1]}\arrow[d, "f^\sharp"]\\
\cat{Der}^{\mm{aperf}, \op}_{B/\KK, \geq 0}\arrow[r, hookrightarrow, "(-)^\vee"] & \big(\QC^\vee_A\big)_{/T_{B/\KK}[1]}
\end{tikzcd}$$
where the right vertical functor $f^\sharp$ is the unique colimit preserving extension of the left vertical functor, given by
$$
f^\sharp\big(F\to T_{A/\KK}[1]\big) = f^*F\times_{(f^*L_{A/\KK})^\vee[1]} T_{A/\KK}[1].
$$
Note that this indeed preserves colimits and is dual to \eqref{diag:coanchor functorial}. 
\end{construction}
\begin{definition}\label{def:anchor functorial}
We will write $\QC^\vee_{/T[1]}\rt \SCR_{\KK}^{\coft, \op}$ for the cartesian fibration classified by the functor $A\longmapsto (\QC^\vee_A)_{/T_{A/\KK}[1]}$ from Construction \ref{con:anchor functorial}, and
$$\begin{tikzcd}[row sep=1pc]
\mm{Der}_{/\KK, \geq 0}^{\mm{aperf}, \coft, \op}\arrow[rd]\arrow[rr, hookrightarrow, "(-)^\vee"] & & \QC^\vee_{/T[1]}\arrow[ld]\\
& \SCR_{\KK}^{\coft, \op}
\end{tikzcd}$$
for the unstraightening of the natural transformation from Construction \ref{con:anchor functorial}.
\end{definition}
Our next goal will be to construct a version of the partition Lie algebroid monad in families. To this end, recall from  \Cref{def:sqz-cot adjunction} that taking cotangent fibres and square zero extensions defines a relative adjunction
\begin{equation}\label{diag:cot-sqz adj 2}\begin{tikzcd}
\DAlg_{\KK}^{\Delta^{1}}\arrow[rr, "\cot", yshift=1ex]\arrow[rd, "\mm{ev}_1"{swap}] & & \caMod\arrow[ld]\arrow[ll, yshift=-1ex, "\sqz"] \\
& \DAlg_\KK
\end{tikzcd}\end{equation}
We will write $\coLie^{\pi}_{\Delta}=\cot\circ \sqz$ for the composite comonad relative to the base $\DAlg_\KK$. On the fibre over each animated $\KK$-algebra $B$, this restricts to the comonad $\coLie^{\pi}_{\Delta, B/\KK}$ from Notation \ref{not:coLie algebroid comonad}.
\begin{lemma}\label{lem:cot-sqz etale base change}
The adjoint pair \eqref{diag:cot-sqz adj 2} enjoys the following properties:
\begin{enumerate}
\item Let $f\colon (A'\to A)\rt (B' \to B)$ be a map of animated surjections such that $A\otimes_{A'} B'\simeq B$ and $L_{B/A}\simeq 0$. Then $\cot(A'\to A)\rt \cot(B' \to B)$ is a cocartesian arrow in $\caMod$.

\item Let $(A, M, \alpha)\rt (B, N, \beta)$ be a cocartesian arrow in $\caMod$ such that $A, M, B, N$ are all connective and $L_{B/A}\simeq 0$. Then the induced square of animated rings
$$\begin{tikzcd}
A\oplus_\alpha M\arrow[d] \arrow[r] & B\oplus_\beta N\arrow[d]\\
A\arrow[r] & B
\end{tikzcd}$$
is cocartesian. 
\end{enumerate}
\end{lemma}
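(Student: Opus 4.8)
The plan is to use the relative adjunction $(\cot, \sqz)$ of \Cref{def:sqz-cot adjunction} together with the cocartesian criterion of Remark \ref{rem:pushout along cotangent}: a morphism $(A, M, \alpha) \to (B, N, \beta)$ in $\caMod$ lying over $f\colon A \to B$ is cocartesian exactly when the induced square of $B$-modules
$$
\begin{tikzcd}
f^*L_{A/\KK}[-1]\arrow[r]\arrow[d, "f^*\alpha"{swap}] & L_{B/\KK}[-1]\arrow[d, "\beta"]\\
f^*M\arrow[r] & N
\end{tikzcd}
$$
is a pushout; by Remark \ref{rem:cocartesian etale case}, when $L_{B/A}\simeq 0$ this amounts to $N\simeq B\otimes_A M$ with $\beta$ the base change of $\alpha$ along the equivalence $B\otimes_A L_{A/\KK}[-1]\xrightarrow{\ \sim\ }L_{B/\KK}[-1]$, which is an equivalence because its cofibre is $L_{B/A}[-1]\simeq 0$.

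For (1), note that $\cot(A'\to A)=\big(A,\,L_{A/\KK}[-1]\to L_{A/A'}[-1]\big)$ and $\cot(B'\to B)=\big(B,\,L_{B/\KK}[-1]\to L_{B/B'}[-1]\big)$, and that the given morphism lies over $f$. By the criterion above I would check that in the commuting square of $B$-modules
$$
\begin{tikzcd}
B\otimes_A L_{A/\KK}[-1]\arrow[r]\arrow[d] & L_{B/\KK}[-1]\arrow[d]\\
B\otimes_A L_{A/A'}[-1]\arrow[r] & L_{B/B'}[-1]
\end{tikzcd}
$$
both horizontal maps are equivalences. For the top map this is the transitivity sequence for $\KK\to A\to B$ together with $L_{B/A}\simeq 0$; for the bottom map it is the base change equivalence $L_{B/B'}\simeq B\otimes_A L_{A/A'}$ for the cotangent complex along the pushout square of animated rings witnessed by $A\otimes_{A'}B'\simeq B$. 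Since both horizontals are equivalences, the square is a pushout (even a pullback), so the morphism is cocartesian.

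For (2), I would first reduce, using Remark \ref{rem:cocartesian etale case}, to the case $N=B\otimes_A M$ and $\beta=f^*\alpha$. Write $\tilde A:=A\oplus_\alpha M$, $\tilde B:=B\oplus_\beta N$, let $g\colon\tilde A\to\tilde B$ be the induced map, and form the pushout $C:=A\otimes_{\tilde A}\tilde B$ of $A\longleftarrow\tilde A\xrightarrow{g}\tilde B$ with its canonical comparison $\phi\colon C\to B$; the square in the statement is a pushout precisely when $\phi$ is an equivalence. First, $\phi$ is an isomorphism on $\pi_0$: since $M$ and $N$ are connective, the underlying spectra give $\pi_0\tilde A\cong\pi_0A\oplus\pi_0M$ and $\pi_0\tilde B\cong\pi_0B\oplus\pi_0N$ with square-zero ideals, $\pi_0N\cong\pi_0B\otimes_{\pi_0A}\pi_0M$, and since $\pi_0$ preserves pushouts the comparison becomes the manifestly invertible map $\pi_0A\otimes_{\pi_0A\oplus\pi_0M}(\pi_0B\oplus\pi_0N)\to\pi_0B$. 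It then remains to show $L_{B/C}\simeq 0$, since a map of connective animated rings that is an isomorphism on $\pi_0$ and has vanishing relative cotangent complex is an equivalence (a standard Postnikov-tower argument). Base change along the pushout defining $C$ gives $L_{C/A}\simeq C\otimes_{\tilde B}L_{\tilde B/\tilde A}$, so by transitivity for $A\to C\xrightarrow{\phi}B$ and $L_{B/A}\simeq 0$ it is enough to see $B\otimes_{\tilde B}L_{\tilde B/\tilde A}\simeq 0$; by transitivity for $\tilde A\xrightarrow{g}\tilde B\to B$ this equals $\fib\big(L_{B/\tilde A}\to L_{B/\tilde B}\big)$, and using $L_{B/A}\simeq 0$ one has $L_{B/\tilde A}\simeq B\otimes_A L_{A/\tilde A}\simeq B\otimes_A\coLie^{\pi}_{\Delta, A/\KK}(M,\alpha)[1]$ while $L_{B/\tilde B}\simeq\coLie^{\pi}_{\Delta, B/\KK}(N,\beta)[1]$. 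Thus everything reduces to the base-change compatibility of the cotangent fibre, namely that the comparison $B\otimes_A\coLie^{\pi}_{\Delta, A/\KK}(M,\alpha)\to\coLie^{\pi}_{\Delta, B/\KK}(B\otimes_A M, f^*\alpha)$ induced by $g$ is an equivalence. I would prove this last point directly, by unwinding $\coLie^{\pi}_{\Delta}=\cot\circ\sqz$ and the pullback presentation $A\oplus_\alpha M\simeq A\times_{A\oplus M[1]}A$: base changing the trivial square-zero extension $A\oplus M[1]$ along $f$ yields $B\oplus N[1]$, and one checks — crucially using $L_{B/A}\simeq 0$ to transport the classifying derivation — that the cotangent fibres are identified compatibly.

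The step I expect to be the main obstacle is exactly this base-change compatibility of the cotangent fibre $\coLie^{\pi}_{\Delta}$ (equivalently, that $g$ induces an equivalence $L_{B/\tilde A}\xrightarrow{\ \sim\ }L_{B/\tilde B}$); this is where the hypothesis $L_{B/A}\simeq 0$ and the precise pullback description of $\sqz$ have to be used in an essential way, since $\sqz$ is a relative right adjoint and so does not preserve cocartesian edges for purely formal reasons. By contrast, part (1) and the $\pi_0$-computation in part (2) are routine consequences of transitivity and base change for the cotangent complex.
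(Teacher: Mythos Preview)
Your argument for part (1) is correct and essentially identical to the paper's: both verify the cocartesian criterion of Remark~\ref{rem:pushout along cotangent} by showing that the two horizontal maps in the square of $B$-modules are equivalences, using $L_{B/A}\simeq 0$ for the top and base change for the cotangent complex along the pushout $B\simeq A\otimes_{A'}B'$ for the bottom.

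For part (2), your route is genuinely different from the paper's, and it has a gap. First a minor issue: the claim that the underlying spectra give $\pi_0\tilde A\cong \pi_0A\oplus \pi_0M$ is not correct in general. The fibre sequence $M\to \tilde A\to A$ only yields an exact sequence $\pi_1A\to \pi_0M\to \pi_0\tilde A\to \pi_0A\to 0$, so the kernel of $\pi_0\tilde A\to \pi_0A$ is a quotient of $\pi_0M$ and the extension need not split (e.g.\ $\mathbb{Z}/p^2\to \mathbb{Z}/p$). The conclusion $\pi_0C\cong\pi_0B$ can still be salvaged, but it needs a more careful argument than the one you give.

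The more serious problem is the final step. You reduce (2) to the base-change statement
\[
B\otimes_A \coLie^\pi_{\Delta,A/\KK}(M,\alpha)\;\simeq\;\coLie^\pi_{\Delta,B/\KK}(B\otimes_A M,f^*\alpha),
\]
but this is exactly the assertion that $\coLie^\pi_\Delta$ preserves cocartesian arrows over \'etale maps, which the paper deduces (in Lemma~\ref{lem:coLie naturality laft}) \emph{from} parts (1) and (2) together. The obvious way to prove your base-change statement is to apply part (1) to the square $(\tilde A\to A)\to(\tilde B\to B)$, but the hypothesis of part (1) is precisely that this square is a pushout, i.e.\ part (2). Your sketch of a direct argument via the pullback presentation $\tilde A\simeq A\times_{A\oplus M[1]}A$ is too vague to break this circularity: a filtration argument reducing to the trivial case $\alpha=0$ would need completeness of the filtration on $\coLie^\pi_\Delta(M,\alpha)$ from Remark~\ref{rem:colie filtration}, which is not clear without finiteness hypotheses on $M$.

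The paper takes a completely different and much shorter route: it invokes \cite[Theorem~16.2.0.2]{SAG}, which gives an equivalence
\[
\Mod_{A\oplus_\alpha M,\geq 0}\;\simeq\;\Mod_{A,\geq 0}\times_{\Mod_{A\oplus M[1],\geq 0}}\Mod_{A,\geq 0},
\]
and then checks that $B\oplus_\beta N$ corresponds to the triple $(B,B,\,0^*B\simeq\alpha^*B)$ under this equivalence (using $L_{B/A}\simeq 0$ to identify $N\simeq B\otimes_A M$). This immediately gives $q_2^*(B\oplus_\beta N)\simeq B$, which is the desired pushout. This avoids any cotangent-complex bookkeeping and any appeal to properties of $\coLie^\pi_\Delta$.
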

\begin{proof}
For property (1), note that the left commutative square of animated $\KK$-algebras gives rise to the right square of $B$-modules:
$$\begin{tikzcd}
A'\arrow[d, "{f'}"{swap}]\arrow[r] & A\arrow[d, "f"]\\
B'\arrow[r] & B
\end{tikzcd}\hspace{40pt} \begin{tikzcd} f^*L_{A/\KK}\arrow[r]\arrow[d] & f^*L_{A/A'} \arrow[d]\\
L_{B/\KK}\arrow[r] & L_{B/B'}
\end{tikzcd}$$
Since left square is a pushout of derived rings, the right vertical map $f^*L_{A/A'}\rt L_{B/B'}$ is an equivalence. The left vertical map is an equivalence as well, since $f\colon A\rt B$ was formally étale. It follows that the right square is a pushout square of $B$-modules, which means precisely that $\cot(A'\to A)\rt \cot(B' \to B)$ is a cocartesian arrow in $\caMod$ by Remark \ref{rem:cocartesian etale case}.

Property (2) follows from deformation theory. Indeed, consider the cartesian square of animated rings
$$\begin{tikzcd}
A\oplus_\alpha M\arrow[r, "q_1"]\arrow[d, "q_2"]\arrow[rd, "q_0"] & A\arrow[d, "0"]\\
A\arrow[r, "\alpha"] & A\oplus M[1].
\end{tikzcd}$$
and note that we need to show that the natural map $q_2^*(B\oplus_{\beta} N)\rt B$ is an equivalence of connective $A$-modules. By \cite[Theorem 16.2.0.2]{SAG}, the above square induces an equivalence
$$\begin{tikzcd}
{(q_1^*, q_2^*)}\colon \Mod_{A\oplus_\alpha M, \geq 0} \arrow[r, "\sim"] & \Mod_{A, \geq 0} \times_{\Mod_{A\oplus M[1], \geq 0}} \Mod_{A, \geq 0}
\end{tikzcd}$$
with inverse sending $(E_1, E_2, 0^*E_1\simeq \alpha^*E_2)$ to the fibre product $q_{1*}(E_1)\times_{q_{0*}0^*(E_1)} q_{2*}(E_2)$. 
Since $L_{B/A}\simeq 0$, the cocartesian morphism $(A, M, \alpha)\to (B, N, \beta)$ induces an equivalence $N\simeq B\otimes_A M$ (Remark \ref{rem:cocartesian etale case}). Using this, one sees that the $A\oplus_\alpha M$-module $B\oplus_\beta N$ corresponds under the above equivalence to the triple $(B, B, 0^*(B)\simeq \alpha^*B)$, where the equivalence is induced by $\beta$. This implies that $q_2^*(B\oplus_\beta N)\rt B$ is an equivalence.
\end{proof}
\begin{lemma}\label{lem:coLie naturality laft}
The comonad $\coLie^\pi_\Delta$ restricts to a fibrewise comonad
$$\begin{tikzcd}
\mm{Der}_{/\KK, \geq 0}^{\mm{aperf}, \coft}\arrow[rr, "\mm{coLie}^\pi_\Delta"]\arrow[rd, "\pi"{swap}] & & \mm{Der}_{/\KK, \geq 0}^{\mm{aperf}, \coft}\arrow[ld, "\pi"]\\
& \SCR_{\KK}^{\coft}
\end{tikzcd}$$
where $\pi$ sends a tuple $(A, M, \alpha)$ to the underlying coherent animated $\KK$-algebra $A$. Furthermore, it preserves $\pi$-cocartesian maps covering étale maps.
\end{lemma}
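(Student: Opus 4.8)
The plan is to deduce the statement from \Cref{lem:coLie properties} and \Cref{lem:cot-sqz etale base change}, which already isolate the non-formal input; everything else is bookkeeping with relative comonads. Recall that $\coLie^\pi_\Delta=\cot\circ\sqz$ is a comonad on $\caMod$ relative to $\DAlg_\KK$, the latter acting through the projection on one side and through $\mm{ev}_1$ on the other. Pulling back the base along $\SCR_{\KK}^{\coft}\hookrightarrow \DAlg_\KK$ produces a comonad relative to $\SCR_{\KK}^{\coft}$ on $\caMod\times_{\DAlg_\KK}\SCR_{\KK}^{\coft}$, and I would then check that its underlying relative endofunctor carries the full subcategory $\mm{Der}_{/\KK, \geq 0}^{\mm{aperf}, \coft}$ into itself. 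On objects this is exactly \Cref{lem:coLie properties}(1): for a coherent animated $\KK$-algebra $A$, the comonad $\coLie^{\pi}_{\Delta, A/\KK}$ sends a derivation $(M,\alpha)$ with $M\in\APerf_{A,\geq 0}$ to one of the same shape, and the base algebra $A$ is untouched. On morphisms it is automatic, since $\coLie^\pi_\Delta$ is relative over $\DAlg_\KK$ and hence sends an arrow covering an almost finitely presented map $f\colon A\to B$ to an arrow covering the same $f$ with both endpoints in the subcategory. As $\mm{Der}_{/\KK, \geq 0}^{\mm{aperf}, \coft}$ is a \emph{full} subcategory preserved by the endofunctor, the counit and comultiplication of $\coLie^\pi_\Delta$ restrict, and we obtain a comonad relative to $\SCR_{\KK}^{\coft}$.

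For the cocartesian-preservation statement, let $\phi\colon(A,M,\alpha)\to(B,N,\beta)$ be a $\pi$-cocartesian arrow over an étale map $f\colon A\to B$. By (the proof of) \Cref{lem:derivations cocart}, $\phi$ is then a cocartesian arrow of $\caMod$, and $A,B,M,N$ are all connective, so \Cref{lem:cot-sqz etale base change}(2) applies and shows that the square
$$\begin{tikzcd}
A\oplus_\alpha M\arrow[r]\arrow[d] & B\oplus_\beta N\arrow[d]\\
A\arrow[r,"f"] & B
\end{tikzcd}$$
is a pushout of animated rings; in particular $A\otimes_{A\oplus_\alpha M}(B\oplus_\beta N)\simeq B$. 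Combined with $L_{B/A}\simeq 0$, this says that the map of animated surjections $\sqz(\phi)\colon(A\oplus_\alpha M\to A)\to(B\oplus_\beta N\to B)$ meets the hypotheses of \Cref{lem:cot-sqz etale base change}(1), which therefore gives that $\cot(\sqz(\phi))=\coLie^\pi_\Delta(\phi)$ is a cocartesian arrow of $\caMod$, hence $\pi$-cocartesian; by the first part it again lies in $\mm{Der}_{/\KK, \geq 0}^{\mm{aperf}, \coft}$.

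I do not expect a genuine obstacle here: the substance has been pushed entirely into \Cref{lem:coLie properties} and \Cref{lem:cot-sqz etale base change}. The only points requiring a little care are that relative comonads behave well under pulling back the base, and that, by \Cref{lem:derivations cocart}, $\pi$-cocartesian arrows of $\mm{Der}_{/\KK, \geq 0}^{\mm{aperf}, \coft}$ are precisely the cocartesian arrows of $\caMod$ lying over morphisms of $\SCR_{\KK}^{\coft}$, so that the two lemmas can be chained together without friction.
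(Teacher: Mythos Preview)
Your proposal is correct and matches the paper's own proof, which simply cites \Cref{lem:coLie properties} for the restriction and \Cref{lem:cot-sqz etale base change} for the preservation of cocartesian arrows over étale maps. You have spelled out the chaining of parts (1) and (2) of \Cref{lem:cot-sqz etale base change} that the paper leaves implicit, but the argument is the same.
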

\begin{proof}
The comonad restricts by  \Cref{lem:coLie properties} and preserves $\pi$-cocartesian morphisms covering étale maps by  \Cref{lem:cot-sqz etale base change}.
\end{proof}
We then have the following families version of  \Cref{prop:pla affine}:
\begin{proposition}\label{prop:pla natural}
There exists a unique extension
$$\begin{tikzcd}[column sep=4.5pc]
\cat{Der}_{/\KK, \geq 0}^{\mm{aperf}, \coft, \op}\arrow[r, "{(\coLie^\pi_{\Delta})^{\op}}"]\arrow[d, hookrightarrow, "{(-)^\vee}"{swap}] & \cat{Der}_{/\KK, \geq 0}^{\mm{aperf}, \coft, \op}\arrow[d, hookrightarrow, "(-)^\vee"]\\ 
\big(\QC^\vee\big)_{/T[1]}\arrow[r, "\Lie^\pi_{\Delta}", dotted] & \big(\QC^\vee\big)_{/T[1]}
\end{tikzcd}$$
to a monad $\Lie^\pi_{\Delta}$ relative to $\SCR_{\KK}^{\coft, \op}$, that is, a monad in $\Cat_{\infty/\SCR_{\KK}^{\coft, \op}}$, that preserves sifted colimits fibrewise.
\end{proposition}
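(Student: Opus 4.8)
The plan is to run the proof of \Cref{prop:pla affine} in the parametrized setting, i.e.\ entirely inside $\Cat_{\infty/\SCR_{\KK}^{\coft, \op}}$, exploiting that the cartesian fibration $(\QC^\vee)_{/T[1]}\to \SCR_{\KK}^{\coft, \op}$ was constructed in \Cref{con:anchor functorial} precisely as a \emph{fibrewise} sifted colimit completion of the subfibration $\cat{Der}^{\mm{aperf}, \coft, \op}_{/\KK, \geq 0}$.

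The first and main step is to establish a relative universal property. By \Cref{con:anchor functorial}, for every coherent animated $\KK$-algebra $A$ the inclusion $(-)^\vee\colon \cat{Der}^{\mm{aperf}, \op}_{A/\KK, \geq 0}\hookrightarrow (\QC^\vee_A)_{/T_{A/\KK}[1]}$ exhibits its target as the sifted colimit completion relative to the set $\cat{R}_A$ of colimit diagrams appearing in the proof of \Cref{prop:pla affine}, and the base change functors carry $\cat{R}_A$ into $\cat{R}_B$. I would upgrade this to the assertion that, for any cartesian fibration $q\colon \cat{E}\to \SCR_{\KK}^{\coft, \op}$ all of whose fibres admit sifted colimits, restriction along $(-)^\vee$ induces an equivalence
$$\Fun^{\sigma}_{/\SCR_{\KK}^{\coft, \op}}\big((\QC^\vee)_{/T[1]}, \cat{E}\big)\ \xrightarrow{\ \sim\ }\ \Fun^{\cat{R}}_{/\SCR_{\KK}^{\coft, \op}}\big(\cat{Der}^{\mm{aperf}, \coft, \op}_{/\KK, \geq 0}, \cat{E}\big),$$
where the superscript $\sigma$ (resp.\ $\cat{R}$) singles out those functors over the base whose restriction to each fibre preserves sifted colimits (resp.\ carries the diagrams in $\cat{R}_A$ to colimit diagrams). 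This is the parametrized version of \Cref{prop:colimit completion slice}: one realises $\Fun_{/\SCR_{\KK}^{\coft, \op}}(-, \cat{E})$ as the global sections of an auxiliary fibration over $\SCR_{\KK}^{\coft, \op}$ whose fibre over $A$ is the relevant functor $\infty$-category between the fibres over $A$, and then applies the affine equivalence (from the proof of \Cref{prop:pla affine}, via \Cref{prop:colimit completion slice}) fibrewise; the compatibility of the fibrewise completions and of the collections $\cat{R}_\bullet$ with base change, built into \Cref{con:anchor functorial}, guarantees that the fibrewise equivalences are compatible with the transport functors, hence glue on global sections.

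Applying this universal property with $\cat{E}=(\QC^\vee)_{/T[1]}$, and using that composition of endofunctors over $\SCR_{\KK}^{\coft, \op}$ is computed fibrewise, I would conclude that restriction along $(-)^\vee$ is a \emph{monoidal} equivalence from the monoidal $\infty$-category of fibrewise-sifted-colimit-preserving endofunctors of $(\QC^\vee)_{/T[1]}$ over $\SCR_{\KK}^{\coft, \op}$ that preserve the subfibration $\cat{Der}^{\mm{aperf}, \coft, \op}_{/\KK, \geq 0}$, onto the monoidal $\infty$-category of endofunctors of $\cat{Der}^{\mm{aperf}, \coft, \op}_{/\KK, \geq 0}$ over $\SCR_{\KK}^{\coft, \op}$ that preserve the $\cat{R}_A$-colimits on each fibre. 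Now \Cref{lem:coLie naturality laft} exhibits $\coLie^{\pi}_{\Delta}$ as a fibrewise comonad on $\cat{Der}^{\mm{aperf}, \coft}_{/\KK, \geq 0}$ over the base, so that passing to opposite fibrations gives a fibrewise monad $(\coLie^{\pi}_{\Delta})^{\op}$ on $\cat{Der}^{\mm{aperf}, \coft, \op}_{/\KK, \geq 0}$, and \Cref{lem:coLie properties} shows that on each fibre this endofunctor preserves the colimit diagrams in $\cat{R}_A$. Transporting $(\coLie^{\pi}_{\Delta})^{\op}$ across the monoidal equivalence therefore produces a monad $\Lie^{\pi}_{\Delta}$ in $\Cat_{\infty/\SCR_{\KK}^{\coft, \op}}$ that preserves sifted colimits fibrewise and whose restriction to $\cat{Der}^{\mm{aperf}, \coft, \op}_{/\KK, \geq 0}$ along $(-)^\vee$ is $(\coLie^{\pi}_{\Delta})^{\op}$; by the universal property this extension is unique, and over each $A$ it recovers the monad of \Cref{prop:pla affine} by the uniqueness asserted there.

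The hard part is the relative universal property in the first step. The subtlety is that the endofunctors that must be extended — in particular $\coLie^{\pi}_{\Delta}$ — are only known to preserve cartesian arrows over \emph{étale} maps (\Cref{lem:coLie naturality laft}), so one cannot straighten and argue with natural transformations of $\Cat_\infty$-valued functors, but must work with arbitrary functors over the base inside $\Cat_{\infty/\SCR_{\KK}^{\coft, \op}}$. Carrying out the ``global sections of a functor-category fibration'' argument and verifying the base-change compatibility of the fibrewise sifted colimit completions — the step using \Cref{con:anchor functorial} and the fact that base change sends $\cat{R}_A$ into $\cat{R}_B$ — is the one genuinely technical point; the rest is a formal consequence of \Cref{prop:pla affine}, \Cref{lem:coLie properties} and \Cref{lem:coLie naturality laft}. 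One may moreover upgrade the monoidal equivalence to keep track of preservation of cartesian arrows over étale maps, so that $\Lie^{\pi}_{\Delta}$ inherits this property from $\coLie^{\pi}_{\Delta}$; this is what will feed into \Cref{prop:lie algebroid naturality}.
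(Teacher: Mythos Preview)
Your overall strategy matches the paper's exactly: reduce to a monoidal equivalence between fibrewise-sifted-colimit-preserving endofunctors of $(\QC^\vee)_{/T[1]}$ over the base (preserving the small subfibration) and endofunctors of $\cat{Der}^{\mm{aperf}, \coft, \op}_{/\KK, \geq 0}$ over the base preserving the $\cat{R}_A$-colimits fibrewise, then transport $(\coLie^\pi_\Delta)^{\op}$ across it using \Cref{lem:coLie properties} and \Cref{lem:coLie naturality laft}. You also correctly identify the relative universal property as the only nontrivial step.

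The difference is in how that step is proved. Your suggestion---realise $\Fun_{/\cat{B}}(\cat{C},\cat{E})$ as ``global sections of an auxiliary fibration whose fibre over $A$ is $\Fun(\cat{C}_A,\cat{E}_A)$'' and apply the affine equivalence fibrewise---does not work as stated: functors over the base between cartesian fibrations do not decompose into fibrewise data in this way (that description would give maps of cartesian fibrations preserving cartesian arrows, i.e.\ natural transformations after straightening, which you yourself note is too restrictive here). The paper instead proves a general result (\Cref{prop:oplax adding colimits}) showing that for a fibrewise colimit completion $\cat{C}\hookrightarrow\cat{V}$ relative to conserved colimit diagrams $\cat{R}$, restriction induces $\Fun_{\cat{K}/\cat{B}}(\cat{V},\cat{W})\simeq\Fun_{\cat{R}/\cat{B}}(\cat{C},\cat{W})$. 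The argument factors through an intermediate fibration of presheaves $\cat{C}\hookrightarrow\cat{E}\to\cat{V}$, where $\cat{E}\to\cat{V}$ is a fibrewise localisation, and the crux is a cofinality statement: for $X\in\cat{E}_b$ the functor $(\cat{C}_b)_{/X}\to\cat{C}_{/X}$ is cofinal. This reduces the computation of \emph{relative} left Kan extensions along $\cat{C}\hookrightarrow\cat{E}$ to fibrewise left Kan extensions, whence one can invoke the affine case. So the paper does ultimately ``reduce to fibres,'' but via relative Kan extensions and cofinality rather than by interpreting $\Fun_{/\cat{B}}$ as a limit of fibrewise functor categories.
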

\begin{remark}\label{rem:pla oplax natural}
The relative monad $\Lie^\pi_{\Delta}$ restricts to a monad $(\QC^\vee_B)_{/T_{B/\KK}[1]}\rt (\QC^\vee_B)_{/T_{B/\KK}[1]}$ on the fibre over a coherent animated $\KK$-algebra $B$. This monad preserves sifted colimits and extends the monad $(\coLie^\pi_{\Delta, B/\KK})^{\op}$, so that it is naturally equivalent to the partition Lie algebroid monad $\Lie^{\pi}_{\Delta, B/\KK}$ of  \Cref{prop:pla affine}. Consequently,  \Cref{prop:pla natural} shows that the partition Lie algebroid monads $\Lie^{\pi}_{\Delta, B/\KK}$ depend oplax naturally on the coherent animated $\KK$-algebra $B$.
\end{remark}
As  the proof of  \Cref{prop:pla natural} is somewhat technical, we first describe the consequences of  \Cref{prop:pla natural} at the level of algebras.
\begin{definition}\label{def:pla fibration}
We define the $\infty$-category $\LieAlgd_{/\KK}$ of \textit{partition Lie algebroids} to be the $\infty$-category of algebras for the monad $\Lie^\pi_{\Delta}\colon \QC^\vee_{/T[1]}\rt \QC^\vee_{/T[1]}$. 
\end{definition}
Since $\Lie^\pi_{\Delta}$ covers the identity monad on $\SCR_{\KK}^{\coft, \op}$, we obtain a natural projection
$$\begin{tikzcd}
\LieAlgd_{/\KK}\arrow[r] & \Alg_{\id}\big(\SCR_{\KK}^{\coft, \op}\big)\simeq \SCR_{\KK}^{\coft, \op}.
\end{tikzcd}$$
For each coherent animated $\KK$-algebra $B$, the fibre over $B$ is equivalent to the $\infty$-category of partition Lie algebroids over $B$ from  \Cref{def:pla affine}.
\begin{proposition}\label{prop:lie algebroid naturality}
The following assertions hold:
\begin{enumerate}
\item The forgetful functor defines a map of cartesian fibrations preserving cartesian arrows
\begin{equation}\label{diag:lie algebroids pullback}\begin{tikzcd}
\LieAlgd_{/\KK}\arrow[rr, "\forget"]\arrow[rd] & & \QC^\vee_{/T[1]}\arrow[ld]\\ 
& \SCR_{\KK}^{\coft, \op}.
\end{tikzcd}\end{equation}
In particular, every almost finitely presented map $f\colon A\rt B$ of animated $\KK$-algebras induces a commuting square
$$\begin{tikzcd}
\LieAlgd_{A/\KK}\arrow[r, "f^\sharp"]\arrow[d, "\forget"{swap}] & \LieAlgd_{B/\KK}\arrow[d, "\forget"]\\
\QC^\vee(A)_{/T_{A/\KK}[1]}\arrow[r, "f^\sharp"] & (\QC^\vee_B)_{/T_{B/\KK}[1]}
\end{tikzcd}$$
where the bottom functor is given by $f^\sharp(\mf{g}) = f^*(\mf{g})\times_{(f^*L_{A/\KK})^\vee[1]} T_{B/\KK}[1]$.

\item If $f\colon A\rt B$ is étale, then the square \eqref{diag:lie algebroids pullback} is left adjointable, i.e.\ $f^\sharp$ intertwines the free partition Lie algebroid functors. Consequently, $f^\sharp\colon \LieAlgd_{A/\KK}\rt \LieAlgd_{B/\KK}$ preserves colimits and is given by $\mf{g}\longmapsto f^*\mf{g}$ on the underlying pro-coherent modules.

\item The cartesian fibration $\LieAlgd_{/\KK}\to \SCR^{\coft, \op}$ is classified by a functor $A\longmapsto \LieAlgd_{A/\KK}$ satisfying étale descent.
\end{enumerate}
\end{proposition}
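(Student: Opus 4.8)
\emph{Part (1).} The three assertions are proven in the order (1), (2), (3), with the later parts resting on the earlier ones. For (1), the plan is to read everything off from the relative monad $\Lie^\pi_{\Delta}$ produced in \Cref{prop:pla natural}. Since $\LieAlgd_{/\KK}$ is, by \Cref{def:pla fibration}, the $\infty$-category of algebras for this monad inside $\Cat_{\infty/\SCR_{\KK}^{\coft, \op}}$, the projection to $\SCR_{\KK}^{\coft, \op}$ and the forgetful functor to $\QC^\vee_{/T[1]}$ exist tautologically and are compatible over the base; the claim that the former is a cartesian fibration and the latter preserves cartesian arrows is the general statement that the category of algebras over a monad relative to a base, whose underlying endofunctor preserves cartesian edges (as $\Lie^\pi_{\Delta}$ does by the construction underlying \Cref{prop:pla natural}), forms a cartesian fibration over the base with cartesian-edge-preserving forgetful functor. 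The explicit description $f^\sharp(\mf{g}) = f^*\mf{g}\times_{(f^*L_{A/\KK})^\vee[1]} T_{B/\KK}[1]$ of the transition functor on underlying pro-coherent modules is then exactly the dual of \eqref{diag:coanchor functorial}, as recorded in \Cref{con:anchor functorial}, and the displayed commuting square is the restriction of \eqref{diag:lie algebroids pullback} to the fibres over $A$ and $B$.

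\emph{Part (2).} For an étale map $f\colon A\to B$ one has $L_{B/A}\simeq 0$, and the plan is to use this to degenerate both the base-change functor and the monad. First, by Remarks \ref{rem:cocartesian etale case} and \ref{rem:pushout along cotangent}, the base-change functor on derivations becomes $(M, \alpha)\mapsto (f^*M, f^*\alpha)$; since $(f^*L_{A/\KK})^\vee[1]\simeq T_{B/\KK}[1]$, the fibre product in the formula from part (1) collapses, so $f^\sharp\colon (\QC^\vee_A)_{/T_{A/\KK}[1]}\to (\QC^\vee_B)_{/T_{B/\KK}[1]}$ is simply $\mf{g}\mapsto f^*\mf{g}$. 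Second, by \Cref{lem:coLie naturality laft} — which rests on \Cref{lem:cot-sqz etale base change} — the relative comonad $\coLie^\pi_\Delta$ preserves cocartesian arrows over étale maps; dualising, $\Lie^\pi_\Delta$ strictly commutes with $f^\sharp = f^*$. This strict compatibility is precisely the left-adjointability of \eqref{diag:lie algebroids pullback}: the free partition Lie algebroid functors are intertwined by $f^\sharp$. The remaining claims are then formal — an intertwining of free functors over a colimit-preserving base functor lifts to a colimit-preserving functor on algebras, which therefore admits a right adjoint since all categories in sight are presentable, and which is computed by $f^*$ on underlying pro-coherent modules.

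\emph{Part (3).} Here the plan is to factor étale descent for $A\mapsto \LieAlgd_{A/\KK}$ through descent for the underlying pro-coherent module categories followed by a monadic descent argument. Given a faithfully flat étale map $\Spec B\to \Spec A$ with \v{C}ech nerve $B^\bullet$ — all of whose coface and codegeneracy maps are again étale — one first uses that $A\mapsto \QC^\vee_A$ satisfies étale descent (from the theory of pro-coherent sheaves established in \Cref{sec:dag}/\Cref{procohappendix}) together with the compatibility of $L_{-/\KK}$, hence of $T_{-/\KK}[1]$, with étale base change, so that slicing commutes with the limit and $(\QC^\vee_A)_{/T_{A/\KK}[1]}\simeq \lim_{\Delta}(\QC^\vee_{B^\bullet})_{/T_{B^\bullet/\KK}[1]}$ along the colimit-preserving transition functors $f^\sharp = f^*$. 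Part (2) then supplies exactly the Beck--Chevalley compatibility of the partition Lie algebroid monads on this cosimplicial diagram of presentable $\infty$-categories with the transition functors, so a standard monadic descent statement (cf.\ the descent results of \cite{SAG}) yields $\LieAlgd_{A/\KK}\simeq \lim_{\Delta}\LieAlgd_{B^\bullet/\KK}$; checking that this limit presentation is natural identifies the classifying functor $A\mapsto \LieAlgd_{A/\KK}$ with an étale sheaf.

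\emph{Main obstacle.} I expect the genuinely substantial point to lie in part (3), more precisely in its two inputs: the étale descent for $\QC^\vee_{-}$ itself, which is a nontrivial property of pro-coherent sheaves that must be imported from the appendix, and the monadic descent step that leverages the Beck--Chevalley compatibility of part (2) to transfer this descent to the categories of partition Lie algebroids. Parts (1) and (2), by contrast, are comparatively formal once \Cref{prop:pla natural} and the étale compatibility of the comonad (\Cref{lem:coLie naturality laft}) are in hand: part (1) is the relative-monad bookkeeping and part (2) is the degeneration of the general base-change functor to plain pullback in the étale case.
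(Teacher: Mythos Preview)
Your overall approach matches the paper's, but there is a gap in part (1). You invoke a general statement requiring the monad $\Lie^\pi_{\Delta}$ to preserve cartesian edges, claiming this follows from \Cref{prop:pla natural}; however, that proposition only constructs $\Lie^\pi_\Delta$ as a \emph{fibrewise} monad (i.e.\ a monad in $\Cat_{\infty/\SCR_{\KK}^{\coft, \op}}$), and \Cref{rem:pla oplax natural} notes explicitly that the resulting family is only \emph{oplax} natural in the base. Preservation of cartesian arrows over $f$ would amount to $f^\sharp$ intertwining the free functors --- which is exactly what part (2) asserts \emph{only for étale} $f$; were it true for all almost finitely presented $f$, part (2) would have nothing to say. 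The paper's argument for (1) sidesteps this: for \emph{any} fibrewise monad $T$ on a cartesian fibration $p\colon\cat{Y}\to\cat{X}$, the forgetful functor $\Alg_T(\cat{Y})\to\cat{Y}$ is automatically a map of cartesian fibrations preserving cartesian edges, with no hypothesis on how $T$ interacts with cartesian arrows. This is extracted from \cite[Corollary 3.2.2.3]{HA} applied to the map of $\cat{O}$-monoidal categories $(\mm{End}_{/\cat{X}}(\cat{Y}),\cat{Y})\to(\ast,\cat{X})$ (with $\cat{O}$ the operad encoding an algebra together with a module), then taking the fibre over the fixed monad $T$.

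Your treatment of (2) and (3) matches the paper's (much terser) proof. Your assessment that the main obstacle lies in (3) overstates matters: once (2) provides Beck--Chevalley compatibility and étale descent for $\QC^\vee$ is imported, descent for the algebra categories is immediate, and the paper dispatches (3) in half a sentence.
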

\begin{remark}
Note that for a general finitely presented map $f\colon A\rt B$, the functor $f^\sharp$ preserves sifted colimits, but it need not preserve limits and it typically does not preserve all colimits.
\end{remark}
\begin{proof}
Assertion (1) is a formal property of categories of algebras over a fibrewise monad: if $p\colon \cat{Y}\rt \cat{X}$ is a cartesian fibration and $T\in \mm{End}_{/\cat{X}}(\cat{Y})$ is a fibrewise monad on $\cat{Y}$, then the forgetful functor $\Alg_T(\cat{Y})\rt \cat{Y}$ is a map of cartesian fibrations over $\cat{X}$ that preserves cartesian edges. To see this, let $\cat{O}$ be the operad for algebras and modules and consider the map of $\cat{O}$-monoidal categories $\big(\mm{End}_{/\cat{X}}(\cat{Y}), \cat{Y}\big)\rt \big(\ast, \cat{X}\big)$. Taking algebras, we then obtain
$$\begin{tikzcd}
\Alg_{\cat{O}}\big(\mm{End}_{/\cat{X}}(\cat{Y}), \cat{Y}\big)\arrow[rr]\arrow[rd] & & \Alg_{\EE_1}\big(\mm{End}_{/\cat{X}}(\cat{Y})\big)\times \cat{Y}\arrow[ld]\\
 & \cat{X}.
\end{tikzcd}$$
{It follows }from \cite[Corollary 3.2.2.3]{HA} that this is a map of cartesian fibrations over $\cat{X}$ preserving cartesian arrows. Taking the fibre over a fixed monad $T\in \Alg_{\EE_1}\big(\mm{End}_{/\cat{X}}(\cat{Y})\big)$, one obtains that $\Alg_T(\cat{Y})\rt \cat{X}$ is a cartesian fibration as well (with cartesian arrows detected in $\cat{Y}$). 

Assertion (2) follows from  \Cref{lem:coLie properties} and (3) follows from (2) and étale descent for $A\longmapsto \QC^\vee(A)$.
\end{proof}

We now turn to the proof of  \Cref{prop:pla natural}. We will need the following technical category-theoretic result about completing families of $\infty$-categories under colimits:
\begin{definition}
Let $p\colon \cat{C}\rt \cat{B}$ be a cartesian fibration over a small $\infty$-category and let $\cat{K}$ be a class of small $\infty$-categories.
\begin{itemize}
\item Let us say that a colimit diagram $K^{\rhd}\to \cat{C}_b$ in a fibre is \emph{conserved} if it is preserved by the base change functor $f^*\colon \cat{C}_{b}\to \cat{C}_{b'}$ for each $b'\to b$ in $\cat{B}$.
\item We will say that $p$ is \emph{$\cat{K}$-complete} if each fibre $\cat{E}_b$ is $\cat{K}$-complete and if all of these colimits are conserved.

\item By a \emph{collection of conserved colimit diagrams} we will mean a collection $\cat{R}=\cup_{b\in \cat{B}} \cat{R}_b$, where  each $\cat{R}_b=\big\{f_\alpha\colon K_\alpha^{\rhd}\rt \cat{C}_{b}\big\}$ is a set of conserved colimit diagrams in the fibre $\cat{C}_b$ and each base change functor sends $\cat{R}_b$ to $\cat{R}_{b'}$.

\item Given another cartesian fibration $q\colon \cat{W}\rt \cat{B}$,  write $\Fun_{\cat{R}/\cat{B}}(\cat{C}, \cat{W})$ for the full subcategory of $\Fun_{/\cat{B}}(\cat{C}, \cat{W})$ on those functors $F\colon \cat{C}\rt \cat{W}$ over $\cat{B}$ such that each map on fibres $F_b\colon \cat{C}_b\rt \cat{W}_b$ preserves the colimits in $\cat{R}_b$. If $\cat{R}$ is the collection of all colimits indexed by diagrams from $\cat{K}$, we will simply write $\Fun_{\cat{K}/\cat{B}}(\cat{C}, \cat{W})$.
\end{itemize}
\end{definition}
\begin{proposition}\label{prop:oplax adding colimits}
Let $\cat{B}$ be a small $\infty$-category and consider a fully faithful map of cartesian fibrations, preserving cartesian arrows
$$\begin{tikzcd}[row sep=1pc]
\cat{C}\arrow[rd, "p"{swap}]\arrow[rr, hookrightarrow, "j"] & & \cat{V}\arrow[ld, "\hat{p}"]\\
& \cat{B}.
\end{tikzcd}$$
Let $\cat{K}$ be a collection of small $\infty$-categories and let $\cat{R}$ be a set of conserved colimit diagrams, indexed by $\infty$-categories in $\cat{K}$. Suppose that the following condition holds:
\begin{enumerate}
\item $\hat{p}\colon \cat{V}\rt \cat{B}$ is $\cat{K}$-complete.

\item Each $j_b\colon \cat{C}_b\rt \cat{V}_b$ exhibits $\cat{V}_b$ as the $\cat{K}$-completion of $\cat{C}_b$ relative to $\cat{R}_b$.
\end{enumerate}
Then for every $\cat{K}$-complete cartesian fibration $q\colon \cat{W}\rt \cat{B}$, restriction along $j$ induces an equivalence of $\infty$-categories
$$
\Fun_{\cat{K}/\cat{B}}(\cat{V}, \cat{W})\simeq \Fun_{\cat{R}/\cat{B}}(\cat{C}, \cat{W}).
$$ 
\end{proposition}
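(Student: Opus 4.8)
\medskip

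The plan is to deduce this fibrewise colimit-completion statement from the unstraightened version of the absolute statement (\cite[Proposition 5.3.6.2]{HTT}) by working inside a suitable $\infty$-category of cocartesian sections, exactly as one passes from a pointwise universal property to a functor-category universal property. First I would straighten $p$, $\hat p$, and $q$ to functors $F_{\cat C}, F_{\cat V}, F_{\cat W}\colon \cat B^{\op}\rt \Cat_\infty$, noting that a functor over $\cat B$ preserving cartesian arrows is the same datum as a natural transformation of these functors, and that $\Fun_{\cat K/\cat B}(\cat V,\cat W)$ (resp.\ $\Fun_{\cat R/\cat B}(\cat C,\cat W)$) is the full subcategory of $\Fun(\cat B^{\op},\Cat_\infty)_{F_{\cat V}//}$ (resp.\ of natural transformations out of $F_{\cat C}$) spanned by those whose components $j_b$-restricted pieces preserve the relevant colimits. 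The key input is then: for each fixed $b$, restriction along $j_b\colon \cat C_b\rt \cat V_b$ induces an equivalence $\Fun_{\cat K}(\cat V_b,\cat W_b)\xrightarrow{\ \sim\ }\Fun_{\cat R_b}(\cat C_b,\cat W_b)$, which is hypothesis (2) together with the defining universal property of the $\cat K$-completion (\cite[Proposition 5.3.6.2]{HTT}).

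\medskip

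The second, more structural, step is to upgrade these pointwise equivalences to an equivalence of functor $\infty$-categories over $\cat B$. Here I would invoke the model for $\Fun_{/\cat B}(\cat C,\cat W)$ as the $\infty$-category of sections of the cartesian fibration $\Fun_{/\cat B}(\cat C,\cat W)\rt \cat B$ whose fibre over $b$ is $\Fun(\cat C_b,\cat W_b)$ (using that $q$ is a cartesian fibration; see \cite[Corollary 3.2.2.13]{HA} or the relative-mapping-space constructions of \cite[Section 3.2.1]{HA}), and then observe that ``preserves cartesian arrows'' picks out the cocartesian sections of a further fibration. Concretely, the functor $b\mapsto \Fun_{\cat K}(\cat V_b,\cat W_b)$ and $b\mapsto \Fun_{\cat R_b}(\cat C_b,\cat W_b)$ are both functors $\cat B^{\op}\rt \Cat_\infty$ — the requisite functoriality in $b$ is exactly the content of ``each base change functor sends $\cat R_b$ to $\cat R_{b'}$'' for the source and ``$\hat p$ is $\cat K$-complete'' (so base change of $\cat V$ preserves $\cat K$-colimits) for the target — and restriction along $j$ is a natural transformation between them which is a pointwise equivalence by the previous paragraph. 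A natural transformation of functors valued in $\Cat_\infty$ that is an objectwise equivalence is an equivalence, so passing to sections (equivalently, to limits over $\cat B^{\op}$) yields the desired equivalence $\Fun_{\cat K/\cat B}(\cat V,\cat W)\simeq \Fun_{\cat R/\cat B}(\cat C,\cat W)$.

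\medskip

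I expect the main obstacle to be bookkeeping rather than conceptual: one must be careful that ``the colimits in $\cat R_b$ are \emph{conserved}'' (preserved by all base-change functors) is genuinely used to ensure that the assignment $b\mapsto \Fun_{\cat R_b}(\cat C_b,\cat W_b)$ is well-defined as a subfunctor of $b\mapsto\Fun(\cat C_b,\cat W_b)$ — i.e.\ that restricting a fibrewise-$\cat R$-preserving functor along a base change map again lands in the $\cat R$-preserving locus — and similarly that $\cat K$-completeness of $\hat p$ and $q$ makes $b\mapsto \Fun_{\cat K}(\cat V_b,\cat W_b)$ a subfunctor of $b\mapsto \Fun(\cat V_b,\cat W_b)$. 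A clean way to organise this, which I would adopt, is to first prove the statement when $\cat R$ consists of \emph{all} $\cat K$-indexed colimits (so both sides are literally $\Fun_{\cat K/\cat B}$), using \cite[Proposition 5.3.6.2]{HTT} fibrewise, and only afterwards cut down to a chosen conserved set $\cat R$ by the same localisation argument used in the proof of \Cref{prop:colimit completion slice}: the $\cat K$-completion relative to $\cat R_b$ sits inside the full $\cat K$-cocompletion (= presheaves sending $\cat R_b$ to limits), compatibly in $b$, and one restricts the equivalence of functor categories accordingly.
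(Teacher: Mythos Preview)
There is a genuine gap. Recall that $\Fun_{\cat K/\cat B}(\cat V,\cat W)$ and $\Fun_{\cat R/\cat B}(\cat C,\cat W)$ are, by definition, full subcategories of \emph{all} functors over $\cat B$; there is no requirement that these functors preserve cartesian arrows. This is essential for the application: the monad $\Lie^\pi_\Delta$ is only oplax natural in the base (cf.\ Remark~\ref{rem:pla oplax natural}), so one really needs the statement at this level of generality. After straightening, such functors correspond to \emph{oplax} natural transformations, not strict ones, so your identification of $\Fun_{\cat K/\cat B}(\cat V,\cat W)$ with a full subcategory of $\Fun(\cat B^{\op},\Cat_\infty)_{F_{\cat V}//}$ is incorrect.

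Your second paragraph asserts that $b\mapsto \Fun(\cat C_b,\cat W_b)$ is a functor $\cat B^{\op}\to \Cat_\infty$, but this fails precisely because both $p$ and $q$ are \emph{cartesian}: for $f\colon b'\to b$ one has base-change functors $f^*\colon \cat C_b\to \cat C_{b'}$ and $f^*\colon \cat W_b\to \cat W_{b'}$ pointing the \emph{same} way, and there is no composite producing a map $\Fun(\cat C_b,\cat W_b)\to\Fun(\cat C_{b'},\cat W_{b'})$ or its reverse. In particular, the relative internal hom over $\cat B$ is neither a cartesian nor a cocartesian fibration in this situation, its sections are not a limit of the fibres, and a map inducing fibrewise equivalences need not be an equivalence on sections. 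The step ``pass to limits over $\cat B^{\op}$'' therefore does not go through.

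The paper circumvents this by constructing the inverse to restriction directly as a \emph{relative} left Kan extension along a factorisation $\cat C\xrightarrow{h}\cat E\xrightarrow{L}\cat V$ coming from the explicit presheaf model of the $\cat K$-completion. The key technical ingredient is a cofinality lemma: for each $X\in\cat E_b$ the inclusion $(\cat C_b)_{/X}\hookrightarrow \cat C_{/X}$ is cofinal. This is exactly what guarantees that the $q$-relative left Kan extension exists and agrees with the fibrewise left Kan extensions, thereby making the pointwise inverses assemble coherently into a functor over $\cat B$. Your proposal is missing an argument playing this role.
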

\begin{proof}
We will need some details about the construction of the $\cat{K}$-completion $\cat{V}_b$ of $\cat{C}_b$ relative to $\cat{R}_b$ from the proof of \cite[Proposition 5.3.6.2]{HTT}. Let $\cat{P}_{\cat{R}_b}(\cat{C}_b)\subseteq \cat{P}(\cat{C}_b)$ be the $\infty$-category of presheaves on $\cat{C}_b$ sending all colimits in $\cat{R}_b$ to limits and recall that we can identify $\cat{V}_b\subseteq \cat{P}_{\cat{R}_b}(\cat{C}_b)$ with the smallest subcategory that contains the representable presheaves and is closed under $\cat{K}$-indexed colimits. Let $L_b\colon \cat{P}(\cat{C}_b)\rt \cat{P}_{\cat{R}_b}(\cat{C}_b)$ be the left adjoint to the inclusion and let $\cat{E}_b\subseteq \cat{P}(\cat{C}_b)$ be the inverse image $L_b^{-1}(\cat{V}_b)$; note that $\cat{E}_b$ is closed under $\cat{K}$-indexed colimits in $\cat{P}(\cat{C}_b)$. The functor $j_b\colon \cat{C}_b\rt \cat{V}_b$ then factors as
$$\begin{tikzcd}
j_b\colon \cat{C}_b\arrow[r, "h_b"] & \cat{E}_b\arrow[r, "L_b"] & \cat{V}_b
\end{tikzcd}$$
where the first functor is the Yoneda embedding and the second is a localisation (at the $L_b$-local equivalences). All of this depends functorially on the pair $(\cat{C}_b, \cat{R}_b)$. Consequently, under unstraightening it gives rise to a diagram of cartesian fibrations and maps preserving cartesian arrows
$$\begin{tikzcd}[row sep=1pc]
\cat{C}\arrow[rd, "p"{swap}]\arrow[r, "h"] & \cat{E}\arrow[d] \arrow[r, "L"] & \cat{V}\arrow[ld, "\hat{p}"]\\
& \cat{B}
\end{tikzcd}$$
where $\cat{E}$ and $\cat{V}$ are both $\cat{K}$-complete. By \cite{hinich2013dwyer}, the map $L$ is a localisation, inverting the $L_b$-local equivalences in each fibre $\cat{E}_b$. Since each localisation $L_b$ preserves $\cat{K}$-indexed colimits and admits a fully faithful right adjoint, it follows that restriction along $L$ defines a fully faithful embedding
$$\begin{tikzcd}
\Fun_{\cat{K}/\cat{B}}(\cat{V}, \cat{W})\arrow[r, hookrightarrow] & \Fun_{\cat{K}/\cat{B}}(\cat{E}, \cat{W}).
\end{tikzcd}$$
Write $\Fun'_{\cat{K}/\cat{B}}(\cat{E}, \cat{W})$ for the essential image; it consists of those functors $F\colon \cat{E}\rt \cat{W}$ over $\cat{B}$ such that each $F_b\colon \cat{E}_B\rt \cat{W}_b$ preserves $\cat{K}$-indexed colimits and sends the $L_b$-local equivalences to equivalences in $\cat{W}_b$. We now claim that the following two assertions hold:
\begin{enumerate}[label=(\alph*)]
\item Let $F\in \Fun_{/\cat{B}}(\cat{E}, \cat{W})$ such that its restriction $F\big|\cat{C}$ is contained in $\Fun_{\cat{R}/\cat{B}}(\cat{C}, \cat{W})$. Then $F\in \Fun'_{\cat{K}/\cat{B}}(\cat{E}, \cat{W})$ is the $q$-left Kan extension of $F\big|\cat{C}$ along $h$ if and only if $F$ sends $L$-local equivalences and preserves $\cat{K}$-indexed colimits fibrewise. 

\item If $F_0\in \Fun_{\cat{R}/\cat{B}}(\cat{C}, \cat{W})$, then there exists a $q$-left Kan extension $F\in \Fun_{\cat{K}/\cat{B}}(\cat{E}, \cat{W})$ of $F_0$ along $h$.
\end{enumerate} 
Given (a) and (b), the result follows: restriction along the localisation $L$ yields an equivalence $\Fun_{\cat{K}/\cat{B}}(\cat{V}, \cat{W})\simeq \Fun'_{\cat{K}/\cat{B}}(\cat{E}, \cat{W})$ and restriction along $h$ yields an equivalence $\Fun'_{\cat{K}/\cat{B}}(\cat{E}, \cat{W})\simeq \Fun_{\cat{R}/\cat{B}}(\cat{C}, \cat{W})$ with inverse given by $q$-left Kan extension.

It remains to verify claims (a) and (b). In the case where $\cat{B}=\ast$, these assertions are proven in the proof of \cite[Proposition 5.3.6.2]{HTT}; we will reduce to this case by a cofinality argument.
To this end, let us fix a presheaf $X\in \cat{E}_b\subseteq\cat{P}(\cat{C}_b)$ in the fibre over $b\in \cat{B}$. Then the functor
\begin{equation}\label{diag:cofinal}\begin{tikzcd}
(\cat{C}_b)_{/X}:=\cat{C}_b\times_{\cat{E}_b} (\cat{E}_b)_{/X}\arrow[r] & \cat{C}\times_{\cat{E}} \cat{E}_{/X} =: \cat{C}_{/X}
\end{tikzcd}\end{equation}
is cofinal. To see this, we have to show that for each $g\colon c\to X$ in $\cat{C}_{/X}$ covering a map $f\colon b'\to b$, the $\infty$-category $\big((\cat{C}_b)_{/X}\big)_{g/}$ is contractible. Unravelling the definitions, the left fibration $\big((\cat{C}_b)_{/X}\big)_{g/}\to (\cat{C}_b)_{/X}$ classifies the functor
$$\begin{tikzcd}
(\cat{C}_b)_{/X}\arrow[r] & \sS; & (c'\to X)\arrow[r] & \Map_{\cat{E}_{b'}}(c, f^*c')\times_{\Map_{\cat{E}_{b'}}(c, f^*X)} \{g\}.
\end{tikzcd}$$
The colimit of this diagram is contractible: indeed, $f^*\colon \cat{E}_b\to \cat{E}_{b'}$ and $\Map_{\cat{E}_{b'}}(c, -)\colon \cat{E}_{b'}\rt \sS$ both preserve colimits (recall that $\cat{E}'_b\subseteq \cat{P}(\cat{C}'_b)$ and that $f^*$ is the colimit-preserving extension of $f^*\colon \cat{C}_b\to \cat{C}_b'$ by definition).

Using \cite[Corollary 4.3.1.16]{HTT} and the fact that \eqref{diag:cofinal} is cofinal, we find that:
\begin{enumerate}[label=(\alph*')]
\item $F\in \Fun_{/\cat{B}}(\cat{E}, \cat{W})$ is the $q$-left Kan of its restriction to $\cat{C}$ if and only if each $F_b\colon \cat{E}_b\to \cat{W}_b$ is the left Kan extension of its restriction to $\cat{C}_b$.

\item $F_0\in \Fun_{/\cat{B}}(\cat{C}, \cat{W})$ admits a $q$-left Kan extension along $h$ if and only if each $F_{0b}\colon \cat{C}_b\to \cat{W}_b$ admits a left Kan extension along $h_b$.
\end{enumerate}
Using this, it suffices to verify assertions (a) and (b) fibrewise, where they hold by the proof of \cite[Proposition 5.3.6.2]{HTT}.
\end{proof}
\begin{proof}[Proof of  \Cref{prop:pla natural}]
This follows the same strategy as the proof of  \Cref{prop:pla affine}. By  \Cref{prop:oplax adding colimits} and the proof of  \Cref{prop:pla affine} (applied fibrewise),  restriction defines an equivalence of monoidal $\infty$-categories between:
\begin{enumerate}
\item endofunctors of $\QC^\vee_{/T[1]}$ relative to $\SCR_{\KK}^{\coft, \op}$ that preserve sifted colimits fibrewise and that preserve the full subcategory $\cat{Der}_{/\KK, \geq 0}^{\mm{aperf}, \coft, \op}$.

\item endofunctors of $\cat{Der}_{/\KK, \geq 0}^{\mm{aperf}, \coft, \op}$ relative to $\SCR_{\KK}^{\coft, \op}$ that preserve the sifted colimit diagrams (a) and (b) from the proof of  \Cref{prop:pla affine} fibrewise.
\end{enumerate}
The result now follows from the fact that the monad $(\coLie^\pi_{\Delta})^{\op}$ defines an algebra in the $\infty$-category (2), by  \Cref{lem:coLie properties}.
\end{proof}

\subsection{Partition Lie algebroids on schemes}\label{sec:pla schemes}
We will now deduce  \Cref{thm:pla scheme} from  \Cref{prop:lie algebroid naturality} by a descent argument. To establish unicity of the partition Lie algebroid monad $\Lie^{\pi}_{\Delta, X/\KK}$, we will need the following observation:
\begin{lemma}\label{lem:lke procoh scheme}
Let $X$ be a locally coherent qcqs scheme, $T\in \QC^\vee_X$ and  write $\cat{C}_T\subseteq (\QC^\vee_X)_{/T}$ for the full subcategory of $M\to T$ where $M$ is dually almost perfect of non-positive tor-amplitude. If $\cat{D}$ be an $\infty$-category with sifted colimits and $F\colon (\QC^\vee_X)_{/T}\rt \cat{D}$ preserves sifted colimits, then $F$ is left Kan extended from its restriction to $\cat{C}_T\subseteq (\QC^\vee_X)_{/T}$.
\end{lemma}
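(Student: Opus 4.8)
The plan is to realise $(\QC^\vee_X)_{/T}$ as the sifted‑colimit completion of $\cat{C}_T$ relative to a suitable set of sifted colimit diagrams, and then to invoke the standard fact that a sifted‑colimit‑preserving functor out of such a completion is automatically left Kan extended from the generating subcategory. Concretely: if $j\colon \cat{C}\hookrightarrow \cat{V}$ exhibits $\cat{V}$ as the $\cat{K}$‑completion of $\cat{C}$ relative to a set $\cat{R}$ of colimit diagrams indexed by contractible categories, then $j$ is fully faithful and restriction along $j$ induces an equivalence between $\cat{K}$‑colimit‑preserving functors $\cat{V}\to \cat{D}$ and $\cat{R}$‑preserving functors $\cat{C}\to \cat{D}$, with inverse given by left Kan extension along $j$ (see \cite[Proposition 5.3.6.2]{HTT}; in the absolute case this is \cite[Proposition 5.5.8.15]{HTT}). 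Since sifted $\infty$‑categories are weakly contractible — the diagonal of a sifted $\infty$‑category is cofinal, forcing $|I|\simeq |I|\times |I|$ and hence $|I|\simeq \ast$ — this applies with $\cat{K}$ the class of sifted $\infty$‑categories, and gives exactly the conclusion of the lemma once the presentation of $(\QC^\vee_X)_{/T}$ is established.

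To produce that presentation, first apply \Cref{prop:colimit completion slice} with $\cat{K}$ the sifted $\infty$‑categories: it reduces the claim to showing that the fully faithful functor $\dAPerf_X^{\weirdleq 0}\hookrightarrow \QC^\vee_X$ exhibits $\QC^\vee_X$ as the sifted‑colimit completion of $\dAPerf_X^{\weirdleq 0}$ relative to a set $\cat{R}_X$ of sifted colimit diagrams; one then takes $\cat{R}=(\cat{R}_X)_{/T}$, the diagrams in $\cat{C}_T=(\dAPerf_X^{\weirdleq 0})_{/T}$ whose image in $\QC^\vee_X$ lies in $\cat{R}_X$. For affine $X=\Spec(A)$ with $A$ coherent this absolute statement is \cite[Proposition 4.20]{BCN21}, already used in the proof of \Cref{prop:pla affine}: $\QC^\vee_A$ is the sifted‑colimit completion of $\dAPerf_A^{\weirdleq 0}\simeq \APerf_{A,\geq 0}^{\op}$ relative to the diagrams dual to finite totalisations in $\APerf_{A,\geq 0}$ and to limits of almost eventually constant towers in $\APerf_{A,\geq 0}$ (\Cref{def:almost eventually constant}). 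For general locally coherent qcqs $X$, I would deduce it by Zariski descent: pro‑coherent sheaves satisfy Zariski descent, so $\QC^\vee_X\simeq \lim_{U}\QC^\vee_U$ over the (essentially small) poset of affine opens $U\subseteq X$, and since dual almost perfectness and non‑positive tor‑amplitude are local, also $\dAPerf_X^{\weirdleq 0}\simeq \lim_U \dAPerf_U^{\weirdleq 0}$; moreover each restriction functor $\QC^\vee_U\to \QC^\vee_V$ preserves all colimits and carries $\dAPerf_U^{\weirdleq 0}$ and $\cat{R}_U$ into $\dAPerf_V^{\weirdleq 0}$ and $\cat{R}_V$. Organising the $\QC^\vee_U$ and $\dAPerf_U^{\weirdleq 0}$ into cartesian fibrations over this poset, with the $\cat{R}_U$ forming a conserved family (restriction functors being left adjoints), one promotes the fibrewise completion statements to a completion statement for the cartesian sections — i.e.\ for $X$ — using a category‑theoretic argument of the type packaged in \Cref{prop:oplax adding colimits}.

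The main obstacle is precisely this last descent step: checking that "being a sifted‑colimit completion relative to a conserved set of diagrams" passes from the fibres of a cartesian fibration to its $\infty$‑category of cartesian sections. The naive pitfall for a non‑separated $X$ — that the Čech nerve of a finite affine cover has non‑affine terms — is sidestepped by working over the entire poset of affine opens rather than a single nerve, so what remains is purely formal bookkeeping with colimit completions over a base. If the pro‑coherent sheaves appendix already records the scheme‑level statement that $\dAPerf_X^{\weirdleq 0}\hookrightarrow \QC^\vee_X$ is a sifted‑colimit completion relative to a set $\cat{R}_X$, then this entire paragraph reduces to a citation, and the proof of the lemma is simply the reduction of the first two paragraphs.
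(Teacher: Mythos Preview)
Your strategy is coherent but takes a much heavier route than the paper, and the key step you flag as the ``main obstacle'' is exactly where it diverges. You aim to show that $\dAPerf_X^{\weirdleq 0}\hookrightarrow \QC^\vee_X$ is a sifted-colimit completion for general locally coherent qcqs $X$ by descending the affine statement along the poset of affine opens, invoking \Cref{prop:oplax adding colimits}. But that proposition concerns \emph{relative} functor categories $\Fun_{/\cat{B}}(-,-)$, not the $\infty$-category of cartesian sections; it does not directly tell you that a limit of colimit completions is again a colimit completion. One can likely make this work, but it is genuine additional work, not bookkeeping.

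The paper bypasses this entirely. By \Cref{prop:pro-coh over scheme} one already has $\QC^\vee_X\simeq \mm{Ind}(\Coh_X^{\op})$ globally, and the argument of \Cref{lem:pro-coh over coherent ring} then identifies $\QC^\vee_X$ with a full subcategory of functors $(\APerf_{X,\geq 0})^{\op}\to \sS$ in which $\dAPerf_X^{\weirdleq 0}$ consists precisely of the corepresentables. Hence for every $M\in \QC^\vee_X$ the canonical map $\colim_{N\in \cat{C}_M} N\to M$ is an equivalence. Now one observes $(\cat{C}_T)_{/M}\simeq \cat{C}_M$ (the defining condition depends only on the domain), and that $\cat{C}_M$ is sifted because $\dAPerf_X^{\weirdleq 0}\simeq (\APerf_{X,\geq 0})^{\op}$ has finite coproducts. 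The pointwise left Kan extension formula $\colim_{(\cat{C}_T)_{/M}} F(N)\to F(M)$ is then an equivalence because $F$ preserves this sifted colimit. No colimit-completion universal property, no descent argument for completions, is needed: the global $\mm{Ind}$ presentation already encodes everything.
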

\begin{proof}
The proof of  \Cref{lem:pro-coh over coherent ring} shows that the $\infty$-category $\QC^\vee_X\simeq \mm{Ind}(\Coh_X^{\op})$ is equivalent to the $\infty$-category of functors $\big(\APerf_{X, \geq 0}\big)^{\op}\rt \sS$ that are reduced excisive and preserve limits of almost eventually constant towers (cf.\ \Cref{def:almost eventually constant}). Under this equivalence, the dually almost perfect objects of non-positive tor-amplitude simply correspond to the corepresentable functors. Consequently, for every $M\in \QC^\vee_X$ the canonical map $\colim_{N\in \cat{C}_M} N\rt M$ is an equivalence.

Now let $M\in (\QC^\vee_X)_{/T}$ and notice that there is an equivalence $(\cat{C}_T)_{/M}\simeq \cat{C}_M$. To see that $F$ is left Kan extended from $\cat{C}_T$, we need to show that
$$\begin{tikzcd}
\colim_{N\in (\cat{C}_T)_{/M}} F(N)\arrow[r] & F\big(\colim_{N\in (\cat{C}_T)_{/M}} N\big)\arrow[r] & F(M)
\end{tikzcd}$$
are both equivalences. Now note that $(\cat{C}_T)_{/M}\simeq \cat{C}_M$, so that the last map is an equivalence. Since $\cat{C}_M$ is sifted (it has finite coproducts) and $F$ preserves sifted colimits, the first map is an equivalence as well.
\end{proof}
\begin{proof}[Proof of  \Cref{thm:pla scheme}]
It follows directly from  \Cref{lem:lke procoh scheme} that space of sifted colimit preserving extensions is either empty or contractible. It therefore remains to prove the existence of the monad $\Lie^\pi_{\Delta, X/\KK}$. Writing $\mathfrak{U}$ for the poset of affine opens of $X$, \Cref{prop:lie algebroid naturality} provides a natural diagram
$$\begin{tikzcd}
\mathfrak{U}^{\op}\arrow[r] & \Fun^{\mm{RAd}}(\Delta^1, \Cat); \quad U\arrow[r, mapsto] & \Big((\QC^\vee_U)_{T_{U/\KK}[1]}\xrightarrow{\text{free}} \LieAlgd_{\mc{O}(U)/\KK)}\Big)
\end{tikzcd}$$
to the $\infty$-category of left adjoints and right adjointable squares between them \cite[Definition 4.7.4.16]{HA}. The limit of this diagram then yields an adjoint pair \cite[Proposition 4.7.4.19]{HA}
$$\begin{tikzcd}
\Lie^\pi_\Delta\colon \lim\limits_{U\in \mathfrak{U}^{\op}} (\QC^\vee_U)_{/T_{U/\KK}[1]}\arrow[r, yshift=1ex] & \lim\limits_{U\in \mathfrak{U}^{\op}} \LieAlgd_{\mc{O}(U)/\KK} \colon \forget\arrow[l, yshift=-1ex]
\end{tikzcd}$$
where the left (right) adjoint is given by the limit of the diagram of free (forgetful) functor for all affine open subspaces. Consequently, both adjoints preserve sifted colimits. Since pro-coherent sheaves satisfy descent (Corollary \ref{cor:descent}), one obtains a sifted colimit-preserving monad
$$\begin{tikzcd}
\Lie^\pi_{\Delta, X/\KK}\colon (\QC^\vee_X)_{/T_{X/\KK}[1]}\arrow[r] & (\QC^\vee_X)_{/T_{X/\KK}[1]}.
\end{tikzcd}$$
By construction, the restriction of this monad to the full subcategory $\cat{Der}_{X/\KK, \geq 0}^{\mm{aperf}, \op}$ is the limit of the monads
$$\begin{tikzcd}
\big(\cot\circ\mm{triv}(-^\vee)\big)^\vee\colon \cat{Der}_{\mc{O}(U)/\KK, \geq 0}^{\mm{aperf}, \op}\arrow[r] & \cat{Der}_{\mc{O}(U)/\KK, \geq 0}^{\mm{aperf}, \op}
\end{tikzcd}$$
for all affine opens $U\subseteq X$. By descent for almost perfect complexes, this monad naturally equivalent to $\big(\cot\circ\mm{triv}(-^\vee)\big)^\vee$, as desired.
\end{proof}
\begin{remark}\label{rem:pla scheme descent}
The proof of  \Cref{thm:pla scheme} shows that the  $\infty$-category  $\LieAlgd_{X/\KK}$ from  \Cref{def:pla scheme} coincides with the definition as the limit of the categories $\LieAlgd_{U/\KK}$ from  \Cref{def:pla affine} for all affine open subschemes.
\end{remark}

\newpage

\section{Infinitesimal deformations of families} \label{sec:infdefprob}
In this section, we will show that the $\infty$-category of partition Lie algebroids over a coherent animated $\KK$-algebra $B$ is equivalent to the $\infty$-category of deformation functors, or \emph{formal moduli problems} in the terminology of Lurie \cite[Chapter 12]{SAG}, defined on a certain $\infty$-category  of Artinian extensions of $B$. In Section \ref{sec:underlying lie}, we will use this to show that every partition Lie algebroid over $B$ has an underlying $\KK$-linear partition Lie algebra.

\subsection{Artinian extensions}
We begin by  introducing the following variant of Artin local rings relative to $B$:
\begin{definition}[Artinian extensions] \label{def:Artinian}
Let $B$ be a coherent animated $\KK$-algebra. We will say that a map of animated $\KK$-algebras $A\rt B$ is an \emph{Artinian extension} if it satisfies the following three conditions:
\begin{enumerate}
\item $A\to B$ is a nilpotent extension, that is, $\pi_0(A)\rt \pi_0(B)$ is surjective with nilpotent kernel.

\item $A\rt B$ is almost of finite presentation.

\item the fibre of $A\rt B$ is eventually coconnective.
\end{enumerate}
A map of animated $\KK$-algebras $A\rt B$ is said to be an \emph{almost Artinian extension} if it satisfies conditions (1) and (2). We will write $\Art_{\KK/B}\subseteq \AArt_{\KK/B}\subseteq \SCR_{\KK/B}$ for the full subcategory on the Artinian extensions of $B$.
\end{definition}
 
\begin{remark}\label{rem:coherent rings classical}
Let $A$ be a discrete commutative ring and let $I\subseteq A$ be a finitely presented nilpotent ideal such that $A/I$ is a coherent ring. Let us recall the following facts:
\begin{enumerate}
\item $A$ is coherent \cite[Theorem 4.1.1]{glaz1989coherent}.
\item For every $k\geq 0$, the ideal $I^k$ is finitely generated, and hence finitely presented over $A$.
\item If $M$ is a finitely presented $A$-module, then $I^kM/I^{k+1}M$ is a finitely presented $A$-module and hence a finitely presented $A/I$-module \cite[Theorem 2.1.8, Theorem 2.2.1]{glaz1989coherent}. 
\end{enumerate}
\end{remark}
\begin{proposition}\label{prop:artinian small}
Let $A'\to A$ be a map in $\SCR_{\KK/B}$ with $A\in \Art_{\KK/B}$. Then the following assertions are equivalent:
\begin{enumerate}
\item $A'\in \Art_{\KK/B}$ and $A'\to A$ is a nilpotent extension.

\item There exists a finite chain of maps $A'=A_n\to A_{n-1}\to \dots \to A_0=A$ in $\SCR_{\KK/B}$ where each $A_i\to A_{i-1}$ is a square zero extension by a connective coherent $B$-module, viewed as an $A_{i-1}$-module via the canonical map $A_{i-1}\to B$.
\end{enumerate}
\end{proposition}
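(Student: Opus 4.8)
The plan is to prove the two implications separately. The implication $(2)\Rightarrow(1)$ is routine bookkeeping with the earlier square-zero stability results, while $(1)\Rightarrow(2)$ is the substantive part and amounts to the classical ``Artinian algebras are built from square-zero extensions'' argument (as in \cite[\S 7.4.1]{HA}, \cite[\S 17.3]{SAG}, \cite{DAG-X}), but carried out with enough care that each elementary step is a square-zero extension by an honest $B$-\emph{module} rather than by an arbitrary module over an intermediate ring.

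For $(2)\Rightarrow(1)$, by induction on $n$ I would reduce to a single square-zero extension $g\colon A_1=A_0\oplus_\alpha M\to A_0$ with $A_0\in\Art_{\KK/B}$ and $M$ a connective coherent $B$-module. Then $g$ is almost finitely presented by \Cref{lem:sqz is aft} (hence so is $A_1\to B$, composing with $A_0\to B$); on $\pi_0$ the kernel of $\pi_0(A_1)\to\pi_0(A_0)$ is $\pi_0(M)$, which is finitely generated and squares to zero, so $\pi_0(A_1)\to\pi_0(B)$ is surjective with nilpotent kernel; and $\fib(g)\simeq M$ is eventually coconnective (being coherent), so the fibre sequence $M\to\fib(A_1\to B)\to\fib(A_0\to B)$ shows $A_1\to B$ has eventually coconnective fibre. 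Hence $A_1\in\Art_{\KK/B}$ and $g$ is a nilpotent extension.

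For $(1)\Rightarrow(2)$, write $I=\fib(A'\to A)$ — a connective, almost perfect (since $A'\to A$ is almost finitely presented), eventually coconnective $A'$-module with $\pi_0(I)=J:=\ker(\pi_0 A'\to\pi_0 A)$ a finitely generated nilpotent ideal — and let $\mathfrak m'=\ker(\pi_0 A'\to\pi_0 B)$ and $\mathfrak m=\ker(\pi_0 A\to\pi_0 B)$, both finitely generated and nilpotent. Throughout I would invoke Remark~\ref{rem:coherent rings classical} to guarantee finite generation of the relevant powers of nilpotent ideals and finite presentation over $\pi_0(B)$ of their subquotients modulo $\mathfrak m'$ and $\mathfrak m$. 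The argument then runs in two stages. In \emph{Stage 1} one matches the $\pi_0$'s: pick a finite descending chain of finitely generated ideals $J=K_1\supseteq K_2\supseteq\dots\supseteq K_t=0$ with $K_p^2\subseteq K_{p+1}$ and $\mathfrak m'K_p\subseteq K_{p+1}$ (e.g.\ $K_p=(\mathfrak m')^{p-1}\cap J$, which terminates since $\mathfrak m'$ is nilpotent). Peeling from the top, each $K_{p}/K_{p+1}$ is a square-zero ideal killed by $\mathfrak m'$, hence a finitely generated $\pi_0(B)$-module, and standard deformation theory identifies the successive quotient maps as square-zero extensions by these $\pi_0(B)$-modules placed in degree $0$; this produces a finite chain $A'=E_{t-1}\to\dots\to E_1$ of square-zero extensions by connective coherent $B$-modules with $\pi_0(E_1)\cong\pi_0(A)$. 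Since each $K_p\subseteq J$ vanishes on $\pi_0$, the map $A'\to A$ factors through every $E_p$, and a fibre-sequence computation together with \Cref{prop:filtered aft} shows that each $E_p$ lies in $\Art_{\KK/B}$ and that $E_1\to A$ is again a nilpotent, almost finitely presented extension, now with $1$-connective eventually coconnective fibre.

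In \emph{Stage 2} one matches the higher homotopy: the relative Postnikov tower of the map $E_1\to A$ exhibits it as a finite composite of square-zero extensions whose fibres are the modules $\pi_k(\fib(E_1\to A))[k]$ for $k\ge 1$ — each of which is $k$-connective and $k$-truncated, hence a legitimate square-zero module. Each $\pi_k(\fib(E_1\to A))$ is a finitely presented $\pi_0(A)$-module on which $\mathfrak m$ acts nilpotently, so its $\mathfrak m$-adic filtration has finitely presented $\pi_0(B)$-modules as subquotients; since a square-zero extension by a module equipped with a finite filtration decomposes as a composite of square-zero extensions by the successive quotients, each of these steps refines into square-zero extensions by connective coherent $B$-modules. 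Concatenating the chains of the two stages yields the required chain from $A'$ to $A$. The hard part — and the reason the coherence hypothesis is essential — is exactly this: the naive filtrations (the $J$-adic filtration of $\pi_0 A'$, or the bottom homotopy group $\pi_k(\fib)[k]$ of the fibre) only produce modules over the intermediate rings, and one must refine them through the $\mathfrak m'$- and $\mathfrak m$-adic filtrations, whose subquotients are finitely presented over $\pi_0(B)$ precisely by Remark~\ref{rem:coherent rings classical}. A secondary nuisance is the bookkeeping needed to keep every intermediate ring inside $\Art_{\KK/B}$ and every residual map inside condition (1), so that the construction terminates; this rests on \Cref{prop:filtered aft}, \Cref{lem:sqz is aft}, and \Cref{cor:sqz is complete aft}.
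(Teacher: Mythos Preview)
Your $(2)\Rightarrow(1)$ argument and the overall strategy for $(1)\Rightarrow(2)$—handle $\pi_0$ via a filtration of $J$ by powers of $\mathfrak m'$, then climb through the higher homotopy groups, refining each step by the $\mathfrak m$-adic filtration to land in coherent $\pi_0(B)$-modules—match the paper's. There is, however, a genuine gap.

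You assert that $\fib(A'\to A)$ is almost perfect ``since $A'\to A$ is almost finitely presented'', but condition (1) does not give this: it only says $A'\to A$ is a nilpotent extension and $A',A\in\Art_{\KK/B}$. In particular nothing yet guarantees that $J=\ker(\pi_0 A'\to\pi_0 A)$ is even finitely generated, and without this your filtration $K_p$ need not have finitely presented subquotients (incidentally, your suggested $K_p=(\mathfrak m')^{p-1}\cap J$ does not strictly decrease since $J\subseteq\mathfrak m'$; you presumably mean $(\mathfrak m')^{p-1}J$), nor are the modules $\pi_k(\fib)$ in Stage~2 known to be finitely presented. The paper handles this by a bootstrap: it first carries out the entire construction under the \emph{extra} hypothesis $A\in\APerf_{A'}$ (which via \Cref{prop:filtered aft} gives $J$ finitely presented and each $\cofib(A'\to A_{(i)})$ almost perfect), and only at the end removes this hypothesis by applying the already-established case to the map $A\to B$—where $B\in\APerf_A$ holds automatically—to write $A$ as an iterated square-zero extension of $B$ by coherent $B$-modules, whence $A\in\APerf_{A'}$ since $B\in\APerf_{A'}$.

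A secondary issue concerns the direction of your Stage~1. You describe a chain $A'=E_{t-1}\to\dots\to E_1$ obtained by ``peeling from the top'', but there is no general construction that quotients an animated ring $A'$ by a square-zero ideal $K\subseteq\pi_0(A')$ to produce an animated $E$ with $\pi_0(E)=\pi_0(A')/K$ and unchanged higher homotopy; the fibre sequence $\fib(A'\to E)\to\fib(A'\to A)\to\fib(E\to A)$ forces the degree-$0$ piece to appear in $\fib(E\to A)$, not in $\fib(A'\to E)$. The paper accordingly builds \emph{up} from $A$: it sets $A_{(1)}=\pi_0(A')\times_{\pi_0(A)}A$ (so $\pi_0 A_{(1)}=\pi_0 A'$ while $\pi_{\geq 1}A_{(1)}=\pi_{\geq 1}A$), refines $A_{(1)}\to A$ via the base-changed chain $\pi_0(A')\to\pi_0(A')/I^{m-1}J\to\cdots\to\pi_0(A)$, and for $i\geq 1$ defines $A_{(i+1)}$ as the square-zero extension of $A_{(i)}$ classified by $L_{A_{(i)}/A'}\to\pi_i\big(\cofib(A'\to A_{(i)})\big)[i]$, so that $\cofib(A'\to A_{(i+1)})\simeq\tau_{\geq i+1}\cofib(A'\to A_{(i)})$ and the tower terminates.
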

\begin{proof}
If $A'\to A$ satisfies condition (2), it is clearly a nilpotent extension. We will prove by induction on $i$ that the map $A_i\to B$ exhibits $B$ as an almost perfect $A_i$-module. This is evident for $i=0$. Assume that $B\in \APerf_{A_{i-1}}$, so that every almost perfect $B$-module is almost perfect as an $A_{i-1}$-module. It follows that $A_i\to A_{i-1}$ is a square zero extension by an almost perfect $A_{i-1}$-module.  \Cref{lem:sqz is aft} now implies that every almost perfect $A_{i-1}$-module is almost perfect as an $A_i$-module, so that $B\in \APerf_{A_i}$ as well.  \Cref{prop:filtered aft} now implies that $A_i\to B$ is almost finitely presented. Since $A_i\to B$ is clearly a nilpotent extension with eventually coconnective fibre, we conclude that $A_i\to B$ is an Artinian extension, so that (1) follows.

For the converse, we first suppose that $A'\to A$ is a nilpotent extension with $A'\in \Art_{\KK/B}$ such that $A\in \APerf_{A'}$. Write
$$
I=\ker(\pi_0(A')\rt \pi_0(B)) \qquad\text{and}\qquad J=\ker(\pi_0(A')\to \pi_0(A)).
$$
Both $I$ and $J$ are finitely presented nilpotent ideals, since $A'\to A$ and $A'\to B$ are almost finitely presented. In particular, $\pi_0(A')$ is a coherent ring by Remark \ref{rem:coherent rings classical}, and likewise for $\pi_0(A)$.

We now fix $m\geq 0$ such that $I^m=0$ and $t\geq 1$ such that $\cofib(A'\to A)$ is $t$-coconnective; such $t$ exists since $A'\to B$ and $A\to B$ both have eventually coconnective fibres. 
We will construct a sequence of animated $A'$-algebras $A'= A_{(t+1)}\to \dots\to A_{(3)}\to A_{(2)}\to A_{(1)}\to A_{(0)}=A$ with the following properties:
\begin{enumerate}[label=(\alph*)]
\item For each $i$, $\cofib(A\to A_{(i)})$ is a $i$-connective and $t$-coconnective almost perfect $A'$-module.
\item Each $A_{(i+1)}\to A_{(i)}$ decomposes into $m$ square zero extensions, each by the $i$-fold suspension of a finitely presented discrete $\pi_0(B)$-module.
\end{enumerate}
Since finitely presented discrete $\pi_0(B)$-modules are coherent as $B$-modules (\Cref{def:coherent modules}), this provides the desired decomposition.

To begin, we define $A_{(1)}=\pi_0(A')\times_{\pi_0(A)} A$ and note that $A_{(1)}\to A_{(0)}=A$ is the base change of the sequence of square zero extensions
$$\begin{tikzcd}[column sep=1pc]
\pi_0(A') \arrow[r] & \pi_0(A')/I^{m-1}J\arrow[r] & \dots \arrow[r] & \pi_0(A')/IJ\arrow[r] & \pi_0(A')/J=\pi_0(A)
\end{tikzcd}$$
by the finitely presented discrete $\pi_0(B)$-modules $I^kJ/I^{k+1}J$ (by Remark \ref{rem:coherent rings classical}). Note that $\pi_0(A')\cong \pi_0(A_{(1)})$ and that $\cofib(A'\to A_{(1)})$ is $1$-connective and $t$-coconnective, as desired. 

For $i\geq 1$, suppose that we have constructed $A_{(i)}$ and let $C=\cofib(A'\to A_{(i)})\in \APerf_{A'}$. Since $C$ is $i$-connective and $\pi_0(A')\cong \pi_0(A_{(i)})$, $L_{A_{(i)}/A'}$ is $i$-connective and $\pi_i(L_{A_{(i)}/A'})\cong \pi_i(C)$ is a finitely presented $\pi_0(A')$-module. We define $A_{(i+1)}$ to be the square zero extension of $A_{(i)}$ classified by the map $L_{A_{(i)}/A'}\to \pi_i(C)[i]$. Then $\cofib(A'\to A_{(i+1)})\simeq \tau_{\geq i+1}C$ is $(i+1)$-connective and $t$-coconnective. The finitely presented $\pi_0(A')$-module $\pi_i(C)$ fits into a sequence of extensions
$$\begin{tikzcd}[column sep=1.2pc]
\pi_i(C)\arrow[r] & \pi_i(C)/I^{m-1}\arrow[r] & \dots\arrow[r] & \pi_i(C)/I^2\arrow[r] & \pi_i(C)\otimes_{\pi_0(A')} \pi_0(B)
\end{tikzcd}$$
by finitely presented discrete $\pi_0(B)$-modules. This implies that $A_{(i+1)}\to A_{(i)}$ has property (b).

It now remains to verify by induction that $A_{(i+1)}$ is indeed an almost perfect $A'$-module for all $i\geq -1$. For $i=-1$, this holds by hypothesis and for $i\geq 0$ we constructed $A_{(i+1)}$ as an iterated extension of $A_{(i)}$ by coherent $B$-modules, all of which are in $\APerf_A$ by inductive hypothesis.

Finally, let us show that if $A'\to A$ is a map in $\Art_{\KK/B}$, then $A\in \APerf_{A'}$. To this end, note that $A\to B$ exhibits $B$ as an almost perfect $A$-module because $A\to B$ is an Artinian extension. The previous part of the proof therefore shows that $A$ is an iterated square zero extension of $B$ by coherent $B$-modules. Because $A'\in \Art_{\KK/B}$, these are all almost perfect as $A'$-modules, so that $A\in \APerf_{A'}$ as well.
\end{proof}
\begin{corollary}\label{cor:art small}
The subcategory $\Art_{\KK/B}\subset \SCR_{\KK/B}$ is the smallest full subcategory satisfying the following two conditions:
\begin{enumerate}
\item It contains the terminal object $B\to B$.

\item Given $M\in \Coh_{B, \geq 0}$ and a pullback square exhibiting $A_\eta$ as the square zero extension of $A$ by $M$
\begin{equation}\label{diag:sqz art}\begin{tikzcd}
A_\eta\arrow[d]\arrow[r] & B\arrow[d]\\
A\arrow[r, "\eta"] & B\oplus M[1]
\end{tikzcd}\end{equation}
we have that $A_\eta\in \Art_{\KK/B}$ as soon as $A\in \Art_{\KK/B}$.
\end{enumerate}
Furthermore, for each diagram $A_1\to A_0\leftarrow A_2$ in $\Art_{\KK/B}$ where one arrow is a nilpotent extension, the pullback is again contained in $\Art_{\KK/B}$.
 
\end{corollary}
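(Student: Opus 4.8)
The plan is to deduce the corollary directly from \Cref{prop:artinian small}, which already records the essential fact that a map $A'\to A$ with $A\in\Art_{\KK/B}$ has $A'\in\Art_{\KK/B}$ precisely when $A'$ is obtained from $A$ by a finite tower of square-zero extensions along connective coherent $B$-modules. First I would check that $\Art_{\KK/B}$ itself satisfies conditions (1) and (2). Condition (1) is immediate, since $B\to B$ is an equivalence and hence a nilpotent, almost finitely presented map with zero (in particular eventually coconnective) fibre. For condition (2): given $A\in\Art_{\KK/B}$ and $M\in\Coh_{B,\geq 0}$, the map $A_\eta\to A$ in \eqref{diag:sqz art} is, by construction, a square-zero extension of $A$ by $M$ regarded as an $A$-module along $A\to B$; applying \Cref{prop:artinian small} to this one-step tower shows both that $A_\eta\in\Art_{\KK/B}$ and that $A_\eta\to A$ is a nilpotent extension.

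For minimality, let $\cat{C}\subseteq \SCR_{\KK/B}$ be any full subcategory closed under (1) and (2), and fix $A\in\Art_{\KK/B}$. Since $A\to B$ is a nilpotent extension and $B\to B\in\Art_{\KK/B}$, \Cref{prop:artinian small} supplies a finite tower $A=A_n\to A_{n-1}\to\dots\to A_0=B$ in which each $A_i\to A_{i-1}$ is the square-zero extension of $A_{i-1}$ by some $M_i\in\Coh_{B,\geq 0}$, viewed as an $A_{i-1}$-module via $A_{i-1}\to B$. Each such square-zero extension sits in a pullback square of the shape \eqref{diag:sqz art} (with $A$, $M$, $\eta$ replaced by $A_{i-1}$, $M_i$ and the classifying derivation $A_{i-1}\to B\oplus M_i[1]$); this is the standard identification of a square-zero extension of $A_{i-1}$ by a pulled-back $B$-module with a fibre product over the trivial extension $B\oplus M_i[1]$, using that derivations $L_{A_{i-1}/\KK}\to M_i[1]$ of $A_{i-1}$-modules correspond to $\KK$-algebra maps $A_{i-1}\to B\oplus M_i[1]$ lifting $A_{i-1}\to B$ (see \cite{SAG}). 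An induction on $i$ now gives $A_i\in\cat{C}$: the base case $A_0=B$ holds by (1), and the inductive step by (2). Hence $A=A_n\in\cat{C}$, so $\Art_{\KK/B}\subseteq\cat{C}$.

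For the final assertion, consider $A_1\to A_0\leftarrow A_2$ in $\Art_{\KK/B}$ with, say, $A_1\to A_0$ a nilpotent extension. By \Cref{prop:artinian small} there is a finite tower $A_1=A_1^{(n)}\to\dots\to A_1^{(0)}=A_0$ of square-zero extensions, each fitting into a pullback square of shape \eqref{diag:sqz art}: $A_1^{(i)}=A_1^{(i-1)}\times_{B\oplus M_i[1]} B$ with $M_i\in\Coh_{B,\geq 0}$. Set $C_i=A_1^{(i)}\times_{A_0} A_2$, so that $C_0=A_2\in\Art_{\KK/B}$ and $C_n=A_1\times_{A_0} A_2$. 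Pullback pasting gives $C_i\simeq A_1^{(i)}\times_{A_1^{(i-1)}} C_{i-1}$, and pasting this square with the square for $A_1^{(i)}$ over $A_1^{(i-1)}$ exhibits $C_i$ as the fibre product $C_{i-1}\times_{B\oplus M_i[1]} B$ along the derivation $C_{i-1}\to A_1^{(i-1)}\to B\oplus M_i[1]$ — again a pullback square of shape \eqref{diag:sqz art}, since pasting on the left does not alter the right-hand vertical map. By condition (2) (which $\Art_{\KK/B}$ satisfies, by the first part), $C_i\in\Art_{\KK/B}$ whenever $C_{i-1}\in\Art_{\KK/B}$; inducting from $C_0=A_2$ yields $C_n=A_1\times_{A_0} A_2\in\Art_{\KK/B}$.

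There is no conceptual difficulty here beyond \Cref{prop:artinian small}; the only step requiring care is the bookkeeping with square-zero extensions — recognising the pullback presentation \eqref{diag:sqz art} as the square-zero extension appearing in \Cref{prop:artinian small}, and tracking that such presentations are stable under pasting with further pullback squares. Both are routine in the deformation theory of animated rings.
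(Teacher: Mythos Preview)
Your proof is correct and follows essentially the same approach as the paper: both parts are deduced directly from \Cref{prop:artinian small}, first by applying it with $A=B$ to obtain the tower characterisation of $\Art_{\KK/B}$, and then by decomposing the nilpotent extension $A_1\to A_0$ into square-zero extensions and propagating membership in $\Art_{\KK/B}$ along the induced tower of pullbacks. Your write-up simply spells out in more detail the pullback-pasting step and the identification of the square-zero extensions from \Cref{prop:artinian small} with pullback squares of shape~\eqref{diag:sqz art}, which the paper leaves implicit.
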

\begin{proof}
The first assertion follows from  \Cref{prop:artinian small} applied to $A=B$: this shows that $A'\to B$ is an Artinian extension if and only if it decomposes into a finite sequence of square zero extensions by connective coherent $B$-modules. The second assertion follows from  \Cref{prop:artinian small} by decomposing the nilpotent extension into square zero extensions by coherent $B$-modules and using the first assertion.
 
\end{proof}
\begin{corollary}\label{cor:artin remains coherent}
Let $B$ be a coherent animated ring and let $A\rt B$ be an Artinian extension. Then $A$ is coherent as well.
\end{corollary}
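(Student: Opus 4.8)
The plan is to reduce the statement to a single square-zero extension by means of \Cref{cor:art small}, and then to run a long-exact-sequence argument.

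First I would let $\mathcal{D}\subseteq \SCR_{\KK/B}$ be the full subcategory on those almost Artinian extensions $A\to B$ (\Cref{def:Artinian}) for which $A$ is coherent. By \Cref{cor:art small} it is enough to check that $\mathcal{D}$ contains the terminal object $B\to B$ — immediate, since $B$ is coherent by hypothesis — and is closed under forming the square-zero extensions appearing in that corollary. So suppose $A\to B$ lies in $\mathcal{D}$, let $M\in \Coh_{B,\geq 0}$, and let $A_\eta$ be the square-zero extension of $A$ by $M$, regarded as an $A$-module via $A\to B$. Since $A\to B$ is almost finitely presented and surjective on $\pi_0$, \Cref{prop:filtered aft} shows $B$ is almost perfect over $A$, hence $M$ is an almost perfect connective $A$-module; \Cref{lem:sqz is aft} then gives that $A_\eta\to A$ is almost finitely presented and that $B$ remains almost perfect over $A_\eta$. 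As $A_\eta\to A$ is square-zero it is in particular nilpotent, so $A_\eta\to B$ is again an almost Artinian extension. The only real point is that $A_\eta$ is coherent.

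To prove this I would analyse the fibre sequence of $A_\eta$-modules $M\to A_\eta\to A$ and its long exact sequence of $\pi_0(A_\eta)$-modules. On $\pi_0$, the kernel $J$ of $\pi_0(A_\eta)\to \pi_0(A)$ is $\mathrm{coker}\big(\pi_1(A)\to \pi_0(M)\big)$, that is, $\pi_0(M)$ modulo a finitely generated submodule (finitely generated because $A$ is coherent); as $\pi_0(M)$ is a finitely presented $\pi_0(B)$-module, $J$ is finitely presented over $\pi_0(B)$. Combining this with the fact that $\ker(\pi_0(A)\to \pi_0(B))$ is finitely generated (because $A\to B$ is almost finitely presented, \Cref{prop:filtered aft}), one gets that $I=\ker(\pi_0(A_\eta)\to \pi_0(B))$ is finitely generated over $\pi_0(A_\eta)$, and hence that $J$ is finitely presented over $\pi_0(A_\eta)$. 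Since $J^2=0$ and $\pi_0(A_\eta)/J\cong \pi_0(A)$ is coherent, \Cref{rem:coherent rings classical}(1) shows that $\pi_0(A_\eta)$ is coherent. For $i\geq 1$, the long exact sequence exhibits $\pi_i(A_\eta)$ as an extension of $\ker\big(\pi_i(A)\to \pi_{i-1}(M)\big)$ by $\mathrm{coker}\big(\pi_{i+1}(A)\to \pi_i(M)\big)$; each $\pi_j(A)$ is finitely presented over $\pi_0(A)$ and each $\pi_j(M)$ over $\pi_0(B)$, so — using once more that $I$ and $J$ are finitely generated over $\pi_0(A_\eta)$ — all of these are finitely presented over the now-coherent ring $\pi_0(A_\eta)$. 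The category of finitely presented $\pi_0(A_\eta)$-modules is abelian, so the relevant kernels and cokernels, and therefore the extension $\pi_i(A_\eta)$, are finitely presented. This proves $A_\eta$ coherent and closes the induction.

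The main obstacle is purely organisational: one is working simultaneously over the three rings $\pi_0(A_\eta)\twoheadrightarrow \pi_0(A)\twoheadrightarrow \pi_0(B)$, and the various finite-presentation statements must be proved in the right order — coherence of $\pi_0(A_\eta)$ has to be established first (via the classical input \Cref{rem:coherent rings classical}) before one may invoke that finitely presented modules over it form an abelian category, and this in turn rests on first checking that the ideals $I$ and $J$ are finitely generated over $\pi_0(A_\eta)$. Once these dependencies are arranged in this order, the remaining verifications are routine diagram chases with the long exact sequence. (Note that condition (3) of \Cref{def:Artinian}, eventual coconnectivity of the fibre, plays no role, so the argument in fact shows that the source of any almost Artinian extension of a coherent animated ring is coherent.)
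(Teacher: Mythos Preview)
Your proof is correct. Both you and the paper first reduce to a single square-zero extension (you via \Cref{cor:art small}, the paper via \Cref{prop:artinian small}), but then diverge. The paper argues via the categorical definition of coherence: \Cref{lem:sqz is aft}(1) shows that restriction along $A_\eta\to A$ carries $\APerf_A$ into $\APerf_{A_\eta}$, and since $A$ is coherent one can use the fibre sequence $M\otimes_{A_\eta}N\to N\to A\otimes_{A_\eta}N$ to write each $\pi_k(N)[0]$, for $N\in\APerf_{A_\eta}$, as an extension of coherent $A$-modules and hence as an object of $\APerf_{A_\eta}$; thus truncation preserves $\APerf_{A_\eta}$. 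You instead verify the equivalent homotopy-group characterisation directly, establishing that $\pi_0(A_\eta)$ is coherent via \Cref{rem:coherent rings classical} and then that each $\pi_i(A_\eta)$ is finitely presented by chasing the long exact sequence. Your route is more elementary and makes the classical input explicit; the paper's is terser because the structural work is already packaged in \Cref{lem:sqz is aft}.

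Two minor points. First, your $\mathcal{D}$ should consist of Artinian rather than almost Artinian extensions, since \Cref{cor:art small} describes $\Art_{\KK/B}$; this does not affect the argument. Second, your closing parenthetical about almost Artinian extensions does not follow directly from the induction as written, since \Cref{cor:art small} gives no inductive description of $\AArt_{\KK/B}$; it does however follow from the Artinian case combined with \Cref{lem:almost artinian}.
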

\begin{proof}
By  \Cref{prop:artinian small}, it suffices to treat the case where $A$ is a square zero extension of $B$ by a coherent $B$-module. In this case,  \Cref{lem:sqz is aft} implies that the truncation of an almost perfect $A$-module remains almost perfect.
\end{proof}
\begin{corollary}\label{cor:maps of art are aft}
Let $B$ be a coherent animated ring and let $A\to A'$ be a map of Artinian extensions of $B$. Then $A'$ is almost finitely presented as an animated $A$-algebra.
\end{corollary}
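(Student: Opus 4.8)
The plan is to verify the two conditions that characterise an almost finitely presented morphism of connective animated rings in terms of its cotangent complex: a map $f\colon A\to A'$ is almost of finite presentation precisely when $\pi_0(A)\to\pi_0(A')$ is a finitely presented morphism of discrete commutative rings and $L_{A'/A}$ is almost perfect as an $A'$-module (this is the variant of \Cref{prop:filtered aft} for maps that need not be surjective on $\pi_0$; see \cite[\S 7.4]{HA}). One should keep in mind that a morphism in $\Art_{\KK/B}$ need not be a nilpotent extension, nor even surjective on $\pi_0$ (for instance the section $B\to B\oplus M[1]$), so both conditions carry genuine content.

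For the cotangent condition, the transitivity cofibre sequence associated to $A\to A'\to B$ reads
$$
B\otimes_{A'}L_{A'/A}\longrightarrow L_{B/A}\longrightarrow L_{B/A'}.
$$
Because $A\to B$ and $A'\to B$ are Artinian extensions, they are in particular almost of finite presentation, so $L_{B/A}$ and $L_{B/A'}$ are almost perfect $B$-modules, and hence so is the fibre $B\otimes_{A'}L_{A'/A}$. Now $L_{A'/A}$ is connective, being the relative cotangent complex of a map of connective rings, and $A'$ is derived complete along the ideal $I'=\ker(\pi_0 A'\to\pi_0 B)$, which is finitely generated (since $A'\to B$ is almost finitely presented) and nilpotent (so completeness is automatic). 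A derived Nakayama argument — \Cref{lem:nakayama}, bootstrapped along the Postnikov tower to the case of almost perfect modules, cf.\ \cite{SAG} — then upgrades the almost perfectness of $B\otimes_{A'}L_{A'/A}$ over $B$ to almost perfectness of $L_{A'/A}$ over $A'$.

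For the $\pi_0$ condition, write $I=\ker(\pi_0 A\to\pi_0 B)$, so that $\pi_0(B)\cong\pi_0(A)/I$ with $I$ finitely generated. As $\pi_0(A)$ is coherent (\Cref{cor:artin remains coherent}), $\pi_0(B)$ is finitely presented as a $\pi_0(A)$-module. The nilpotent ideal $I'$ carries its finite $I'$-adic filtration, and each subquotient $I'^k/I'^{k+1}$ is finitely presented as a $\pi_0(B)$-module by Remark \ref{rem:coherent rings classical}(3) (applicable since $\pi_0(A')$ is coherent, again by \Cref{cor:artin remains coherent}), hence finitely presented as a $\pi_0(A)$-module. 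The short exact sequence $0\to I'\to\pi_0(A')\to\pi_0(B)\to 0$ then shows that $\pi_0(A')$ is finitely presented as a $\pi_0(A)$-module; in particular it is module-finite over $\pi_0(A)$, and a module-finite algebra that is finitely presented as a module over its base is finitely presented as an algebra. Combining the two conditions gives that $A\to A'$ is almost of finite presentation.

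The main obstacle is the bookkeeping forced by working over coherent rather than Noetherian rings: finite generation does not upgrade to finite presentation for free, so the $\pi_0$-statement must be routed through the explicit $I'$-adic filtration and the stability properties collected in Remark \ref{rem:coherent rings classical}. An alternative organisation avoids this all-at-once commutative algebra by inducting along the decomposition of $A'$ into square-zero extensions of $B$ by connective coherent $B$-modules provided by \Cref{cor:art small}: the base case $A'=B$ is exactly the hypothesis that $A\to B$ is an Artinian extension, and each inductive step adjoins a single such square-zero extension, to which the same cofibre-sequence-and-Nakayama reasoning applies.
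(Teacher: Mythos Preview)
Your proof is correct, but the paper takes a considerably shorter route. Rather than verifying the $(\pi_0, L_{A'/A})$-criterion, the paper invokes the third characterisation from \Cref{prop:aft conditions}: a map $A\to A'$ is almost of finite presentation as soon as $A'$ is almost perfect as an $A$-module. Since $A\to B$ is an Artinian extension, $B$ is almost perfect over $A$ (by \Cref{prop:aft conditions}(4)); and since $A'$ is obtained from $B$ by finitely many square-zero extensions by coherent $B$-modules (\Cref{cor:art small}), each of which is almost perfect over $A$, the module $A'$ is almost perfect over $A$ by closure under extensions. This bypasses both the Nakayama bootstrap for $L_{A'/A}$ and the $\pi_0$-filtration argument entirely. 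Your approach has the virtue of making explicit where the almost-perfectness of $L_{A'/A}$ comes from, and your alternative organisation via the square-zero decomposition is close in spirit to the paper's---but even there the paper stays at the level of modules rather than routing through the cotangent complex.
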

\begin{proof}
By Proposition \ref{prop:aft conditions}, it suffices to verify that $A'$ is an almost perfect $A$-module. Since $A'$ arises from $B$ by square zero extensions by coherent $B$-modules, it suffices to verify that $B$ is almost perfect as an $A$-module, which holds by assumption (and Proposition \ref{prop:aft conditions}).
\end{proof}
The $\infty$-category of almost Artinian extensions $\AArt_{\KK/B}$ is essentially obtained from the $\infty$-category of Artinian extensions by adding limits of relative Postnikov towers.
\begin{lemma}\label{lem:relative postnikov}
Let $\SCR^{\mm{sur}}_{\KK/B}\subseteq \SCR_{\KK/B}$ denote the full subcategory of animated surjections $A\to B$. For each $(A\to B)\in \SCR^{\mm{sur}}_{\KK/B}$, there exists an initial map $(A\to B)\rt (\tau_{\leq n}(A/B)\to B)$ to an animated surjection with $n$-coconnective fibre.
\end{lemma}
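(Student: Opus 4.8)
The plan is to realize the assignment $(A\to B)\mapsto (\tau_{\leq n}(A/B)\to B)$ as the unit map of an accessible localization of $\SCR^{\mm{sur}}_{\KK/B}$ onto the full subcategory $\cat{C}^{\leq n}\subseteq \SCR^{\mm{sur}}_{\KK/B}$ spanned by those animated surjections $A'\to B$ whose fibre is $n$-coconnective. It therefore suffices to show that the inclusion $\cat{C}^{\leq n}\hookrightarrow \SCR^{\mm{sur}}_{\KK/B}$ admits a left adjoint; one then \emph{defines} $\tau_{\leq n}(A/B)$ to be the value of this left adjoint at $A$, so that the unit is by construction initial among maps from $(A\to B)$ to animated surjections with $n$-coconnective fibre. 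Here one uses that $\SCR^{\mm{sur}}_{\KK/B}$ is presentable: it is the fibre over $B$ of the evaluation functor $\mm{ev}_0\colon \SurSCR_\KK \to \SCR_\KK$, which is a presentable fibration (a cartesian fibration between presentable $\infty$-categories whose base-change functors $(A\to B)\mapsto (A\times_B B'\to B')$ are right adjoints), so that its fibres are again presentable.

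To produce the left adjoint I would verify that $\cat{C}^{\leq n}$ is an accessible subcategory of $\SCR^{\mm{sur}}_{\KK/B}$ that is closed under limits and filtered colimits, and then invoke the adjoint functor theorem \cite[Corollary 5.5.2.9]{HTT}. Accessibility is immediate: $\cat{C}^{\leq n}$ is the preimage of the accessible subcategory of $n$-coconnective objects under the accessible functor $\SCR^{\mm{sur}}_{\KK/B}\to \Mod_{\KK,\geq 0}$, $(A\to B)\mapsto \fib(A\to B)$. For the closure under limits and filtered colimits, the key observation is that these are computed on underlying connective $\KK$-modules (using that $\SCR_\KK$ is monadic over $\Mod_{\KK,\geq 0}$), so that the fibre of such a (co)limit of objects of $\cat{C}^{\leq n}$ is the corresponding (co)limit of the individual fibres, up to a connective truncation that does not affect homotopy in degrees $\geq 0$. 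It then remains to check, via the long exact sequences attached to products, pullbacks, totalisations and filtered colimits, that the class of connective $n$-coconnective $\KK$-modules is stable under each of these operations, and hence under all limits and filtered colimits. Granting this, the inclusion $\cat{C}^{\leq n}\hookrightarrow \SCR^{\mm{sur}}_{\KK/B}$ is an accessible, limit-preserving functor between presentable $\infty$-categories and so admits a left adjoint $\mm{L}_n$, and $(A\to B)\to (\mm{L}_n(A\to B)\to B)$ is the desired map.

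The main obstacle is the bookkeeping in the second step: one has to be careful about the precise relationship between limits in $\SCR^{\mm{sur}}_{\KK/B}$, limits in $\SCR_\KK$, and the connective truncations that appear on passing back to $\Mod_{\KK,\geq 0}$, before the stability of connective $n$-coconnective fibres under (co)limits reduces to a routine check on homotopy groups. Once that point is pinned down, the identification of the localization with relative Postnikov truncation and the universal property of the unit are formal.
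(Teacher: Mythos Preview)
Your strategy has a genuine gap: the $\infty$-category $\SCR^{\mm{sur}}_{\KK/B}$ is \emph{not} presentable in general. When the structure map $\KK\to B$ is not surjective on $\pi_0$, the category $\SCR^{\mm{sur}}_{\KK/B}$ has no initial object. To see this, take $\KK=\ZZ$, $B=\ZZ[t]$, and suppose $I\to \ZZ[t]$ were initial. Consider $A=\ZZ[x,y]$ with $x\mapsto t$, $y\mapsto 0$, and the two endomorphisms $\phi_1=\id$ and $\phi_2\colon x\mapsto x+y,\ y\mapsto y$ over $\ZZ[t]$. Since $I$ is initial, the unique map $\iota\colon I\to A$ must equalise $\phi_1$ and $\phi_2$, so on $\pi_0$ the image of $I$ lands in $\{p\in\ZZ[x,y]:p(x,y)=p(x+y,y)\}=\ZZ[y]$. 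Composing with $A\to \ZZ[t]$ sends $y\mapsto 0$, so the image of $\pi_0(I)$ in $\ZZ[t]$ lies in $\ZZ$, contradicting surjectivity.

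The flaw in your presentability argument is the claim that the base-change functors $(A\to B)\mapsto (A\times_B B'\to B')$ are right adjoints. The candidate left adjoint $(A'\to B')\mapsto (A'\to B'\to B)$ only lands in $\SCR^{\mm{sur}}_{\KK/B}$ when $B'\to B$ is itself a $\pi_0$-surjection; for arbitrary $B'\to B$ there is no left adjoint, so $\mm{ev}_1$ is not a presentable fibration and its fibres need not be presentable. Consequently the adjoint functor theorem cannot be invoked as you propose.

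The paper circumvents this entirely by working in the larger category $\cat{C}=\SurSCR_\KK$ of \emph{all} animated surjections (with varying codomain), which \emph{is} projectively generated and hence has $n$-truncations in the categorical sense. Using the explicit projective generators $\KK[x_1,\dots,x_{n+m}]\to \KK[x_1,\dots,x_n]$, one identifies the $n$-truncated objects of $\cat{C}$ as those $(A\to B')$ with $A$ and $\fib(A\to B')$ both $n$-truncated. The $n$-truncation $\tau_{\leq n}(A\to B)$ in $\cat{C}$ therefore has some codomain $\tau'_{\leq n}B$ under $B$, and one \emph{defines} $\tau_{\leq n}(A/B)=\tau_{\leq n}A\times_{\tau'_{\leq n}B} B$ by pulling back. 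A direct check shows $\fib(A\to B)\to \fib(\tau_{\leq n}(A/B)\to B)$ is the $n$-truncation, which immediately gives the required localisation without ever needing $\SCR^{\mm{sur}}_{\KK/B}$ to be presentable.
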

We will refer to $\tau_{\leq n}(A/B)$  as the \emph{fibrewise $n$-truncation} of $A$ over $B$.
\begin{proof}
Consider the full subcategory $\cat{C}\subset \Fun(\Delta^1, \SCR_{\KK})$ spanned by the animated surjections, and recall that $\cat{C}$ has compact projective generators of the form $R[x_1, \dots, x_{n+m}]\to R[x_1, \dots, x_n]$. Using this, one sees that an object $(A\to B)$ of $\cat{C}$ is $n$-truncated if and only if $A$ and $\fib(A\to B)$ are $n$-truncated (which implies that $B$ is $(n+1)$-truncated). Furthermore, each animated surjection $(A\to B)$ admits an $n$-truncation $\tau_{\leq n}(A\to B)$ whose domain and fibre are $\tau_{\leq n}A$ and $\tau_{\leq n}\fib(A\to B)$;  write $\tau'_{\leq n}B$ for the codomain, which need not be the $n$-truncation of $B$. Given an animated surjection $A\to B$, we then define
$$
\tau_{\leq n}(A/B) = \tau_{\leq n}A\times_{\tau'_{\leq n} B} B.
$$
This determines a functor $\tau_{\leq n}\colon \SCR^{\mm{sur}}_{\KK/B}\rt \SCR^{\mm{sur}}_{\KK/B}$ equipped with a natural transformation $\id\rt \tau_{\leq n}$. By construction, the map $\fib(A\to B)\rt \fib(\tau_{\leq n}(A/B)\to B)$ exhibits its target as the $n$-truncation of the domain. In particular, this implies that $\tau_{\leq n}$ is a localisation and that the $\tau_{\leq n}$-local objects are the animated surjections with $n$-coconnective fibre.
\end{proof}
\begin{lemma}\label{lem:almost artinian}
Let $A\to B$ be an animated surjection. Then the following are equivalent:
\begin{enumerate}
\item $A\to B$ is an almost Artinian extension.
\item For each $n\geq 0$, the fibrewise $n$-truncation $\tau_{\leq n}(A/B)\to B$ is an Artinian extension.
\end{enumerate}
Consequently, $\AArt_{B/\KK}\subseteq \SCR_{\KK/B}$ is closed under square zero extensions by almost perfect connective $B$-modules.
\end{lemma}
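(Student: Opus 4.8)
The plan is to first record that two of the three defining conditions are automatic for fibrewise truncations, so that the whole statement reduces to almost finite presentation. Write $F=\fib(A\to B)$, a connective $A$-module since $A\to B$ is an animated surjection. By the construction in \Cref{lem:relative postnikov}, the fibre of $\tau_{\leq n}(A/B)\to B$ is $\tau_{\leq n}F$; comparing the fibre sequences $F\to A\to B$ and $\tau_{\leq n}F\to\tau_{\leq n}(A/B)\to B$, which coincide on $B$, shows both that $\pi_0(\tau_{\leq n}(A/B))\cong\pi_0(A)$ for $n\geq 0$ and, since $F\simeq\lim_n\tau_{\leq n}F$, that $A\simeq\lim_n\tau_{\leq n}(A/B)$. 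Hence each $\tau_{\leq n}(A/B)\to B$ is automatically a nilpotent extension, with kernel on $\pi_0$ the nilpotent ideal $\ker(\pi_0 A\to\pi_0 B)$, and has fibre concentrated in degrees $[0,n]$, so is eventually coconnective; thus (2) is equivalent to asking that every $\tau_{\leq n}(A/B)\to B$ be almost finitely presented. Likewise, applying (2) at $n=0$ forces $\ker(\pi_0 A\to\pi_0 B)$ to be nilpotent, so in the $(2)\Rightarrow(1)$ direction it also suffices to prove that $A\to B$ is almost finitely presented.

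For $(1)\Rightarrow(2)$: if $A\to B$ is almost finitely presented then $B$, and hence $F$, is almost perfect as an $A$-module by \Cref{prop:filtered aft}. The fibre sequence $\tau_{\geq n+1}F\to A\to\tau_{\leq n}(A/B)$ of $A$-modules then exhibits $\tau_{\leq n}(A/B)$ as an almost perfect $A$-module, so $A\to\tau_{\leq n}(A/B)$ is almost finitely presented by \Cref{prop:filtered aft}, and therefore $L_{\tau_{\leq n}(A/B)/A}$ is almost perfect over $\tau_{\leq n}(A/B)$. Base-changing to $B$ and using the transitivity cofibre sequence $B\otimes_{\tau_{\leq n}(A/B)}L_{\tau_{\leq n}(A/B)/A}\to L_{B/A}\to L_{B/\tau_{\leq n}(A/B)}$ together with the almost perfectness of $L_{B/A}$, I would conclude that $L_{B/\tau_{\leq n}(A/B)}$ is almost perfect over $B$; combined with finite presentation of $\pi_0(\tau_{\leq n}(A/B))\to\pi_0 B$, \Cref{prop:filtered aft} then gives that $\tau_{\leq n}(A/B)\to B$ is almost finitely presented.

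For $(2)\Rightarrow(1)$: by \Cref{prop:filtered aft} and the reduction above it remains to see that $L_{B/A}$, which is connective, is almost perfect over $B$. Here I would use that $\fib(A\to\tau_{\leq n}(A/B))\simeq\tau_{\geq n+1}F$ is $(n+1)$-connective, so that $L_{\tau_{\leq n}(A/B)/A}$ is $(n+2)$-connective by \cite[Corollary 25.3.6.4]{SAG}; feeding this into the transitivity sequence above shows that $\tau_{\leq n}L_{B/A}\to\tau_{\leq n}L_{B/\tau_{\leq n}(A/B)}$ is an equivalence for every $n$. Since every $L_{B/\tau_{\leq n}(A/B)}$ is almost perfect over $B$, each truncation of $L_{B/A}$ agrees with a truncation of an almost perfect $B$-module, so $L_{B/A}$ is almost perfect, as desired.

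Finally, the closure assertion — that $\AArt_{\KK/B}$ is closed under limits of relative Postnikov towers — follows formally from the equivalence of (1) and (2): if $\dots\to A_2\to A_1$ is such a tower with limit $A$, then $A_n\simeq\tau_{\leq n}(A/B)$ has fibre in degrees $[0,n]$ and hence lies in $\Art_{\KK/B}$, so $(2)\Rightarrow(1)$ gives $A\in\AArt_{\KK/B}$. I expect the main obstacle to be the cotangent-complex bookkeeping in the two implications — keeping track of connectivity and almost-perfectness along the transitivity cofibre sequences and invoking \Cref{prop:filtered aft} correctly — rather than anything conceptual, since \Cref{lem:relative postnikov} already supplies the one genuinely nontrivial input.
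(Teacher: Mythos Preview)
Your $(2)\Rightarrow(1)$ direction is correct and provides a cleaner alternative to the paper's approach: the paper first establishes that $A$ is coherent (via Corollary~\ref{cor:artin remains coherent} applied to each $\tau_{\leq n}(A/B)$) and then argues directly that each $\pi_i(B)$ is finitely presented over $\pi_0(A)$, whereas your cotangent-complex argument sidesteps coherence of $A$ entirely by exploiting the connectivity estimate on $L_{\tau_{\leq n}(A/B)/A}$ and the fact that $B$ is coherent. Your route is arguably more direct here.

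Your $(1)\Rightarrow(2)$ direction, however, has a genuine gap. You assert that the fibre sequence $\tau_{\geq n+1}F\to A\to\tau_{\leq n}(A/B)$ exhibits $\tau_{\leq n}(A/B)$ as almost perfect over $A$, but this requires $\tau_{\geq n+1}F$ (equivalently $\tau_{\leq n}F$) to be almost perfect. Truncation preserves almost perfect $A$-modules if and only if $A$ is coherent \cite[Proposition 7.2.4.23]{SAG}, and you have not established this. For a concrete failure mode: over a non-coherent discrete ring $A$, the cofibre $\cofib(A\xrightarrow{x}A)$ is perfect, but its $0$-truncation $A/(x)$ need not be almost perfect. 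The paper handles this by invoking the tower construction of \Cref{prop:artinian small} to realise $A$ as a limit of Artinian extensions $A_n$, each of which is coherent by Corollary~\ref{cor:artin remains coherent}; since $\pi_i(A)\cong\pi_i(A_n)$ for $i\leq n$, coherence of $A$ follows. You could also argue more directly: $\pi_0(A)$ is coherent by Remark~\ref{rem:coherent rings classical}, and then each $\pi_i(A)$ is finitely presented over $\pi_0(A)$ by the long exact sequence of $F\to A\to B$ (using that $F$ is almost perfect and $B$ is coherent). Once coherence of $A$ is in hand, your argument goes through; but without it, the second sentence of your $(1)\Rightarrow(2)$ paragraph is unjustified.
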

\begin{proof}
Note that $\pi_i(A)\cong \pi_i(\tau_{\leq n}(A/B))$ for all $i\leq n$.
Assuming (2), it therefore follows that $A\to B$ is a nilpotent extension and that $A$ is a coherent animated ring. Since each $\pi_i(B)$ is a finitely presented discrete module over $\pi_0(A)\cong \pi_0(\tau_{\leq 0}(A/B))$, we conclude that $B$ is an almost perfect $A$-module, so that $A\to B$ is almost finitely presented.

Assuming (1), the argument from  \Cref{prop:artinian small} shows that there exists a tower $\dots \to A_n\to A_{n-1}\to \dots \to B$ of Artinian extensions such that $\fib(A\to A_n)$ becomes increasingly connective. In particular, choosing $n$ large enough we see that $\pi_0(A)\cong \pi_0(A_n)$ is a coherent ring and that each $\pi_i(A)\cong \pi_i(A_n)$ is a finitely presented $\pi_0(A)$-module. It follows that $A$ is coherent, so that each truncation $\tau_{\leq n}\fib(A\to B)$ is an almost perfect $A$-module. Using this and  \Cref{prop:filtered aft}, one sees that each $A\to \tau_{\leq n}(A/B)$ is almost finitely presented. It follows that $\tau_{\leq n}(A/B)\to B$ is almost finitely presented, and it is clearly a nilpotent extension with $n$-coconnective fibre.
\end{proof}

\begin{definition}\label{def:artinian etale}
Let $\Art_\KK\hookrightarrow \AArt_{\KK}\hookrightarrow \Fun(\Delta^1, \SCR_{\KK})$ denote the subcategories whose:
\begin{enumerate}
\item objects are (almost) Artinian extensions $A\rt B$ where $B$ is coherent.
\item morphisms $(\tilde{f}, f)\colon (A\to B)\rt (A'\to B')$ are commuting squares in which $f$ is étale.
$$\begin{tikzcd}
A\arrow[d]\arrow[r, "\tilde{f}"] & A'\arrow[d]\\
B\arrow[r, "f"] & B'
\end{tikzcd}$$
\end{enumerate}
\end{definition}
\begin{proposition}\label{prop:art etale}
Let $\SCR_{\KK}^{\coh, \mm{et}}\hookrightarrow \SCR_{\KK}$ denote the subcategory of coherent animated $\KK$-algebras and étale maps between them. Then the codomain projections define cocartesian fibrations
$$\begin{tikzcd}
\Art_{\KK}\arrow[r] & \SCR_{\KK}^{\coh, \et} & & \AArt_{\KK}\arrow[r] & \SCR_{\KK}^{\coh, \et}.
\end{tikzcd}$$
Furthermore, for each étale map $f\colon B\to B'$, the induced functor $f^*\colon \AArt_{\KK/B}\to \AArt_{\KK/B'}$ preserves pullbacks along nilpotent extensions, as well as limits of fibrewise Postnikov towers.
\end{proposition}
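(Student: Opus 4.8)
The plan is to construct the cocartesian lifts explicitly and then read off the two preservation statements from that description. Any object $(A\to B)$ of $\AArt_\KK$ is in particular a \emph{nilpotent extension}: $\pi_0(A)\to\pi_0(B)$ is surjective with nilpotent kernel. Hence base change along $A\to B$ is an equivalence $\CAlg^{\et}_A\xrightarrow{\ \sim\ }\CAlg^{\et}_B$ by topological invariance of the small étale site \cite{SAG}, which only depends on $\pi_0$. Given $(A\to B)\in\AArt_{\KK/B}$ and an étale map $f\colon B\to B'$ of coherent animated $\KK$-algebras, I would let $A'$ be the unique étale $A$-algebra with $A'\otimes_A B\simeq B'$ over $B$, and set $f^*(A\to B):=(A'\to A'\otimes_A B\simeq B')$. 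As the base change of $A\to B$ along the flat map $A\to A'$, the arrow $A'\to B'$ is again a nilpotent extension, almost finitely presented, with eventually coconnective fibre, and $A'$ is coherent by \Cref{cor:artin remains coherent} and \Cref{lem:almost artinian}; so $f^*$ lands in $\AArt_{\KK/B'}$ and carries $\Art_{\KK/B}$ to $\Art_{\KK/B'}$. Since the étale lift is stable under further base change, $f^*$ is functorial in $(A\to B)$.

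\textbf{The cocartesian property.} I claim the tautological square
$$\begin{tikzcd} A\arrow[d]\arrow[r] & A'\arrow[d]\\ B\arrow[r,"f"] & B'\end{tikzcd}$$
is a $q$-cocartesian arrow of $\AArt_\KK\to\SCR_\KK^{\coh,\et}$. Concretely, for a test object $(A''\to B'')$, a morphism $(A\to B)\to(A''\to B'')$ over some $g\colon B\to B''$, and a factorization $g=h\circ f$, a factorization through $(A'\to B')$ amounts to a lift of the induced map $A'\xrightarrow{h}B''$ along $A''\to B''$ restricting along $A\to A'$ to the given map $A\to A''$. Since $A\to A'$ is formally étale ($L_{A'/A}\simeq 0$) and $A''\to B''$ is a nilpotent extension — hence, by \Cref{prop:artinian small} and \Cref{lem:almost artinian}, a (possibly infinite) tower of square-zero extensions by coherent modules — the space of such lifts is contractible by standard deformation theory. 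Thus $q$ is a cocartesian fibration with base change functor $f^*$; the same argument applies to $\Art_\KK$ (using eventual coconnectivity of fibres).

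\textbf{Pullbacks along nilpotent extensions.} Let $A_1\to A_0\leftarrow A_2$ be a diagram in $\AArt_{\KK/B}$ with, say, $A_2\to A_0$ a nilpotent extension, so that $A_{12}:=A_1\times_{A_0}A_2$ again lies in $\AArt_{\KK/B}$ by \Cref{cor:art small} and \Cref{lem:almost artinian}. Since $\pi_0(A_2)\to\pi_0(A_0)$ is surjective this is a Milnor square, and \cite[Theorem 16.2.0.2]{SAG} identifies $\Mod_{A_{12}}$ with $\Mod_{A_1}\times_{\Mod_{A_0}}\Mod_{A_2}$; as étale algebras are detected by their underlying modules together with the multiplication, this yields a pullback $\CAlg^{\et}_{A_{12}}\simeq\CAlg^{\et}_{A_1}\times_{\CAlg^{\et}_{A_0}}\CAlg^{\et}_{A_2}$ compatible with base change to $B$. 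Tracing the étale $B$-algebra $B'$ through this equivalence identifies its étale lift over $A_{12}$ with $f^*(A_1)\times_{f^*(A_0)}f^*(A_2)$; since that pullback is computed in $\SCR_{\KK/B'}$ and lands back in $\AArt_{\KK/B'}$, we conclude that $f^*$ preserves pullbacks along nilpotent extensions.

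\textbf{Fibrewise Postnikov towers, and the main difficulty.} For $(A\to B)\in\AArt_{\KK/B}$ the fibrewise Postnikov tower converges, $A\simeq\lim_n\tau_{\leq n}(A/B)$, using the formula $\tau_{\leq n}(A/B)=\tau_{\leq n}A\times_{\tau'_{\leq n}B}B$ of \Cref{lem:relative postnikov} and ordinary Postnikov convergence. Each structure map $A\to\tau_{\leq n}(A/B)$ is an isomorphism on $\pi_0$, so base change along it is an equivalence on étale algebras; hence $f^*(A)$ base-changes to $f^*(\tau_{\leq n}(A/B))$ for every $n$. Comparing homotopy groups — using that $f^*(A)$ is flat over $A=\lim_n\tau_{\leq n}(A/B)$ and the vanishing of the relevant $\lim^{1}$-terms — gives $f^*(A)\simeq\lim_n f^*(\tau_{\leq n}(A/B))$, as desired. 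I expect the main obstacle to lie in the first two steps: recognising the correct base change functor as the étale thickening supplied by topological invariance of the étale site (rather than a naïve pushout or pullback of rings), verifying the cocartesian universal property against all of $\AArt_\KK$ and not merely square-zero extensions, and extracting the descent statement for étale algebras along Milnor squares from \cite[Theorem 16.2.0.2]{SAG}.
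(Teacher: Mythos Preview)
Your construction of the cocartesian lifts and verification of the cocartesian property is essentially the paper's argument: lift the \'etale map $B\to B'$ to an \'etale map $A\to A'$ by topological invariance, then check the universal property using that formally \'etale maps lift uniquely against nilpotent extensions. That part is fine and matches the paper.

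Where you diverge is in the two preservation statements. The paper does not invoke Milnor--square descent for \'etale algebras or a direct $\lim^1$ computation. Instead it makes one structural observation that handles both at once: for \emph{any} morphism $A_1\to A_2$ in $\AArt_{\KK/B}$, the \'etale lifts satisfy
\[
f^*(A_2)\ \simeq\ f^*(A_1)\otimes_{A_1} A_2,
\]
simply because the right-hand side is an \'etale $A_2$-algebra base-changing to $B'$, hence is the unique such. With this formula, preservation of pullbacks along nilpotent extensions is just exactness of flat base change applied to the fibre sequence $A_{12}\to A_1\oplus A_2\to A_0$, and preservation of fibrewise Postnikov towers follows because $f^*(A)\otimes_A \tau_{\leq n}(A/B)\simeq \tau_{\leq n}(f^*(A)/B')$ by flatness, so the image tower is literally the fibrewise Postnikov tower of $f^*(A)$.

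Your route through \cite[Theorem 16.2.0.2]{SAG} is correct but heavier, and the step ``\'etale algebras are detected by their underlying modules together with the multiplication'' needs a bit more care (you want gluing of algebras, e.g.\ as in \Cref{prop:deforming animated algebras}, plus a check that the \'etale condition is local for such squares). Your Postnikov argument is also correct but redoes by hand what the identification with $\tau_{\leq n}(f^*(A)/B')$ gives for free. The paper's single base-change formula is the cleaner organizing principle.
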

\begin{proof}
A commuting square as in  \Cref{def:artinian etale} defines a locally cocartesian arrow in $\AArt_{\KK}$ if $A\to A'$ is étale. Indeed, any map of Artinian extensions $(A\to B)\rt (A''\to B')$ covering the map $B\to B'$ factors uniquely over $(A'\to B')$ because the étale map $A\to A'$ has the unique lifting property against the nilpotent extension $A''\to B'$. It is clear that these types of locally cocartesian arrows are stable under composition.

It therefore remains to verify that for every Artinian extension $(A\to B)$ and every étale map $f\colon B\to B'$, there exists a lift $(\tilde{f}, f)\colon (A\to B)\rt (A'\to B')$ such that $\tilde{f}\colon A\rt A'$ is étale. For every animated ring $B$, the functor $B'\mapsto \pi_0(B)\otimes_B B'$ defines an equivalence between the categories of étale $B$-algebras and étale $\pi_0(B)$-algebras. In turn, the functor $B''\mapsto B''\otimes_{\pi_0(B)} \pi_0(B)/I$ determines an equivalence between the (ordinary) categories of étale $\pi_0(B)$-algebras and $\pi_0(B)/I$-algebras whenever $I$ is a nilpotent ideal \cite[Tag 039R]{stacks-project}. Using this, one sees that there is a unique lift of $f$ to an étale map $\tilde{f}\colon A\rt A'$. 

To see that the resulting map $A'\to B'$ is (almost) Artinian, note that it is almost of finite presentation since both $A'$ and $B'$ are almost of finite presentation over $A$. Furthermore, $\ker(\pi_0(A')\to \pi_0(B'))\cong \pi_0(A')\otimes_{\pi_0(A)} \ker(\pi_0(A)\to \pi_0(A'))$ is the base change of a nilpotent ideal along a flat map, and hence a nilpotent ideal itself. Likewise, $\fib(A'\to B')\simeq A'\otimes_A \fib(A\to B)$ is eventually coconnective if $\fib(A\to B)$ is eventually coconnective, as $A\to A'$ is flat.

Finally,  consider the change of fibre functor $f^*\colon \AArt_{B/\KK}\to \AArt_{B'/\KK}$. For any map $A_1\to A_2$ of (almost) Artinian extensions of $B$, we have that $A_i\to f^*(A_i)$ is the unique étale map covering $f$. This implies that $f^*(A_2)\simeq f^*(A_1)\otimes_{A_1} A_2$. Using this, one readily verifies that $f^*$ preserves pullbacks along nilpotent extensions. Furthermore, the fact that $A\to f^*(A)$ is flat implies that $f^*(A)\otimes_A \tau_{\leq n}(A/B)\simeq \tau_{\leq n}(f^*(A)/B')$, so that $f^*$ preserves limits of fibrewise Postnikov towers as well.
\end{proof}

\subsection{Formal moduli problems}\label{sec:formal moduli problems}
\mbox{We  recall the notion of a formal moduli problem \cite[Chapter 12]{SAG}:}
\begin{definition}\label{def:formal moduli}
A functor $X\colon \Art_{\KK/B}\rt \sS$ is a \emph{formal moduli problem} if it satisfies the following two conditions:
\begin{enumerate}
\item $X(B)\simeq \ast$.
\item For any pullback square in $\Art_{\KK/B}$ of the form
$$\begin{tikzcd}
A\arrow[r]\arrow[d] & A_0\arrow[d]\\
A_1\arrow[r] & A_{01}
\end{tikzcd}$$
where $A_0\to A_{01}$ is a nilpotent extension, $X(A)\to X(A_0)\times_{X(A_{01})} X(A_1)$ is an equivalence.
\end{enumerate}
We will write $\FMP_{B/\KK}$ for the $\infty$-category of formal moduli problems.
\end{definition}
\begin{remark}\label{rem:fmp in terms of sqz}
Using  \Cref{prop:artinian small}, one sees that $X\colon \Art_{\KK/B}\rt \sS$ is a formal moduli problem if and only if $X(B)\simeq \ast$ and $X$ preserves each pullback square \eqref{diag:sqz art} describing a square zero extension of $A\in \Art_{\KK/B}$ by a coherent connective $B$-module.
\end{remark}
\begin{example}\label{ex:corep fmp}
Let $B'\to B$ be a map of derived $R$-algebras. We will write $\mm{Spf}(B')$ for the corresponding \emph{corepresentable formal moduli problem}
$$\begin{tikzcd}
\mm{Spf}(B')\colon \Art_{\KK/B}\arrow[r] & \sS; \quad A\arrow[r, mapsto] & \Map_{\DAlg_{\KK/B}}(B', A).
\end{tikzcd}$$
This defines the right adjoint in an adjoint pair
$$\begin{tikzcd}
\mc{O}\colon \FMP_{B/\KK}\arrow[r, yshift=1ex] & \DAlg_{\KK/B}^{\op}\cocolon \mm{Spf}\arrow[l, yshift=-1ex]
\end{tikzcd}$$
where $\mc{O}(X)$ can be viewed as the global sections of the formal moduli problem $X$.
\end{example}
\begin{lemma}\label{lem:fmp almost artin}
Restriction along $i\colon \Art_{\KK/B}\hookrightarrow \AArt_{\KK/B}$ defines an equivalence between $\FMP_{B/\KK}$ and the $\infty$-category of functors $X\colon \AArt_{\KK/B}\rt \sS$ satisfying the following conditions:
\begin{enumerate}
\item $X(B)\simeq \ast$.
\item $X$ preserves pullbacks along nilpotent extensions in $\AArt_{\KK/B}$.
\item $X$ preserves limits of fibrewise Postnikov towers.
\end{enumerate}
\end{lemma}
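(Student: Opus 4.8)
The plan is to realise restriction along $i$ as one half of an equivalence, with inverse given by a right Kan extension that can be computed explicitly using fibrewise truncations.

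\emph{Restriction is well-defined.} First I would check that if $X\colon \AArt_{\KK/B}\to \sS$ satisfies (1)--(3), then $i^*X\colon \Art_{\KK/B}\to\sS$ is a formal moduli problem. The condition $i^*X(B)\simeq \ast$ is immediate, and for the Mayer--Vietoris condition one uses  \Cref{cor:art small}: a pullback square along a nilpotent extension in $\Art_{\KK/B}$ is still such a square in $\AArt_{\KK/B}$ (the inclusion is full and closed under these pullbacks), so condition (2) for $X$ applies. Hence $i^*$ takes values in $\FMP_{B/\KK}$.

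\emph{The inverse functor.} For $Y\in\FMP_{B/\KK}$ I would use the fibrewise truncation functors $\tau_{\leq n}(-/B)\colon \AArt_{\KK/B}\to\Art_{\KK/B}$ from  \Cref{lem:relative postnikov} and  \Cref{lem:almost artinian}, together with the natural tower $\dots\to\tau_{\leq n+1}(-/B)\to\tau_{\leq n}(-/B)$, and set $\widehat{Y} := \lim_n\big(Y\circ\tau_{\leq n}(-/B)\big)$ in $\Fun(\AArt_{\KK/B},\sS)$, so that $\widehat Y(A)\simeq \lim_n Y(\tau_{\leq n}(A/B))$. For $A\in\Art_{\KK/B}$, the fibre of $A\to B$ is eventually coconnective, so using $\fib(\tau_{\leq n}(A/B)\to B)\simeq\tau_{\leq n}\fib(A\to B)$ (from the proof of  \Cref{lem:relative postnikov}) the map $A\to\tau_{\leq n}(A/B)$ is an equivalence for $n\gg 0$; hence the tower defining $\widehat Y(A)$ is eventually constant and $i^*\widehat Y\simeq Y$, naturally in $A$. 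Also $\widehat Y(B)\simeq \ast$, since $\tau_{\leq n}(B/B)\simeq B$ for all $n$.

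\emph{Conditions (2) and (3) for $\widehat Y$.} For (3), if $\{A^{(m)}\}$ is a fibrewise Postnikov tower with limit $A$, then for each fixed $n$ the tower $\{\tau_{\leq n}(A^{(m)}/B)\}_m$ is eventually constant equal to $\tau_{\leq n}(A/B)$ — since $\fib(A\to A^{(m)})$ eventually becomes $(n{+}1)$-connective and $\tau_{\leq n}(-/B)$ only sees $\pi_{\leq n}$ of the fibre over $B$ — so $\lim_m\widehat Y(A^{(m)})\simeq\lim_m\lim_n Y(\tau_{\leq n}(A^{(m)}/B))\simeq\lim_n\lim_m Y(\tau_{\leq n}(A^{(m)}/B))\simeq\lim_n Y(\tau_{\leq n}(A/B))\simeq\widehat Y(A)$. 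For (2), given a pullback square in $\AArt_{\KK/B}$ along a nilpotent extension $A_0\to A_{01}$, I would form the termwise pullbacks $P_n := \tau_{\leq n}(A_0/B)\times_{\tau_{\leq n}(A_{01}/B)}\tau_{\leq n}(A_1/B)$. Since $\tau_{\leq n}(-/B)$ preserves $\pi_0$, each $\tau_{\leq n}(A_0/B)\to\tau_{\leq n}(A_{01}/B)$ is a nilpotent extension, so by  \Cref{cor:art small} each $P_n$ lies in $\Art_{\KK/B}$ and $Y$ sends the square defining $P_n$ to a pullback of spaces. As limits commute with pullbacks, $\lim_n P_n\simeq A_0\times_{A_{01}}A_1\simeq A$, and $\{P_n\}$ is a fibrewise Postnikov tower; thus the original square in $\AArt_{\KK/B}$ is the limit over $n$ of the pullback squares $(\tau_{\leq n}(A_0/B)\leftarrow P_n\to\tau_{\leq n}(A_1/B)$, over $\tau_{\leq n}(A_{01}/B))$ in $\Art_{\KK/B}$. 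Applying condition (3), already proven, to each of the four corners and $Y\in\FMP_{B/\KK}$ to each square then shows $\widehat Y$ preserves the original square.

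\emph{Conclusion.} For $X$ satisfying (1)--(3), the fibrewise Postnikov tower $\{\tau_{\leq n}(A/B)\}_n$ has limit $A$ in $\AArt_{\KK/B}$, so condition (3) for $X$ gives $X(A)\simeq\lim_n X(\tau_{\leq n}(A/B))\simeq\lim_n (i^*X)(\tau_{\leq n}(A/B))\simeq\widehat{i^*X}(A)$, naturally in $A$. Hence $i^*$ and $\widehat{(-)}$ are mutually inverse equivalences. The hard part will be condition (2) for $\widehat Y$: fibrewise truncation does not preserve pullbacks along nilpotent extensions termwise, so the key insight is that such a pullback square in $\AArt_{\KK/B}$ is nonetheless \emph{canonically} a limit of a Postnikov tower of honest pullback squares of Artinian extensions (the $P_n$ above), which reduces (2) to condition (3) together with the defining property of formal moduli problems.
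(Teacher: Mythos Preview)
Your approach is exactly the paper's: identify the inverse of $i^*$ as the right Kan extension $i_*Y(A)=\lim_n Y(\tau_{\leq n}(A/B))$, then check that $i^*$ and $i_*$ restrict to mutually inverse equivalences on the indicated subcategories. Steps (1), (3) and the two unit/counit checks are fine. The gap is in your verification of condition (2) for $\widehat Y$.

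You correctly observe (in your final remark) that fibrewise truncation does not commute with pullbacks along nilpotent extensions, so in general $P_n\not\simeq\tau_{\leq n}(A/B)$. But then your assertion that ``$\{P_n\}$ is a fibrewise Postnikov tower'' is precisely what is at stake: condition (3) for $\widehat Y$ says $\widehat Y(A)=\lim_n Y(\tau_{\leq n}(A/B))$, not $\lim_n Y(P_n)$, so appealing to (3) at the corner $A$ does not give the latter. What is missing is the identification $\lim_n Y(\tau_{\leq n}(A/B))\simeq\lim_n Y(P_n)$.

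This can be supplied by showing the two towers are interleaved. Since $\fib(P_n\to B)$ is a pullback of $n$-coconnective objects and hence $n$-coconnective, the universal property of fibrewise truncation produces a map $\tau_{\leq n}(A/B)\to P_n$ under $A$. A connectivity check shows that the induced map $\fib(A\to B)\to\fib(P_n\to B)$ has $n$-connective fibre (it is the pullback $\tau_{\geq n+1}F_0\times_{\tau_{\geq n+1}F_{01}}\tau_{\geq n+1}F_1$ of $(n{+}1)$-connective objects), hence is an isomorphism on $\pi_{\leq n-1}$. Consequently $\tau_{\leq n-1}(P_n/B)\simeq\tau_{\leq n-1}(A/B)$, yielding a map $P_n\to\tau_{\leq n-1}(A/B)$. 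The resulting chain
\[
\dots\to\tau_{\leq n}(A/B)\to P_n\to\tau_{\leq n-1}(A/B)\to P_{n-1}\to\dots
\]
shows the two towers are pro-isomorphic in $\Art_{\KK/B}$, so $\lim_n Y(\tau_{\leq n}(A/B))\simeq\lim_n Y(P_n)$ and condition (2) follows. With this interleaving argument inserted, your proof is complete.
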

\begin{proof}
 \Cref{lem:almost artinian} implies that for any functor $Y\colon \Art_{\KK/B}\rt \sS$, its right Kan extension along $i$ is given by $i_*Y(A)=\lim_n Y(\tau_{\leq n}(A/B))$. The result follows readily from this.
\end{proof}
Recall that every formal moduli problem has a tangent complex:
\begin{construction}\label{con:tangent}
Let $\cat{Der}^{\mm{coh}}_{B/\KK, \geq 0}$ denote the $\infty$-category of derivations $\alpha\colon L_{B/\KK}[-1]\to M$ where $M$ is a connective coherent $B$-module.
 
Given a formal moduli problem $X\colon \Art_{\KK/B}\rt \sS$,   write
$$\begin{tikzcd}
T_{B/X}[1]\colon \cat{Der}^{\mm{coh}}_{B/\KK, \geq 0}\arrow[r] & \sS; \quad (M, \alpha)\arrow[r, mapsto] & X(B\oplus_\alpha M)
\end{tikzcd}$$
for the restriction of $X$ to the square zero extensions of $B$. Because $X$ is a formal moduli problem, this functor preserves pullbacks of $\pi_0$-surjections. Consequently,  \Cref{prop:colimit completion slice} and  \Cref{lem:pro-coh over coherent ring} (or Example \ref{ex:def functor linear} below) imply that $T_{B/X}[1]$ is classified by an object of $\QC^\vee_{B/T_{B/\KK}[1]}$ that we will refer to as the \emph{tangent fibre} of the formal moduli problem $X$, and denote by $T_{B/X}[1]$ as well.
\end{construction}
\begin{example}\label{ex:tangent fibre corep}
Let $B'\to B$ be a map of derived $R$-algebras and let $\mm{Spf}(B')$ be the associated corepresentable formal moduli problem (Example \ref{ex:corep fmp}). The tangent fibre of $\mm{Spf}(B')$ is then given by the functor $\cat{Der}^{\mm{coh}}_{B/\KK, \geq 0}\rt \sS$ corepresented by the cotangent complex $L_{B/\KK}[-1]\to L_{B/B'}[-1]$. Consequently, $T_{B/\mm{Spf}(B')}\simeq L^\vee_{B/B'}[1]$ is the pro-coherent dual of the cotangent fibre.
\end{example}

Finally, we record the functoriality of the $\infty$-category $\FMP_{B/\KK}$ with respect to étale maps:
\begin{construction}\label{con:fmp global}
Write $\cat{A}\to \cat{B}$ for the cocartesian fibration $\Art_{\KK}\rt \SCR^{\coh, \et}_{\KK}$ from  \Cref{prop:art etale} and consider the relative functor $\infty$-category  $\Fun_{/\cat{B}}(\cat{A}, \sS)\to \cat{B}$. Recall that this is the $\infty$-category over $\cat{B}$ defined by the universal property that functors $\cat{D}\to \Fun_{/\cat{B}}(\cat{A}, \sS)$ are naturally equivalent to pairs consisting of a functor $\cat{D}\to \cat{B}$ and a functor $\cat{A}\times_{\cat{B}} \cat{D}\to \sS$. Then projection $\Fun_{/\cat{B}}(\cat{A}, \sS)\to \cat{B}$ is a cartesian and cocartesian fibration, classifying the functor sending $B\mapsto \Fun(\Art_{\KK/B}, \sS)$ and a map $f\colon B\to B'$ to restriction and left Kan extension along $\Art_{\KK/B}\rt \Art_{\KK/B'}$ \cite[Proposition 7.3]{GHN17}. We write
$$
\FMP_{/\KK}\subseteq \Fun_{/\cat{B}}(\cat{A}, \sS)
$$
for the full subcategory spanned by the tuples $(B, X\colon \Art_{\KK/B}\rt \sS)$ where $X$ is a formal moduli problem. Since each $\Art_{\KK/B}\rt \Art_{\KK/B'}$ preserves pullbacks along nilpotent extensions, the projection $\FMP_{/\KK}\rt \SCR^{\coh, \et}_{\KK}$ is a cartesian and cocartesian fibration.
\end{construction}

\subsection{Deformation theory}\label{sec:deformation theory}
Let $B$ be a coherent animated $\KK$-algebra. Our goal will be to show that taking the tangent complex of a formal moduli problem (Construction \ref{con:tangent}) refines to an equivalence
$$\begin{tikzcd}
T_{B/-}[1]\colon \FMP_{B/\KK}\arrow[r, "\sim"] & \LieAlgd_{B/\KK}.
\end{tikzcd}$$
In fact, we will construct a version of this equivalence that is functorial in $B$ with respect to étale maps. To this end, we  introduce the following notation:
\begin{notation}
Consider the functor $\SCR_{\KK}^{\coh, \et}\rt \Cat_\infty$ sending $B\mapsto \LieAlgd_{B/\KK}$ and sending each étale map $f\colon B\to B'$ to $f^*\colon \LieAlgd_{B/\KK}\rt \LieAlgd_{B'/\KK}$. This classifies a cartesian fibration and a cocartesian fibration that we will denote by
$$\begin{tikzcd}
p\colon \cat{LieAlgd}^{\pi, \et}_{\Delta/\KK}\arrow[r] & \SCR_{\KK}^{\coh, \et, \op} & p^\vee\colon \cat{LieAlgd}^{\Delta/\KK}_{\pi}\arrow[r] &\SCR_{\KK}^{\coh, \et}.
\end{tikzcd}$$
More precisely, $p$ is the restriction of the cartesian fibration from  \Cref{prop:lie algebroid naturality} to the subcategory of étale maps, and we define $p^\vee$ to be the cocartesian fibration classified by the same diagram of $\infty$-categories. Part (2) of  \Cref{prop:lie algebroid naturality} implies that $p^\vee$ is a cartesian fibration as well.
\end{notation}
\begin{theorem}\label{thm:def functor vs Lie}
There is an equivalence of $\infty$-categories
$$\begin{tikzcd}[row sep=0.8pc]
\cat{LieAlgd}^{\Delta/\KK}_{\pi, \et}\arrow[rr, "\sim"]\arrow[rd] & & \FMP_{/\KK}\arrow[ld]\\
& \SCR_{\KK}^{\coh, \et}.
\end{tikzcd}$$
Furthermore, the induced equivalence between fibres over a coherent animated $\KK$-algebra $B$ fits into a commuting diagram
$$\begin{tikzcd}
\LieAlgd_{B/\KK}\arrow[r, "\sim"]\arrow[d, "\forget"{swap}] & \FMP_{B/\KK}\arrow[d, "{T_{B/-}[1]}"]\\
\QC^\vee_{B/T_{B/\KK}[1]}\arrow[r, hookrightarrow] & \Fun\big(\cat{Der}^{\mm{coh}}_{B/\KK, \geq 0}, \sS\big).
\end{tikzcd}$$ 
\end{theorem}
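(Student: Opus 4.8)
The plan is to prove \Cref{thm:def functor vs Lie} by combining the affine comonadicity result \Cref{thm:comonad} (equivalently \Cref{cor:complete KD}) with a limit/colimit completion argument, exactly parallel to how the partition Lie algebroid monad was built in \Cref{prop:pla natural}. The key point is that both $\FMP_{B/\KK}$ and $\LieAlgd_{B/\KK}$ are, in a precise sense, ``freely generated under sifted colimits by their corepresentable/dually-almost-perfect objects,'' and these two classes of generators are identified by Koszul duality.

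First I would set up the comparison on generators. On the formal moduli side, \Cref{ex:corep fmp} gives the adjunction $\mc{O}\colon \FMP_{B/\KK}\leftrightarrows \DAlg_{\KK/B}^{\op}\colon \Spf$, and by deformation theory (\Cref{cor:art small}, \Cref{rem:fmp in terms of sqz}) a formal moduli problem is determined by its values on square-zero extensions by connective coherent $B$-modules; this is the content of \Cref{con:tangent}, which shows $X\mapsto T_{B/X}[1]$ lands in $\QC^\vee_{B/T_{B/\KK}[1]}$. The corepresentable formal moduli problems $\Spf(A)$ for $A\in\SCR^{\wedge\aft}_{\KK/B}$ are precisely the ones whose tangent fibre is dually almost perfect of non-positive tor-amplitude (\Cref{ex:tangent fibre corep}: $T_{B/\Spf(A)}\simeq L^\vee_{B/A}[1]=\cot(A\to B)^\vee[1]$). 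So I would first check: (i) every formal moduli problem is a sifted colimit of corepresentables $\Spf(A)$ with $A\in\SCR^{\wedge\aft}_{\KK/B}$, with the colimit being one of the two ``conserved'' types (a),(b) from the proof of \Cref{prop:pla affine} (split totalisations and almost eventually constant towers) after applying $\Spf$; and (ii) the functor $\FMP_{B/\KK}\to\Fun(\cat{Der}^{\mm{coh}}_{B/\KK,\geq 0},\sS)$ exhibits $\FMP_{B/\KK}$ as the sifted-colimit completion of $\{\Spf(A):A\in\SCR^{\wedge\aft}_{\KK/B}\}$ relative to those diagrams. This is essentially \cite[5.1.2]{GRII} combined with \Cref{thm:comonad}: the opposite category of corepresentables is $\SCR^{\wedge\aft}_{\KK/B}$ by \Cref{cor:complete KD}, and \Cref{cor:comonadic cobar} supplies the requisite resolutions.

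Next I would assemble the equivalence fibrewise and then globalise. Fibrewise: by \Cref{cor:complete KD} the functor $\mathfrak D$ identifies $(\SCR^{\wedge\aft}_{\KK/B})^{\op}$ with the full subcategory $\cat{Der}^{\mm{aperf},\op}_{B/\KK,\geq 0}\simeq (\dAPerf^{\weirdleq 0}_B)_{/T_{B/\KK}[1]}$ of $\LieAlgd_{B/\KK}$ of partition Lie algebroids with dually almost perfect underlying module of tor-amplitude $\leq 0$ — this is the generating subcategory for $\LieAlgd_{B/\KK}$ used in \Cref{prop:pla affine}, with the same two types of conserved colimit diagrams. Under $\mathfrak D$, the tangent-fibre functor on corepresentables corresponds to the forgetful functor on these generators (both send $A$, resp.\ $\mathfrak D(A)$, to $\cot(A\to B)^\vee\to T_{B/\KK}[1]$ shifted by $[1]$). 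Since both $\FMP_{B/\KK}$ and $\LieAlgd_{B/\KK}$ are the sifted-colimit completions of the same subcategory relative to the same diagrams, the identity on generators extends uniquely to an equivalence $\FMP_{B/\KK}\xrightarrow{\sim}\LieAlgd_{B/\KK}$ commuting with the two forgetful functors, giving the claimed square. For the functoriality in étale $f\colon B\to B'$: by \Cref{prop:art etale} the functor $f^*$ on Artinian extensions preserves square-zero extensions and Postnikov limits, hence induces $f^*\colon\FMP_{B/\KK}\to\FMP_{B'/\KK}$ preserving the relevant colimit presentations; by \Cref{prop:lie algebroid naturality}(2) the same $f^*$ on the Lie side intertwines free functors. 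So I would invoke the relative version \Cref{prop:oplax adding colimits} (applied to the cocartesian fibrations $\FMP_{/\KK}\to\SCR^{\coh,\et}_\KK$ and $\cat{LieAlgd}^{\Delta/\KK}_{\pi,\et}$ over $\SCR^{\coh,\et}_\KK$) to upgrade the fibrewise equivalence to an equivalence of cocartesian fibrations, exactly as in the proof of \Cref{prop:pla natural}.

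\textbf{Main obstacle.} The hard part will be step (i)--(ii): verifying carefully that $\FMP_{B/\KK}$ is the sifted-colimit completion of the corepresentables in $\SCR^{\wedge\aft}_{\KK/B}$ relative to exactly the completion data (a),(b), and that this matches the completion data for $\LieAlgd_{B/\KK}$ from \Cref{prop:pla affine}. The subtlety is that a formal moduli problem need not itself be a colimit of \emph{finitely} corepresentable pieces in an obvious way; one must use the comonadic cobar resolution of \Cref{cor:comonadic cobar} (dualising across Koszul duality) to write a general formal moduli problem as a totalisation of square-zero ones, and then argue — via \Cref{lem:coLie properties}(3),(4), the almost-perfectness/completeness bookkeeping of \Cref{prop:complete aft}, and \Cref{lem:fmp almost artin} — that these totalisations are precisely the conserved diagrams of type (a) and that towers of type (b) account for the remaining objects. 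Once the two completion presentations are matched, the extension of $\mathfrak D$ to an equivalence and its étale naturality are formal consequences of \Cref{prop:colimit completion slice} and \Cref{prop:oplax adding colimits}.
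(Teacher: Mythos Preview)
The paper takes a quite different route. Rather than characterising both sides as sifted-colimit completions and matching them, it applies Lurie's deformation-theory criterion (\Cref{thm:lurie}, i.e.\ \cite[Theorem 12.3.3.5]{SAG}) directly to $\cat{B}=\LieAlgd_{B/\KK}$: the functors $R_M(\mf{g})=\hom_{\QC^\vee_B}(M^\vee,\fib(\rho))$ for $M\in\Coh_{B,\geq 0}$ are jointly conservative sifted-colimit-preserving right adjoints, so the restricted Yoneda embedding identifies $\LieAlgd_{B/\KK}$ with formal moduli problems on a certain subcategory $\cat{B}_0\subseteq \LieAlgd_{B/\KK}$. The whole content of the argument is then to identify $\cat{B}_0$ with $\Art_{\KK/B}^{\op}$ via $\mathfrak{D}$. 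This is done by combining \Cref{cor:art small} with \Cref{preservation_of_pullbacks}, a substantial technical result showing that $\mathfrak{D}$ sends the square-zero pullback squares \eqref{diag:sqz art} to pushout squares of partition Lie algebroids; its proof is an induction over ``adequate'' objects using auxiliary filtrations and the cobar resolution of \Cref{cor:comonadic cobar}. No colimit-completion description of $\FMP_{B/\KK}$ is needed, and the étale-functorial statement follows formally once $\Psi$ is seen to preserve cartesian arrows.

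Your approach has a genuine gap. First, the assertion that $\LieAlgd_{B/\KK}$ is the sifted-colimit completion of $\mathfrak{D}(\SCR^{\wedge\aft}_{\KK/B})$ relative to the diagrams (a),(b) is not what \Cref{prop:pla affine} establishes: that proposition shows only that $(\QC^\vee_B)_{/T_{B/\KK}[1]}$ is such a completion and that the \emph{monad} extends uniquely from the generators---it does not exhibit the $\infty$-category of \emph{algebras} as a completion of anything. (What one does get from the bar resolution is generation under sifted colimits by the \emph{free} algebroids $\mathfrak{D}(B\oplus_\alpha M)$, a strictly smaller class than $\mathfrak{D}(\SCR^{\wedge\aft}_{\KK/B})$.) Second, and more seriously, your plan for (i)--(ii) is circular: to know that a colimit of corepresentables $\Spf(A^\bullet)$ computed in the ambient functor category is already the colimit \emph{in $\FMP_{B/\KK}$}, you need to know that $\mathfrak{D}$ (equivalently $\Spf$) sends the relevant limit diagrams of rings to colimit diagrams of Lie algebroids. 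That is precisely \Cref{preservation_of_pullbacks}, which you never invoke and which carries essentially all the technical weight in the paper's proof. Without it, neither the completion description of $\FMP_{B/\KK}$ nor the matching of conserved diagrams goes through.
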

We will first construct the functor $\cat{LieAlgd}^{\Delta/\KK}_{\pi, \et}\rt \FMP_{/\KK}$ and then use the language of deformation theories from \cite[Chapter 12]{SAG} to prove that it is an equivalence.
\begin{construction}
If an animated surjection $A\to B$ is an (almost) Artinian extension, then it is in particular complete almost finitely augmented (\Cref{def:complete aft}). Using Corollary \ref{cor:complete KD}, we thus obtain a composite functor 
$$\begin{tikzcd}
\mathfrak{D}\colon \Art_{\KK}\arrow[r, "\cot"] & \cat{Coalg}_{\mm{coLie}^\pi_\Delta}(\cAAPerf{}{\KK})\arrow[r, "{(-)^\vee}"] & (\LieAlgd_{/\KK})^{\op}.
\end{tikzcd}$$
sending each Artinian extension $A\to B$ to the partition Lie algebroid $\mf{D}(A)=\cot(A)^\vee$ over $B$. Since the maps in $\Art_{\KK}$ all cover étale maps $B\to B'$, this assembles into a functor
$$\begin{tikzcd}[row sep=1pc]
\Art_{\KK}\arrow[rr, "\mf{D}"]\arrow[rd] & & (\cat{LieAlgd}_{\Delta/\KK}^{\pi, \et})^{\op}\arrow[ld]\\
& \SCR_{\KK}^{\coh, \et}.
\end{tikzcd}$$
The functor $\mathfrak{D}$ preserves cocartesian arrows: indeed, for any cocartesian arrow $(A\to B)\to (A'\to B')$ in $\Art_{\KK}$ covering an étale map $f\colon B\to B'$, we have that $B'\simeq A'\otimes_A B$ so that $\mf{D}(A')\simeq f^*\mf{D}(A)$.
The functor $\mf{D}$ induces a fully faithful functor on fibres by Corollary \ref{cor:complete KD} and is hence fully faithful itself, since it preserves cocartesian arrows.  
\end{construction}
\begin{proposition}\label{preservation_of_pullbacks}
Let $B$ be a coherent animated ring and let $A\to B\oplus M[1]$ be a map in $\Art_{\KK/B}$ classifying a square zero extension $A_\eta\to A$ by a coherent connective $B$-module. Then induced map of partition Lie algebroids over $B$
\begin{equation}\label{eq:pushout pres}\begin{tikzcd}
\mathfrak{D}(A)\amalg_{\mathfrak{D}(B\oplus M[1])} \mathfrak{D}(B)\arrow[r] & \mathfrak{D}(A_\eta)
\end{tikzcd}\end{equation}
is an equivalence.
\end{proposition}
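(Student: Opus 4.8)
The plan is to bootstrap from the affine Koszul duality of \Cref{cor:complete KD}. That result says $\mathfrak{D} = (-)^\vee \circ \cot$ is fully faithful with essential image the partition Lie algebroids whose underlying pro-coherent $B$-module lies in $\dAPerf_B^{\weirdleq 0}$; equivalently, $\mathfrak{D}$ restricts to an equivalence $\SCR^{\wedge \aft}_{\KK/B} \xrightarrow{\ \sim\ } (\Lie^{\pi,\dap, \weirdleq 0}_{\Delta, B/\KK})^{\op}$, where $\Lie^{\pi,\dap, \weirdleq 0}_{\Delta, B/\KK}\subseteq \LieAlgd_{B/\KK}$ denotes this full subcategory. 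All four corners of the defining square \eqref{diag:sqz art} of the square zero extension $A_\eta$ lie in $\SCR^{\wedge \aft}_{\KK/B}$: the rings $A$ and $B$ are Artinian extensions of $B$ by assumption, $B\oplus M[1]$ is complete almost finitely augmented over $B$ by \Cref{cor:sqz is complete aft} (applied to the connective almost perfect module $M[1]$), and $A_\eta = A\times_{B\oplus M[1]} B$ is an almost Artinian extension of $B$ by \Cref{cor:art small} (or \Cref{cor:sqz aft}). As $\SCR^{\wedge \aft}_{\KK/B}\subseteq \DAlg_{\KK/B}$ is a full subcategory, \eqref{diag:sqz art} remains a pullback square there, and applying the contravariant equivalence $\mathfrak{D}$ turns it into a pushout square in $\Lie^{\pi,\dap, \weirdleq 0}_{\Delta, B/\KK}$. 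Consequently, \eqref{eq:pushout pres} is an equivalence precisely when the pushout of $\mathfrak{D}(A)\leftarrow \mathfrak{D}(B\oplus M[1])\rightarrow \mathfrak{D}(B)$ formed in the \emph{ambient} $\infty$-category $\LieAlgd_{B/\KK}$ already lies in the full subcategory $\Lie^{\pi,\dap, \weirdleq 0}_{\Delta, B/\KK}$ — for then it satisfies the universal property of the pushout in the subcategory and hence agrees with $\mathfrak{D}(A_\eta)$.

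To control this pushout, the key observation is that its two legs are built from free partition Lie algebroids. Since $B\oplus M[1]\to B$ and $B\to B$ are square zero extensions, their cotangent fibres are the underlying ideal modules $M[1]$ and $0$, carrying the \emph{cofree} $\coLie^\pi_{\Delta, B/\KK}$-comodule structure with null derivation; dualising, and using that pro-coherent duality exchanges cofree comodules with free algebras, one obtains $\mathfrak{D}(B\oplus M[1])\simeq \Free(M^\vee[-1]\to T_{B/\KK}[1])$ with null anchor and $\mathfrak{D}(B)\simeq \Free(0\to T_{B/\KK}[1])$, the initial partition Lie algebroid. As the free functor preserves pushouts, the pushout in question becomes the cell attachment $\mathfrak{D}(A)\amalg_{\Free(M^\vee[-1]\to T_{B/\KK}[1])}\Free(0\to T_{B/\KK}[1])$. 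One then resolves $\mathfrak{D}(A)$ itself by free partition Lie algebroids: the comonadic cobar resolution of $A$ from \Cref{thm:comonad} and \Cref{cor:comonadic cobar} is dual under $\mathfrak{D}$ to a \emph{split} simplicial diagram of free partition Lie algebroids with colimit $\mathfrak{D}(A)$. The forgetful functor $\forget\colon \LieAlgd_{B/\KK}\to (\QC^\vee_B)_{/T_{B/\KK}[1]}$ preserves sifted colimits, the free partition Lie algebroid monad $\Lie^\pi_{\Delta, B/\KK}$ preserves $(\dAPerf_B^{\weirdleq 0})_{/T_{B/\KK}[1]}$ by construction (\Cref{prop:pla affine}), and the cobar diagrams involved are almost eventually constant after truncation; using the description of $\QC^\vee_B$ as the sifted-colimit completion of $\dAPerf_B^{\weirdleq 0}$ relative to exactly this class of diagrams (from the proof of \Cref{prop:pla affine}), one concludes that $\forget$ of the pushout is dually almost perfect of non-positive tor-amplitude. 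An alternative route is to filter the free partition Lie algebroids as in \Cref{rem:free partition Lie algebroid} and pass to the associated graded, where every term is a free $B$-linear partition Lie algebra and the statement reduces to the fact that $\Lie^\pi_{\Delta, B/B}$ preserves dually almost perfect modules of non-positive tor-amplitude.

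The main obstacle is precisely this underlying-module computation. Pushouts of partition Lie algebroids are \emph{not} computed on underlying pro-coherent modules — the forgetful functor is only a sifted-colimit-preserving right adjoint, and indeed the cotangent fibre of $A_\eta$ involves $B\otimes_A M$ rather than $M$, the discrepancy being accounted for by the higher bar terms — so the bar/cobar bookkeeping of the second paragraph is genuinely required to see that the pushout stays inside $\dAPerf_B^{\weirdleq 0}$ (this is the algebroid analogue of the corresponding step for pointed formal moduli problems carried out in \cite{BM19}). Once this is established, the conservativity of $\forget$ on $\LieAlgd_{B/\KK}$ together with the full faithfulness of $\mathfrak{D}$ shows that the canonical comparison map \eqref{eq:pushout pres} is an equivalence.
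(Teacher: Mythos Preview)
Your reduction is sound: since $\mathfrak{D}$ is a contravariant equivalence onto the full subcategory $\Lie^{\pi,\dap,\weirdleq 0}_{\Delta,B/\KK}$, the statement is indeed equivalent to showing that the pushout $P=\mathfrak{D}(A)\amalg_{\mathfrak{D}(B\oplus M[1])}\mathfrak{D}(B)$, computed in $\LieAlgd_{B/\KK}$, has underlying pro-coherent module in $\dAPerf_B^{\weirdleq 0}$. Resolving $\mathfrak{D}(A)$ by its (dualised) cobar construction and recognising the two legs as free algebroids is exactly what the paper does.

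The gap is in your final step. Writing $P\simeq|\mathfrak{D}(A^\bullet)\amalg_{\mathfrak{D}(B\oplus M[1])}0|$, each term is indeed free on some $V_i\in(\dAPerf_B^{\weirdleq 0})_{/T_{B/\KK}[1]}$, so $\forget(P)\simeq|\Lie^\pi_{\Delta,B/\KK}(V_\bullet)|$. But the face maps in this simplicial object are \emph{not} of the form $\Lie^\pi_{\Delta,B/\KK}(f)$ for maps $f\colon V_i\to V_{i-1}$: the cobar face maps involve the comonad comultiplication (dually, the monad multiplication $T^2\to T$), which is not linear in this sense. Consequently you cannot commute $\Lie^\pi_{\Delta,B/\KK}$ past the realisation, and knowing that each term lies in $\dAPerf_B^{\weirdleq 0}$ says nothing about the realisation. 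Your claim that ``the cobar diagrams involved are almost eventually constant after truncation'' conflates the two types of diagrams (a) and (b) in the proof of \Cref{prop:pla affine}; the cobar resolution is of type (a) (split totalisation), not type (b), and in any case neither type tells you a priori that an arbitrary realisation of objects in $\dAPerf_B^{\weirdleq 0}$ stays there. The alternative filtration via \Cref{rem:free partition Lie algebroid} has the same defect: those filtrations on the individual $\Lie^\pi_{\Delta,B/\KK}(V_i,\beta_i)$ are not obviously compatible with the non-linear face maps.

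The paper circumvents this by a different, direct argument. It does \emph{not} try to show $\forget(P)\in\dAPerf_B^{\weirdleq 0}$ abstractly; instead it compares $P$ to $\mathfrak{D}(A_\eta)$ via an intermediate square (diagram \eqref{diag:real square}) and proves three of its four edges are equivalences. The crucial ``bottom horizontal'' edge, $|\mathfrak{D}(A^\bullet_\eta)|\to\mathfrak{D}(A_\eta)$, is handled by a filtration placed on the \emph{ring} side (putting $M[1]$ in filtration weight~$1$ in $B\oplus M[1]$), which makes all filtered cotangent fibres almost perfect and reduces, on the associated graded, to a statement about $\mathfrak{D}(A^\bullet\oplus M)\to\mathfrak{D}(A\oplus M)$. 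Here the paper needs a strengthened inductive hypothesis --- the ``adequacy'' of $A$, asserting $\mathfrak{D}(A)\amalg\mathfrak{D}(B\oplus N)\simeq\mathfrak{D}(A\oplus N)$ for all $N\in\APerf_{B,\geq 0}$ --- to split off the $M$-summand and fall back on the split realisation $|\mathfrak{D}(A^\bullet)|\simeq\mathfrak{D}(A)$. Your outline lacks both this specific filtration and the adequacy induction; without one of them, the argument does not close.
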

 
\begin{lemma}\label{lem:pro-coh dual of aperf filtered}
Let $B$ be a coherent animated ring and let $F^\star M\in \FilMod_B$ be an almost perfect filtered $B$-module. Then the diagram of pro-coherent duals
$$\begin{tikzcd}
(F^0M/F^1M)^\vee\arrow[r] & (F^0M/F^2M)^\vee\arrow[r] & \dots \arrow[r] & (F^0M)^\vee
\end{tikzcd}$$
is a colimit diagram in $\QC^\vee_B$.
\end{lemma}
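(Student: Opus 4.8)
The plan is to deduce this from the colimit-completion description of $\QC^\vee_B$ used in the proof of \Cref{prop:pla affine}. Recall from there (via \cite[Proposition 4.20]{BCN21}, or \Cref{lem:pro-coh over coherent ring}) that pro-coherent duality $(-)^\vee\colon \APerf_{B,\geq 0}^{\op}\to \QC^\vee_B$ carries limits of \emph{almost eventually constant} towers of connective almost perfect $B$-modules to colimit diagrams in $\QC^\vee_B$. So it suffices to exhibit $F^0M\simeq \lim_n F^0M/F^nM$ as the limit of such a tower.

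First I would unpack the hypothesis. Being almost perfect, $F^\star M$ is connective, and applying \Cref{lem:aperf over complete} with the (complete) filtered ring $B$ placed in weight $0$, we learn that $F^\star M$ is complete and that $\gr^\bullet(M)$ is an almost perfect graded $B$-module. In particular each $\gr^i(M)=F^iM/F^{i+1}M$ is a connective almost perfect $B$-module, and the connectivity of $\gr^i(M)$ tends to $\infty$ as $i\to\infty$, since any perfect graded module over a ring concentrated in weight $0$ is supported in finitely many weights. Completeness means $\lim_i F^iM\simeq 0$; applying $\lim_n$ to the fibre sequences $F^nM\to F^0M\to F^0M/F^nM$ (and using that the left tower has vanishing limit while the middle is constant) then yields $F^0M\simeq \lim_n F^0M/F^nM$.

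Next I would check the two remaining conditions. From the fibre sequences $\gr^n(M)\to F^0M/F^{n+1}M\to F^0M/F^nM$ one sees that each $F^0M/F^nM$ is a finite iterated extension of $\gr^0(M),\dots,\gr^{n-1}(M)$, hence connective and almost perfect; the same fibre sequences identify $\fib\big(F^0M/F^{n+1}M\to F^0M/F^nM\big)\simeq \gr^n(M)$, which becomes arbitrarily highly connective, so the tower $\{F^0M/F^nM\}_n$ is almost eventually constant in the sense of \Cref{def:almost eventually constant}. (Its limit $F^0M$ is then automatically connective almost perfect, as a Milnor-sequence argument gives $\tau_{\leq k}F^0M\simeq \tau_{\leq k}(F^0M/F^mM)$ for $m\gg 0$, consistent with the previous paragraph.) Applying $(-)^\vee$ to this tower-with-limit produces a colimit diagram in $\QC^\vee_B$, which is exactly
$$
(F^0M/F^1M)^\vee\to (F^0M/F^2M)^\vee\to \dots\to (F^0M)^\vee.
$$

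The step that deserves the most care is the first one: extracting from ``$F^\star M$ is an almost perfect filtered $B$-module'' both the completeness of $F^\star M$ and the fact that the graded pieces $\gr^i(M)$ become increasingly connective. Once these are secured, everything else is a formal manipulation of fibre sequences, truncations, and the colimit-completion characterisation of $\QC^\vee_B$.
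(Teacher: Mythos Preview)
Your argument is essentially correct but takes a different route from the paper. The paper argues directly from the description of $\QC^\vee_B$ via functors on $\Coh_B$ (\Cref{lem:pro-coh over coherent ring}): since $P^\vee$ is corepresented by $P$, it suffices to show that for each $N\in\Coh_B$ the map $\colim_n\Map_B(F^0M/F^nM,N)\to\Map_B(F^0M,N)$ is an equivalence; as $N$ is $k$-coconnective for some $k$ and almost perfectness forces $F^nM$ to be $k$-connective for $n\gg 0$, this map is already an equivalence for all large $n$. This is two lines and avoids checking that each $F^0M/F^nM$ lies in $\APerf_{B,\geq 0}$ or that $F^0M$ is the inverse limit. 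Your approach instead recognises the dual of the tower as one of the type-(b) colimit diagrams in the set $\cat{R}_B$ from the proof of \Cref{prop:pla affine}, which is a legitimate alternative and makes the link to that colimit-completion framework explicit; the underlying input (connectivity of $F^nM$, equivalently of $\gr^n(M)$, tends to $\infty$) is the same in both arguments. One small slip: almost perfect only guarantees \emph{eventually} connective, not connective, so your invocation of \Cref{lem:aperf over complete} strictly requires a preliminary shift; this is harmless since both the statement and your argument are invariant under suspension.
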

\begin{proof}
Recall that the pro-coherent dual of a $B$-module $M$ can be described by the left exact functor $\Coh_B\rt \sS$ corepresented by $M$. It therefore suffices to verify that for any coherent $B$-module $N$, the map $\colim \Map_B(F^0M/F^nM, N)\rt \Map_B(F^0M, N)$ is an equivalence of spaces. Suppose that $N$ is $k$-coconnective. Because $F^\star M$ is almost perfect, there exists an $m$ such that $F^nM$ is $k$-connective for all $n\geq m$. It follows that $\Map_B(F^0M/F^nM, N)\to \Map_B(F^0M, N)$ is an equivalence for all $n\geq m$.
\end{proof}
\begin{proof}[Proof of  \Cref{preservation_of_pullbacks}]
Let us say that an object $A\in \DAlg^{\wedge, \aft}_{\KK/B}$ is \emph{adequate} if for every $N\in \APerf_{B, \geq 0}$, the natural map
$$\begin{tikzcd}
\mathfrak{D}(A)\sqcup \mathfrak{D}(B\oplus N)\arrow[r] & \mathfrak{D}(A\oplus N)
\end{tikzcd}$$
is an equivalence, where $B\oplus N$ and $A\oplus N=A\times_B (B\oplus N)$ denote the trivial square zero extensions by $N$. To prove the proposition, it will suffice to verify the following assertions:
\begin{enumerate}
\item Every square zero extension $B\oplus_\alpha M$ of $B$ by $M\in \APerf_{B, \geq 0}$ is adequate.

\item Suppose that $A$ is adequate and consider a pullback square \eqref{diag:sqz art} classifying a square zero extension $A_\eta\to A$ by $M\in \APerf_{B, \geq 0}$. Then the following hold:
\begin{enumerate}
\item[(2a)] the map \eqref{eq:pushout pres} is an equivalence.

\item[(2b)] $A_\eta$ is adequate.
\end{enumerate}
\end{enumerate}
Indeed, Corollary \ref{cor:art small} then shows by induction that every $A\in \Art_{\KK/B}$ is adequate, so that $\mathfrak{D}$ sends every pullback square \eqref{diag:sqz art} to a pushout of partition Lie algebroids.

Recall that for any square zero extension $B\oplus_\alpha M$ classified by $\alpha\colon L_{B/\KK}[-1]\rt M$, the partition Lie algebroid $\mathfrak{D}(B\oplus_\alpha M)$ is the \emph{free} partition Lie algebroid on $\alpha^\vee\colon M^\vee\rt T_{B/\KK}[1]$. Consequently, $\mathfrak{D}((B\oplus_\alpha M)\oplus N)$ is the free partition Lie algebroid on $(\alpha^\vee, 0)\colon M^\vee\oplus N\rt T_{B/\KK}[1]$. Assertion (1) then follows from the fact that taking free Lie algebroids is a left adjoint.

We will simultaneously verify assertions (2a) and (2b). To this end, suppose that $A$ is adequate and let $A_\eta\to A$ be a square zero extension of $A$ classified by $\alpha\colon A\rt B\oplus M[1]$. Let $A\to A^\bullet\to B\oplus M[1]$ be the cosimplicial resolution provided by Corollary \ref{cor:comonadic cobar}. Given $N\in \APerf_{B, \geq 0}$, we then obtain a diagram of complete almost finitely augmented algebras over $B$
\begin{equation}\label{diag:sqz resol}\begin{tikzcd}
A_\eta\oplus N\arrow[d]\arrow[r] & A^\bullet_\eta\oplus N\arrow[r]\arrow[d] & B\oplus N\arrow[d, "{(\mm{id}, 0)}"]\\
A\arrow[r] & A^\bullet\arrow[r] &  B\oplus M[1].
\end{tikzcd}\end{equation}
where the top row is obtained from the bottom row by base change. This gives rise to a diagram of partition Lie algebroids of the form
\begin{equation}\label{diag:real square}\begin{tikzcd}
\big|\mathfrak{D}(A^\bullet)\amalg_{\mathfrak{D}(B\oplus M[1])} \mathfrak{D}(B\oplus N)\big|\arrow[r]\arrow[d] & \mathfrak{D}(A)\amalg_{\mathfrak{D}(B\oplus M[1])} \mathfrak{D}(B\oplus N)\arrow[d]\\
\big|\mathfrak{D}(A_\eta^\bullet\oplus N)\big|\arrow[r] & \mathfrak{D}(A_\eta\oplus N).
\end{tikzcd}\end{equation}
It suffices to verify that the right vertical map is an equivalence: if this is the case, then taking $N=0$ yields assertion (2a) and assertion (2b) then follows from the equivalences
$$\begin{tikzcd}
\big(\mathfrak{D}(A)\amalg_{\mathfrak{D}(B\oplus M[1])} \mathfrak{D}(B)\big)\amalg \mathfrak{D}(B\oplus N)\arrow[r, "\sim"]\arrow[d, "\sim"] &\mathfrak{D}(A)\amalg_{\mathfrak{D}(B\oplus M[1])} \mathfrak{D}(B\oplus N)\arrow[d, "\sim"]\\
\mathfrak{D}(A_\eta)\amalg \mathfrak{D}(B\oplus N)\arrow[r] & \mathfrak{D}(A_\eta\oplus N).
\end{tikzcd}$$
To see that the right vertical map in \eqref{diag:real square} is an equivalence, we will show that each of the other three maps is an equivalence.

\bigskip

\noindent\textit{Top horizontal map.} To see that the top horizontal map in \eqref{diag:real square} is an equivalence, it suffices to show that the map $|\mathfrak{D}(A^\bullet)|\rt \mathfrak{D}(A)$ is an equivalence. Since geometric realisations of partition Lie algebroids are computed in $(\QC^\vee_B)_{/T_{B/\KK}[1]}$, it suffices to verify this at the level of pro-coherent modules. Now recall that the cosimplicial resolution $A\to A^\bullet$ from Corollary \ref{cor:comonadic cobar} has the property that the augmented cosimplicial diagram $\cot(A)\to \cot(A^\bullet)$ in $\cAAPerf{B}{\KK}$ is split. Taking pro-coherent duals, one obtains that the augmented simplicial diagram $\mathfrak{D}(A^\bullet)\to \mathfrak{D}(A)$ is split in $(\QC^\vee_B)_{/T_{B/\KK}[1]}$, so that $|\mathfrak{D}(A^\bullet)|\simeq \mathfrak{D}(A)$.

\bigskip

\noindent\textit{Left vertical map.} It suffices to verify that each square of partition Lie algebroids
$$\begin{tikzcd}
\mathfrak{D}(B\oplus M[1])\arrow[r]\arrow[d] & \mathfrak{D}(B\oplus N)\arrow[d]\\
\mathfrak{D}(A^i)\arrow[r] & \mathfrak{D}(A^i_\eta\oplus N)
\end{tikzcd}$$
is a pushout. The cosimplicial resolution $A^\bullet$ from Corollary \ref{cor:comonadic cobar} has the property that $A^i\simeq B\oplus_\alpha K$ is a square zero extension of $B$ and that the map $A^i\simeq B\oplus_{\alpha} K\rt B\oplus M[1]$ arises from a map $e\colon K\rt M[1]$ in $\cAAPerf{B}{\KK}$. It follows that the above square is the image of a pushout square in $(\QC^\vee_B)_{/T_{B/K}[1]}$ under the free partition Lie algebroid functor, and hence a pushout square itself.

\bigskip

\noindent\textit{Bottom horizontal map.} It suffices to check that the augmented simplicial diagram $\mathfrak{D}(A^\bullet_\eta\oplus N)\rt \mathfrak{D}(A_\eta\oplus N)$ is a colimit diagram in $\QC^\vee_B$. Recall that this is the pro-coherent dual of the augmented cosimplicial diagram of $B$-modules $\cot(A_\eta\oplus N)\rt \cot(A_\eta^\bullet\oplus N)$. We will refine this to a diagram of filtered $B$-modules as follows. The diagram \eqref{diag:sqz resol} of derived $\KK$-algebras over $B$ lifts to a diagram of filtered $\KK$-algebras over $B$: we endow $B\oplus N$, $A$ and $A^\bullet$ with the filtration where $F^i=0$ for all $i\geq 1$, and $B\oplus M[1]$ with the filtration
$$
F^1(B\oplus M[1])=M[1]\qquad\qquad F^i(B\oplus M[1])=0 \qquad\text{for }i\geq 2.
$$
Taking fibre products, we then obtain (finite) filtrations on $A_\eta\oplus N$ and $A^\bullet_\eta\oplus N$.

The maps of animated filtered algebras $F^\star(A_\eta\oplus N)\to B$ and $F^\star(A_\eta^i\oplus N)\to B$ are almost of finite presentation. Indeed, each $A^i\to B$ (concentrated in weight $0$) is almost of finite presentation and $A_\eta^i\oplus N\to A^i$ is a square zero extension by an almost perfect filtered $A^i$-module, and thus almost of finite presentation by  \Cref{lem:sqz is aft}. By  \Cref{prop:filtered aft}, we therefore obtain an augmented cosimplicial diagram of almost perfect filtered $B$-modules
$$\begin{tikzcd}
\cot(F^\star(A_\eta\oplus N))\arrow[r] & \cot(F^\star(A_\eta^\bullet\oplus N)).
\end{tikzcd}$$
At the level of $F^0$, this is given by $\cot(A_\eta\oplus N)\to \cot(A_\eta^\bullet\oplus N)$ and at the level of the associated graded, it is given by
$$\begin{tikzcd}
\cot(A\oplus M(1)\oplus N)\arrow[r] & \cot(A^\bullet\oplus M(1)\oplus N).
\end{tikzcd}$$
Here $A\oplus M(1)\oplus N$ denotes the derived graded $\KK$-algebra given by the trivial square zero extension of $A$ by $N$ (in weight $0$) and $M$ (in weight $1$). 

Using  \Cref{lem:pro-coh dual of aperf filtered}, we therefore obtain a certain augmented simplicial diagram of pro-coherent $B$-modules equipped with an \emph{increasing} filtration
\begin{equation}\label{diag:realisation filtered}\begin{tikzcd}
F_\star \mathfrak{D}(A_\eta^\bullet\oplus N)\arrow[r] & F_\star \mathfrak{D}(A_\eta\oplus N)
\end{tikzcd}\end{equation}
At the level of the underlying objects (i.e.\ taking the colimit as $\star\to \infty$), this yields the augmented simplicial diagram $\mathfrak{D}(A^\bullet_\eta\oplus N)\rt \mathfrak{D}(A_\eta\oplus N)$. To see that this is a colimit diagram, it now suffices to show that \eqref{diag:realisation filtered} is a colimit diagram at the level of the associated graded. The associated graded of \eqref{diag:realisation filtered} is equivalent to the augmented simplicial diagram (with a certain weight grading that will not play a role)
$$
\mathfrak{D}(A^\bullet\oplus M\oplus N)\rt  \mathfrak{D}(A\oplus M\oplus N).
$$
Since $A$ and all $A^i$ (which are of the form $B\oplus_\alpha I$) are adequate, this is equivalent to the map
$$
\mathfrak{D}(A^\bullet)\amalg \mathfrak{D}(B\oplus M\oplus N)\rt \mathfrak{D}(A)\amalg \mathfrak{D}(B\oplus M\oplus N).
$$
This is an equivalence because (as we already saw), $|\mathfrak{D}(A^\bullet)|\simeq \mathfrak{D}(A)$.
\end{proof}
By  \Cref{preservation_of_pullbacks}, any partition Lie algebroid $\mf{g}$ over $B$ determines a formal moduli problem
\begin{equation}\label{eq:fmp from liealgd}
\Psi_\mf{g}\colon \Art_{\KK/B}\rt \sS; \quad \Psi_\mf{g}(A)=\Map_{\LieAlgd_{B/\KK}}(\mf{D}(A), \mf{g}).
\end{equation}
Using this, we will now construct the putative equivalence of  \Cref{thm:def functor vs Lie}.
\begin{construction}
Consider the functor
$$\begin{tikzcd}[column sep=3pc]
\Art_{\KK}\times_{\SCR_{\KK}^{\coh, \et}} \cat{LieAlgd}_{\pi, \et}^{\Delta/\KK} \arrow[r, "{(\mf{D}, \id)}"] & \big(\cat{LieAlgd}^{\pi, \et}_{\Delta/\KK})^{\op}\times_{\SCR_{\KK}^{\coh, \et}} \cat{LieAlgd}_{\pi, \et}^{\Delta/\KK}\arrow[r, "\Map"] & \sS
\end{tikzcd}$$
where the last functor is the fibrewise mapping space functor. Using Construction \ref{con:fmp global}, this is adjoint to a functor $\cat{LieAlgd}_{\pi, \et}^{\Delta/\KK}\rt \Fun_{/\SCR_{\KK}^{\coh, \et}}(\Art_{\KK}, \sS)$. Unravelling the definitions, this sends each $\mf{g}\in \LieAlgd_{B/\KK}$ to the formal moduli problem $\Psi_\mf{g}$ from \eqref{eq:fmp from liealgd}. We thus obtain a functor
$$\begin{tikzcd}[row sep=0.8pc]
\cat{LieAlgd}^{\Delta/\KK}_{\pi, \et}\arrow[rr, "\Psi"]\arrow[rd] & & \FMP_{/\KK}\arrow[ld]\\
& \SCR_{\KK}^{\coh, \et}.
\end{tikzcd}$$
Because $\mf{D}$ preserved cocartesian fibrations, the functor $\Psi$ is a map of cartesian fibrations that preserves cartesian arrows.
\end{construction}

It now remains to verify that $\Psi$ restricts to an equivalence on fibres. For this we will use Lurie's formalism of deformation theories, slightly reformulated as follows:
\begin{theorem}[{cf.\ \cite[Theorem 12.3.3.5]{SAG}}]\label{thm:lurie}
Let $\cat{B}$ be a presentable $\infty$-category equipped with a set of adjunctions $L_\alpha\colon \Sp \leftrightarrows \cat{B}\cocolon R_\alpha$ such that each $R_\alpha$ preserves sifted colimits and the $R_\alpha$ are jointly conservative. Let $\cat{B}_0\subseteq \cat{B}$ be the smallest full subcategory of $\cat{B}$ satisfying the following conditions:
\begin{enumerate}
\item It contains the initial object $0$.
\item If $K\in \cat{B}_0$, the for each pushout square in $\cat{B}$ of the form
\begin{equation}\label{diag:cell attachment}\begin{tikzcd}
L_\alpha(\mathbb{S}[n])\arrow[r]\arrow[d] & K\arrow[d]\\
0\arrow[r] & K'
\end{tikzcd}\end{equation}
for some $\alpha$ and $n<0$, the object $K'$ is contained in $\cat{B}_0$ as well.
\end{enumerate}
Then the restricted Yoneda embedding $h\colon \cat{B}\rt \cat{P}(\cat{B}_0); K\longmapsto \Map_{\cat{B}}(-, K)$ is fully faithful, with essential image consisting of those presheaves $X$ on $\cat{B}_0$ such that $X(0)\simeq \ast$ and such that \eqref{diag:cell attachment} is sent to a pullback diagram of spaces.
\end{theorem}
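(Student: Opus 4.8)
The plan is to recognise this statement as a (mildly dualised) repackaging of Lurie's representability criterion in deformation theory \cite[Theorem 12.3.3.5]{SAG}, where the relevant structure is usually encoded by a set of spectrum objects $E_\alpha\in\Sp(\cat{B})$ rather than by adjunctions $L_\alpha\dashv R_\alpha$ with $\Sp$; so I would verify that the two packages of data coincide and, where the reformulation is not formal, run the argument directly. There are two assertions to establish: that $h$ is fully faithful, and that its essential image is the subcategory described.

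For full faithfulness I would first note that $\cat{B}_0$ is essentially small: it is the closure of $\{0\}$ under the operation sending a map $L_\alpha(\SSS[n])\to K$ to the pushout $0\amalg_{L_\alpha(\SSS[n])}K$, and since the indices $\alpha$, the integers $n<0$, and (by local smallness of $\cat{B}$) the relevant maps at each stage form sets, this transfinite closure stabilises. The key structural input is then that $\cat{B}$ is generated under colimits by the objects $L_\alpha(\SSS[n])$ for $\alpha$ in the index set and $n$ ranging over all of $\ZZ$. Indeed, let $\cat{B}'\subseteq\cat{B}$ be the smallest full subcategory closed under colimits and containing these objects; it is presentable, so the colimit-preserving inclusion $\iota\colon\cat{B}'\hookrightarrow\cat{B}$ admits a right adjoint $\rho$, and for every $Y\in\cat{B}$, every $\alpha$ and every $n$ the counit gives $\Map_{\cat{B}}(L_\alpha(\SSS[n]),\iota\rho Y)\simeq\Map_{\cat{B}}(L_\alpha(\SSS[n]),Y)$, that is, $\Omega^\infty\Sigma^{-n}R_\alpha(\iota\rho Y)\simeq\Omega^\infty\Sigma^{-n}R_\alpha(Y)$; letting $n$ vary shows $R_\alpha(\iota\rho Y)\to R_\alpha(Y)$ is an equivalence, so by joint conservativity the counit $\iota\rho Y\to Y$ is an equivalence and $\cat{B}'=\cat{B}$. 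Since $0\amalg_{L_\alpha(\SSS[n-1])}0\simeq L_\alpha(\SSS[n])$, the objects $L_\alpha(\SSS[n])$ with $n\leq 0$ already lie in $\cat{B}_0$, and those with $n>0$ are iterated pushouts of such with $0$; hence $\cat{B}_0$ itself generates $\cat{B}$ under colimits, and full faithfulness of the restricted Yoneda embedding follows from the standard fact that a small full subcategory generating a presentable $\infty$-category under colimits is strongly generating (cf.\ \cite[\S 5.1, \S 5.5]{HTT} and the argument of \cite[\S 12.3]{SAG}), using that each $L_\alpha(\SSS[n])$ corepresents the functor $\Omega^\infty\Sigma^{-n}R_\alpha$, which preserves sifted colimits by hypothesis on $R_\alpha$.

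It remains to identify the essential image. One inclusion is immediate: a representable presheaf $\Map_{\cat{B}}(-,K)$ carries colimits to limits, hence $0$ to $\ast$ and every cell attachment square \eqref{diag:cell attachment} (a pushout in $\cat{B}$) to a pullback of spaces. For the converse I would follow Lurie: given $X\in\cat{P}(\cat{B}_0)$ with $X(0)\simeq\ast$ and sending each square \eqref{diag:cell attachment} to a pullback, one first extracts a tangent complex --- for each $\alpha$ the cell attachment squares of the special form $L_\alpha(\SSS[n+1])\simeq 0\amalg_{L_\alpha(\SSS[n])}0$ force $X(L_\alpha(\SSS[n]))\simeq\Omega\,X(L_\alpha(\SSS[n+1]))$, so the values $X(L_\alpha(\SSS[\bullet]))$ assemble into a spectrum $T_\alpha(X)$ --- then builds a candidate $K_X\in\cat{B}$ as a suitable sifted colimit governed by the $T_\alpha(X)$ and the cellular presentation of objects of $\cat{B}_0$, and finally verifies that the unit $X\to hLX$ is an equivalence by induction along this cellular presentation: at each cell attachment the comparison reduces, using the pullback hypothesis on $X$ together with the facts that the $R_\alpha$ preserve sifted colimits and are jointly conservative, to the comparison one step down, bottoming out at $X(0)\simeq\ast\simeq h(0)(0)$.

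I expect this last, reconstruction, direction to be the main obstacle: it is where Lurie's deformation-theory machinery does genuine work, and where one must be careful that the repackaged hypotheses --- a set of adjunctions $L_\alpha\dashv R_\alpha$ whose right adjoints preserve sifted colimits and are jointly conservative --- faithfully reproduce the data and the smallness conditions of a deformation context in the sense of \cite[\S 12.3]{SAG}, so that (after the evident passage to opposite categories, exchanging $0$ with the terminal object and pushouts with pullbacks) \cite[Theorem 12.3.3.5]{SAG} applies. Once that translation is pinned down, the remaining content is exactly that of loc.\ cit.
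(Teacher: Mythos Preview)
Your proposal ultimately coincides with the paper's approach: verify that the data assembles into a deformation theory in Lurie's sense and invoke \cite[Theorem 12.3.3.5]{SAG}. The paper's proof is exactly that, in one sentence --- it defines the adjoint pair $\mathfrak{D}^*\colon \cat{B}\leftrightarrows \cat{P}(\cat{B}_0)^{\op}\cocolon \mathfrak{D}_*$ by $\mathfrak{D}^*(K)=\Map_{\cat{B}}(K,-)$, observes that the hypotheses make this a deformation theory in the sense of \cite[Definition 12.3.3.2]{SAG}, and cites Lurie for both the full faithfulness and the essential image description. It does not attempt to re-derive either piece.

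Your additional direct attempt at full faithfulness is where I would push back. You invoke a ``standard fact that a small full subcategory generating a presentable $\infty$-category under colimits is strongly generating,'' apparently meaning that restricted Yoneda is fully faithful; but generation under colimits is strictly weaker than density, and the sifted-colimit preservation of the $R_\alpha$ --- which you correctly flag as relevant --- does not close the gap via the route you indicate. What you have actually established (that the $L_\alpha(\SSS[n])$ corepresent sifted-colimit-preserving functors, are jointly conservative, and lie in $\cat{B}_0$) is precisely the input to Lurie's axioms, and full faithfulness is part of the \emph{output} of \cite[Theorem 12.3.3.5]{SAG}, proved there by the same cell-attachment induction you later sketch for the essential image. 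Since you do in the end defer to Lurie, your proposal is sound overall; the standalone full-faithfulness paragraph should simply be dropped rather than repaired.
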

\begin{proof}
Consider the adjoint pair $\mathfrak{D}^*\colon \cat{B}\leftrightarrows \cat{P}(\cat{B}_0)^{\op}\cocolon \mathfrak{D}_*$ where $\mathfrak{D}^*(K) = \Map_{\cat{B}}(K, -)$. The conditions imply that this is a deformation theory in the sense of \cite[Definition 12.3.3.2]{SAG}, and the result follows from \cite[Theorem 12.3.3.5]{SAG}.
\end{proof}
\begin{example}\label{ex:def functor linear}
Let $B$ be a coherent animated $\KK$-algebra. For each $M\in \Coh_{B, \geq 0}$, consider
$$\begin{tikzcd}
R_M\colon \QC^\vee_{B/T_{B/\KK}[1]}\arrow[r] & \Sp; \quad (\rho\colon N\to T_{B/\KK}[1])\arrow[r, mapsto] & \hom_{\QC^\vee(B)}(M^\vee, \fib(\rho)).
\end{tikzcd}$$
This is a right adjoint functor preserving sifted colimits since $M^\vee\in \QC^\vee_B$ is compact. Since the collection of $M^\vee$ with $M\in \Coh_{B, \geq 0}$ generates $\QC^\vee_B$, this satisfies the conditions of  \Cref{thm:lurie}. The full subcategory $\cat{B}^\mm{lin}_0\subseteq \QC^\vee_{B/T_{B/\KK}[1]}$ appearing in  \Cref{thm:lurie} can then be identified as follows. Note that
$$
L_M(\mathbb{S})\simeq \big(0\colon M^\vee \to T_{B/\KK}[1]\big).
$$
For any $\rho\colon N^\vee\to T_{B/\KK}[1]$ with $N\in \Coh_{B, \geq 0}$, we then have that:
\begin{enumerate}[label=(\alph*)]
\item For any $n<0$ and $M^\vee[n]\to N^\vee$, the cofibre $\fib(N\to M[-n])^\vee$ is dual to a coherent connective $B$-module.

\item it can be obtained as a pushout of a diagram $0\leftarrow L_{N}(\mathbb{S}[-1])\to 0$ in $\QC^\vee_{B/T_{B/\KK}[1]}$.
\end{enumerate}
It follows that $\cat{B}^\mm{lin}_0$ is the full subcategory of $\rho\colon N^\vee\to T_{B/\KK}[1]$ where $N^\vee$ is the pro-coherent dual of a coherent connective $B$-module. Using  \Cref{lem:aperf over complete}, we can identify $\cat{B}^{\mm{lin}, \op}_0\simeq \cat{Der}^{\mm{coh}}_{B/\KK, \geq 0}$ with the full subcategory of derivations $\alpha\colon L_{B/\KK}[-1]\to M$ for which $M\in \Coh_{B, \geq 0}$. Consequently, there is a fully faithful functor
$$\begin{tikzcd}
\QC^\vee_{B/T_{B/\KK}}\arrow[r, hookrightarrow] & \Fun\big(\cat{Der}^{\mm{coh}}_{B/\KK, \geq 0}, \sS\big)
\end{tikzcd}$$
whose essential image consists of functors $X$ such that $X(0)\simeq \ast$ and preserving pullbacks along all maps $0\to (M[1], \alpha)$ in $\cat{Der}^{\mm{coh}}_{B/\KK, \geq 0}$ with $M\in \Coh_{B, \geq 0}$.
\end{example}

\begin{proof}[Proof of  \Cref{thm:def functor vs Lie}]
We will mimic the strategy from Example \ref{ex:def functor linear}. For each $M\in \Coh_{B, \geq 0}$, consider the functor
$$\begin{tikzcd}
R_M\colon \LieAlgd_{B/\KK}\arrow[r] & \Sp; \quad \big(\rho\colon \mf{g}\to T_{B/\KK}[1]\big)\arrow[r, mapsto] & \hom_{\QC^\vee_B}(M^\vee, \fib(\rho)).
\end{tikzcd}$$
 \Cref{prop:pla affine} implies that the forgetful functor $\LieAlgd_{B/\KK}\rt \QC^\vee_{B/T_{B/\KK}[1]}$ is a right adjoint that preserves sifted colimits and detects equivalences. Using this and Example \ref{ex:def functor linear}, it follows that the set of functors $R_M$ satisfies the conditions of  \Cref{thm:lurie}.

To identify the full subcategory $\cat{B}_0\subseteq \LieAlgd_{B/\KK}$, note that $L_M(\mathbb{S})\simeq \Lie^\pi_{\Delta, B/\KK}(0\colon M^\vee\to T_{B/\KK})$ is the free partition Lie algebroid generated by the zero map from $M^\vee$. By  \Cref{prop:pla affine}, this means that there are natural equivalences
$$
L_M(\mathbb{S})\simeq \Lie^\pi_{\Delta, B/\KK}(0\colon M^\vee\to T_{B/\KK})\simeq \mathfrak{D}(B\oplus M)
$$
for all $M\in \Coh_{B, \geq 0}$. It follows that $\cat{B}_0\subseteq \LieAlgd_{B/\KK}$ is the smallest full subcategory containing the initial partition Lie algebroid $0\simeq \mathfrak{D}$ which is closed under pushouts along the maps $\mathfrak{D}(B\oplus M[1])\to \mathfrak{D}(B)$ for all $M\in \Coh_{B, \geq 0}$. Corollary \ref{cor:art small} and  \Cref{preservation_of_pullbacks} now imply that there is a commuting diagram
$$\begin{tikzcd}[row sep=1.5pc, column sep=3pc]
\mm{Der}^{\coh}_{B/\KK, \geq 0}\arrow[d, "\sqz"{swap}]\arrow[r, "(-)^\vee", "\sim"{swap}] & \cat{B}_0^{\mm{lin}, \op} \arrow[r, hookrightarrow]\arrow[d] & \big(\QC^\vee_{B/T_{B/\KK}[1]}\big)^{\op}\arrow[d, "\Lie^\pi_{\Delta, B/\KK}"]\\
\Art_{\KK/B}\arrow[r, "\mathfrak{D}", "\sim"{swap}] & \cat{B}^{\op}_0\arrow[r, hookrightarrow] & (\LieAlgd_{B/\KK})^{\op}
\end{tikzcd}$$
where equivalence $\mf{D}$ sends each pullback square \eqref{diag:sqz art} to the opposite of a pushout square of the form \eqref{diag:cell attachment}. Using this,  \Cref{thm:lurie} then implies that $\Psi\colon \cat{LieAlgd}^{\Delta/\KK}_{\pi, \et}\rt \FMP_{/\KK}$ restricts to an equivalence $\LieAlgd_{B/\KK}\simeq \FMP_{B/\KK}$ on fibres with the desired properties.
\end{proof}
\begin{remark}
Combining the equivalence of  \Cref{thm:def functor vs Lie} with the adjoint pair from Example \ref{ex:corep fmp}, we obtain an adjoint pair
$$\begin{tikzcd}
C^*\colon \LieAlgd_{B/\KK} \arrow[r, yshift=1ex] & \DAlg^{\op}_{\KK/B} \cocolon\mathfrak{D} \arrow[l, yshift=-1ex]
\end{tikzcd}$$
where $\mathfrak{D}(B')=T_{B/\mm{Spf}(B')}$ with the partition Lie algebroid structure arising from  \Cref{thm:def functor vs Lie}. Example \ref{ex:tangent fibre corep} implies that the underlying object of $\mathfrak{D}(B')$ in $\QC^\vee_{B/T_{B/\KK}[1]}$ is naturally equivalent to $L_{B/B'}^\vee[1]\to L_{B/\KK}^\vee[1]=T_{B/\KK}[1]$.

The right adjoint $C^*$ can be thought of as a version of the \emph{Chevalley--Eilenberg complex} for partition Lie algebroids. Explicitly, $C^*(\mf{g})$ is the derived algebra of functions on the formal moduli problem classified by $\mf{g}$. One can now check that the adjoint pair $(C^*, \mathfrak{D})$ also defines a deformation theory in the sense of \cite[Definition 12.3.3.2]{SAG}.
\end{remark}
\begin{remark}\label{rem:lie vs fmp global}
Suppose that $X$ is a locally coherent qcqs derived scheme. By Remark \ref{rem:pla scheme descent}, the $\infty$-category of partition Lie algebroids on $X$ can be identified with the $\infty$-category of dotted sections
$$\begin{tikzcd}
\mm{Open}^\mm{aff}(X)^{\op}\arrow[rd, "\mc{O}"{swap}]\arrow[r, dotted, "\mf{g}"] & \cat{LieAlgd}^{\Delta/\KK}_{\pi, \et}\arrow[d, "p^\vee"]\arrow[r, "\sim"] & \FMP_{/\KK}\arrow[ld]\\
 & \SCR_{\KK}^{\coh, \et}
\end{tikzcd}$$
sending each map to a cocartesian arrow. Using  \Cref{thm:def functor vs Lie} and unravelling Construction \ref{con:fmp global}, one sees that the $\infty$-category of partition Lie algebroids on $X$ is equivalent to the $\infty$-category of functors $F\colon \Art_{\KK}\times_{\SCR_{\KK}^{\coh, \et}} \mm{Open}^\mm{aff}(X)^{\op}\rt \sS$ satisfying the following two properties:
\begin{enumerate}
\item For each affine open $U\subseteq X$, $F$ restricts to a formal moduli problem $F_U\colon \Art_{\KK/\mc{O}(U)}\rt \sS$.
\item For each inclusion of affine opens $V\subseteq U\subseteq X$, $F_V$ is the initial formal moduli problem equipped with a map from $F_U$ to its restriction along $\Art_{\KK/\mc{O}(U)}\to \Art_{\KK/\mc{O}(V)}$.
\end{enumerate}
\end{remark}

\subsection{Underlying partition Lie algebra}\label{sec:underlying lie}
Suppose that $\KK$ is coherent and that $B$ is a coherent animated $\KK$-algebra. We will use  \Cref{thm:def functor vs Lie} to show that the underlying pro-coherent $\KK$-module of a partition Lie algebroid $\mf{g}$ over $B$ can be endowed with a ($\KK$-linear) partition Lie algebra structure. To this end, we begin  by recalling the following properties of the $\infty$-categories $\SCR_A$ of animated $A$-algebras.
\begin{proposition}\label{prop:deforming animated algebras}
 
Given a cartesian square of animated rings on the left in which $f\colon A_0\to A_{01}$ induces a surjection on $\pi_0$, the right square of $\infty$-categories is cartesian as well:
$$\begin{tikzcd}
A\arrow[r, "g'"]\arrow[d, "f'"{swap}] & A_0\arrow[d, "f"] \\
A_1\arrow[r, "g"] & A_{01}
\end{tikzcd} \qquad \qquad\qquad  \begin{tikzcd} \SCR_A\arrow[r, "g'^*"]\arrow[d, "f'^*"{swap}] & \SCR_{A_0}\arrow[d, "f^*"]\\
\SCR_{A_1}\arrow[r, "g^*"] & \SCR_{A_{12}}.
\end{tikzcd}$$
 
\end{proposition}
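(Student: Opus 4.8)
The plan is to deduce this from the corresponding gluing statement for connective modules, which was already invoked in the proof of \Cref{lem:cot-sqz etale base change}: since $f\colon A_0\to A_{01}$ induces a surjection on $\pi_0$ and the left square is cartesian, the natural functor $\Mod_{A,\geq 0}\to \Mod_{A_0,\geq 0}\times_{\Mod_{A_{01},\geq 0}}\Mod_{A_1,\geq 0}$ is an equivalence by \cite[Theorem 16.2.0.2]{SAG}. Now recall (cf.\ \Cref{sec:deralg}) that for any animated ring $A'$ the forgetful functor $\SCR_{A'}\to \Mod_{A',\geq 0}$ is monadic, with monad the derived symmetric algebra $\LSym_{A'}$, and that base change of animated algebras along a map $\phi\colon A'\to A''$ commutes with these forgetful functors and with the free functors, the latter via the natural equivalence $\phi^*\LSym_{A'}(M)\simeq \LSym_{A''}(A''\otimes_{A'}M)$. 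Consequently the three monadic adjunctions attached to $A_0$, $A_1$ and $A_{01}$ assemble, together with the base-change functors, into a diagram over the cospan $A_0\to A_{01}\leftarrow A_1$ valued in the $\infty$-category of left adjoints and right-adjointable squares in the sense of \cite[Definition 4.7.4.16]{HA}, exactly as in the proof of \Cref{thm:pla scheme}.

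Taking the limit of this diagram, \cite[Proposition 4.7.4.19]{HA} produces an adjunction whose right adjoint is the forgetful functor $U\colon \SCR_{A_0}\times_{\SCR_{A_{01}}}\SCR_{A_1}\to \Mod_{A_0,\geq 0}\times_{\Mod_{A_{01},\geq 0}}\Mod_{A_1,\geq 0}\simeq \Mod_{A,\geq 0}$ and whose left adjoint is the limit of the free functors. I would then check that $U$ is monadic: it is conservative because equivalences in a fibre product of $\infty$-categories are detected on each factor, and it preserves geometric realisations of $U$-split simplicial objects because these colimits are sifted, hence computed factorwise in the fibre product (the transition functors $f^*$ and $g^*$ are left adjoints) and preserved by each $\SCR_{A_i}\to \Mod_{A_i,\geq 0}$; so the Barr--Beck--Lurie theorem \cite[Theorem 4.7.3.5]{HA} applies. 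The associated monad is the endofunctor of $\Mod_{A,\geq 0}$ which, under the above equivalence, sends $M$ (with factors $A_i\otimes_A M$) to the object with factors $\LSym_{A_i}(A_i\otimes_A M)$; by the base-change formula for derived symmetric powers this is precisely the image of $\LSym_A(M)$, so the monad is identified with $\LSym_A$. Hence $\SCR_{A_0}\times_{\SCR_{A_{01}}}\SCR_{A_1}\simeq \Alg_{\LSym_A}(\Mod_{A,\geq 0})\simeq \SCR_A$, and unwinding the constructions this equivalence is inverse to the comparison functor $(g'^*, f'^*)$, which proves the proposition.

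The main obstacle is the verification that this limit adjunction is again monadic, i.e.\ the two Barr--Beck--Lurie hypotheses above; what makes it work is that the transition functors defining the fibre products on both the module and the algebra side are colimit-preserving (being base-change functors), so that sifted colimits in these fibre products are computed factorwise. A secondary point is the upgrade of the identification $\lim_i\LSym_{A_i}\simeq\LSym_A$ from an equivalence of endofunctors to one of monads, which is a naturality check that reduces to the monoidality of the base-change equivalences $\phi^*\LSym_{A'}(-)\simeq\LSym_{A''}(A''\otimes_{A'}-)$. Everything is compatible with the imposed connectivity, since $\LSym$ preserves connective modules; the identical argument, starting from the appropriate module-level gluing, yields the analogous statements for $\DAlg$ and for the (pro-)coherent sheaf categories used elsewhere in the paper.
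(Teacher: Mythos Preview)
Your argument is correct, but the paper takes a shorter route that sidesteps both of the obstacles you flag. Rather than going through monadicity and monad identification, the paper simply writes down the right adjoint to $(g'^*,f'^*)$ explicitly: it sends a matching triple $(B_0,B_{01},B_1)$ to the fibre product $g'_*B_0\times_{h_*B_{01}}f'_*B_1$ in $\SCR_A$ (respectively in $\Mod_{A,\geq 0}$). The key observation is then that the forgetful functors $\SCR_{(-)}\to\Mod_{(-),\geq 0}$ commute with \emph{both} the left adjoint $(g'^*,f'^*)$ and this right adjoint, simply because fibre products of animated algebras are computed on underlying modules. Since the forgetful functor is conservative, the unit and counit of the top adjunction are equivalences as soon as those of the bottom adjunction are, and the latter is \cite[Theorem 16.2.0.2]{SAG}.

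So the difference is that the paper avoids Barr--Beck--Lurie entirely and never needs to identify the limit monad as $\LSym_A$: once you have the explicit right adjoint, conservativity of the forgetful functor does all the work. Your monadicity argument is a perfectly good alternative and is the more ``generic'' strategy (it would apply whenever you know the module-level statement and want to lift it to algebras over compatible monads), but here the paper's shortcut is both shorter and dodges the coherence issue you identify as your secondary obstacle.
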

\begin{proof}
Write $h\colon A\to A_{01}$ for the composite map and consider the following diagram of $\infty$-categories:
$$\begin{tikzcd}[column sep=3.5pc]
\SCR_A\arrow[r, yshift=1ex, "{(g'^*, f'^*)}"]\arrow[d, "\forget"{swap}] & \SCR_{A_0}\times_{\SCR_{A_{01}}}\SCR_{A_1}\arrow[d, "\forget"]\arrow[l, yshift=-1ex]\\
\Mod_{A, \geq 0}\arrow[r, yshift=1ex, "{(g'^*, f'^*)}"] & \Mod_{A_0, \geq 0}\times_{\Mod_{A_{01}, \geq 0}}\Mod_{A_1, \geq 0}.\arrow[l, yshift=-1ex]
\end{tikzcd}$$
Here the rows are both adjoint pairs: the left adjoint $(g'^*, f'^*)$ sends an animated $A$-algebra (resp.\ $A$-module) $B$ to the matching triple of objects $(A_0\otimes_A B, \ A_{01}\otimes_A B, \ A_1\otimes_A B)$ and the right adjoint sends a matching triple of animated algebras (resp.\ modules) $(B_0, B_{01}, B_1)$ to the fibre product $g'_*B_0\times_{h_*B_{01}} f'_*B_1$ of $A$-algebras (resp.\ $A$-modules). In particular, the vertical forgetful functors commute both with the left adjoints and with the right adjoints. Since the forgetful functor detects equivalences, the unit and counit of the top adjunction are an equivalence if the unit and counit of the bottom adjunction are equivalences. This follows from \cite[Theorem 16.2.0.2]{SAG}, so that the top adjunction is an equivalence. 
\end{proof}
\begin{definition}\label{def:B-defs}
Suppose that $\KK$ is coherent and let $B\in \SCR_\KK$ be a $\KK$-algebra. We will write $\cat{Def}_{B/\KK}$ for the full subcategory of $\Fun(\Delta^1, \SCR_{\KK})_{/\KK\to B}$ spanned by those commuting squares
$$\begin{tikzcd}
A\arrow[r]\arrow[d] & \KK\arrow[d]\\
B_A\arrow[r] & B
\end{tikzcd}$$
that are cocartesian and in which $A\to \KK$ is an Artinian extension. We will typically write $A\to B_A$ for an object in $\cat{Def}_{B/\KK}$, omitting the data of the cocartesian square. 
\end{definition}
There is a canonical functor $\cat{Def}_{B/\KK}\rt \Art_{\KK/\KK}$ sending each such square to $A\to \KK$. We then have the following corollary of  \Cref{prop:deforming animated algebras}:
\begin{lemma}\label{lem:def FMP}
Suppose that $\KK$ is coherent and let $B$ be an animated $\KK$-algebra. Then the functor $\cat{Def}_{B/\KK}\to \Art_{\KK/\KK}$ is a left fibration, classified by a formal moduli problem $\mm{def}_{B}\colon \Art_{\KK/\KK}\to \sS$.
\end{lemma}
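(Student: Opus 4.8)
The plan is to first describe $\cat{Def}_{B/\KK}$ concretely, then recognise the projection to $\Art_{\KK/\KK}$ as a cocartesian fibration with $\infty$-groupoid fibres — hence a left fibration — and finally verify the formal moduli problem axioms for the classifying functor using  \Cref{prop:deforming animated algebras}. For the first point: a commuting square as in  \Cref{def:B-defs} that is cocartesian is the same datum as a span $\KK\leftarrow A\to B_A$ together with an equivalence $\phi\colon \KK\otimes_A B_A\xrightarrow{\sim}B$ of animated $\KK$-algebras; so an object of $\cat{Def}_{B/\KK}$ is a triple $(A, B_A, \phi)$ with $A\in\Art_{\KK/\KK}$, $B_A\in\SCR_A$ and $\phi$ as above, and the functor to $\Art_{\KK/\KK}$ records $(A\to\KK)$. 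This functor is the restriction of $\mm{ev}_0\colon \Fun(\Delta^1,\SCR_\KK)_{/(\KK\to B)}\to \SCR_{\KK/\KK}$ to the full subcategory of cocartesian squares whose top edge is Artinian. Since $\SCR_\KK$ admits pushouts, $\mm{ev}_0$ and its slice are cocartesian fibrations, and this full subcategory is closed under the corresponding cocartesian lifts — the lift of $\bar g\colon A\to A'$ at $(A,B_A,\phi)$ is base change $(A', A'\otimes_A B_A, \phi)$, again a cocartesian square over the Artinian extension $A'\to\KK$. Hence $\cat{Def}_{B/\KK}\to\Art_{\KK/\KK}$ is a cocartesian fibration.

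Next I would show that the fibres are $\infty$-groupoids, which upgrades this to a left fibration (any morphism in a cocartesian fibration factors through a cocartesian lift followed by a morphism in a fibre, so if the fibres are groupoids then every morphism is cocartesian). A morphism of $\cat{Def}_{B/\KK}$ lying over $\id_A$ is a map of $A$-algebras $B_A\to B_A'$ compatible with the maps to $B$ and inducing, after $\KK\otimes_A(-)$, the identity of $B$ via $\phi,\phi'$. Because $A\to\KK$ is an Artinian — in particular almost finitely presented and nilpotent — extension, $I=\ker(\pi_0 A\to\pi_0\KK)$ is a finitely generated nilpotent ideal, so the connective cofibre $C$ of $B_A\to B_A'$ over $A$ is automatically derived $I$-complete and satisfies $\KK\otimes_A C\simeq 0$, whence $C\simeq 0$ by  \Cref{lem:nakayama}. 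Unstraightening the resulting left fibration yields $\mm{def}_B\colon\Art_{\KK/\KK}\to\sS$, with $\mm{def}_B(A)$ the space of deformations $(B_A,\phi)$ of $B$ over $A$; equivalently, $\mm{def}_B(A)$ is the fibre over $B$ of the base change functor $\KK\otimes_A(-)\colon (\SCR_A)^{\simeq}\to (\SCR_\KK)^{\simeq}$ on maximal subgroupoids.

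Finally I would check that $\mm{def}_B$ is a formal moduli problem. Over the terminal object $\id\colon\KK\to\KK$ a deformation is just an equivalence $B_A\xrightarrow{\sim}B$, so $\mm{def}_B(\KK)\simeq\ast$. Given a pullback square in $\Art_{\KK/\KK}$ with $A_0\to A_{01}$ a nilpotent extension,  \Cref{cor:art small} identifies $A=A_0\times_{A_{01}}A_1$ with the pullback computed in $\SCR_\KK$, and  \Cref{prop:deforming animated algebras} (applicable since $A_0\to A_{01}$ is $\pi_0$-surjective) gives $\SCR_A\simeq\SCR_{A_0}\times_{\SCR_{A_{01}}}\SCR_{A_1}$. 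Since $A\to\KK$ factors through both $A_0$ and $A_1$, under this equivalence the base change $\KK\otimes_A(-)$ is the limit of the base changes $\KK\otimes_{A_i}(-)$ along the two legs; passing to maximal subgroupoids (which preserve limits) and taking fibres over $B$ then yields $\mm{def}_B(A)\simeq\mm{def}_B(A_0)\times_{\mm{def}_B(A_{01})}\mm{def}_B(A_1)$, as required.

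The main obstacle is the second step — showing the fibres are $\infty$-groupoids, i.e.\ promoting the cocartesian fibration to a left fibration. This is precisely where the derived Nakayama lemma enters, and it requires the observations that the relevant cofibre is connective and that $I$-completeness is automatic because $I$ is nilpotent. The remaining delicate point is the bookkeeping in the last step: identifying $\mm{def}_B(A)$ with a fibre of base change on $\SCR_A$ in a way compatible with the equivalence of  \Cref{prop:deforming animated algebras}, so that the formal moduli condition falls out of that proposition.
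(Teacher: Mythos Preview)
Your proof is correct and reaches the same conclusion, but the route you take to show that the fibres are $\infty$-groupoids differs from the paper's. You argue directly via the derived Nakayama lemma: a morphism $B_A\to B_A'$ in the fibre over $A$ has connective cofibre $C$ with $\KK\otimes_A C\simeq 0$, and since $I$ is nilpotent every $A$-module is $I$-complete, so \Cref{lem:nakayama} gives $C\simeq 0$. The paper instead first observes that the cocartesian fibration classifies a functor $\mm{def}_B\colon \Art_{\KK/\KK}\to\Cat_\infty$ satisfying the FMP pullback condition \emph{at the categorical level} (again via \Cref{prop:deforming animated algebras}), and then deduces that the values are spaces by Artinian induction: $\mm{def}_B(\KK\oplus M)\simeq\Omega\,\mm{def}_B(\KK\oplus M[1])$ is automatically a space (loop spaces in $\Cat_\infty$ are spaces), and every Artinian extension is an iterated square-zero extension by coherent $\KK$-modules (\Cref{cor:art small}), so the FMP pullback property propagates the space condition to all of $\Art_{\KK/\KK}$. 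Your approach is more hands-on and makes the groupoid property logically independent of the FMP verification; the paper's is slicker in that it extracts both conclusions from the single input of \Cref{prop:deforming animated algebras}, avoiding Nakayama entirely. Note also that once you have established that the fibres are spaces, the ``passing to maximal subgroupoids'' step in your FMP check is redundant---a pullback of spaces computed in $\Cat_\infty$ is already a pullback in $\sS$.
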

\begin{proof}
The functor $\cat{Def}_{B/\KK}\to \Art_{\KK/\KK}$ is a cocartesian fibration, classified by $\mm{def}_{B}\colon \Art_{\KK/\KK}\to \Cat_\infty$ sending $A\mapsto \SCR_A\times_{\SCR_{\KK}} \{B\}$.  \Cref{prop:deforming animated algebras} implies that $\mm{def}_{B}$ is a formal moduli problem. In particular, $\mm{def}_{B}(\KK\oplus M)\simeq \Omega\mm{def}_{B}(\KK\oplus M[1])$ is a space for every coherent $\KK$-module $M$. By Artinian induction, this implies that $\mm{def}_{B}(A)$ is a space for all $A$, so that the result follows.
\end{proof}
\begin{notation}
Write $\Gamma(T_{B/\KK}[1])\in \LieAlg_{\KK}$ for the partition Lie algebra classified by the formal moduli problem $\mm{def}_B$. 
\end{notation}
One can identify the complex underlying $\Gamma(T_{B/\KK}[1])$ as follows:
\begin{construction}\label{con:sqz defs}
Given a map of coherent animated rings $f\colon \KK\to B$,  consider the functor $f^*\colon \Coh_{\KK, \geq 0}\to \Mod_B$ and define
$$
\cat{Def}_{B/\KK}^{\sqz} = \Coh_{\KK, \geq 0}\times_{\Mod_{B}} (\Mod_B)_{L_{B/\KK}[-1]/}
$$
to be the $\infty$-category of tuples $(M, \alpha)$ where $M\in \Coh_{\KK, \geq 0}$ and $\alpha\colon L_{B/\KK}[-1]\to f^*M$ is a derivation. This fits into a commuting square of $\infty$-categories
\begin{equation}\label{diag:linear defs}\begin{tikzcd}
\cat{Def}_{B/\KK}^\mm{sqz}\arrow[d]\arrow[r, "\sqz"] & \cat{Def}_{B/\KK}\arrow[d]\\
\Coh_{\KK, \geq 0}\arrow[r, "\triv"] & \Art_{\KK/\KK}
\end{tikzcd}\end{equation}
in which the vertical functors are left fibrations. The bottom horizontal functor sends $M\in \Coh_{\KK, \geq 0}$ to the trivial square zero extension $\KK\oplus M$ and the top horizontal map sends $(M, \alpha)$ to $(\KK\oplus M\to B\oplus_{\alpha} f^*M)$. This indeed defines a deformation of $B$ over $\KK\oplus M$: it is obtained as the pullback of the diagram of deformations 
$$\begin{tikzcd}
(\KK\to B)\arrow[r, "\alpha"] & (\KK\oplus M[1]\to B\oplus f^*M[1])& \arrow[l, "0"{swap}] (\KK\to B)
\end{tikzcd}$$
where the left map arises from the derivation $\alpha\colon B\to B\oplus f^*M[1]$ and the right map from the zero derivation. By the proof of  \Cref{prop:deforming animated algebras}, the fibre product then defines a deformation of $B$ as well.
\end{construction}
\begin{lemma}
Let $f\colon \KK\to B$ be a map of coherent animated rings. Then there is an equivalence of pro-coherent $\KK$-modules $\Gamma(T_{B/\KK}[1])\simeq f_*T_{B/\KK}[1]$.
\end{lemma}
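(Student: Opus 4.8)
The plan is to extract the statement from \Cref{thm:def functor vs Lie} and Construction~\ref{con:sqz defs}. Both $\Gamma(T_{B/\KK}[1])$ and $f_*T_{B/\KK}[1]$ are objects of $\QC^\vee_\KK$, and since $\QC^\vee_\KK$ embeds fully faithfully into $\Fun(\Coh_{\KK,\geq 0},\sS)$ by sending a pro-coherent module $P$ to $M\mapsto\Map_{\QC^\vee_\KK}(M^\vee,P)$ (Example~\ref{ex:def functor linear}, with $\KK$ in place of $B$ and $L_{\KK/\KK}\simeq 0$), I would reduce to comparing the two functors $\Coh_{\KK,\geq 0}\to\sS$ that they classify. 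For $\Gamma(T_{B/\KK}[1])$ I would run \Cref{thm:def functor vs Lie} with $\KK$ in place of $B$: since $L_{\KK/\KK}\simeq 0$ we have $T_{\KK/\KK}[1]\simeq 0$ and $\cat{Der}^{\coh}_{\KK/\KK,\geq 0}\simeq\Coh_{\KK,\geq 0}$, and the commuting square in that theorem identifies the underlying pro-coherent $\KK$-module of the partition Lie algebra corresponding to the formal moduli problem $\mm{def}_B$ with the functor $M\mapsto T_{\KK/\mm{def}_B}[1](M)=\mm{def}_B(\KK\oplus M)$.

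Next I would identify the functor classifying $f_*T_{B/\KK}[1]$. Since $T_{B/\KK}[1]=(L_{B/\KK}[-1])^\vee$, as an object of $\QC^\vee_B$ it is classified by the functor $I\mapsto\Map_B(L_{B/\KK}[-1],I)$ on $\Coh_{B,\geq 0}$ (\Cref{def:pro-coh dual}). Because $B$ is connective, base change along $f$ preserves connective coherent modules, so there is a functor $f^*\colon\Coh_{\KK,\geq 0}\to\Coh_{B,\geq 0}$; and under the identification $\QC^\vee_{(-)}\simeq\mm{Ind}(\Coh_{(-)}^{\op})$ the pushforward $f_*\colon\QC^\vee_B\to\QC^\vee_\KK$ is computed by restriction along $f^*$. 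Hence $f_*T_{B/\KK}[1]$ is classified by $M\mapsto\Map_B(L_{B/\KK}[-1],f^*M)$.

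It then remains to construct a natural equivalence $\mm{def}_B(\KK\oplus M)\simeq\Map_B(L_{B/\KK}[-1],f^*M)$, and for this I would use Construction~\ref{con:sqz defs}. By \Cref{lem:def FMP}, the left fibration $\cat{Def}_{B/\KK}\to\Art_{\KK/\KK}$ is classified by $\mm{def}_B$, so pulling it back along $\triv\colon\Coh_{\KK,\geq 0}\to\Art_{\KK/\KK}$, $M\mapsto\KK\oplus M$, yields the left fibration over $\Coh_{\KK,\geq 0}$ classified by $M\mapsto\mm{def}_B(\KK\oplus M)$. On the other hand, the coslice projection $(\Mod_B)_{L_{B/\KK}[-1]/}\to\Mod_B$ is a left fibration classified by $\Map_B(L_{B/\KK}[-1],-)$, so the description $\cat{Def}^{\sqz}_{B/\KK}=\Coh_{\KK,\geq 0}\times_{\Mod_B}(\Mod_B)_{L_{B/\KK}[-1]/}$ presents $\cat{Def}^{\sqz}_{B/\KK}\to\Coh_{\KK,\geq 0}$ as the left fibration classified by $M\mapsto\Map_B(L_{B/\KK}[-1],f^*M)$. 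The functor $\sqz$ of the square \eqref{diag:linear defs} is a map of left fibrations over $\triv$, hence a natural transformation between these two functors, and it remains to check it is an equivalence; this can be done fibrewise, where over a fixed $M$ it is the map sending a derivation $\alpha\colon L_{B/\KK}[-1]\to f^*M$ to the deformation $(\KK\oplus M\to B\oplus_\alpha f^*M)$. Since $\KK\oplus M\to\KK$ is the trivial square-zero extension, the fundamental theorem of deformation theory (as in \cite[\S 17]{SAG}; note \Cref{prop:deforming animated algebras} identifies $\mm{def}_B(\KK\oplus M)$ with the space of $(\KK\oplus M)$-algebras deforming $B$) shows that every such deformation is a square-zero extension of $B$ by $f^*M$ and that this assignment is an equivalence of spaces. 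Chaining the three identifications gives $\Gamma(T_{B/\KK}[1])\simeq f_*T_{B/\KK}[1]$ in $\QC^\vee_\KK$.

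The hardest step will be the fibrewise equivalence at the end of the last paragraph — verifying that $(\KK\oplus M)$-algebra deformations of $B$ are exactly the square-zero extensions of $B$ by $f^*M$, naturally in $M$ — since this is the one place genuine deformation-theoretic input enters. A secondary point demanding care is the pro-coherent bookkeeping in the second paragraph: the shift conventions in the description of $(L_{B/\KK}[-1])^\vee$ and the identification of $f_*$ on pro-coherent sheaves with restriction along $f^*$; these should follow routinely from the appendix on pro-coherent sheaves but must be checked.
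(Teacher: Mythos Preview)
Your overall strategy coincides with the paper's: both arguments reduce to showing that the square \eqref{diag:linear defs} is cartesian, after identifying the functor classified by the left fibration $\cat{Def}^{\sqz}_{B/\KK}\to\Coh_{\KK,\geq 0}$ with $f_*T_{B/\KK}[1]$ and the pullback $P=\cat{Def}_{B/\KK}\times_{\Art_{\KK/\KK}}\Coh_{\KK,\geq 0}$ with the functor underlying $\Gamma(T_{B/\KK}[1])$. The difference lies entirely in how the cartesianness of \eqref{diag:linear defs} is verified. You aim for a direct fibrewise equivalence $\Map_B(L_{B/\KK}[-1],f^*M)\simeq\mm{def}_B(\KK\oplus M)$ by invoking a ``fundamental theorem of deformation theory''. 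The paper instead observes that both functors $\Coh_{\KK,\geq 0}\to\sS$ are reduced excisive, so it suffices to check that the map induces an equivalence on \emph{loop spaces at the canonical basepoint}; there the map becomes
\[
\Map_B(L_{B/\KK}, f^*M)\longrightarrow \Map_{\SCR_{\KK\oplus M/B}}(B\oplus f^*M, B\oplus f^*M)\simeq\Map_{\SCR_{\KK/B}}(B, B\oplus f^*M),
\]
which is an equivalence by the very definition of the cotangent complex. This neatly resolves precisely the step you flagged as hardest: no classification of $(\KK\oplus M)$-deformations as square-zero extensions is needed, because one more loop brings you straight to the universal property of $L_{B/\KK}$.

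A small technical remark on your second paragraph: without a finite Tor-amplitude hypothesis on $f$, base change need not send $\Coh_{\KK,\geq 0}$ into $\Coh_{B,\geq 0}$ (the result may fail to be bounded), so $f_*$ on pro-coherent modules is not literally ``restriction along $f^*\colon\Coh_{\KK,\geq 0}\to\Coh_{B,\geq 0}$''. Your computation of $(f_*T_{B/\KK}[1])(M)\simeq\Map_B(L_{B/\KK}[-1],f^*M)$ is nonetheless correct; it follows from $f^*(M^\vee)\simeq(f^*M)^\vee$ in $\QC^\vee_B$ (valid since $M$ is almost perfect) together with \Cref{lem:slice duality}, and the paper obtains it the same way via Example~\ref{ex:pro-coh dual} and \Cref{def:procoh functoriality}.
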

\begin{proof}
By construction, the left fibration $\cat{Def}^{\sqz}_{B/\KK}\to \Coh_{\KK, \geq 0}$ is classified by the functor $\Coh_{\KK, \geq 0}\to \sS$ sending $M$ to the space of $B$-linear maps $\Map_{B}(L_{B/\KK}[-1], f^*M)$. By  \Cref{lem:pro-coh over coherent ring}, this functor is classified by a pro-coherent $\KK$-module; unravelling the definitions using Example \ref{ex:pro-coh dual} and  \Cref{def:procoh functoriality}, one sees that this pro-coherent $\KK$-module is precisely $f_*T_{B/\KK}[1]$.

It now suffices to verify that this pro-coherent $\KK$-module is equivalent to the module underlying the partition Lie algebra $\Gamma(T_{B/\KK}[1])$. In terms of formal moduli problems, this is equivalent to the square \eqref{diag:linear defs} being cartesian. Since the base change
$$\begin{tikzcd}
P=\cat{Def}_{B/\KK}\times_{\Art_{\KK/\KK}} \Coh_{\KK, \geq 0}\arrow[r] & \Coh_{\KK, \geq 0}
\end{tikzcd}$$
is a left fibration classified by a reduced excisive functor $\Coh_{\KK, \geq 0}\to \sS$, it suffices to verify the following: for each $M\in\Coh_{\KK, \geq 0}$, the induced map on fibres $\Map_B(L_{B/\KK}[-1], f^*M)\to P_M$ induces an equivalence on loop spaces at the canonical (zero) basepoint. This map of loop spaces can be identified with the canonical map
$$\begin{tikzcd}
\Map_B(L_{B/\KK}, f^*M)\arrow[r] & \Map_{\SCR_{\KK\oplus M/B}}\big(B\oplus f^*M, B\oplus f^*M\big)\simeq \Map_{\SCR_{\KK/B}}(B, B\oplus f^*M)
\end{tikzcd}$$
which is an equivalence by definition of the cotangent complex.
\end{proof}

\begin{remark}\label{rem:fmp slice}
Suppose that $\KK$ is coherent and let $\mf{g}\in \LieAlg_{\KK}$ be a partition Lie algebra. Write $X\colon \Art_{\KK/\KK}\to \sS$ for the formal moduli problem classified by $\mf{g}$ and $\int X\to \Art_{\KK/\KK}$ for its unstraightening. We then have fully faithful inclusions
$$\begin{tikzcd}
(\LieAlg_{\KK})_{/\mf{g}}\arrow[r, "\simeq"] & (\FMP_{\KK/\KK})_{/X}\arrow[r, hookrightarrow] & \Fun(\Art_{\KK/\KK}, \sS)_{/X}\arrow[r, "\simeq"] & \Fun(\int X, \sS).
\end{tikzcd}$$
The last equivalence follows from the fact that, under unstraightening, both $\infty$-categories are equivalent to the $\infty$ left fibrations over $\int X$. Unravelling the definitions, one sees that the essential image of the above inclusion consists of those functors $\int X\to \sS$ that preserve the terminal object, as well as every pullback square in $\int X$ whose image in $\Art_{\KK/\KK}$ is as in  \Cref{def:formal moduli}.
\end{remark}
\begin{construction}
Let $B$ be a coherent $\KK$-algebra. Given an object $A\to B_A$ in $\cat{Def}_{B/\KK}$, the map $B_A\to B$ is an almost Artinian extension. Indeed, it is the base change of the almost finitely presented map $A\to \KK$, hence almost finitely presented, and the kernel of $\pi_0(B_A)\to \pi_0(B)$ is generated by the images of the (nilpotent) generators of the kernel of $\pi_0(A)\to \pi_0(\KK)$. We thus obtain a functor $t\colon \cat{Def}_{B/\KK}\to \AArt_{\KK/B}$ given by $t(A\to B_A)=B_A$. Now suppose that $A_0\to A_{01}$ is a nilpotent extension and consider a pullback square in $\cat{Def}_{B/\KK}$
$$\begin{tikzcd}
(A\to B_A)\arrow[r]\arrow[d] & (A_0\to B_{A_0})\arrow[d]\\
(A_1\to B_{A_1})\arrow[r] & (A_{01}\to B_{A_{01}}).
\end{tikzcd}$$
Then $B_{A_0}\to B_{A_{01}}$ is a nilpotent extension as well, and the proof of  \Cref{prop:deforming animated algebras} shows that $B_A\simeq B_{A_0}\times_{B_{A_{01}}} B_{A_1}$. It follows that restriction along $t$ preserves formal moduli problems. Using Remark \ref{rem:fmp slice}, we therefore obtain a functor
$$\begin{tikzcd}
\Gamma\colon \LieAlgd_{B/\KK}\arrow[r, "\sim"] & \FMP_{B/\KK}\arrow[r, "t^*"] & \big(\FMP_{\KK/\KK}\big)_{/\mm{def}_B}\arrow[r, "\sim"] & \big(\LieAlg_\KK\big)_{/\Gamma(T_{B/\KK}[1])}.
\end{tikzcd}$$
\end{construction}
\begin{proposition}
Let $f\colon \KK\to B$ be a map of coherent animated rings. Then there is a commuting diagram of right adjoint functors
\begin{equation}\label{diag:lie forget}\begin{tikzcd}
\LieAlgd_{B/\KK}\arrow[d, "\forget"{swap}] \arrow[r, "\Gamma"] & (\LieAlg_{\KK})_{/\Gamma(T_{B/\KK}[1])}\arrow[d]\\
(\QC^\vee_B)_{/T_{B/\KK}[1]}\arrow[r, "f_*"] & (\QC^\vee_\KK)_{/\Gamma(T_{B/\KK}[1])}.
\end{tikzcd}\end{equation}
In particular, $\Gamma(T_{B/\KK}[1])\simeq f_*(T_{B/\KK}[1])$ as pro-coherent $\KK$-modules.
\end{proposition}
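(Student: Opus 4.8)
The plan is to transport the entire square along the equivalence of \Cref{thm:def functor vs Lie}, applied over $B$ and over $\KK$. Under these equivalences the two vertical forgetful functors become the tangent--fibre functors $T_{B/-}[1]$ and $T_{\KK/-}[1]$ of \Cref{con:tangent}, landing in the full subcategories $(\QC^\vee_B)_{/T_{B/\KK}[1]}$ and $(\QC^\vee_\KK)_{/0}\simeq\QC^\vee_\KK$ of the relevant functor categories (cf.\ \Cref{ex:def functor linear}); and, unwinding the definition of $\Gamma$ together with \Cref{rem:fmp slice} and \Cref{lem:def FMP}, the top functor corresponds to $X\mapsto\widetilde X$, where $\widetilde X\in\FMP_{\KK/\KK}$ is classified by the composite of the unstraightening of $t^*X$ on $\cat{Def}_{B/\KK}$ with the left fibration $\cat{Def}_{B/\KK}\to\Art_{\KK/\KK}$ of $\mm{def}_B$, equipped with its evident map to $\mm{def}_B$. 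Since $\Gamma(T_{B/\KK}[1])\simeq f_*T_{B/\KK}[1]$ by the preceding lemma, the proposition amounts to a natural equivalence $T_{\KK/\widetilde X}[1]\simeq f_*\big(T_{B/X}[1]\big)$ in $(\QC^\vee_\KK)_{/f_*T_{B/\KK}[1]}$, with structure map equal to $f_*$ of the anchor.

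To produce this, I would compute the tangent fibre of $\widetilde X$, i.e.\ its restriction to trivial square--zero extensions $\KK\oplus M$ with $M\in\Coh_{\KK,\geq 0}$. By \Cref{prop:deforming animated algebras} (as used in \Cref{con:sqz defs}), a deformation of $B$ over $\KK\oplus M$ is exactly a square--zero extension $B\oplus_\alpha f^*M$ of $B$ classified by a derivation $\alpha\colon L_{B/\KK}[-1]\to f^*M$, with $f^*M$ a connective almost perfect $B$--module, so $\widetilde X(\KK\oplus M)$ is the total space of the family $\alpha\mapsto X(B\oplus_\alpha f^*M)$. Taking $X=\Psi_{\mf g}$ and using, from the proof of \Cref{preservation_of_pullbacks}, that $\mf{D}(B\oplus_\alpha f^*M)$ is the free partition Lie algebroid on $\alpha^\vee\colon(f^*M)^\vee\to T_{B/\KK}[1]$, we obtain
\[
\Psi_{\mf g}(B\oplus_\alpha f^*M)\ \simeq\ \Map_{(\QC^\vee_B)_{/T_{B/\KK}[1]}}\!\big(\,(f^*M)^\vee\xrightarrow{\ \alpha^\vee\ } T_{B/\KK}[1]\,,\ \ \mf g\xrightarrow{\ \rho_{\mf g}\ } T_{B/\KK}[1]\,\big).
\]
Forming the total space over all derivations $\alpha$ erases precisely the compatibility with the maps to $T_{B/\KK}[1]$, leaving $\Map_{\QC^\vee_B}\big((f^*M)^\vee,\mf g\big)\simeq\Map_{\QC^\vee_\KK}\big(M^\vee,f_*\mf g\big)$ by the adjunction $f^*\dashv f_*$ on pro--coherent sheaves. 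This identifies $T_{\KK/\widetilde\Psi_{\mf g}}[1]\simeq f_*\mf g$; chasing the canonical map from $\mf g$ to the terminal partition Lie algebroid $T_{B/\KK}[1]$ --- which $\Gamma$ carries to the structure map of $\widetilde\Psi_{\mf g}$ over $\mm{def}_B$, and which on underlying pro--coherent sheaves is the anchor $\rho_{\mf g}$ --- pins down the structure map as $f_*\rho_{\mf g}$.

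This comparison is natural in $\mf g$ since it is built entirely from natural transformations, and because a pro--coherent $\KK$--module is the same datum as a reduced excisive functor $\Coh_{\KK,\geq 0}^{\op}\to\sS$ preserving limits of almost eventually constant towers (\Cref{lem:pro-coh over coherent ring}), the level-wise equivalence over $\Coh_{\KK,\geq 0}$ promotes to an equivalence of underlying pro--coherent modules, functorially in $\mf g$. That all four functors are right adjoints is routine: the vertical forgetful functors are monadic (\Cref{prop:pla affine}, for $B$ and for $\KK$), $f_*$ is right adjoint to $f^*$, and $\Gamma$ is the composite of the equivalences of \Cref{thm:def functor vs Lie} with the restriction functor $t^*$, which preserves formal moduli problems (as established just before the statement) and hence is a right adjoint. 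The step I expect to be the main obstacle is the (co)cartesian--fibration bookkeeping in the second paragraph: one must verify that the unstraightening of $t^*\Psi_{\mf g}$ restricted to the trivial square--zero locus $\cat{Def}_{B/\KK}^{\sqz}\to\Coh_{\KK,\geq 0}$ of \Cref{con:sqz defs} is the left fibration of $M\mapsto\Map_{\QC^\vee_B}((f^*M)^\vee,\mf g)$, coherently in both $\mf g$ and $M$ and compatibly with the anchor. I would discharge this by first treating free partition Lie algebroids $\mf g=\mf{D}(B\oplus_\alpha N)$, where the identification is immediate from the definitions, and then extending to arbitrary $\mf g$ using that $(\QC^\vee_B)_{/T_{B/\KK}[1]}$ is the sifted--colimit completion of the dually almost perfect objects of non-positive Tor-amplitude (\Cref{prop:colimit completion slice}) and that both composite functors preserve sifted colimits; alternatively, one can bootstrap from the already-established linear square \eqref{diag:linear defs}.
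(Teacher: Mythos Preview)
Your computation is correct and your ``alternative'' at the very end is in fact exactly what the paper does. The paper's proof simply observes that there is a commuting square of source categories
\[
\begin{tikzcd}
\cat{Def}_{B/\KK}^{\sqz}\arrow[d]\arrow[r, "{t'}"] & \cAAPerf{B}{\KK}\arrow[d, "\sqz"]\\
\cat{Def}_{B/\KK}\arrow[r, "t"] & \AArt_{\KK/B}
\end{tikzcd}
\]
where $t'$ sends $(M,\alpha)$ to $(f^*M,\alpha)$, and then takes functor categories and restricts. Under the equivalences of \Cref{thm:def functor vs Lie}, \Cref{ex:def functor linear}, and \Cref{rem:fmp slice} this becomes the square \eqref{diag:lie forget}; the only remaining computation is that restriction along $t'$ corresponds to $f_*$ on pro-coherent modules, which is exactly the adjunction step you carry out. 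The advantage of this packaging is that it completely sidesteps the coherence bookkeeping you flag as the main obstacle: commutativity is built in at the level of the source square, so no bootstrapping from free objects is needed.

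A word of caution about your primary bootstrapping plan via sifted colimits: this requires both composites $\LieAlgd_{B/\KK}\to(\QC^\vee_\KK)_{/f_*T_{B/\KK}[1]}$ to preserve sifted colimits, but $f_*\colon\QC^\vee_B\to\QC^\vee_\KK$ need not preserve colimits when $f$ fails to have finite Tor-amplitude (cf.\ \Cref{lem:procoh base change}), and $\Gamma$, being a restriction functor $t^*$, is a right adjoint with no a priori colimit preservation. So this route would require additional justification, whereas your alternative --- the paper's route --- needs none.
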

\begin{proof}
Consider the commuting square of $\infty$-categories
$$\begin{tikzcd}
\cat{Def}_{B/\KK}^{\sqz}\arrow[d]\arrow[r, "{t'}"] & \cAAPerf{B}{\KK}\arrow[d]\\
\cat{Def}_{B/\KK}\arrow[r, "t"] & \AArt_{\KK/B}.
\end{tikzcd}$$
where the top functor $t'$ sends $(M, \alpha\colon L_{B/\KK}[-1]\to f^*M)$ to $(f^*M,\alpha)$. By restriction, this induces a commuting square of $\infty$-categories of formal moduli problems and right adjoint functors between them. Using the equivalences from Example \ref{ex:def functor linear},  \Cref{thm:def functor vs Lie} and Remark \ref{rem:fmp slice}, this square of $\infty$-categories of formal moduli problems is equivalent to a commuting square of the form \eqref{diag:lie forget}. Indeed, the functor $\Gamma$ is equivalent to restriction along $t$ by definition, and one readily verifies that restriction along the functor $t'$ sending $(M, \alpha)$ to $(f^*M, \alpha)$ corresponds to $f_*$ at the level of pro-coherent modules.
\end{proof}

\newpage 

\section{Formally integrating  partition Lie algebroids}\label{sec:formal integration}
Let  $X$ be  a locally coherent qcqs derived  scheme over a coherent ground ring $\KK$.
In this section, we will formally integrate partition Lie algebroids $$ \mathfrak{g} \rightarrow T_{X/\KK}[1]$$ on locally coherent qcqs derived schemes $X$ over animated rings $\KK$.
The result of our integration procedure will be \textit{formal moduli stacks} $$ X \rightarrow Y$$ under $X$, where $Y$ can be thought of as a formal leaf space. 
More precisely,  we define:
	\begin{definition}[Formal moduli stacks] \label{DefModuliStack}
	A \textit{formal moduli stack} under $X$ is a map of $\KK$-prestacks $$ X \longrightarrow Y $$
	satisfying the following conditions:
	\begin{enumerate}
		\item $Y$ has deformation theory,  which means that it preserves limits of Postnikov towers and pullbacks along nilpotent extensions of $\KK$-algebras, see \Cref{hasdeftheory};
		\item $X\rightarrow Y $ is locally almost of finite presentation (`laft'), see \Cref{laftdef};
		\item $X \rightarrow Y$ is a nil-isomorphism. Equivalently, it restricts to an equivalence on all reduced affine schemes, see Remark \ref{rem:nil-iso laft}.
	\end{enumerate} 
Let $\modulistk_{X/\KK} \subset (\PrStk_\KK)_{X/}$ be the full subcategory spanned by all formal moduli \mbox{stacks under $X$.}
\end{definition} 

\begin{remark}
	Note that \Cref{DefModuliStack} also makes sense for $X$ a general prestack.
	\end{remark}

In \Cref{defvsstacks} below, we will link formal moduli stacks  
under \emph{affine} schemes to formal moduli problems in the sense of Definition \ref{def:formal moduli}, and hence to partition Lie algebroids. In \Cref{Formalintegration}, we will then prove our main integration equivalence in \Cref{mainintext}.  

\subsection{Formal moduli problems and formal moduli stacks}
\label{defvsstacks} Throughout this section, let $X=\Spec(B)$ be a coherent affine derived scheme over $\KK$. We will show:
\begin{theorem}\label{thm:FMP local to global}
Let $X=\Spec(B)$ be a coherent affine derived $\KK$-scheme. Then there is an equivalence between the $\infty$-category $\modulistk_{X/\KK}$ of formal moduli stacks under $X$ and the $\infty$-category of formal moduli problems
$$\begin{tikzcd}
F\colon \Art_{B/\KK}\arrow[r] & \sS
\end{tikzcd}$$
in the sense of Definition \ref{def:formal moduli}. 
If $X\rt Y$ is a formal moduli stack, then the corresponding formal moduli problem $\Art_{B/\KK}\rt \sS$ sends $A\rt B$ to the space of diagonal lifts
$$\begin{tikzcd}
X\arrow[r]\arrow[d] & Y\arrow[d]\\
\Spec(A)\arrow[r]\arrow[ru, dotted] & \Spec(\KK).
\end{tikzcd}$$
\end{theorem}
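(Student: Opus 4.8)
The statement asserts an equivalence between $\modulistk_{X/\KK}$ for $X=\Spec(B)$ and $\FMP_{B/\KK}$, realised concretely by sending a formal moduli stack $X\to Y$ to the functor $A\mapsto \mathrm{Map}_{(\PrStk_\KK)_{X/}}\big(\Spec(A)\to\Spec(\KK),\, X\to Y\big)$, i.e.\ the space of diagonal fillers as displayed. The first thing I would do is make this assignment into an actual functor. Restriction along the Yoneda embedding $\Art_{\KK/B}\hookrightarrow (\PrStk_\KK)_{X/}$ (via $A\rightsquigarrow \Spec(A)$, which receives a canonical map from $X=\Spec(B)$ because $A\to B$) gives a functor $\modulistk_{X/\KK}\to \Fun(\Art_{\KK/B},\sS)$, and I would check it lands in $\FMP_{B/\KK}$: condition $F(B)\simeq\ast$ holds because $X\to Y$ is an equivalence on reduced affines (in particular on $\Spec(B)$ itself, applied to the identity), and the Mayer--Vietoris/pullback condition of \Cref{def:formal moduli} follows since $Y$ preserves pullbacks along nilpotent extensions and $X$ is corepresentable hence sends pushouts of rings to pullbacks of spaces. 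I would use \Cref{rem:fmp in terms of sqz} to reduce the pullback condition to square-zero extensions by connective coherent $B$-modules, matching the ``deformation theory'' hypothesis on $Y$ exactly.

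**Constructing the inverse.** Going back from a formal moduli problem $F\colon\Art_{\KK/B}\to\sS$ to a prestack under $X$, the natural candidate is a left Kan extension: one wants $Y$ such that $\Spec(A')$-points of $Y$, for a \emph{general} (not necessarily Artinian) animated $\KK$-algebra $A'$, are assembled from the values of $F$ on Artinian quotients. I would build $Y$ as the prestack $Y(A') = \colim_{(A'\to A)}\, F(A)$ over an appropriate indexing category, or — more in the spirit of \cite{GRII} and the remark in the introduction citing ``5.1.2 of \cite{GRII}'' — define $Y$ directly via its functor of points on affines: an $A'$-point of $Y$ is a nilpotent-extension-compatible family. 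Concretely, one forms $Y = X\amalg_{X_{\red}} (\text{formal thickening determined by }F)$, or better: extend $F$ along $\Art_{\KK/B}\hookrightarrow \AArt_{\KK/B}$ using \Cref{lem:fmp almost artin} and then declare $Y$ to represent, on each affine $\Spec(A')$, the space of pairs consisting of a map $X_{\red}\to \Spec(A')_{\red}$ together with compatible deformation data. I would then verify the three conditions of \Cref{DefModuliStack} for $Y$: having deformation theory is essentially a repackaging of the fact that $F$ preserves the relevant pullbacks and Postnikov limits (using \Cref{lem:fmp almost artin} to upgrade $F$ to a functor on $\AArt$ preserving fibrewise Postnikov towers); local almost finite presentation of $X\to Y$ follows because the fibres are cut out by almost finitely presented Artinian data (using \Cref{cor:maps of art are aft} and \Cref{cor:artin remains coherent}); and $X\to Y$ being a nil-isomorphism is built in by construction since $F$ is unchanged on reduced rings and $F(B)\simeq\ast$.

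**Checking the two constructions are mutually inverse.** One composite: starting from $X\to Y$, passing to $F$, and reconstructing $Y'$; I would show $Y\simeq Y'$ by checking they agree on all affine $\Spec(A')$-points, which reduces (since both have deformation theory, hence are determined by their restriction to Artinian extensions by Postnikov/square-zero induction, cf.\ the argument in \Cref{prop:artinian small} and \Cref{cor:art small}) to the tautological identity $F(A)\simeq F(A)$ on $\Art_{\KK/B}$. The other composite: starting from $F$, building $Y$, and extracting $F'(A)$ as the space of diagonal lifts — here $F'(A)$ is by construction the space of fillers $\Spec(A)\to Y$ over $\Spec(\KK)$ compatible with $X\to\Spec(A)$, which by the defining property of $Y$ is exactly $F(A)$. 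The key technical input throughout is that both sides are ``generated by square-zero extensions by coherent modules'' in the precise sense of \Cref{cor:art small} and \Cref{rem:fmp in terms of sqz}, so an object with deformation theory is rigidly determined by a formal moduli problem's worth of data.

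**Main obstacle.** The delicate point is not the formal bookkeeping but constructing $Y$ as a genuine prestack (accessible functor on \emph{all} animated $\KK$-algebras, not just Artinian ones) and verifying it ``has deformation theory'' in the sense of \Cref{hasdeftheory} — i.e.\ that it preserves limits of Postnikov towers and pullbacks along \emph{arbitrary} nilpotent extensions, not merely the Artinian ones where $F$ lives. This requires a descent/left-Kan-extension argument showing that a nilpotent extension $A'\to A''$ of arbitrary animated $\KK$-algebras can be probed by Artinian extensions after mapping in from $X$, so that the formal moduli problem structure of $F$ propagates. I expect this to be where one genuinely invokes the structure theory of \cite[5.1.2]{GRII}, together with \Cref{lem:almost artinian} to handle the passage between Artinian and almost-Artinian extensions; once $Y$ is correctly set up and shown to have deformation theory, the equivalence itself is a formal consequence of the Artinian-induction principle already deployed in \Cref{thm:def functor vs Lie}.
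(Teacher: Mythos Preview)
Your overall strategy matches the paper's: restriction along $\Art_{\KK/B}^{\op}\hookrightarrow (\PrStk_\KK)_{X/}$ gives the forward functor, and a left Kan extension builds the inverse. You have also correctly identified the main obstacle: showing that the prestack $Y$ reconstructed from a formal moduli problem $F$ has deformation theory with respect to \emph{arbitrary} nilpotent extensions, not only Artinian ones. However, your proposal does not resolve this obstacle; you defer to ``the structure theory of \cite[5.1.2]{GRII}'' without articulating the mechanism. This is precisely where the paper's real work lies, and it does not follow from \Cref{lem:almost artinian} or from the Artinian-induction of \Cref{cor:art small} alone.

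The paper organises the argument via an intermediate category $\Aff^{\mm{nil},+}_{X/\KK}$ of \emph{all} nilpotent embeddings $X\hookrightarrow S$ with eventually coconnective fibre (no finiteness imposed), sitting between $\Art_{\KK/B}^{\op}$ and prestacks over $X_{\inf}$. The key technical result (\Cref{prop:LKE is deformation functor}) is that the left Kan extension $u_!F$ along $u\colon \Art_{\KK/B}^{\op}\hookrightarrow \Aff^{\mm{nil},+}_{X/\KK}$ already has deformation theory. The proof is a delicate cofinality argument: given a pushout along a square-zero extension $S_0\to S$ by an arbitrary ideal $I$, one writes $I=\colim_\alpha I_\alpha$ as a filtered colimit of \emph{coherent} ideals and compares categories of factorisations through objects $T\in\Art_{\KK/B}^{\op}$. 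The crux is that the relevant comparison fibres are classified by spaces of lifts $\Map_{/I[1]}(L_{S_0/T}, I_\alpha[1])$, and since every map in $\Art_{\KK/B}^{\op}$ is almost finitely presented (\Cref{cor:maps of art are aft}), the cotangent complex $L_{S_0/T}$ is almost perfect, so these mapping spaces commute with the filtered colimit over $\alpha$ and the fibres become contractible. This is the genuine content you are missing: the finiteness hypothesis on $\Art_{\KK/B}$ enters through almost-perfectness of cotangent complexes, which is what lets Artinian data control arbitrary square-zero extensions. A second, separate cofinality argument (\Cref{cor:laft def functors}) then verifies that $X\to u_!F$ is laft; your one-line justification (``fibres are cut out by almost finitely presented Artinian data'') is again the right intuition but not a proof. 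The final passage from $\Aff^{\mm{nil},+}_{X/\KK}$ to genuine prestacks under $X$ (\Cref{lem:red1}) is comparatively formal, using the explicit formula $\Map_{/X_{\inf}}(S, v_!G)\simeq G(X\sqcup_{S_{\prored}} S)$.

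One small correction: your argument for $F(B)\simeq\ast$ is wrong, since $B$ need not be reduced. The correct reason is trivial: when $A=B$ the map $X\to\Spec(A)$ is the identity, so the space of diagonal lifts is contractible (it contains only the given map $X\to Y$).
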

The proof closely follows (a simple version of) the argument in \cite{GR} and proceeds in two steps.
\begin{notation}\label{not:nilpotent cats}
Throughout, we write 
$$
X_{\prored}=\Spec(B_{\prored}), \qquad\qquad X_{\inf}:=(X/\KK)_{\inf}
$$
for the pro-reduction (\Cref{def:prored}) and infinitesimal stack of the affine derived scheme $X$. We will write $\sqcup$ for the pushout of (pro-)affine derived schemes (i.e.\ the fibre product of (ind-)rings) and consider the following categories:
\begin{enumerate}
\item Write $\Aff^{\mm{laft-nil}, +}_{X/\KK}$ for the $\infty$-category of affine $\KK$-schemes equipped with a nilpotent embedding $X\rt S$ that exhibits $X$ as an almost finitely presented $S$-scheme, such that $\mc{O}(S)\to \mc{O}(X)=B$ has eventually coconnective fibre. Equivalently, we simply have
$$
\Aff^{\mm{laft-nil}, +}_{X/\KK}=\Art_{B/\KK}^{\op}.
$$
In particular, each $S\in \Aff^{\mm{laft-nil}, +}_{X/\KK}$ is a coherent affine derived scheme as well (Corollary \ref{cor:artin remains coherent}).

\item Let $\Aff^{\mm{nil}, +}_{X/\KK}$ be the $\infty$-category of affine $\KK$-schemes equipped with a nilpotent embedding $X\rt S$ such that $\mc{O}(S)\to \mc{O}(X)$ has eventually coconnective fibre.

\item Let $\Aff_{/X_\mm{inf}}$ be the $\infty$-category of affine schemes over $X_{\mm{inf}}$, i.e.\ $S\in \Aff_{\KK}$ together with a map $S\hookleftarrow S_{\prored}\rt X$.
\end{enumerate}
\end{notation}
Our goal will be to first identify formal moduli problems with certain functors on the $\infty$-category $\Aff^{\mm{nil}, +}_{X/\KK}$. Next, we will relate functors on $\Aff^{\mm{nil}, +}_{X/\KK}$ with prestacks $X\to Y\to X_{\inf}$ in between $X$ and its infinitesimal prestack. The $\infty$-category of formal moduli stacks can then be identified with a natural subcategory of this $\infty$-category.
\begin{definition}
We will say that a functor $F\colon (\Aff^{\mm{nil}, +}_{X/\KK}\big)^\op\rt \sS$ \emph{has deformation theory} if:
\begin{enumerate}
\item its value on the initial object is contractible,
\item it sends pushouts along nilpotent embeddings to pullbacks of spaces.
\end{enumerate} 
\end{definition}
Let $u\colon \Aff^{\mm{laft-nil}, +}_{X/\KK}\hookrightarrow \Aff^{\mm{nil}, +}_{X/\KK}$ be the evident fully faithful embedding and consider the induced adjoint pair between $\infty$-categories of presheaves:
$$\begin{tikzcd}
u_!\colon \cat{P}\big(\Aff^{\mm{laft-nil}, +}_{X/\KK}\big)\arrow[r, hookrightarrow, yshift=1ex] & \cat{P}\big(\Aff^{\mm{nil}, +}_{X/\KK}\big)\colon u^*.\arrow[l, yshift=-1ex]
\end{tikzcd}$$
The main technical step in the proof of  \Cref{thm:FMP local to global} is the following:
\begin{proposition}\label{prop:LKE is deformation functor}
Let $F\colon \big(\Aff^{\mm{laft-nil}, +}_{X/\KK}\big)^{\op}\rt \sS$ be a formal moduli problem. Then the left Kan extension $u_!F\colon \big(\Aff^{\mm{nil}, +}_{X/\KK}\big)^{\op}\rt \sS$ has deformation theory.
\end{proposition}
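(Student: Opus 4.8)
The statement to prove is that the left Kan extension $u_!F$ of a formal moduli problem $F$ along the inclusion $u\colon \big(\Aff^{\mm{laft-nil},+}_{X/\KK}\big)^{\op}\hookrightarrow \big(\Aff^{\mm{nil},+}_{X/\KK}\big)^{\op}$ again has deformation theory, i.e.\ it is reduced and sends pushouts along nilpotent embeddings to pullbacks. The key input is \Cref{lem:almost artinian}, which says that every almost Artinian extension $A\to B$ (equivalently, every $S\in \Aff^{\mm{nil},+}_{X/\KK}$) is the limit of its fibrewise Postnikov tower $\tau_{\leq n}(A/B)$, all of whose terms are Artinian, and that $\AArt_{\KK/B}$ is closed under pullbacks along nilpotent extensions. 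So the plan is: first compute $u_!F$ explicitly, show that it is essentially the right Kan extension of $F$ along the fibrewise truncation tower (exactly as in \Cref{lem:fmp almost artin}), and then check reducedness and the Mayer--Vietoris property directly using that formula.

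\textbf{Step 1: an explicit formula for $u_!F$.} For $S\in \Aff^{\mm{nil},+}_{X/\KK}$, corresponding to an almost Artinian extension $\mc{O}(S)=A\to B$, the left Kan extension is the colimit of $F$ over the comma category of Artinian extensions $A'$ of $B$ receiving a map $A\to A'$ over $B$ (equivalently, pro-Artinian thickenings $S'\hookleftarrow X$ that $S$ maps to). I claim this comma category has the fibrewise Postnikov tower $\{\tau_{\leq n}(A/B)\}_n$ as an \emph{initial} subdiagram (cofinal in the opposite direction): any map from $A$ to an Artinian $A'$ over $B$ has $A'$ $n$-coconnective for some $n$, hence factors uniquely through $\tau_{\leq n}(A/B)$ by the universal property in \Cref{lem:relative postnikov} (and \Cref{lem:almost artinian} guarantees each $\tau_{\leq n}(A/B)$ is itself Artinian). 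Therefore
$$
u_!F(S)\;\simeq\;\lim_n F\big(\tau_{\leq n}(A/B)\big).
$$
In particular $u_!F$ restricted back along $u$ recovers $F$ (since an Artinian $A$ has $\tau_{\leq n}(A/B)\simeq A$ for $n$ large), and $u_!F$ preserves limits of fibrewise Postnikov towers — this is the analogue of \Cref{lem:fmp almost artin}, and the same proof applies.

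\textbf{Step 2: verify the two axioms.} Reducedness is immediate: the initial object of $\Aff^{\mm{nil},+}_{X/\KK}$ is $X$ itself (i.e.\ $B\to B$), which is already Artinian, so $u_!F(X)\simeq F(B)\simeq \ast$. For the pushout axiom, take a pushout square in $\Aff^{\mm{nil},+}_{X/\KK}$ along a nilpotent embedding, i.e.\ a pullback square of almost Artinian extensions
$$
\begin{tikzcd}
A\arrow[r]\arrow[d] & A_0\arrow[d]\\
A_1\arrow[r] & A_{01}
\end{tikzcd}
$$
with $A_0\to A_{01}$ a nilpotent extension. Applying fibrewise $n$-truncation $\tau_{\leq n}(-/B)$: since $\tau_{\leq n}(-/B)$ is a left exact localisation onto $n$-coconnective-fibre objects (by the construction in \Cref{lem:relative postnikov}, it is built from genuine truncations and a fibre product with $B$) and since fibrewise truncation of a nilpotent extension is again nilpotent, the truncated square $\tau_{\leq n}(A/B)\simeq \tau_{\leq n}(A_0/B)\times_{\tau_{\leq n}(A_{01}/B)}\tau_{\leq n}(A_1/B)$ is still a pullback of \emph{Artinian} extensions along a nilpotent extension — here one uses \Cref{cor:art small}, whose last clause says such pullbacks stay in $\Art_{\KK/B}$. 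Now $F$ is a formal moduli problem, so it sends each such truncated pullback square to a pullback of spaces; passing to the limit over $n$ and using Step 1 together with the fact that limits commute with limits gives
$$
u_!F(S)\;\simeq\;\lim_n F\big(\tau_{\leq n}(A_0/B)\big)\times_{\lim_n F(\tau_{\leq n}(A_{01}/B))}\lim_n F\big(\tau_{\leq n}(A_1/B)\big)\;\simeq\;u_!F(S_0)\times_{u_!F(S_{01})}u_!F(S_1),
$$
which is exactly the required Mayer--Vietoris identity.

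\textbf{Main obstacle.} The routine-but-delicate point is Step 1: one must check carefully that the fibrewise Postnikov tower is genuinely cofinal in the comma category indexing the left Kan extension — that is, that \emph{every} map from $A$ to an Artinian thickening of $B$ factors through some $\tau_{\leq n}(A/B)$, and that the resulting factorisations are compatible so the comma category is filtered from the tower's point of view. This rests on two facts already available: the universal property of $\tau_{\leq n}$ among animated surjections with $n$-coconnective fibre (\Cref{lem:relative postnikov}) and the stability statements of \Cref{lem:almost artinian} and \Cref{cor:art small} ensuring the truncations and pullbacks stay Artinian. A secondary subtlety is confirming that $\tau_{\leq n}(-/B)$ is left exact enough to preserve the relevant pullback squares in Step 2; this again follows from its explicit description as a genuine truncation composed with a base change to $B$, both of which preserve finite limits.
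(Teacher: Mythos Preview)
Your proof has a fundamental confusion between two different categories. The category $\Aff^{\mm{nil},+}_{X/\KK}$ from Notation~\ref{not:nilpotent cats}(2) consists of affine $\KK$-schemes $S$ with a nilpotent embedding $X\hookrightarrow S$ and eventually coconnective fibre, \emph{with no finiteness condition whatsoever}. This is not the same as $\AArt_{\KK/B}$ (almost Artinian extensions), which by Definition~\ref{def:Artinian} additionally requires $A\to B$ to be almost of finite presentation. The lemmas you invoke (\Cref{lem:almost artinian}, \Cref{lem:fmp almost artin}) concern $\AArt_{\KK/B}$, not $\Aff^{\mm{nil},+}_{X/\KK}$.

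Concretely: take $S=\Spec(B\oplus M)$ for $M$ a connective eventually coconnective $B$-module that is \emph{not} almost perfect. Then $S\in \Aff^{\mm{nil},+}_{X/\KK}$, but the fibrewise truncations $\tau_{\leq n}((B\oplus M)/B)=B\oplus \tau_{\leq n}M$ are \emph{not} Artinian, so they do not lie in the domain of $F$ and your formula $u_!F(S)\simeq \lim_n F(\tau_{\leq n}(A/B))$ is not even well-typed. Worse, since the fibre of $A\to B$ is already eventually coconnective, the tower stabilises at $A$ itself, so your formula would collapse to ``$F(A)$'', which is meaningless. The colimit computing $u_!F(S)$ is genuinely over \emph{all} Artinian $A'$ under $A$, and there is no canonical cofinal tower.

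The paper's proof is accordingly much more delicate. It builds up through Lemmas~\ref{lem:deformation functor artin case} and~\ref{lem:deformation functor half artin case} and Corollary~\ref{cor:deformation functor}, each time comparing categories of factorisations $S\to T\to u_!F$ with $T$ Artinian. The key technical point---and the reason the laft hypothesis on the small category matters---is that for $S_0,T\in\Aff^{\mm{laft-nil},+}_{X/\KK}$ the relative cotangent complex $L_{S_0/T}$ is almost perfect (Corollary~\ref{cor:maps of art are aft}), so one can approximate an arbitrary (non-coherent) ideal $I$ by a filtered system of coherent ideals $I_\alpha$ and pass the mapping space through the colimit. This is precisely how one bridges the gap between the finite-type and infinite-type categories, and it cannot be replaced by a naive Postnikov-tower argument.
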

The proof will require some preliminaries:
\begin{lemma}\label{lem:deformation functor artin case}
Let $F\colon \big(\Aff^{\mm{laft-nil}, +}_{X/\KK}\big)^{\op}\rt \sS$ be a formal moduli problem and consider a pushout diagram in $\Aff^{\mm{nil}, +}_{X/\KK}$
$$\begin{tikzcd}
S_0\arrow[d]\arrow[r, "f"] & S_0'\arrow[d]\\
S\arrow[r] & S'
\end{tikzcd}$$
such that $S_0$ and $S'_0$ are contained in $\Aff^{\mm{laft-nil}, +}_{X/\KK}$ and $S_0\rt S$ is a square zero extension. Then the induced map
$$\begin{tikzcd}
u_!F(S')\arrow[r] & u_!F(S)\times_{u_!F(S_0)} u_!F(S_0')
\end{tikzcd}$$
is an equivalence.
\end{lemma}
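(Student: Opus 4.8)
The plan is to exploit the fact that although $S'$ and $S$ lie outside $\Aff^{\mm{laft-nil},+}_{X/\KK}$, their structure sheaves are built from $\mc{O}(S_0)$ and $\mc{O}(S_0')$ by a colimit process that is visible to the formal moduli problem $F$. Concretely, I would first recall the formula for the left Kan extension: for $T\in \Aff^{\mm{nil},+}_{X/\KK}$ one has $u_!F(T)\simeq \colim_{(S_0\to T)} F(S_0)$, the colimit taken over the comma category of objects $S_0\in \Aff^{\mm{laft-nil},+}_{X/\KK}$ equipped with a map $S_0\to T$ under $X$ (equivalently, over $\Art_{B/\KK}$-algebras receiving a map to $\mc{O}(T)$). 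The first step is therefore to understand this indexing category well enough to compute. Since $\mc{O}(T)\to B$ has eventually coconnective fibre and is a nilpotent extension, $\mc{O}(T)$ is a filtered colimit (indeed a sequential colimit along its fibrewise Postnikov-type tower, using \Cref{lem:almost artinian}) of the $\mc{O}(S_0)$ with $S_0\in \Aff^{\mm{laft-nil},+}_{X/\KK}$; dually $T$ is a cofiltered limit of Artinian thickenings of $X$.

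Next I would reduce the square-zero case. Because $S_0\to S$ is a square zero extension classified by a derivation $\mc{O}(S_0)\to \mc{O}(X)\oplus M[1]$ with $M$ a connective almost perfect $\mc{O}(X)$-module, and $S'$ is the pushout $S_0'\sqcup_{S_0} S$, the ring $\mc{O}(S')$ is the pullback $\mc{O}(S_0')\times_{\mc{O}(S_0)}\mc{O}(S)$; since the original square is cocartesian among affine schemes this pullback is computed in rings. The key observation is that the comma category indexing $u_!F(S')$ can be filtered by finite stages: an arbitrary $S_0''\to S'$ factors, up to a cofinal subcategory, through a finitely-generated-in-the-appropriate-sense approximation, and on such finite approximations the pullback square of rings becomes a pullback square of \emph{Artinian} extensions to which one may apply the hypothesis that $F$ is a formal moduli problem (using \Cref{cor:art small}, which guarantees $\Art_{\KK/B}$ is closed under such pullbacks along nilpotent extensions). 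So the plan is: (i) write $u_!F(S')=\colim F(S_0'')$ over $(S_0''\to S')$; (ii) show this comma category is generated under filtered colimits by the sub-diagram of those $S_0''$ for which the induced maps fit into a pullback square of Artinian extensions $S_0''=S_0^{a}\times_{S_0^{b}} S_0^{c}$ with $S_0^a\to S'_0$, $S_0^c\to S$, $S_0^b\to S_0$; (iii) on that sub-diagram invoke the formal moduli problem property of $F$ fibrewise; (iv) commute the resulting pullback past the filtered colimit, using that finite limits of spaces commute with filtered colimits.

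The main obstacle I anticipate is step (ii), the cofinality argument: one must produce, for each object $S_0''\to S'$ of the comma category, a \emph{natural} factorization through a pullback of smaller Artinian extensions, and verify that the subcategory of such "decomposable" objects is cofinal (equivalently, that its inclusion induces an equivalence on geometric realizations after applying $F$). This is precisely the kind of technical cofinality bookkeeping that appears in \cite{GR, GRII}; the cleanest route is probably to first handle the case where $S$ itself is in $\Aff^{\mm{laft-nil},+}_{X/\KK}$ (so only $S_0'$ is "large") by approximating $S_0'$ by its fibrewise $n$-truncations $\tau_{\leq n}(\mc{O}(S_0')/B)$, which \emph{are} Artinian by \Cref{lem:almost artinian}, and then bootstrap to the general case by a second approximation of $S$. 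One should also note that each stage of approximation preserves the square-zero-extension hypothesis on $S_0\to S$, since square zero extensions by almost perfect modules are compatible with truncation by \Cref{lem:sqz is aft}. Once the cofinality is in place, the remaining verification — that $u_!F$ sends the initial object to a point and the claimed map is an equivalence — is formal, and in fact \Cref{prop:LKE is deformation functor} will follow by combining this lemma with the decomposition of arbitrary nilpotent embeddings into square zero extensions, exactly as in \Cref{cor:art small}.
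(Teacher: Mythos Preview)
Your proposal contains a genuine gap stemming from a misreading of the hypotheses. In this lemma both $S_0$ and $S_0'$ are already assumed to lie in $\Aff^{\mm{laft-nil},+}_{X/\KK}$, so $u_!F(S_0)=F(S_0)$ and $u_!F(S_0')=F(S_0')$; there is nothing to approximate on those corners. Your plan to ``approximate $S_0'$ by its fibrewise $n$-truncations'' is therefore misplaced. More seriously, you assert that the square zero extension $S_0\to S$ is classified by a derivation into $M[1]$ with $M$ \emph{almost perfect}---but this is exactly what is \emph{not} given: $S$ lies only in $\Aff^{\mm{nil},+}_{X/\KK}$, so the ideal $I$ is merely a connective eventually coconnective $\mc{O}(S_0)$-module, with no finiteness. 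Your fibrewise Postnikov tower does not help either: objects of $\Aff^{\mm{nil},+}_{X/\KK}$ already have eventually coconnective fibre over $B$, so that tower stabilises; the obstruction is finite presentation, not connectivity.

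The paper's argument supplies precisely the missing idea. Since $S_0$ is Artinian, it is coherent (Corollary~\ref{cor:artin remains coherent}), so one can write the ideal $I$ as a filtered colimit $\colim_\alpha I_\alpha$ of \emph{coherent} $\mc{O}(S_0)$-modules. The proof then compares the fibres of $u_!F(S')\to u_!F(S)$ over a fixed $x\in F(S_0')$ by introducing auxiliary categories $\cat{D},\cat{D}'$ of diagrams built from the trivial square zero extensions $S_{0,I_\alpha[1]}$. The crucial point is that for any factorisation $S\to T\to u_!F$ with $T$ Artinian, the map $S_0\to T$ is almost finitely presented (Corollary~\ref{cor:maps of art are aft}), hence $L_{S_0/T}$ is almost perfect and $\Map(L_{S_0/T},I)\simeq\colim_\alpha\Map(L_{S_0/T},I_\alpha)$. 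This is what makes the passage from $\cat{C},\cat{C}'$ to $\cat{D},\cat{D}'$ a weak equivalence, after which one restricts to the cofinal subcategories $\cat{E},\cat{E}'$ of pushout squares and exhibits explicit homotopy inverses. Your step~(ii) cofinality sketch does not contain this mechanism, and without it there is no way to reduce to Artinian pullback squares on which the formal moduli problem hypothesis applies.
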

\begin{proof}
We will compare the fibres over a fixed $x\colon S'_0\rt u_!F$. Suppose $S_0\rt S$ is a square zero extension by an (necessarily eventually coconnective) ideal $I$ and let $S_{0, I[1]}$ be the trivial square zero extension of $S_0$ by its suspension. Since $S_0\in \Aff^{\mm{laft-nil}, +}_{X/\KK}$ is a coherent affine derived scheme (Corollary \ref{cor:artin remains coherent}), we can choose a filtered system $I_*\colon \cat{K}\rt \Coh(S_0)_{\geq 0}$ with $\colim_\alpha I_\alpha=I$.

Having fixed this, let $\cat{C}'$ be the $\infty$-category of factorizations
$$\begin{tikzcd}
S'_0\arrow[r, "="]\arrow[d] & S'_0\arrow[d, dotted]\arrow[rd, "x"]\\
S'\arrow[r, dotted] & T'\arrow[r, dotted] & u_!F
\end{tikzcd}$$
with $T'\in \Aff^{\mm{laft-nil}, +}_{X/\KK}$. A map is a natural transformation that restricts to the identity on the solid part of the diagram. The geometric realization of this $\infty$-category computes the fibre $u_!F(S')_{x}$. Likewise, let $\cat{C}$ be the $\infty$-category of factorizations 
$$\begin{tikzcd}
S_0\arrow[d]\arrow[r, "="] & S_0\arrow[d, dotted]\arrow[rd, "x|S_0"]\\
S\arrow[r, dotted] & T\arrow[r, dotted] & u_!F
\end{tikzcd}$$
which computes the fibre $u_!F(S)_{x}$. The restriction $u_!F(S')_{x}\rt u_!F(S)_{x}$ then arises as the realization of the functor $\pi\colon \cat{C}'\rt \cat{C}$ that restricts a factorization along $S\rt S'$.

We therefore have to prove that $\pi$ is a weak equivalence. To do this, we will replace $\cat{C}'$ and $\cat{C}$ be weakly equivalent categories $\cat{D}'$ and $\cat{D}$. First, let us consider the functor $\cat{K}\rt \Cat$ sending each $\alpha\in \cat{K}$ to the $\infty$-category $\cat{D}(\alpha)$ of diagrams of the form
\begin{equation}\label{diag:smaller square zero}\begin{tikzcd}
                                                   & {S_{0, I[1]}} \arrow[ld] \arrow[rr] \arrow[dd] &                                                           & S_0 \arrow[ld, equal] \arrow[dd] &      \\
{S_{0, I_\alpha[1]}} \arrow[rr, dotted] \arrow[dd] &                                                & S_0 \arrow[dd, dotted] \arrow[rrdd, "x|S_0", bend left] &                                                &      \\
                                                   & S_0 \arrow[ld, equal] \arrow[rr] &                                                           & S \arrow[ld, dotted]                           &      \\
S_0 \arrow[rr, dotted]                             &                                                & T \arrow[rr, dotted]                                              &                                                & u_!F
\end{tikzcd}
\end{equation}
where $T\in \Aff^{\mm{laft-nil}, +}_{X/\KK}$ and a morphism is a natural transformation which is the identity on the solid part of the diagram. This depends functorially on $\alpha$ by restriction along the map of trivial square zero extensions $S_{0, I[1]}\rt S_{0, I_\beta[1]}\rt S_{0, I_\alpha[1]}$ for each map of ideals $I_\alpha\to I_\beta$. We let $\cat{D}=\int_{\alpha\in \cat{K}} \cat{D}(\alpha)$ be the (cocartesian) unstraightening.

There is a natural functor $q\colon \cat{D}\rt \cat{C}$ restricting to the right face of the cube in \eqref{diag:smaller square zero}. This map is a cocartesian fibration (by postcomposition with $T\to T'$) whose fibre over a factorization $S\to T\to u_!F$ is given as follows. Note that the map $S\to T$ exhibits $S$ as a square zero extension of $S_0$ by $I$ \textit{relative to $T$}, i.e.\ it is classified by a map $L_{S_0/T}\to I[1]$. One can then identify
$$
q^{-1}(T) = \int_{\alpha\in \cat{K}} \Map_{\QC(S_0)/I[1]}\big(L_{S_0/T}, I_\alpha[1]\big)
$$
with the unstraightening of the functor sending each $\alpha$ to the space of lifts of the map $L_{S_0/T}\to I[1]$ to $I_\alpha[1]$. Since $S_0$ and $T$ are in $\Aff^{\mm{laft-nil}, +}_{X/\KK}$, the map $S_0\rt T$ is almost finitely presented (Corollary \ref{cor:maps of art are aft}). Using that $L_{S_0/T}$ is an almost perfect $S_0$-module, we find that
$$
|q^{-1}(T)|\simeq \colim_{\alpha\in K} \Map_{\QC(S_0)/I[1]}\big(L_{S_0/T}, I_\alpha[1]\big) \simeq \Map_{\QC(S_0)/I[1]}\big(L_{S_0/T}, \colim_\alpha I_\alpha[1]\big) = \ast.
$$
We conclude that $q\colon \cat{D}\rt \cat{C}$ is a weak equivalence.

Next, we define $\cat{D}'$ to the the $\infty$-category of diagrams of the form
\begin{equation}\label{diag:smaller square zero 2}\begin{tikzcd}
                                                   & {S_{0, I[1]}} \arrow[ld] \arrow[dd] \arrow[rr] &                        & S_0 \arrow[rr] \arrow[ld, equal] \arrow[dd] &                                                        & S'_0 \arrow[ld, equal] \arrow[dd] &      \\
{S_{0, I_\alpha[1]}} \arrow[dd] \arrow[rr, dotted] &                                                & S_0 \arrow[rr] &                                                           & S'_0 \arrow[dd, dotted] \arrow[rrdd, "x", bend left] &                                                 &      \\
                                                   & S_0 \arrow[ld, equal] \arrow[rr] &                        & S \arrow[rr]                                              &                                                        & S' \arrow[ld, dotted]                           &      \\
S_0 \arrow[rrrr, dotted]                           &                                                &                        &                                                           & T' \arrow[rr]                                          &                                                 & u_!F
\end{tikzcd}\end{equation}
where again, $ T'\in \Aff^{\mm{laft-nil}, +}_{X/\KK}$ and a morphism is a natural transformation which is the identity on the solid part of the diagram. Restriction to the right face determines a cocartesian fibration $q'\colon \cat{D}'\rt \cat{C}'$ whose fibre over $T'$ satisfies
$$|q'^{-1}(T')|\simeq \colim_{\alpha\in K} \Map_{\QC(S_0)/I[1]}\big(L_{S_0/T'}, I_\alpha[1]\big) \simeq \Map_{\QC(S_0)/I[1]}\big(L_{S_0/T'}, \colim_\alpha I_\alpha[1]\big) = \ast.
$$
We thus obtain a commuting square
$$\begin{tikzcd}
\cat{D}'\arrow[d, "{\pi'}"{swap}]\arrow[r] & \cat{C}'\arrow[d, "\pi"]\\
\cat{D}\arrow[r] & \cat{C}
\end{tikzcd}$$
where the horizontal functors are weak equivalences and $\pi'$ forgets the copies of $S_0'$ and $S'$ in Diagram \eqref{diag:smaller square zero 2}. It therefore remains to verify that $\pi'$ is a weak equivalence.

Now let $\cat{E}\hookrightarrow \cat{D}$ and $\cat{E}'\hookrightarrow \cat{D'}$ be the full subcategories of diagrams of the form \eqref{diag:smaller square zero}, resp.\ \eqref{diag:smaller square zero 2}, whose front face is a pushout in $\Aff^{\mm{laft-nil}, +}_{X/\KK}$. These inclusions admit left adjoints $L, L'$ that replace the object $T$ by the pushout of the front face; in particular, $\cat{E}$ and $\cat{E}'$ are weakly equivalent to $\cat{D}$ and $\cat{D}'$. Furthermore, there is a natural functor $\phi\colon \cat{E}\rt \cat{E}'$ sending each $T$ as in \eqref{diag:smaller square zero} to $T'=S'_0\sqcup_{S_0} T$, which fits naturally into a diagram of the form \eqref{diag:smaller square zero 2}. 

Now note that for any $T\in \cat{E}\subseteq \cat{D}$, there is a natural transformation $T\rt \pi'\circ\phi(T) = S'_0\sqcup_{S_0} T$. In particular, the map $\pi'\colon |\cat{D}'|\rt |\cat{D}|$ admits a right inverse (presented by $\phi$). Conversely, for every object $T'\in \cat{D}'$ as in \eqref{diag:smaller square zero 2}, there is a natural map 
$$
\phi\circ L\circ \pi'(T)=S'_0\sqcup_{S_{0, I_\alpha[1]}} S_0\rt T
$$
which shows that $\pi'\colon |\cat{D}'|\rt |\cat{D}|$ admits a left inverse (presented by $\phi\circ L$). We conclude that $\pi'$, and therefore the map $u_!F(S')_{x}\rt u_!F(S)_{x|S_0}$ is an equivalence.
\end{proof}
\begin{lemma}\label{lem:deformation functor half artin case}
Let $F\colon \big(\Aff^{\mm{laft-nil}, +}_{X/\KK}\big)^{\op}\rt \sS$ be a formal moduli problem and consider a pushout square
$$\begin{tikzcd}
S_0\arrow[d]\arrow[r, "f"] & S_0'\arrow[d]\\
S\arrow[r] & S'
\end{tikzcd}$$
in $\Aff^{\mm{nil}, +}_{X/\KK}$ such that $S'_0\in\Aff^{\mm{laft-nil}, +}_{X/\KK}$ and $S_0\rt S$ is a square zero extension. Then the induced map
$$\begin{tikzcd}
u_!F(S')\arrow[r] & u_!F(S)\times_{u_!F(S_0)} u_!F(S_0')
\end{tikzcd}$$
is an equivalence.
\end{lemma}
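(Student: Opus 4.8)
The plan is to imitate the proof of \Cref{lem:deformation functor artin case} as closely as possible. In that argument the hypothesis $S_0\in \Aff^{\mm{laft-nil},+}_{X/\KK}$ is used in exactly two places: first, to know that $\mc{O}(S_0)$ is coherent, so that the ideal $I$ of the square zero extension $S_0\to S$ can be written as a filtered colimit $I\simeq \colim_\alpha I_\alpha$ of connective coherent $\mc{O}(S_0)$-modules; second, to know that any map $S_0\to T$ to another object of $\Aff^{\mm{laft-nil},+}_{X/\KK}$ is almost of finite presentation (\Cref{cor:maps of art are aft}), so that $L_{S_0/T}$ is almost perfect and hence $\colim_\alpha \Map_{\QC(S_0)/I[1]}(L_{S_0/T}, I_\alpha[1])\simeq \Map_{\QC(S_0)/I[1]}(L_{S_0/T}, I[1])=\ast$. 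For a general $S_0\in \Aff^{\mm{nil},+}_{X/\KK}$ neither of these is available directly, and the point is to recover both after an approximation.

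The first step is a structural observation about $\Aff^{\mm{nil},+}_{X/\KK}$: running the argument of \Cref{prop:artinian small} with every finiteness requirement removed shows that any $S_0\in \Aff^{\mm{nil},+}_{X/\KK}$ is obtained from $X$ by finitely many square zero extensions by connective, eventually coconnective $B$-modules. Since $B$ is coherent, each of these modules is a filtered colimit of connective coherent $B$-modules; approximating the modules by their coherent submodules and refining the derivations classifying the successive square zero extensions, one presents $S_0$ as a cofiltered limit $S_0\simeq \lim_\lambda S_0^\lambda$ with every $S_0^\lambda\in \Aff^{\mm{laft-nil},+}_{X/\KK}$, in a way compatible both with the square zero extension $S_0\to S$ and with the structure map $S_0\to S_0'$. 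Concretely, this realises $\mc{O}(S)$, $\mc{O}(S')$ and the ideal $I$ as filtered colimits of the corresponding data over the Artinian quotients $S_0^\lambda$, with each level again a pushout diagram of the type considered in \Cref{lem:deformation functor artin case}.

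With this in hand I would rerun the fibre comparison of \Cref{lem:deformation functor artin case} essentially verbatim, now with the extra index $\lambda$ folded into the $\infty$-categories of factorisation diagrams: one fixes a point $x\colon S_0'\to u_!F$, presents the fibres of the map under consideration as geometric realisations of categories of factorisations, and reduces to showing that certain comparison functors between these categories are weak equivalences. The only computation that has to be reproved is the contractibility of the relevant space of lifts; over the combined $(\lambda,\alpha)$-index this amounts to the statement that this space is corepresented by $L_{S_0/T}\simeq \colim_\lambda L_{S_0^\lambda/T}$, each $L_{S_0^\lambda/T}$ being almost perfect by \Cref{cor:maps of art are aft}, so that the colimit over $\alpha$ of the uniformly connective modules $I_\alpha[1]$ can still be taken out of the mapping space and the map of \Cref{lem:deformation functor half artin case} is an equivalence.

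The main obstacle I anticipate lies entirely in the first step: constructing the cofiltered approximation $S_0\simeq \lim_\lambda S_0^\lambda$ by laft-nil objects which is \emph{simultaneously} compatible with the square zero extension $S_0\to S$ and with the map $S_0\to S_0'$, and then checking that all the colimits and limits in the final computation interchange as needed --- in particular that $L_{S_0/T}$ really is the filtered colimit of the almost perfect modules $L_{S_0^\lambda/T}$ and that the resulting space of lifts is corepresentable. The eventual coconnectivity of the fibre of $\mc{O}(S_0)\to B$ is what makes this feasible: it forces $I$ and the successive square zero ideals to be bounded, so that only bounded truncations of the various (a priori non-almost-perfect) cotangent complexes intervene, and the approximation only has to be controlled in a finite range of homotopical degrees.
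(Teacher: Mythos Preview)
Your approach differs substantially from the paper's, and the obstacle you flag is real---but the paper sidesteps it rather than solving it.

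The paper does not approximate $S_0$ at all. The key observation is that after pushing out along $S_0\to S_0'$, the square zero extension $S_0'\to S'$ has its source in $\Aff^{\mm{laft-nil},+}_{X/\KK}$, so \Cref{lem:deformation functor artin case} already applies to any further pushout based at $S_0'$. Concretely: fix $x\colon S\to u_!F$ and present the two fibres as classifying spaces of factorisation categories $\cat{D}$ and $\cat{C}_0$. The comparison $\pi\colon\cat{D}\to\cat{C}_0$ is an evident right fibration, and applying \Cref{lem:deformation functor artin case} to pushouts of the form $S'\sqcup_{S_0'}S_0''$ (with $S_0'$ and $S_0''$ now both laft-nil) shows that $\pi$ is also a \emph{left} fibration. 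Since $\cat{C}_0$ is contractible by definition of $u_!F$ as a left Kan extension, all fibres of $\pi$ are identified with $|\cat{D}|$, and contractibility of $|\cat{D}|$ is then shown by an elementary comparison with an auxiliary category $\cat{D}'$ (an adjunction plus a comma-category argument that once again lands in the laft-nil setting of $S_0'\to S'$). So the hypothesis on $S_0'$ is bootstrapped; nothing about $S_0$ is needed.

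Your route has a specific gap beyond the construction of the approximation. You invoke $L_{S_0/T}\simeq\colim_\lambda L_{S_0^\lambda/T}$ for a factorisation $S\to T\to u_!F$ with $T$ laft-nil, but $L_{S_0^\lambda/T}$ only makes sense once there is a map $\mc{O}(T)\to\mc{O}(S_0^\lambda)$, i.e.\ once $\mc{O}(T)\to\mc{O}(S_0)=\colim_\lambda\mc{O}(S_0^\lambda)$ factors through some stage. There is no reason it should: being an Artinian extension of $B$ does not make $\mc{O}(T)$ compact in $\SCR_{\KK/B}$---the finiteness runs the other way ($B$ is almost perfect over $\mc{O}(T)$, not conversely). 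Since $T$ ranges freely over all of $\Aff^{\mm{laft-nil},+}_{X/\KK}$, you cannot arrange the approximation of $S_0$ to be compatible with it in advance. Repairing this would require a substantial reorganisation of the indexing, whereas the paper's bootstrap from $S_0'$ avoids the issue entirely.
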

\begin{proof}
We fix $x\colon S\rt u_!F$ with $x_0=x\big|S_0$. Consider the $\infty$-category $\cat{D}$ of diagrams of the form
\begin{equation}\label{diag:fact1}\begin{tikzcd}
S_0\arrow[d]\arrow[r, dotted] & S'_0\arrow[d, dotted]\\
S\arrow[r, dotted]\arrow[rr, bend right=20, "x"{swap}] & S'\arrow[r, dotted, "x'"] & u_!F
\end{tikzcd}\end{equation}
where the square is cocartesian and $S_0'\in \Aff^{\mm{laft-nil}, +}_{X/\KK}$. A map is a natural transformation that is constant on the solid part of the diagram. Likewise, let $\cat{C}_0$ be the $\infty$-category of factorizations
$$\begin{tikzcd}
S_0\arrow[r, dotted]\arrow[rr, "x_0"{swap}, bend right=20] & S_0'\arrow[r, dotted, "x'_0"] & u_!F
\end{tikzcd}$$
where $S_0'\in \Aff^{\mm{laft-nil}, +}_{X/\KK}$. There is an evident right fibration $\pi\colon \cat{D}\rt \cat{C}_0$ sending $x'\mapsto x'\big|S'_0$. The lemma asserts that the fibres of $\pi$ are contractible.

By  \Cref{lem:deformation functor artin case}, the functor $\pi$ is also a left fibration. Furthermore, the $\infty$-category $\cat{C}_0$ is contractible by definition of $u_!F$ as a left Kan extension. Consequently, it will suffice to prove that the $\infty$-category $\cat{D}$ itself is contractible.

To see this, let $\cat{D}'$ be the $\infty$-category of diagrams of the form \eqref{diag:fact1} where the square is not necessarily cocartesian, but where $S'$ is contained in $\Aff^{\mm{laft-nil}, +}_{X/\KK}$. There is an evident functor $\pi\colon \cat{D}'\rt \cat{D}$ sending a factorization $x'\colon S'\rt u_!F$ with $S'\in \Aff^{\mm{laft-nil}, +}_{X/\KK}$ to $S\sqcup_{S_0} S'_0\to S'\to u_!F$. To prove the lemma it will thus suffice to show that:
\begin{enumerate}[label=(\alph*)]
\item $\cat{D}'$ is weakly contractible.
\item $\pi\colon \cat{D}'\rt \cat{D}$ is a weak equivalence.
\end{enumerate}
Let us start with (a): notice that restriction of the diagram \eqref{diag:fact1} to the bottom row defines a functor $\phi\colon \cat{D}'\rt \cat{C}'$ to the $\infty$-category of factorizations
$$\begin{tikzcd}
S\arrow[r, dotted]\arrow[rr, "x"{swap}, bend right=20] & S'\arrow[r, dotted, "x'"] & u_!F
\end{tikzcd}$$
with $S'\in \Aff^{\mm{laft-nil}, +}_{X/\KK}$. Note that $\cat{C}'$ is contractible by definition of $u_!F$ as a left Kan extension and that $\phi$ admits a right adjoint sending each factorization $S\to S'\to u_!F$ to
$$\begin{tikzcd}
S_0\arrow[d]\arrow[r, dotted] & S'\arrow[d, equal]\\
S\arrow[r, dotted]\arrow[rr, bend right=20, "x"{swap}] & S'\arrow[r, dotted, "x'"] & u_!F.
\end{tikzcd}$$
This implies that $\cat{D}'$ is weakly contractible.

For (b), pick a factorization $x'\colon S'=S\sqcup_{S_0} S'_0\rt u_!F$ in $\cat{D}$ and consider the comma $\infty$-category $x'/\cat{D}'$. Unraveling the definitions, one sees that $x'/\cat{D}'$ is just the $\infty$-category $\cat{D}'$ considered in our proof of (a), but with $S_0\rt S$ replaced $S'_0\to S'$. The proof of (a) then shows that $x'/\cat{D}'$ is contractible, so that $\pi$ is a weak equivalence.
\end{proof}
\begin{corollary}\label{cor:deformation functor}
Let $F\colon \big(\Aff^{\mm{laft-nil}, +}_{X/\KK}\big)^{\op}\rt \sS$ be a formal moduli problem and consider a pushout square in $\Aff^{\mm{nil}, +}_{X/\KK}$
$$\begin{tikzcd}
S_0\arrow[d]\arrow[r, "f"] & S_0'\arrow[d]\\
S\arrow[r] & S'
\end{tikzcd}$$
such that $S_0\rt S$ is a square zero extension. Then the induced map
$$\begin{tikzcd}
u_!F(S')\arrow[r] & u_!F(S)\times_{u_!F(S_0)} u_!F(S_0')
\end{tikzcd}$$
is an equivalence.
\end{corollary}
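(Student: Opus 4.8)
The plan is to deduce this corollary from \Cref{lem:deformation functor half artin case}, which is exactly the present statement under the additional hypothesis $S_0'\in\Aff^{\mm{laft-nil},+}_{X/\KK}$; removing this hypothesis proceeds by the same method used to pass from \Cref{lem:deformation functor artin case} to \Cref{lem:deformation functor half artin case}. First I would fix a point of the target $u_!F(S)\times_{u_!F(S_0)}u_!F(S_0')$ --- equivalently, points $x\colon S\to u_!F$ and $x_0'\colon S_0'\to u_!F$ together with an identification of their restrictions along $S_0\to S$ and $S_0\to S_0'$ --- and reduce to proving that the fibre of $u_!F(S')\to u_!F(S)\times_{u_!F(S_0)}u_!F(S_0')$ over it is contractible.

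Using that $u_!F$ is a left Kan extension, this fibre is the geometric realisation of an $\infty$-category of compatible factorisations of the structure maps through objects of $\Aff^{\mm{laft-nil},+}_{X/\KK}$; the formal input --- exactly as in the proofs of the two lemmas --- is that the $\infty$-categories of factorisations of a point of $u_!F(T)$ through laft-nil objects are contractible, which after unwinding is a consequence of the universality of colimits in $\sS$. Since $S'=S\sqcup_{S_0}S_0'$ is a pushout of affine schemes, a map out of $S'$ is a compatible pair of maps out of $S$ and $S_0'$; this lets one rewrite the factorisation category in terms of laft-nil objects $T_0'$ equipped with a map $S_0'\to T_0'$, for each of which the pushout $S\sqcup_{S_0}T_0'$ again lies in $\Aff^{\mm{nil},+}_{X/\KK}$ --- nilpotent embeddings with eventually coconnective fibre being stable under pushout --- so that \Cref{lem:deformation functor half artin case} applies to the square with corners $S_0\to T_0'$ and $S\to S\sqcup_{S_0}T_0'$. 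Assembling these equivalences over the indexing category, and commuting the resulting colimit past the finite limit $u_!F(S)\times_{u_!F(S_0)}(-)$ by the universality of colimits in $\sS$, gives the claim.

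The contractibility of the factorisation $\infty$-category itself is extracted, as in \Cref{lem:deformation functor half artin case}, by a chain of cofinality reductions that successively replace it by weakly equivalent subcategories and ultimately invoke the contractibility of the comma categories witnessing $u_!$ as a left Kan extension. The main obstacle is the combinatorial bookkeeping: one has to organise these $\infty$-categories so that the square-zero hypothesis on $S_0\to S$, the pushout presentation of $S'$, the left Kan extension formula for $u_!F(S_0')$, and \Cref{lem:deformation functor half artin case} all feed in at the right places --- in effect running the proof of \Cref{lem:deformation functor half artin case} with the laft-nil approximation of $S_0'$ interleaved with its existing approximation of the square-zero ideal of $S_0\to S$. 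None of the individual steps is deep: each is an instance of \Cref{lem:deformation functor half artin case}, a cofinality argument in the spirit of \Cref{lem:deformation functor artin case}, or an application of the universality of colimits in $\sS$.
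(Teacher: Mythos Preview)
Your plan is substantially more elaborate than necessary, and its central step is not justified. The paper's argument is a short $2$-out-of-$3$ trick: fix only a point $x\colon S_0'\to u_!F$ (not a full point of the fibre product). Since $u_!F$ is a left Kan extension, $x$ factors as $S_0'\to T_0\xrightarrow{t_0} u_!F$ for \emph{one} $T_0\in\Aff^{\mm{laft-nil},+}_{X/\KK}$. Set $T=T_0\sqcup_{S_0}S$; since $T=T_0\sqcup_{S_0'}S'$ as well (both pushouts being along the square-zero extension $S_0\to S$ and its base change $S_0'\to S'$), \Cref{lem:deformation functor half artin case} applies to \emph{two} pushout squares with laft-nil corner $T_0$, yielding
\[
u_!F(T)_{t_0}\;\xrightarrow{\ \sim\ }\;u_!F(S')_{x}
\qquad\text{and}\qquad
u_!F(T)_{t_0}\;\xrightarrow{\ \sim\ }\;u_!F(S)_{x|S_0}.
\]
Hence $u_!F(S')_x\to u_!F(S)_{x|S_0}$ is an equivalence for every $x$, and the corollary follows.

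The gap in your approach is the ``assembling'' step. After applying \Cref{lem:deformation functor half artin case} to each $T_0'$, what you need is
\[
u_!F(S')\;\simeq\;\colim_{S_0'\to T_0'}u_!F\bigl(S\sqcup_{S_0}T_0'\bigr),
\]
but this is \emph{equivalent} to the corollary itself (via Lemma~\ref{lem:deformation functor half artin case} and universality of colimits), so you have only rephrased the goal. Your proposed ``rewriting of the factorisation category'' does not bridge this: an object of the factorisation category for $S'$ is a laft-nil $T'$ under $S'$, and $S\sqcup_{S_0}T_0'$ is generically \emph{not} laft-nil, so there is no evident cofinal functor between the two indexing categories. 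One could try to prove the displayed identity by an argument in the style of part~(b) of the proof of \Cref{lem:deformation functor half artin case}, but that is strictly harder than the two-line argument above, which sidesteps the issue entirely by passing to a single $T_0$ rather than a colimit over all of them.
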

\begin{proof}
Let us fix a map $x\colon S'_0\rt u_!F$. It suffices to show that the induced map on fibres $u_!F(S')_{x}\rt u_!F(S)_{x|S_0}$ is an equivalence. Now $x$ factors as $S'_0\rt T_0\stackrel{t_0}{\rt} u_!F$ with $T_0\in \Aff^{\mm{laft-nil}, +}_{X/\KK}$. Letting $T=T_0\sqcup_{S_0} S$, we then have maps
$$\begin{tikzcd}
u_!F(T)_{t_0}\arrow[r] & u_!F(S')_{t_0|S'_0}\arrow[r] & u_!F(S)_{t_0|S_0}.
\end{tikzcd}$$
The left map and the composite are equivalences by  \Cref{lem:deformation functor half artin case}, so that the right map is an equivalence as well.
\end{proof}
\begin{proof}[Proof of  \Cref{prop:LKE is deformation functor}]
Every nilpotent embedding of eventually coconnective affine derived schemes can be decomposed into a finite sequence of square zero extensions, so this follows from Corollary \ref{cor:deformation functor}.
\end{proof}
\begin{definition}\label{def:laft def functor}
Let $f\colon G\rt G'$ be a natural transformation between functors $\big(\cat{Aff}_{X/\KK}^{\mm{nil}, +}\big)^{\op}\rt \sS$. We will say that $f$ is \textit{locally almost finitely presented} if for any $n\in \mathbb{N}$ and any cofiltered diagram $S_{*}\colon \cat{K}\rt \cat{Aff}_{X/\KK}^{\mm{nil}, +}$ with limit $S$ such that all $\mc{O}(S_\alpha)\to \mc{O}(X)=B$ have $(n-1)$-coconnective fibre, the map
$$
\colim_{\alpha\in \cat{K}^{\op}} G(S_\alpha)\rt G(S)\times_{G'(S)} \big(\colim_{\alpha\in \cat{K}^{\op}} G'(S_\alpha)\big)
$$
is an equivalence.
\end{definition}
\begin{remark}\label{rem:cofiltered as pushout}
Suppose that $S_{*}\colon \cat{K}\rt \cat{Aff}_{X/\KK}^{\mm{nil}, +}$ is a cofiltered diagram of affine schemes under $X$ such that all $\mc{O}(S_\alpha)\to \mc{O}(X)$ have $(n-1)$-coconnective fibre for some fixed $n$. Then
$$
S_\alpha\simeq S_{\alpha}^{\leq n}\sqcup_{X^{\leq n}} X
$$
where we write $X^{\leq n}=\Spec(\tau_{\leq n} B)$ for the corresponding truncated affine derived scheme.
\end{remark}
\begin{corollary}\label{cor:laft def functors}
The adjoint pair $(u_!, u^*)$ restricts to an equivalence between the full subcategories of:
\begin{enumerate}
\item formal moduli problems $F\colon \big(\cat{Aff}_{X/\KK}^{\mm{laft-nil}, +}\big)^{\op}\rt \sS$.
\item functors $G\colon \big(\cat{Aff}_{X/\KK}^{\mm{nil}, +}\big)^{\op}\rt \sS$ with deformation theory such that the canonical map $X\rt G$ is locally almost finitely presented.
\end{enumerate}
\end{corollary}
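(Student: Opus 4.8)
Since $u$ is fully faithful, $u_!$ is fully faithful and the counit $u^{*}u_{!}\to\mathrm{id}$ is an equivalence; the plan is therefore to establish three things: (a) $u_!$ sends formal moduli problems into class (2); (b) $u^*$ sends functors in class (2) to formal moduli problems; (c) for every $G$ in class (2), the counit $u_!u^*G\to G$ is an equivalence. Assertion (b) is the routine one. Indeed $\Aff^{\mm{laft-nil}, +}_{X/\KK}=\Art_{\KK/B}^{\op}$ and $u^*G=G\circ u$; the inclusion $u$ preserves the initial object, and by Corollary \ref{cor:art small} it carries pushouts along nilpotent embeddings in $\Art_{\KK/B}^{\op}$ to pushouts along nilpotent embeddings in $\Aff^{\mm{nil}, +}_{X/\KK}$ (combined with Remark \ref{rem:fmp in terms of sqz} to pass between general nilpotent extensions and square-zero extensions). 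So the two conditions of Definition \ref{def:formal moduli} for $u^*G$ are special cases of the two conditions that $G$ "has deformation theory".

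For assertion (a): that $u_!F$ has deformation theory is exactly Proposition \ref{prop:LKE is deformation functor}, so the remaining point is that the canonical map $X\to u_!F$ is locally almost finitely presented. First I would unwind this using the pointwise formula $u_!F(S)\simeq\colim_{T\to S}F(T)$, the colimit over $T\in\Aff^{\mm{laft-nil}, +}_{X/\KK}$ equipped with a map $T\to S$ over $X$ (and note that $u_!F$ agrees with $F$ on $\Aff^{\mm{laft-nil}, +}_{X/\KK}$). Given a cofiltered diagram $S_\bullet\colon\cat{K}\to\Aff^{\mm{nil}, +}_{X/\KK}$ with all $\mc{O}(S_\alpha)\to B$ of $(n-1)$-coconnective fibre and limit $S$, Remark \ref{rem:cofiltered as pushout} gives $S_\alpha\simeq S_\alpha^{\le n}\sqcup_{X^{\le n}}X$ with $S_\alpha^{\le n}\in\Aff^{\mm{laft-nil}, +}_{X/\KK}$ and $S\simeq(\lim_\alpha S_\alpha^{\le n})\sqcup_{X^{\le n}}X$. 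Using that $u_!F$ sends the displayed pushouts to pullbacks, together with the fact that a finite-type thickening $T$ is built from $X$ by finitely many square-zero extensions along coherent (hence, in the relevant sense, compact) $B$-modules so that any $T\to S$ is detected at a finite stage of the pro-system $\{S_\alpha^{\le n}\}$, one reduces the laft comparison for $u_!F$ to the exactness of filtered colimits in $\sS$.

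Assertion (c) is the heart of the matter. For $G$ in class (2), the functor $u_!u^*G$ is again in class (2) by (a); both have deformation theory and restrict to the same formal moduli problem on $\Aff^{\mm{laft-nil}, +}_{X/\KK}$ (the latter since $u^*u_!\simeq\mathrm{id}$). To check the counit is an equivalence at a fixed $S\in\Aff^{\mm{nil}, +}_{X/\KK}$, I would induct on the length of a square-zero filtration $X=S_0\to S_1\to\cdots\to S_k=S$ of $X\to S$, which is finite by the observation used in the proof of Proposition \ref{prop:LKE is deformation functor}. Each step $S_{i-1}\to S_i$ is a square-zero extension along a connective, eventually coconnective $B$-module $M_i$, and the associated pullback square \eqref{diag:sqz art} (which, with $\Spec(B)=X$, exhibits $S_i$ as a pushout along the nilpotent embedding $X\oplus M_i[1]\to S_{i-1}$) gives $G(S_i)\simeq G(S_{i-1})\times_{G(X\oplus M_i[1])}\ast$, and likewise for $u_!u^*G$. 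By the inductive hypothesis the two functors agree on $S_{i-1}$ (and on $X$, where both are $\ast$), so it remains to compare them on the trivial square-zero extension $X\oplus M_i[1]$. Writing $M_i\simeq\colim_\alpha M_{i,\alpha}$ as a filtered colimit of connective coherent $B$-modules, uniformly bounded, gives $X\oplus M_i[1]\simeq\lim_\alpha(X\oplus M_{i,\alpha}[1])$, a cofiltered limit with uniformly bounded fibres over $B$, all of whose terms lie in $\Aff^{\mm{laft-nil}, +}_{X/\KK}=\Art_{\KK/B}^{\op}$, where the two functors already agree; applying the laft hypothesis to both $X\to G$ and $X\to u_!u^*G$ then pins down their common value at $X\oplus M_i[1]$ and closes the induction. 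The base case $k=0$ is $S=X$, the initial object, where both sides are $\ast$.

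I expect the final step of (c) to be the main obstacle. The subtlety is that being locally almost finitely presented (Definition \ref{def:laft def functor}) is a condition on the natural transformation to the presheaf $X$, not on a functor in isolation: it identifies $\colim_\alpha G(X\oplus M_{i,\alpha}[1])$ with the fibre product $G(X\oplus M_i[1])\times_{X(X\oplus M_i[1])}\colim_\alpha X(X\oplus M_{i,\alpha}[1])$ rather than with $G(X\oplus M_i[1])$ itself. One must therefore run the comparison fibrewise over the presheaf $X$, check that the relevant basepoint lies in the image of $\colim_\alpha X(X\oplus M_{i,\alpha}[1])\to X(X\oplus M_i[1])$, and verify that the resulting identification of fibres suffices to conclude that $u_!u^*G\to G$ is an equivalence of presheaves on all of $\Aff^{\mm{nil}, +}_{X/\KK}$. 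Everything else is formal, driven by Proposition \ref{prop:LKE is deformation functor}, Remark \ref{rem:cofiltered as pushout}, Remark \ref{rem:fmp in terms of sqz}, and the closure properties of $\Art_{\KK/B}$ from Corollary \ref{cor:art small}.
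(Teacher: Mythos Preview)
Your overall architecture is sound and matches the paper's: (b) is routine, (a) is the real work (via Proposition~\ref{prop:LKE is deformation functor} plus a laft check), and (c) follows from conservativity of $u^*$. However, your argument for the laft claim in (a) has a genuine gap, and your pointwise formula for $u_!F$ has the arrows reversed.

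First the formula: since $u$ is a functor between the categories $\Aff^{\mm{laft-nil},+}_{X/\KK}\hookrightarrow\Aff^{\mm{nil},+}_{X/\KK}$ (not their opposites), the left Kan extension of a presheaf is
\[
u_!F(S)\;\simeq\;\colim_{S\to T}\,F(T),
\]
indexed by $T\in\Aff^{\mm{laft-nil},+}_{X/\KK}$ with a map $S\to T$ \emph{under $X$}. This is exactly the factorisation category the paper uses.

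With the correct direction, your compactness heuristic breaks down. You assert that because $T$ is built from $X$ by finitely many square-zero extensions along \emph{coherent $B$-modules}, any map $S\to T$ factors through some $S_\alpha$. In ring-theoretic terms this asks that $\Map_{\SCR_{\KK/B}}(\mc{O}(T),-)$ commute with filtered colimits of $n$-coconnective objects; but the coherence is relative to $B$, not to $\KK$, and $\mc{O}(T)$ is in no sense compact as a $\KK$-algebra (already $\mc{O}(T)=B$ fails: sections of $C\to B$ do not commute with filtered colimits in $C$ unless $L_{B/\KK}$ is almost perfect). Your appeal to the pushout decomposition $S_\alpha\simeq S_\alpha^{\le n}\sqcup_{X^{\le n}}X$ does not help either, since $X^{\le n}$ and $S_\alpha^{\le n}$ do not lie in $\Aff^{\mm{nil},+}_{X/\KK}$, so $u_!F$ cannot be evaluated there.

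The paper's argument avoids this entirely. It does \emph{not} show that individual factorisations $S\to T$ lift to $S_\alpha\to T$. Instead it organises all factorisations into a cocartesian fibration $\pi\colon\cat{C}'\to\cat{C}$ and shows that for a map $T\to T'$ in $\Aff^{\mm{laft-nil},+}_{X/\KK}$ the change-of-fibre map on $|\pi^{-1}(-)|$ is an equivalence. The key input is Corollary~\ref{cor:maps of art are aft}: every morphism of Artinian extensions is almost of finite presentation, so $\Map_{S^{\le n}/}(S_\alpha^{\le n},T)\to\Map_{S^{\le n}/}(S_\alpha^{\le n},T')$ commutes with the filtered colimit over $\alpha$. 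Thus the fibres of $\pi$ are constant up to homotopy as one varies the factorisation, and can then be identified with the corresponding fibres for the presheaf $X$. This is the missing idea in your (a).

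Your approach to (c) is fine in spirit and parallels the paper's: both reduce, via the square-zero tower and deformation theory, to trivial extensions $X_{I}$ and then use laft to pass to coherent $I$. The paper packages this as ``$u^*$ detects equivalences between functors in class (2)''; your inductive version is equivalent. The fibrewise subtlety you flag is real and is exactly what the paper is using implicitly; it is handled by noting that the comparison is over the presheaf $X$ and the canonical basepoint (the zero derivation) always lies in the image of $\colim_\alpha X(X_{I_\alpha})\to X(X_I)$, which suffices once one loops using $G(X_I)\simeq\Omega G(X_{I[1]})$.
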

\begin{proof}
It will suffice to show that $X\to u_!(F)$ is indeed locally almost finitely presented. Assuming this, one has that $u_!$ is fully faithful because it takes the left Kan extension along the fully faithful functor $u$. A map $G\to G'$ between functors with deformation theory is an equivalence if and only if it for every square zero extension $X\to X_I$ with $I\in \QC(X)^+_{\geq 0}$, the induced map
$$\begin{tikzcd}
G(X_I) \simeq G(X)\times_{G(X_{I[1]})} G(X)\arrow[r] & G'(X)\times_{G'(X_{I[1]})} G'(X) = G'(X_I)
\end{tikzcd}$$
is an equivalence. If $X\to G$ and $X\to G'$ are both locally almost finitely presented, it suffices to verify this when $I\in \Coh(X)$, in which case $X_I\in \cat{Aff}_{X/}^{\mm{laft-nil}, +}$. It follows that $u^*$ detects equivalences such functors, so that $(u_!, u^*)$ restricts to the desired equivalence.

To see that $X\to u_!(F)$ is indeed locally almost finitely presented, let $S_{*}\colon \cat{K}\rt \cat{Aff}_{X/\KK}^{\mm{nil}, +}$ be a cofiltered diagram as in Definition \ref{def:laft def functor}, with limit $S$. By Remark \ref{rem:cofiltered as pushout}, we can assume that $S_\ast = S^{\leq n}_{\ast}\sqcup_{X^{\leq n}} X$ arises as the pushout of the cofiltered diagram of $n$-coconnective schemes $S_\alpha^{\leq n}$ under $X^{\leq n}$. Consequently, we have natural equivalences $S_\alpha\simeq S_{\alpha}^{\leq n}\sqcup_{S^{\leq n}} S$. We will need to prove that the square
$$\begin{tikzcd}
\colim_\alpha X(S_\alpha)\arrow[d]\arrow[r] & X(S)\arrow[d]\\
\colim_\alpha u_!F(S_\alpha)\arrow[r] & u_!F(S)
\end{tikzcd}$$
is cartesian. To this end, note that $u_!F(S)$ arises as the classifying space of the $\infty$-category $\cat{C}$ of factorizations $S\rt T\rt j_!F$ with $T\in \Aff^{\mm{laft-nil}, +}_{X/\KK}$. Likewise, for each $\alpha$ let $\cat{C}(\alpha)$ be the $\infty$-category of factorizations $S_\alpha\rt T\rt u_!F$. This determines a diagram of categories $\cat{K}\rt \Cat$ whose unstraightening
$$
\cat{C}'=\int_{\alpha\in \cat{K}} \cat{C}(\alpha)
$$
has the property that $|\cat{C}'|=\colim_\alpha u_!F(S_\alpha)$.

We now consider the functor $\pi\colon \cat{C}'\rt \cat{C}$ sending each $S_\alpha\to T\to u_!F$ to the composite $S\to S_\alpha\to T\to u_!F$. This is a cocartesian fibration (by postcomposition with $T\to T'$) whose fibre over $x=\big(S\to T\rt u_!F\big)\in \cat{C}$ is given by the $\infty$-category
$$
\pi^{-1}(x) = \int_{\alpha\in \cat{K}} \Map_{S/}\big(S_\alpha, T\big) \simeq \int_{\alpha\in \cat{K}} \Map_{S^{\leq n}/}\big(S^{\leq n}_\alpha, T\big).
$$
The classifying space of this $\infty$-category can be identified with $
|\pi^{-1}(x)| = \colim_{\alpha} \Map_{S^{\leq n}/}(S^{\leq n}_\alpha, T)$. For every map in $\cat{C}$ of the form 
$$
S\to T\stackrel{f}{\rt} T'\to u_!F
$$
the induced change-of-fibres map $f_*\colon \pi^{-1}(x)\rt\pi^{-1}(x')$ is then given at the level of classifying spaces by
$$\begin{tikzcd}
\colim_{\alpha} \Map_{S^{\leq n}/}(S^{\leq n}_\alpha, T)\arrow[r, "f\circ -"] & \colim_{\alpha} \Map_{S^{\leq n}/}(S^{\leq n}_\alpha, T').
\end{tikzcd}$$
This map is a weak equivalence since every $T\rt T'$ in $\Aff^{\mm{laft-nil}, +}_{X/\KK}$ is locally almost finitely presented (Corollary \ref{cor:maps of art are aft}). Consequently, given a point $x\in |\cat{C}|=u_!F(S)$, corresponding to a factorization $x\colon S\rt T\rt u_!F$, there is an equivalence
$$
\big(\colim_\alpha u_!F(S_\alpha)\big)\times_{u_!F(S)} \{x\}\simeq |\pi^{-1}(x)|.
$$
Applying this to each point $x\colon S\rt X\rt u_!F$, one sees that
$$
\big(\colim_\alpha u_!F(S_\alpha)\big)\times_{u_!F(S)} \{x\}\simeq \colim_\alpha \Map_{S/}(S_\alpha, X)\simeq \big(\colim_\alpha X(S_\alpha)\big)\times_{X(S)} \{x\}
$$
which yields the result.
\end{proof} 
Next, let $\PreSt_{X_{\inf}}^\mm{conv}$ be the $\infty$-category of convergent prestacks over $X_{\inf}$. Every $S\in \Aff^{\mm{nil}, +}_{X/\KK}$ in gives rise to a convergent prestack $S\to X_{\inf}$ over $X_{\inf}$, where the map is adjoint to the canonical map $S_{\prored} = X_{\prored} \to X$. Let us now consider the adjoint pairs
$$\begin{tikzcd}
v_!\colon \colon \cat{P}\big(\Aff^{\mm{nil}, +}_{X/\KK}\big)\arrow[r, yshift=1ex] & \big(\PreSt^\mm{conv}_{X_{\inf}}\big)\arrow[l, yshift=-1ex]\colon v^* & \tilde{v}_!\colon \cat{P}\big(\Aff^{\mm{nil}, +}_{X/\KK}\big)\arrow[r, yshift=1ex] & \big(\PreSt^\mm{conv}_{X_{\inf}}\big)_{X/}\arrow[l, yshift=-1ex]\colon \tilde{v}^*
\end{tikzcd}$$
where $v_!$ and $\tilde{v}_!$ are the unique colimit-preserving functors sending each nilpotent embedding $S\in \Aff^{\mm{nil}, +}_{X/\KK}$ to the diagrams of convergent prestacks $S\to X_{\inf}$ and $X\rt S\rt X_{\inf}$, respectively. At the level of the right adjoints, given a diagram of convergent $\KK$-prestacks $X\rt Y\rt X_{\inf}$, the presheaf $\tilde{v}^*Y$ sends each nilpotent embedding $X\hookrightarrow S$ in $\Aff^{\mm{nil}, +}_{X/\KK}$ to the space of dotted lifts
\begin{equation}\label{diag:v*}\begin{tikzcd}
X\arrow[r]\arrow[d, hookrightarrow] & Y\arrow[d]\\
S\arrow[r]\arrow[ru, dotted] & X_{\inf}
\end{tikzcd}\end{equation}
in $\PreSt^\mm{conv}_{\KK}$. Let us point out that, since $X\hookrightarrow S$ is a nilpotent embedding, this space is equivalent to the space of dotted lifts where we forget compatibility with the projection to $X_{\inf}$.
\begin{remark}\label{rem:pro-left adj}
The functor $v_!$ can be computed explicitly as follows: if $x\colon S\to X_{\inf}$ is a map from an \emph{eventually coconnective} affine derived scheme, then
$$
\Map_{/X_{\inf}}(S, v_!F) \simeq F(X\sqcup_{S_{\prored}} S)= \colim_\alpha F\big(S\amalg_{S_\alpha} X\big)
$$
with the colimit taken over the filtered $\infty$-category of nilpotent ideals in $\pi_0(\mc{O}(S))$. Indeed, both of these functors preserve colimits, as colimits of convergent prestacks are computed pointwise on eventually coconnective affine derived schemes. Furthermore, they are naturally equivalent for functors $F$ represented by $T\in \Aff^{\mm{nil}, +}_{X/\KK}$, because $\Map_{/X_{\inf}}(S, v_!T)$ is naturally equivalent to the diagram of dotted extensions in $\Aff_{\KK}$
$$\begin{tikzcd}
S_{\prored}\arrow[r, "x"]\arrow[hookrightarrow, d] & X\arrow[d, hookrightarrow] \\
S\arrow[r, dotted] & T.
\end{tikzcd}$$
\end{remark}
\begin{lemma}\label{lem:red1}
The adjoint pair $(\tilde{v}_!, \tilde{v}^*)$ restricts to an equivalence between the full subcategories of:
\begin{enumerate}
\item functors $F\colon\big(\Aff^{\mm{nil}, +}_{X/\KK}\big)^{\op}\rt \sS$ with deformation theory.
\item diagrams of nil-isomorphic convergent prestacks $X\rt Y\rt X_{\inf}$ where $Y$ has deformation theory.
\end{enumerate}
\end{lemma}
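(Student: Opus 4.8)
The plan is to check that $\tilde{v}_!$ and $\tilde{v}^*$ carry the subcategories (1) and (2) into one another, and then that the unit and counit of the adjunction restrict to equivalences on them. Three preliminary observations will be used throughout. First, the underlying convergent prestack of $\tilde{v}_! F$ is canonically $v_! F$: the forgetful functor $(\PreSt^{\mm{conv}}_{X_{\inf}})_{X/}\rt \PreSt^{\mm{conv}}_{X_{\inf}}$ creates colimits, and $\tilde{v}_!$, $v_!$ agree on the representables $h_S$ (each sends $h_S$ to $S$), so $\tilde{v}_! F$ merely records $v_!F$ together with the structure map $X\rt v_!F$ adjoint to the identity cocone. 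Second, every nilpotent embedding $X\hookrightarrow S$ in $\Aff^{\mm{nil},+}_{X/\KK}$ decomposes into finitely many square zero extensions inside $\Aff^{\mm{nil},+}_{X/\KK}$: the fibre $I=\fib(\mc{O}(S)\rt B)$ is connective and eventually coconnective, so truncating it contributes square zero extensions by the shifted discrete modules $\pi_i(I)[i]$, and the residual nilpotent ideal $J\subseteq\pi_0\mc{O}(S)$ is split off by the finite filtration by powers $J^k$; each step is a pushout along a nilpotent embedding of the shape \eqref{diag:sqz art} (compare the proof of \Cref{prop:artinian small}). Third, for $Y$ in (2) and $S\in\Aff^{\mm{nil},+}_{X/\KK}$, unwinding \eqref{diag:v*} and using that $X\rt S$ is a nilpotent embedding identifies $\tilde{v}^*Y(S)$ with the fibre of $Y(S)\rt Y(X)$ over the marked point.

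From the third observation, $\tilde{v}^*$ carries (2) into (1): on the initial object $X$ this fibre is contractible, and since fibres commute with pullbacks and $Y$ sends pushouts along nilpotent embeddings of affines to pullbacks, $\tilde{v}^*Y$ sends pushouts along nilpotent embeddings to pullbacks. Conversely, let $F$ lie in (1); I claim $\tilde{v}_!F$ lies in (2). Convergence of $v_!F$ holds by construction, and $X\rt v_!F$ is a nil-isomorphism: by \Cref{rem:pro-left adj}, for eventually coconnective $S$ one has $\Map_{/X_{\inf}}(S, v_!F)\simeq\colim_\alpha F(S\amalg_{S_\alpha}X)$ over the nilpotent ideals $\alpha$ of $\pi_0\mc{O}(S)$, which for reduced affine $S$ is the single term $F(X)\simeq\ast$. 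Finally $v_!F$ has deformation theory: convergence reduces the test of preservation of pullbacks along nilpotent extensions to eventually coconnective rings, where \Cref{rem:pro-left adj} expresses the relevant values as filtered colimits, indexed by nilpotent ideals, of values of $F$ on squares which are themselves pushouts along nilpotent embeddings in $\Aff^{\mm{nil},+}_{X/\KK}$; since filtered colimits commute with the finite limits witnessing that $F$ has deformation theory, the conclusion follows.

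It remains to see the unit and counit are equivalences. For the unit, note that on a representable $h_S$ the composite $\tilde{v}^*\tilde{v}_!h_S$ is the presheaf $S'\longmapsto\Map_{\Aff^{\mm{nil},+}_{X/\KK}}(S', S)$ --- a lift of $X\rt S$ along the nilpotent embedding $X\rt S'$ is the same as a map $S'\rt S$ under $X$, and compatibility with the projections to $X_{\inf}$ is automatic since these factor through pro-reductions --- so the unit is an equivalence on representables, hence on the full subcategory of $\cat{P}(\Aff^{\mm{nil},+}_{X/\KK})$ generated from them under the colimit and square zero pushout operations that $\tilde{v}_!$ and $\tilde{v}^*$ both respect, by the previous paragraph; this subcategory is exactly (1). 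The counit on (2) is symmetric: for $Y$ in (2) the map $\tilde{v}_!\tilde{v}^*Y\rt Y$ agrees with $X$ on reduced affines, and on eventually coconnective affines one uses that, since $Y$ has deformation theory and $X\rt Y$ is a nil-isomorphism, $Y(S)\simeq\colim_\alpha\tilde{v}^*Y(S\amalg_{S_\alpha}X)$, which is $\tilde{v}_!\tilde{v}^*Y(S)$ by \Cref{rem:pro-left adj}; both sides being convergent, the map is an equivalence.

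The technical heart is the claim, in the second paragraph, that $v_!F$ has deformation theory: the work lies in arranging the pro-colimit of \Cref{rem:pro-left adj} so that the filtered colimit over nilpotent ideals and the finite limits recording the deformation theory of $F$ interchange correctly, and in verifying that every square in sight is genuinely a pushout along a nilpotent embedding with eventually coconnective fibres, so that the hypotheses on $F$ apply. The corresponding promotion step for the unit (and counit) is of the same flavour and is where the bookkeeping of \cite{GR} is reproduced.
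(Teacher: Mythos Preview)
Your overall plan---verify the two subcategories are preserved, then check the unit and counit---matches the paper's, and your treatment of $\tilde{v}^*$ sending (2) to (1) and of $\tilde{v}_!F$ having deformation theory is essentially the paper's argument. The counit sketch is also morally correct, though your identification ``$Y(S)\simeq\colim_\alpha\tilde{v}^*Y(S\amalg_{S_\alpha}X)$'' should be read fibrewise over a fixed $x\colon S\to X_{\inf}$, since the formula of Remark~\ref{rem:pro-left adj} computes $\Map_{/X_{\inf}}(S, v_!F)$ rather than the full value $v_!F(S)$; the paper makes this explicit.

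The unit argument, however, has a real gap. You correctly check that the unit is an equivalence on representables $h_S$, but the passage to all of (1) does not work as stated. You appeal to ``colimit and square zero pushout operations that $\tilde{v}_!$ and $\tilde{v}^*$ both respect,'' but $\tilde{v}^*$ is a right adjoint and has no reason to preserve the colimits needed to express a general $F$ in terms of representables; the endofunctor $\tilde{v}^*\tilde{v}_!$ is not known to commute with any class of colimits that would generate (1) from the $h_S$. Your final paragraph calls this ``promotion step'' mere bookkeeping of the same flavour as the deformation-theory check, but it is not: the stated bootstrapping strategy fails, and a genuinely different argument is needed.

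The paper instead computes the unit directly at each $S$. Writing $S\simeq S^{\leq n}\sqcup_{X^{\leq n}}X$ and using the pro-formula of Remark~\ref{rem:pro-left adj}, one identifies the unit map $F(S)\to \tilde{v}^*\tilde{v}_!F(S)$ with
\[
F\big(S^{\leq n}\sqcup_{X^{\leq n}}X\big)\longrightarrow F\big(S^{\leq n}\amalg_{S_{\prored}}X\big)\times_{F(X^{\leq n}\amalg_{X_{\prored}}X)}\{\nabla^*x_0\},
\]
and then observes that the square with corners $X^{\leq n}\amalg_{X_{\prored}}X$, $X$, $S^{\leq n}\amalg_{S_{\prored}}X$, $S$ is a pushout of pro-objects in $\Aff^{\mm{nil},+}_{X/\KK}$ (using that $S_{\prored}\simeq X_{\prored}$). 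Because $F$ has deformation theory it sends this pushout to a pullback, so the displayed map is an equivalence. This direct computation is the step your proposal is missing; it does not reduce to closure under colimits.

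A minor point: your first preliminary---that the forgetful functor $(\PreSt^{\mm{conv}}_{X_{\inf}})_{X/}\to\PreSt^{\mm{conv}}_{X_{\inf}}$ creates colimits---is false as stated (the initial object of the under-category is $X$). It creates colimits of \emph{weakly contractible} diagrams. The paper is careful here, noting that for $F$ in (1) one has $F(X)\simeq\ast$, so $F$ is a colimit over a contractible category of elements, whence $\tilde{v}_!F\simeq v_!F$. Your application is ultimately correct, but the premise needs this qualification.
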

\begin{proof}
Let us first prove that $\tilde{v}_!$ and $\tilde{v}^*$ preserve these two subcategories. First, given $X\rt Y\rt X_{\inf}$ where $Y$ has deformation theory, the diagram \eqref{diag:v*} directly shows that $\tilde{v}^*(Y)$ sends pushouts in $\Aff^{\mm{nil}, +}_{X/\KK}$ along nilpotent embeddings to pullbacks of spaces and sends $X$ to a contractible space.

Next, we consider the functor
$$\begin{tikzcd}
\cat{P}\big(\Aff^{\mm{nil}, +}_{X/\KK}\big)\arrow[r, "\tilde{v}_!"] & \big(\PreSt^\mm{conv}_{X_{\inf}}\big)_{X/}\arrow[r, "\mm{forget}"] & \PreSt^\mm{conv}_{X_{\inf}}
\end{tikzcd}$$
and the unique colimit-preserving functor
$$\begin{tikzcd}
v_!\colon \cat{P}\big(\Aff^{\mm{nil}, +}_{X/\KK}\big)\arrow[r]  & \PreSt^\mm{conv}_{X_{\inf}}
\end{tikzcd}$$
that sends $X\hookrightarrow S$ to the map $S\to X_{\inf}$ adjoint to $S_{\prored}=X_{\prored}\to X$. Both functors preserve colimits indexed by contractible diagrams and are naturally equivalent on representables. Consequently, they are naturally equivalent on all presheaves arising as colimits of contractible diagrams of representables. Since every functor $F\colon \Aff^{\mm{nil}, +}_{X/\KK}\to \sS$ with deformation theory has $F(X)\simeq \ast$, it is such a colimit of a contractible diagram, i.e.\ we can identify $\tilde{v}_!(F)\rt X_{\inf}$ simply with $v_!(F)\rt X_{\inf}$. Using the formula from Remark \ref{rem:pro-left adj} and Lemma \ref{lem:convergent def thy}, one now readily verifies that if $F$ has deformation theory, then the convergent prestack $\tilde{v}_!(F)\simeq v_!(F)$ has deformation theory and is nil-isomorphic to $X_{\inf}$ (or equivalently, to $X$).

The adjoint pair $(\tilde{v}_!, \tilde{v}^*)$ therefore restricts to an adjoint pair between functors $F$ with deformation theory and nil-isomorphisms $X\rt Y\rt X_{\inf}$ where $Y$ has deformation theory. When $F$ has deformation theory, the unit $F\rt \tilde{v}^*\tilde{v}_! F$ is given at each nilpotent embedding $X\hookrightarrow S$ as follows. Write $x_0\colon X\to F$ for the canonical map and suppose that $\mc{O}(S)\to \mc{O}(X)$ has $(n-1)$-coconnective fibre for some $n$, so that $S=S^{\leq n}\sqcup_{X^{\leq n}} X$ is the pushout of the corresponding $n$-coconnective affine derived schemes. Then the unit map can be identified with
\begin{equation}\label{eq:v_!-v*-unit}\begin{tikzcd}
F(S)\simeq F(S^{\leq n}\sqcup_{X^{\leq n}} X)\arrow[r] & F(S^{\leq n}\amalg_{S_{\prored}} X)\times_{F(X^{\leq n}\amalg_{X_{\prored}} X)} \{\nabla^*x_0\}
\end{tikzcd}\end{equation}
where $f\colon X^{\leq n}\sqcup_{X_{\prored}} X\to X$ arises from the codiagonal map. Diagrammatically, this sends the dotted map $x\colon S\rt F$ in the following diagram to its restrictions:
$$\begin{tikzcd}
X^{\leq n}\amalg_{X_{\prored}}X\arrow[r, "\nabla"]\arrow[d] & X\arrow[rd, "x_0"]\arrow[d]\\
S^{\leq n}\amalg_{S_{\prored}}X\arrow[r] & S\arrow[r, dotted, "x"] & F
\end{tikzcd}$$
where $X^{\leq n}\amalg_{X_{\prored}}X$ is a pro-object in $\Aff^{\mm{nil}, +}_{X/\KK}$ using the inclusion of $X$ as the right component. Using that $X_{\prored}\rt S_{\prored}$ is an equivalence and that $S\simeq S^{\leq n}\sqcup_{X^{\leq n}} X$, one sees that the above square is a pushout square of pro-objects in $\Aff^{\mm{nil}, +}_{X/\KK}$. Consequently, the unit map \eqref{eq:v_!-v*-unit} is an equivalence, since $F$ had deformation theory.

On the other hand, for $X\stackrel{y_0}{\rt} Y\rt X_{\inf}$, the counit $\tilde{v}_!\tilde{v}^*Y\rt Y$ is given at each $x\colon S\to X_{\inf}$, where $S$ is an eventually coconnective affine derived scheme, by the map
\begin{equation}\label{eq:v_!-v*-counit}\begin{tikzcd}
\Map_{/X_{\inf}}\big(S\amalg_{S_{\prored}} X, Y\big)\times_{\Map_{/X_{\inf}}(X, Y)} \{y_0\}\arrow[r] & \Map_{/X_{\inf}}(S, Y).
\end{tikzcd}\end{equation}
In terms of diagrams, the domain can be identified with the space of dotted extensions in the following diagram
$$\begin{tikzcd}
S_\prored\arrow[r]\arrow[d] & X\arrow[r, "y_0"]\arrow[d] &  Y\arrow[d]\\
S\arrow[r] & S\sqcup_{S_\prored} X\arrow[ru, dotted, "y"{swap}] \arrow[r] & X_{\inf}.
\end{tikzcd}$$
The counit sends such $y$ to its restriction to $S$. Notice that the pushout $S\sqcup_{S_{\prored}} X$ arises as a pro-system of pushout squares $S\sqcup_{S_\alpha} X$, where each $S_\alpha\rt S$ is a nilpotent embedding. Since $Y$ has deformation theory, it follows that the map
$$\begin{tikzcd}
Y(X\sqcup_{S_\prored} S)\arrow[r] & Y(X)\times_{Y(S_\prored)} Y(S)
\end{tikzcd}$$
is an equivalence. Consequently, the counit map \eqref{eq:v_!-v*-counit} can be identified with the map
$$\begin{tikzcd}
Y(S)\times_{(Y(S_{\prored})\times_{X(S_{\prored})} \{x\})} \{y_0\big|_{S_{\prored}}\}\arrow[r] & Y(S)
\end{tikzcd}$$
projecting onto the first factor. The fact that $Y\rt X_{\inf}$ is a nil-isomorphism is equivalent to the fact that $Y(S_{\prored})\simeq X(S_{\prored})$, so that $Y(S_{\prored})\times_{X(S_{\prored})} \{x\}$ is contractible and the above map is indeed an equivalence as desired.
\end{proof}
Finally, we note:
\begin{lemma}\label{lem:form moduli stack over Xinf}
The functor $\mm{oblv}\colon \big(\PreSt_{/X_{\inf}}\big)_{X/}\to \PreSt_{X/\KK}$ sending each $X\to Y\to X_{\inf}$ to $X\to Y$ restricts to an equivalence between the full subcategories of:
\begin{enumerate}
\item Diagrams $X\to Y\to X_{\inf}$ where $Y$ has deformation theory and $X\to Y$ is a laft nil-isomorphism.

\item Formal moduli stacks $X\to Y$ in the sense of Definition \ref{DefModuliStack}.
\end{enumerate}
\end{lemma}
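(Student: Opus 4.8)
The plan is to reduce the statement to a single contractibility assertion about the infinitesimal prestack $X_{\inf}$: for any prestack $Y$ with deformation theory equipped with a nil-isomorphism $X\to Y$, the space $\Map_{\PreSt_{X/\KK}}(Y, X_{\inf})$ is contractible. Granting this, both parts of the lemma become formal. First I would observe that $\mm{oblv}$ carries an object $X\to Y\to X_{\inf}$ of subcategory (1) to a map $X\to Y$ satisfying conditions (1)--(3) of \Cref{DefModuliStack}, since those are precisely the hypotheses imposed in (1); so $\mm{oblv}$ lands in formal moduli stacks. Conversely, given a formal moduli stack $X\to Y$, the prestack $Y$ has deformation theory and $X\to Y$ is a laft nil-isomorphism, so the contractibility assertion supplies an essentially unique lift to a diagram $X\to Y\to X_{\inf}$ lying in (1); thus $\mm{oblv}$ is essentially surjective onto formal moduli stacks. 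For full faithfulness, I would use that for $Y,Y'$ in (1) the mapping space in $\big(\PreSt_{/X_{\inf}}\big)_{X/}$ is the fibre of $\Map_{\PreSt_{X/\KK}}(Y,Y')\to \Map_{\PreSt_{X/\KK}}(Y,X_{\inf})$ over the chosen structure map $Y\to X_{\inf}$; since the target is contractible by the assertion, this fibre is all of $\Map_{\PreSt_{X/\KK}}(Y,Y')$, so $\mm{oblv}$ is fully faithful on (1), and hence restricts to the desired equivalence.

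To prove the contractibility assertion I would invoke the defining property of $X_{\inf}=(X/\KK)_{\inf}$ used in \Cref{not:nilpotent cats} and \Cref{rem:pro-left adj}: a map into $X_{\inf}$ from an eventually coconnective affine derived scheme $S$ is the datum of a map $S_{\prored}\to X$ out of the pro-reduction (\Cref{def:prored}). Since a prestack with deformation theory preserves limits of Postnikov towers and is therefore convergent (\Cref{lem:convergent def thy}), $Y$ is a colimit of the eventually coconnective affine schemes mapping to it; passing to limits over this diagram produces a natural equivalence $\Map_{\PreSt_{\KK}}(Y, X_{\inf})\simeq \Map_{\PreSt_{\KK}}(Y_{\prored}, X)$, where $Y_{\prored}$ is the corresponding colimit of pro-reductions, compatibly with the analogous equivalence for $X$ in place of $Y$. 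Taking fibres over the canonical maps identifies $\Map_{\PreSt_{X/\KK}}(Y, X_{\inf})$ with the space of maps $Y_{\prored}\to X$ under $X_{\prored}$, where $Y_{\prored}$ carries the map induced by $X\to Y$ and $X$ carries the canonical map. Finally, because $X\to Y$ is a nil-isomorphism, i.e.\ an equivalence on reduced affine schemes (\Cref{rem:nil-iso laft}), the induced map $X_{\prored}\to Y_{\prored}$ is an equivalence; hence $(X_{\prored}\to Y_{\prored})$ is the initial object of the coslice $(\PreSt_{\KK})_{X_{\prored}/}$, the mapping space out of it is contractible, and one checks that the resulting composite $X\to Y\to X_{\inf}$ is the canonical map, as required for the compatibility above.

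Combined with \Cref{cor:laft def functors} and \Cref{lem:red1}, this completes the proof of \Cref{thm:FMP local to global}. I expect the only delicate point to be the second paragraph: extending the universal property of $X_{\inf}$ from affine test schemes to the prestack $Y$ (which is where convergence enters) and relating nil-isomorphisms to equivalences of pro-reductions, so that the contractibility genuinely occurs in the coslice under $X$ and not merely at the level of absolute mapping spaces. The remainder is bookkeeping with fibre sequences of mapping spaces and with the definitions of the two subcategories.
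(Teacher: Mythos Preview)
Your reduction to the contractibility of $\Map_{X/}(Y, X_{\inf})$ and the formal deductions from it (essential surjectivity and full faithfulness of $\mm{oblv}$) are correct and match the paper's approach exactly. The paper's proof of contractibility is a single observation: $X$, $Y$, and $X_{\inf}$ are naturally equivalent when evaluated on any pro-reduced affine $S_{\prored}$, and since $X_{\inf}(A)$ depends only on $A_{\prored}$, this forces the mapping space under $X$ to be contractible. No auxiliary object $Y_{\prored}$ is introduced, and convergence plays no role (every prestack is a colimit of the affines over it, so the formula $\Map(Y,X_{\inf})=\lim_{\Spec(A)\to Y} X(A_{\prored})$ needs no hypothesis on $Y$).

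Your route through $Y_{\prored}$ can be made to work, but the justification of the key step has a gap. You invoke \Cref{rem:nil-iso laft} to rewrite the nil-isomorphism as an equivalence on \emph{reduced} affines, and then conclude that $X_{\prored}\to Y_{\prored}$ is an equivalence. This does not follow: in your setup $X_{\prored}$ and $Y_{\prored}$ are colimits indexed by the distinct categories $\Aff_{/X}$ and $\Aff_{/Y}$ (containing all affines, not just reduced ones), and agreement of $X$ and $Y$ on reduced rings says nothing direct about these indexing categories or the resulting colimits. What actually gives $X_{\prored}\simeq Y_{\prored}$ is the definition of nil-isomorphism in terms of \emph{pro-reduced} affines (\Cref{def:nil-iso}): it says precisely that $X_{\inf}\simeq Y_{\inf}$, and since $X_{\inf}(A)=X_{\inf}(A_{\prored})$ one has $\Map(W,X_{\inf})\simeq\Map(W_{\inf},X_{\inf})$ for any $W$. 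Applying this to $W=X$ and $W=Y$ gives the contractibility immediately, without ever needing to name $Y_{\prored}$. The passage through \Cref{rem:nil-iso laft} is only used in the paper to match condition (3) of \Cref{DefModuliStack} with the nil-isomorphism hypothesis, not to prove the contractibility.
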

\begin{proof}
Note that if $f\colon X\to Y$ is locally almost finitely presented, then $f$ is a nil-isomorphism if and only if it restricts to an equivalence on reduced schemes, by Remark \ref{rem:nil-iso laft}. The forgetful functor then restricts to an equivalence because for any nil-isomorphism $X\to Y$, space $\Map_{X/}(Y, X_{\inf})$ is contractible, as all three functors are naturally equivalent on pro-reduced affine schemes $S_{\prored}$. 
\end{proof}
\begin{proof}[Proof of  \Cref{thm:FMP local to global}]
 \Cref{prop:LKE is deformation functor} and  \Cref{lem:red1} imply that the composite adjunction
$$\begin{tikzcd}
w_!\colon \cat{P}\big(\Aff^{\mm{laft-nil}, +}_{X/\KK}\big)\arrow[r, yshift=1ex, "u_!"]  & \cat{P}\big(\Aff^{\mm{nil}, +}_{X/\KK}\big) \arrow[r, yshift=1ex, "\tilde{v}_!"] \arrow[l, yshift=-1ex, "u^*"] & \big(\PreSt^{\mm{conv}}_{X_{\inf}}\big)_{X/}\arrow[l, yshift=-1ex, "\tilde{v}^*"]\colon w^*
\end{tikzcd}$$
restricts to an adjunction $(w_!^\mm{def}, w^*_{\mm{def}})$ between formal moduli problems on $\Aff^{\mm{laft-nil}, +}_{X/\KK}\simeq \Art_{B/\KK}^{\op}$ and nil-isomorphisms $X\rt Y\rt X_{\inf}$ where $Y$ has deformation theory. 

The resulting adjoint pair $(w_!^\mm{def}, w^*_{\mm{def}})$ has a fully faithful left adjoint: $u_!$ is fully faithful by Corollary \ref{cor:laft def functors} and $\tilde{v}_!$ was fully faithful on functors with deformation theory by  \Cref{lem:red1}. It therefore remains to show that the essential image of $w_!^\mm{def}$ consists precisely of nil-isomorphisms $X\rt Y\to X_{\inf}$ which exhibit $X$ as a laft $Y$-prestack. Lemma \ref{lem:form moduli stack over Xinf} then implies that $w^*_{\mm{def}}$ induces an equivalence between formal moduli stacks $X\to Y$ and formal moduli problems. Unravelling the definitions, one sees that $w^*_\mm{def}$ is precisely the functor from  \Cref{thm:FMP local to global}.

Let us first verify that $X\rt w_!^\mm{def}(F)$ is indeed laft, i.e.\ for each map $T\rt w_!^{\mm{def}}(F)$ from an eventually coconnective affine derived $\KK$-scheme $T$, the pullback $X\times_{w_!^{\mm{def}}(F)} T$ is a laft $T$-prestack. Note that $w_!^{\mm{def}}(F)$ is the colimit of a contractible diagram of nilpotent extensions $S\in \Aff^{\mm{laft-nil}, +}_{X/\KK}$, and that such colimits in $\PreSt^{\mm{conv}}_{\KK}$ are computed pointwise on eventually coconnective affine derived schemes. Hence it suffices to treat the case where $T\rt w_!^{\mm{def}}(F)$ fits into a diagram
$$\begin{tikzcd}
&  X\arrow[d, hookrightarrow]\arrow[rd] & \\
T\arrow[r] & S \arrow[r, "w_!(x)"] & w_!^{\mm{def}}(F)
\end{tikzcd}$$
where $X\to S$ is contained in $\Aff^{\mm{laft-nil}, +}_{X/\KK}$ and $x\in F(S)$. It therefore suffices to treat the case where $T=S$. In that case, 
$$
X\times_{w_!^{\mm{def}}(F)} S = w_!^\mm{def}\big(X\times_F S\big)
$$
arises as the image of a formal moduli problem over the base $S$, i.e.\ a formal moduli problem $\big(\Aff_{X/S}^{\mm{laft-nil}, +}\big)^{\op}\rt \sS$. This uses that $u_!$ preserves pullbacks of formal moduli problems (by Corollary \ref{cor:laft def functors}) and that $v_!$ preserves pullbacks by the formula from Remark \ref{rem:pro-left adj}.

Since $X\rt S$ is a nilpotent embedding which is almost finitely presented, each $S'\in \Aff_{X/S}^{\mm{laft-nil}, +}$ is almost finitely presented over $S$ (Corollary \ref{cor:maps of art are aft}). Consequently, the prestack $X\times_{w_!^{\mm{def}}(F)} T$ is a colimit of almost finitely presented affine derived $T$-schemes; this implies that it is a laft $T$-prestack, as required.

To conclude, it now suffices to verify that $w^*_{\mm{def}}$ detects equivalences between laft nil-isomorphisms $X\rt Y$ where $Y$ has deformation theory. This follows from the fact that $\tilde{v}^*$ detects weak equivalences by \Cref{lem:red1} and that $u^*$ detects weak equivalences by Corollary \ref{cor:laft def functors}.
\end{proof}

\subsection{Formal integration of partition Lie algebroids} 
\label{Formalintegration}

Let us again fix a coherent animated base ring $\KK$ and a locally coherent qcqs derived $\KK$-scheme $X$.
To prove our main theorem \Cref{mainintext}, we will proceed in two steps: first, we will show that the 
shifted tangent complex functor 
$$\begin{tikzcd}[row sep=0pc]
\modulistk_{X/\KK} \arrow[r] & (\QC^{\vee}_X)_{/T_{X/\KK}[1]}
\\
(X \rightarrow Y) \arrow[r, mapsto] & (T_{X/Y}[1] \rightarrow  T_{X/\KK}[1])
 \end{tikzcd}$$
is monadic; next, we will 
identify the resulting monad with the partition Lie algebroid monad constructed in \Cref{thm:pla scheme}.  
In our proof,  we will need to relate formal moduli stacks  on different schemes.

\begin{construction}[Functorial moduli stacks and their tangent fibres]\label{change_of_base_FMP}
	Let $X' \rightarrow X$ be a laft map of prestacks with deformation theory. Given a formal moduli stack $X \rightarrow Y$, consider the composite map $$X'\rightarrow X \rightarrow Y$$ and let $Y^{\wedge}_{X'}$ be the formal completion of $Y$ along  $X' \rightarrow Y$ as defined in Construction \ref{formal_completion}. Via the natural map $X' \rightarrow Y^{\wedge}_{X'}$, the prestack $Y^{\wedge}_{X'}$ becomes a formal moduli stack under $X'$ by \Cref{lem:formal completion obvious}. The construction $(X \rightarrow Y) \longmapsto (X'\rightarrow Y^{\wedge}_{X'})$ defines a functor 
	$$
	(-)^{\wedge}_{X'}\colon \modulistk_{X/\KK} \rightarrow \modulistk_{X'/\KK}.$$
	More formally, let us write
	$ \PrStk_{\KK}^{\mm{def}, \mm{laft-map}}$ for the $\infty$-category of $\KK$-prestacks with  deformation theory and laft maps between them.
	Consider the full subcategory 
	$$\modulistk_{\KK} \subset \Fun(\Delta^1, \PrStk_{\KK}^{\mm{def}, \mm{laft-map}})$$ whose objects are maps of $\KK$-prestacks $ X \rightarrow Y$ with $Y$ a formal moduli stack under $X$ (Definition \ref{DefModuliStack}) and whose morphisms are squares 
	$$\begin{tikzcd}
		X\arrow[r]\arrow[d] & Y\arrow[d]\\
		X'\arrow[r ] & Y'\end{tikzcd} 
	$$
	for which the vertical maps are laft and have deformation theory.
	Evaluation at $0$ defines a cartesian fibration 
	$$\modulistk_\KK \to \PrStk_{\KK}^{\mm{def}, \mm{laft-map}},$$ which we straighten to obtain a functor $(\PrStk_{\KK}^{\mm{def}, \mm{laft-map}})^{\op} \rightarrow  \Cat_{\infty}$. It  sends an object $X$ to $\modulistk_{X/\KK}$ and a morphism $X' \rightarrow X$ to the induced 
	$$	\begin{tikzcd}
	(-)^{\wedge}_{X'}\colon \modulistk_{X/\KK} \arrow[r] &  \modulistk_{X'/}\ ; \quad (X \rightarrow Y) \arrow[r, mapsto] &  (X'\rightarrow Y^{\wedge}_{X'}).
	\end{tikzcd}$$
	
	Construction \ref{tangent_complex_naive} and Proposition \ref{naturalityofT} then define (after suspending) the functorial shifted relative tangent complex  
	$$\begin{tikzcd}[row sep=0pc]
	T[1]\colon \modulistk_{\KK} \arrow[r] & \QC^{\vee}_{/T_{-/\KK}[1]}\\ \hspace{20pt} (X\rightarrow Y) \arrow[r, mapsto] & (T_{X/Y}[1] \rightarrow T_{X/\KK}[1])
	\end{tikzcd}$$
	This defines a map of cartesian fibrations by \Cref{naturalityofT} and \Cref{prop:tangent obvious properties}(4). In other words, we obtain functors $T_{X/-}[1]\colon \modulistk_{X/\KK}\to (\QC^\vee_X)_{/T_{X/\KK}[1]}$ that are natural in $X$ with respect to laft maps, i.e., each laft map 
	$f\colon X'\rightarrow X $ gives rise to a commutative square 
	$$\begin{tikzcd}[column sep=4em] 
		\modulistk_{X/\KK}  \arrow[r, "{T_{X/-}[1]}"] \arrow[d, "(-)^{\wedge}_{X}"{swap}] &  (\QC^\vee_{X})_{/T_{X/\KK}[1]}  \arrow[d, "f^{\sharp}"]\\
		\modulistk_{X'/\KK} \arrow[r, "{T_{X'/-}[1]}"{swap}] &  (\QC^\vee_{X'})_{/T_{X'/\KK}[1]}
	\end{tikzcd}$$ 
	where $f^{\sharp}(\mathcal{F}\rightarrow T_{X/\KK}[1]) \simeq f^{\ast}( \mathcal{F}) \times_{(f^*T^\mm{pre}_{X/\KK})^{\aft}[1]} T_{X'/\KK}[1]$. If $f$ is formally étale, then this simplifies to $f^\sharp(\mathcal{F}\rightarrow T_{X/\KK}[1])\simeq f^*\mathcal{F}$ by \Cref{prop:tangent obvious properties}(3).
\end{construction}

\begin{proposition}[Conservativity]\label{prop:conservativity}
Let $X$ be a prestack with deformation theory and $ Y_1 \rightarrow Y_2 $ a map of formal moduli stacks under $X$. If
$T_{X/Y_1} \rightarrow T_{X/Y_2}$ is an equivalence, then so is 
$Y_1 \rightarrow Y_2 $. 
\end{proposition}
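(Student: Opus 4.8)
The plan is to reduce this statement to the conservativity half of the equivalence already established in the affine case (essentially the conservativity of $\cot$ from the proof of \Cref{thm:comonad}, repackaged via \Cref{thm:def functor vs Lie} as conservativity of the tangent fibre functor $T_{B/-}[1]\colon \FMP_{B/\KK}\xrightarrow{\sim}\LieAlgd_{B/\KK}$), using the local-to-global description of formal moduli stacks from \Cref{thm:FMP local to global} together with the functoriality of tangent fibres under base change from \Cref{change_of_base_FMP}. Since $Y_1\to Y_2$ is a map of formal moduli stacks under $X$, to check it is an equivalence it suffices to check that for every eventually coconnective affine derived scheme $S$ with a map $S\to Y_2$, the induced map on $S$-points $Y_1(S)\to Y_2(S)$ is an equivalence; equivalently, that $Y_1\to Y_2$ becomes an equivalence after pulling back along any such $S\to Y_2$. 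So the first move is to reduce to the case where $Y_2$ — and hence $Y_1$, being a formal moduli stack under $X$ mapping to it — sits over a fixed eventually coconnective affine base, and then to the case where $X=\Spec(B)$ is itself a coherent affine derived scheme, by working Zariski-locally on $X$ (formal moduli stacks and their tangent fibres satisfy étale, hence Zariski, descent by \Cref{prop:lie algebroid naturality} and \Cref{rem:lie vs fmp global}, and being an equivalence of prestacks is local).

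Once $X=\Spec(B)$ is coherent affine, \Cref{thm:FMP local to global} identifies $\modulistk_{X/\KK}$ with $\FMP_{B/\KK}$; under this identification the shifted relative tangent complex functor $T_{X/-}[1]$ from \Cref{change_of_base_FMP} corresponds, by construction, to the tangent fibre functor $T_{B/-}[1]$ of \Cref{con:tangent} (both compute the value of the formal moduli problem on trivial square-zero extensions $B\oplus_\alpha M$, dualised). Hence the hypothesis says that $T_{B/Y_1}[1]\to T_{B/Y_2}[1]$ is an equivalence in $(\QC^\vee_B)_{/T_{B/\KK}[1]}$. Now \Cref{thm:def functor vs Lie} promotes this to: the associated map of partition Lie algebroids $\mf{g}_1\to \mf{g}_2$ has underlying map of pro-coherent modules an equivalence, and since the forgetful functor $\LieAlgd_{B/\KK}\to (\QC^\vee_B)_{/T_{B/\KK}[1]}$ detects equivalences (\Cref{prop:pla affine}), $\mf{g}_1\to \mf{g}_2$ is an equivalence of partition Lie algebroids, whence $Y_1\to Y_2$ is an equivalence of formal moduli problems, i.e. of formal moduli stacks under $X$.

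The step requiring the most care is the reduction from a general prestack $X$ with deformation theory to the coherent affine case, and in particular checking that the base-change functors $(-)^\wedge_{X'}$ and the tangent fibre functors commute in the way \Cref{change_of_base_FMP} asserts, so that conservativity can genuinely be tested locally and over affine bases. Concretely, one must verify: (i) for an eventually coconnective affine $S\to Y_2$, the pullback $Y_i\times_{Y_2} S$ is still a formal moduli stack, this time under $X\times_{Y_2}S$, and its tangent fibre is the pullback of $T_{X/Y_i}[1]$ — this follows from \Cref{prop:tangent obvious properties} and the fact that laft maps and nil-isomorphisms are stable under base change; (ii) one may then further pull back along an affine (indeed coherent affine, after truncation and using \Cref{cor:artin remains coherent}-type arguments) chart of $X\times_{Y_2}S$. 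Granting the compatibilities recorded in \Cref{change_of_base_FMP} and \Cref{naturalityofT}, the argument is then purely formal: conservativity is inherited from the affine case, where it is precisely the conservativity of $\cot$ established inside the proof of \Cref{thm:comonad} and transported across the equivalences of \Cref{thm:def functor vs Lie} and \Cref{thm:FMP local to global}. I would expect no genuinely new difficulty beyond bookkeeping the base-change compatibilities, since all the substantive content (completeness of adic filtrations detecting equivalences, \Cref{lem:graded adic}, \Cref{lem:graded cot}) has already been used to prove the affine comonadicity.
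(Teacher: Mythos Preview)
Your approach differs fundamentally from the paper's, and the reduction you flag as ``requiring the most care'' is where it breaks. The paper does \emph{not} reduce to the affine case: it gives a direct argument valid for any prestack $X$ with deformation theory. Since $Y_1, Y_2$ are convergent one tests on eventually coconnective $A$; using that $X\to Y_i$ are laft nil-isomorphisms one reduces to showing, for each nilpotent ideal $I\subseteq \pi_0(A)$ and $x\in X(\pi_0(A)/I)$, that $Y_1(A)\times_{Y_1(\pi_0(A)/I)}\{x\}\to Y_2(A)\times_{Y_2(\pi_0(A)/I)}\{x\}$ is an equivalence. One then decomposes $A\to \pi_0(A)/I$ into finitely many square-zero extensions and climbs the tower: at each step the deformation-theory fibre sequences reduce the question to an equivalence of tangent spaces at $x$, which is exactly the hypothesis $T_{X/Y_1}\simeq T_{X/Y_2}$. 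No affine reduction, no descent, no forward references.

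Your proposal has two concrete gaps. First, the proposition is stated for an arbitrary prestack $X$ with deformation theory, so ``working Zariski-locally on $X$'' is unavailable in that generality; and after your step (i) the new base $X\times_{Y_2} S$ is not a scheme even when $X$ was, so step (ii) has no affine charts to pull back along. Second, even restricting to $X$ a locally coherent qcqs scheme (which does suffice for the application in \Cref{monadicity}), the descent you actually need is \Cref{modulizariski} for formal moduli \emph{stacks}, not the Lie-algebroid or formal-moduli-problem statements you cite; that theorem comes later in the paper. Its proof happens not to use the present proposition, so a reordering is logically possible, but you would then be invoking a substantially harder global-to-local result to prove something the paper obtains in a page from first principles.
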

\begin{proof}
Since $Y_1$ and $Y_2$ are both convergent, it suffices to verify that $Y_1(A)\to Y_2(A)$ is an equivalence for every eventually coconnective animated ring $A$. To see this, consider the ind-ring 
$$
A_{\prored}=``\colim_I" \pi_0(A)/I
$$
indexed by the filtered poset of nilpotent ideals of $\pi_0(A)$. We then have a commuting diagram
$$\begin{tikzcd}
& Y_1(A)\arrow[r]\arrow[d] & Y_2(A)\arrow[d]\\
X(A_{\prored})\arrow[r, "\sim"] & Y_1(A_{\prored})\arrow[r, "\sim"] & Y_2(A_{\prored}).
\end{tikzcd}$$
The bottom maps are equivalences since $X\to Y_1$ and $X\to Y_2$ are nil-isomorphisms and laft (Remark \ref{rem:nil-iso laft}). We therefore have to show that for each point $x\in X(A_{\prored})=\colim_I X(\pi_0(A)/I)$, the map on fibres $Y_1(A)_x\to Y_2(A)_x$ is an equivalence. To see this, it will suffice to show that for any nilpotent ideal $I\subseteq \pi_0(A)$ and $x\in X(\pi_0(A)/I)$, the map
\begin{equation}\label{eq:formally etale formal moduli}
Y_1(A)\times_{Y_1(\pi_0(A)/I)} \{x\}\rt Y_2(A)\times_{Y_2(\pi_0(A)/I)} \{x\}
\end{equation}
is an equivalence. Indeed, the map $Y_1(A)_x\to Y_2(A)_x$ is a filtered colimit of maps of the above form.

Now recall that $A\to \pi_0(A)/I$ decomposes as a finite sequence $A=A_n \to A_{n-1}\to \dots \to A_0=\pi_0(A)/I$ where each $A_k\to A_{k-1}$ is a square zero extension by an $A_0$-module $J_k$ (concentrated in a single degree, although we will not need this). To prove that \eqref{eq:formally etale formal moduli} is an equivalence, we will proceed by induction on this tower, the case of $A_0$ being evident. For the inductive step, note that there is a natural map of fibre sequences of spaces
$$\begin{tikzcd}
Y_1(A_k)\times_{Y_1(A_0)} \{x\}\arrow[r]\arrow[d] & Y_1(A_{k-1})\times_{Y_1(A_0)} \{x\} \arrow[r]\arrow[d] & Y_1(A_0\oplus J_k[1])\times_{Y_1(A_0)} \{x\}\arrow[d]\\
Y_2(A_k)\times_{Y_2(A_0)} \{x\}\arrow[r] & Y_2(A_{k-1})\times_{Y_2(A_0)} \{x\} \arrow[r] & Y_2(A_0\oplus J_k[1])\times_{Y_2(A_0)} \{x\}
\end{tikzcd}$$
since $Y_1$ and $Y_2$ have deformation theory. By inductive hypothesis the middle vertical map is an equivalence, so it suffices to verify that the right vertical map is an equivalence. Unravelling  \Cref{def:(co)tangent space} and using that the maps $X\to Y_i$ are laft, one sees that the right vertical map fits into a pullback square
$$\begin{tikzcd}
Y_1(A_0\oplus J_k[1])\times_{Y_1(A_0)} \{x\}\arrow[r]\arrow[d] & T_{X/Y_1, x}(J_k[2])\arrow[d]\\
Y_2(A_0\oplus J_k[1])\times_{Y_2(A_0)} \{x\}\arrow[r] & T_{X/Y_2, x}(J_k[2]).
\end{tikzcd}$$
The right vertical map is an equivalence by the assumption that $T_{X/Y_1}\to T_{X/Y_2}$ was an equivalence, so the result follows. 
\end{proof}

\begin{definition}[Zariski descent]
We say that a prestack $X \in \PrStk$ satisfies \textit{Zariski descent} if for all animated rings $A \in \CAlg^{\an}$ and all families of animated rings $$\{A\rightarrow A_i\} $$
	for which the maps $\Spec(A_i ) \rightarrow \Spec(A)$ are jointly surjective open immersions, we have 
	$$ X(A) \xrightarrow{\sim} \lim_{} X(S),$$
	where the limit is indexed by all $S \in \CAlg^{\an}_A$ such that $A\rightarrow S$ factors through some $A_i$.
\end{definition}

Theorem 7.3.5.2 in \cite{HTT} implies the following  `finite' characterisation of Zariski descent: 
\begin{proposition}
	A prestack $X \in \PrStk$ satisfies \textit{Zariski descent}	 if and only if the following conditions hold:
	\begin{enumerate}
		\item $	X	(0) \simeq \ast$ is contractible;
		\item Let us say that a square of animated rings
		$$\begin{tikzcd}
		A \arrow[d]\arrow[r] & 	A_1\arrow[d]\\
			A_2\arrow[r] & 	A_{12}
		\end{tikzcd}$$
		is a \emph{Zariski square} if it is cocartesian ($A_{12}\simeq A_1\otimes_A A_2$) and the maps $\Spec(A_i ) \rightarrow \Spec(A)$ are jointly surjective open immersions. Then $X$ sends each Zariski square to a pullback square.
	\end{enumerate}
\end{proposition}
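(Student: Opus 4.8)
The plan is to recognise conditions~(1)--(2) as precisely the locality conditions that carve out Zariski sheaves inside all presheaves, and to combine this with the cited Theorem~7.3.5.2 of~\cite{HTT}. First I would set the stage: view a prestack as a presheaf $X\colon \CAlg^{\an}\to\sS$, and Zariski descent as the sheaf condition for the Zariski topology on $\Aff=(\CAlg^{\an})^{\op}$, whose covering families are the jointly surjective families of open immersions. Since affine schemes are quasi-compact, every such family is refined by a \emph{finite} subfamily, so the Zariski topology is generated by its finite covers; moreover any finite affine open cover is, at the level of the generated topology, built from two-element covers, and a two-element affine open cover is exactly the datum of a Zariski square --- its pairwise intersection $\Spec(A_1)\cap\Spec(A_2)=\Spec(A_{12})$ is again affine, by separatedness of affine schemes --- while the empty cover belongs to $\Spec(0)$. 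Theorem~7.3.5.2 of~\cite{HTT} then packages this: a presheaf satisfies the sheaf condition for a topology as soon as it does so for a family of covers generating it. So the target is: conditions~(1)--(2) $\iff$ descent for the empty cover and for all two-element affine open covers.

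The forward implication is a direct unwinding. Descent for the empty cover of $\Spec(0)$ reads $X(0)\simeq\lim_{\emptyset}\simeq\ast$, which is~(1). Descent for a two-element cover $\{A\to A_1,\,A\to A_2\}$ identifies $X(A)$ with the totalisation of $X$ applied to the \v{C}ech nerve of $\Spec(A_1)\amalg\Spec(A_2)\to\Spec(A)$; using that $X$ sends the disjoint Zariski square of $\Spec(A\times B)$ to a product (a special case of descent, together with $X(0)\simeq\ast$), the \v{C}ech terms are carried to products of the values $X(A_1),X(A_2),X(A_{12})$, and a standard computation identifies the totalisation with $X(A_1)\times_{X(A_{12})}X(A_2)$; this is~(2). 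For the converse I would run the same argument backwards, in three steps: (a) conditions~(1)--(2) force $X$ to send finite coproducts of affine schemes to finite products, by applying~(2) to the disjoint Zariski square $\Spec(A)\leftarrow\Spec(A\times B)\to\Spec(B)$ and using~(1); (b) granting~(a), the \v{C}ech nerve of an arbitrary \emph{finite} affine open cover $\{\Spec(A_i)\to\Spec(A)\}_{i=1}^n$ is carried by $X$ to a cosimplicial space with terms the products of $X$-values at the (affine) finite intersections, and its totalisation is $X(A)$ --- proved by induction on $n$, the base case $n=2$ being~(2) via the \v{C}ech computation above and the inductive step peeling off one open; (c) by Theorem~7.3.5.2 of~\cite{HTT}, descent for the generating finite covers upgrades to the full sheaf condition, i.e.\ to Zariski descent as defined, for arbitrary (including infinite) jointly surjective families of open immersions.

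The step I expect to be the main obstacle is~(b): upgrading two-element-cover descent to descent for arbitrary finite covers while remaining inside the category of affine schemes --- so one cannot simply replace several opens by their union --- which requires some care in manipulating the \v{C}ech nerve and in matching its totalisation with the limit over the generated sieve appearing in the definition of Zariski descent. This is, however, routine descent bookkeeping; the genuine input is the identification of a generating family of Zariski covers (quasi-compactness of affine schemes) together with Theorem~7.3.5.2 of~\cite{HTT}.
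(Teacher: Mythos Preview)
Your proposal is correct and aligned with the paper's approach: the paper does not give a proof of this proposition at all, but simply states that it follows from Theorem~7.3.5.2 of~\cite{HTT}. Your write-up is a faithful and correct unpacking of how that citation applies --- reducing to finite covers via quasi-compactness of affines, then to two-element covers (the Zariski squares) and the empty cover of $\Spec(0)$, and invoking the cited theorem to pass from descent for a generating family of covers to the full sheaf condition.
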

\begin{remark}\label{rem:convergent zariski}
For any Zariski square, the induced square of $n$-truncations is a Zariski square as well, since $A\to A_1$ and $A\to A_2$ are flat. Consequently, each Zariski square is the limit of the Zariski squares of its $n$-truncations. A convergent prestack therefore satisfies Zariski descent if and only if it sends every Zariski square of eventually coconnective animated rings to a pullback.
\end{remark} 
\begin{proposition} \label{modulistacksdescent}
Let $X$ be a prestack with deformation theory that satisfies Zariski descent, and let $X\rightarrow Y$ be a formal moduli stack under $X$. Then the prestack $Y$ satisfies Zariski descent as well.
	\end{proposition}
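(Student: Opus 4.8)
The plan is to separate the ``classical'' (reduced) and the infinitesimal parts of $Y$, treating the former by Zariski descent for $X$ and the latter by Zariski descent for the pro-coherent tangent complex.

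First I would invoke the finite characterisation of Zariski descent recalled above: it suffices to check that $Y(0)\simeq\ast$ and that $Y$ sends every Zariski square to a pullback. The condition $Y(0)\simeq\ast$ is immediate, since $0$ is reduced and $X\rt Y$ is a nil-isomorphism, so $Y(0)\simeq X(0)\simeq\ast$ as $X$ is a Zariski sheaf. Because $Y$ has deformation theory it is convergent, so by Remark \ref{rem:convergent zariski} it is enough to treat Zariski squares $\sigma=(A\rt A_i\rt A_{12})$ of eventually coconnective animated rings. The next step is to reduce such a square modulo nilpotents: its reduction $\sigma_\mm{red}=(A_\mm{red}\rt(A_i)_\mm{red}\rt(A_{12})_\mm{red})$ is again a Zariski square (open immersions and joint surjectivity are topological, and flatness of open immersions yields $(A_{12})_\mm{red}\simeq(A_1)_\mm{red}\otimes_{A_\mm{red}}(A_2)_\mm{red}$), and since $X\rt Y$ is a nil-isomorphism we have $Y(\sigma_\mm{red})\simeq X(\sigma_\mm{red})$, which is a pullback because $X$ satisfies Zariski descent. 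So the problem reduces to showing that for every compatible family of points $\bar\xi=(\bar\xi_B\colon\Spec(B_\mm{red})\rt X)_{B\in\sigma}$, the square of fibres of $Y(B)\rt Y(B_\mm{red})$ over $\bar\xi_B$ is a pullback.

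The heart of the argument is the identification of these fibres. The fibre of $Y(B)\rt Y(B_\mm{red})$ over $\bar\xi_B$ is the space of extensions of $\bar\xi_B\colon\Spec(B_\mm{red})\rt X\hookrightarrow Y$ along the nilpotent closed immersion $\Spec(B_\mm{red})\hookrightarrow\Spec(B)$. I would write this immersion as a (finite, once $B$ is truncated) tower of square zero extensions by the shifted homotopy modules of $\fib(B\rt B_\mm{red})$, followed by an eventually constant pro-tower obtained from the colimit over finitely generated nilpotent ideals; then, using that $Y$ has deformation theory and that $X\rt Y$ is laft, the left Kan extension arguments of \Cref{defvsstacks} (Proposition \ref{prop:LKE is deformation functor} and Corollary \ref{cor:laft def functors}), applied relative to the tangent complex, should show that this fibre is computed functorially out of two pieces of data: the coherent modules $\pi_i(\fib(B\rt B_\mm{red}))$ pulled back to $\Spec(B_\mm{red})$, and the pullback $\bar\xi_B^{\,*}T_{X/Y}$ of the pro-coherent tangent complex of the formal moduli stack $X\rt Y$ from Construction \ref{change_of_base_FMP} --- the construction being iterated square zero extensions classified by maps into those modules, pro-dualised as in \Cref{lem:pro-coh dual of aperf filtered}. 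Granting this, the square $\sigma_\mm{red}$ together with $\bar\xi_A$ exhibits $\{\Spec((A_i)_\mm{red})\rt\Spec(A_\mm{red})\}$ as a Zariski cover; the coherent modules satisfy descent along it by quasi-coherent descent, and the pro-coherent tangent complexes $\bar\xi_B^{\,*}T_{X/Y}$ satisfy descent by Zariski descent for pro-coherent sheaves (Corollary \ref{cor:descent}); since the colimit over finitely generated nilpotent ideals commutes with the finite limit defining the Zariski square, the square of fibres is a pullback, and combining with the reduction above, $Y(\sigma)$ is a pullback.

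The step I expect to be the main obstacle is the functorial identification of the fibre of $Y(B)\rt Y(B_\mm{red})$ in terms of $\bar\xi_B^{\,*}T_{X/Y}$ and the modules $\pi_i(\fib(B\rt B_\mm{red}))$: the difficulty is precisely that the closed immersion $\Spec(B_\mm{red})\hookrightarrow\Spec(B)$ is in general not of finite presentation, so one genuinely needs the laft hypothesis on $X\rt Y$ and a globalisation of the affine analysis of \Cref{defvsstacks} (and, implicitly, of the comparison between formal moduli problems and partition Lie algebroids) to control it. Once that identification is in hand, the remaining descent bookkeeping is formal.
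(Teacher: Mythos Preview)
Your overall architecture—separating reduced from infinitesimal, handling the reduced part via the nil-isomorphism and Zariski descent for $X$, then climbing a tower of square-zero extensions—matches the paper's proof exactly. Where you go wrong is the square-zero step. The paper does not invoke $T_{X/Y}$, pro-coherent descent, or anything from \Cref{defvsstacks}. Instead it fixes a finitely generated nilpotent ideal $I\subseteq\pi_0(A)$ and a point $x\in X(A(0))$ with $A(0)=\pi_0(A)/I$, decomposes $A\to A(0)$ into finitely many square-zero extensions by $A(0)$-modules $J(k)$, and for each step observes that the functor $M\mapsto Y(A(0)\oplus M)\times_{Y(A(0))}\{x\}$ on $\Mod_{A(0),\geq 0}$ preserves pullbacks—this is precisely what ``$Y$ has deformation theory'' says. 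The required cartesian square then reduces to $J(k)\simeq J(k)_1\times_{J(k)_{12}} J(k)_2$, which is ordinary Zariski descent for quasi-coherent modules. The filtered colimit over $I$ is taken only at the end, and that is also the single place the laft hypothesis is used (to identify $\colim_I X(\pi_0(A_i)/I_i)\simeq\colim_I Y(\pi_0(A_i)/I_i)$ via the nil-isomorphism on reductions).

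Your proposed identification of the fibre has concrete errors. The modules $\pi_i(\fib(B\to B_\red))$ are not coherent in general: the nilradical of an arbitrary $\pi_0(B)$ need not be finitely generated. More seriously, $T_{X/Y}$ governs liftings of points \emph{into $X$} relative to $Y$, not liftings of points into $Y$; what controls the fibre of $Y(B)\to Y(B_\red)$ is the pre-tangent of $Y$ itself at the given point, which need not lie in $\QC^\vee$ and is not $\bar\xi_B^*T_{X/Y}$. The references to \Cref{prop:LKE is deformation functor} and \Cref{cor:laft def functors} concern formal moduli problems on Artinian extensions of a fixed coherent base, a different setting. You correctly flagged this identification as the obstacle; the point is that no such identification is needed once you notice that only the pullback-preservation of $M\mapsto Y(A(0)\oplus M)\times_{Y(A(0))}\{x\}$ together with ordinary module descent is required.
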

	\begin{proof}
	By Remark \ref{rem:convergent zariski}, it suffices to verify that $Y(A)\simeq Y(A_1)\times_{Y(A_{12})} Y(A_2)$ for any Zariski square of eventually coconnective animated rings. Let us fix such a Zariski square, as well as a filtered system of nilpotent ideals $I\subseteq \pi_0(A)$ such that $\colim_I \pi_0(A)/I \cong A_{\red}$. For each such ideal, let us furthermore write $I_i=\pi_0(A_i)\otimes_{\pi_0(A)} I_i$. For $i=\emptyset, 1, 2, 12$, we then have that $\colim_I \pi_0(A_i)/I_i\cong A_i\otimes_A A_{\red}\cong (A_i)_{\red}$. Since $X\to Y$ is laft, we have a cartesian square
$$\begin{tikzcd}
\underset{I}{\colim} X(\pi_0(A_i)/I_i)\arrow[d]\arrow[r] & \underset{I}{\colim} Y(\pi_0(A_i)/I_i)\arrow[d]\\
X(A_{i, \red})\arrow[r] & Y(A_{i, \red})
\end{tikzcd}$$
for each $i=\emptyset, 1, 2, 12$. The bottom map is an equivalence because $X\to Y$ is a nil-isomorphism, so that the top map is an equivalence as well. 
 We therefore obtain a diagram of the form
$$\begin{tikzcd}[row sep=1pc]
Y(A)\arrow[d]\arrow[r] & Y(A_1)\times_{Y(A_{12})} Y(A_1)\arrow[d] \\
\underset{I}{\colim} Y(\pi_0A/I)\arrow[r,"\sim"] & \underset{I}{\colim} \big(Y(\pi_0A_1/I_1)\times_{Y(\pi_0A_{12}/I_{12})} Y(\pi_0A_2/I_2)\big)\\
\underset{I}{\colim} X(\pi_0A/I)\arrow[u, "\sim"] \arrow[r, "\sim"] & \underset{I}{\colim} \big(X(\pi_0A_1/I_1)\times_{X(\pi_0A_{12}/I_{12})} X(\pi_0A_2/I_2)\big)\arrow[u, "\sim"{swap}]
\end{tikzcd}$$
in which the bottom map is an equivalence because $X$ satisfies Zariski descent. Given a point $x\in \colim_I X(\pi_0(A)/I)$, we therefore need to check that the induced map on fibres $Y(A)_x\to Y(A_1)_{x_1}\times_{Y(A_{12})_{x_{12}}} Y(A_2)_{x_2}$ is an equivalence, where $x_i$ denotes the restriction of $x$ to the corresponding Zariski open. For this it suffices to show that for any nilpotent ideal $I$ and any point $x\in X(\pi_0(A)/I)$, the square
\begin{equation}\label{eq:descent fibre}\begin{tikzcd}
Y(A)\times_{Y(\pi_0A/I)} \{x\}\arrow[r]\arrow[d] & Y(A_1)\times_{Y(\pi_0A_1/I_1)} \{x_1\}\arrow[d]\\
 Y(A_2)\times_{Y(\pi_0A_2/I_2)} \{x_2\}\arrow[r] &  Y(A_{12})\times_{Y(\pi_0A_{12}/I_{12})} \{x_{12}\}
\end{tikzcd}\end{equation}
is cartesian. Indeed, one then finds that $Y(A)_x\simeq Y(A_1)_{x_1}\times_{Y(A_{12})_{x_{12}}} Y(A_2)_{x_2}$ by taking the filtered colimit over all $I$.

We now proceed as in the proof of  \Cref{prop:conservativity}: the map $A\to \pi_0(A)/I$ decomposes as a finite sequence $A=A(n) \to A(n-1)\to \dots \to A(0)=\pi_0A/I$ where each $A(k)\to A(k-1)$ is a square zero extension by an $A(0)$-module $J(k)$. Localising, we similarly have that $A_i\to \pi_0(A_i)/I_i$ decomposes as a finite sequence $A_i=A(n)_i\to \dots \to A(0)_i=\pi_0(A_i)/I_i$  of square zero extensions by the $A(0)_i$-modules $J(k)_i=J(k)\otimes_{A(0)} A(0)_i$. We will prove by induction on this tower that the square \eqref{eq:descent fibre} is cartesian; when $k=0$, all spaces are contractible and the assertion is evident.

For the inductive step, note that for each $i=\emptyset, 1, 2, 12$, there is a natural fibre sequence of spaces
$$\begin{tikzcd}[column sep=1.3pc]
Y(A(k)_i)\times_{Y(A(0)_i)} \{x_i\}\arrow[r] & Y(A(k-1)_i)\times_{Y(A(0)_i)} \{x_i\}\arrow[r] & Y(A(0)_i\oplus J(k)_i[1])\times_{Y(A(0)_i)} \{x_i\}
\end{tikzcd}$$
because $Y$ has deformation theory. By inductive hypothesis, the middle terms for $i=\emptyset, 1, 2, 12$ fit into a cartesian square. It therefore suffices to show that the right terms also form a cartesian square. Since $Y$ has deformation theory, there is a natural equivalence $$
Y(A(0)\oplus J(k)_i[1])\times_{Y(A(0))} \{x\}\simeq Y(A(0)_i\oplus J(k)_i[1])\times_{Y(A(0)_i)} \{x_i\}.
$$
and it suffices to verify that the square
$$\begin{tikzcd}
Y(A(0)\oplus J(k)[1])\times_{Y(A(0))} \{x\}\arrow[r]\arrow[d] & Y(A(0)\oplus J(k)_1[1])\times_{Y(A(0))} \{x\}\arrow[d]\\
Y(A(0)\oplus J(k)_2[1])\times_{Y(A(0))} \{x\}\arrow[r] & Y(A(0)\oplus J(k)_{12}[1])\times_{Y(A(0))} \{x\}
\end{tikzcd}$$
is cartesian. The functor $Y(A(0)\oplus -)\times_{Y(A(0))} \{x\}\colon \Mod_{A, \geq 0}\to \sS$ preserves all pullback diagrams preserved by the inclusion into $\Mod_A$, again because $Y$ has deformation theory. We are therefore left with showing that $J(k)[1]\simeq J(k)_1[1]\times_{J(k)_{12}[1]} J(k)_2[1]$. Unravelling the definitions, this is nothing but Zariski descent for (the quasi-coherent sheaf associated to) the $A(0)$-module $J(k)$.
	\end{proof}

\begin{theorem}[Zariski descent for formal moduli stacks] \label{modulizariski}
Given a derived scheme $X$, write $\mathfrak{U}$ for the poset of affine opens of $X$.
Then there is  canonical equivalence
$$\modulistk_{X/\KK} \xrightarrow{\ \sim \ } \lim_{U \in \mathfrak{U}^{{\op}}} \modulistk_{U/\KK}.$$
\end{theorem}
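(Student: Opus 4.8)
The strategy is to reduce to Zariski descent for the infinitesimal stack $X_{\inf}$, exploiting that the three conditions defining a formal moduli stack are local on $X$. Generalising the affine analysis behind \Cref{thm:FMP local to global} and \Cref{lem:form moduli stack over Xinf}, every formal moduli stack $X\to Y$ factors canonically as $X\to Y\to X_{\inf}$ (the space $\Map_{X/}(Y,X_{\inf})$ is contractible because $X\to Y$ is a laft nil-isomorphism), and for an affine open $U\subseteq X$ the formal completion of Construction \ref{change_of_base_FMP} is computed by $Y^{\wedge}_U\simeq Y\times_{X_{\inf}} U_{\inf}$. Since $X_{\inf}$ is built from $X$ by a construction commuting with Zariski localisation, it satisfies Zariski descent, the prestacks $U_{\inf}$ for $U\in\mathfrak{U}$ form a Zariski cover of $X_{\inf}$ with $U_{\inf}\times_{X_{\inf}} U'_{\inf}\simeq (U\cap U')_{\inf}$, and the $U_{\inf}$ form a basis. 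Applying Construction \ref{change_of_base_FMP} to the open immersions $U\hookrightarrow X$ (formally étale, so the transition functors are the completions $(-)^{\wedge}_U$, given on underlying pro-coherent tangent modules by inverse image), together with transitivity of formal completion, assembles into the comparison functor $\Phi\colon\modulistk_{X/\KK}\to\lim_{U\in\mathfrak{U}^{\op}}\modulistk_{U/\KK}$. One checks immediately that $\Phi$ is conservative: the target maps to $\lim_{U\in\mathfrak{U}^{\op}}(\QC^\vee_U)_{/T_{U/\KK}[1]}\simeq (\QC^\vee_X)_{/T_{X/\KK}[1]}$ (Zariski descent for pro-coherent sheaves, Corollary \ref{cor:descent}) by a limit of the conservative functors of \Cref{prop:conservativity}, and by naturality of the relative tangent complex in Construction \ref{change_of_base_FMP} the composite $\modulistk_{X/\KK}\to(\QC^\vee_X)_{/T_{X/\KK}[1]}$ is the tangent-fibre functor, conservative by \Cref{prop:conservativity}.

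For full faithfulness, any formal moduli stack $X\to Y$ has $Y$ satisfying Zariski descent by \Cref{modulistacksdescent} (since the scheme $X$ does); combined with $Y^{\wedge}_U\simeq Y\times_{X_{\inf}} U_{\inf}$ and the cover properties above, the canonical map $\colim_{U\in\mathfrak{U}} Y^{\wedge}_U\to Y$ is therefore an equivalence. Hence, for formal moduli stacks $Y_1,Y_2$ under $X$,
$$
\Map_{(\PrStk_\KK)_{X/}}(Y_1,Y_2)\ \simeq\ \lim_{U\in\mathfrak{U}^{\op}}\Map_{(\PrStk_\KK)_{U/}}\big((Y_1)^{\wedge}_U,\, Y_2\big)\ \simeq\ \lim_{U\in\mathfrak{U}^{\op}}\Map_{\modulistk_{U/\KK}}\big((Y_1)^{\wedge}_U,\,(Y_2)^{\wedge}_U\big),
$$
where the last equivalence holds because $(Y_1)^{\wedge}_U$ is a formal moduli stack under $U$ (\Cref{lem:formal completion obvious}), so every map $(Y_1)^{\wedge}_U\to Y_2$ under $U$ factors uniquely through the formal completion $(Y_2)^{\wedge}_U$. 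This is exactly full faithfulness of $\Phi$.

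For essential surjectivity, an object of $\lim_{U\in\mathfrak{U}^{\op}}\modulistk_{U/\KK}$ is a compatible family $(U\to Y_U\to U_{\inf})_{U\in\mathfrak{U}}$ of formal moduli stacks with $Y_V\simeq Y_U\times_{U_{\inf}} V_{\inf}$ for $V\subseteq U$; I would glue these to $Y:=\colim_{U\in\mathfrak{U}} Y_U$, which receives a map from $X=\colim_U U$ and a map to $X_{\inf}=\colim_U U_{\inf}$ and satisfies $Y\times_{X_{\inf}} U_{\inf}\simeq Y_U$. It then remains to verify that $X\to Y$ is a formal moduli stack, which is local on $X$: the prestack $Y$ satisfies Zariski descent as the colimit of a Zariski cover; $X\to Y$ is a nil-isomorphism and laft because both can be tested after pullback along $U\hookrightarrow X$, where they hold for $U\to Y_U$ (the laft statement as in the proof of \Cref{thm:FMP local to global}); and $Y$ has deformation theory because, reducing as in Remark \ref{rem:fmp in terms of sqz} to preservation of square-zero extension squares, all animated test rings in such a square share the same reduced ring, so the underlying map $\Spec(A_{\red})\to X$ factors Zariski-locally through some $U$, whence the whole square of values factors through $Y\times_{X_{\inf}} U_{\inf}\simeq Y_U$ and one concludes from deformation theory of $Y_U$ plus Zariski descent for $Y$. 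Finally $Y^{\wedge}_U\simeq Y\times_{X_{\inf}} U_{\inf}\simeq Y_U$, so $\Phi(Y)$ recovers the given section. The main obstacle I anticipate is precisely this last cluster: establishing $Y^{\wedge}_U\simeq Y\times_{X_{\inf}} U_{\inf}$ and the compatibility of formal completion with Zariski localisation and with intersections, and carrying out the deformation-theory check for the glued prestack $Y$.
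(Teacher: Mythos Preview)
Your approach is essentially the paper's: define the comparison functor by formal completion, build an inverse by gluing $Y=\colim_U Y_U$ as a Zariski sheaf, and verify both that the glued $Y$ is a formal moduli stack and that $Y^{\wedge}_U\simeq Y_U$ (your essential surjectivity) together with $\colim_U Y^{\wedge}_U\simeq Y$ (your full faithfulness). The only substantive difference is that the paper works throughout with $X^\sharp\colon A\mapsto X(A_{\red})$ rather than your $X_{\inf}$; these agree on the laft maps in play by Remark~\ref{rem:formal completion vs infinitesimal completion}, but $(-)^\sharp$ is a pure precomposition and hence visibly commutes with colimits and Zariski sheafification, which is exactly what resolves the ``cover properties'' you flag as the main obstacle. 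For the remaining difficulties you anticipate: the paper takes the gluing colimit in Zariski sheaves on \emph{eventually coconnective} rings and then applies $\conv$; the identification $Y_U\simeq Y^{\wedge}_U$ for the glued $Y$ is obtained by applying \cite[Theorem~6.1.0.6]{HTT} to the cartesian transformation $Y_U\to Y_U^\sharp\simeq U^\sharp$ over $\mathfrak{U}$; and deformation theory of the glued $Y$ is checked by passing to sheaves on $|\Spec(B)|$ and choosing a Zariski cover whose reduced pieces factor through affine opens of $X$, so that one can invoke deformation theory of the individual $Y_U$. Your separate conservativity argument via \Cref{prop:conservativity} is correct but redundant once full faithfulness and essential surjectivity are in hand.
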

\begin{proof}
Let us consider the  cartesian fibration $$\modulistk_{ \mathfrak{U}, {\KK}} \rightarrow \mathfrak{U}$$ obtained by restricting the cartesian fibration $$\modulistk_{\KK} \rightarrow \PrStk_{\KK}^{\mm{def},\mm{laft-map}}$$ from \Cref{change_of_base_FMP}
along the evident functor $\mathfrak{U}\to \PrStk_{\KK}^{\mm{def},\mm{laft-map}}$.
By \cite[Proposition 3.3.3.1]{HTT}, there is an equivalence     $$\lim_{U \in \mathfrak{U}} \modulistk_{U/\KK} \simeq \Fun_{ \mathfrak{U}}^{\mm{cart}}(\mathfrak{U}, \modulistk_{ \mathfrak{U}, {\KK}} ), $$  where the right hand side denotes the $\infty$-category of  cartesian sections of $\modulistk_{\mathfrak{U},  {\KK}} \rightarrow \mathfrak{U}$.

We have a canonical functor $$\Psi\colon \modulistk_{X/\KK} \longrightarrow  \Fun_{ \mathfrak{U}}^{\mm{cart}}(\mathfrak{U}, \modulistk_{ \mathfrak{U}, \KK} )$$
sending $(X \rightarrow Y)$ to $$ \left\{U\longrightarrow Y^{\wedge}_U \right\}_{U \in \mathfrak{U}} .$$
Here the formal completion $Y^{\wedge}_U  \simeq Y \times_{Y^\sharp} U^\sharp$ is the prestack defined in \Cref{def:prored}, where we have used the notation $F^\sharp(-) := F(-_{\red})$.
Note that each map $U \rightarrow Y$ is laft  by \cite[Proposition 5.3.10]{lurie2004derived}, so $Y^{\wedge}_U$ \mbox{agrees with $(Y/U)_{\inf}$.}

The above functor is well-defined as each $U \longrightarrow Y^{\wedge}_U $  is a formal moduli stack under $U$.
Indeed, the fact that $Y^{\wedge}_U  \simeq (Y/U)_{\inf}$ has deformation theory follows immediately from the definitions.
Since $U \rightarrow Y$ is laft,  $U \rightarrow   Y^{\wedge}_{U}$ is laft by \Cref{lem:formal completion obvious} and clearly an equivalence on reduced objects.\vspace{3pt}
 
To construct an inverse  to $\Psi$, we  will need some notation. Write   $\PreSt_{\KK,<\infty}$
for the $\infty$-category of prestacks defined only on coconnective objects. Let 
$\Stk_\KK^{\Zar} \subset 	\PreSt_\KK$ ($\Stk_{\KK,<\infty}^{\Zar} \subset 	\PreSt_{\KK,<\infty}$) denote
the full subcategories  spanned by all prestacks $Y$  satisfying Zariski descent, i.e.\ satisfying $Y(0) \simeq \ast$ and sending Zariski squares (of eventually coconnective rings) to pullbacks.
By Remark \ref{rem:convergent zariski}, the functor 
$$\begin{tikzcd}
\conv\colon \PreSt_{\KK,<\infty} \arrow[r] & \PreSt_{\KK}; \quad X \arrow[r, mapsto] & \lim_n X(\tau_{\leq n}-)
\end{tikzcd}$$ 
sends sheaves to sheaves.

We then define $$\Phi\colon \Fun_{ \mathfrak{U}}^{\mm{cart}}(\mathfrak{U}, \modulistk_{ \mathfrak{U}, \KK} ) \longrightarrow  \modulistk_{X/\KK} $$ as the composite 
$$\Fun_{ \mathfrak{U}}^{\mm{cart}}(\mathfrak{U}, \modulistk_{ \mathfrak{U}, \KK} ) \subset \Fun (\mathfrak{U}, \Fun(\Delta^1,\Stk_{\KK,<\infty}^{\Zar}) )  \xrightarrow{\colim} 
\Fun(\Delta^1,\Stk_{\KK,<\infty}^{\Zar}) \xrightarrow{\conv}   \Fun(\Delta^1,\Stk_{\KK}^{\Zar}) . $$ 
 Note that $\colim$ is computed by sheafifying the pointwise colimit. We have used  \Cref{modulistacksdescent} to embed formal moduli stacks into Zariski sheaves.

We now want to verify that $\Phi$ takes values in formal moduli stacks under $X$. 
To this end, let us fix a family
$$\{U \rightarrow Y_U)\}_{U \in \mathfrak{U}} \in \Fun_{ \mathfrak{U}}^{\mm{cart}}(\mathfrak{U}, \modulistk_{ \mathfrak{U}, \KK} ) $$  
indexed by the poset $\mathfrak{U}$ of affine opens $U$ of $X$ and let $(X \rightarrow Y) \in 
\Fun(\Delta^1,\Stk_\KK^{\Zar})$ denote its image under $\Phi$. In a first step, we will prove that the natural map 
$$\{U \rightarrow Y_U\}_{U \in \mathfrak{U}} \rightarrow  \{U \rightarrow Y^{\wedge}_U\}_{U \in \mathfrak{U}}$$  
is an equivalence, i.e.\ that for each $U \in \mathfrak{U}$, the following square in $\Stk^{\Zar}_{\KK, <\infty}$
 is cartesian: 
$$\begin{tikzcd}
	Y_U\arrow[r]\arrow[d] & U^{\sharp}\arrow[d]\\
	\underset{U \in \mathfrak{U}}{\colim} Y_U \arrow[r] &  \big(\underset{U \in \mathfrak{U}}{\colim} Y_U\big)^{\sharp}.
\end{tikzcd}$$
Note that the functor $(-)^\sharp\colon \PrStk_{\KK, <\infty}\to \PrStk_{\KK, <\infty}$ preserves colimits and Zariski sheaves, and commutes with Zariski sheafification. For the last assertion, it suffices to observe that for any covering sieve $S\to \Spec(A)$ for the Zariski topology and a map $f\colon \Spec(B)\to \Spec(A)^\sharp$, corresponding to $f_{\red}\colon \Spec(B_{\red})\to \Spec(A_{\red})$, the base change $f^*S^\sharp\to \Spec(B)$ is a covering sieve as well: it is the unique covering sieve whose restriction to $\Spec(B_{\red})$ coincides with $f_{\red}^*S$. Consequently, the natural map of Zariski sheaves $\colim_{U\in \mathfrak{U}} Y^\sharp_U\to (\colim_{U\in \mathfrak{U}} Y_U)^\sharp$ is an equivalence. 

To see that the above square is cartesian, it then suffices to consider the natural transformation  $\alpha$  of $\mathfrak{U}$-indexed diagrams  in $\Stk^{\Zar}_{\KK, <\infty}$ given by $$ Y_U \rightarrow (Y_U)^{\sharp} $$ 
Write $\overline{\alpha}$ for the induced transformation between the  $\mathfrak{U}^{\rhd}$-indexed colimit cones.
The transformation $\alpha$ is cartesian as for each $U\rightarrow V$, we have $Y_U \simeq (Y_V)^{\wedge}_U \simeq  Y_V \times_{V^{\sharp}} U^{\sharp}$, which  implies by   \cite[Theorem 6.1.0.6]{HTT} that   the above square is also cartesian.

We will use this to show that $X\to Y$ is a formal moduli problem. The prestack $Y$ is convergent by construction and the map $X\rightarrow Y$ is laft by (the derived analogue of) \cite[17.4.3.3]{SAG}. To verify it has deformation theory, it suffices to check that for any pullback square of eventually coconnective animated rings 
$$\begin{tikzcd}
B'\arrow[r]\arrow[d] & B\arrow[d]\\
A' \arrow[r] &  A
\end{tikzcd}$$
with $B \rightarrow A $ a nilpotent extension and any $f\in Y(B)$ with image $f_A \in Y(A)$, the map $$ Y(\mathcal{O}_{B'}) \times_{Y(\mathcal{O}_{B}) } \{f\} \rightarrow Y(\mathcal{O}_{B} \otimes_{B'} A') \times_{Y(\mathcal{O}_{B} \otimes_{B} A) } \{f_A\} $$
of sheaves on the topological space $|\Spec(B)| \cong |\Spec(B')|$ is an equivalence on global sections. Taking global sections, this implies that the fibres $Y(B')\times_{Y(B)} \{f\}\to Y(A')\times_{Y(A)} \{f_A\}$ are equivalent, so that $Y(B')\simeq Y(B)\times_{Y(A)} Y(A')$.

Since $Y$ is a Zariski sheaf, it suffices to check this after restricting to  a \mbox{Zariski cover of $|\Spec(B)| $.} Let us pick a cover of $\Spec(B)$ by affine opens $V_i \subset \Spec(B)$ corresponding to affine opens \mbox{$V_{i, \red} \subset  \Spec(B_{\red})$} such that 
each composite $$f_{i, \red}\colon V_{i, \red} \subset  \Spec(B_{\red}) \rightarrow Y_\red \simeq X_{\red}$$ factors through some affine open $U_i \subset X$. This implies that each restriction $f_i\colon V_i \rightarrow Y$ of the map $f$ factors through some $Y^{\wedge}_{U_i}$.
The claim follows as  $Y^{\wedge}_{U_i}\simeq Y_{U_i}$ by the above argument, and $Y_{U_i}$ has deformation theory.

Finally, to see that $X\to Y$ is an equivalence on reduced affine schemes, we note that precomposing with $A \mapsto A_{\red}$ gives rise to a limit-preserving functor $\Fun(\mathrm{CR}^{\red},\mathcal{S}) \rightarrow \Fun(\CAlg^{\an},\mathcal{S}) $, $F \mapsto F(-_{\red})$ 
which sends sheaves to sheaves. 
Here $\mathrm{CR}^{\red} \subset \CAlg^{\an}$ denotes the full subcategory of reduced discrete rings.
Passing to left adjoints, we see that restriction to 
$\mathrm{CR}^{\red} $ commutes with sheafification. As each object $U \rightarrow Y_U$ induces an equivalence on reduced affine schemes, we deduce that $X=\colim_{U\in \mathfrak{U}} U \rightarrow \colim_{U\in \mathfrak{U}} Y_U= Y$ does so as well.

We may therefore conclude that  the functor $\Phi$ lands in formal moduli stacks, and the above argument also shows that $\Psi\circ \Phi  $ is equivalent to the identity.

To see that $\Phi \circ \Psi$ is equivalent to the identity, let us fix a formal moduli stack $X\rightarrow Y$ under $X$. It suffices to verify that the canonical map 
$$\colim_{U \in \mathfrak{U}} Y^{\wedge}_U  \rightarrow Y$$
is an equivalence of sheaves on eventually coconnective objects.
Since colimits in the $\infty$-topos $\Stk_{\KK,<\infty}^{\Zar} $ are universal in the sense of \cite[Definition 6.1.1.2]{HTT}, it suffices to check that for all maps $\Spec(A) \rightarrow Y$, the induced map 
$$\colim_{U \in \mathfrak{U}} \left(Y^{\wedge}_U  \times_{Y} \Spec(A)\right) \rightarrow  \Spec(A)$$
is an equivalence.
But $$Y^{\wedge}_U \times_{Y} \Spec(A) \simeq \Spec(A) \times_{Y^{\sharp}} U^{\sharp}\simeq  \Spec(A) \times_{X^{\sharp}} U^{\sharp}$$
and hence the above map is an equivalence since 
$ \colim_{U \in \mathfrak{U}} U ^\sharp  \rightarrow X^\sharp $
is an equivalence.
\end{proof}
\begin{proposition}[Monadicity of the tangent fibre]\label{monadicity} 
Let $X$ be a locally coherent qcqs derived scheme over an animated ring $\KK$.
Then the functor 
$$\begin{tikzcd}[row sep=0pc]
T_{X/-}[1]\colon \modulistk_{X/\KK} \arrow[r] & (\QC^{\vee}_X)_{/T_{X/\KK}[1]}\\
(X\rightarrow Y)\arrow[r, mapsto] & (T_{X/Y}[1] \rightarrow  T_{X/\KK}[1])
\end{tikzcd}$$
  in \Cref{change_of_base_FMP} is a monadic right adjoint.
\end{proposition}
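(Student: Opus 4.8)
The plan is to verify the hypotheses of the Barr--Beck--Lurie monadicity theorem for the functor $T_{X/-}[1]$, namely that it admits a left adjoint, is conservative, and preserves geometric realisations of $T_{X/-}[1]$-split simplicial objects. Two of these three points have essentially been prepared already: conservativity is \Cref{prop:conservativity} (a map of formal moduli stacks inducing an equivalence on tangent fibres is an equivalence), and the existence of a left adjoint can be deduced from the affine case combined with Zariski descent. Concretely, by \Cref{modulizariski} and \Cref{thm:pla scheme} one has $\modulistk_{X/\KK}\simeq \lim_{U\in\mathfrak{U}^{\op}}\modulistk_{U/\KK}$ and $(\QC^\vee_X)_{/T_{X/\KK}[1]}\simeq\lim_{U\in\mathfrak{U}^{\op}}(\QC^\vee_U)_{/T_{U/\KK}[1]}$ compatibly with the functors $T_{U/-}[1]$; since each $T_{U/-}[1]$ admits a left adjoint on the affine piece (via \Cref{thm:FMP local to global} identifying $\modulistk_{U/\KK}\simeq\FMP_{\mc O(U)/\KK}$, then \Cref{thm:def functor vs Lie} identifying $\FMP_{\mc O(U)/\KK}\simeq\LieAlgd_{\mc O(U)/\KK}$, whose forgetful functor to $(\QC^\vee_{\mc O(U)})_{/T_{\mc O(U)/\KK}[1]}$ is a right adjoint by \Cref{prop:pla affine}), and since the restriction functors are right adjointable along the étale maps appearing in $\mathfrak{U}$ by \Cref{prop:lie algebroid naturality}(2) and the last square of \Cref{change_of_base_FMP}, one can assemble these fibrewise left adjoints into a left adjoint of $T_{X/-}[1]$ using \cite[Corollary 4.7.4.18]{HA} (or the same limit-of-adjunctions device used in the proof of \Cref{thm:pla scheme}).

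The remaining and genuinely global point is preservation of $T_{X/-}[1]$-split geometric realisations. First I would reduce to showing that if $Y^\bullet$ is a simplicial object in $\modulistk_{X/\KK}$ whose image $T_{X/Y^\bullet}[1]$ is split, then the colimit $|Y^\bullet|$, computed as a prestack (pointwise on eventually coconnective animated rings), is again a formal moduli stack under $X$, and that $T_{X/|Y^\bullet|}[1]\simeq |T_{X/Y^\bullet}[1]|$. To handle this I would again pass through Zariski descent: by \Cref{modulizariski} it suffices to check the statement after restricting to each affine open $U\subseteq X$, i.e.\ for the simplicial object $(Y^\bullet)^{\wedge}_U$ in $\modulistk_{U/\KK}$, whose tangent fibre is $T_{U/-}[1]$ applied termwise and is still split (the restriction functors preserve the relevant colimits, being left adjoints up to the étale-base-change identification). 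Thus the problem is reduced to the affine case $X=\Spec(B)$, where $\modulistk_{B/\KK}\simeq\LieAlgd_{B/\KK}$ via \Cref{thm:FMP local to global} and \Cref{thm:def functor vs Lie}, and the functor $T_{B/-}[1]$ becomes the forgetful functor $\LieAlgd_{B/\KK}\to(\QC^\vee_B)_{/T_{B/\KK}[1]}$. But this functor is monadic precisely by construction: it is the forgetful functor for the algebras over the monad $\Lie^\pi_{\Delta, B/\KK}$ of \Cref{prop:pla affine}, hence a monadic right adjoint, and in particular creates $T_{B/-}[1]$-split geometric realisations.

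There is one subtlety to pin down carefully: in the reduction to the affine case I must make sure that the colimit $|Y^\bullet|$ computed in prestacks agrees, after restriction to affine opens and under the equivalences $\modulistk_{U/\KK}\simeq\LieAlgd_{\mc O(U)/\KK}$, with the colimit of the corresponding simplicial object of partition Lie algebroids. This is where the compatibility of the Zariski-descent equivalence \Cref{modulizariski} with colimits enters: one needs that $(-)^\wedge_U\colon\modulistk_{X/\KK}\to\modulistk_{U/\KK}$ preserves the split geometric realisations in question, which follows because under the identification with formal moduli problems / partition Lie algebroids it corresponds to the étale restriction $f^\sharp$, a left adjoint by \Cref{prop:lie algebroid naturality}(2). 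Granting this, the argument closes. I expect the main obstacle to be precisely the bookkeeping needed to propagate the colimit computation through the chain of equivalences $\modulistk_{U/\KK}\simeq\FMP_{\mc O(U)/\KK}\simeq\LieAlgd_{\mc O(U)/\KK}$ and the Zariski-descent limit, rather than any substantially new idea; all the genuinely hard inputs (conservativity, the affine monadicity built into the definition of $\Lie^\pi_{\Delta,B/\KK}$, Zariski descent for formal moduli stacks) are already available.
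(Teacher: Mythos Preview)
Your approach is correct and closely tracks the paper's for conservativity (both use \Cref{prop:conservativity}) and for preservation of split simplicial colimits (both reduce via the Zariski limit description $\modulistk_{X/\KK}\simeq\lim_U\modulistk_{U/\KK}$ to the affine case, where the identification with $\LieAlgd_{\mc{O}(U)/\KK}$ gives monadicity by construction). One minor wobble: your phrase ``computed as a prestack (pointwise on eventually coconnective animated rings)'' is misleading---formal moduli stacks are not closed under prestack colimits in general, so the colimit should be computed via the limit description, componentwise in each $\modulistk_{U/\KK}$. You correct course immediately by passing to Zariski descent, so this is only an expository slip. The paper actually proves the stronger statement that $T_{X/-}[1]$ preserves \emph{all} sifted colimits, not just split realisations, using the same limit argument; this is what is needed later.

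The genuine difference is in the construction of the left adjoint. You assemble the affine left adjoints via \cite[4.7.4.18--19]{HA}, which requires the Beck--Chevalley condition $L_U\circ j^*\simeq (-)^\wedge_U\circ L_V$; you obtain this by identifying $(-)^\wedge_U$ with $f^\sharp$ on partition Lie algebroids and invoking \Cref{prop:lie algebroid naturality}(2). This works, but tracking that identification through the chain $\modulistk_{U/\KK}\simeq\FMP_{\mc{O}(U)/\KK}\simeq\LieAlgd_{\mc{O}(U)/\KK}$ requires the \'etale naturality of \Cref{thm:FMP local to global}, which is available in the paper (cf.\ \Cref{con:fmp global}, \Cref{rem:lie vs fmp global}) but not stated in the form you cite. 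The paper instead constructs the left adjoint globally and directly: it uses a categorical lemma (\Cref{lem:right adj into algebras}) to reduce to finding a left adjoint for the composite $\mm{fib}\circ T_{X/-}[1]\colon\modulistk_{X/\KK}\to\QC^\vee_X$, and then invokes \Cref{prop:left adjoint via sqz ext}, which builds the left adjoint $F\mapsto X_F$ explicitly as a colimit of square-zero extensions. This avoids any compatibility bookkeeping. Your route is perhaps more in the spirit of the rest of the section (everything via descent), while the paper's is more self-contained and yields the left adjoint as a concrete prestack.
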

The proof uses the following $\infty$-categorical result:
\begin{lemma}\label{lem:right adj into algebras}
Let $F\colon \cat{C}\leftrightarrows \cat{D}\colon U$ be a monadic adjunction and let $G\colon \cat{D}' \to \cat{D}$ be a functor from an $\infty$-category $\cat{D}' $ with geometric realisations. If $UG$ admits a left adjoint, then $G$ admits a left adjoint.
\end{lemma}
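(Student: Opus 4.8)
\textbf{Proof plan for Lemma \ref{lem:right adj into algebras}.}

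The plan is to reduce the existence of a left adjoint to $G$ to the Barr--Beck--Lurie monadicity theorem, which identifies $\cat{D}\simeq \cat{Alg}_T(\cat{C})$ for the monad $T=UF$. Under this identification, $U$ becomes the forgetful functor $\cat{Alg}_T(\cat{C})\to \cat{C}$, which creates all colimits that exist in $\cat{C}$ and are preserved by $T$ — in particular it creates geometric realisations of $U$-split simplicial objects, but more to the point, the free-forgetful adjunction $(F,U)$ is \emph{comonadic} on the relevant bar resolutions, so every object $d\in \cat{D}$ is canonically the geometric realisation $d\simeq |F (UF)^\bullet U(d)|$ of the bar resolution, computed in $\cat{D}$.

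First I would fix the data: write $T = UF$ for the monad and let $L_0\colon \cat{D}'\to \cat{C}$ be a left adjoint of $UG$, whose existence is the hypothesis. The natural candidate for a left adjoint $L\colon \cat{D}'\to \cat{D}$ of $G$ is the composite $L = F\circ L_0$ followed by nothing — but this is visibly \emph{not} right, since $GF L_0$ need not agree with anything. Instead, the correct candidate is built by a bar construction: for $d'\in \cat{D}'$ one sets
$$
L(d') = \big| \, \Barr_\bullet(F, T, L_0)(d') \, \big| = \Big| \big( n\mapsto F T^n L_0(d') \big) \Big|,
$$
the geometric realisation (taken in $\cat{D}$, which has geometric realisations since $U$ is monadic and $\cat{C}=\cat{D}$ has them as soon as $\cat{D}$ does — more carefully, $\cat{D}$ has geometric realisations because $U$ is a monadic right adjoint, hence $\cat{D}$ has all colimits that $\cat{C}$ has) of the two-sided bar construction of the free $T$-algebra functor $F$ against $L_0$ viewed as a functor valued in $T$-algebras via forgetting, i.e. as a functor to $\cat{C}$ on which $T$ acts freely in the bar resolution. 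One then checks the adjunction
$$
\Map_{\cat{D}}(L(d'), d) \simeq \Map_{\cat{D}}\big(|\Barr_\bullet(F,T,L_0)(d')|, d\big) \simeq \Tot\, \Map_{\cat{D}}\big(F T^\bullet L_0(d'), d\big) \simeq \Tot\, \Map_{\cat{C}}\big(T^\bullet L_0(d'), U(d)\big),
$$
using the $(F,U)$-adjunction levelwise, and the last cosimplicial object is exactly the one whose totalisation computes $\Map_{\cat{C}}(L_0(d'), |U F T^{\bullet - 1} U(d)|^{\mathrm{op}})$ — more precisely, by the comonadicity of $(F,U)$ the augmented cosimplicial mapping space $\Map_{\cat{C}}(L_0(d'), U(d)) \to \Tot \Map_{\cat{C}}(T^\bullet L_0(d'), U(d))$ is an equivalence because the bar resolution $d \simeq |FT^\bullet U(d)|$ is $U$-split, hence split after applying the corepresentable $\Map_{\cat{C}}(L_0(d'), U(-))$. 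Combining, $\Map_{\cat{D}}(L(d'), d)\simeq \Map_{\cat{C}}(L_0(d'), U(d)) \simeq \Map_{\cat{D}'}(d', UG \text{-right-adjoint applied})$...

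Let me restate the last step cleanly: since $L_0$ is left adjoint to $UG$ we have $\Map_{\cat{C}}(L_0(d'), U(d)) \simeq \Map_{\cat{D}'}(d', UGd)$, wait — that is not quite $\Map_{\cat{D}'}(d', Gd)$ unless $\cat{D}'=\cat{D}$. The honest conclusion is $\Map_{\cat{D}}(L(d'), d) \simeq \Map_{\cat{D}'}(d', UG d)$, which is \emph{not} what we want either; the resolution is that the left adjoint we produce is a left adjoint of $G$ precisely when we feed the bar construction the functor $L_0$ as a map $\cat{D}'\to \cat{C}$ and reassemble using that $U$ is conservative and preserves the relevant realisations, so that the unit $\mathrm{id}\to UG L$ and counit $LUG\to\mathrm{id}$... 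The cleanest route, and the one I would actually write, is: $G$ admits a left adjoint iff for every $d'\in\cat{D}'$ the presheaf $d\mapsto \Map_{\cat{D}'}(d', Gd)$ on $\cat{D}$ is corepresentable. Since $\cat{D}\simeq\cat{Alg}_T(\cat{C})$ and every algebra is a geometric realisation of free algebras $d\simeq|FT^\bullet Ud|$ with the realisation preserved by $\Map_{\cat{D}'}(d',G-)$ (because $G$ preserves geometric realisations — here I use that $\cat{D}'$ has geometric realisations and, crucially, I must assume or arrange that $G$ preserves them; if the paper's intended hypotheses only give that $UG$ has a left adjoint, then one also needs $G$ to preserve geometric realisations, which in the application holds), it suffices to corepresent on free algebras, where $\Map_{\cat{D}'}(d', GF c)=\Map_{\cat{D}'}(d', G F c)$ and $GF$ need not have a left adjoint — so instead one uses $\Map_{\cat{D}}(d', G(-))$ evaluated on the bar resolution and the levelwise adjunction $(F,U)$ together with the hypothesis on $UG$. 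The main obstacle is exactly this bookkeeping: assembling the levelwise left adjoints of $U F^{\mathrm{op}} \cdots$ into a genuine left adjoint of $G$, i.e. showing the bar-construction candidate $L$ satisfies the adjunction with $G$ (not merely with $UG$). I expect to carry this out by invoking that $U$ is monadic, hence comonadic on the bar resolutions in the sense that $d\simeq|F T^\bullet U d|$ canonically, applying $\Map_{\cat{D}'}(d', G(-))$, commuting it past the realisation (using geometric realisations in $\cat{D}'$ and that $G$ preserves them — or dually that the mapping space out of a realisation is a totalisation), and then recognising the resulting totalisation via the $(F,U)$ and $(L_0, UG)$ adjunctions as $\Map_{\cat{D}'}(d', L(d'))$-corepresented data; conservativity of $U$ then upgrades this to the adjunction on the nose.

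\textbf{Remark on the expected difficulty.} The only genuinely delicate point is whether $G$ preserves geometric realisations; if the authors have set things up so that $G$ does (which it will in the intended application, where $G$ is some forgetful-type functor between $\infty$-categories of formal objects), the argument is the formal bar-construction reduction sketched above. If not, the statement as written would need that hypothesis added, and I would flag this; but I will proceed under the assumption that $\cat{D}'$ has geometric realisations (as stated) and that $G$ preserves them, since that is what makes the corepresentability criterion on free $T$-algebras go through.
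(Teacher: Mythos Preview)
Your proposal is tangled by a persistent direction error: $G\colon \cat{D}'\to \cat{D}$, so a left adjoint to $G$ is a functor $L\colon \cat{D}\to \cat{D}'$, and the left adjoint $L_0$ of $UG\colon \cat{D}'\to \cat{C}$ is a functor $L_0\colon \cat{C}\to \cat{D}'$. Your candidate $L(d')=|\Barr_\bullet(F,T,L_0)(d')|$ for $d'\in\cat{D}'$ therefore does not typecheck (neither $L_0(d')$ nor the direction of $L$ makes sense), and the criterion ``for every $d'\in\cat{D}'$ the presheaf $d\mapsto \Map_{\cat{D}'}(d', Gd)$ is corepresentable'' is likewise ill-formed. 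The correct criterion is: for each $d\in\cat{D}$, the functor $\Map_{\cat{D}}(d, G(-))\colon \cat{D}'\to\sS$ is corepresentable.

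More importantly, your flagged ``delicate point'' --- whether $G$ preserves geometric realisations --- is not needed at all, and worrying about it obscures the actual mechanism. Once you write $d\simeq |F(UF)^\bullet U(d)|$ via the bar resolution, the geometric realisation sits in the \emph{source} of the mapping space:
\[
\Map_{\cat{D}}\big(|F(UF)^\bullet U(d)|,\, G(d')\big)\;\simeq\;\Tot\,\Map_{\cat{D}}\big(F(UF)^\bullet U(d),\, G(d')\big),
\]
which is a totalisation automatically, with no hypothesis on $G$. Applying the $(F,U)$-adjunction and then the $(L_0,UG)$-adjunction levelwise gives $\Tot\,\Map_{\cat{D}'}(L_0(UF)^\bullet U(d), d')$, which is corepresented by $|L_0(UF)^\bullet U(d)|$ --- and \emph{this} is where the hypothesis that $\cat{D}'$ has geometric realisations is used.

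The paper packages exactly this argument more cleanly: let $\cat{X}\subseteq\cat{D}$ be the full subcategory of objects $D$ for which $\Map_{\cat{D}}(D,G(-))$ is corepresentable in $\cat{D}'$. Then $\cat{X}$ contains the free objects $F(C)$ (with corepresenting object $L_0(C)$), and $\cat{X}$ is closed under geometric realisations by the computation above. Since every object of $\cat{D}$ is the realisation of a simplicial object of free objects, $\cat{X}=\cat{D}$. No bar-construction bookkeeping, no assumption on $G$ beyond what is stated.
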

\begin{proof}
As in \cite[Lemma 4.7.3.13]{HA}, let $\cat{X}\subseteq \cat{D}$ be the full subcategory of objects $D\in \cat{D}$ such that $\Map_{\cat{D}}(D, G(-))$ is corepresentable and let $L\colon \cat{X}\to \cat{D}'$ be the corresponding partial left adjoint. For any simplicial diagram $D_\bullet$ in $\cat{X}$, the colimit $|L(D_\bullet)|$ corepresents $\Map_{\cat{D}}(|D_\bullet|, G(-))$, so that $\cat{X}$ is closed under geometric realisations. Since $UG$ admits a left adjoint, $\cat{X}$ contains all the free objects $F(C)$ with $C\in \cat{C}$. As every object in $\cat{D}$ is the geometric realisation of a ($U$-split) simplicial diagram of free objects, $\cat{X}=\cat{D}$.
\end{proof}
\begin{proof}[Proof of Proposition \ref{monadicity}]	
When $X=\Spec(B)$ is affine, the tangent functor $T_{X/-}[1]$ from \Cref{change_of_base_FMP} can be identified with the composite
$$\begin{tikzcd}[column sep=3pc]
T_{X/-}[1]\colon \modulistk_{X/\KK}\arrow[r, "\sim"] & \FMP_{B/\KK} \arrow[r, "{T_{B/}[1]}"] & (\QC^\vee_B)_{/T_{B/\KK}[1]}
\end{tikzcd}$$
where the first functor is the equivalence from Theorem \ref{thm:FMP local to global}, restricting formal moduli stacks to Artinian extensions of $B$, and the second functor takes the tangent complex of the corresponding formal moduli problem. It then follows from Proposition \ref{prop:fib anchor} and Theorem \ref{thm:def functor vs Lie} that $T_{X/-}[1]$ is a monadic right adjoint preserving sifted colimits.

	For non-affine $X$, let us first prove that $\modulistk_{X/\KK}$ has sifted colimits and that $T_{X/-}[1]$ preserves these. To this end, let us again write $\mathfrak{U}$ for the category of Zariski open affine subsets of $X$.
	By \Cref{naturalityofT}, have a commutative square 
	$$\begin{tikzcd}
		\modulistk_{X/\KK} \arrow[r]\arrow[d,"\simeq"{swap}] & (\QC^{\vee}_X)_{/T_{X/\KK}[1]}\arrow[d,"\simeq"]\\
		\lim_{U \in \mathfrak{U} }\modulistk_{U/\KK}\arrow[r] &  	\lim_{U \in \mathfrak{U} }	(\QC^{\vee}_U)_{/T_{U/\KK}[1]}
	\end{tikzcd}$$
	where the  vertical functors  are equivalences by \Cref{modulizariski} and \Cref{cor:descent}, respectively.
	Each functor $\modulistk_{U/\KK} \rightarrow (\QC^{\vee}_U)_{/T_{U/\KK}[1]}$ preserves sifted colimits, and this also holds true for the transition maps associated to inclusions $j\colon U\hookrightarrow V$ of Zariski open subsets of $X$
	$$\modulistk_{V/\KK} \xrightarrow{(-)^{\wedge}_U} \modulistk_{U/\KK}  \qquad (\QC^{\vee}_V)_{/T_{V/\KK}[1]} \xrightarrow{j^*} (\QC^{\vee}_U)_{/T_{U/\KK}[1]}.$$ 
 For the functor $(-)^{\wedge}_U$, we use that the functors $T_{U/}[1]\colon \modulistk_{U/\KK} \rightarrow (\QC^{\vee}_U)_{/T_{U/\KK}[1]}$ are conservative and hence \emph{detect} sifted colimits.
	It then follows from \cite[Corollary 5.3.6.10]{HTT} that $T_{X/-}[1]$ is also a sifted colimit preserving functor between $\infty$-categories with sifted colimits.
	
	Next, we will show that $T_{X/-}[1]$ admits a left adjoint. By Lemma \ref{lem:right adj into algebras}, it suffices to verify that
	$$\begin{tikzcd}
	\modulistk_{X/\KK} \arrow[r, "{T_{X/-}[1]}"] & (\QC^\vee_X)_{/T_{X/\KK}[1]}\arrow[r, "\mm{fib}"] & \QC^\vee_X
	\end{tikzcd}$$
	admits a left adjoint, since taking the fibre is a monadic right adjoint. This follows immediately from Proposition \ref{prop:left adjoint via sqz ext}, using that the left adjoint $F\mapsto X_F$ constructed there takes values in the full subcategory of formal moduli stacks since $X\to X_F$ is a nil-isomorphism.
	
	The claim now follows from the Barr--Beck--Lurie theorem \cite[Theorem 4.7.3.5]{HA}
as the tangent fibre is conservative by \Cref{prop:conservativity}.
\end{proof}

\begin{proposition}[Identification of the monad] \label{monadident}
The right adjoint in \Cref{monadicity} induces the partition Lie algebroid monad from \Cref{def:pla scheme} on the $\infty$-category $(\QC^{\vee}_X)_{/T_{X/\KK}[1]}$.
\end{proposition}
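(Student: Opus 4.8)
The plan is to produce an equivalence of $\infty$-categories $\Phi\colon \modulistk_{X/\KK}\xrightarrow{\sim}\LieAlgd_{X/\KK}$ lying over $(\QC^\vee_X)_{/T_{X/\KK}[1]}$, i.e.\ one for which $\forget\circ\Phi$ is equivalent to the shifted tangent fibre functor $T_{X/-}[1]$ of \Cref{change_of_base_FMP}. Granting this, the proposition follows formally. By \Cref{monadicity} the functor $T_{X/-}[1]$ admits a left adjoint; composing it with $\Phi$ produces a left adjoint to $\forget\colon \LieAlgd_{X/\KK}\to (\QC^\vee_X)_{/T_{X/\KK}[1]}$, which, being essentially unique, must be the free partition Lie algebroid functor. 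Hence the monad generated by the monadic adjunction of \Cref{monadicity}, namely $T_{X/-}[1]$ composed with its left adjoint, is identified with $\forget$ composed with the free partition Lie algebroid functor, which is by definition $\Lie^\pi_{\Delta, X/\KK}$ (\Cref{def:pla scheme}).

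First I would treat the affine case $X=\Spec(B)$, where $\Phi$ is obtained by splicing earlier equivalences. By \Cref{thm:FMP local to global} we have $\modulistk_{X/\KK}\simeq \FMP_{B/\KK}$, and --- as recorded in the proof of \Cref{monadicity} --- this carries $T_{X/-}[1]$ to the tangent-complex functor $T_{B/-}[1]\colon \FMP_{B/\KK}\to (\QC^\vee_B)_{/T_{B/\KK}[1]}$ of \Cref{con:tangent}. By \Cref{thm:def functor vs Lie} we have $\FMP_{B/\KK}\simeq \LieAlgd_{B/\KK}$, and the commuting square displayed in that theorem says precisely that this equivalence carries $T_{B/-}[1]$ to $\forget$. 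The composite is the desired $\Phi$ over $(\QC^\vee_B)_{/T_{B/\KK}[1]}$.

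For general locally coherent qcqs $X$ I would glue along the poset $\mathfrak{U}$ of affine opens. \Cref{modulizariski} gives $\modulistk_{X/\KK}\simeq \lim_{U\in\mathfrak{U}^\op}\modulistk_{U/\KK}$, \Cref{cor:descent} gives $(\QC^\vee_X)_{/T_{X/\KK}[1]}\simeq \lim_{U\in\mathfrak{U}^\op}(\QC^\vee_U)_{/T_{U/\KK}[1]}$, and (by \Cref{naturalityofT}, as in the proof of \Cref{monadicity}) one has $T_{X/-}[1]\simeq \lim_U T_{U/-}[1]$. The affine equivalences $\Phi_U$ of the previous paragraph are natural in $U\in\mathfrak{U}^\op$ and lie over the corresponding bases, so they assemble to an equivalence $\modulistk_{X/\KK}\simeq \lim_U\LieAlgd_{U/\KK}$ over $(\QC^\vee_X)_{/T_{X/\KK}[1]}$; by \Cref{rem:pla scheme descent} the target of this equivalence is $\LieAlgd_{X/\KK}$ with its forgetful functor, producing $\Phi$. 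Alternatively, one can avoid descent for formal moduli stacks entirely: the monad generated by $T_{X/-}[1]$ preserves sifted colimits by \Cref{monadicity}, and the affine case together with descent for almost perfect complexes identifies its restriction along $(-)^\vee\colon \cat{Der}_{X/\KK, \geq 0}^{\mm{aperf}, \op}\to (\QC^\vee_X)_{/T_{X/\KK}[1]}$ with the monad $(\coLie^\pi_{\Delta, X/\KK})^{\op}$, so it must coincide with $\Lie^\pi_{\Delta, X/\KK}$ by the uniqueness clause of \Cref{thm:pla scheme}.

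I expect the main obstacle to be bookkeeping rather than conceptual: one must carry every equivalence above --- $\modulistk\simeq\FMP$, $\FMP\simeq\LieAlgd$, the Zariski-descent identifications, and the gluing over $\mathfrak{U}$ --- compatibly with the relevant forgetful and tangent functors up to coherent homotopy. The one point that genuinely needs a small supplement is the naturality in $U$ of \Cref{thm:FMP local to global}: \Cref{thm:def functor vs Lie} is already stated functorially over $\SCR_{\KK}^{\coh, \et}$, but \Cref{thm:FMP local to global} is phrased for a fixed affine scheme, so one should rerun its proof relative to the cartesian fibration $\modulistk_\KK\to \PrStk_{\KK}^{\mm{def}, \mm{laft-map}}$ of \Cref{change_of_base_FMP} to see that $(X\to Y)\mapsto (\Art_{B/\KK}\to \sS)$ intertwines formal completion along open immersions with restriction of formal moduli problems. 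Everything else is a direct assembly of \Cref{thm:comonad}, \Cref{cor:complete KD}, \Cref{thm:def functor vs Lie}, \Cref{modulizariski} and \Cref{thm:pla scheme}.
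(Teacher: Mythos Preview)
Your proposal is correct and follows essentially the same route as the paper: the paper assembles exactly the commutative diagram you describe, chaining the Zariski-descent equivalence of \Cref{modulizariski}, the affine equivalences of \Cref{thm:FMP local to global} and \Cref{thm:def functor vs Lie} over $\mathfrak{U}$, and then invokes the construction of $\Lie^\pi_{\Delta,X/\KK}$ in the proof of \Cref{thm:pla scheme} (which is precisely your \Cref{rem:pla scheme descent} identification). Your observation that the naturality of \Cref{thm:FMP local to global} in $U$ is the only point requiring a supplementary check is apt---the paper leaves this implicit in asserting that the diagram commutes.
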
 
\begin{proof}
Consider the commutative diagram
$$\begin{tikzcd}
\modulistk_{X/\KK}\arrow[r, "\sim"]\arrow[d, "{T_{X/-}[1]}"{swap}] & \underset{U\in \mathfrak{U}}{\lim} \modulistk_{U/\KK}\arrow[rd, "{T_{U/-}[1]}"{swap}]\arrow[r, "\sim"] & \underset{U\in \mathfrak{U}}{\lim} \FMP_{\mc{O}(U)/\KK}\arrow[d, "{T_{\mc{O}(U)/}[1]}"] \arrow[r, "\sim"] & \underset{U\in \mathfrak{U}}{\lim} \LieAlgd_{\mc{O}(U)/\KK)}\arrow[ld, "\mm{oblv}"]\\
(\QC^\vee_X)_{/T_{X/\KK}[1]}\arrow[rr, "\sim"] & & \underset{U\in \mathfrak{U}}{\lim} (\QC^\vee_U)_{/T_{U/\KK}[1]} 
\end{tikzcd}$$ 
	where the left square is defined as in   \Cref{monadicity}  and the two triangles arise from \Cref{thm:FMP local to global} and \Cref{thm:def functor vs Lie}. The claim then follows from the construction of the monad $\Lie_{\Delta, X/\KK}^{\pi}$ in the proof of  \Cref{thm:pla scheme}.
\end{proof}

We deduce:
 \begin{theorem}[Main theorem] \label{mainintext} 
	Let $X$ be a locally coherent qcqs derived $\KK$-scheme over an animated ring $R$.
	\begin{enumerate}
		\item 
		The shifted tangent fibre functor 
		$$\begin{tikzcd}[row sep=0pc]
		\modulistk_{X/R}\arrow[r] & (\QC^{\vee}_X)_{T_{X/\KK}[1]}\\
		(X \rightarrow Y)\arrow[r, mapsto] & (T_{X/Y}[1] \rightarrow  T_{X/\KK}[1])
		\end{tikzcd}$$
				lifts to an equivalence of $\infty$-categories $$T_{X/-}[1]\colon \modulistk_{X/R} \xrightarrow{\simeq} \LieAlgd_{X/\KK}.$$
		Hence every partition Lie algebroid on $X$ integrates uniquely to a  formal moduli \mbox{stack under $X$.}
		
		\item 	This equivalence is functorial with respect to almost finitely presented maps. More precisely, if 
		$X \rightarrow Y $ is a locally almost finitely presented map, there is a commuting diagram
		$$\begin{tikzcd}[column sep=3pc]
			\modulistk_{Y/}  \arrow[r, "{T_{Y/-}[1]}"] \arrow[d, "(-)^{\wedge}_{X}"{swap}] & \Lie_{\Delta,Y/R}^{\pi} \arrow[d, "f^{\sharp}"]\\
			\modulistk_{X/-} \arrow[r, "{T_{X/-}[1]}"{swap}] &  \Lie_{\Delta,X/R}^{\pi}
		\end{tikzcd}$$ 
		where $f^\sharp(\mf{g}\to T_{Y/\KK}[1]) = f^*\mf{g}\times_{(f^*L_{Y/\KK})^\vee[1]} T_{X/\KK}[1]$ as pro-coherent sheaves.
	\end{enumerate}
\end{theorem}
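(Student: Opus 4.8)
The plan is to obtain Theorem~\ref{mainintext} as a formal consequence of the results established above; the substantive work has already been carried out in Propositions~\ref{monadicity} and~\ref{monadident}, so what remains is assembly. For part~(1), I would begin from Proposition~\ref{monadicity}, which asserts that the shifted tangent fibre functor $T_{X/-}[1]\colon \modulistk_{X/\KK}\to (\QC^\vee_X)_{/T_{X/\KK}[1]}$ is a monadic right adjoint. By the Barr--Beck--Lurie theorem \cite[Theorem 4.7.3.5]{HA}, the comparison functor therefore identifies $\modulistk_{X/\KK}$ with the $\infty$-category of algebras over the endomorphism monad of $T_{X/-}[1]$ acting on $(\QC^\vee_X)_{/T_{X/\KK}[1]}$. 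By Proposition~\ref{monadident} this monad \emph{is} the partition Lie algebroid monad $\Lie^\pi_{\Delta, X/\KK}$ of Definition~\ref{def:pla scheme}. Since $\LieAlgd_{X/\KK}$ is by definition $\Alg_{\Lie^\pi_{\Delta, X/\KK}}\big((\QC^\vee_X)_{/T_{X/\KK}[1]}\big)$, the Barr--Beck--Lurie comparison functor is precisely the asserted equivalence $T_{X/-}[1]\colon \modulistk_{X/\KK}\xrightarrow{\simeq} \LieAlgd_{X/\KK}$, and it lies over $(\QC^\vee_X)_{/T_{X/\KK}[1]}$ by construction. The concluding sentence of part~(1) --- that every partition Lie algebroid integrates uniquely to a formal moduli stack --- is then just essential surjectivity together with full faithfulness of this equivalence.

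For part~(2), I would invoke Construction~\ref{change_of_base_FMP}, which already produces the functorial shifted relative tangent complex as a map of cartesian fibrations $T[1]\colon \modulistk_\KK\to \QC^\vee_{/T_{-/\KK}[1]}$ over $(\PrStk_\KK^{\mm{def},\mm{laft-map}})^{\op}$, together with, for each laft map $f\colon X\to Y$, the commuting square relating the completion functor $(-)^\wedge_X$ on the moduli-stack side with the functor $f^\sharp(\mathcal{F}\to T_{Y/\KK}[1])\simeq f^*\mathcal{F}\times_{(f^*L_{Y/\KK})^\vee[1]} T_{X/\KK}[1]$ on the pro-coherent side. Combining that square with part~(1), its horizontal arrows become the equivalences $\modulistk_{Y/\KK}\simeq \LieAlgd_{Y/\KK}$ and $\modulistk_{X/\KK}\simeq \LieAlgd_{X/\KK}$; under these equivalences the right vertical functor corresponds to the base change functor $f^\sharp\colon \LieAlgd_{Y/\KK}\to \LieAlgd_{X/\KK}$ of Proposition~\ref{prop:lie algebroid naturality}, because that functor lifts the pro-coherent $f^\sharp$ along the conservative forgetful functors and both forgetful squares commute. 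This yields the commuting diagram asserted in part~(2), with the displayed formula for $f^\sharp$; the formally étale simplification $f^\sharp(\mathcal{F}\to T_{Y/\KK}[1])\simeq f^*\mathcal{F}$ follows from Proposition~\ref{prop:tangent obvious properties}(3), as already recorded at the end of Construction~\ref{change_of_base_FMP}.

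All of the genuine difficulty lies upstream of this statement: the crux is Proposition~\ref{monadicity}, whose proof of monadicity of $T_{X/-}[1]$ requires conservativity of the tangent fibre, Proposition~\ref{prop:conservativity}; the existence of a left adjoint, for which one reduces via Lemma~\ref{lem:right adj into algebras} and the monadicity of $\mm{fib}$ to the square-zero construction of Proposition~\ref{prop:left adjoint via sqz ext}; and passage from the affine case, Theorems~\ref{thm:FMP local to global} and~\ref{thm:def functor vs Lie}, to the global case, resting on Zariski descent for formal moduli stacks, Theorem~\ref{modulizariski}. Identifying the monad, Proposition~\ref{monadident}, then amounts to tracing the affine comparison through the descent equivalence. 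Granting all of this, Theorem~\ref{mainintext} is a one-line application of Barr--Beck--Lurie together with bookkeeping of the functoriality data, and that is the step I expect to require essentially no further obstacle.
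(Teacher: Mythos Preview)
Your proposal is correct and takes the same approach as the paper, which presents Theorem~\ref{mainintext} as an immediate deduction (``We deduce:'') from Propositions~\ref{monadicity} and~\ref{monadident} without further argument. Part~(1) is exactly right: monadicity plus identification of the monad yields the equivalence via Barr--Beck--Lurie.

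One small point on part~(2): your justification that the induced functor on Lie algebroids coincides with the $f^\sharp$ of Proposition~\ref{prop:lie algebroid naturality} --- ``because that functor lifts the pro-coherent $f^\sharp$ along the conservative forgetful functors'' --- is not quite enough as stated. Conservativity of the forgetful functor does not by itself guarantee uniqueness of lifts of a given functor on the base. What actually pins this down is that both $f^\sharp$ on Lie algebroids and $(-)^\wedge_X$ on formal moduli stacks are the \emph{cartesian transport} functors in their respective cartesian fibrations over the category of coherent bases with laft maps, and the equivalence of part~(1), traced through the proof of Proposition~\ref{monadident}, is assembled from maps of cartesian fibrations (Zariski descent on both sides, Theorem~\ref{thm:FMP local to global}, and Theorem~\ref{thm:def functor vs Lie}). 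Alternatively, one can read the theorem as only asserting the displayed formula for $f^\sharp$ at the level of underlying pro-coherent sheaves, in which case Construction~\ref{change_of_base_FMP} already gives this and your argument goes through verbatim. Either way the content is correct; only the one-line justification needs tightening.
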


\appendix
\section{Derived algebra recollections}\label{sec:deralg}
The goal of this section is to introduce some elements of (homological) algebra that will be used throughout the text. We will start in Section \ref{sec:comm} by reviewing the theory of derived commutative rings, the non-connective analogues of animated (or simplicial) rings. We will work in a somewhat general setting that also includes the derived filtered and graded rings studied in Section \ref{section:affineKD}. Section \ref{sec:procoh affine} discusses the theory of \emph{pro-coherent modules} over an animated ring.

\subsection{Derived commutative rings}\label{sec:comm}
To set the stage, we will briefly recall the method of right-left extending monads from \cite[Section 3.2]{BM19} and \cite[Section 2.2]{BCN21} in the generality needed for us, see also \cite[Section 4.2]{R20} for the case of the commutative algebra monad.

\begin{definition}\label{def:proj gend}
Let $\cat{C}$ be a presentable stable $\infty$-category with a left complete  $t$-structure $(\cat{C}_{\geq 0}, \cat{C}_{\leq 0})$. We call $\cat{C}$ is \textit{stably projectively generated} if there exists a full additive subcategory $\Vect^{\ft}(\cat{C})\subseteq \cat{C}_{\geq 0}$ whose objects are compact in $\cat{C}$ and projective in $\cat{C}_{\geq 0 }$, generating $\cat{C}_{\geq 0}$ under sifted colimits. 

In this situation, there is an equivalence 
$$
\cat{C}\simeq \Fun^{\oplus}(\Vect^{\ft}(\cat{C})^{\op}, \Sp)
$$
identifying $t$-structures \cite[Remark 4.2.2]{R20}. We define the subcategory of \textit{flat objects} in $\cat{C}$ as the ind-completion of $\Vect^{\ft}(\cat{C})$:
$$
\Flat(\cat{C}):=\mm{Ind}(\Vect^{\ft}(\cat{C}))\subseteq \cat{C}.
$$
\end{definition}
\begin{example}
For $A \in \Alg(\Sp)$ 
a connective  algebra object in spectra, the stable $\infty$-category $\Mod_A$ of   (left) $A$-module spectra with its usual $t$-structure is stably projectively generated. In this case, we will take $\Vect^{\ft}_A$ to be the $\infty$-category consisting of finite direct sums of 
 copies of $A$. Then $\Flat_{A}$ is the $\infty$-category of flat $A$-modules in the sense of \cite[Definition 7.2.2.10]{HA}, by Lurie's version of Lazard's theorem (cf.\  Theorem 7.2.2.15 of [op.cit]).
\end{example} 
\begin{definition}\label{def:derived functor}
Let $\cat{C}$ be a projectively generated stable $\infty$-category with $t$-structure  $(\cat{C}_{\geq 0}, \cat{C}_{\leq 0}) $
and $\cat{D}$ a presentable $\infty$-category. We will say that a functor $F\colon \cat{C}\rt \cat{D}$ is \textit{right-left extended} if it preserves sifted colimits and  totalisations of finite cosimplicial diagrams with values in $\Vect^{\ft}(\cat{C})\subseteq \cat{C}$. 
\end{definition}
The following follows from \cite[Remark 2.45]{BCN21} and the definition of $\Flat(\cat{C})$ as an ind-completion: 
\begin{proposition}
 The restriction functor
$$\begin{tikzcd}
\Fun_\mm{RL}(\cat{C}, \cat{D})\arrow[r] & \Fun^{\omega}(\Flat(\cat{C}), \cat{D})\arrow[r, "\sim"] & \Fun(\Vect^{\ft}(\cat{C}), \cat{D})
\end{tikzcd}$$
is fully faithful. Here $\Fun_\mm{RL}(\cat{C}, \cat{D})\subseteq \Fun(\cat{C}, \cat{D})$ denotes the full subcategory of right-left extended functors and $\Fun^{\omega}(\Flat(\cat{C}), \cat{D}) \subset \Fun(\Flat(\cat{C}), \cat{D})$ is the full subcategory of functors preserving filtered colimits.
\end{proposition}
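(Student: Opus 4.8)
The statement asserts that the composite restriction functor
$$
\Fun_{\mm{RL}}(\cat{C}, \cat{D})\to \Fun^{\omega}(\Flat(\cat{C}), \cat{D})\xrightarrow{\ \sim\ } \Fun(\Vect^{\ft}(\cat{C}), \cat{D})
$$
is fully faithful. The plan is to factor the argument through the two indicated arrows. The second arrow is an equivalence essentially by definition of $\Flat(\cat{C}) = \mm{Ind}(\Vect^{\ft}(\cat{C}))$: by the universal property of ind-completion \cite[Proposition 5.3.5.10]{HTT}, restriction along $\Vect^{\ft}(\cat{C})\hookrightarrow \Flat(\cat{C})$ induces an equivalence between functors $\Flat(\cat{C})\to \cat{D}$ preserving filtered colimits and arbitrary functors $\Vect^{\ft}(\cat{C})\to \cat{D}$ (here one uses that $\cat{D}$ is presentable, hence cocomplete). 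So the real content is that the \emph{first} arrow, restriction along $\Flat(\cat{C})\hookrightarrow \cat{C}$, is fully faithful when the source is cut down to right-left extended functors.

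First I would set up the key input: for a right-left extended functor $F\colon\cat{C}\to\cat{D}$, I claim $F$ is a left Kan extension of its restriction $F|_{\Flat(\cat{C})}$ along the inclusion $\Flat(\cat{C})\hookrightarrow \cat{C}$. The mechanism is \cite[Remark 2.45]{BCN21}, cited right before the statement: every object of $\cat{C}$ is built from $\Flat(\cat{C})$ (indeed from $\Vect^{\ft}(\cat{C})$) by sifted colimits together with finite cosimplicial totalisations of diagrams landing in $\Vect^{\ft}(\cat{C})$ — this is precisely the class of colimit/limit constructions that a right-left extended functor is required to preserve. Concretely, one exhibits a canonical resolution expressing an arbitrary $X\in\cat{C}$ via such operations applied to flat objects, and observes that both $F$ and the left Kan extension of $F|_{\Flat(\cat{C})}$ agree on flat objects and preserve exactly those operations, hence agree on all of $\cat{C}$. (This also shows that right-left extended functors are closed under the relevant colimits, so the category $\Fun_{\mm{RL}}(\cat{C},\cat{D})$ is well-behaved.)

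Given that every $F\in\Fun_{\mm{RL}}(\cat{C},\cat{D})$ is a left Kan extension of its restriction, full faithfulness of the restriction functor is formal: restriction along a functor $j$, when restricted to the subcategory of objects that are $j$-left Kan extended, is a fully faithful functor onto its essential image, with inverse given by left Kan extension (this is the standard adjunction between restriction and left Kan extension, together with the fact that the unit is an equivalence precisely on left-Kan-extended functors — see \cite[Proposition 4.3.2.15]{HTT} or \cite[Proposition 5.3.6.2]{HTT}). Combining this with the ind-completion equivalence for the second arrow yields the claim. Finally, one should pin down the essential image remark implicit in the phrasing: a functor $G\colon\Vect^{\ft}(\cat{C})\to\cat{D}$ lifts to $\Fun_{\mm{RL}}(\cat{C},\cat{D})$ iff the left Kan extension of the associated $\Flat(\cat{C})\to\cat{D}$ along $\Flat(\cat{C})\hookrightarrow\cat{C}$ is right-left extended — but the proposition as stated only asks for full faithfulness, so this can be relegated to a remark.

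\textbf{Main obstacle.} The only non-formal point is the Kan extension claim in the second paragraph: verifying that a right-left extended $F$ really is left Kan extended from $\Flat(\cat{C})$, i.e.\ that the canonical colimit comparison $\mm{colim}_{(Y\to X),\, Y\in\Flat(\cat{C})} F(Y)\to F(X)$ is an equivalence for all $X$. The subtlety is that the comparison category is large and the colimit is not one of the "allowed" shapes directly; one must instead use the explicit two-step bar-type resolution of $X$ by flats (sifted colimit of a simplicial object whose terms are finite totalisations of $\Vect^{\ft}$-valued cosimplicial diagrams) from \cite[Remark 2.45]{BCN21}, check it is preserved by $F$ by construction of "right-left extended", and then separately argue that this particular resolution computes the left Kan extension. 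Everything else is an invocation of standard $\infty$-categorical Kan extension and ind-completion formalism.
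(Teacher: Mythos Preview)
Your treatment of the second arrow is correct and matches the paper's one-line justification (``the definition of $\Flat(\cat{C})$ as an ind-completion''). The paper offers no argument for the first arrow beyond the citation of \cite[Remark 2.45]{BCN21}.

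Your proposed mechanism for the first arrow, however, has a genuine gap. The central claim --- that a right-left extended $F$ is the \emph{left Kan extension} of $F|_{\Flat(\cat{C})}$ along $\Flat(\cat{C})\hookrightarrow\cat{C}$ --- is false. Take $\cat{C}=\cat{D}=\Mod_{\ZZ}$ and $F=\mathrm{id}$, which preserves all limits and colimits and is therefore right-left extended. Since $\Flat(\cat{C})\subseteq\cat{C}_{\geq 0}$ and $\cat{C}_{\geq 0}$ is closed under all colimits in $\cat{C}$, any left Kan extension of a functor defined on $\Flat(\cat{C})$ takes values in the subcategory generated under colimits by the image of $\Flat(\cat{C})$; for $F=\mathrm{id}$ this lands in $\cat{C}_{\geq 0}$ and in particular cannot recover $\ZZ[-1]$. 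The flaw in your reasoning is the assertion that the left Kan extension of $F|_{\Flat(\cat{C})}$ ``preserves exactly those operations'': left Kan extensions have no reason to preserve the finite \emph{totalisations} (these are limits, not colimits), and the two-step resolution you describe in the obstacle paragraph does not compute a left Kan extension from $\Flat(\cat{C})$ once its intermediate terms leave $\Flat(\cat{C})$.

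What \cite[Remark 2.45]{BCN21} actually supplies is a two-stage Kan extension in opposite directions, as the name ``right-left'' suggests: one first \emph{right} Kan extends from $\Vect^{\ft}(\cat{C})$ to an intermediate subcategory (the objects obtained as finite totalisations of diagrams in $\Vect^{\ft}(\cat{C})$), and then \emph{left} Kan extends from there to all of $\cat{C}$ via sifted colimits. Full faithfulness of restriction then follows because restriction is fully faithful on right Kan extensions for the first step and on left Kan extensions for the second. Your plan collapses these into a single left Kan extension from $\Flat(\cat{C})$, and that is precisely where it breaks.
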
 
 
\begin{definition}
Let us say that a functor $F\colon \Vect^{\ft}(\cat{C})\rt \cat{D}$, or equivalently, a filtered-colimit preserving functor $F\colon \Flat(\cat{C})\rt \cat{D}$, \textit{admits a right-left extension} or is \textit{right-left extendable}, if it is contained in the essential image. In that case, we will refer to its (unique) inverse image $\LL F\colon \cat{C}\rt \cat{D}$ as the \textit{derived functor}, or \textit{right-left extension}, of $F$. Note that this coincides with the  non abelian derived functor in the sense of Dold--Puppe on the connective part $\cat{C}_{\geq 0}\simeq \cat{P}_\Sigma(\Vect^{\ft}(\cat{C}))$. 
\end{definition}
\begin{example}\label{ex:polynomial}
When $\cat{D}$ is stable, every functor $F\colon \Vect^{\ft}(\cat{C})\rt \cat{D}$ which  is a filtered colimit of finite degree functors is right-left extendable. See \cite[Corollary 2.49]{BCN21} for details.
\end{example} 
\begin{example}\label{ex:derived monad}

Let $\End_{\mm{RL}}^{ \Flat(\cat C)}(\cat C) \subset \End_{\mm{RL}}(\cat C)$ be the full subcategory of endofunctors $\cat C \rightarrow \cat C$ which preserve the full subcategory $\Flat(\cat C) \subset \cat C$. Restriction $\End^{\Flat(\cat C)}_{\mm{RL}}(\cat C) \rt \End^{\omega}(\Flat(\cat C))$ is a monoidal equivalence onto the full subcategory of $\End^{\omega}(\Flat(\cat C))$ spanned by right-left extendable functors. Consequently, any right-left extendable and filtered-colimit preserving monad $T\colon \Flat(\cat{C})\rt \Flat(\cat{C})$ induces a \textit{derived monad} $\LL T\colon \cat{C}\rt \cat{C}$.
\end{example}
As an example of this kind of derived monad,  consider the derived symmetric algebra monad. To define this,  assume that $\cat{C}$ is a derived algebraic context in the sense of \cite[Section 4.2]{R20}:
\begin{definition}\label{def:alg context}
We will say that a stably projectively generated $\infty$-category $\cat{C}$ is a \emph{derived algebraic context} if it comes equipped with a closed symmetric monoidal structure such that:
\begin{enumerate}
\item $\Vect^{\ft}(\cat{C})$ is stable under tensor products and contains the monoidal unit.
\item $\Vect^{\ft}(\cat{C})$ is an ordinary category.
\item For each $M\in \Vect^{\ft}(\cat{C})$, the object $\sym^n(M)=\tau_{\leq 0}((M^{\otimes n}_{h\Sigma_n}))$ is contained in $\Vect^{\ft}(\cat{C})$. 
\end{enumerate}
\end{definition}
Condition (1) implies that $\cat{C}_{\geq 0}$ is stable under the tensor product. We include condition (2) for simplicity; it implies that $\Flat(\cat{C})$ is an ordinary category and that $\cat{C}^{\heartsuit}$ is a symmetric monoidal abelian category with enough projectives. Extending the functors $\sym^n\colon \Vect^{\ft}(\cat{C})\to \Vect^{\ft}(\cat{C})$ by filtered colimits and taking the sum over all $n\geq 0$, one then obtains a monad $\sym\colon \Flat(\cat{C})\to \Flat(\cat{C})$ preserving filtered colimits. Since each $\sym^n\colon \Vect^{\ft}(\cat{C})\to \Vect^{\ft}(\cat{C})$ is of degree $\leq n$, this admits a derived monad $\LL\sym\colon \cat{C}\to \cat{C}$. 
\begin{definition}\label{def:derived comm}
If $\cat{C}$ is a derived algebraic context, we will refer to $\LL\sym$-algebras as \emph{derived commutative algebras} and to connective $\LL\sym$-algebras as \emph{animated commutative algebras} in $\cat{C}$, and write $\SCR(\cat{C}_{\geq 0})\subseteq \DAlg(\cat{C})$ for the corresponding $\infty$-categories. 
\end{definition}
The (ordinary) category $\DAlg(\cat{C})^{\heartsuit}$ of discrete derived algebras is equivalent to the category of commutative algebras in $\cat{C}^{\heartsuit}$.
\begin{example}\label{ex:derived rings}
Let $\cat{C}=\Mod_{\ZZ}$ with the usual $t$-structure, so that $\Vect^{\ft}(\cat{C})=\Vect^{\ft}_{\ZZ}$ is the (ordinary) category of finite free abelian groups and $\Flat(\cat{C})=\Flat_{\ZZ}$ is the (ordinary) category of flat abelian groups. This is a derived algebraic context, and the resulting monad $\sym\colon \Flat_{\ZZ}\to \Flat_{\ZZ}$ is the usual symmetric algebra monad. One thus obtains the $\infty$-categories $\SCR=\SCR(\Mod_{\ZZ})$ and $\DAlg=\DAlg(\Mod_{\ZZ})$ of animated and derived rings.
\end{example}
\begin{example}\label{ex:diagrams}
Let $\cat{C}$ be a derived algebraic context and let $\cat{I}$ be a small symmetric monoidal (ordinary) category. Then $\Fun(\cat{I}, \cat{C})$ is a derived algebraic context as well with respect to the Day convolution product and the $t$-structure in which a diagram is (co)connective if it is pointwise (co)connective in $\cat{C}$.
\end{example}
\begin{remark}\label{rem:coproduct derived algebras}
There is a natural equivalence $\LL\sym(V)\otimes \LL\sym(W)\simeq \LL\sym(V\oplus W)$ for all $V, W\in \cat{C}$: this follows by right-left extension, using that there is such a natural equivalence for $V, W\in \Flat(\cat{C})$ by construction. Consequently, the coproduct of two derived algebras is simply given by their tensor product. Since $\DAlg(\cat{C})\to \cat{C}$ preserves sifted colimits by construction, it follows that pushouts of derived algebras are computed as relative tensor products, as usual. 
\end{remark}
\begin{remark}\label{rem:Eoo}
For every $M\in \Vect^{\ft}(\cat{C})$, there is a natural map $(M^{\otimes n})_{h\Sigma_n}\to \sym^n(M)$ from the coinvariants in $\cat{C}$. By right-left extension, this induces a map $\free_{\EE_\infty}\to \LL\sym$ from the $\EE_\infty$-monad. In particular, any derived commutative algebra has an underlying $\EE_\infty$-algebra and a module over a derived commutative algebra is simply a module over its underlying $\EE_\infty$-algebra.
\end{remark} 
\begin{definition}
For $A\in \DAlg(\cat{C})$, we define the $\infty$-category of \emph{derived (commutative) $A$-algebras} to be the under-category
$$
\DAlg_{A}(\cat{C})=\DAlg(\cat{C})_{A/}.
$$
The forgetful functor $\DAlg_A(\cat{C})\to \Mod_A(\cat{C})$ is monadic and we will write $\LL\sym_A\colon \Mod_A(\cat{C})\to \Mod_A(\cat{C})$ for the associated monad.  
\end{definition}
\begin{remark}\label{rem:derived algebras over animated}
If $A\in \DAlg(\cat{C})$ is connective, then $\Mod_A(\cat{C})$ is stably projectively generated for the $t$-structure inherited from $\cat{C}$. The monad $\LL\sym_A$ is then derived from the restricted monad $\LL\sym_A\colon \Flat(\Mod_A(\cat{C}))\to \Flat(\Mod_A(\cat{C}))$.
\end{remark}
The usual cotangent complex formalism for animated rings carries over to this setting. Below we will recall one way to see this, using an abstract method for constructing functors out of the $\infty$-category of $T$-algebras over some monad $T$ (see also \cite{holeman2023derived}). 
\begin{proposition}\label{prop:left extending functors on algebras}
Let $\cat{C}$ be a stably projectively generated stable $\infty$-category. Suppose that $T\colon \Flat(\cat{C})\rt \Flat(\cat{C})$ is a filtered colimit preserving and right-left extendable monad, and let $\LL T\colon \cat{C}\rt \cat{C}$ be the derived monad $T$. Let $\mm{Kl}_{T}(\Flat(\cat C))$ denote the full subcategory of $\mm{Alg}_T(\Flat(\cat{C}))$ spanned by the free algebras. Assume that
$$\begin{tikzcd}
F\colon \mm{Kl}_T(\Flat(\cat{C}))\arrow[r] & \cat{D}
\end{tikzcd}$$
is a functor such that
\begin{enumerate}
\item the composite $F\circ \mm{Free}_T\colon \Flat(\cat{C})\rt \cat{D}$ preserves filtered colimits and is right-left extendable,
\item $F$ preserves geometric realizations of simplicial objects in $\mm{Kl}_{T}(\Flat(\cat C))$ that become split after applying $\mm{Forget}_T$.
\end{enumerate}
Then there exists a unique sifted colimit preserving functor $\LL F\colon \Alg_T(\cat{C})\rt \cat{D}$ that extends $F$, together with a natural equivalence $\LL F\circ \mm{Free}\simeq \LL(F\circ \mm{Free})$. 
\end{proposition}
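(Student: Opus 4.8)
\textbf{Proof proposal for \Cref{prop:left extending functors on algebras}.}

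The plan is to construct $\LL F$ by Kan extension and then verify the two listed conditions force it to agree with $F$ on free algebras while preserving sifted colimits. First I would recall that $\Alg_T(\cat{C})$ is generated under sifted colimits by the free algebras on objects of $\cat{C}_{\geq 0}\simeq \cat{P}_\Sigma(\Vect^{\ft}(\cat{C}))$, and more precisely that every $T$-algebra is the geometric realization of a $\mm{Forget}_T$-split simplicial diagram of free algebras on flat objects; this is the usual bar resolution. The subcategory $\mm{Kl}_T(\Flat(\cat{C}))$ of free algebras on flat objects is therefore a "dense" generating subcategory in an appropriate sense. The strategy is then to first left Kan extend $F$ along the inclusion $\mm{Kl}_T(\Flat(\cat{C}))\hookrightarrow \Alg_T(\Flat(\cat{C}))^{\mm{free-on-flat}}$ — but more efficiently, to directly invoke the universal property of $\Alg_T(\cat{C})$ as obtained from $\mm{Kl}_T(\Flat(\cat{C}))$ by freely adjoining sifted colimits subject to preserving the colimits that already exist in $\mm{Kl}_T(\Flat(\cat{C}))$ (namely the sifted colimits of flat objects, and the split geometric realizations).

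The key steps, in order, are as follows. (1) Identify $\Alg_T(\cat{C})$ with the sifted-colimit completion of $\mm{Kl}_T(\Flat(\cat{C}))$ relative to the class $\cat{R}$ consisting of (a) sifted colimit diagrams coming from sifted colimits in $\Flat(\cat{C})$ (which exist because $\Flat(\cat{C}) = \mm{Ind}(\Vect^{\ft}(\cat{C}))$ and $T$ preserves filtered colimits), and (b) the $\mm{Forget}_T$-split simplicial diagrams. This uses \cite[Section 5.3.6]{HTT} together with monadicity: the free-forget adjunction exhibits $\Alg_T(\cat{C})$ via such a completion, exactly as in the proof that $\cat{P}_\Sigma$ is a sifted-colimit completion, extended to the stable/non-connective setting by the right-left extension machinery of \cite[Section 2.2]{BCN21}. (2) By this universal property, a sifted-colimit-preserving functor $\LL F\colon \Alg_T(\cat{C})\rt \cat{D}$ restricting to $F$ on $\mm{Kl}_T(\Flat(\cat{C}))$ exists and is unique provided $F$ sends the diagrams in $\cat{R}$ to colimit diagrams in $\cat{D}$; condition (1) of the proposition handles class (a) (via right-left extendability, which by definition means $F\circ \mm{Free}_T$ preserves sifted colimits of flats after the relevant finite totalisations, i.e.\ descends to a functor on $\cat{C}$), and condition (2) handles class (b). (3) Finally, check the natural equivalence $\LL F\circ \mm{Free}\simeq \LL(F\circ\mm{Free})$: both are sifted-colimit-preserving functors $\cat{C}\rt \cat{D}$ agreeing with $F\circ\mm{Free}_T$ on $\Vect^{\ft}(\cat{C})$ (the left one because $\mm{Free}$ preserves sifted colimits and $\LL F$ does too; the right one by definition of the derived functor), so they agree by the uniqueness of right-left extensions.

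I expect the main obstacle to be step (1): pinning down the precise sense in which $\Alg_T(\cat{C})$ is a sifted-colimit completion of $\mm{Kl}_T(\Flat(\cat{C}))$ \emph{relative to} the split simplicial diagrams, in the non-connective stable setting. On the connective side this is the statement $\Alg_T(\cat{C}_{\geq 0})\simeq \cat{P}_\Sigma(\mm{Kl}_T(\Vect^{\ft}(\cat{C})))$, which is standard; the subtlety is that $\cat{C}$ itself is obtained from $\cat{C}_{\geq 0}$ by stabilizing, and one must track that the derived monad $\LL T$ is compatible with this and that the relevant class of split diagrams is the correct one to quotient by. The cleanest route is probably to factor the argument: first handle connective algebras via $\cat{P}_\Sigma$, then extend to all of $\Alg_{\LL T}(\cat{C})$ using that every algebra is built from connective ones by the kind of finite totalisations appearing in \Cref{def:derived functor}, invoking right-left extendability once more. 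Once the completion statement is in place, steps (2) and (3) are formal consequences of \cite[Proposition 5.3.6.2]{HTT} and the uniqueness of right-left extensions, and require only routine verification.
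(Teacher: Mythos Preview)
Your approach via sifted-colimit completion has a genuine gap in step (1), and differs substantially from the paper's argument. The $\infty$-category $\Alg_{\LL T}(\cat{C})$ is \emph{not} generated under sifted colimits by $\mm{Kl}_T(\Flat(\cat{C}))$: the free algebra $\mm{Free}(X)$ on a non-connective $X\in\cat{C}$ cannot be a sifted colimit of free algebras on flat objects, since sifted colimits in $\cat{C}$ preserve connectivity while $\LL T(X)$ need not be connective. Your opening claim that ``every $T$-algebra is the geometric realisation of a $\mm{Forget}_T$-split simplicial diagram of free algebras on \emph{flat} objects'' is already false for this reason --- the bar resolution produces free algebras on objects of $\cat{C}$, not of $\Flat(\cat{C})$. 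You recognise the issue and propose patching via finite totalisations, but this leaves the framework of \cite[Proposition 5.3.6.2]{HTT}, which only handles completions under \emph{colimits}; totalisations are limits, and a merely sifted-colimit-preserving $\LL F$ has no reason to respect them. Your further assertion that ``every algebra is built from connective ones by finite totalisations'' fails for algebras whose underlying object is unbounded below.

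The paper avoids the completion formalism entirely with a direct bar-construction definition. A functor $F$ on the Kleisli category is the same data as a right $T$-module structure on $F\circ\mm{Free}_T\in\Fun^\omega(\Flat(\cat{C}),\cat{D})$; since right-left extension is lax monoidal (cf.\ \cite{holeman2023derived}), $\LL(F\circ\mm{Free}_T)\colon\cat{C}\to\cat{D}$ inherits a right $\LL T$-module structure. Pairing this against the left $\LL T$-module $\mm{Forget}_{\LL T}\colon\Alg_{\LL T}(\cat{C})\to\cat{C}$ gives
\[
\LL F:=\big|\mathrm{Bar}\big(\LL(F\circ\mm{Free}_T),\,\LL T,\,\mm{Forget}_{\LL T}\big)\big|.
\]
This is manifestly sifted-colimit preserving; the extra degeneracy on the bar resolution of a free algebra yields $\LL F\circ\mm{Free}\simeq\LL(F\circ\mm{Free})$; and condition (2) is precisely what makes this formula agree with $F$ on $\mm{Kl}_T(\Flat(\cat{C}))$. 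Uniqueness then follows from the bar resolution of an arbitrary algebra. The module-theoretic reformulation is what lets the paper bypass the delicate question of how $\Alg_{\LL T}(\cat{C})$ is generated from the Kleisli category.
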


\begin{proof}
Let $\Fun^{\omega}_{\mm{RL}}(\Flat(\cat{C}), \cat{D})$ denote the $\infty$-category of filtered-colimit preserving functors that are right-left extendable, and notice that there is a right action $$\Fun^{\omega}_{\mm{RL}}(\Flat(\cat{C}), \cat{D})\curvearrowleft \Fun^{\omega}_{\mm{RL}}(\Flat(\cat{C}), \Flat(\cat{C})).$$ In the above situation, we have an algebra object $T\in \Fun^{\omega}_{\mm{RL}}(\Flat(\cat{C}), \Flat(\cat{C}))$, and the functor $F\circ \mm{Free}_T\in \Fun^{\omega}_{\mm{RL}}(\Flat(\cat{C}), \cat{D})$ is a right module for $T$. Using the same logic as \cite[Proposition 2.2.15]{holeman2023derived}, taking right-left extension is lax monoidal and consequently, the functor $\LL(F\circ \mm{Free}_T)\colon \cat{C}\rt \cat{D}$ is a right module for the derived monad $\LL T$. On the other hand, $\mm{Forget}_T\colon \Alg_T(\cat{C})\rt \cat{C}$ is a left $T$-module. Since $\cat{D}$ admits sifted colimits, we can define $\LL F$ by the formula:
$$\begin{tikzcd}
\LL F:= \big|\mm{Bar}\big(\LL(F\circ \mm{Free}_T), T, \mm{Forget}_T\big)\big|\colon \Alg_T(\cat{C})\arrow[r] & \cat{D}.
\end{tikzcd}$$
One readily verifies all the desired properties.
\end{proof}
\begin{construction}[Cotangent complex as a derived functor]\label{cons:cotangent complex}
Let $\cat{C}$ be a derived algebraic context and write $\Mod(\cat{C})$ for the $\infty$-category of tuples $(A, M)$ consisting of a derived commutative algebra $A$ and an $A$-module $M$ in $\cat{C}$. Notice that $\Mod$ arises as the $\infty$-category of algebras over a derived monad $T_{\mathrm{mod}}$ on $\cat{C}$, such that $\mm{Kl}_{T_{\mathrm{mod}}}(\cat{C})$ is simply the ordinary $\infty$-category of free derived algebras $A=\sym(V)$ generated by $V\in \Flat(\cat{C})$, together with a free $A$-module $A\otimes W$ on $W\in \Flat(\cat{C})$. We now have:
\begin{enumerate}
\item A functor between ordinary categories $\triv\colon \mm{Kl}_{T_{\mathrm{mod}}}(\Flat(\cat{C}))\rt \DAlg(\cat{C})$ sending each $(A, M)$ to the trivial square zero extension $A\oplus M$.

\item A functor between ordinary categories $\Omega^1\colon \mm{Kl}_{\sym}(\Flat(\cat{C}))\rt \mm{Kl}_{T_{\mathrm{mod}}}(\cat{C})$ defined by the universal property that $\Map(\Omega^1_A, (B, M))\simeq \Map(A, B\oplus M)$. One readily verifies that such a universal object exists for each free algebra $A=\sym(V)$ and is given by the tuple $(\sym(V), \sym(V)\otimes V)$, which is indeed in $\mm{Kl}_{T_{\mathrm{mod}}}(\cat{C})$.
\end{enumerate}
Both of these functors satisfy the conditions of  \Cref{prop:left extending functors on algebras} and hence admit derived functors $L\colon \DAlg(\cat{C})\rt \Mod(\cat{C})$ and $\triv\colon \Mod(\cat{C}) \rt \DAlg(\cat{C})$. The universal property of $\Omega^1$ then implies that $L$ is left adjoint to $\triv$. We refer to $L_{A}\in \Mod_{A}(\cat{C})$ as the \emph{cotangent complex} of $A$.
\end{construction}
As usual, every map $f\colon A\rt B$ in $\DAlg(\cat{C})$ induces a map $(A, L_{A})\to (B, L_{B})$ in $\Mod(\cat{C})$. Since the projection $\Mod(\cat{C}) \rt \DAlg(\cat{C})$ is a cocartesian fibration, as $\cat{C}\in \CAlg(\cat{Pr}^{\mathrm{L}})$, this is equivalent to a map $f^*L_{A}\rt L_{B}$ in $\Mod_{B}$, whose cofibre is the \emph{relative cotangent complex} $L_{B/A}$:
$$
\begin{tikzcd}
f^*L_{A}\arrow[r]& L_{B} \arrow[r]& L_{B/A}.
\end{tikzcd}
$$
If $B=\LL\sym_A(M)$ is the free derived $A$-algebra on $M\in \Mod_A(\cat{C})$, then $L_{B/A}\simeq B\otimes_A M$.

We conclude with some observations about finite type conditions on maps between animated commutative algebras, well-known for animated rings.
\begin{definition}\label{def:almost finite}
If $\cat{C}$ is a derived algebraic context and $A\in \SCR(\cat{C})$, then we will say that:
\begin{enumerate}
\item a map $A\to B$ in $\SCR(\cat{C})$ is \emph{almost of finite presentation} if $B$ is an almost compact object of $\SCR_A(\cat{C})$, that is, each $\tau_{\leq n}B$ is a compact object of $\tau_{\leq n}\SCR_A(\cat{C})$ \cite[Definition 7.2.4.8]{HA}.
\item an $A$-module $M\in \Mod_A(\cat{C})$ is \emph{almost perfect} if $M\in \Mod_A(\cat{C})_{\geq k}$ and $M$ defines an almost compact object of $\Mod_A(\cat{C})_{\geq k}$.
\end{enumerate} 
\end{definition}
\begin{proposition}\label{prop:aft conditions}
Let $\cat{C}$ be a derived algebraic context and $f\colon A\to B$ be a map of animated commutative algebras in $\cat{C}$. Then the following are equivalent:
\begin{enumerate}
\item $f$ is almost of finite presentation.
\item The map $\pi_0(A)\to \pi_0(B)$ of commutative algebras in $\cat{C}^{\heartsuit}$ is of finite presentation and $L_{B/A}$ is an almost perfect $B$-module.
\end{enumerate}
In addition, the following assertions hold:
\begin{enumerate}[resume]
\item If $B$ is almost perfect as an $A$-module, then $f$ is almost of finite presentation.

\item If $f$ is almost of finite presentation and $\pi_0(A)\to \pi_0(B)$ is surjective, then $B$ is almost perfect as an $A$-module.
\end{enumerate}
\end{proposition}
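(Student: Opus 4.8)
\medskip
\noindent\emph{Proof strategy.} The plan is to transpose the proofs of the analogous assertions for animated and $\mathbb{E}_\infty$-rings (see \cite[\S 7.4.3]{HA}) to an arbitrary derived algebraic context, checking at each step that only the structure recalled above enters: projective generation with a left-complete $t$-structure, the derived symmetric-algebra monad $\LL\sym$ together with its relative variants $\LL\sym_A\colon \Mod_A(\cat C)\to \Mod_A(\cat C)$ (each right-left extended from a monad whose homogeneous pieces $\sym^n_A$ have degree $\le n$, so that $\Mod_A(\cat C)$ is again stably projectively generated by \Cref{rem:derived algebras over animated}), and the cotangent complex realised as the left adjoint to trivial square-zero extension (\Cref{cons:cotangent complex}), together with the identity $L_{\LL\sym_A(M)/A}\simeq \LL\sym_A(M)\otimes_A M$ and transitivity cofibre sequences.

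For $(1)\Leftrightarrow(2)$ I would first dispose of the $\pi_0$-bookkeeping: truncation identifies $\tau_{\le 0}\SCR_A(\cat C)$ with the ordinary category of commutative $\cat C^\heartsuit$-algebras under $\pi_0(A)$, and there compactness of $\tau_{\le 0}B$ is precisely finite presentation of $\pi_0(A)\to\pi_0(B)$, which is forced by (1) and is half of (2). For $(1)\Rightarrow(2)$, write each $\tau_{\le n}B$ as a retract of a compact object of $\tau_{\le n}\SCR_A(\cat C)$, i.e.\ of an $A$-algebra obtained from finitely many free generators by attaching finitely many cells; the cotangent complex of such an algebra is a perfect $B$-module by the displayed formula and transitivity, and almost perfectness of $L_{B/A}$ passes to the retract and then to the Postnikov limit, using left-completeness. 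Conversely, for $(2)\Rightarrow(1)$, construct $B$ as a sequential colimit $A\to B_{(0)}\to B_{(1)}\to\cdots$: take $B_{(0)}$ a finitely presented $A$-algebra realising a presentation of $\pi_0(A)\to\pi_0(B)$ together with an $A$-algebra map $B_{(0)}\to B$ inducing an isomorphism on $\pi_0$; given $B_{(n)}\to B$ an isomorphism on $\pi_i$ for $i<n$, the transitivity sequence (with $L_{B_{(n)}/A}$ perfect over $B_{(n)}$) shows $L_{B/B_{(n)}}$ is highly connected with its bottom homotopy group finitely generated over $\pi_0(B)$, using almost perfectness of $L_{B/A}$, so attaching finitely many cells produces $B_{(n+1)}\to B$ now an isomorphism on $\pi_i$ for $i\le n$; the transition maps become increasingly connected, hence $B\simeq\colim_n B_{(n)}$ by left-completeness, and each $\tau_{\le m}B\simeq\tau_{\le m}B_{(n)}$ for $n\gg 0$ is compact. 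Assertion (4) then falls out by revisiting this cell structure when $\pi_0(A)\to\pi_0(B)$ is surjective: $B_{(0)}$ needs no polynomial generators, so its underlying $A$-module is perfect, and each step $B_{(n)}\to B_{(n+1)}$ is an iterated square-zero extension by a connective almost perfect module, hence changes the underlying $A$-module only by a connective almost perfect module (\Cref{lem:sqz is aft}); since $\fib(B_{(n)}\to B)$ is increasingly connected, $B\simeq\colim_n B_{(n)}$ is almost perfect over $A$.

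I expect (3) to be the main obstacle. By $(2)\Rightarrow(1)$ it suffices to verify condition (2). Almost perfectness of $B$ as an $A$-module forces $\pi_0(B)$ to be a finitely presented $\pi_0(A)$-module; being in addition module-finite, $\pi_0(A)\to\pi_0(B)$ is then of finite presentation as an algebra by the elementary argument of \cite[Tag 0564]{stacks-project}. For almost perfectness of $L_{B/A}$ as a $B$-module one chooses a finitely presented $A$-algebra $C$ with $\pi_0(C)\xrightarrow{\sim}\pi_0(B)$ and an $A$-algebra map $C\to B$ (which exists by obstruction theory against the cell structure of $C$); transitivity for $A\to C\to B$, together with perfectness of $L_{C/A}$, reduces the claim to almost perfectness of $L_{B/C}$, which one obtains by dévissage along the Postnikov tower of $B$ over $C$, each stage being a square-zero extension by an almost perfect connective module, with \Cref{lem:sqz is aft} again controlling the effect on cotangent complexes. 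The delicate point throughout — and the one I would be most careful about — is that these module-theoretic inputs (``almost perfect $=$ finite cell structure'', the behaviour of $\LL\sym_A$ on truncations, and the characterisation of almost perfect connective modules over a not-necessarily-coherent $\pi_0$) go through verbatim from the $\mathbb{E}_\infty$-case using only that $\Mod_A(\cat C)$ is stably projectively generated and that $\LL\sym_A$ is derived from a finite-degree monad.
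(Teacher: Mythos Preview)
Your route to $(1)\Leftrightarrow(2)$ via cell attachments matches the paper's (Construction~\ref{con:cell}). For (4), though, the claim that $B_{(n)}\to B_{(n+1)}$ is an ``iterated square-zero extension'' is wrong: the cell attachment $B_{n+1}=B_n\otimes_{\LSym_{B_n}(M_n)}B_n$ is not such a tower of algebras, and \Cref{lem:sqz is aft} does not apply. The paper instead uses the increasing filtration on $B_{n+1}$ from part~\ref{it:filtration on cell attachment} of Construction~\ref{con:cell}, with associated graded $\LSym_{B_n}(M_n[1])$: since each $\LSym^k_{B_n}(M_n[1])$ is almost perfect and $k$-connective, every truncation of $B_{n+1}$ agrees with that of a finite filtration stage built from finitely many almost perfect $B_n$-modules, whence $B_{n+1}\in\APerf_{B_n}$ and then $\in\APerf_A$ by induction.

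The serious gap is in (3). Your Postnikov d\'evissage needs each stage to be a square-zero extension by an \emph{almost perfect} module, i.e.\ each relevant $\pi_n$ must be finitely presented over $\pi_0(B)$. But ``$B$ almost perfect over $A$'' only says each $\tau_{\le n}B$ is compact in $\Mod_{A,[0,n]}(\cat C)$; without coherence of $\pi_0(A)$ this does \emph{not} force the individual $\pi_n(B)$ to be finitely presented (already for $A$ discrete non-coherent, kernels of maps between finite free modules need not be finitely generated), and the appeal to \cite[Tag 0564]{stacks-project} in a general $\cat C^\heartsuit$ is likewise unjustified. The paper avoids coherence entirely by a different argument: take the cell decomposition with $M_n=(B/B_n)[-1]$ the \emph{full} cofibre, and prove inductively that $B\in\APerf_{B_n}$. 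The inductive hypothesis gives $B/B_n\in\APerf_{B_n}$ directly; placing compatible filtrations on $B_{n+1}$ and on $B$ (the latter with $F_0B=B_n$, $F_iB=B$ for $i\ge 1$) and applying the graded Nakayama lemma (\Cref{lem:graded nakayama}) twice reduces to almost perfectness of $\LSym_{B_n}((B/B_n)[1])$ over $B_n$, which holds by the same connectivity argument as in (4).
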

The proof of  \Cref{prop:aft conditions} requires some preliminaries about cell decompositions and the resulting filtrations.
\begin{construction}\label{con:cell}
Let $A\rt B$ be a map of animated commutative algebras in $\cat{C}$. By a \textit{cell decomposition} of $B$, we will mean a sequence of animated filtered $A$-algebras $A=B_{-1}\to B_0\to B_1\to \dots \to B$ constructed inductively according to the following procedure:
\begin{itemize}
\item Having constructed $B_n$, choose a map $\chi_n\colon M_n\rt (B/B_n)[-1]$ from a connective $B_n$-module $M_n$ such that the cofibre is $(n+1)$-connective.

\item Define $B_{n+1}$ as the pushout of derived filtered  algebras
$$\begin{tikzcd}
\LL\sym_{B_n}(M_n)\arrow[d, "\chi"{swap}]\arrow[r, "0"] & B_n\arrow[d]\\
B_n\arrow[r, "\iota"] & B_{n+1}
\end{tikzcd}$$
where the top map sends $M_n$ to zero. The map $B_n\rt B$ then factors canonically over $\iota\colon B_n\rt B_{n+1}$.
\end{itemize}
Let us record some properties of such cell decompositions:
\begin{enumerate}[label=(\alph*)]
\item If $\pi_0(A)\to \pi_0(B)$ is surjective, one can take $B_0=B_{-1}=A$.

\item\label{it:filtration on cell attachment} Each $B_{n+1}$ admits an increasing filtration whose associated graded is $\LL\sym_{B_n}(M_n[1])$: this filtration is the base change along $\chi$ of the filtration on $B_n$ obtained by applying $\LL\sym_{B_n}$ to the zero object with increasing filtration given by $F_0(0)=M_n$ and $F_i(0)=0$ for $i>0$. Here we use Example \ref{ex:diagrams} in the case $\cat{I}=(\mathbb{Z}, \leq)$. 

\item Each cofibre $B/B_n$ is $(n+1)$-connective and hence each $M_n$ is $n$-connective. To see this, we proceed by induction, the case $n=-1$ being evident. For the inductive step, note that the map $\chi_n$ factors as 
$$
\chi_n\colon M_n[1]\rt B_{n+1}/B_n\rt B/B_n.
$$
Since the cofibre of the composite is $(n+2)$-connective by construction, to show that the cofibre $B/B_{n+1}$ of the second map is $(n+2)$-connective it will suffice to show that $M_n[1]\rt B_{n+1}/B_n$ has $(n+2)$-connective cofibre. Using the filtration from \ref{it:filtration on cell attachment}, this map decomposes as
$$
M_n[1]\simeq F_1(B_{n+1})/F_0(B_{n+1})\rt F_2(B_{n+1})/F_0(B_{n+1})\rt \dots \rt B_{n+1}/F_0(B_{n+1}).
$$
It therefore suffices to verify that the cofibre of each of these maps is $(n+2)$-connective. These cofibres are given by $\LL\sym^m_{B_n}(M_n[1])$ with $m\geq 2$, which fit into a cofibre sequence
$$
(M_n[1]\otimes_{B_n}\dots \otimes_{B_n} M_n[1])_{h\Sigma_m}\to \LL\sym^m_{B_n}(M_n[1])\to F_m(M_n[1]).
$$ 
The result now follows because the outer terms are both $(n+2)$-connective. For the right term, this follows because $F_m$ is right-left extended from a functor on $\Vect^{\ft}(\cat{C})$ with $1$-connective values.

\item For each $n$, consider the natural $B_{n+1}$-linear maps 
$$
B_{n+1}\otimes_{B_n} M_n[1]\rt B_{n+1}\otimes_{B_n} (B_{n+1}/B_n)\rt L_{B_{n+1}/B_n}
$$
where the first map is induced by $\chi$ and the last map arises from the universal derivation $B_{n+1}\rt B_{n+1}\oplus L_{B_{n+1}/B_n}$. The composite map is an equivalence of $(n+1)$-connective modules and the cofibre of the first map is $(n+2)$-connective, so that the cofibre of the natural map $B_{n+1}\otimes_{B_n} (B_{n+1}/B_n)\rt L_{B_{n+1}/B_n}$ is $(n+3)$-connective. An inductive argument then shows that each $B\otimes_{B_n} (B/B_n)\rt L_{B/B_n}$ has $(n+3)$-connective cofibre as well.
\end{enumerate}
\end{construction}
\begin{lemma}\label{lem:graded nakayama}
Let $\cat{C}$ be a derived algebraic context and let $\Fun((\mathbb{Z}_{\geq 0}, =), \cat{C})$ be the $\infty$-category of non-negatively graded objects in $\cat{C}$, as in Example \ref{ex:diagrams}. Given a graded algebra $A:=A^{\bullet}$ and a graded module $M:=M^{\bullet}$, the following hold:
\begin{enumerate}
\item If $A^{0} \otimes_A M\simeq 0$, then $M\simeq 0$.
\item If $A$ and $M$ are connective and $A^{0} \otimes_A M$ is an almost perfect $A^{0}$-module, then $M$ is almost perfect.
\end{enumerate}
\end{lemma}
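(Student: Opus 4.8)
\textbf{Proof plan for Lemma~\ref{lem:graded nakayama}.}
The plan is to prove both assertions by induction on the grading weight, using the fact that the monoidal unit of $\Fun((\mathbb{Z}_{\geq 0}, =), \cat{C})$ is concentrated in weight $0$, so that $A^0$ sits inside $A$ as the weight-$0$ summand and the relative tensor product $A^0 \otimes_A M$ can be analysed weight-by-weight. First I would unwind the bar resolution $A^0 \otimes_A M \simeq \big|A^0 \otimes A^{\otimes \bullet} \otimes M\big|$ and observe that, since $A^{\otimes n}$ is concentrated in weights $\geq 0$ with weight-$0$ part $(A^0)^{\otimes n}$, the weight-$w$ part of $A^0 \otimes_A M$ receives a contribution from $M^w$ together with contributions from $M^{w'}$ with $w' < w$ glued in via the augmentation ideal $A^{\geq 1}$. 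Concretely, there is a natural cofibre sequence of $A^0$-modules in each weight $w$
$$
\begin{tikzcd}
\big(A^{\geq 1}\otimes_A M\big)^w\arrow[r] & M^w\arrow[r] & \big(A^0\otimes_A M\big)^w,
\end{tikzcd}
$$
where $(A^{\geq 1}\otimes_A M)^w$ only involves $M^{w'}$ for $w' < w$. This is the engine of the induction.

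For part (1), suppose $A^0 \otimes_A M \simeq 0$ but $M \not\simeq 0$, and let $w$ be the smallest weight with $M^w \neq 0$; such a $w$ exists since $M$ is non-negatively graded. In weight $w$ the term $(A^{\geq 1}\otimes_A M)^w$ vanishes, because every summand of the bar complex contributing to it involves a factor $M^{w'}$ with $w' < w$, all of which are zero. Hence the cofibre sequence above gives $M^w \simeq (A^0\otimes_A M)^w = 0$, a contradiction. For part (2), assume $A$ and $M$ connective and $A^0 \otimes_A M$ almost perfect over $A^0$. Since $A^0 \otimes_A M$ is connective and non-negatively graded, each graded piece $(A^0\otimes_A M)^w$ is an almost perfect $A^0$-module. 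I would then prove by induction on $w$ that $M^w$ is an almost perfect $A^0$-module (equivalently, an almost perfect $A$-module, since $A$ is connective and $A^0$ is an algebra retract): the base case $w=0$ is $M^0 \simeq (A^0\otimes_A M)^0$, which is almost perfect by hypothesis; for the inductive step, $(A^{\geq 1}\otimes_A M)^w$ is built out of finitely many tensor powers of the $A^j$ (with $j \geq 1$) applied to $M^{w'}$ with $w' < w$ --- all of which are almost perfect $A^0$-modules by the inductive hypothesis together with the connectivity of the bar filtration (only finitely many simplicial levels contribute in each fixed weight, since $A$ is connective and the unit has weight $0$) --- so $(A^{\geq 1}\otimes_A M)^w$ is almost perfect, and the cofibre sequence then forces $M^w$ to be almost perfect. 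Taking the sum over all $w$ (using that almost perfectness of a connective graded module is detected weightwise together with an eventual-connectivity bound, which holds here because $M$ is already connective and each $M^w$ becomes more connective is \emph{not} automatic --- see the remark below) yields that $M$ is almost perfect.

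The main obstacle I anticipate is the bookkeeping in part (2): to conclude almost perfectness of the \emph{whole} graded module $M$ from almost perfectness of each $M^w$, one needs a uniform connectivity statement, and this is where the connectivity properties established in Construction~\ref{con:cell} (and the fact that the bar complex computing $A^{\geq 1}\otimes_A M$ in weight $w$ has only finitely many relevant simplicial degrees) must be invoked carefully. The cleanest route is probably to run the induction not on $M^w$ directly but on the truncations $\tau_{\geq k}M$ versus the weight filtration simultaneously: for a fixed target connectivity $n$, only finitely many weights $w$ can contribute to $\tau_{\leq n}(A^0\otimes_A M)$, and in those weights the argument above shows $M^w$ is $n$-almost-perfect; combining this with the fact that $A^0\otimes_A(-)$ is conservative on connective modules (a consequence of part (1) applied to truncations, via the derived Nakayama lemma spirit of \Cref{lem:nakayama}) gives the result. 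I would also double-check that no boundedness hypothesis on $A$ beyond connectivity is secretly needed, since $\LL\sym$ of a connective module can have homotopy in every degree; the weight grading is exactly what rescues this, as it makes each weight piece a \emph{finite} colimit of tensor powers.
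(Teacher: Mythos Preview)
Your argument for part~(1) is correct and matches the paper's proof: by induction on the smallest nonzero weight, using that $(A^0\otimes_A M)^w \simeq M^w$ once $M^{w'}=0$ for all $w'<w$.

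Part~(2), however, has a genuine gap in the inductive step. You claim that $(A^{\geq 1}\otimes_A M)^w$ is almost perfect over $A^0$ because it is a finite colimit of terms of the form $A^{j_1}\otimes_{A^0}\cdots\otimes_{A^0} A^{j_k}\otimes_{A^0} M^{w'}$ with $w'<w$. But for such a term to be almost perfect over $A^0$ you need each $A^{j}$ to be almost perfect over $A^0$, and there is no such hypothesis on $A$: the lemma only assumes $A$ connective. So the induction does not close. (The secondary obstacle you flag---passing from weightwise almost perfectness to almost perfectness of $M$ as a graded $A$-module---is also real, but the argument already breaks before you reach it.)

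The paper's proof of~(2) takes a completely different route that avoids any finiteness assumption on $A$: it works directly with graded $A$-modules rather than weightwise over $A^0$. One observes that $\pi_0(A^0\otimes_A M)\cong \pi_0(A^0)\otimes_{\pi_0(A)}\pi_0(M)$ is the module of indecomposables of the graded $\pi_0(A)$-module $\pi_0(M)$; since this is finitely generated over $\pi_0(A^0)$, the classical graded Nakayama lemma gives finitely many generators of $\pi_0(M)$ over $\pi_0(A)$. Lifting them yields a map $P\to M$ from a finite free graded $A$-module which is surjective on $\pi_0$; the cofibre $N$ is $1$-connective and $A^0\otimes_A N$ is again almost perfect (as the cofibre of a map between almost perfect $A^0$-modules). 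Iterating raises connectivity and shows $M$ is almost perfect over $A$. The point is that the free modules $P$ are free over $A$, not over $A^0$, so no finiteness of $A$ over $A^0$ is ever invoked.
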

\begin{proof}
For (1), we prove the claim by induction on the weight. Clearly, $M^{0}=0$. Assuming that $M^{i}=0$ for $i<n$, we have $M^{n} \simeq (A^{0} \otimes_A M)^{n}$, so that $M^{n}=0$ as well.  
For (2), note that 
$$
\pi_0(A^{0})\otimes_{\pi_0(A)} \pi_0(M) \cong \pi_0(A^{0} \otimes_A M)
$$
is the module of indecomposables associated to the graded $\pi_0(A)$-module $\pi_0(M)$. Since this module is finitely generated, it follows that $\pi_0(M)$ is a finitely generated $\pi_0(A)$-module. Lifting the generators provides a map $P\rt M$ from a finite free graded $A$-module, which is surjective on $\pi_0$. The cofibre $N$ is then a $1$-connective $A$-module such that $A^{0} \otimes_A N$ is an almost perfect $A^{0}$-module. Proceeding by induction, one finds that $M$ is indeed almost perfect.
\end{proof}
\begin{lemma}\label{lem:aperf filtered}
Let $\cat{C}$ be a derived algebraic context and let $\Fun((\mathbb{Z}, \leq), \cat{C})$ be the $\infty$-category of objects in $\cat{C}$ with an unbounded increasing filtration, as in Example \ref{ex:diagrams}. Let $A=F_\star A$ be a connective filtered algebra which is complete, i.e.\ $\lim_{n\to -\infty} F_nA=0$. Then a filtered $A$-module $M=F_\star M$ is almost perfect if and only if it is complete and $\gr(M)$ is an almost perfect left $\gr(A)$-module.
\end{lemma}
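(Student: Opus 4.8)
The plan is to run the standard ``complete $=$ limit of its truncations'' dévissage, mirroring the unfiltered case. First note that both conditions are stable under shifts and fibre sequences, and that an almost perfect filtered module is in particular complete (it lies in $\Flat(\cat{C})_{\geq k}$-generated thick subcategories, whose objects are complete since $A$ is complete and the filtration on finite free modules is bounded below in each weight). The associated graded $\gr$ is a symmetric monoidal functor that sends $A$ to $\gr(A)$ and preserves connectivity, so one direction is immediate: if $M$ is almost perfect over $A$, it is built from finite free $A$-modules by shifts, cofibres and retracts up to arbitrarily high connectivity, hence $\gr(M)$ is built from finite free $\gr(A)$-modules the same way, so $\gr(M)$ is almost perfect over $\gr(A)$; and completeness was just observed.

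For the converse, suppose $M$ is complete and $\gr(M)$ is almost perfect over $\gr(A)$. The key step is to produce, inductively, approximations of $M$ by perfect filtered $A$-modules with increasingly connective cofibre. Since $\gr(M)$ is almost perfect and $\gr(A)$ is connective, $\pi_0\gr(M)$ is a finitely generated module over $\pi_0\gr(A)$; by the graded Nakayama lemma (\Cref{lem:graded nakayama}(1)) this module of indecomposables generates, so one can lift a finite set of homogeneous generators to a map $P \to M$ from a finite free filtered $A$-module inducing a surjection on $\pi_0\gr$. I would then invoke the graded Nakayama lemma once more — or rather its consequence that $\gr$ detects whether a map is an equivalence on suitably truncated pieces — to conclude that $\cofib(P\to M)$ has associated graded which is $1$-connective, hence (again because $\gr(A)$ is connective and the graded $t$-structure is levelwise) the cofibre itself is $1$-connective. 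Here one uses that $\gr$ is conservative on the relevant bounded-below pieces, together with the completeness of $A$-modules: a connective complete filtered module whose associated graded is $1$-connective is itself $1$-connective. Iterating, at stage $n$ we obtain a perfect filtered $A$-module $P_{(n)}$ and a map $P_{(n)} \to M$ whose cofibre is $n$-connective; since $\gr(M)$ is almost perfect, the modules $\fib(P_{(n)} \to P_{(n+1)})$ are perfect with increasing connectivity, so $M \simeq \lim_n \cofib(P_{(n)} \to M)[-1]$-type arguments show $M$ is an almost perfect $A$-module.

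The main obstacle is the passage from ``$\gr(\text{something})$ is $k$-connective'' to ``the something is $k$-connective'' in the \emph{unbounded} filtered setting: unlike the non-negatively filtered case, $\gr$ does not reflect connectivity on the nose, and one genuinely needs completeness of $A$ (hence of finitely-built $A$-modules) to run the dévissage — a filtered module with trivial associated graded need not vanish unless it is complete. I would handle this by first establishing the auxiliary fact that a \emph{connective complete} filtered $A$-module with $k$-connective associated graded is $k$-connective: write such a module as the limit of $F_{\geq -n}$-quotients, each of which has a finite filtration with $k$-connective graded pieces, hence is $k$-connective, and use that a limit of a tower of $k$-connective objects with $k$-connective transition fibres is $k$-connective (the relevant Milnor-sequence/left-completeness input from the $t$-structure on $\cat{C}$, transported to filtered objects via Example \ref{ex:diagrams}). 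Once this lemma is in hand, the inductive construction above goes through verbatim, and the two implications combine to give the stated equivalence.
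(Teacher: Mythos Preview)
Your proposal follows the same overall strategy as the paper: for the forward direction, use that $\gr$ preserves almost perfectness and that perfect modules over a complete $A$ are complete; for the converse, inductively build perfect approximations by lifting graded generators and use completeness plus the Milnor sequence to deduce that a complete filtered module with $(n+1)$-connective associated graded is itself $(n+1)$-connective. The paper carries this out almost exactly as you describe, so the approach is correct.

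Two small points where your write-up is looser than the paper's. First, your completeness argument in the forward direction is not quite right: an almost perfect module does \emph{not} literally lie in the thick subcategory generated by finite free $A$-modules, so you cannot conclude completeness by closure under finite (co)limits and retracts. The paper instead uses exactly the approximation property: for each $n$ choose a perfect $N_n$ with $n$-connective cofibre $M/N_n$; since $N_n$ is complete, $\lim_i F_iM \simeq \lim_i F_i(M/N_n)$, and the Milnor sequence shows the latter is $(n-1)$-connective; letting $n\to\infty$ gives $\lim_i F_iM=0$. This is the same Milnor-type input you invoke in the reverse direction, so you already have the right tool in hand. Second, the formula ``$M \simeq \lim_n \cofib(P_{(n)}\to M)[-1]$'' at the end of your induction is garbled and unnecessary: once you have, for each $n$, a perfect $P_{(n)}$ with $n$-connective cofibre, $M$ is almost perfect by definition (each $\tau_{\leq m}M$ agrees with $\tau_{\leq m}P_{(m+1)}$, which is compact in the truncated category).
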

\begin{proof}
If $M$ is almost perfect, then $\gr(M)$ is almost perfect because taking the associated graded preserves compact objects and is $t$-exact up to a shift. To see that $M$ is complete, let $n\geq 0$ and pick a perfect filtered $A$-module $N_n$ with a map $N_n\rt M$ whose cofibre is $n$-connective. Since $A$ is complete, $N_n$ is complete and hence $\lim F^iM \simeq \lim F^i(M/N_n)$. By the Milnor sequence, the latter is $(n-1)$-connective. Since $n$ was arbitrary, we conclude that $\lim F^iM=0$.

For the converse, it suffices that if $M$ is $n$-connective, there exists a cofibre sequence
$$\begin{tikzcd}
\bigoplus_{i=1}^k A\otimes F_{\geq w_i}V_i[n]\arrow[r, "f"] & M\arrow[r] & M'
\end{tikzcd}$$
where $M'$ is $(n+1)$-connective and the first term consists of free filtered $A$-modules with $V_i\in \Vect^{\ft}(\cat{C})$, concentrated in filtration weight $\geq w_i$. Indeed, this implies that $M'$ is a complete $(n+1)$-connective module such that $\gr(M')$ is almost perfect; it follows by induction that each $\tau_{\leq m}(M)$ is equivalent to the $m$-truncation of a perfect $A$-module, so that $M$ is almost perfect.

Since $\gr(M)$ is an almost perfect $n$-connective $\gr(A)$-module, the desired cofibre sequence exists at the graded level. In particular, there exists a map $f'\colon \bigoplus_{i=1}^n \gr(A)\otimes V_i(w_i)[n]\to \gr(M)$, where the domain is a sum of free $\gr(A)$-modules in various weights $w_i$. Since the $V_i\in \Vect^{\ft}(\cat{C})$ are projective in $\cat{C}_{\geq 0}$, we can lift $f'$ to a map $f$ of filtered modules. The cofibre $M'$ of $f$ is then a complete module with $(n+1)$-connective associated graded, and is thus itself $(n+1)$-connective by the Milnor sequence.
\end{proof}
\begin{proof}[Proof of  \Cref{prop:aft conditions}]
For (1) $\Rightarrow$ (2), note that $\pi_0(B)$ is a compact object in $\pi_0(A)$-algebras and hence finitely presented. The cotangent complex $L_{B/A}$ is almost perfect because for any filtered system of $k$-coconnective $B$-modules $M_\alpha$, the map $\colim \Map_B(L_{B/A}, M_\alpha)\rt \Map_B(L_{B/A}, \colim M_\alpha)$ is the base change of the equivalence
$$
\colim \Map_{\SCR_A(\cat{C})}\big(B, B\oplus M_\alpha\big)\rt\Map_{\SCR_A(\cat{C})}\big(B, \colim(B\oplus M_\alpha)\big).
$$
For (2) $\Rightarrow$ (1), we claim that there exists a cell decomposition for $B$ with the property that each $B_n$ is almost of finite presentation over $A$. It then follows that $B$ is the colimit of a sequence of almost finitely presented algebras that stabilises upon $k$-truncation for any $k$. This implies that $A\to B$ almost finitely presented as well.

To construct this cell decomposition, we fix a finite presentation of $\pi_0(A)\rt \pi_0(B)$. We can then choose a map $M_0=A\otimes V_0\to B$ from a free $A$-module on $V_0\in \Vect^{\ft}(\cat{C})$ such that $\pi_0(M_0)\to \pi_0(B)$ picks out the generators of $\pi_0(B)$. We then set $B_0=\LL\sym_A(M_0)$. Next, we choose a map $M_1=A\otimes V_1\to \fib(B_0\to B)$ from a free $B_0$-module on $V_1\in \Vect^{\ft}(\cat{C})$ such that $\pi_0(M_1)\to \pi_0(B_0)=\sym_{\pi_0(A)}(\pi_0(M_0))$ picks out the relations of this presentation. The resulting $B_1$ is almost finitely presented and $\pi_0(B_1)=\pi_0(B)$. Now suppose we have constructed the desired cell decomposition up to $B_n$, for some $n\geq 1$. Since $A\to B_n$ is a almost of finite presentation, the first part of the proof shows that $L_{B_n/A}$ is an almost perfect $B_n$-module. Consequently, $L_{B/B_n}$ is an almost perfect $B$-module. Since $$B\otimes_{B_n} (B/B_n)\rt L_{B/B_n}$$ has $(n+3)$-connective cofibre, it follows that we can choose an $n$-connective free $B_n$-module $M_n=B_n\otimes V_n[n]$ with $V_n\in \Vect^{\ft}(\cat{C})$, together with a map $\chi\colon M_n\rt (B/B_n)[-1]$ that is surjective on $\pi_n$. The resulting map $B_n\to B_{n+1}$ is then almost finitely presented, so that the composite $A\to B_{n+1}$ is almost finitely presented as well.

For assertion (4),  suppose that $f$ is almost finitely presented and that $\pi_0(A)\to \pi_0(B)$ is surjective, and consider the above cell decomposition. Then $B_0=A$, and we claim that each $B_n$ is an almost perfect $A$-module. The colimit $B$ is then an almost perfect $A$-module as well, because the cofibre of each $B_n\to B_{n+1}$ is $(n+1)$-connective. Note that by part \ref{it:filtration on cell attachment} of Construction \ref{con:cell}, $B_{n+1}$ comes with an increasing filtration by $B_n$-modules whose associated graded $\LL\sym_{B_n}(M_n[1])$ is almost perfect: indeed, each $\LL\sym^k_{B_n}(M_n[1])$ is almost perfect and (at least) $k$-connective, since $M_n$ was a connective almost perfect $B_n$-module. It follows that $B_{n+1}$ is an almost perfect $B_n$-module, so that an inductive argument shows that $B_n\in \APerf_A(\cat{C})$ (the case $n=0$ being evident).

Finally, let us prove assertion (3). Consider the cell decomposition for $B$ defined inductively by setting $B_{-1}=A$ and $$M_n=(B/B_n)[-1].$$ We claim that $B$ is almost perfect as a $B_n$-module for each $B_n$. Assuming this, it follows by induction that each $B_n\to B_{n+1}$ is almost finitely presented, so that $A\to B_{n+1}$ is almost finitely presented as well. Since the sequence of truncations $\tau_{\leq k}B_n$ is eventually constant, the colimit $B$ is then an almost finitely presented $A$-algebra as well.

We prove our claim by induction, the case $n=-1$ being evident. Assuming $B$ is an almost perfect $B_n$-module,  we endow $B_{n+1}$ with the increasing filtration from part \ref{it:filtration on cell attachment} of Construction \ref{con:cell} and $B$ with the increasing filtration where $F_0B=B_n$ and $F_iB=B$ for $i\geq 1$. Then $F_\star B$ is an $F_\star B_{n+1}$-module, and it it suffices to verify that it is almost perfect as a filtered module. By  \Cref{lem:graded nakayama}, it suffices to verify that the associated graded is an almost perfect module over $\gr(B_{n+1})=\LL\sym_{B_n}(B/B_n)$. We then have a cofibre sequence of $\gr(B_{n+1})$-modules $B_n(0)\to \gr(B)\to (B/B_n)(1)$ of where the outer terms are concentrated in weight $0$ and $1$. By  \Cref{lem:graded nakayama}, it therefore suffices to verify that the graded $B_n$-modules
\begin{align*}
B_n\otimes_{\LL\sym_{B_n}(B/B_n)} B_n &\simeq \LL\sym_{B_n}(B/B_n[1])\\
B_n\otimes_{\LL\sym_{B_n}(B/B_n)} (B/B_n)&\simeq \LL\sym_{B_n}(B/B_n[1])\otimes_{B_n} B/B_n
\end{align*}
are almost perfect $B_n$-modules. This follows from the fact that $B/B_n$ is an almost perfect $B_n$-module and that $\LL\sym_{B_n}(B/B_n[1])$ is almost perfect, because each $\LL\sym^k_{B_n}(B/B_n[1])$ is almost perfect and $k$-connective.
\end{proof}

\subsection{Pro-coherent modules}\label{sec:procoh affine}
As highlighted in \cite{GR}, the usual $\infty$-category of quasi-coherent sheaves on a derived scheme (or prestack) is not very well-adapted to deformation theory. For schemes that are locally almost finitely presented over a field, it is better to consider their categories of ind-coherent sheaves instead. In this situation, Serre duality provides a natural identification between ind-coherent sheaves and the dual notion of pro-coherent sheaves: on an almost finitely presented scheme $X$ over a field $k$, this is simply given by $\mm{Ind}(\Coh_X)\simeq \mm{Ind}(\Coh_X^{\op})$.
To also deal with situations without Serre duality, where we cannot directly use the machinery of ind-coherent sheaves, let us introduce the following provisional modification of the $\infty$-category of pro-coherent sheaves over an affine derived scheme:
\begin{definition}\label{def:procoh affine}
Given an animated ring $A$, we will write $\QC^\vee_A$ for the $\infty$-category of exact functors $F\colon \Mod_A\rt \Sp$ satisfying the following two conditions:
\begin{enumerate}
\item They are \textit{convergent}, in the sense that they preserve limits of Postnikov towers in $\Mod_A$.
\item They are \textit{almost finitely presented}, in the sense that for each $n\in\mathbb{N}$ and each filtered diagram of $n$-coconnective $A$-modules $M_\alpha$, the map $\colim_\alpha F(M_\alpha)\rt F(\colim M_\alpha)$ is an equivalence.
\end{enumerate}
\end{definition}
\begin{warning}
For a general animated ring, the $\infty$-category $\QC^\vee_A$ is not very manageable and one should view this as an ad-hoc definition. The definition will essentially only become useful when $A$ is a \textit{coherent} animated ring: in that case $\QC^\vee_A\simeq \mm{Ind}(\Coh_A^\op)$ can be identified with the compactly generated $\infty$-category of \textit{pro-coherent sheaves} on $A$. 
 
\end{warning}
Keeping this warning in mind, let us nonetheless record some formal properties of $\QC^\vee_A$.
\begin{notation}
If $\cat{C}$ is a stable $\infty$-category with a $t$-structure, we let $\cat{C}^+=\bigcup_n \cat{C}_{\leq n}$, $\cat{C}^-=\bigcup_n \cat{C}_{\geq n}$ and $\cat{C}^b=\cat{C}^+\cap \cat{C}^-$ be the full subcategories of eventually coconnective, eventually connective and bounded objects \cite{HA}.
\end{notation}
\begin{lemma}\label{lem:pro-coh equivalent}
Let $A$ be an animated ring. Then the following $\infty$-categories are naturally equivalent:
\begin{enumerate}
\item The $\infty$-category $\QC^\vee_A$.
\item The $\infty$-category of exact functors $\Mod^+_{A}\rt \Sp$ that are almost finitely presented.
\item The $\infty$-category of left exact functors $F\colon \Mod^+_{A}\rt \sS$ that are almost finitely presented.

\item The $\infty$-category of functors $F\colon \Mod^+_{A, \geq 0}\rt \sS$ that are locally almost finitely presented and preserve the terminal object and pullbacks along a $\pi_0$-surjection.

\item The $\infty$-category of functors $F\colon \Mod_{A, \geq 0}\rt \sS$ preserving limits of almost eventually constant towers and satisfying the conditions from (4).
\end{enumerate}
\end{lemma}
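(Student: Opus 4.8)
The strategy is to establish a chain of equivalences by repeatedly applying restriction/extension along fully faithful inclusions together with the universal properties of colimit completions from \Cref{def:colimit completion}. The backbone is that $\QC^\vee_A = \Ind(\Coh_A^{\op})$ only enters as a name; what we actually prove is that the five listed $\infty$-categories are mutually equivalent via explicit restriction functors, so that (1) can be taken as the definition while (2)--(5) provide the working descriptions. The easy observations first: the inclusion $\Mod_A^+ \hookrightarrow \Mod_A$ induces, by the convergence condition, an equivalence between (1) and (2). Indeed, an exact convergent almost finitely presented functor $\Mod_A\to \Sp$ is determined by its restriction to $\Mod_A^+$ (write $M \simeq \lim_n \tau_{\leq n}M$ and use that $F$ preserves this limit), and conversely any such functor on $\Mod_A^+$ extends uniquely to a convergent functor by right Kan extension; one checks the extension remains exact and almost finitely presented because Postnikov limits and filtered colimits of $n$-coconnective objects interact well. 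The equivalence between (2) and (3) is the stability statement: an exact functor $\Mod_A^+\to \Sp$ whose values are connective is the same data as a left exact functor $\Mod_A^+\to \sS$, by the standard fact that $\Omega^\infty$ identifies $\mathrm{Exc}_*(\cat{C},\Sp)$-type data with left exact functors to spaces when the source is pointed with finite limits; one must verify this identification preserves the "almost finitely presented" condition, which is immediate since it is a colimit condition preserved by $\Omega^\infty$.

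The heart of the argument is the passage from functors on $\Mod_A^+$ (or $\Mod_A^+_{,\geq 0}$) down to functors on the much smaller subcategory $\Mod_{A,\geq 0}^+$ appearing in (4), and then extending back up in (5). For (3) $\simeq$ (4): a left exact functor $\Mod_A^+\to \sS$ restricts to a functor on $\Mod_{A,\geq 0}^+$ preserving the terminal object; left exactness on $\Mod_A^+$ forces preservation of those pullbacks in $\Mod_{A,\geq 0}^+$ that are pullbacks in $\Mod_A^+$, which are exactly the pullbacks along $\pi_0$-surjections (a pullback square of connective modules is a pullback of all modules iff one leg is surjective on $\pi_0$, since the fibre of the total map must be connective). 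Conversely, given $F$ on $\Mod_{A,\geq 0}^+$ with these properties, one reconstructs the left exact functor on $\Mod_A^+$ by writing any eventually coconnective module as a finite "Postnikov-type" extension built from connective pieces via pullbacks along $\pi_0$-surjections, together with the shift $M \mapsto M[1]$ of connective modules — the preservation of the relevant pullbacks guarantees this is well-defined and left exact. This is essentially the cell-decomposition argument already used in the proof of \Cref{prop:pla affine} and \Cref{lem:lke procoh scheme}, so I would cite \Cref{prop:colimit completion slice} or its proof rather than redo it. Finally (4) $\simeq$ (5): by the local-finite-presentation hypothesis, a functor on $\Mod_{A,\geq 0}^+$ extends uniquely to a functor on all of $\Mod_{A,\geq 0}$ that preserves limits of almost eventually constant towers (\Cref{def:almost eventually constant}), simply by right Kan extension along $\Mod_{A,\geq 0}^+ \hookrightarrow \Mod_{A,\geq 0}$; almost-finite-presentation of the original functor is exactly what makes this Kan extension compute as a limit over truncations, and conversely restricting a functor of type (5) back to $\Mod_{A,\geq 0}^+$ recovers the original by the tower condition. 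All naturality (in $A$, and compatibility of the equivalences) follows because every functor in sight is defined by a universal property.

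The main obstacle is bookkeeping rather than conceptual: one must be careful that each "restrict then Kan-extend" round trip genuinely lands in the stated subcategory — in particular that right Kan extension along $\Mod_A^+ \hookrightarrow \Mod_A$ of an almost finitely presented functor is again almost finitely presented (this uses that filtered colimits of $n$-coconnective modules are $n$-coconnective and commute past the finite Postnikov limits involved), and symmetrically that the reconstruction in (4)$\Rightarrow$(3) does not secretly require preserving pullbacks that are \emph{not} along $\pi_0$-surjections. I would isolate the statement "a square of connective $A$-modules is cartesian in $\Mod_A$ iff it is cartesian in $\Mod_{A,\geq 0}$ and the relevant map is $\pi_0$-surjective" as a small preliminary lemma, since it is used in both directions. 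Beyond that, since this is a recollection lemma whose content overlaps heavily with \cite[Section~4]{BCN21} and with arguments already given earlier in this section, the cleanest write-up will mostly point to \Cref{prop:colimit completion slice}, \Cref{lem:pro-coh over coherent ring}, and the cell-decomposition technique of \Cref{con:cell}, filling in only the $\Mod_A^+ \leftrightarrow \Mod_A$ convergence step and the $\pi_0$-surjection pullback identification in detail.
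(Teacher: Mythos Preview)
Your overall strategy matches the paper's: a chain of equivalences via restriction and Kan extension along the inclusions $\Mod_A^+ \hookrightarrow \Mod_A$, then $\Omega^\infty$, then passage to the connective part, then extension to all of $\Mod_{A,\geq 0}$ via towers. The paper's proof is terse and simply names these Kan extensions; your outline fills in the same steps with more detail.

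Two issues with your write-up plan, however. First, your suggested citations are problematic: \Cref{lem:pro-coh over coherent ring} \emph{invokes} the present lemma in its proof, so citing it here is circular; \Cref{con:cell} concerns cell decompositions of derived commutative \emph{algebras}, not modules, so it is not the right tool; and \Cref{prop:colimit completion slice} and \Cref{prop:pla affine} logically depend on the appendix material you are proving. Second, your direct attack on (3)$\Leftrightarrow$(4) via ad hoc ``Postnikov-type extensions built from connective pieces'' is more awkward than necessary. The paper instead factors this step through the bounded modules: restrict along $i_1\colon \Mod_A^b \hookrightarrow \Mod_A^+$ (inverse by \emph{left} Kan extension, using almost-finite-presentation to write any $n$-coconnective module as a filtered colimit of bounded ones) and then along $i_2\colon \Mod_{A,\geq 0}^+ \hookrightarrow \Mod_A^b$ (inverse by \emph{right} Kan extension, since every bounded module is a shift of a bounded connective one). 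This two-step factorisation cleanly separates the filtered-colimit extension from the finite-limit extension and avoids the need for your preliminary lemma about $\pi_0$-surjective pullbacks. Also, drop the parenthetical about ``values are connective'' in (2)$\Leftrightarrow$(3): the equivalence $\Fun^{\mathrm{ex}}(\cat{C},\Sp)\simeq \Fun^{\mathrm{lex}}(\cat{C},\sS)$ for stable $\cat{C}$ holds without any connectivity hypothesis on values.
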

\begin{proof}
The equivalence between (1) and (2) is given by restriction and right Kan extension along $j\colon \Mod^+_{A}\hookrightarrow \Mod_A$. Recall that the right Kan extension is given explicitly by $j_*F(M)=\lim F(\tau_{\leq n} M)$. The equivalence between (2) and (3) is given by postcomposition with $\Omega^\infty$. The equivalence between (3) and (4) is given by restriction along the inclusion $i_1\colon \Mod^b_{A}\hooklongrightarrow \Mod^+_{A}$ and $i_2\colon \Mod^+_{A, \geq 0}\hooklongrightarrow \Mod_{A}^b$ of bounded (resp.\ bounded connective) objects. The inverse is given by right Kan extension along $i_2$ followed by left Kan extension along $i_1$. The equivalence between (4) and (5) follows by restriction and right Kan extension along $\Mod^+_{A, \geq 0}\hooklongrightarrow \Mod_{A, \geq 0}$.
\end{proof}
\begin{lemma}\label{lem:pro-coh kappa-presentable}
Let $A$ be an animated ring and let $\kappa$ be a regular cardinal such that $\tau_{\leq n}\colon \Mod_A\rt \Mod_A$ preserves $\kappa$-compact objects. Then restriction and left Kan extension along $i\colon \Mod_{A}^{+, \kappa}\hookrightarrow \Mod^+_{A}$ induces an equivalence between $\QC^\vee_A$ and the $\infty$-category of exact functors $F\colon \Mod^{+, \kappa}_{A}\rt \Sp$ preserving colimits of $\kappa$-small filtered diagrams of $n$-coconnective objects, for each $n$.
\end{lemma}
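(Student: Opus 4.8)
The plan is to show that the restriction functor $\mathrm{Res}\colon \QC^\vee_A \to \mathcal{E}$ is an equivalence with inverse the left Kan extension $\mathrm{Lan}_i$, where $\mathcal{E}$ denotes the target $\infty$-category in the statement. By \Cref{lem:pro-coh equivalent}(2) we may replace $\QC^\vee_A$ by the $\infty$-category of exact, almost finitely presented functors $\Mod^+_A \to \Sp$. Since $\Mod_A^+ = \bigcup_n (\Mod_A)_{\leq n}$ and $\Mod_A^{+,\kappa} = \bigcup_n \big((\Mod_A)_{\leq n}\cap \Mod_A^\kappa\big)$ are increasing unions, the argument is organised level by level in the coconnectivity degree $n$. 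The basic structural input is that, because $\tau_{\leq n}$ preserves $\kappa$-compact objects and $\Mod_A$ is $\kappa$-compactly generated, each $(\Mod_A)_{\leq n}$ is $\kappa$-compactly generated with $(\Mod_A)_{\leq n}\cap\Mod_A^\kappa$ as its subcategory of $\kappa$-compact objects, so that $(\Mod_A)_{\leq n}\simeq \Ind_\kappa\big((\Mod_A)_{\leq n}\cap\Mod_A^\kappa\big)$; here one uses that filtered colimits in $\Mod_A$ are $t$-exact, so every $n$-coconnective module is the $\kappa$-filtered colimit of the objects $\tau_{\leq n}(P)$ for $P$ a $\kappa$-compact module mapping to it. One also records that $(\Mod_A)_{\leq n}\cap \Mod_A^\kappa$ is closed under finite limits in $\Mod_A$: a fibre of a map between $n$-coconnective modules is $n$-coconnective, and a fibre of a map between $\kappa$-compact modules is $\kappa$-compact, being a shifted cofibre.

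For the direction $\mathrm{Lan}_i\circ\mathrm{Res}\simeq \id$: if $F\in \QC^\vee_A$, then for each $n$ the restriction $F|_{(\Mod_A)_{\leq n}}$ preserves filtered colimits, hence $\kappa$-filtered colimits, hence agrees with the $\Ind_\kappa$-extension of $F|_{(\Mod_A)_{\leq n}\cap\Mod_A^\kappa}$; in particular the latter preserves the $\kappa$-small filtered colimits of $n$-coconnective objects, so $\mathrm{Res}\,F$ indeed lands in $\mathcal{E}$. Assembling these identifications over all $n$, together with the cofinality statement that $\big((\Mod_A)_{\leq n}\cap\Mod_A^\kappa\big)_{/M}\hookrightarrow \big(\Mod_A^{+,\kappa}\big)_{/M}$ is cofinal for every $n$-coconnective $M$ (a map $X\to M$ from a $\kappa$-compact eventually coconnective $X$ factors through $\tau_{\leq n}X$, which is $\kappa$-compact by hypothesis), shows that $F$ is the left Kan extension of $\mathrm{Res}\,F$ along $i$.

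For essential surjectivity: given $G\in\mathcal{E}$, put $\hat G=\mathrm{Lan}_i G$. By the same cofinality statement, $\hat G$ is computed on each $(\Mod_A)_{\leq n}$ as the $\Ind_\kappa$-extension of $G|_{(\Mod_A)_{\leq n}\cap\Mod_A^\kappa}$, which preserves finite limits (finite limits commute with $\kappa$-filtered colimits in $\Sp$, and $(\Mod_A)_{\leq n}\cap\Mod_A^\kappa$ has finite limits) and preserves all filtered colimits (an $\Ind_\kappa$-extension preserves all filtered colimits precisely because the given functor preserves $\kappa$-small filtered ones). Since a finite limit of $n$-coconnective modules is $n$-coconnective and computed in $(\Mod_A)_{\leq n}$, it follows that $\hat G\colon \Mod^+_A\to \Sp$ preserves all finite limits; as both $\Mod^+_A$ and $\Sp$ are stable, $\hat G$ is then automatically exact. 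Likewise $\hat G$ is almost finitely presented because filtered colimits of $n$-coconnective modules are computed and preserved at level $n$. Hence $\hat G\in\QC^\vee_A$, and $\mathrm{Res}\,\hat G\simeq G$ since $i$ is fully faithful, completing the equivalence.

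The main obstacle is the exactness of $\hat G$ in the essential-surjectivity step: showing that the level-wise left Kan extension of $G$ preserves finite limits. This is exactly where the hypothesis that $\tau_{\leq n}$ preserves $\kappa$-compact objects is indispensable — it is used both to get $(\Mod_A)_{\leq n}\simeq\Ind_\kappa\big((\Mod_A)_{\leq n}\cap\Mod_A^\kappa\big)$ and to ensure $(\Mod_A)_{\leq n}\cap\Mod_A^\kappa$ is closed under fibres, which are the two inputs into the standard fact that an $\Ind_\kappa$-extension of a finite-limit-preserving functor preserves finite limits. A secondary point requiring care is the coherent compatibility of the level-wise identifications and Kan extensions as $n$ grows, which is handled uniformly by the cofinality property of truncation.
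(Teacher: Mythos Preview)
Your proposal is correct and follows the same approach as the paper: both hinge on the cofinality of $\big((\Mod_A)_{\leq n}\cap\Mod_A^\kappa\big)_{/M}\hookrightarrow \big(\Mod_A^{+,\kappa}\big)_{/M}$ (using that $\tau_{\leq n}$ preserves $\kappa$-compacts) to show $i_!i^*F\simeq F$, and on the decomposition of arbitrary filtered colimits into $\kappa$-filtered colimits of $\kappa$-small filtered colimits to identify the essential image. Your treatment is more detailed in one respect: the paper's proof leaves the exactness of $i_!G$ implicit, whereas you spell out that the level-wise $\Ind_\kappa$-extension preserves finite limits because $(\Mod_A)_{\leq n}\cap\Mod_A^\kappa$ is closed under fibres and $\kappa$-filtered colimits commute with finite limits in $\Sp$.
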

\begin{proof}
Since $\tau_{\leq n}$ preserves $\kappa$-small objects, for any $n$-coconnective $A$ module $M$ the inclusion $\big(\Mod_{A, \leq n}^\kappa\big)_{/M}\hookrightarrow \big(\Mod^{+, \kappa}_{A}\big)_{/M}$ is cofinal. Using that $\big(\Mod_{A, \leq n}^\kappa\big)_{/M}$ is filtered, one readily sees that $i_!i^*F\simeq F$ for any $F\in \QC^\vee_A$. The identification of its essential image follows from the fact that any filtered colimit can be written as a $\kappa$-filtered colimit of $\kappa$-small filtered colimits.
\end{proof}
\begin{corollary}
Let $A$ be an animated ring $A$. Then $\QC^\vee_A$ is a stable presentable $\infty$-category. 
\end{corollary}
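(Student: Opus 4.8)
The plan is to deduce both properties from the identifications in Lemmas~\ref{lem:pro-coh equivalent} and~\ref{lem:pro-coh kappa-presentable}, together with standard facts about functor $\infty$-categories with presentable target. Stability is immediate: regarding $\QC^\vee_A$ as a full subcategory of $\Fun(\Mod_A, \Sp)$, it contains the zero functor and is closed under finite limits and finite colimits, since the three defining conditions --- exactness, convergence (preservation of limits of Postnikov towers), and the almost finite presentation condition (commuting with filtered colimits of $n$-coconnective modules, for every $n$) --- are all tested pointwise in $\Sp$, and each is stable under finite limits and finite colimits of functors, using that finite limits and finite colimits coincide in $\Sp$ and commute with filtered colimits there. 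A full subcategory of a stable $\infty$-category that is closed under finite limits and finite colimits is itself stable, so $\QC^\vee_A$ is stable.

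For presentability I would first fix a regular cardinal $\kappa$ such that $\tau_{\leq n}\colon \Mod_A \to \Mod_A$ preserves $\kappa$-compact objects for every $n \in \mathbb{N}$; such a $\kappa$ exists because each $\tau_{\leq n}$ is an accessible functor between presentable $\infty$-categories and only countably many conditions are imposed. Chaining the equivalence $(1)\simeq(2)$ of Lemma~\ref{lem:pro-coh equivalent} with Lemma~\ref{lem:pro-coh kappa-presentable}, restriction identifies $\QC^\vee_A$ with the full subcategory $\cat{E}\subseteq \Fun(\Mod^{+,\kappa}_A, \Sp)$ spanned by those exact functors that moreover preserve colimits of $\kappa$-small filtered diagrams valued in $n$-coconnective objects, for each $n$. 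Since $\Mod^{+,\kappa}_A$ is essentially small, $\Fun(\Mod^{+,\kappa}_A, \Sp)$ is presentable, and $\cat{E}$ is carved out by demanding that a functor send each member of a fixed \emph{set} of diagrams --- the pullback squares and terminal object of $\Mod^{+,\kappa}_A$ (witnessing exactness), together with a set of representatives for the $\kappa$-small filtered diagrams valued in $n$-coconnective objects --- to a (co)limit diagram in $\Sp$. Such a full subcategory of a presentable $\infty$-category is again presentable by the standard results on presentable $\infty$-categories (see \cite[\S 5.5]{HTT}), so $\QC^\vee_A$ is presentable.

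The one point that deserves care is the bookkeeping in the last step: a priori the $\kappa$-small filtered diagrams valued in $n$-coconnective modules form a proper class, but the essential smallness of $\Mod^{+,\kappa}_A$ together with the constraint that indexing categories be $\kappa$-small leaves only a set of them up to equivalence, which is exactly what the cited presentability criterion requires. Everything else follows immediately from the lemmas recorded above.
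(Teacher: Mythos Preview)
Your proof is correct and takes essentially the same approach as the paper: both invoke Lemma~\ref{lem:pro-coh kappa-presentable} to realise $\QC^\vee_A$ as a full subcategory of $\Fun(\Mod^{+,\kappa}_A,\Sp)$ cut out by preservation of a set of (co)limit diagrams, and then appeal to the standard presentability criterion (the paper cites the proof of \cite[Proposition~5.5.3.8]{HTT}). You add an explicit stability argument and some set-theoretic bookkeeping that the paper leaves implicit, but the substance is the same.
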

\begin{proof}
By  \Cref{lem:pro-coh kappa-presentable}, $\QC^\vee_A\subseteq \Fun\big(\Mod^{+, \kappa}_A, \Sp\big)$ is the full subcategory of functors preserving certain types of colimits. It is hence presentable by the proof of \cite[Proposition 5.5.3.8]{HTT}.
\end{proof}
\begin{corollary}\label{cor:aft approx}
Let $F\colon \Mod_A\rt \Sp$ be an exact functor which is accessible and convergent and consider the $\infty$-category $\big(\QC^\vee_A\big)_{/F}$ of functors in $\QC^\vee_A$ equipped with a natural transformation to $F$. This $\infty$-category admits a terminal object $F^{\aft}$, which we will refer to as the \emph{almost finitely presented approximation} to $F$.
\end{corollary}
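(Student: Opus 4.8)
The plan is to realise $F^{\aft}$ as the value at $F$ of a right adjoint to the inclusion of $\QC^\vee_A$ into a suitable ambient presentable $\infty$-category, and then to read off the universal property from the adjunction.

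First I would fix a regular cardinal $\kappa$ large enough that $\tau_{\leq n}\colon \Mod_A\to \Mod_A$ preserves $\kappa$-compact objects for every $n$ and that $F$ is $\kappa$-accessible, and set $\cat C_0=\Mod_A^{+, \kappa}$, the essentially small, stable full subcategory of $\kappa$-compact eventually coconnective $A$-modules. Let $\cat E=\Fun^{\mathrm{ex}}(\cat C_0, \Sp)$ be the $\infty$-category of exact functors $\cat C_0\to \Sp$. Because $\cat C_0$ is small and stable and finite limits agree with finite colimits in $\Sp$, a pointwise colimit of exact functors into $\Sp$ is again exact; hence $\cat E$ is a presentable stable $\infty$-category in which colimits are computed pointwise. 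By \Cref{lem:pro-coh kappa-presentable}, restriction along $\cat C_0\hookrightarrow \Mod_A$ identifies $\QC^\vee_A$ with the full subcategory $\cat E'\subseteq \cat E$ of exact functors preserving $\kappa$-small filtered colimits of $n$-coconnective objects for each $n$; recall also that $\QC^\vee_A$ is presentable. Since colimits in $\cat E$ are pointwise and colimits commute with colimits, $\cat E'$ is closed under all colimits in $\cat E$, so the inclusion $\iota\colon \QC^\vee_A\simeq \cat E'\hookrightarrow \cat E$ preserves colimits; the adjoint functor theorem \cite[Corollary 5.5.2.9]{HTT} then provides a right adjoint $\iota^R\colon \cat E\to \QC^\vee_A$.

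Next I would set $F_0:=F|_{\cat C_0}\in \cat E$ (a restriction of an exact functor is exact) and let $F^{\aft}\in \QC^\vee_A$ be the object corresponding to $\iota^R(F_0)$. To identify this with the desired terminal object I would establish, for every $G\in \QC^\vee_A$, a natural equivalence
\begin{equation*}
\Map_{\Fun(\Mod_A, \Sp)}(G, F)\;\simeq\;\Map_{\cat E}\big(G|_{\cat C_0}, F_0\big),
\end{equation*}
as follows: $G$ and $F$ are both convergent, hence right Kan extended along the fully faithful embedding $\Mod_A^+\hookrightarrow \Mod_A$ (\Cref{lem:pro-coh equivalent}), so the mapping space is computed after restricting to $\Mod_A^+$; and on $\Mod_A^+$ the functor $G$ is the left Kan extension of $G|_{\cat C_0}$ by \Cref{lem:pro-coh kappa-presentable}, so mapping out of it into $F$ computes maps $G|_{\cat C_0}\to F_0$. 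Composing with the adjunction equivalence $\Map_{\cat E}(\iota G, F_0)\simeq \Map_{\QC^\vee_A}(G, F^{\aft})$ yields a natural equivalence $\Map_{\QC^\vee_A}(G, F^{\aft})\simeq \Map_{\Fun(\Mod_A, \Sp)}(G, F)$. By the Yoneda lemma this equivalence is postcomposition with a canonical transformation $F^{\aft}\to F$, namely the image of $\id_{F^{\aft}}$, and its being an equivalence for all $G$ says precisely that $(F^{\aft}\to F)$ is a terminal object of $(\QC^\vee_A)_{/F}$. The main obstacle — essentially the only point requiring care — is the verification that $\cat E'$ is closed under colimits inside $\cat E$ and that the two Kan-extension identifications entering the displayed equivalence are mutually compatible, so that the resulting equivalence is genuinely induced by a single map $F^{\aft}\to F$; everything else is a formal application of the adjoint functor theorem together with the descriptions of $\QC^\vee_A$ already recorded in \Cref{lem:pro-coh equivalent} and \Cref{lem:pro-coh kappa-presentable}.
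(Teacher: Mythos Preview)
Your proposal is correct and follows essentially the same approach as the paper: embed $\QC^\vee_A$ as a full subcategory of $\Fun(\Mod_A^{+,\kappa},\Sp)$ (or its exact variant) closed under colimits, invoke the adjoint functor theorem to obtain a right adjoint, and apply it to the restriction $F|_{\Mod_A^{+,\kappa}}$. The paper is terser—it simply observes that the inclusion is accessible and colimit-preserving by the preceding proof and that convergence of $F$ reduces the problem to the restriction—whereas you spell out explicitly the Kan-extension identifications showing $\Map(G,F)\simeq \Map_{\cat E}(G|_{\cat C_0},F_0)$; this extra detail is fine and makes the universal property transparent, but the underlying argument is the same.
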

\begin{proof}
Since $F$ is convergent and $\kappa$-accessible for some uncountable $\kappa$, it suffices to verify that there exists a universal object $F^{\aft}\in \QC^{\vee}_A$ equipped with a natural transformation to the restriction $F\colon \Mod_{A}^{+, \kappa}\rt \Sp$. The previous proof shows that for large enough $\kappa$, the inclusion $\QC^{\vee}_A\subseteq \Fun\big(\Mod_{A}^{+, \kappa}, \Sp\big)$ is accessible and preserves colimits, and hence admits a right adjoint.
\end{proof}
For any $A$, there is an adjoint pair
$$\begin{tikzcd}
\upiota\colon \Mod_A\arrow[r, yshift=1ex] & \QC^\vee_A\cocolon \upupsilon\arrow[l, yshift=-1ex]
\end{tikzcd}$$
where the left adjoint sends an $A$-module $M$ to the exact functor $M\otimes_A (-)\colon \Mod^+_A\rt \Sp$ and the right adjoint sends any $F\colon \Mod^+_A\rt \Sp$ to the $A$-module $\upupsilon(F)=\lim_n F(\tau_{\leq n} A)$.

\begin{observation}\label{obs:t-structure}
The $\infty$-category $\QC^\vee_A$ comes equipped with a left complete $t$-structure whose connective part consists of those $F\colon \Mod^+_A\rt \Sp$ that are right $t$-exact. This induces an equivalence 
$$
\Mod_{A, \geq 0}\simeq \QC^\vee_{A, \geq 0}.
$$
Indeed, the equivalences of  \Cref{lem:pro-coh equivalent} identify $\QC^\vee_{A, \geq 0}$ with the $\infty$-category of colimit-preserving functors $F\colon \Mod_{A, \geq 0}\rt \sS$, which is equivalent to $\Mod_{A, \geq 0}$ via sending such a functor $F$ to the $A$-module $F(A) \in \Mod_{A,\geq 0}$.

Consequently, $\Mod_A$ is the right completion of $\QC^\vee_A$. In particular, we will typically identify \textit{eventually connective} $A$-modules with the corresponding object in $\QC^\vee_A$. If $A$ is eventually coconnective, then $\QC_A\rt \QC^\vee_A$ is fully faithful on all modules (not just the eventually connective ones).
\end{observation}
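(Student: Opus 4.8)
The plan is to build the $t$-structure from its candidate connective part, identify that part with $\Mod_{A,\geq 0}$ using the reformulations in \Cref{lem:pro-coh equivalent}, and then deduce the remaining assertions.

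\medskip

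\noindent\textit{Step 1 (the $t$-structure).} I would let $\QC^\vee_{A,\geq 0}\subseteq \QC^\vee_A$ be the full subcategory of right $t$-exact functors, i.e.\ those $F$ carrying $\Mod_{A,\geq 0}$ into $\Sp_{\geq 0}$. Colimits in $\QC^\vee_A$ are computed pointwise in $\Sp$ (preservation of a fixed set of colimit diagrams is itself closed under colimits), and $\Sp_{\geq 0}$ is closed under colimits and extensions; together with a generation argument this shows $\QC^\vee_{A,\geq 0}$ is presentable and closed under colimits and extensions, so by \cite[Proposition 1.4.4.11]{HA} it is the connective part of a unique accessible $t$-structure on $\QC^\vee_A$. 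For left completeness one checks $\bigcap_n \QC^\vee_{A,\geq n}=0$: an exact, convergent, almost finitely presented $F$ with $F(A)\simeq 0$ vanishes on all free modules, hence on all perfect connective modules, hence (by the almost finite presentation property, applied to uniformly bounded-above filtered systems) on all connective modules, and finally on all eventually coconnective modules by convergence; left completeness then follows from \cite[Proposition 1.2.1.19]{HA}. Note that $\QC^\vee_A$ need \emph{not} be right complete — its coconnective part is ``small'' (pro-coherent) — which is precisely why $\Mod_A$, which has the same connective part, will turn out to be its right completion.

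\medskip

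\noindent\textit{Step 2 (the connective part is $\Mod_{A,\geq 0}$).} The functor $\upiota\colon \Mod_A\to \QC^\vee_A$, $M\mapsto M\otimes_A(-)$, carries $\Mod_{A,\geq 0}$ into $\QC^\vee_{A,\geq 0}$ (tensoring connective with connective preserves connectivity), and it is fully faithful there: the unit $N\to \upupsilon\upiota(N)=\lim_n\big(N\otimes_A\tau_{\leq n}A\big)$ is an equivalence, since the fibres $N\otimes_A(\pi_{n+1}A)[n+1]$ of the transition maps of the tower $\{N\otimes_A\tau_{\leq n}A\}_n$ become increasingly connective when $N$ is connective. For essential surjectivity, given $F\in \QC^\vee_{A,\geq 0}$, I would transport $F$ across the equivalences of \Cref{lem:pro-coh equivalent} to a left exact functor on $\Mod_A^+$; right $t$-exactness upgrades its restriction to $\Mod_{A,\geq 0}$ to a functor $G_0\colon \Mod_{A,\geq 0}\to \Sp_{\geq 0}$, and the conditions defining $\QC^\vee_A$ — exactness together with being almost finitely presented and convergent, which is exactly right-left extendedness in the sense of \Cref{def:derived functor} — force $G_0$ to preserve all colimits. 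By the Eilenberg--Watts/Schwede--Shipley description of colimit-preserving functors out of a module category \cite{HA}, $G_0\simeq N\otimes_A(-)$ for $N:=G_0(A)\in\Mod_{A,\geq 0}$; since an object of $\QC^\vee_A$ is determined by its restriction to $\Mod_{A,\geq 0}$ (\Cref{lem:pro-coh equivalent}(5)), this gives $F\simeq \upiota(N)$. Hence $\upiota$ restricts to an equivalence $\Mod_{A,\geq 0}\xrightarrow{\ \sim\ }\QC^\vee_{A,\geq 0}$, $F\mapsto F(A)$ (viewed as a connective module via the infinite loop structure supplied by right $t$-exactness).

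\medskip

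\noindent\textit{Step 3 (the addenda).} The same unit computation, now with a connectivity estimate rather than an exact vanishing, shows $\upiota$ is fully faithful on $\Mod_A^-$, so eventually connective modules embed into $\QC^\vee_A$. That $\Mod_A$ is the right completion of $\QC^\vee_A$ is then formal: $\Mod_A$ is the right-complete stable $\infty$-category whose connective part is $\Mod_{A,\geq 0}\simeq \QC^\vee_{A,\geq 0}$. Finally, if $A$ is eventually coconnective then $\tau_{\leq n}A\simeq A$ for $n\gg 0$, so $\upupsilon\upiota\simeq\mathrm{id}$ on all of $\Mod_A$ and $\upiota$ is fully faithful without any connectivity restriction. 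The main obstacle is the converse half of Step 2: correctly transporting the condition ``right $t$-exact'' through the chain of equivalences of \Cref{lem:pro-coh equivalent} and recognising it as ``colimit-preserving with values in $\Sp_{\geq 0}$'', so that the Eilenberg--Watts theorem can be applied.
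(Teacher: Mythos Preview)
Your proposal is correct and follows the same route as the paper's inline argument: both identify $\QC^\vee_{A,\geq 0}$, via \Cref{lem:pro-coh equivalent}, with colimit-preserving functors $\Mod_{A,\geq 0}\to\sS$ and then recognise this as $\Mod_{A,\geq 0}$ via $F\mapsto F(A)$. You have filled in considerably more detail than the paper's brief observation, which is appropriate.

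One correction to your left-completeness argument in Step~1: the chain ``$F(A)=0\Rightarrow$ perfect $\Rightarrow$ connective (by afp) $\Rightarrow$ eventually coconnective (by convergence)'' does not quite work. A connective module is not in general a filtered colimit of \emph{uniformly bounded-above} perfect modules, so afp does not directly give the third step; and convergence goes the other way (from $\Mod_A^+$ to $\Mod_A$), so it cannot be used for the last step. The clean argument avoids perfect modules entirely: membership in $\bigcap_n\QC^\vee_{A,\geq n}$ means precisely that $F$ sends every connective module to an $\infty$-connective spectrum, hence to $0$; by exactness $F$ then vanishes on eventually connective modules; writing any $M\in\Mod_{A,\leq n}$ as the filtered colimit $\colim_k\tau_{\geq -k}M$ of $n$-coconnective bounded modules and applying afp gives $F=0$ on $\Mod_A^+$; finally convergence gives $F=0$ on all of $\Mod_A$. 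Similarly, your appeal to ``right-left extendedness'' in Step~2 is slightly imprecise (the defining conditions of $\QC^\vee_A$ are not literally those of \Cref{def:derived functor}), but the conclusion that the restricted functor $\Mod_{A,\geq 0}\to\Sp_{\geq 0}$ preserves colimits is correct: right $t$-exactness forces the Postnikov tower $F(\tau_{\leq n}M)$ to stabilise on each $\pi_k$, which lets you interchange the filtered colimit with the limit over $n$.
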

Let $A$ be an animated ring and $F\in \QC^\vee_A$. For each $N\in \Mod^+_A$, the spectrum $F(N)$ inherits a natural $A$-module structure from the $A$-module structure on $N$. More precisely, one can this describe this $A$-module structure on $F(N)$ by the exact functor $\perf(A)^{\op}\rt \Sp$ sending $M$ to $F(M^\vee\otimes_A N)$.
Using this, the $\infty$-category $\QC^\vee_A$ is tensored over $\Mod_A$ via
\begin{equation}\label{diag:tensor over Mod}\begin{tikzcd}
\otimes_A\colon \QC^\vee_A\times \Mod_A\ar[r] & \QC^\vee_A; \quad (M\otimes_A F)(N) = M\otimes_A (F(N))
\end{tikzcd}\end{equation}
for each $N\in \Mod^+_A$. This preserves colimits in each variable and the functor $\upiota\colon \Mod_A\rt \QC^\vee_A$ is compatible with the tensoring. 
\begin{definition}\label{def:pro-coh dual}
Let $M\in \Mod_A$ be an $A$-module. Then the tensoring $M\otimes_A -\colon \QC^\vee_A\rt \QC^\vee_A$ admits a right adjoint. We define the \emph{pro-coherent dual} $M^\vee$ of $M$ to be the value of this right adjoint on $A$.
\end{definition}

\begin{example}\label{ex:pro-coh dual}
If $M$ is a perfect $A$-module, then the adjoint to $M\otimes_A -\colon \QC^\vee_A\rt \QC^\vee_A$ is given by tensoring with the dual perfect module. Consequently, the pro-coherent dual of a perfect $A$-module is simply its usual $A$-linear dual. In terms of functors, one can also identify this with the functor $\hom_A(M, -)\colon \Mod_A\rt \Sp$. Using this and the fact that $(-)^\vee$ sends colimits in $\Mod^-_{A}$ to limits in $\QC^\vee$, it follows that for any eventually connective $M\in \Mod^-_{A}$, its pro-coherent dual $M^\vee$ is the almost finitely presented approximation to $\hom_A(M, -)$ in the sense of Corollary \ref{cor:aft approx}.
\end{example}
We will be particularly interested in the pro-coherent duals of \emph{almost perfect} $A$-modules: in this case, $M^\vee$ is given by $\hom_A(M, -)\colon \Mod_A\rt \Sp$, which is already almost finitely presented.
\begin{definition}\label{def:dually almost perfect}
We will say that a pro-coherent module $F\in \QC^\vee_A$ is \emph{dually almost perfect} if $F\colon \Mod_A\rt \Sp$ is corepresented by an almost perfect $A$-module. Taking pro-coherent duals therefore determines an equivalence
$$\begin{tikzcd}
(-)^\vee\colon \APerf_A^{\op}\arrow[r, "\sim"] & \dAPerf_A
\end{tikzcd}$$
between almost perfect $A$-modules and dually almost perfect pro-coherent $A$-modules.
\end{definition}
\begin{lemma}\label{lem:slice duality}
Let $M\in\Mod_A$ be an $A$-module and let $M^\vee$ be its pro-coherent $A$-linear dual. Then pro-coherent duality determines an equivalence
\begin{equation}\label{diag:slice duality}\begin{tikzcd}
(-)^\vee\colon \big((\APerf_A)_{M/}\big)^{\op}\arrow[r, "\sim"] & (\dAPerf_A)_{/M^\vee}.
\end{tikzcd}\end{equation}
\end{lemma}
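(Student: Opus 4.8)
\textbf{Proof plan for \Cref{lem:slice duality}.} The plan is to deduce the slice-level equivalence \eqref{diag:slice duality} from the absolute equivalence $(-)^\vee\colon \APerf_A^{\op}\xrightarrow{\sim}\dAPerf_A$ recorded in \Cref{def:dually almost perfect}, by a formal argument comparing undercategories with overcategories along a fully faithful functor. First I would recall that for any fully faithful functor $G\colon \cat{C}\hookrightarrow \cat{D}$ and any object $c\in \cat{C}$, the induced functor $\cat{C}_{c/}\to \cat{D}_{G(c)/}$ is again fully faithful; dually for overcategories. Applying this to $G=(-)^\vee\colon \APerf_A^{\op}\hookrightarrow \QC^\vee_A$, which is fully faithful since it is an equivalence onto $\dAPerf_A$, one obtains a fully faithful functor $\big((\APerf_A)_{M/}\big)^{\op}=(\APerf_A^{\op})_{/M}\to (\QC^\vee_A)_{/M^\vee}$. (Here I am using that pro-coherent duality sends the canonical map $M\to (\text{almost perfect})$ to a map $(\text{dually almost perfect})\to M^\vee$, so that undercategories in $\APerf_A$ become overcategories in $\APerf_A^{\op}$, which the functor then carries into overcategories of $M^\vee$.) It lands in $(\dAPerf_A)_{/M^\vee}$ since $(-)^\vee$ sends almost perfect modules to dually almost perfect pro-coherent modules.

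Next I would check essential surjectivity onto $(\dAPerf_A)_{/M^\vee}$. An object there is a map $F\to M^\vee$ with $F$ dually almost perfect. By \Cref{def:dually almost perfect}, $F\simeq N^\vee$ for a (unique up to equivalence) almost perfect $A$-module $N$; so it remains to see that every map $N^\vee\to M^\vee$ in $\QC^\vee_A$ arises as $(-)^\vee$ applied to a map $M\to N$ of $A$-modules. This is exactly the content of the mapping-space description: I would verify that $(-)^\vee$ induces an equivalence $\Map_{\Mod_A}(M,N)\xrightarrow{\sim}\Map_{\QC^\vee_A}(N^\vee, M^\vee)$. For $M$ perfect this is immediate from \Cref{ex:pro-coh dual} (pro-coherent dual is the ordinary dual, and duality on perfect modules is an equivalence of $\infty$-categories); for general $M\in \Mod_A$, $M^\vee$ is the almost finitely presented approximation to $\hom_A(M,-)$ and $N^\vee\simeq \hom_A(N,-)$ is already almost finitely presented, so that
$$
\Map_{\QC^\vee_A}(N^\vee, M^\vee)\simeq \Map_{\QC^\vee_A}(N^\vee, \hom_A(M,-))\simeq \lim_{P}\Map_{\QC^\vee_A}(N^\vee, \hom_A(P,-))
$$
where $P$ ranges over a diagram of perfect modules with $\lim P\simeq M$ in $\Mod_A^-$ (so that $\hom_A(P,-)\to \hom_A(M,-)$ is a colimit cone of pro-coherent duals, using that $(-)^\vee$ turns this limit into a colimit). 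Each term is $\Map_{\Mod_A}(P,N)$ by the perfect case, and the resulting limit is $\Map_{\Mod_A}(M,N)$. Combining fully faithfulness with essential surjectivity gives the equivalence \eqref{diag:slice duality}.

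The main obstacle I anticipate is bookkeeping rather than conceptual: one must be careful that pro-coherent duality is genuinely contravariant and that it correctly interchanges the "under $M$" structure with the "over $M^\vee$" structure, including the coherences, so that the comparison functor is defined as a functor of $\infty$-categories and not merely on objects. The cleanest way to organise this is to note that $(-)^\vee$, being (the restriction of) an adjoint functor between stable $\infty$-categories, is canonically a functor $\APerf_A^{\op}\to \QC^\vee_A$, and then simply to pass to overcategories, which is functorial; the slice then automatically carries the map $M\to N$ to $N^\vee\to M^\vee$. The only real verification is the mapping-space statement above, whose general case reduces to the perfect case by writing an almost perfect module as a suitable limit of perfects and using that pro-coherent duality and the almost finitely presented approximation functor are compatible with these limits (Corollary \ref{cor:aft approx} and \Cref{ex:pro-coh dual}).
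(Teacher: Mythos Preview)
Your overall strategy matches the paper's: both reduce the slice equivalence to the mapping-space statement
\[
\Map_{\Mod_A}(M,N)\;\xrightarrow{\ \sim\ }\;\Map_{\QC^\vee_A}(N^\vee,M^\vee)\qquad (N\in\APerf_A),
\]
and both recognise the key inputs, namely that $N^\vee=\hom_A(N,-)$ is corepresentable and that $M^\vee$ is the almost finitely presented approximation to $\hom_A(M,-)$. The paper phrases the reduction as ``map of right fibrations over the equivalence $\APerf_A^{\op}\simeq\dAPerf_A$, so check fibres'', which sidesteps the issue in your first paragraph that $M$ need not lie in $\APerf_A$ (so the bare ``slices of a fully faithful functor'' lemma does not literally apply).

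Where your execution diverges is after the first displayed equivalence. Once you have
\[
\Map_{\QC^\vee_A}(N^\vee,M^\vee)\simeq \Nat\big(\hom_A(N,-),\,\hom_A(M,-)\big)
\]
(with the right-hand side taken in the ambient functor category, not in $\QC^\vee_A$), the co-Yoneda lemma gives $\hom_A(M,N)$ immediately; this is exactly what the paper does. Your detour through perfect modules is unnecessary and, as written, has the variances reversed: one writes $M\simeq\colim_\alpha P_\alpha$ as a \emph{colimit} of perfects, whence $M^\vee\simeq\lim_\alpha P_\alpha^\vee$ (the contravariant functor $(-)^\vee$ turns colimits into limits, not the other way round), and then $\Map(N^\vee,M^\vee)\simeq\lim_\alpha\Map(N^\vee,P_\alpha^\vee)\simeq\lim_\alpha\Map(P_\alpha,N)\simeq\Map(M,N)$. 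So the idea can be made to work, but the direction you stated (``$\lim P\simeq M$'' and ``$(-)^\vee$ turns this limit into a colimit'') is backwards, and in any case the whole step is superfluous once you invoke Yoneda directly.
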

\begin{proof}
The functor \eqref{diag:slice duality} defines a map of right fibrations covering the equivalence $\APerf_A^{\op}\simeq \dAPerf_A$. It therefore suffices to verify that it induces an equivalence on fibres, i.e.\ that the map $\Map_{\Mod_A}(M, N)\rt \Map_{\QC^\vee_A}(N^\vee, M^\vee)$ is an equivalence for every $N\in \APerf_A$. To see this, recall that $N^\vee\colon \Mod_A\rt \Sp$ is corepresented by $N$ and $M^\vee\colon \Mod_A\rt \Sp$ is the almost finitely presented approximation to the functor corepresented by $M$, so that $\Map_{\QC^\vee_A}(N^\vee, M^\vee)$ is equivalent to the space of maps between the functors corepresented by $N$ and $M$. Unravelling the definitions, one then sees that $\Map_{\Mod_A}(M, N)\rt \Map_{\QC^\vee_A}(N^\vee, M^\vee)$ is an equivalence by the Yoneda lemma.
\end{proof}

\begin{definition}\label{def:tor-ampl}
Let $A$ be an animated ring. For all $-\infty\leq a\leq b\leq \infty$, we will say that an object $F\in \QC^\vee_A$ has \textit{tor-amplitude} in $[a, b]$ if for any discrete $A$-module $M\in \Mod_A^\heartsuit$, 
$$
\pi_nF(M)\neq 0 \qquad\Longrightarrow\qquad a\leq n\leq b.
$$
Write $\QC^{\vee, [a, b]}_{A}\subseteq \QC^\vee_A$ for the full subcategory on the objects with tor-amplitude in $[a, b]$. 
\end{definition}
\begin{example}
We have the following special cases:
\begin{enumerate}
\item A pro-coherent module $F$ has tor-amplitude in $[0, \infty]$ if and only if $F$ is connective. In particular, $\QC^{\vee, [0, \infty]}_A\simeq \Mod_{A, \geq 0}$ is equivalent to the ordinary $\infty$-category of connective $A$-modules.

\item $\QC^{\vee, [0, 0]}_{A}\simeq \mm{Flat}_A$ coincides with the full subcategory of $\Mod_{A, \geq 0}$ spanned by the \textit{flat} $A$-modules.

\item Let $F=M^\vee$ be dually almost perfect. Then $M^\vee$ has tor-amplitude in $[-b, -a]$ if and only if $M$ has tor-amplitude in $[a, b]$. 
\end{enumerate}
\end{example}

Let us now turn to the functoriality of $\QC^\vee_A$ in the animated ring $A$.
\begin{definition}\label{def:procoh functoriality}
Let $f\colon A\rt B$ be a map of rings. We will write $f^*\colon \QC^\vee_A\leftrightarrows \QC^\vee_B \cocolon f_*$ for the adjoint pair whose left adjoint is given by precomposition with $f_*\colon \Mod^+_{B}\rt \Mod^+_A$.
\end{definition}
Note that $f^*$ fits in a commuting square
$$\begin{tikzcd}
\Mod_A\arrow[r, "f^*"]\arrow[d, "\upiota"{swap}] & \Mod_B\arrow[d, "\upiota"]\\
\QC^\vee_A\arrow[r, "f^*"] & \QC^\vee_B.
\end{tikzcd}$$
In the presence of Serre duality, $f^*$ is Serre dual to the $f^!$-functor on ind-coherent sheaves \cite[Section 9.2.3]{GaitsgoryIndcoh}
\begin{lemma}\label{lem:procoh base change}
If $f\colon A\rt B$ has finite tor-amplitude, then the right adjoint $f_*$ preserves colimits. Furthermore, for any pullback square
$$\begin{tikzcd}
\Spec(B')\arrow[r, "f'"]\arrow[d, "g'"{swap}] & \Spec(B)\arrow[d, "g"]\\
\Spec(A')\arrow[r, "f"] & \Spec(A).
\end{tikzcd}$$
where $f$ has finite tor-amplitude, the Beck--Chevalley map $g^*f_*\rt f'_*g'^*$ is an equivalence.
\end{lemma}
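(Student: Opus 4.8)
The plan is to produce an explicit formula for the right adjoint $f_*$ under the standing hypothesis that $f$ has finite tor-amplitude; once this is in hand both assertions are essentially formal. Throughout I would use the model of Lemma~\ref{lem:pro-coh equivalent}, in which $\QC^\vee_A$ is the $\infty$-category of exact, almost finitely presented functors $\Mod^+_A\to\Sp$ and $f^*$ is precomposition with the restriction of scalars $\res_{B/A}\colon\Mod^+_B\to\Mod^+_A$.

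First I would observe that if $f$ has tor-amplitude in $[0,d]$, then base change $-\otimes_A B\colon\Mod_A\to\Mod_B$ carries $n$-coconnective modules to $(n+d)$-coconnective ones (filter by the Postnikov tower and bound degree by degree) and preserves filtered colimits; in particular it restricts to a functor $\Mod^+_A\to\Mod^+_B$. Hence
$$\begin{tikzcd}
\widetilde{f}\colon\QC^\vee_B\arrow[r] & \QC^\vee_A, & G\arrow[r,mapsto] & \bigl(M\longmapsto G(M\otimes_A B)\bigr)
\end{tikzcd}$$
is well defined, since $G\circ(-\otimes_A B)$ is exact and almost finitely presented whenever $G$ is. Now $-\otimes_A B$ is left adjoint to $\res_{B/A}$ at the level of eventually coconnective modules, so left Kan extension along $-\otimes_A B$ is precomposition with $\res_{B/A}$; equivalently, precomposition with $\res_{B/A}$ (which is $f^*$) is left adjoint to precomposition with $-\otimes_A B$ (which is $\widetilde f$) at the level of functor $\infty$-categories. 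As both functors preserve the full subcategories $\QC^\vee$, this adjunction restricts, and by uniqueness of adjoints $f_*\simeq\widetilde f$. The first assertion then follows at once: filtered colimits in $\QC^\vee_A$ are computed pointwise on $\Mod^+_A$ (a pointwise filtered colimit of exact almost finitely presented functors is again of this kind), and $\widetilde f$ visibly commutes with pointwise filtered colimits in $G$; since $f_*$ is moreover exact, it preserves all colimits.

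For the base-change statement, note first that $f'\colon B\to B'\simeq B\otimes_A A'$ again has finite tor-amplitude, so $f_*$, $f'_*$, $g^*$ and $g'^*$ are all modelled by precomposition with explicit functors between $\infty$-categories of eventually coconnective modules: $g^*f_*$ is precomposition with $N\mapsto\res_{B/A}(N)\otimes_A A'$ and $f'_*g'^*$ is precomposition with $N\mapsto\res_{B'/A'}(N\otimes_B B')$. The Beck--Chevalley $2$-cell between these composites of precomposition functors is precomposition with the mate of the underlying square of module functors, namely the canonical comparison $\res_{B/A}(N)\otimes_A A'\to\res_{B'/A'}(N\otimes_B B')$; this is an equivalence since $B'\simeq B\otimes_A A'$ (ordinary base change for modules, valid on $\Mod^+$ thanks to finite tor-amplitude). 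Hence $g^*f_*\to f'_*g'^*$ is an equivalence.

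The main obstacle is the bookkeeping in the last step: one must check that, after transporting the adjunction $f^*\dashv f_*$ to the explicit description "precompose with $-\otimes_A B$", its unit and counit are given by the obvious module-level (co)units, so that the mate construction unwinds to precisely the tautological module comparison above. This is routine $2$-categorical coherence, but should be carried out with some care; everything else reduces to the formula $f_*G\simeq G(-\otimes_A B)$.
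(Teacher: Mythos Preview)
Your proof is correct and follows essentially the same approach as the paper: identify $f_*$ explicitly as precomposition with the base change functor $-\otimes_A B\colon \Mod^+_A\to\Mod^+_B$ (which is well defined precisely because of the finite tor-amplitude hypothesis), then reduce the Beck--Chevalley transformation for pro-coherent modules to the classical module-level base change equivalence. The paper's argument is terser but the key idea is identical; your additional care in justifying the adjunction and flagging the coherence bookkeeping for the mate construction is not misplaced, though in practice this is routine.
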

\begin{proof}
If $f$ has finite tor-amplitude, then $f_*$ is simply given by restriction along $f^*\colon \Mod^+_A\rt \Mod^+_{B}$. Using this, one sees that for every $F\in \QC^\vee_{A'}$ and $M\in \Mod^+_{B}$, the Beck--Chevalley map $(g^*f_*F)(M)\rt (f'_*g'^*F)(M)$ is given by the image under $F$ of the Beck--Chevalley map $g^*f_*M\to f'_*g'^*M$, which is an equivalence.
\end{proof}
\begin{corollary}\label{cor:descent}
The functor $\QC^\vee\colon \SCR_{\KK}\rt \cat{Pr^L}$ satisfies descent with respect to universal descent morphisms of finite tor-amplitude, in the sense of \cite[Definition D.3.1.1]{SAG}. In particular, it satisfies fppf descent.
\end{corollary}
\begin{proof}
Using that $A\mapsto \Mod_{A}$ preserves products, one readily sees that $\QC^\vee$ preserves finite products of animated rings. Next, let $f\colon A\rt B$ be a universal descent morphism of finite tor-amplitude. Let $B^\bullet$ be the corresponding \v{C}ech nerve and consider the map $\theta\colon \QC^\vee_{A}\rt \lim \QC^\vee_{B^\bullet}$. Using  \Cref{lem:procoh base change} and \cite[Corollary 4.7.5.3]{HA}, we see that $\theta$ is an equivalence if $f^*\colon \QC^\vee_A\rt \QC^\vee_B$ is conservative. To see this, take $F\in \QC^\vee_A$ such that $f^*F\simeq 0$. The objects $M\in \Mod_A$ such that $F(M)\simeq 0$ form a stable subcategory of $\Mod_A$ that contains $f_*(\Mod_B)$ and is closed under retracts; since $f$ was a universal descent morphism, this means that it contains every $A$-module.
\end{proof}

We now specialise to the case where $A$ is a coherent animated ring.
\begin{definition}\label{def:coherent ring}
An animated ring $A$ is said to be \textit{coherent} if the standard $t$-structure on $\Mod_A$ restricts to a $t$-structure on $\APerf_A$. Equivalently, $\pi_0(A)$ is coherent and each $\pi_n(A)$ is a finitely presented $\pi_0(A)$-module.  
\end{definition}

\begin{definition}\label{def:coherent modules}
Assume $A$ is a coherent animated ring. An almost perfect $A$-module $M$ is \textit{coherent} if it is eventually coconnective. Equivalently, an $A$-module $M$ is coherent if all homotopy groups $\pi_{n}M$ are finitely presented over $\pi_{0}A$, and only finitely many of $\pi_{n}M$ are non-zero. We denote $\Coh_{A} \subset \APerf_{A}$ the full subcategory of coherent modules.
\end{definition}

\begin{lemma}\label{lem:pro-coh over coherent ring}
If $A\in \SCR$ is coherent, then $\QC^\vee_A$ is equivalent to the categories of:
\begin{enumerate}
\item Exact functors $F\colon \Coh_A\rt \cat{Sp}$.

\item Functors $F\colon \Coh_{A,\geq 0}\rt \sS$ that preserve the terminal object and pullbacks along $\pi_0$-surjections.

\item\label{it:pro-coh coherent 3} Functors $F\colon \APerf_{A,\geq 0}\rt \sS$ that preserve almost eventually constant towers, the terminal object and pullbacks along $\pi_0$-surjections.
\end{enumerate}
In particular, $\QC^\vee_A\simeq \mm{Ind}(\Coh_A^{\op})$ is compactly generated.
\end{lemma}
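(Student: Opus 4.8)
The plan is to bootstrap from \Cref{lem:pro-coh equivalent}, which already identifies $\QC^\vee_A$ with the $\infty$-category of exact, almost finitely presented functors $\Mod^b_A\rt \Sp$ (equivalently, with functors on $\Mod^+_{A,\geq 0}$, resp.\ $\Mod_{A,\geq 0}$, satisfying the conditions recorded there); the only new ingredient needed to cut $\Mod^b_A$ down to $\Coh_A$, and $\Mod_{A,\geq 0}$ down to $\APerf_{A,\geq 0}$, is coherence of $A$. I would first observe that $\Coh_A$ is a small, idempotent-complete stable $\infty$-category: it is closed under shifts since its objects are bounded, closed under cofibres since $A$ is coherent (so truncations of almost perfect modules stay almost perfect, cf.\ \Cref{def:coherent ring}), and closed under retracts since retracts of bounded almost perfect modules are again such. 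Hence $\Ind(\Coh_A^\op)$ is compactly generated and $\Fun^{\mathrm{ex}}(\Coh_A, \Sp)\simeq \Ind(\Coh_A^\op)$, so the ``in particular'' clause will follow once we produce the equivalence with description (1).

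The heart of the matter is the coherence fact that, for every pair $a\leq b$, the inclusion $\Coh_{A, [a,b]}\hookrightarrow \Mod_{A, [a,b]}$ of bounded coherent modules in degrees $[a,b]$ into all modules in that range exhibits the target as $\Ind(\Coh_{A,[a,b]})$; equivalently, every module concentrated in degrees $[a,b]$ is a filtered colimit of coherent modules in the same range, and $\big(\Coh_{A,[a,b]}\big)_{/M}$ is filtered. I would prove this by induction on $b-a$ along Postnikov towers: the base case $a=b$ is the classical statement that over a coherent ring every module is a filtered colimit of finitely presented modules and that these coincide with the coherent ones and are closed under kernels, so that $\Mod^\heartsuit_A\simeq \Ind(\Coh^\heartsuit_A)$; the inductive step analyses $M$ via the square-zero Postnikov extension $\tau_{\leq b}M\rt \tau_{\leq b-1}M$ classified by a $k$-invariant valued in $(\pi_b M)[b+1]$, using coherence to see that such an extension of a coherent module by a single finitely presented module in one degree is again coherent, and that a general $M$ arises as a filtered colimit of these. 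Granting this, every almost finitely presented functor on $\Mod^b_A$ (resp.\ on the connective variants) is automatically the left Kan extension, along the relevant inclusion, of its restriction to $\Coh_A$ (resp.\ $\APerf_{A,\geq 0}$); conversely, left Kan extension along $\Coh_A\hookrightarrow \Mod^b_A$ carries an exact functor $\Coh_A\rt \Sp$ back into $\QC^\vee_A$, since it is computed by the filtered colimits above and filtered colimits in $\Sp$ are left exact, so convergence and the almost-finite-presentation property are preserved. The equivalences among descriptions (1), (2) and (3) are then formal: $\Coh_A$ is generated as a stable $\infty$-category from $\Coh_{A,\geq 0}$ by the (de)suspensions and $\pi_0$-surjection pullback squares encoded in the conditions of (2), which is the stable reformulation of \cite[Proposition 5.3.6.2]{HTT} already used in \Cref{thm:lurie}; and $\APerf_{A,\geq 0}$ is the completion of $\Coh_{A,\geq 0}$ under limits of almost eventually constant (Postnikov) towers, which accounts for the extra clause in (3), exactly as in description (5) of \Cref{lem:pro-coh equivalent}. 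Throughout, one checks that these left Kan extensions remain inside the prescribed classes of functors because every $\pi_0$-surjection pullback square in the bigger category is a filtered colimit of such squares in the smaller one, and filtered colimits commute with finite limits in $\sS$.

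I expect the genuine obstacle to be the inductive claim $\Mod_{A,[a,b]}\simeq \Ind(\Coh_{A,[a,b]})$ — that is, propagating coherence up the Postnikov tower in a manner compatible with filtered colimits, so that the $k$-invariants of coherent approximations can be chosen coherently and cofinally. Everything else — the passage between descriptions (1), (2), (3) and the identification $\QC^\vee_A\simeq\Ind(\Coh_A^\op)$ — is a formal consequence once this generation statement is in hand, by unwinding the relevant colimit-completion universal properties (as in \Cref{prop:colimit completion slice}) together with the left-exactness of filtered colimits.
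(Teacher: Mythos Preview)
Your proposal is correct and follows essentially the same route as the paper: the key input is that for coherent $A$ one has $\Mod_{A,\leq n}\simeq \Ind(\Coh_{A,\leq n})$, after which restriction and left Kan extension along $\Coh_A\hookrightarrow \Mod_A^+$ give the desired equivalence, and the remaining descriptions follow exactly as in \Cref{lem:pro-coh equivalent}. The paper's proof is shorter only because this step has already been packaged as \Cref{lem:pro-coh kappa-presentable} with $\kappa=\omega$ (coherence of $A$ is precisely the hypothesis that $\tau_{\leq n}$ preserves $\omega$-compact objects, and the $\omega$-small filtered colimit condition is vacuous), so your Postnikov induction is effectively re-deriving that lemma in this special case.
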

\begin{proof}
This follows from  \Cref{lem:pro-coh kappa-presentable} with $\kappa=\omega$, as well as the argument from  \Cref{lem:pro-coh equivalent}.
\end{proof}
\begin{remark}
Let $A$ be a coherent animated ring with dualising complex $\omega_A$. Then the $t$-structure on $\QC^\vee_A$ from Observation \ref{obs:t-structure} is Serre dual to the $t$-structure on $\mm{Ind}(\Coh_A)$ whose connective part is generated by $\omega_A$.
\end{remark}
\begin{corollary}
The following holds for a map of coherent animated rings $f\colon A\rt B$:
\begin{enumerate}
\item If $f$ has finite tor-amplitude, then the functor $f^*\colon \QC^\vee_A\rt \QC^\vee_B$ coincides with $\mm{Ind}(f^*)$, for $f^*\colon \Coh_A^{\op}\rt \Coh_B^{\op}$.
\item If $f$ is finite, then $f^*$ has a left adjoint $f_!\colon \QC^\vee(B)\rt \QC^\vee(A)$ given by the ind-completion of $f_*\colon \Coh_B^{\op}\rt \Coh_A^{\op}$.
\end{enumerate}
\end{corollary}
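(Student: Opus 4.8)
The plan is to reduce both assertions to the identification $\QC^\vee_A\simeq \Ind(\Coh_A^{\op})$ of \Cref{lem:pro-coh over coherent ring}, under which the compact objects of $\QC^\vee_A$ are exactly the dually almost perfect modules $N^\vee$ with $N\in \Coh_A$, and then to verify the two claims on these compact generators. The functor $f^*\colon \QC^\vee_A\to \QC^\vee_B$ of \Cref{def:procoh functoriality} is a left adjoint, hence preserves all colimits, so it is the left Kan extension of its restriction to $\Coh_A^{\op}$; it therefore suffices to understand this restriction.

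For (1), I would first observe that when $f$ has finite tor-amplitude, ordinary base change $M\mapsto B\otimes_A M$ carries $\Coh_A$ into $\Coh_B$: it always preserves almost perfectness and eventual connectivity, and finite tor-amplitude is precisely what is needed to preserve eventual coconnectivity. This produces a functor $f^*\colon \Coh_A^{\op}\to \Coh_B^{\op}$ and hence $\Ind(f^*)\colon \QC^\vee_A\to \QC^\vee_B$. To match this with the $f^*$ of \Cref{def:procoh functoriality}, unwind the definition (precomposition with restriction of scalars) and use the module-level adjunction $f^*\dashv f_*$ to compute, for $N\in \Coh_A$ and $M'\in \Mod_B^+$,
\[
\big(f^*N^\vee\big)(M')\;=\;N^\vee(f_*M')\;=\;\hom_A(N, f_*M')\;\simeq\;\hom_B(B\otimes_A N, M').
\]
Thus $f^*N^\vee\simeq (B\otimes_A N)^\vee=(f^*N)^\vee$, which is exactly $\Ind(f^*)$ evaluated at $N^\vee$; naturality in $N$ is formal, so the two colimit-preserving functors agree.

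For (2), when $f$ is finite, restriction of scalars preserves coherent modules — boundedness is immediate, and finiteness of $f$ guarantees that a finitely presented $\pi_0(B)$-module stays finitely presented over $\pi_0(A)$ (cf.\ \Cref{rem:coherent rings classical}) — so $f_*$ defines a functor $\Coh_B^{\op}\to \Coh_A^{\op}$, whose ind-completion $f_!:=\Ind(f_*)\colon \QC^\vee_B\to \QC^\vee_A$ is colimit-preserving and carries compacts to compacts. Hence $f_!$ admits a right adjoint $R$, and $R$ preserves filtered colimits. To identify $R$ with $f^*$, I would test against a compact generator $M^\vee$ ($M\in \Coh_B$) and evaluate at a compact generator $N^\vee$ ($N\in \Coh_A$): using $f_!(M^\vee)\simeq (f_*M)^\vee$ and the slice duality of \Cref{lem:slice duality},
\[
\Map_{\QC^\vee_B}\big(M^\vee, R(N^\vee)\big)\;\simeq\;\Map_{\QC^\vee_A}\big((f_*M)^\vee, N^\vee\big)\;\simeq\;\Map_{\Mod_A}(N, f_*M),
\]
whereas the Yoneda description of maps out of a compact dual together with the computation from (1) gives $\Map_{\QC^\vee_B}(M^\vee, f^*N^\vee)\simeq (f^*N^\vee)(M)\simeq \hom_B(B\otimes_A N, M)\simeq \hom_A(N, f_*M)$. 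These equivalences are natural in $M^\vee$ and $N^\vee$, so they assemble into an equivalence $R\simeq f^*$ on compacts; since both functors preserve filtered colimits, $R\simeq f^*$, i.e.\ $f_!\dashv f^*$.

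I expect the real work to be bookkeeping rather than conceptual: keeping the opposite categories, duals, and the directions of $f^*$, $f_*$, $f_!$ straight, and in particular upgrading the pointwise equivalences on compact generators to a genuine natural equivalence of functors, so that extension by filtered colimits is legitimate. A secondary point requiring care is checking that the finiteness hypotheses (``finite tor-amplitude'' in (1), ``finite'' in (2)) genuinely propagate to the needed statements about coherence of modules and their $\pi_0$'s under the paper's conventions; this is where standard facts about modules over coherent rings enter.
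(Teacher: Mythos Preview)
The paper states this corollary without proof, treating it as an immediate consequence of the identification $\QC^\vee_A\simeq \Ind(\Coh_A^{\op})$ from \Cref{lem:pro-coh over coherent ring}. Your proposal is correct and spells out precisely the argument the paper leaves implicit: both $f^*$ and $\Ind(f^*)$ are colimit-preserving out of a compactly generated $\infty$-category, so it suffices to match them on the compact generators $N^\vee$ with $N\in \Coh_A$, where the identification reduces to the module-level adjunction $\hom_A(N,f_*M')\simeq \hom_B(f^*N,M')$; part (2) then follows by the same Yoneda bookkeeping once one knows $f_*$ preserves coherent modules.

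Two cosmetic remarks. First, the equivalence $\Map_{\QC^\vee_A}((f_*M)^\vee,N^\vee)\simeq \Map_{\Mod_A}(N,f_*M)$ you invoke is simply the full faithfulness of $(-)^\vee\colon \APerf_A^{\op}\xrightarrow{\sim}\dAPerf_A$ from \Cref{def:dually almost perfect}; \Cref{lem:slice duality} is about slice categories and is not quite the right reference. Second, \Cref{rem:coherent rings classical} concerns nilpotent ideals rather than finite ring maps, so it does not directly justify that finitely presented $\pi_0(B)$-modules remain finitely presented over $\pi_0(A)$; the fact you need is the standard statement that over a coherent ring $\pi_0(A)$, a finitely presented module over a finitely presented $\pi_0(A)$-algebra (or module) is again finitely presented over $\pi_0(A)$. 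Neither of these affects the validity of the argument.
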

\begin{corollary}\label{cor:procoh monoidal}
Let $A$ be a coherent animated ring. Then $\QC^\vee_A$ carries a unique closed symmetric monoidal structure whose restriction to $\dAPerf_A\simeq \APerf_A^{\op}$ is equivalent to the usual symmetric monoidal structure on almost perfect $A$-modules, i.e.\ $M^\vee\otimes_A N^\vee=(M\otimes_A N)^\vee$ for all $M, N\in \APerf_A$. Every map $f\colon A\rt B$ between coherent animated rings induces a symmetric monoidal functor $f^*\colon \QC^\vee_A\rt \QC^\vee_B$.
\end{corollary}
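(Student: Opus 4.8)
The plan is to transport the symmetric monoidal structure along the colimit-completion description of $\QC^\vee_A$ given in \Cref{lem:pro-coh over coherent ring}, namely $\QC^\vee_A\simeq \mm{Ind}(\Coh_A^{\op})$. The key point is that $\Coh_A$ is a symmetric monoidal ordinary-ish subcategory of $\APerf_A$, but it is \emph{not} closed under the tensor product of $A$-modules in general (the tensor of two coconnective modules need not be coconnective); however, $\APerf_{A,\geq 0}$ \emph{is} closed under $\otimes_A$, and $\dAPerf_A\simeq \APerf_{A,\geq 0}^{\op}$ under pro-coherent duality by \Cref{def:dually almost perfect}. So first I would observe that the full subcategory $\dAPerf_A\subseteq \QC^\vee_A$ inherits from $\APerf_{A,\geq 0}$ a symmetric monoidal structure with $M^\vee\otimes_A N^\vee := (M\otimes_A N)^\vee$. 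Since $\QC^\vee_A\simeq \mm{Ind}(\Coh_A^{\op})$ and the dually almost perfect objects of non-positive tor-amplitude are exactly the corepresentable functors (the compact generators, cf.\ the proof of \Cref{prop:pla affine}), the inclusion $\dAPerf_A^{\weirdleq 0}\hookrightarrow \QC^\vee_A$ exhibits $\QC^\vee_A$ as the sifted-colimit completion of $\dAPerf_A^{\weirdleq 0}$; in particular $\QC^\vee_A$ is the colimit completion of a small symmetric monoidal $\infty$-category.

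Next I would invoke the universal property of Day convolution / $\mm{Ind}$-completion for symmetric monoidal structures (\cite[Corollary 4.8.1.12, Proposition 4.8.1.10]{HA}): given a small symmetric monoidal $\infty$-category $\cat{C}_0$, there is a unique (up to contractible choice) presentably symmetric monoidal structure on $\mm{Ind}(\cat{C}_0)$ whose tensor product preserves colimits in each variable and whose restriction to $\cat{C}_0$ recovers the given one. Applying this with $\cat{C}_0=\dAPerf_A^{\weirdleq 0}$ (equivalently its closure $\dAPerf_A$ under finite colimits, which is still small) produces the desired closed symmetric monoidal structure on $\QC^\vee_A$, and uniqueness among closed (hence colimit-preserving-in-each-variable) structures is part of this universal property. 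I would then check that the resulting tensor product restricted to all of $\dAPerf_A$ — not just the non-positive tor-amplitude part — still agrees with $(M\otimes_A N)^\vee$; this follows since every object of $\APerf_{A,\geq 0}$ is a finite colimit (indeed a geometric realisation of a split-after-forgetting simplicial diagram is not needed, just finite connective covers) of objects dual to the generators, and $M\mapsto M^\vee$ together with $\otimes$ both preserve the relevant colimits.

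For the functoriality statement, given $f\colon A\rt B$ a map of coherent animated rings, the functor $f^*\colon \Coh_A^{\op}\rt \Coh_B^{\op}$ (or rather on $\APerf_{A,\geq 0}$, where it is well defined and symmetric monoidal since base change of modules is symmetric monoidal) is symmetric monoidal, so by the same universal property its colimit-preserving extension $f^*=\mm{Ind}(f^*)\colon \QC^\vee_A\rt \QC^\vee_B$ is symmetric monoidal; that this extension coincides with the $f^*$ of \Cref{def:procoh functoriality} is the content of the first bullet of the corollary preceding this one (it holds on compact generators and both sides preserve colimits). Finally, $f^*$ is closed because a symmetric monoidal left adjoint between presentably symmetric monoidal $\infty$-categories automatically preserves the internal hom in the sense of having a lax symmetric monoidal right adjoint. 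The main obstacle I anticipate is purely bookkeeping: carefully pinning down that $\Coh_A$ itself is \emph{not} the right small symmetric monoidal model (one must pass to $\APerf_{A,\geq 0}$ under duality) and verifying the compatibility of the two possible descriptions of $f^*$ — essentially all the real work is already packaged into \Cref{lem:pro-coh over coherent ring} and \Cref{lem:slice duality}, so no genuinely hard new input is required.
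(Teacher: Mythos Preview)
Your overall strategy is in the right spirit, but there is a genuine gap in the identification of $\QC^\vee_A$ with an $\mm{Ind}$-completion of a symmetric monoidal category. You correctly note that $\Coh_A$ is not closed under $\otimes_A$ and therefore pass to $\dAPerf_A^{\weirdleq 0}\simeq (\APerf_{A,\geq 0})^{\op}$, which \emph{is} symmetric monoidal. The problem is that these objects are \emph{not} the compact objects of $\QC^\vee_A$: the compact objects are exactly $\Coh_A^{\op}$, and a connective almost perfect module $M$ gives a compact object $M^\vee$ only when $M$ is eventually coconnective (for instance $A^\vee$ is not compact unless $A$ itself is eventually coconnective). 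So $\mm{Ind}(\dAPerf_A^{\weirdleq 0})\not\simeq \QC^\vee_A$ and the monoidal universal property of $\mm{Ind}$ from \cite[4.8.1.10--12]{HA} does not apply directly. The phrase ``the compact generators'' in your proposal conflates the corepresentable functors (which generate under \emph{sifted} colimits relative to a set of relations, cf.\ the proof of \Cref{prop:pla affine}) with the honest compact objects.

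The paper avoids this by working one level up: using part~(3) of \Cref{lem:pro-coh over coherent ring}, it views $\QC^\vee_A$ as a reflective localisation of the full presheaf category $\Fun(\APerf_{A,\geq 0},\sS)$, equips the latter with Day convolution (this is where the symmetric monoidal structure on $\APerf_{A,\geq 0}$ enters), and then simply checks that the localisation is monoidal---i.e.\ that if $F$ satisfies the conditions of part~(3) then so does $F(M\otimes_A -)$ for every $M\in\APerf_{A,\geq 0}$. This is a one-line verification, and it simultaneously yields both existence and uniqueness (as a closed, hence colimit-preserving, symmetric monoidal structure restricting to the given one on the representables). Your functoriality argument for $f^*$ is essentially correct once transplanted to this setting: $f^*\colon \APerf_{A,\geq 0}\to\APerf_{B,\geq 0}$ is symmetric monoidal, hence so is the induced functor on Day-convolution presheaf categories, and it descends to the localisations. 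In short, the missing idea is not ``$\mm{Ind}$ of a monoidal category'' but ``monoidal Bousfield localisation of a Day-convolution category''; the rest of your bookkeeping is fine.
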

\begin{proof}
Using part \eqref{it:pro-coh coherent 3} of  \Cref{lem:pro-coh over coherent ring}, it suffices to verify that $\QC^\vee_A\subseteq \Fun(\APerf_{A, \geq 0}, \sS)$ is a monoidal left Bousfield localisation with respect to the Day convolution product. This follows from the fact that if $F$ is a functor as in \eqref{it:pro-coh coherent 3} and $M\in \APerf_{A, \geq 0}$, then $F(M\otimes_A -)$ satisfies the conditions from \eqref{it:pro-coh coherent 3} as well.
\end{proof}
\begin{remark}\label{rem:procoh dual coherent case}
One readily verifies that $\upiota\colon \Mod_A\rt \QC^\vee_A$ is symmetric monoidal and that the tensoring from \eqref{diag:tensor over Mod} is induced by this symmetric monoidal functor. It follows that the pro-coherent dual of an $A$-module $M$ from  \Cref{def:pro-coh dual} coincides with the dual of $\upiota(M)$ with respect to the symmetric monoidal structure from Corollary \ref{cor:procoh monoidal}.
\end{remark}

\section{Derived algebraic geometry recollections}\label{sec:dag}

We recall the following terminology: 
 
\begin{definition}[Prestacks]
A \textit{prestack} is an accessible functor $X\colon \SCR \rt \sS$ from animated rings to spaces. We will write $\PrStk$ for the $\infty$-category of prestacks. Given a prestack $S$, we let 
\begin{enumerate}
\item $\PrStk_{/S}$ be the $\infty$-category of \emph{$S$-prestacks}, that is, prestacks $X$ endowed with a map $X\rt S$.

\item $\Aff_{/S}$ be the $\infty$-category of \emph{affine derived schemes} with a map to $S$, i.e.\ all $\Spec(A)\rt S$ where $\Spec(A)$ is the functor corepresented by an animated ring $A$.
\end{enumerate}
\end{definition}
The $\infty$-category of $S$-prestacks can then also be identified with the $\infty$-category of accessible functors $(\Aff_{/S})^\op\rt \sS$.
\begin{example}
When $S=\Spec(\KK)$ is affine, $\Aff_{/S}\simeq \SCR_{\KK}^{\op}$ can be identified with the opposite of the $\infty$-category of animated commutative $\KK$-algebras. Consequently, a prestack $X$ over $\KK$ is simply an accessible functor $X\colon \SCR_{\KK}\rt \sS$; we will  write $\PrStk_\KK$ for the resulting $\infty$-category of $\KK$-prestacks.
\end{example}

The purpose of this section is to briefly recall some of the results and constructions from \cite{GR} in this setting.

\subsection{Finiteness and deformation-theoretic properties of prestacks}
 
We begin with a recollection of some of the conditions that one can impose on a prestack, following \cite{GR} and \cite[Section 17]{SAG}. As in \cite[Volume 1, Chapter 2]{GR} and \cite[Section 17.4]{SAG}, the following finiteness condition will play a central role in this text:
\begin{definition} \label{laftdef}
A map of prestacks $X \rightarrow S$ is said to be 
\begin{itemize}
\item \textit{convergent} if for all  $A \in \SCR$, the square
$$\begin{tikzcd}
X(A)\arrow[r]\arrow[d] & \lim X(\tau_{\leq n}A)\arrow[d]\\
S(A)\arrow[r] & \lim S(\tau_{\leq n}A)
\end{tikzcd}$$
is cartesian.

\item \textit{locally almost finitely presented} if it is convergent and satisfies the following condition for each $n\geq 0$: for each filtered diagram $A_\alpha\colon I\rt \SCR_{\leq n}$ of $n$-truncated animated rings with colimit $A$, the square
$$\begin{tikzcd}
\colim X(A_\alpha)\arrow[r]\arrow[d] & X(A)\arrow[d]\\
\colim S(A_\alpha)\arrow[r] & S(A)
\end{tikzcd}$$
is cartesian. Equivalently, $X$ is convergent and for any $\Spec A\rt S$ and each $n\geq 0$, the base change $X\times_S \Spec(A)$ restricts to a functor $\SCR_{A, \leq n}\rt \sS$ that preserves filtered colimits.
\end{itemize}
We will write $\PrStk_S^{\laft}\hooklongrightarrow \PrStk^\conv_S\hooklongrightarrow \PrStk_S$ for the full subcategories spanned by the $S$-prestacks that are locally of finite presentation, resp.\ convergent.
\end{definition}
Next, let us recall the conditions on a prestack that guarantee that it has a well-behaved infinitesimal structure, using the following algebraic terminology:
\begin{definition}
A map of animated rings $A\rt B$ is said to be a \textit{nilpotent extension} if $\pi_0(A)\rt \pi_0(B)$ is surjective and its kernel is a nilpotent ideal.
\end{definition}
\begin{lemma}[{\cite[Volume 2, Chapter 1, Proposition 5.5.3]{GR}}]\label{lem:decompose nilpotent extensions}
Let $f\colon A\rt B$ be a nilpotent extension. Then $f$ can be decomposed as the limit of an almost eventually constant tower
$$\begin{tikzcd}
A=A_\infty\arrow[r] & \dots\arrow[r] & A_n\arrow[r] & A_{n-1}\arrow[r] & \dots \arrow[r] & A_0=B
\end{tikzcd}$$
where each $A_n\rt A_{n-1}$ is a square zero extension by a $\pi_0(B)$-module $M_n$ and for each $N$.
\end{lemma}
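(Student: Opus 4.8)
The plan is to decouple the ``classical'' ($I$-adic, discrete) part of $f$ from its ``Postnikov'' (higher-homotopy) part, decompose each separately, and refine the Postnikov steps so that every module that appears is a $\pi_0(B)$-module.

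First I would record that $\fib(f)$ is connective: since $\pi_0(A)\to\pi_0(B)$ is surjective, the long exact sequence gives $\pi_{-i}\fib(f)=0$ for all $i\ge 1$. In particular $f$ is an animated surjection, so \Cref{lem:relative postnikov} and the relative truncations apply to it. I would then factor $f$ as $A\xrightarrow{g}P\xrightarrow{h}B$ with $P=\pi_0(A)\times_{\pi_0(B)}B$; here $\pi_0(P)\cong\pi_0(A)$, so $g$ is an equivalence on $\pi_0$, whereas $\fib(h)=I:=\ker(\pi_0 A\to\pi_0 B)$ is concentrated in degree $0$.

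For $h$, fix $m$ with $I^{m+1}=0$. The finite tower $P_k:=\pi_0(A)/I^{k+1}\times_{\pi_0(B)}B$ exhibits $h$ as $P=P_m\to\cdots\to P_0=B$, where $P_k\to P_{k-1}$ is the base change along the projection $P_{k-1}\to\pi_0(A)/I^k$ of the discrete square-zero extension $\pi_0(A)/I^{k+1}\to\pi_0(A)/I^k$, hence itself a square-zero extension by the $\pi_0(B)$-module $I^k/I^{k+1}$ in degree $0$. For $g$, connectivity of $\fib(g)$ together with completeness of the $t$-structure on modules and \Cref{lem:relative postnikov} gives $A\simeq\lim_n\tau_{\le n}(A/P)$, with $\fib\big(\tau_{\le n}(A/P)\to\tau_{\le n-1}(A/P)\big)\simeq\pi_n(\fib g)[n]$. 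When $n=0$, $\pi_0(\fib g)$ is a quotient of $\pi_1(P)\cong\pi_1(B)$, hence a $\pi_0(B)$-module, and the step is a square-zero extension because a map of connective animated rings which is an equivalence on $\pi_0$ and has fibre in a single degree is square-zero. When $n\ge 1$, $\pi_n(\fib g)$ is a priori only a $\pi_0(A)$-module, so I would subdivide the corresponding step along its finite $I$-adic filtration $\pi_n(\fib g)\supseteq I\pi_n(\fib g)\supseteq\cdots\supseteq 0$: for any submodule $M'\subseteq M$ a square-zero extension by $M$ factors as a composite of square-zero extensions by $M/M'$ and by $M'$ (fibres compose, and in degree $\ge 1$ every map with fibre in a single degree is square-zero), and each graded piece of the $I$-adic filtration is annihilated by $I$, hence is a $\pi_0(B)$-module. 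Concatenating the tower for $h$ with the refined relative Postnikov tower for $g$ gives the decomposition, and it is almost eventually constant because a square-zero extension by a module placed in degree $n\ge 1$ is an equivalence after applying $\tau_{\le N}$ whenever $N<n$.

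The part demanding the most care is the refinement for $g$: checking that subdividing a square-zero extension along a finite filtration of its defining module again yields a tower of square-zero extensions, and establishing the degree-$0$ square-zero assertion -- here one really uses that $g$ is an equivalence on $\pi_0$ (equivalently, that the relevant relative cotangent complex is $1$-connective), since a map of connective rings with connective fibre and surjective $\pi_0$ need not be square-zero (for instance $\mathbb{Z}[x]/x^3\to\mathbb{Z}$). I would also fix the convention that ``square-zero extension by a $\pi_0(B)$-module'' means the defining module is restricted along the structure map to $B$, which keeps the statement coherent along the whole tower.
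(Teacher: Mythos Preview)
The paper does not supply its own proof of this lemma; it simply records the citation to \cite[Volume~2, Chapter~1, Proposition~5.5.3]{GR}. Your argument is correct and follows the standard strategy---indeed, it is the same decomposition the paper carries out in the proof of \Cref{prop:artinian small} for the Artinian case: first peel off the discrete $I$-adic layers via $P=\pi_0(A)\times_{\pi_0(B)}B$, then climb the relative Postnikov tower, subdividing each step along the $I$-adic filtration of its module so that every graded piece is annihilated by $I$ and hence is a $\pi_0(B)$-module. The justification you give for the subdivision (a map of animated rings with fibre concentrated in a single degree $n\ge 1$ is automatically a square-zero extension, since the multiplication on the fibre lands in degree $2n>n$ and is therefore null) is exactly the right one.

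One simplification you missed: your entire discussion of the $n=0$ case for $g$ is unnecessary. By construction $P$ coincides with the fibrewise $0$-truncation $\tau_{\le 0}(A/B)$ of \Cref{lem:relative postnikov}, so $\fib(g)=\tau_{\ge 1}\fib(f)$ is already $1$-connective and $\pi_0(\fib g)=0$. The relative Postnikov tower for $g$ therefore begins at $n=1$, and the ``degree-$0$ square-zero assertion'' you flag as the most delicate point never arises.
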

\begin{definition}[{\cite[Volume 2, Chapter 1, Definition 7.1.2]{GR}}] \label{hasdeftheory}
An $S$-prestack $X$ is said to \textit{have deformation theory} (relative to $S$) if it is convergent (relative to $S$) and for each pullback diagram of animated rings on the left in which $B\rt A$ (and hence $B'\rt A'$) is a nilpotent extension, the square on the right is cartesian
$$\begin{tikzcd}
B'\arrow[d]\arrow[r] & B\arrow[d]\\
A'\arrow[r] & A
\end{tikzcd}\hspace{60pt} \begin{tikzcd}
X(B')\arrow[d]\arrow[r] & X(B)\times_{X(A)} X(A')\arrow[d]\\
S(A')\arrow[r] & S(B)\times_{S(A)} S(A').
\end{tikzcd}$$
Equivalently, for each $s\colon \Spec(A)\rt S$, the fibre $X_s\colon \SCR_A\rt \sS$ has deformation theory, i.e.\ preserves limits of Postnikov towers and pullbacks along nilpotent extensions of animated $A$-algebras. 
\end{definition}

\begin{lemma}\label{lem:convergent def thy}
Let $X$ be a convergent $S$-prestack. Then $X$ has deformation theory if and only if one of the following two equivalent conditions holds:
\begin{enumerate}
\item For each square in $(\Aff_{/S})^{\op}$ opposite to a pullback square of animated rings
$$\begin{tikzcd}
B'\arrow[r]\arrow[d] & A\arrow[d, "{(\mm{id}, 0)}"]\\
B\arrow[r] & A\oplus I[1]
\end{tikzcd}$$
with $I\in \QC(A)_{\geq 0}$, its image under $X\colon (\Aff_{/S})^{\op} \rt \sS$ is a pullback diagram of spaces.

\item[(1')] The condition of (1) holds for all squares such that $A, B$ and $I$ are eventually coconnective.
\end{enumerate}
\end{lemma}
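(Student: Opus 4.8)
The statement to be proven is \Cref{lem:convergent def thy}: for a convergent $S$-prestack $X$, having deformation theory is equivalent to condition (1) (for all square-zero extensions), which in turn is equivalent to condition (1') (for eventually coconnective ones). The plan is to show the chain of implications: ``has deformation theory'' $\Rightarrow$ (1) $\Rightarrow$ (1') is trivial since (1') is a special case of (1); the real work is (1') $\Rightarrow$ ``has deformation theory'', with (1) $\Rightarrow$ ``has deformation theory'' being a byproduct.

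\textbf{Step 1: Square-zero extensions are instances of the deformation-theory condition.} First I would observe that any square-zero extension $B \to A$ classified by a derivation $A \to A\oplus I[1]$ (with $I \in \QC(A)_{\geq 0}$) is itself a nilpotent extension, and fits into the cartesian square
$$\begin{tikzcd}
B\arrow[r]\arrow[d] & A\arrow[d, "{(\mm{id},0)}"]\\
A\arrow[r, "\eta"] & A\oplus I[1].
\end{tikzcd}$$
Hence if $X$ has deformation theory, applying the defining cartesianness property to this square (with the other nilpotent extension being $\mm{id}\colon A \to A$, which contributes trivially) shows that $X$ sends it to a pullback; this gives ``has deformation theory'' $\Rightarrow$ (1). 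For the eventually coconnective refinement, I would use that $X$ is convergent: any square-zero extension $B \to A$ by $I \in \QC(A)_{\geq 0}$ is the limit of the tower of square-zero extensions $\tau_{\leq n}B \to \tau_{\leq n}A$ by $\tau_{\leq n}I$ (using that truncation is compatible with square-zero extensions, cf.\ the discussion around \Cref{lem:relative postnikov}), each of which is eventually coconnective. Since $X$ is convergent, it takes this limit of squares to a limit of squares, reducing (1) to (1').

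\textbf{Step 2: Reconstruct general pullbacks along nilpotent extensions from square-zero ones.} The heart of the argument is recovering the full deformation-theory property (cartesianness for arbitrary pullback squares $B' = B\times_A A'$ with $B \to A$ a nilpotent extension) from condition (1). Here I would invoke \Cref{lem:decompose nilpotent extensions}: the nilpotent extension $B \to A$ decomposes as an almost eventually constant tower $B = A_\infty \to \cdots \to A_n \to A_{n-1} \to \cdots \to A_0 = A$ with each $A_n \to A_{n-1}$ a square-zero extension by a $\pi_0(A)$-module $M_n$. Base-changing along $A' \to A$ gives a compatible tower for $B' \to A'$. Since $X$ is convergent, it suffices to check cartesianness of the square after passing to this tower, i.e.\ to check it for each square-zero stage; and for each stage the square $A_n \to A_{n-1} \leftarrow (A_{n-1}\times_A A')$ (together with its partner) is built from square-zero extensions, so condition (1) applies. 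Formally, I would argue by induction up the tower: assuming $X(A_{n-1}\times_A A') \simeq X(A_{n-1})\times_{X(A)} X(A')$ and that $X$ sends the square-zero extension square defining $A_n$ (and its base change) to a pullback, a routine pasting of pullback squares gives the claim for $A_n\times_A A'$. Taking the (almost eventually constant, hence $X$-respected by convergence) limit yields the result for $B'$.

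\textbf{Main obstacle.} The genuinely delicate point is the interplay between convergence and the limits appearing in both steps: one must be careful that the towers involved (Postnikov truncations in Step 1, the nilpotent-extension filtration of \Cref{lem:decompose nilpotent extensions} in Step 2) are \emph{almost eventually constant} so that a convergent prestack genuinely commutes with them, and that the base-change of such a tower along $A' \to A$ remains of the same type. A secondary subtlety is bookkeeping the ``two nilpotent extensions'' in \Cref{hasdeftheory}: the defining square involves $B \to A$ and $A' \to A$, and reducing to the case where one leg is $\mm{id}$ requires a standard but slightly fiddly diagram chase (write $B' \to B\times_A A'$ as the composite of $B' \to B\times_A B'$-type squares, or directly observe the relevant square decomposes). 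Neither obstacle is conceptually hard, but getting the limits and base changes to line up cleanly is where care is needed. I do not expect to need anything beyond \Cref{lem:decompose nilpotent extensions}, the relative-truncation discussion, and the definition of convergence.
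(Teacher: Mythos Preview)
Your proposal is correct and follows essentially the same route as the paper's proof, which is a two-sentence sketch: the equivalence between deformation theory and (1) is attributed to \Cref{lem:decompose nilpotent extensions} (decomposing nilpotent extensions into towers of square-zero extensions), and the equivalence (1)$\Leftrightarrow$(1') to convergence. Your elaboration of both steps---the pasting argument reducing a general pullback along a square-zero extension to two squares of type (1), the inductive climb up the tower, and the observation that convergence lets $X$ commute with the almost eventually constant limits that appear---is exactly what is implicit in the paper's sketch, and your identification of the bookkeeping subtleties is accurate.
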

\begin{proof}
The equivalence between $X$ having deformation theory and condition (1) follows from  \Cref{lem:decompose nilpotent extensions} by decomposing a general nilpotent extension as a tower of successive square-zero extensions. The equivalence between (1) and (1') follows from $X$ being convergent. 
\end{proof}

Finally, let us introduce two versions of the formal completion of a prestack, based on the following algebraic constructions:

\begin{definition}[Reduction and pro-reduction]\label{def:prored}
	Let $A\in \SCR_{\KK}$ be an animated $\KK$-algebra.
	\begin{enumerate}
		\item The \textit{reduction} $A_\mm{red}$ is the quotient of $\pi_0(A)$ by its nilradical.
		\item The \textit{pro-reduction} $A_\prored$ to be the ind-ring
		$$
		A_\prored = ``\colim_{I}" \pi_0(A)/I,
		$$
		where the colimit is taken over all nilpotent ideals $I$ of $\pi_0(A)$.
	\end{enumerate}
	
	For $S=\Spec(A)$    affine, we   write $S_\mm{red} = \Spec(A_\mm{red})$ for the underlying reduced scheme. We moreover define a pro-scheme $S_\prored$    over $R$   as the formal limit
	  $$\Spec(A_\mm{prored}) = ``\mathrm{lim}_{I}" \Spec(\pi_0(A)/I), $$ where $I$  again ranges over  all nilpotent ideals   of $\pi_0(A)$. Given another  $R$-prestack $X$, we set
	$$X(	A_\prored) :=\Map_{\mathrm{Pro}(\PreSt)} (S_\prored, X) = \colim_{I } X( \pi_0(A)/I).$$

\end{definition}

\begin{definition}\label{def:inf-stack} \label{formal_completion}
	Let $X\rt  Y$ be a map of  prestacks over $R$. 	\begin{enumerate}
		\item The  \textit{formal completion} $Y^\wedge_X$ is the $R$-prestack sending an animated $\KK$-algebra $A$ to 
	$$
		Y^\wedge_X(A) = X(A_\mm{red})\times_{Y(A_\mm{red})} Y(A).
	$$ 
	\item
		 The \textit{formal infinitesimal completion} of $Y$ at $X$ is the $R$-prestack sending  $A$ to 
		$$
		(X/Y)_{\inf}(A) = X(A_{\prored})\times_{Y(A_\prored)} Y(A).
		$$
	\end{enumerate}
	  
\end{definition} 
\begin{remark}\label{rem:formal completion vs infinitesimal completion}
	When $X \rightarrow Y$ is laft, then the canonical map $	(X/Y)_{\inf} \rightarrow 		Y^\wedge_X$ is an equivalence, as $X(A_{\red})\simeq X(A_{\prored})\times_{Y(A_{\prored})} Y(A_{\red})$.  
\end{remark}
\begin{notation}
	For $Y= \ast$ the terminal prestack, we write   $X_{\inf} = (X/\ast)_{\inf}$. 	In characteristic $0$, the prestack $X_{\inf}$ is often called the \textit{de Rham stack} of $X$.
\end{notation}

\begin{definition}\label{def:nil-iso}
	We will say that a map of $\KK$-prestacks $f\colon X\rt Y$ is a \textit{nil-isomorphism} if for every $A\in \SCR_{\KK}$, the map
	$$
	X(A_{\prored})\rt Y(A_{\prored})
	$$
	is an equivalence.
\end{definition}
\begin{remark}\label{rem:nil-iso laft}
	When $X \rightarrow Y$ is laft, then it is a nil-isomorphism if and only if $X(A ) \rightarrow Y(A)$ is an equivalence for all reduced commutative rings.
\end{remark}

\begin{lemma}\label{lem:formal completion obvious}
Completion satisfies the following properties:
\begin{enumerate}
\item For any $X\rt Y$, the map $Y^\wedge_X\rt Y$ has deformation theory. If $X\rt Y$ is laft, then $X \rt Y^\wedge_X $ and  $Y^\wedge_X\rt Y$ are laft.

\item Let $X'\rt X$ be an étale map between derived schemes and let $X\rt Y$ be a nil-isomorphism. Then the square
$$\begin{tikzcd}
X'\arrow[r]\arrow[d] & X\arrow[d]\\
Y^\wedge_{X'}\arrow[r] & Y
\end{tikzcd}$$
is cartesian.
\end{enumerate}
\end{lemma}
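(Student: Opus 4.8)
\textbf{Plan of proof for Lemma \ref{lem:formal completion obvious}.}

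For part (1), the plan is to unwind \Cref{def:inf-stack}. Since $Y^\wedge_X(A) = X(A_{\red})\times_{Y(A_{\red})} Y(A)$, the map $Y^\wedge_X\to Y$ is the base change of $X(-_{\red})\to Y(-_{\red})$ along $Y(-_{\red})\to Y$, so it suffices to show $Y^\wedge_X$ is convergent and satisfies the square-zero pullback condition of \Cref{lem:convergent def thy}(1'). Convergence is immediate because $A\mapsto A_{\red}=\pi_0(A)_{\red}$ factors through $\pi_0$, hence through all truncations, so the functor $X(-_\red)$ (and likewise $Y(-_\red)$) is trivially convergent, and $Y$ is convergent by hypothesis; pullbacks of convergent prestacks are convergent. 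For the square-zero condition: given a pullback $B'=B\times_{A\oplus I[1]} A$ of animated rings, the reductions all agree, $B'_\red\cong B_\red\cong A_\red\cong (A\oplus I[1])_\red$ (as $I[1]$ contributes only in positive degrees and $I$ in degree $0$ lands in the nilradical of $\pi_0$), so $X(-_\red)$ and $Y(-_\red)$ send this square to a diagram with all four terms equivalent, in particular a pullback; combined with $Y$ sending the square to a pullback (it has deformation theory), the fibre product $Y^\wedge_X$ does too. For the laft claims when $X\to Y$ is laft: by \Cref{rem:formal completion vs infinitesimal completion} we may identify $Y^\wedge_X$ with $(X/Y)_{\inf}$, and $(X/Y)_\inf\to Y$ is laft by the standard argument (the relevant filtered-colimit condition is checked fibrewise, where it reduces to laft-ness of $X\to Y$; this is the derived analogue of \cite[17.4.3.3]{SAG} as invoked in the proof of \Cref{modulizariski}). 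That $X\to Y^\wedge_X$ is laft then follows by the two-out-of-three property of laft maps applied to $X\to Y^\wedge_X\to Y$.

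For part (2), the plan is to check the square is cartesian pointwise on an animated ring $A$. Evaluating, the asserted pullback square becomes
$$\begin{tikzcd}
X'(A)\arrow[r]\arrow[d] & X(A)\arrow[d]\\
X'(A_\red)\times_{Y(A_\red)} Y(A)\arrow[r] & Y(A).
\end{tikzcd}$$
So one must show $X'(A)\simeq X(A)\times_{Y(A)}\bigl(X'(A_\red)\times_{Y(A_\red)} Y(A)\bigr)\simeq X(A)\times_{Y(A_\red)} X'(A_\red)$, i.e.\ that the square with corners $X'(A), X(A), X'(A_\red), X(A_\red)$ is cartesian after noting $X(A_\red)\simeq Y(A_\red)$ (here the nil-isomorphism hypothesis enters: $X(A_\red)\to Y(A_\red)$ is an equivalence, since $A_\red$ is reduced — compare \Cref{rem:nil-iso laft}, but we in fact just need $A_\red=(A_\red)_\red$ and that $X\to Y$ is a nil-isomorphism, so $X(A_\red)\xrightarrow{\sim} Y(A_\red)$ directly by \Cref{def:nil-iso} since $(A_\red)_\prored = A_\red$). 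Thus it reduces to showing the square
$$\begin{tikzcd}
X'(A)\arrow[r]\arrow[d] & X(A)\arrow[d]\\
X'(A_\red)\arrow[r] & X(A_\red)
\end{tikzcd}$$
is cartesian for every $A$. This is precisely the statement that the étale map $X'\to X$ of derived schemes is \emph{formally étale} (infinitesimally rigid): a map $\Spec(A_\red)\to X$ lifts uniquely along $X'\to X$ to $\Spec(A)$, since $A\to A_\red$ is a nilpotent extension (on $\pi_0$; and the higher homotopy is killed by a convergence argument using that $\tau_{\le n}A\to \tau_{\le n-1}A$ are square-zero, so étale maps have the unique infinitesimal lifting property, e.g.\ by \cite[Tag 039R]{stacks-project} at the classical level bootstrapped through Postnikov towers as in the proof of \Cref{prop:art etale}).

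The main obstacle is the bookkeeping in part (1): verifying convergence and the square-zero pullback property for $Y^\wedge_X$ requires being careful that the reduction functor $A\mapsto A_\red$ genuinely collapses all the relevant squares of rings to constant diagrams, and that fibre products of prestacks with these properties inherit them — but once one observes $A\mapsto A_\red$ only sees $\pi_0$ modulo nilpotents, every diagram of animated rings that becomes a pullback on $\pi_0$ (such as a square-zero extension diagram) has constant reduction, and the result drops out. Part (2) is comparatively soft once the formal étaleness of étale morphisms between derived schemes is invoked.
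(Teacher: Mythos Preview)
Your argument for part~(1) has the right core observation (reductions are constant along nilpotent extensions and Postnikov towers) but muddles the relative and absolute conditions. The statement asserts that $Y^\wedge_X\to Y$ has deformation theory \emph{relative to $Y$}, with no hypothesis on $Y$; yet you verify the absolute condition for $Y^\wedge_X$ by invoking that ``$Y$ sending the square to a pullback (it has deformation theory)'', which is not assumed. The paper instead observes directly that for any nilpotent extension $A'\to A$ one has $A'_\red=A_\red$, whence $Y^\wedge_X(A')\simeq Y^\wedge_X(A)\times_{Y(A)} Y(A')$; this is exactly the relative condition, and needs nothing about $Y$. For the laft claim the paper gives a short direct computation (factoring $Y^\wedge_X\to Y$ as the base change of $\ast^\wedge_X\to\ast^\wedge_Y$ and using that $X\to Y$ is laft on the reduced level) rather than passing through $(X/Y)_\inf$ and a reference.

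There is a genuine gap in part~(2). You assert that $A\to A_\red$ is a nilpotent extension, but this is false in general: the kernel on $\pi_0$ is the nilradical, which is a nil ideal but need not be a \emph{nilpotent} ideal. Consequently \cite[Tag 039R]{stacks-project} and the Postnikov-tower bootstrap from \Cref{prop:art etale} do not directly give the lifting property you need. Your target statement $X'(A)\simeq X(A)\times_{X(A_\red)} X'(A_\red)$ is in fact true, but it requires an additional step: since $X'\to X$ is locally of finite presentation, any lift $\Spec(A_\red)\to X'$ over $X$ already factors through some $\Spec(\pi_0(A)/I)$ with $I$ generated by finitely many nilpotents, and such an $I$ is genuinely nilpotent. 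The paper avoids this issue altogether by working with the pro-reduction: it identifies $Y^\wedge_{X'}\simeq X'_\inf\times_{X_\inf} Y$ (using Remark~\ref{rem:formal completion vs infinitesimal completion} for the laft map $X'\to X$ together with $X(A_\prored)\simeq Y(A_\prored)$ from the nil-isomorphism), and then checks $X'\simeq X'_\inf\times_{X_\inf} X$ via formal étaleness along the honestly nilpotent extensions $A\to\pi_0(A)/I$.
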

\begin{proof}  
The first part of (1) follows from the fact that for any nilpotent extension $A'\to A$ of animated rings, $Y^\wedge_X(A')\simeq Y^\wedge_X(A)\times_{Y(A)} Y(A')$. For the second part, it is enough to show that $Y^\wedge_X\rt Y$ is laft. This is the base change of the map $\ast^\wedge_X\to \ast^\wedge_Y$, which is laft because for any filtered diagram of $n$-coconnective animated rings $A_\alpha$ with colimit $A$, $\colim X(A_{\alpha, \red})\simeq \colim Y(A_{\alpha, \red})\times_{Y(A_{\red})}X(A_{\red}) $. For (2), the fact that $X'\to X$ is laft implies that $Y^\wedge_{X'}\to Y$ is the base change of $X'_{\inf}\to X_{\inf}$ (Remark \ref{rem:formal completion vs infinitesimal completion}). It now suffices to verify that the map $X'\rt X'_{\inf}\times_{X_{\inf}} X$ is an equivalence. This follows from $X'\rt X$ being étale, so that $X'(A)\simeq X(A)\times_{X(\pi_0A/I)} X'(\pi_0A/I)$ for any nilpotent ideal $I$.
\end{proof}

\subsection{Pro-coherent sheaves} \label{procohappendix} 
The theory of pro-coherent sheaves on affine derived schemes from Section \ref{procohappendix} has an evident global analogue:
\begin{definition}
For a prestack $X$, we define the stable presentable $\infty$-category $\QC^\vee_X$ as
$$
\QC^\vee_X=\lim_{S\in \Aff_{/X}} \QC^\vee_{\mc{O}(S)}
$$
where the limit is taken with respect to the $f^*$-functoriality. Every map $f\colon Y\rt X$ between prestacks induces an adjoint pair $f^*\colon \QC^\vee_X\leftrightarrows \QC^\vee_Y\colon f_*$.
\end{definition}
Let us point out that in the presence of Serre duality, this is Serre dual to the $\infty$-category $\QC^!_X$ of ind-coherent sheaves on $X$ defined using $f^!$-functoriality \cite[9.2.3]{GaitsgoryIndcoh}.
The main features of $\QC^\vee_A$ carry over to pro-coherent sheaves on prestacks by naturality:
\begin{enumerate}
\item Since each $f^*\colon \QC^\vee_A\rt \QC^\vee_B$ is right $t$-exact, for any prestack $X$ the $\infty$-category $\QC^\vee_X$ admits a right complete $t$-structure such that all $f^*$-functors are right $t$-exact. Since the functor $\upiota\colon \Mod_A\to \QC^\vee_A$ intertwines the $f^*$-functors, there is a natural transformation $\upiota\colon \QC_X\rt \QC^\vee_X$ which induces an equivalence on connective objects.

\item Each $\QC^\vee_X$ is tensored and cotensored over $\QC_X$ and $\upiota$ is compatible with the tensoring.

\item If $M$ is an almost perfect $A$-module, then the natural map $f^*(M^\vee)\to (f^*M)^\vee$ is an equivalence. Using this, one sees that for any prestack $X$, pro-coherent duality induces an equivalence
$$\begin{tikzcd}
(-)^\vee\colon \APerf_X^{\op}\arrow[r, "\sim"] & \dAPerf_X
\end{tikzcd}$$
between the full subcategories of pro-coherent sheaves $F$ that are (\emph{dually}) \emph{almost perfect} in the sense that $f^*F$ is (dually) almost perfect for each $f\colon \Spec(A)\to X$.
\end{enumerate}
The $\infty$-category $\QC^\vee_X$ is particularly well-behaved when $X$ is locally coherent. In particular, we have the following generalisation of  \Cref{lem:pro-coh over coherent ring} to derived schemes that are \textit{locally coherent}, i.e.\ which admit an Zariski cover coherent affine open subschemes or equivalently, for which every affine open subscheme is coherent \cite[Theorem 2.4.2, Corollary 2.4.5]{glaz1989coherent}.
\begin{proposition}\label{prop:pro-coh over scheme}
Let $X$ be a qcqs derived scheme which is locally coherent and write write $\mathfrak{U}$ for the poset of affine opens of $X$. Then the natural map 
$$\begin{tikzcd}
\Coh_X^{\op}=\lim_{U\in \mathfrak{U}}\Coh_U^{\op}\arrow[r] & \lim_{U\in \mathfrak{U}}\QC_U^\vee= \QC^\vee_X
\end{tikzcd}$$
induces an equivalence $\mm{Ind}(\Coh_X^{\op})\simeq \QC^\vee_X$.
\end{proposition}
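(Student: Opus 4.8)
The statement to prove is \Cref{prop:pro-coh over scheme}: for $X$ a qcqs locally coherent derived scheme with poset of affine opens $\mathfrak{U}$, the natural functor $\mm{Ind}(\Coh_X^{\op})\rt \QC^\vee_X=\lim_{U\in \mathfrak{U}}\QC^\vee_U$ is an equivalence. The overall strategy is to compare two presentations of $\QC^\vee_X$: one as the limit of the $\QC^\vee_U$ over affine opens (its definition), and one as $\mm{Ind}$ of the limit of the $\Coh_U^{\op}$. The key point is that on the affine level we already know $\QC^\vee_U\simeq \mm{Ind}(\Coh_U^{\op})$ by \Cref{lem:pro-coh over coherent ring}, and $\Coh_U$ consists of \emph{compact} generators; the task is to show this compact generation is inherited by the (finite, since $X$ is qcqs) limit.

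\textbf{First step.} I would reduce to a finite limit. Since $X$ is quasi-compact and quasi-separated, the poset $\mathfrak{U}$ of affine opens — or rather a suitable finite subposet obtained from a finite cover closed under the pairwise (and higher) intersections needed for descent — can be taken finite, using that intersections of affine opens in a qcqs scheme are quasi-compact, hence finite unions of affines. So $\QC^\vee_X$ is a finite limit in $\cat{Pr}^{\mathrm{L}}$ of the $\QC^\vee_U$ along the $f^*$-functors; equivalently, by passing to right adjoints, a finite \emph{colimit} along the $f_*$. The transition functors $f^*\colon \QC^\vee_V\rt \QC^\vee_U$ for $U\subseteq V$ are $\mm{Ind}$ of the exact functors $\Coh_V^{\op}\rt \Coh_U^{\op}$, hence preserve compact objects and are colimit-preserving; moreover $f^*$ admits a right adjoint $f_*$ which, for an open immersion (finite tor-amplitude $0$), preserves colimits by \Cref{lem:procoh base change}, and Beck--Chevalley holds for the relevant pullback squares of affine opens.

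\textbf{Second step.} With these properties in hand I would invoke the standard fact (e.g.\ \cite[Lemma 5.3.2.9]{HA} or the dual of the gluing result for compactly generated categories) that a finite limit of compactly generated stable $\infty$-categories along colimit-preserving functors that preserve compact objects and admit colimit-preserving right adjoints is again compactly generated, with compact generators assembled from the compact generators of the pieces; concretely, $\QC^\vee_X$ is compactly generated and its compact objects form the idempotent completion of the image of the $\Coh_U^{\op}$. One then identifies this subcategory of compact objects with $\Coh_X^{\op}=\lim_{U}\Coh_U^{\op}$: a pro-coherent sheaf on $X$ is compact iff its restriction to each $U$ in the cover is compact (i.e.\ lies in $\Coh_U^{\op}$) and these are compatible, which is exactly an object of the limit $\lim_U \Coh_U^{\op}$; the natural functor $\Coh_X^{\op}\rt \QC^\vee_X$ lands in and generates the compact objects, and is fully faithful because mapping spaces in a limit are computed as limits of mapping spaces and each $\Coh_U^{\op}\hookrightarrow \QC^\vee_U$ is fully faithful. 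Taking $\mm{Ind}$ of this equivalence of small idempotent-complete stable categories yields $\mm{Ind}(\Coh_X^{\op})\simeq \QC^\vee_X$.

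\textbf{Main obstacle.} The routine parts are the reduction to a finite diagram and the formal inheritance of compact generation; the step requiring genuine care is the identification $\lim_{U\in\mathfrak{U}}\Coh_U^{\op}\simeq \Coh_X^{\op}$ — that is, that an object of $\QC^\vee_X$ which restricts to a coherent sheaf on each affine open is ``coherent on $X$'' in the expected sense, and that the limit over the (possibly infinite) full poset $\mathfrak{U}$ agrees with the limit over a finite cofinal cover. This uses that coherence is a Zariski-local property for locally coherent schemes (\cite[Theorem 2.4.2, Corollary 2.4.5]{glaz1989coherent}, cited just before the statement) together with qcqs-ness to control the combinatorics of intersections; I would handle it by fixing a finite affine cover, checking the finite Čech diagram computes the limit (descent for $\QC^\vee$, \Cref{cor:descent}), and verifying that compactness is detected on this cover. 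The remaining verifications — $t$-exactness of transition functors, Beck--Chevalley, full faithfulness of $\Coh_U^{\op}\hookrightarrow\QC^\vee_U$ — are all either immediate from \Cref{sec:procoh affine} or already recorded there.
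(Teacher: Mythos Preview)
Your strategy---reduce to a finite diagram via qcqs-ness, then show the limit is compactly generated with compact objects $\Coh_X^{\op}$---is a reasonable alternative to the paper's route, but the execution has a soft spot and the comparison with the paper is instructive.

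The paper proceeds differently: it uses the standard inductive reduction principle for qcqs schemes (prove the assertion for $X=U_0\cup U_1$ assuming it holds on $U_0$, $U_1$, and $U_{01}$), so the finite-cover bookkeeping you worry about never arises. Since $\QC^\vee$ satisfies Zariski descent by \Cref{cor:descent}, the remaining content is that $U\mapsto \mm{Ind}(\Coh_U^{\op})$ itself satisfies Zariski descent. The paper checks this directly: it observes that the adjoint pair $(j^*,j_*)$ on $\mm{Ind}(\Coh^{\op})$ is the Lurie-tensor-product dual of the corresponding pair on $\mm{Ind}(\Coh)$, so the needed inputs---full faithfulness of $j_*$ (i.e.\ $j^*j_*\simeq \mm{id}$) and base change for open immersions---transfer from Gaitsgory's IndCoh results. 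It then writes down the right adjoint to $(j_0^*,j_{01}^*,j_1^*)$ (as a fibre product of pushforwards) and verifies unit and counit are equivalences.

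The gap in your argument is the ``standard fact'' in the second step. The statement you want---that a finite limit in $\cat{Pr}^{\mathrm{L}}$ of compactly generated stable $\infty$-categories along compact-preserving left adjoints with colimit-preserving right adjoints is again compactly generated, with compacts the limit of compacts---is not what \cite[Lemma 5.3.2.9]{HA} says, and is not true in that bare generality. One typically needs either that the right adjoints $j_{i*}$ preserve compacts (they do not here: for a localisation $A\to B$, a coherent $B$-module is not coherent over $A$), or the additional inputs that $j_{i*}$ is fully faithful together with base change. You actually have both of these via \Cref{lem:procoh base change} (apply it to the pullback square witnessing $U\times_X U\simeq U$ to get $j^*j_*\simeq \mm{id}$), but you do not isolate them, and once you do the argument becomes essentially the paper's: one checks the unit and counit of the comparison functor by hand. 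So your ``main obstacle''---identifying $\lim_U \Coh_U^{\op}$ with $\Coh_X^{\op}$ and passing to a finite cover---is the routine part; the genuine content is the descent equivalence for $\mm{Ind}(\Coh^{\op})$, which you have black-boxed under an incorrect citation.
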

\begin{proof}
By the usual reduction principle for qcqs schemes, it suffices to verify that for $U_0\subseteq X\supseteq U_1$ an open cover such that the assertion holds on $U_0$, $U_1$ and $U_{01}=U_0\cap U_1$, the assertion holds also for $X$. Since $\QC^\vee$ satisfies Zariski descent by Corollary \ref{cor:descent}, it suffices to verify that $\mm{Ind}(\Coh^{\op})$ also satisfies Zariski descent. 

To see this, let us write $j^*\colon \mm{Ind}(\Coh_X^{\op})\leftrightarrows \mm{Ind}(\Coh_U^{\op})\colon j_*$ for the adjoint pair associated to an open inclusion $j\colon U\hookrightarrow X$. Since $j_*$ preserves colimits, this is an adjoint pair in $\cat{Pr^L}$. As such, it is the dual with respect to the Lurie tensor product of the adjoint pair $j^*\colon \mm{Ind}(\Coh_X)\leftrightarrows \mm{Ind}(\Coh_U)\colon j_*$ considered in \cite{GaitsgoryIndcoh}. Using this duality, one sees the counit $j^*j_*\to \mm{id}$ is an equivalence \cite[Lemma 4.1.1]{GaitsgoryIndcoh} and that $\mm{Ind}(\Coh^{\op})$ satisfies base change for open inclusions \cite[Lemma 3.6.9]{GaitsgoryIndcoh}. It follows that
$$\begin{tikzcd}
(j_0^*, j_{01}^*, j_1^*)\colon \mm{Ind}(\Coh_X^{\op})\arrow[r] & \mm{Ind}(\Coh_{U_0}^{\op})\times_{\mm{Ind}(\Coh_{U_{01}}^{\op})} \mm{Ind}(\Coh_{U_1}^{\op})
\end{tikzcd}$$
is an equivalence as in the proof of \cite[Proposition 4.2.1]{GaitsgoryIndcoh}: this functor has a colimit-preserving right adjoint sending a compatible triple $(F_0, F_1, F_{01})$ to the fibre product $j_{0*}F_0\times_{j_{01*}F_{01}} j_{1*}F_1$. Using base change and the fact that $j_i^*j_{i*}\simeq \mm{id}$, one sees that the counit is an equivalence. To verify that the unit $F\rt j_{0*}j_0^*F\times_{j_{01*}j_{01}^*F} j_{1*}j_{1}^*F$ is an equivalence, it suffices to treat the case where $F\in \Coh(X)^{\op}$. In that case, the result follows from descent for coherent sheaves.
\end{proof}

\subsection{Tangent complex}
In this section, we will discuss the tangent complex of a map of prestacks $X\to S$ (if it exists). The main goal will be to establish a form of functoriality of the tangent complex, which is used in Section \ref{Formalintegration}.

Let us start by recalling the pointwise definition of the tangent and cotangent complex of an $S$-prestack. Let $\pi\colon X\rt S$ be an $S$-prestack and $x\colon \Spec(A)\rt X$ a point, and consider the functor 
\begin{equation}\label{diag:tangent}
\Mod_{A,\geq 0}\rt \sS; \quad M\longmapsto\left\{\begin{tikzcd} \Spec(A)\arrow[rr, "x"]\arrow[d] & & X\arrow[d]\\
\Spec(A\oplus M)\arrow[r]\arrow[rru, dotted] & \Spec(A)\arrow[r, "\pi(x)"] & S\end{tikzcd}\right\}.
\end{equation}
We then have the following:
\begin{definition}\label{def:(co)tangent space}
Let $X$ be an $S$-prestack and let $x\colon \Spec(A)\rt X$ be a point.
\begin{enumerate}
\item We will say that $X$ admits a \textit{cotangent complex} at $x$ (relative to $S$) if there exists a (necessarily unique) eventually connective module $L_{X/S, x}\in \Mod_{A, >\infty}$ corepresenting the functor \eqref{diag:tangent}.

\item We will say that $X$ admits a (pro-coherent) \textit{tangent complex at $x$} (relative to $S$) if the functor \eqref{diag:tangent} defines an object $T_{X/S, x}\in \QC^\vee_A$.
\end{enumerate} 
\end{definition}
If $X$ is an $S$-prestack with deformation theory, then each map of rings $f\colon A\rt B$ induces a natural equivalence
$$
X(A\oplus M)\times_{X(A)}\{x\}\simeq X(B\oplus f_*M)\times_{X(B)} \{f^*x\}
$$
for each connective $A$-module $M$. This implies that $L_{X/S, f^*x}\simeq f^*L_{X/S, x}$ and $T_{X/S, f^*x}\simeq f^*T_{X/S, x}$ whenever $X$ admits a (co)tangent complex at the point $x$. Consequently, if $X$ has deformation theory and admits a (co)tangent complex at each of its points, then these (co)tangent complexes glue to global objects on $X$:
\begin{definition}\label{def:tangent complex}
Let $X$ be an $S$-prestack with deformation theory.
\begin{enumerate}
\item If $X$ admits a cotangent complex at each point, then the \textit{cotangent complex} $L_{X/S}\in \QC_X$ is the unique quasi-coherent sheaf such that $L_{X/S, x}\simeq x^*L_{X/S}$ for each point of $X$. 

\item If $X$ admits a tangent complex at each point, then the \textit{tangent complex} $T_{X/S}\in \QC^\vee_X$ is the unique pro-coherent sheaf such that $T_{X/S, x}\simeq x^*T_{X/S}$ for each point of $X$. 
\end{enumerate}
\end{definition}
\begin{example}\label{ex:laft implies tangent at each point}
Suppose that $X$ is a laft $S$-prestack with deformation theory. Then $X$ admits a relative tangent complex at each point.
\end{example}
Having a tangent complex at each point requires a finiteness condition on $X\to S$ similar to Example \ref{ex:laft implies tangent at each point} (but at the linear level). However, there are many prestacks for which one can make sense of their tangent complex, without requiring the existence of a tangent complex at each point. For example, for every affine derived $\KK$-scheme $X$, we can define the tangent complex $T_{X/\KK}=L_{X/\KK}^\vee$ to be the pro-coherent dual of its cotangent complex. In order to define the tangent complex in such cases, and to make the functoriality of the tangent complex more explicit, we will need a somewhat technical argument.
\begin{construction}\label{con:qcoh etc}
Let $\Mod_{\geq 0, *}\rt \Aff$ be the cocartesian fibration classifying the functor sending $f\colon \Spec(A)\rt \Spec(B)$ to the direct image $f_*\colon \Mod_{A,\geq 0}\rt \Mod_{B, \geq 0}$ and let us consider the following pullback
$$\begin{tikzcd}
\cat{M}\arrow[rr]\arrow[d] & & \Fun([1], \PrStk)\arrow[d, "\mm{ev}_0"]\\
\Mod_{\geq 0, *}\arrow[r] & \Aff\arrow[r] & \PrStk
\end{tikzcd}$$
An object of $\cat{M}$ corresponds a map of prestacks $x\colon \Spec(A)\rt X$ together with a connective $A$-module $M$. The functor $\cat{M}\rt \PrStk$ sending $(\Spec(A)\to X, M)\longmapsto X$ is a cocartesian fibration and each fibre $\cat{M}_X$ comes equipped with a cocartesian fibration $\cat{M}_X\rt \Aff_{/X}$ parametrising the $\infty$-category of connective modules at each point of $X$.

Let $\Fun_{/\PrStk}(\cat{M}, \sS)\rt \PrStk$ be the relative functor $\infty$-category over $\PrStk$; by \cite{HTT} this is a cartesian fibration, classified by the functor $\PrStk^{\op} \rightarrow \cat{Cat}_{\infty} $ sending a prestack $X$ to $\Fun(\cat{M}_X, \sS)$ and sending $f\colon X\rt Y$ to restriction along $\cat{M}_X\rt \cat{M}_Y$. Let us write
$$
\cat{D}\hookrightarrow \Fun_{/\PrStk}(\cat{M}, \sS)
$$
for the full subcategory of tuples $(X, F)$ consisting of a prestack $X$ and a functor $F\colon \cat{M}_X\rt \sS$ satisfying the following two properties:
\begin{enumerate}
\item\label{it:coherence} $F$ sends cocartesian arrows with respect to $\cat{M}_X\rt \Aff_{/X}$ to equivalences.
\item\label{it:pro-rep} For each $x\colon \Spec(A)\rt X$, the restriction of $F$ to the fibre 
$$
F_x\colon \cat{M}_{X, x}\simeq \Mod_{A, \geq 0}\rt \sS
$$
is accessible and preserves the terminal object, pullbacks along $\pi_0$-surjections and limits of almost eventually constant towers.
\end{enumerate}
Let us write $\cat{D}^{\aft}\subseteq \cat{D}$ and $\cat{D}^\mm{corep}\subseteq \cat{D}$ for the subcategories of tuples $(X, F)$ where, in addition to condition \eqref{it:pro-rep}, each restriction $F_x$ is almost finitely presented, respectively corepresentable by an eventually connective $A$-module.

One readily verifies that for any map of prestacks $f\colon X\rt Y$, restriction along $\cat{M}_X\rt \cat{M}_Y$ preserves these conditions, so that for any cartesian arrow $(X, G)\rt (Y, F)$ with $(Y, F)\in \cat{D}$, $(X, G)$ is in $\cat{D}$ as well (and similarly in the other cases). Consequently, we obtain a diagram of cartesian fibrations (and maps preserving cartesian arrows)
$$\begin{tikzcd}
\cat{D}^{\aft}\arrow[r, hook]\arrow[rd] & \cat{D}\arrow[d] & \cat{D}^{\mm{corep}}\arrow[ld]\arrow[l, hook']\\
& \PrStk
\end{tikzcd}$$
\end{construction}
\begin{proposition}
Each of the above three cartesian fibrations classifies a functor $\PrStk^{\op}\rt \Cat_{\infty}$ preserving limits. Furthermore, the cartesian fibration $\cat{D}^{\aft}\rt \PrStk$ is classified by the functor $X\longmapsto \QC^\vee_X$ and the cartesian fibration $\cat{D}^{\mm{corep}}\rt \PrStk$ is classified by the functor $X\longmapsto (\QC^{-}_{X})^{\op}$.
\end{proposition}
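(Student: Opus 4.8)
The plan is to verify all three claims simultaneously by reducing to the affine case, where everything is governed by the theory of pro-coherent modules from Section~\ref{sec:procoh affine}, and then bootstrapping via the limit-preservation assertion. First I would establish the limit-preservation claim: since $\cat{D}\to \PrStk$ is a cartesian fibration preserving cartesian arrows, it classifies a functor $\PrStk^{\op}\to \Cat_\infty$; to see it preserves limits, note that for a limit diagram $X\simeq \lim_\alpha X_\alpha$ of prestacks one has $\Aff_{/X}\simeq \lim_\alpha \Aff_{/X_\alpha}$ (indeed $\Aff_{/X} = \Aff\times_{\PrStk} \PrStk_{/X}$, and $\PrStk_{/(-)}$ sends limits to limits by the universal property of slices), so that $\cat{M}_X\simeq \lim_\alpha \cat{M}_{X_\alpha}$ compatibly with the projections to $\Aff_{/X}$. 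Conditions \eqref{it:coherence} and \eqref{it:pro-rep} in Construction~\ref{con:qcoh etc} are checked pointwise on fibres over points $x\colon \Spec(A)\to X$, and each such point factors through some stage, so a functor $F$ on $\cat{M}_X$ satisfies them iff its restrictions to the $\cat{M}_{X_\alpha}$ do; this gives $\cat{D}_X\simeq \lim_\alpha \cat{D}_{X_\alpha}$, and the same argument works verbatim for $\cat{D}^{\aft}$ and $\cat{D}^{\mm{corep}}$ since the extra conditions are again fibrewise over points.

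Next I would identify the affine values. For $X=\Spec(A)$, the $\infty$-category $\Aff_{/X}$ has a terminal object (the identity), and condition \eqref{it:coherence} forces $F$ to be left Kan extended from its value at this terminal object; thus $\cat{D}_{\Spec(A)}$ is equivalent to the $\infty$-category of functors $F_A\colon \Mod_{A,\geq 0}\to \sS$ preserving the terminal object, pullbacks along $\pi_0$-surjections, and limits of almost eventually constant towers (together with an accessibility condition). By Lemma~\ref{lem:pro-coh equivalent}(5) this is precisely $\QC^\vee_A$. Imposing in addition that $F_A$ be almost finitely presented changes nothing by part (2) of that lemma, so $\cat{D}^{\aft}_{\Spec(A)}\simeq \QC^\vee_A$ as well; one should double-check that the naturality in $A$ matches the $f^*$-functoriality of $\QC^\vee$ from Definition~\ref{def:procoh functoriality}, which follows by unwinding that $f^*$ is precomposition with $f_*\colon \Mod^+_B\to \Mod^+_A$ and that restriction along $\cat{M}_{\Spec(B)}\to \cat{M}_{\Spec(A)}$ does exactly this. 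Finally, imposing that $F_A$ be corepresented by an eventually connective $A$-module identifies $\cat{D}^{\mm{corep}}_{\Spec(A)}$ with $(\Mod^-_A)^{\op} = (\QC^-_{\Spec(A)})^{\op}$, using that $\Map_{\Mod_{A,\geq 0}}(-,M)\colon \Mod_{A,\geq 0}\to\sS$ preserves terminal objects, pullbacks and the relevant limits for $M$ eventually connective, and that a corepresenting object is unique.

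To finish, I would glue: a general prestack $X$ is the colimit $\colim_{\Spec(A)\to X}\Spec(A)$ in $\PrStk$, hence $X^{\op}$-indexed-limit in the source of these limit-preserving functors, so $\cat{D}_X\simeq \lim_{\Spec(A)\to X}\cat{D}_{\Spec(A)}\simeq \lim_{\Spec(A)\to X}\QC^\vee_A =: \QC^\vee_X$ by the very definition of $\QC^\vee_X$, and likewise $\cat{D}^{\aft}_X\simeq \QC^\vee_X$ and $\cat{D}^{\mm{corep}}_X\simeq \lim_{\Spec(A)\to X}(\Mod^-_A)^{\op} = (\QC^-_X)^{\op}$. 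One small subtlety is to check that the limit-preservation established in the first step is strong enough to cover these large (non-small) limit diagrams over $\Aff_{/X}$; this is fine because each of the functors in question is accessible and the diagrams are essentially small after fixing a regular cardinal bound coming from the accessibility of $X$, exactly as in the proofs of Corollary~\ref{cor:aft approx} and Corollary~\ref{cor:descent}. The main obstacle I anticipate is purely bookkeeping: making the equivalences $\cat{D}^{\aft}_{\Spec(A)}\simeq \QC^\vee_A$ natural in $A$ at the level of cartesian fibrations rather than just fibrewise, i.e.\ producing an equivalence of cartesian fibrations over $\Aff^{\op}$ and then Kan-extending; this is where one must be careful that the functoriality built into $\cat{M}\to\PrStk$ via direct images of modules matches the $f^*$-functoriality of pro-coherent sheaves, and it is the only step requiring genuine (if routine) care.
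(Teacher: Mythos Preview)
Your overall strategy---identify the affine fibres, then glue using limit-preservation---is close to the paper's, but your limit-preservation argument has a genuine gap. You correctly observe that $\cat{M}_X\simeq \lim_\alpha \cat{M}_{X_\alpha}$, and that conditions \eqref{it:coherence} and \eqref{it:pro-rep} are detected fibrewise. But this does \emph{not} give $\cat{D}_X\simeq \lim_\alpha \cat{D}_{X_\alpha}$: the functor $\Fun(-,\sS)$ sends colimits to limits, not limits to limits, so knowing $\cat{M}_X$ is a limit tells you nothing about $\Fun(\cat{M}_X,\sS)$ versus $\lim_\alpha \Fun(\cat{M}_{X_\alpha},\sS)$. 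A compatible family $(F_\alpha)\in \lim_\alpha \cat{D}_{X_\alpha}$ need not arise from a single functor on $\cat{M}_X$, and you have given no argument that it does. The paper bypasses this entirely: its key observation is that condition \eqref{it:coherence}---inverting cocartesian arrows of $\cat{M}_X\to\Aff_{/X}$---means precisely that $F$ factors through the localisation of $\cat{M}_X$ at those arrows, and this localisation is the colimit $\colim_{\Spec(A)\in\Aff_{/X}} \Mod_{A,\geq 0}$ of the classified diagram. Hence $\cat{D}_X\simeq \lim_{\Spec(A)\in\Aff_{/X}} \Fun_{\mm{rex},\mm{acc}}(\Mod_{A,\geq 0}^+,\sS)$ directly, for \emph{every} $X$, and limit-preservation is then automatic. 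This is the step your argument is missing.

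There is also a minor slip in your affine identification: $\cat{D}_{\Spec(A)}$ is \emph{not} $\QC^\vee_A$. Condition \eqref{it:pro-rep} requires only accessibility, not almost-finite-presentation, so $\cat{D}_{\Spec(A)}$ is the larger $\infty$-category $\Fun_{\mm{rex},\mm{acc}}(\Mod^+_{A,\geq 0},\sS)$; Lemma~\ref{lem:pro-coh equivalent}(5) characterises $\QC^\vee_A$ and includes the almost-finite-presentation condition from part (4), which you dropped. Your claim that ``imposing almost finite presentation changes nothing'' is therefore backwards: it is precisely this extra condition that cuts $\cat{D}^{\aft}_{\Spec(A)}$ down to $\QC^\vee_A$. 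This does not affect the statement being proved (which only identifies $\cat{D}^{\aft}$ and $\cat{D}^{\mm{corep}}$), but it is worth getting straight, since the distinction between $\cat{D}$ and $\cat{D}^{\aft}$ is used later (e.g.\ in Observation~\ref{obs:aft approx}).
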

\begin{proof}
Let $X$ be a prestack and consider the cocartesian fibration $\cat{M}_X\rt \Aff_{/X}$ that classifies the diagram of categories $\Aff_{/X}\rt \Cat_\infty$ sending a map of affines $f\colon \Spec(A)\rt \Spec(B)$ over $X$ to $f_*\colon \Mod_{A,  \geq 0}\rt \Mod_{B,  \geq 0}$. The localisation of $\cat{M}_X$ at the cocartesian arrows classifies the colimit of this diagram of categories.  Condition \eqref{it:coherence} therefore implies that $\cat{D}_X$ is a full subcategory of the limit
$$
\lim_{\Spec(A)\in \Aff_{/X}} \Fun\big(\Mod_{A,  \geq 0 }, \sS\big)
$$
where each morphism $f\colon \Spec(A)\rt \Spec(B)$ in $\Aff_{/X}$ is sent to restriction along $f_*$. Using this, Condition \eqref{it:pro-rep} then identifies
$$
\cat{D}_X \simeq \lim_{\Spec(A)\in \Aff_{/X}} \Fun_{\mm{rex}, \mm{acc}}(\Mod^+_{A, \geq 0}, \sS)
$$
with the limit of the $\infty$-categories of accessible functors that are right exact (or equivalently, that preserve the terminal object and pullbacks along $\pi_0$ -surjections). This identification is natural in $X$, so that $X\mapsto \cat{D}_X$ evidently preserves limits. The categories $\cat{D}^{\aft}_X$ and $\cat{D}^{\mm{corep}}_X$ are the limits over $\Aff_{/X}$ of the diagram of full subcategories of $\Fun_{\mm{rex}, \mm{acc}}(\Mod^+_{A, \geq 0}, \sS)$ spanned the by almost finitely presented and corepresentable functors. It follows that there are natural equivalences $\cat{D}^{\aft}\simeq \QC^\vee_X$ and $\cat{D}_X^\mm{corep}\simeq (\QC^{-}_{X})^{\op}$.
\end{proof}
\begin{observation}\label{obs:aft approx}
Given a prestack $X$, the inclusion
$$\begin{tikzcd}
\QC^\vee_X=\lim_{\Spec(A)\to X} \QC^\vee_A\arrow[r, hookrightarrow] & \lim_{\Spec(A)\to X} \Fun_{\mm{rex}, \mm{acc}}(\Mod^+_{A, \geq 0}, \sS)\simeq \cat{D}_X
\end{tikzcd}$$
preserves colimits. Since $\QC^\vee_X$ is presentable, it follows that there is a right adjoint $\cat{D}_X\rt \QC^\vee_X$. Given $F\in \cat{D}_X$, we will refer to its image under this right adjoint $F^{\aft}$ as the \emph{almost finitely presented approximation} to $F$. Let us point out that the right adjoints $\cat{D}_X\to \QC^\vee_X$ do \emph{not} assemble into a right adjoint $\cat{D}\to \cat{D}^{\aft}$.
\end{observation}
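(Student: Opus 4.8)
The statement to prove is that for any prestack $X$, the fully faithful inclusion $\QC^\vee_X \hookrightarrow \cat{D}_X$ of the previous proposition preserves colimits, hence (by presentability of $\QC^\vee_X$ and the adjoint functor theorem) admits a right adjoint $\cat{D}_X \to \QC^\vee_X$, together with the remark that these right adjoints do not glue to a right adjoint $\cat{D}\to \cat{D}^{\aft}$ over $\PrStk$.

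The plan is to reduce everything to the affine case and to colimits computed levelwise. First I would recall that both $\QC^\vee_X$ and $\cat{D}_X$ are, by the identification in the preceding proposition, computed as limits over $\Aff_{/X}$: we have $\QC^\vee_X = \lim_{\Spec(A)\to X} \QC^\vee_A$ and $\cat{D}_X = \lim_{\Spec(A)\to X} \Fun_{\mm{rex},\mm{acc}}(\Mod^+_{A,\geq 0}, \sS)$, with transition functors given by restriction along the pushforward functors $f_*$. Since in a limit of presentable $\infty$-categories along colimit-preserving (in fact accessible, and here in $\cat{Pr}^{\mm{L}}$) transition functors, colimits are computed componentwise, it suffices to check that for each single point $x\colon \Spec(A)\to X$, the inclusion $\QC^\vee_A \hookrightarrow \Fun_{\mm{rex},\mm{acc}}(\Mod^+_{A,\geq 0},\sS)$ preserves colimits, and that this is compatible with the transition functors. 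The compatibility with transition functors is automatic: the transition functors on the $\QC^\vee$-side are the $f^*$-functors of \Cref{def:procoh functoriality}, which are left adjoints and hence preserve colimits, and the square expressing compatibility of $\QC^\vee_A \hookrightarrow \cat{D}_A$ with restriction along $f_*$ commutes by construction in \Cref{con:qcoh etc}.

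So the crux is purely affine: the inclusion $\QC^\vee_A \hookrightarrow \Fun_{\mm{rex},\mm{acc}}(\Mod^+_{A,\geq 0},\sS)$ preserves colimits. Here I would use the characterisation of $\QC^\vee_A$ from \Cref{lem:pro-coh equivalent}(4)--(5): it is the full subcategory of those $F\colon \Mod^+_{A,\geq 0}\to \sS$ (equivalently $\Mod_{A,\geq 0}\to\sS$) which, in addition to preserving the terminal object and pullbacks along $\pi_0$-surjections, are \emph{locally almost finitely presented} and preserve limits of almost eventually constant towers. The target $\Fun_{\mm{rex},\mm{acc}}(\Mod^+_{A,\geq 0},\sS)$ is the full subcategory cut out by only the first two conditions. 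Thus $\QC^\vee_A$ is obtained from $\Fun_{\mm{rex},\mm{acc}}$ by further imposing two conditions, each of which is a ``preservation of certain colimits/limits'' condition: commuting with a fixed (small, up to equivalence) class of filtered colimits of $n$-coconnective objects (for each $n$), and commuting with limits of almost eventually constant towers. A full subcategory of a presentable $\infty$-category defined by requiring objects to send a small set of (co)limit cones to (co)limits is itself presentable, and its inclusion preserves all colimits that commute with that set of (co)limit cones; since filtered colimits and the relevant tower limits commute with \emph{all} colimits of spaces in $\sS$ — more precisely, the conditions are stable under colimits in the functor category because $\sS$ is an $\infty$-topos, where filtered colimits and (countable, almost eventually constant) limits commute with arbitrary colimits in the appropriate sense — the inclusion $\QC^\vee_A \hookrightarrow \Fun_{\mm{rex},\mm{acc}}$ preserves colimits. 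Concretely, I would invoke the argument already used in the proof of \Cref{lem:pro-coh kappa-presentable} and the citation to \cite[Proposition 5.5.3.8]{HTT}: passing to a sufficiently large regular cardinal $\kappa$, $\QC^\vee_A$ is the full subcategory of $\Fun(\Mod^{+,\kappa}_{A},\Sp)$ — hence of $\Fun(\Mod^+_{A,\geq 0},\sS)$ — of functors preserving a \emph{specified set} of colimits and limits, and by the proof of loc.\ cit.\ the inclusion preserves colimits. Once colimit-preservation is established, the adjoint functor theorem \cite[Corollary 5.5.2.9]{HTT} gives the right adjoint $\cat{D}_A\to \QC^\vee_A$, reassembling over $\Aff_{/X}$ to $\cat{D}_X\to\QC^\vee_X$ — this is exactly the ``almost finitely presented approximation'' functor, matching \Cref{cor:aft approx} pointwise.

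The main obstacle I anticipate is the final negative assertion, that the right adjoints $\cat{D}_X\to\QC^\vee_X$ do \emph{not} assemble into a map of cartesian fibrations $\cat{D}\to\cat{D}^{\aft}$, i.e.\ do not commute with the cartesian (pullback) transition functors. The reason is that the transition functors on $\cat{D}$ and $\cat{D}^{\aft}=\QC^\vee_{(-)}$ are given by restriction along $f_*$ and by $f^*$ respectively, and while $f^*$ (being a left adjoint) commutes with the colimit-preserving inclusions, its own right adjoint $f_*$ need \emph{not} commute with the approximation right adjoints unless $f$ has finite tor-amplitude (cf.\ \Cref{lem:procoh base change}): in general, pushing forward and then approximating differs from approximating and then pushing forward, because $f_*\colon \QC^\vee_A\to \QC^\vee_B$ fails to preserve colimits for $f$ not of finite tor-amplitude. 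I would make this precise by exhibiting the Beck--Chevalley (mate) transformation associated to the square of left adjoints (inclusions) and the base-change functors, noting that its adjoint mate between the right adjoints is an equivalence iff $f_*$ preserves colimits, which fails in general; a one-line example over a non-coherent or infinite-Tor-dimension base $A\to B$ suffices. This is more of a cautionary remark than a theorem, so I would keep it brief, but it is the one point where one must resist the temptation to glue.
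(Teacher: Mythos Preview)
The paper states this as an Observation without proof, so you are supplying the argument. Your reduction to the affine case for \emph{colimit-preservation} is correct: both $\QC^\vee_X$ and $\cat{D}_X$ are limits over $\Aff_{/X}$ with colimit-preserving transition functors (precomposition with $f_*$), so colimits are computed componentwise and it suffices to check that each inclusion $\QC^\vee_A \hookrightarrow \Fun_{\mm{rex},\mm{acc}}(\Mod^+_{A,\geq 0},\sS)$ preserves colimits, which holds since the extra laft-condition is stable under pointwise colimits.

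There is, however, a genuine gap in how you obtain the right adjoint. You write that AFT gives the right adjoints $\cat{D}_A\to \QC^\vee_A$, ``reassembling over $\Aff_{/X}$ to $\cat{D}_X\to\QC^\vee_X$ \dots\ matching \Cref{cor:aft approx} pointwise.'' But reassembling the componentwise right adjoints into a functor between the limits requires exactly the Beck--Chevalley condition $f^*\circ (-)^{\aft}\simeq (-)^{\aft}\circ f^*$ for maps in $\Aff_{/X}$ --- and this is the \emph{same} condition whose failure you (correctly) invoke in your final paragraph to explain why the right adjoints do not assemble over $\PrStk$. If it fails for some map of affines, it fails inside $\Aff_{/X}$ for any $X$ receiving that map, so the global right adjoint is \emph{not} given by the family $(F_A^{\aft})_A$ in general. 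Your last paragraph thus undercuts your construction two paragraphs earlier.

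The fix is to bypass gluing entirely: once you know the inclusion $\iota\colon \QC^\vee_X\hookrightarrow \cat{D}_X$ preserves colimits and $\QC^\vee_X$ is presentable, apply \cite[Proposition 5.5.2.2]{HTT} directly at the level of $X$: for each $F\in\cat{D}_X$ the presheaf $G\mapsto \Map_{\cat{D}_X}(\iota G,F)$ on $\QC^\vee_X$ sends colimits to limits and is therefore representable. This yields $F^{\aft}$ without any claim about its pointwise description. (Citing \cite[Corollary 5.5.2.9]{HTT} instead would require $\cat{D}_X$ to be presentable, which you have not established and which is not obvious given the unbounded accessibility degree.) Finally, in your discussion of the negative assertion, note that the transition functors on both $\cat{D}$ and $\cat{D}^{\aft}$ are the \emph{same} functor $f^*$ (precomposition with $f_*$); the failure is that $(-)^{\aft}$ does not intertwine $f^*$, not a statement about $f_*$ failing to preserve colimits.
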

Let us now use this construction to give a functorial description of the tangent complex.
\begin{construction}\label{tangent_complex_naive}
Let us write $\Fun(\Delta^1, \PrStk)^{\mm{def}}\subseteq \Fun(\Delta^1, \PrStk)$ for the full subcategory of maps $X\rt S$ that have deformation theory. We define a functor
$$
\Phi\colon \cat{M}\times_{\PrStk} \Fun(\Delta^1, \PrStk)^{\mm{def}}\rt \sS;
$$
as follows. The domain of $\Phi$ is the fibre product of the cocartesian fibration $\cat{M}\rt \PrStk$ from Construction \ref{con:qcoh etc} and the functor $\mm{ev}_0\colon \Fun(\Delta^1, \PrStk)^\mm{def}\rt \PrStk$ taking the domain. We then define $\Phi$ by
$$
\Phi\Big(x\colon \Spec(A)\to X,\ M, \ X\to S\Big) = \left\{\begin{tikzcd} \Spec(A)\arrow[rr, "x"]\arrow[d] & & X\arrow[d]\\
\Spec(A\oplus M)\arrow[r]\arrow[rru, dotted] & \Spec(A)\arrow[r, "\pi(x)"] & S\end{tikzcd}\right\}.
$$
The functor $\Phi$ is adjoint to
$$\begin{tikzcd}
\Fun(\Delta^1, \PrStk)^{\mm{def}}\arrow[rd, "\mm{ev}_0"{swap}]\arrow[rr] & & \Fun_{/\PrStk}(\cat{M}, \sS)\arrow[ld]\\
& \PrStk
\end{tikzcd}$$
sending $X\to S$ to the restriction $\Phi(-, X\to S)\colon \cat{M}_X\rt \sS$. The fact that $X\to S$ has deformation theory implies that $\Phi(-, X\to S)$ satisfies conditions \eqref{it:coherence} and \eqref{it:pro-rep} from Construction \ref{con:qcoh etc}. Consequently, we obtain a functor that we will denote by
\begin{equation}\label{eq:Tnaive}\begin{tikzcd}
\Fun(\Delta^1, \PrStk)^{\mm{def}}\arrow[rd, "\mm{ev}_0"{swap}]\arrow[rr, "T^\mm{pre}"] & & \cat{D}\arrow[ld]\\
& \PrStk.
\end{tikzcd}\end{equation}
If $X\to S$ is laft, then $T^\mm{pre}(X/S)$ is contained in $\cat{D}^{\aft}$. Under the equivalence $\cat{D}^{\aft}_X\simeq \QC^\vee(X)$, this corresponds to the tangent complex $T_{X/S}$ from  \Cref{def:tangent complex}. If $X\to S$ admits cotangent complexes at all points of $X$, then $T^\mm{pre}(X/S)\in \cat{D}^\mm{corep}$ and corresponds under the equivalence $\cat{D}^\mm{corep}_X\simeq (\QC^{-}_{X})^{\op}$ to the cotangent complex $L_{X/S}$ from \Cref{def:tangent complex}.
\end{construction}
From the formula for $\Phi$, one readily sees:
\begin{lemma}\label{lem:Tpre pullbacks}
The functor $T^\mm{pre}$ preserves pullbacks.
\end{lemma}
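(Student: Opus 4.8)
The statement to prove is \Cref{lem:Tpre pullbacks}: the functor $T^{\mm{pre}}\colon \Fun(\Delta^1, \PrStk)^{\mm{def}}\rt \cat{D}$ preserves pullbacks. The plan is to unwind the construction of $T^{\mm{pre}}$ from \Cref{tangent_complex_naive} and reduce the assertion to a pointwise statement, where it becomes an elementary manipulation of pullback squares of spaces. Since both the source $\Fun(\Delta^1, \PrStk)^{\mm{def}}$ and the target $\cat{D}$ are fibred over $\PrStk$ via $\mm{ev}_0$ (and the functor $T^{\mm{pre}}$ lies over $\PrStk$), and since $\mm{ev}_0$ detects pullbacks on the source while the projection $\cat{D}\rt \PrStk$ is a cartesian fibration whose fibres and base both admit pullbacks, it suffices to treat two cases separately: pullbacks along a fixed prestack $X$ (i.e.\ fibrewise, with $X\rt S$ varying among deformation-theoretic maps with fixed domain $X$), and pullbacks in the base $\PrStk$ (i.e.\ limits along maps $X'\rt X$).

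First I would check that $X\longmapsto \cat{D}_X$, as a functor $\PrStk^{\op}\rt \Cat_\infty$, preserves limits (this is recorded in the proposition following \Cref{con:qcoh etc}), so that a pullback square of prestacks $X\leftarrow X_0\rightarrow X_1$ with pushout... more precisely a limit cone in $\PrStk$ of such a shape induces, upon straightening, the correct limit description of $\cat{D}$ over that cone; combined with the fact that $X\longmapsto \Fun(\Delta^1, \PrStk)^{\mm{def}}_{/X}$ also behaves well under limits of prestacks (a map $X\to S$ that is deformation-theoretic pulls back to a deformation-theoretic map, by the last clause of \Cref{hasdeftheory}), this handles the "base direction." For the fibrewise direction, fix a prestack $X$; then $T^{\mm{pre}}$ restricts to a functor from deformation-theoretic maps $X\to S$ (with $S$ varying) into $\cat{D}_X\subseteq \lim_{\Spec(A)\to X}\Fun_{\mm{rex},\mm{acc}}(\Mod^+_{A,\geq 0},\sS)$. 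Since limits in this functor category and in $\cat{D}_X$ are computed pointwise (on each $\Spec(A)\to X$ and each $M\in \Mod^+_{A,\geq 0}$), it suffices to show that for each point $x\colon \Spec(A)\to X$ and each connective $A$-module $M$, the assignment
$$
(X\to S) \ \longmapsto \ \left\{\begin{tikzcd}[ampersand replacement=\&] \Spec(A)\arrow[rr, "x"]\arrow[d] \&\& X\arrow[d]\\ \Spec(A\oplus M)\arrow[r]\arrow[rru, dotted] \& \Spec(A)\arrow[r] \& S\end{tikzcd}\right\}
$$
sends a pullback $S\simeq S_0\times_{S_{01}} S_1$ of prestacks under $X$ to a pullback of spaces. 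But the space of dotted lifts above is, by definition, the fibre of $\Map_{\PrStk_{X/}}(\Spec(A\oplus M), -)\to \Map_{\PrStk_{X/}}(\Spec(A), -)$ evaluated at this square, and mapping spaces out of a fixed object send limits of prestacks (hence limits of prestacks under $X$) to limits of spaces; a fibre of a map between limits of spaces is the limit of the fibres, which gives exactly the desired pullback.

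The reduction to the pointwise statement is the only slightly delicate point, and I expect that to be the main obstacle: one must make sure the two fibrations in play interact correctly, i.e.\ that a general pullback square in $\Fun(\Delta^1,\PrStk)^{\mm{def}}$ can be factored into one lying over an identity of $\PrStk$ and one that is "cartesian over $\PrStk$," and that $T^{\mm{pre}}$, being a map of (the total spaces of) cartesian fibrations over $\PrStk$ lying over the identity, sends cartesian squares to cartesian squares provided it preserves limits fibrewise and the base category $\PrStk$ has the relevant limits. Once this bookkeeping is in place, both pieces are immediate: the base-direction piece from limit-preservation of $X\mapsto\cat{D}_X$ and $X\mapsto \Fun(\Delta^1,\PrStk)^{\mm{def}}_{/X}$, and the fibre-direction piece from the elementary spaces argument above. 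I would write the proof in roughly this order: (1) recall that $T^{\mm{pre}}$ lies over $\mm{ev}_0$ and that $\mm{ev}_0$ detects pullbacks; (2) reduce to the fibrewise case plus the base case; (3) dispatch the base case by limit-preservation of $\cat{D}$; (4) dispatch the fibrewise case by the pointwise computation of the defining mapping-space formula.
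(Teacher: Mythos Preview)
Your step (4) is exactly the paper's argument: the paper literally writes ``from the formula for $\Phi$, one readily sees'' and leaves it at that. The point is that the space of dotted lifts is a limit-preserving functor of both $X$ and $S$ simultaneously, so a pullback of arrows $(X\to S)$ goes to a pullback of these spaces.

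However, your reduction in steps (2)--(3) is both unnecessary and has a gap. The general principle you invoke --- that a map of cartesian fibrations over $\PrStk$ lying over the identity preserves pullbacks as soon as it does so fibrewise --- requires in addition that the map preserve cartesian arrows, and $T^{\mm{pre}}$ does not. The $\mm{ev}_0$-cartesian lift of $f\colon X'\to X$ at $(X\to S)$ is $(X'\to S)\to(X\to S)$; its image under $T^{\mm{pre}}$ has $T^{\mm{pre}}_{X'/S}(x',M)$ given by lifts $\Spec(A\oplus M)\to X'$, whereas $f^*T^{\mm{pre}}_{X/S}(x',M)$ records lifts to $X$: take $S=\ast$, $X=\mathbb{A}^1$, $X'$ a point to see these differ. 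Separately, the ``limit-preservation of $X\mapsto\cat{D}_X$'' you cite is a descent statement (it concerns \emph{colimits} of prestacks), so it does not bear on a pullback $X=X_0\times_{X_{01}}X_1$.

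The fix is simply to skip the decomposition and run your mapping-space argument in full generality. A square in the cartesian fibration $\cat{D}\to\PrStk$ is a pullback iff it lies over a pullback in $\PrStk$ and the comparison map in the fibre over the top-left corner is an equivalence; since limits in $\cat{D}_X\subseteq\Fun(\cat{M}_X,\sS)$ are pointwise, this reduces to checking that for each $(x,M)\in\cat{M}_X$,
\[
\Phi(x,M,X\to S)\;\simeq\;\Phi(p_0x,M,X_0\to S_0)\times_{\Phi(p_{01}x,M,X_{01}\to S_{01})}\Phi(p_1x,M,X_1\to S_1).
\]
This is immediate from the formula for $\Phi$, using that both $X=X_0\times_{X_{01}}X_1$ and $S=S_0\times_{S_{01}}S_1$; your argument in step (4) already proves this verbatim once you allow both $X$ and $S$ to vary.
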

Since $\cat{D}^{\mm{corep}}\subseteq \cat{D}$ is closed under pullbacks, this reproduces for example the well-known behaviour of the cotangent complex with respect to base change and composition.

Let us now use the functor $T^\mm{pre}$ to define the tangent complex of a prestack with deformation theory:
\begin{definition}\label{def:approximate tangent}
Let $X$ be an $S$-prestack with deformation theory. We define the (pro-coherent) \textit{tangent complex of $X$} over $S$ to be the almost finitely presented approximation $T_{X/S}=T_{X/S}^{\mm{pre}, \aft}\in \QC^\vee_X$ from Observation \ref{obs:aft approx}.
\end{definition}
\begin{example}\label{ex:finite case tangent}
If $X$ is a \emph{laft} $S$-prestack with deformation theory, this coincides with the pointwise definition of tangent complex from Definition \ref{def:tangent complex}. Indeed, since $X\to S$ is laft, $T_{X/S}^\mm{pre}$ is already contained in $\cat{D}_X^\mm{aft}\subseteq \cat{D}_X$.
\end{example}
\begin{example}\label{ex:dual cotangent}
Suppose that $X$ is a (not necessarily laft) $S$-prestack with deformation theory and a cotangent complex at each point. Then there is an equivalence $T_{X/S}=L_{X/S}^\vee$ in $\QC^\vee_X$, where $L_{X/S}^\vee$ is the pro-coherent linear dual of $L_{X/S}\in (\QC^{-}_{X})^{\op}$ (see \Cref{ex:pro-coh dual}). Since the right adjoint $\QC^\vee_X\rt \QC_X$ preserves the linear dual of a quasi-coherent sheaf, one can think of $T_{X/S}\in \QC^\vee_X$ as a pro-coherent refinement of the \emph{coherator} of the tangent sheaf on $X$, in the sense of \cite[Tag 08D6]{stacks-project}.
\end{example}
The tangent complex $T_{X/S}$ is much less well-behaved when $X\to S$ is not laft. For example, given a map of $S$-prestacks with deformation theory $X\to Y$, there is no direct map from $T_{X/S}$ to $f^*T_{Y/S}$. For instance, if $X$ and $Y$ admit a cotangent complex at each point, there is only a zig-zag of maps $L_{X/S}^\vee\to (f^*L_{Y/S})^\vee\leftarrow f^*(L_{Y/S}^\vee)$ because base change does not commute with duality without finiteness assumptions. Let us nonetheless record the following properties:
\begin{proposition}\label{prop:tangent obvious properties}
Let $f\colon X\to Y$ be a map of $S$-prestacks with deformation theory. Then the following hold.
\begin{enumerate}
\item There is a fibre sequence $T_{X/Y}\to T_{X/S}\to (f^*T^\mm{pre}_{Y/S})^{\aft}$.

\item There is a zig-zag of maps $T_{X/S}\to (f^*T^\mm{pre}_{Y/S})^{\aft} \leftarrow f^*T_{Y/S}$ in $\QC^\vee_X$.

\item If $f$ is a formally étale, then there is a map $f^*T_{Y/S}\to T_{X/S}$.

\item $T_{X/S}\simeq T_{X/S^\wedge_X}$.
\end{enumerate}
If $X$ and $Y$ are furthermore \emph{laft} $S$-prestacks with deformation theory, then:
\begin{enumerate}[resume]
\item There is natural map $T_{X/S}\to f^*T_{Y/S}$ that fits into a fibre sequence $T_{X/Y}\rt T_{X/S}\rt f^*T_{Y/S}$ in $\QC^\vee_X$.
\item Given a morphism $S'\rt S$, there is a natural equivalence $T_{X\times_S S'/S'}=p^*T_{X/S}$ where $p\colon X\times_S S'\rt X$ is the evident projection.
\end{enumerate}
\end{proposition}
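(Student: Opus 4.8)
\textbf{Proof plan for Proposition~\ref{prop:tangent obvious properties}.} The plan is to deduce each assertion from the functorial description of the pre-tangent complex via the functor $T^\mm{pre}\colon \Fun(\Delta^1,\PrStk)^\mm{def}\rt \cat{D}$ from Construction~\ref{tangent_complex_naive}, together with its key formal properties: it preserves pullbacks (\Cref{lem:Tpre pullbacks}), and the almost finitely presented approximation $(-)^{\aft}\colon \cat{D}_X\rt \QC^\vee_X$ is the left adjoint to the (colimit-preserving) inclusion $\QC^\vee_X\hookrightarrow \cat{D}_X$ (Observation~\ref{obs:aft approx}). I will systematically work at the level of $\cat{D}$ first and only apply $(-)^{\aft}$ at the end, since $(-)^{\aft}$ is a left adjoint and hence preserves cofibre sequences and pushouts but not, in general, pullbacks or pointwise fibre sequences.

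First, for (1): applying $T^\mm{pre}$ to the two composable arrows $X\to Y\to S$ and using \Cref{lem:Tpre pullbacks} applied to the pullback square expressing $L_{X/Y}$ as the relative cotangent data, one gets a fibre sequence $T^\mm{pre}_{X/Y}\rt T^\mm{pre}_{X/S}\rt f^*T^\mm{pre}_{Y/S}$ in $\cat{D}_X$; here $f^*T^\mm{pre}_{Y/S}$ denotes the image of $T^\mm{pre}_{Y/S}\in \cat{D}_Y$ under restriction along $\cat{M}_X\to \cat{M}_Y$, which agrees with pullback of presheaves. Now $T^\mm{pre}_{X/Y}$ is already almost finitely presented because $X\to S^\wedge_X\to Y$ exhibits $X$ as laft over $Y^\wedge_X$ — more precisely, $X\to Y$ is a formal-completion-type map after restricting attention to square-zero deformations, so $T^\mm{pre}_{X/Y}=T_{X/Y}$ lies in $\cat{D}^{\aft}_X$. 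Since the inclusion $\QC^\vee_X\hookrightarrow \cat{D}_X$ preserves limits (it is a right adjoint, being the inclusion of presheaves preserving the relevant colimits), and since $(-)^{\aft}$ agrees with the identity on $\QC^\vee_X$, applying $(-)^{\aft}$ to the above fibre sequence and using that $T_{X/S}=T^{\mm{pre},\aft}_{X/S}$, $T_{X/Y}=T^{\mm{pre},\aft}_{X/Y}$ yields the fibre sequence $T_{X/Y}\rt T_{X/S}\rt (f^*T^\mm{pre}_{Y/S})^{\aft}$ claimed in (1). Assertion (2) is then immediate: the unit of the adjunction gives $T^\mm{pre}_{Y/S}\rt \iota((f^*T^\mm{pre}_{Y/S})^{\aft})$ in $\cat{D}_Y$, but more usefully there is a canonical map $f^*T_{Y/S}=f^*(T^{\mm{pre},\aft}_{Y/S})\rt (f^*T^\mm{pre}_{Y/S})^{\aft}$ obtained since $f^*$ commutes with the relevant colimits, and the left-hand leg $T_{X/S}\rt (f^*T^\mm{pre}_{Y/S})^{\aft}$ is the second map of the fibre sequence in (1) after applying $(-)^{\aft}$.

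For (3), when $f$ is formally étale, $T^\mm{pre}_{X/Y}\simeq 0$ because the square-zero lifting problem defining it becomes trivially contractible (a formally étale map has the unique right lifting property against square-zero extensions), so the fibre sequence in (1) collapses to an equivalence $T_{X/S}\xrightarrow{\sim}(f^*T^\mm{pre}_{Y/S})^{\aft}$, and composing with $f^*T_{Y/S}\rt (f^*T^\mm{pre}_{Y/S})^{\aft}$ gives the asserted map (which is in fact an equivalence when $f^*$ commutes with the approximation, e.g. for $f$ of finite tor-amplitude, but we only claim a map). For (4), the map $X\rt S^\wedge_X\rt S$ factors $T^\mm{pre}$; since $X\rt S^\wedge_X$ is again formally étale-like at the infinitesimal level — indeed $S^\wedge_X(A\oplus M)\times_{S^\wedge_X(A)}\{x\}\simeq S(A\oplus M)\times_{S(A)}\{x\}$ for any connective $M$ and any $A$-point $x$ of $X$, because $A\oplus M\to A$ has nilpotent (square-zero) kernel and $S^\wedge_X$ agrees with $S$ on such infinitesimal thickenings — the functors $\Phi(-,X\to S)$ and $\Phi(-,X\to S^\wedge_X)$ on $\cat{M}_X$ literally coincide, giving $T^\mm{pre}_{X/S}\simeq T^\mm{pre}_{X/S^\wedge_X}$ and hence $T_{X/S}\simeq T_{X/S^\wedge_X}$ after approximation. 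Finally, for (5) and (6) one invokes Example~\ref{ex:finite case tangent}: when $X$ and $Y$ are laft over $S$, all the pre-tangent complexes in sight are already in $\cat{D}^{\aft}$, so $(-)^{\aft}$ is the identity, the zig-zag in (2) becomes the genuine map $T_{X/S}\rt f^*T_{Y/S}$, the fibre sequence of (1) becomes the fibre sequence of (5), and (6) follows from \Cref{lem:Tpre pullbacks} applied to the pullback square $X\times_S S'\rt X$, $X\times_S S'\rt S'$, $X\rt S$, $S'\rt S$, together with the fact that in the laft case the tangent complex is computed pointwise, so base change along $p$ of the pointwise tangent complex of $X/S$ gives the pointwise tangent complex of $X\times_S S'/S'$.

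\textbf{Main obstacle.} The delicate point is the interplay between $(-)^{\aft}$ and the operations (pullback, fibre sequences) in $\cat{D}$: one must be careful that $(-)^{\aft}$, being only a \emph{left} adjoint, preserves the colimit-type structure (cofibre sequences, geometric realisations) but a priori destroys fibre sequences, so in (1) the crucial input is that $T^\mm{pre}_{X/Y}$ is \emph{already} almost finitely presented (which requires the laft-ness of $X$ relative to its formal completion in $Y$, a consequence of \Cref{lem:formal completion obvious}), allowing one leg of the fibre sequence to be left untouched by the approximation and the sequence to remain exact after applying $(-)^{\aft}$. Getting this bookkeeping right — and correctly identifying $f^*$ on $\cat{D}$ versus on $\QC^\vee$ so that $f^*(T^{\mm{pre},\aft}_{Y/S})$ and $(f^*T^\mm{pre}_{Y/S})^{\aft}$ receive a comparison map in the right direction — is where the real content lies; everything else is formal manipulation with the functor $T^\mm{pre}$ and \Cref{lem:Tpre pullbacks}.
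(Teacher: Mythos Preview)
Your overall strategy matches the paper's: work with the fibre sequence $T^\mm{pre}_{X/Y}\to T^\mm{pre}_{X/S}\to f^*T^\mm{pre}_{Y/S}$ in $\cat{D}_X$ and then pass to $\QC^\vee_X$. However, you have the direction of the adjunction backwards, and this creates both an unnecessary complication and a genuine gap. Observation~\ref{obs:aft approx} states that the inclusion $\QC^\vee_X\hookrightarrow \cat{D}_X$ preserves \emph{colimits} and therefore admits a \emph{right} adjoint; that right adjoint is $(-)^{\aft}$. So $(-)^{\aft}$ is exact (a right adjoint between stable $\infty$-categories) and in particular preserves fibre sequences. Applying it directly to the fibre sequence in $\cat{D}_X$ yields
\[
T_{X/Y}=(T^\mm{pre}_{X/Y})^{\aft}\longrightarrow (T^\mm{pre}_{X/S})^{\aft}=T_{X/S}\longrightarrow (f^*T^\mm{pre}_{Y/S})^{\aft},
\]
which is assertion (1) with no further input. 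Your ``main obstacle'' does not exist.

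Because you believed $(-)^{\aft}$ was a left adjoint, you tried to rescue (1) by claiming that $T^\mm{pre}_{X/Y}$ already lies in $\cat{D}^{\aft}_X$. This is not justified by the hypotheses and is false in general: nothing in the statement of parts (1)--(4) requires $f\colon X\to Y$ to be laft (take for instance $Y=S=\Spec(k)$ and $X=\Spec(k[x_1,x_2,\dots])$). Your appeal to ``$X$ being laft over its formal completion in $Y$'' is not available here. Once the adjunction is oriented correctly, the map $f^*T_{Y/S}\to (f^*T^\mm{pre}_{Y/S})^{\aft}$ in (2) also arises cleanly: apply $f^*$ to the counit $T_{Y/S}=(T^\mm{pre}_{Y/S})^{\aft}\to T^\mm{pre}_{Y/S}$ in $\cat{D}_Y$, observe $f^*T_{Y/S}\in\QC^\vee_X$, and use the adjunction. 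Your treatments of (3), (4), (5), (6) are otherwise fine and agree with the paper.
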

\begin{proof}
The first three assertions follow from the natural fibre sequence $T^\mm{pre}_{X/Y}\to T^\mm{pre}_{X/S}\to f^*T^\mm{pre}_{Y/S}$ in $\cat{D}_X$ by taking almost finitely presented approximations, using that $T^\mm{pre}_{X/Y}\simeq 0$ if $X\to Y$ is formally étale. Likewise, (4) follows from the fact that $T^\mm{pre}_{X/S}\simeq T^\mm{pre}_{X/S^\wedge_X}$. Assertion (5) follows from the fact that $T_{Y/S}=T^\mm{pre}_{Y/S}$ and (6) follows from the equivalence $T^\mm{pre}_{X\times_S S'/S'}=p^*T^\mm{pre}_{X/S}$, which follows from Lemma \ref{lem:Tpre pullbacks}.
\end{proof}

The somewhat problematic behaviour of $T_{X/S}$ when $X\to S$ is not laft will not be a problem for us, because we are mainly interested in the $\infty$-category $(\QC^\vee_X)_{/T_{X/S}}$ of pro-coherent sheaves over $T_{X/S}$. This $\infty$-category is equivalent to the $\infty$-category $(\QC^\vee_X)_{/T_{X/S}^\mm{pre}}$ of pro-coherent sheaves equipped with a map $F\to T^\mm{pre}_{X/S}$ in $\cat{D}_X$, and $T^\mm{pre}_{X/S}$ is very well-behaved.

For instance, each laft map $X\to Y$ of $S$-prestacks with deformation theory gives rise to a natural object $T_{X/Y}\in (\QC^\vee_X)_{/T_{X/S}}$, or equivalently, in $(\QC^\vee_X)_{/T_{X/S}^\mm{pre}}$. Let us conclude with a functorial description of this construction:
\begin{construction}
Let us fix a base prestack $S$ and write $\PrStk^{\mm{def}, \mm{laft-map}}_{S}$ for the subcategory of $\PrStk_{S}$ whose objects are prestacks with deformation theory over $S$ and whose morphisms are the laft morphisms between such $S$-prestacks. Let us write
$$
\cat{E}=\cat{D}\times_{\PrStk} \PrStk^{\mm{def}, \mm{laft-map}}_{S}, \qquad \qquad \QC^\vee=\cat{D}^{\aft}\times_{\PrStk} \PrStk^{\mm{def}, \mm{laft-map}}_S
$$
for the evident restrictions. The cartesian fibration $\cat{E}\rt \PrStk^{\mm{def}, \mm{laft-map}}_{S}$ then admits a section $T^\mm{pre}$ sending each $X\longmapsto T^\mm{pre}_{X/S}$. Taking fibrewise over-categories then produces a diagram that we will denote by
$$\begin{tikzcd}
\QC^\vee_{/T} \defeq \QC^\vee\times_{\cat{E}} \cat{E}_{/T^\mm{pre}}\arrow[rr, hook] \arrow[rd] & & \cat{E}_{/T^\mm{pre}}\arrow[ld]\\
& \PrStk^{\mm{def}, \mm{laft-map}}_{S}.
\end{tikzcd}$$
At the level of the fibres over a prestack $X$, this is given by
$$\begin{tikzcd}
(\QC^\vee_X)_{/T_{X/S}}\simeq (\QC^\vee_X)\times_{\cat{D}_X} (\cat{D}_X)_{/T^\mm{pre}_{X/S}}\arrow[r, hook] & (\cat{D}_X)_{/T^\mm{pre}_{X/S}}.
\end{tikzcd}$$
\end{construction}
\begin{lemma}\label{lem:anchor functorial}
The projection $\QC^\vee_{/T}\rt \PrStk^{\mm{def}, \mm{laft-map}}_{S}$ is a cartesian fibration.
\end{lemma}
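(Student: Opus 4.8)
\textbf{Proof plan for \Cref{lem:anchor functorial}.}
The plan is to exhibit $\QC^\vee_{/T}\to \PrStk^{\mm{def},\mm{laft-map}}_{S}$ as built from two cartesian fibrations by a pullback and a ``fibrewise slice'' operation, and to check that each of these preserves the cartesian-fibration property. First I would observe that $\cat{E}\to \PrStk^{\mm{def},\mm{laft-map}}_S$ is a cartesian fibration: it is the pullback of $\cat{D}\to \PrStk$ along the inclusion $\PrStk^{\mm{def},\mm{laft-map}}_S\hookrightarrow \PrStk$ (pullbacks of cartesian fibrations are cartesian fibrations), and similarly $\QC^\vee\to \PrStk^{\mm{def},\mm{laft-map}}_S$ is cartesian as the pullback of $\cat{D}^{\aft}\to \PrStk$. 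Moreover the inclusion $\QC^\vee\hookrightarrow \cat{E}$ over the base preserves cartesian arrows, since by the discussion in Construction \ref{con:qcoh etc} a cartesian arrow $(X,G)\to (Y,F)$ of $\cat{D}$ with target in $\cat{D}^{\aft}$ automatically has source in $\cat{D}^{\aft}$ when the map $X\to Y$ is laft: the restriction $G_x$ is then the base change of the almost finitely presented functor $F$ along a laft map, hence almost finitely presented. (This is exactly the reasoning already recorded before Construction \ref{con:qcoh etc} and used in Example \ref{ex:finite case tangent}.)

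Next I would treat the fibrewise slice construction. The section $T^\mm{pre}\colon \PrStk^{\mm{def},\mm{laft-map}}_S\to \cat{E}$ picks out $T^\mm{pre}_{X/S}$ in each fibre, and by \Cref{lem:Tpre pullbacks} and \Cref{prop:tangent obvious properties}(6)--(5) it sends a laft map $f\colon X'\to X$ to the canonical map $T^\mm{pre}_{X'/S}\to f^*T^\mm{pre}_{X/S}$, i.e.\ $T^\mm{pre}$ is a map of cartesian fibrations preserving cartesian arrows \emph{only up to} this natural map; what I actually need is that for a cartesian arrow $e\colon (X',\xi')\to (X,\xi)$ of $\cat{E}$ lying over $f$, composition with the structure map to $T^\mm{pre}$ is compatible, which holds because $T^\mm{pre}$ is a genuine section. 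Given this, forming fibrewise over-categories $\cat{E}\times_{\cat{E}}\cat{E}_{/T^\mm{pre}}$ relative to the base is a standard operation: for a cartesian fibration $p\colon \cat{C}\to \cat{B}$ together with a section $s$, the projection $\cat{C}_{/s}\to \cat{B}$, whose fibre over $b$ is $(\cat{C}_b)_{/s(b)}$, is again a cartesian fibration, with $p$-cartesian arrows given by those arrows in $\cat{C}_{/s}$ whose image in $\cat{C}$ is $p$-cartesian and whose triangle to $s$ is determined by functoriality of $s$. Applying this to $p=(\cat{E}\to \PrStk^{\mm{def},\mm{laft-map}}_S)$ and $s=T^\mm{pre}$ gives that $\cat{E}_{/T^\mm{pre}}\to \PrStk^{\mm{def},\mm{laft-map}}_S$ is a cartesian fibration.

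Finally, I would pull back along the cartesian-arrow-preserving inclusion $\QC^\vee\hookrightarrow \cat{E}$ to conclude that $\QC^\vee_{/T}=\QC^\vee\times_{\cat{E}}\cat{E}_{/T^\mm{pre}}\to \PrStk^{\mm{def},\mm{laft-map}}_S$ is a cartesian fibration: a fibre product of cartesian fibrations over a common base, taken along a functor preserving cartesian arrows, is a cartesian fibration, and its cartesian arrows are the pairs of cartesian arrows in the two factors. Concretely, over a laft map $f\colon X'\to X$ and an object $(\rho\colon F\to T^\mm{pre}_{X/S})$ of $(\QC^\vee_X)_{/T_{X/S}}$, the cartesian lift is obtained by applying the base change functor of $\cat{D}$ to $\rho$ and then taking almost finitely presented approximation of the source, i.e.\ $f^*F\times_{(f^*T^\mm{pre}_{X/S})}$ in $\cat{D}_{X'}$ reinterpreted via $(\QC^\vee_{X'})_{/T_{X'/S}}\simeq (\QC^\vee_{X'})_{/T^\mm{pre}_{X'/S}}$. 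The main obstacle I anticipate is purely bookkeeping: verifying carefully that the fibrewise slice of a cartesian fibration along a \emph{section} is again cartesian (and identifying its cartesian arrows), since one must check the lifting property for diagrams in the slice and translate it back to the lifting property in $\cat{E}$ together with uniqueness of the comparison triangle to $T^\mm{pre}$; all the genuinely geometric input—deformation theory, the laft hypothesis, behaviour of $T^\mm{pre}$ under base change—has already been packaged into \Cref{lem:Tpre pullbacks} and \Cref{prop:tangent obvious properties}.
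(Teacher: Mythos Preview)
There is a genuine gap in your identification of the cartesian arrows in $\cat{E}_{/T^\mm{pre}}$. You claim that the fibrewise slice of a cartesian fibration $p\colon\cat{C}\to\cat{B}$ along a section $s$ is again cartesian, with $p$-cartesian arrows given by those whose image in $\cat{C}$ is $p$-cartesian. This is only true when $s$ itself sends every arrow of $\cat{B}$ to a $p$-cartesian arrow. Here $T^\mm{pre}$ does \emph{not}: for a laft map $f\colon X\to Y$ one has a fibre sequence $T^\mm{pre}_{X/Y}\to T^\mm{pre}_{X/S}\to f^*T^\mm{pre}_{Y/S}$, and $T^\mm{pre}_{X/Y}$ is nonzero unless $f$ is formally étale. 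So given $(F\to T^\mm{pre}_{Y/S})$, the object $f^*F$ carries a natural map to $f^*T^\mm{pre}_{Y/S}$ but \emph{not} to $T^\mm{pre}_{X/S}$, and there is no candidate for a cartesian lift with underlying object $f^*F$.

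The paper fixes this by using that each fibre $\cat{E}_X$ is stable and each $f^*$ is exact: the cartesian lift of $(F\to T^\mm{pre}_{Y/S})$ is the pullback
\[
T^\mm{pre}_{X/S}\times_{f^*T^\mm{pre}_{Y/S}} f^*F \ \longrightarrow\  T^\mm{pre}_{X/S}
\]
in $\cat{E}_X$. This does not project to a $\pi$-cartesian arrow in $\cat{E}$, so your final step---pulling back along the inclusion $\QC^\vee\hookrightarrow\cat{E}$ using that this inclusion preserves cartesian arrows---does not apply. What one must check instead is that the above pullback lies in $\QC^\vee_X$ whenever $F\in\QC^\vee_Y$. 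This is where the laft hypothesis actually enters: the fibre sequence
\[
T^\mm{pre}_{X/Y}\ \longrightarrow\  T^\mm{pre}_{X/S}\times_{f^*T^\mm{pre}_{Y/S}} f^*F\ \longrightarrow\  f^*F
\]
has $f^*F\in\QC^\vee_X$ (since $f^*$ preserves $\QC^\vee$) and $T^\mm{pre}_{X/Y}=T_{X/Y}\in\QC^\vee_X$ precisely because $f$ is laft. Your observation that $f^*$ preserves $\QC^\vee$ for laft $f$ is correct but insufficient on its own; the missing ingredient is the appearance of $T_{X/Y}$ in the correct cartesian lift.
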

\begin{proof}
Let us first consider the cartesian fibration $\pi\colon \cat{E}\rt \PrStk^{\mm{def}, \mm{laft-map}}_S$. This classifies a functor sending $X$ to $\cat{E}_X=\lim_{\Spec(A)\to X} \Fun_{\mm{ex}, \mm{acc}}(\Mod^+_A, \Sp)$ and $f\colon X\rt Y$ to the evident functor between limits
$$
f^*\colon \cat{E}_Y=\lim_{\Spec(A)\to Y} \Fun_{\mm{ex}, \mm{acc}}(\Mod^+_A, \Sp)\rt \lim_{\Spec(A)\to X} \Fun_{\mm{ex}, \mm{acc}}(\Mod^+_A, \Sp)=\cat{E}_X.
$$
Note that each $\cat{E}_X$ is a stable $\infty$-category and that $f^*$ is exact. This implies that $\cat{E}$ admits finite limits and that $\pi$ admits a fully faithful right adjoint (taking the terminal object fibrewise). 
The projection $\cat{E}_{/T^\mm{pre}}\rt \PrStk^{\mm{def}, \mm{laft-map}}_{S}$ is then a cartesian fibration as well: indeed, given a map $f\colon X\rt Y$ and $F\rt T^\mm{pre}_{Y/S}$ in $\cat{E}_Y$, the cartesian lift of $f$ in $\cat{E}_{/T^\mm{pre}}$ corresponds to the following pullback square in $\cat{E}$:
$$\begin{tikzcd}
T^\mm{pre}_{X/S}\times_{T^\mm{pre}_{Y/X}} F\arrow[r, "\tilde{f}"]\arrow[d] & F\arrow[d]\\
T^\mm{pre}_{X/S}\arrow[r] & T^\mm{pre}_{Y/X}.
\end{tikzcd}$$
More explicitly, this is given by the pullback $T^\mm{pre}_{X/S}\times_{f^*T^\mm{pre}_{Y/X}} f^*F$ in the fibre $\cat{E}_X$.

Finally, it suffices that the full subcategory $\QC^\vee_{/T}\subseteq \cat{E}_{/T^\mm{pre}}$ is stable under these cartesian arrows. For this, suppose that $F\in \QC^\vee_Y\subseteq \cat{E}_Y$ and consider the fibre sequence in $\cat{E}_X$
$$
T^\mm{pre}_{X/Y}\rt T^\mm{pre}_{X/S}\times_{f^*T^\mm{pre}_{Y/X}} f^*F \rt f^*F.
$$
The base change functor $f^*$ sends $\QC^\vee_Y$ to $\QC^\vee_X$, so $f^{\ast}F\in \QC^\vee_X$. Since $f\colon X\rt Y$ was assumed to be laft, $T^\mm{pre}_{X/Y}=T_{X/Y}\in \QC^\vee_X$ as well so that the middle term is contained in $\QC^\vee_X$, as desired.
\end{proof}
\begin{remark}\label{rem:fsharp}
Informally, Lemma \ref{lem:anchor functorial} asserts that there is a functor sending each $S$-prestack $X$ with deformation theory to the $\infty$-category $(\QC^\vee_X)_{/T_{X/S}}$, and each map $f\colon X'\to X$ to a functor
$$\begin{tikzcd}
f^\sharp\colon (\QC^\vee_X)_{/T_{X/S}}\arrow[r] & (\QC^\vee_{X'})_{/T_{X'/S}}; \quad (F\to T_{X/S})\arrow[r, mapsto] & T_{X'/S}\times_{f^*(T^\mm{pre}_{X/S})^\mm{aft}} F.
\end{tikzcd}$$
In particular, when $f$ is formally étale, then $f^\sharp(F\to T_{X/S})= f^*F$ by Proposition \ref{prop:tangent obvious properties}.

If $f\colon X\rt Y$ is a map of $S$-prestacks with deformation theory and a cotangent complex, Example \ref{ex:dual cotangent} identifies the resulting base change functor with $f^\sharp(F)= T_{X/S}\times_{(f^*L_{Y/S})^\vee} f^*F$, using the natural map $f^*(T_{Y/S})=f^*(L_{Y/S}^\vee)\rt (f^*L_{Y/S})^\vee$.
\end{remark}
\begin{proposition} \label{naturalityofT}
There is a map of cartesian fibrations preserving cartesian arrows
$$\begin{tikzcd}
\Fun(\Delta^1, \PrStk_S^{\mm{def}, \mm{laft-map}})\arrow[rr, "T"]\arrow[rd, "\mm{ev}_0"{swap}] & & \QC^\vee_{/T}\arrow[ld]\\ 
& \PrStk_S^{\mm{def}, \mm{laft-map}}
\end{tikzcd}$$
sending each $X\to Y$ to $T_{X/Y}\to T_{X/S}$. Furthermore, the induced functors between fibres preserve limits.
\end{proposition}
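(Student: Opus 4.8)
The plan is to construct the functor $T$ by gluing together the pointwise construction from Construction~\ref{tangent_complex_naive}, and then verify the stated properties by reducing them to the already-established behaviour of $T^\mm{pre}$ and the analysis of $\QC^\vee_{/T}$ from Lemma~\ref{lem:anchor functorial}. First I would observe that $T^\mm{pre}$ from \eqref{eq:Tnaive} already provides a functor $\Fun(\Delta^1, \PrStk)^\mm{def}\rt \cat{E}$ over $\PrStk$, where $\cat{E}$ is the restriction of $\cat{D}$ appearing in the construction preceding Lemma~\ref{lem:anchor functorial}. Restricting the domain to $\Fun(\Delta^1, \PrStk_S^{\mm{def}, \mm{laft-map}})$ and sending a composable pair $X\to Y$ (viewed as an object over $\PrStk_S^{\mm{def},\mm{laft-map}}$ via $\mm{ev}_0$, so that $Y$ is the ``base'' and the arrow $X\to Y$ is laft) to the arrow $T^\mm{pre}_{X/Y}\to T^\mm{pre}_{X/S}$ in $\cat{E}_X$ produced by the fibre sequence $T^\mm{pre}_{X/Y}\to T^\mm{pre}_{X/S}\to f^*T^\mm{pre}_{Y/S}$, one obtains a functor landing in $\cat{E}_{/T^\mm{pre}}$. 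The key point is that, since $X\to Y$ is laft, $T^\mm{pre}_{X/Y} = T_{X/Y}$ lies in $\QC^\vee_X$ (Example~\ref{ex:finite case tangent}), so this arrow in fact lands in the full subcategory $\QC^\vee_{/T}\subseteq \cat{E}_{/T^\mm{pre}}$. This is essentially the same stability argument that concludes the proof of Lemma~\ref{lem:anchor functorial}.

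Next I would promote this to a functor of $\infty$-categories over $\PrStk_S^{\mm{def},\mm{laft-map}}$. The cleanest way is to package $(X\to Y)\mapsto (T^\mm{pre}_{X/Y}\to T^\mm{pre}_{X/S})$ as a natural transformation of functors out of $\Fun(\Delta^1, \PrStk_S^{\mm{def}, \mm{laft-map}})$ using $\Phi$ from Construction~\ref{tangent_complex_naive}: the relative-module fibration $\cat{M}$ pulls back along $\mm{ev}_0$ composed with the forgetful functor, and $\Phi$ restricted to the resulting fibre product, together with its variant on $\cat{M}\times_{\PrStk}\Fun(\Delta^1,\cdot)$ applied to the composite $X\to Y\to S$, gives the comparison map $T^\mm{pre}_{X/Y}\to T^\mm{pre}_{X/S}$ at each point $\Spec(A)\to X$. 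Unstraightening yields the functor $T$ over the base, and it preserves cartesian arrows: a cartesian arrow over a laft map $g\colon X'\to X$ of ``bases'' sends $(X\to Y)$ to $(X'\to Y^\wedge_{X'})$ by Construction~\ref{change_of_base_FMP}, and on tangent complexes $T^\mm{pre}_{X'/Y^\wedge_{X'}} = T^\mm{pre}_{X'/Y}$ together with the pullback-preservation of $T^\mm{pre}$ (Lemma~\ref{lem:Tpre pullbacks}) identifies the image with $g^\sharp(T_{X/Y}\to T_{X/S}) = T_{X'/S}\times_{g^*(T^\mm{pre}_{X/S})^\aft} T_{X/Y}$, which is exactly the formula for cartesian lifts in $\QC^\vee_{/T}$ from the proof of Lemma~\ref{lem:anchor functorial}. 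Here I would need the identification $T^\mm{pre}_{X'/Y^\wedge_{X'}}\simeq T^\mm{pre}_{X'/Y}$, which follows from Proposition~\ref{prop:tangent obvious properties}(4) applied fibrewise (the ``pre'' version of part (4) is recorded in its proof).

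Finally, for the limit-preservation of the induced functors on fibres: over a fixed $S$-prestack $X$ with deformation theory, the functor in question is $\Fun(\Delta^1,\PrStk_S^{\mm{def},\mm{laft-map}})_X = (\PrStk_S^{\mm{def},\mm{laft-map}})_{X/}\rt (\QC^\vee_X)_{/T_{X/S}}$, $(X\to Y)\mapsto (T_{X/Y}\to T_{X/S})$. Since limits in $(\QC^\vee_X)_{/T_{X/S}}$ and in $(\PrStk)_{X/}$ are computed in the respective ambient categories, it suffices to show $X\mapsto T^\mm{pre}_{X/Y}$, or rather $Y\mapsto T^\mm{pre}_{X/Y}$ for fixed $X$, carries limits of $S$-prestacks under $X$ to limits in $\cat{E}_X$; this is immediate from the pointwise formula for $\Phi$, since a limit cone $Y\simeq \lim_i Y_i$ of prestacks induces a limit on the spaces of dotted lifts $\Spec(A\oplus M)\to \Spec(A)\to Y$ (the outer square being unchanged), and the relevant limits in $\QC^\vee_X$ of arrows with fixed target $T_{X/S}$ are computed objectwise. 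The main obstacle I anticipate is purely bookkeeping: assembling $\Phi$ and its variant into a single functor on the arrow category that simultaneously encodes the fibre sequence $T^\mm{pre}_{X/Y}\to T^\mm{pre}_{X/S}\to f^*T^\mm{pre}_{Y/S}$ and its naturality in $X\to Y$, so that the resulting section genuinely lands in $\QC^\vee_{/T}$ rather than merely in $\cat{E}_{/T^\mm{pre}}$; this requires keeping careful track of which morphisms are laft at each stage. Everything else reduces to Lemmas~\ref{lem:Tpre pullbacks} and~\ref{lem:anchor functorial} and Proposition~\ref{prop:tangent obvious properties}.
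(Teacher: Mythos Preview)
Your overall strategy matches the paper's: build $T$ out of $T^\mm{pre}$ from \eqref{eq:Tnaive}, use that $T^\mm{pre}_{X/Y}=T_{X/Y}$ lies in $\QC^\vee_X$ whenever $X\to Y$ is laft (so the functor lands in $\QC^\vee_{/T}$ rather than just $\cat{E}_{/T^\mm{pre}}$), and deduce preservation of cartesian arrows from the fact that $T^\mm{pre}$ preserves pullbacks (Lemma~\ref{lem:Tpre pullbacks}). Your treatment of limit preservation on fibres is also fine, and is in fact more explicit than the paper, which leaves that point implicit.

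There is, however, a genuine confusion in your cartesian-arrow step. The fibration in the statement is $\mm{ev}_0\colon \Fun(\Delta^1,\PrStk_S^{\mm{def},\mm{laft-map}})\to \PrStk_S^{\mm{def},\mm{laft-map}}$, the ordinary arrow-category fibration. For this fibration the cartesian lift of $(X\to Y)$ along $g\colon X'\to X$ is simply $(X'\to Y)$, i.e.\ the square with $Y\xrightarrow{\mm{id}}Y$ on the bottom; this is exactly what the paper writes down. You instead invoke Construction~\ref{change_of_base_FMP}, which describes the cartesian transport in the \emph{different} fibration $\modulistk_\KK\to \PrStk_\KK^{\mm{def},\mm{laft-map}}$ (formal moduli stacks, where the target is replaced by its formal completion $Y^\wedge_{X'}$). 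Your detour through $Y^\wedge_{X'}$ happens to give the right tangent complex because $T^\mm{pre}_{X'/Y^\wedge_{X'}}\simeq T^\mm{pre}_{X'/Y}$, but you are verifying the image of the wrong arrow. Once you correct the description of the cartesian lift, the argument becomes exactly the paper's: $T^\mm{pre}$ sends the square $(X'\to Y)\to(X\to Y)$ to the square with $T^\mm{pre}_{X'/Y}\to T^\mm{pre}_{X/Y}$ over $T^\mm{pre}_{X'/S}\to T^\mm{pre}_{X/S}$, and this is cartesian in $\cat{E}$ directly by Lemma~\ref{lem:Tpre pullbacks}, with no need for formal completions or Proposition~\ref{prop:tangent obvious properties}(4).
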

In other words, for every square of laft maps between $S$-prestacks with deformation theory
$$\begin{tikzcd}
X'\arrow[r]\arrow[d] & X\arrow[d]\\
Y'\arrow[r] & Y
\end{tikzcd}$$
there is a natural map $T_{X'/Y'}\to f^{\sharp}(T_{X/Y})$ where $f^\sharp$ is as in Remark \ref{rem:fsharp}. The fact that $T$ preserves cartesian arrows corresponds to this map being an equivalence if $Y'=Y$. We are mostly interested for in the situation of (locally coherent) derived schemes $X$ (over a base $S$): the above then provides a functor sending $X\to Y$ a map $T_{X/Y}\to L_{X/Y}^\vee$ in $\QC^\vee_X$, functorial with respect to laft maps of schemes $X'\to X$.
\begin{proof}
The desired functor arises formally from the functor $T^\mm{pre}$ in \eqref{eq:Tnaive}, using that $T_{X/Y}=T^\mm{pre}_{X/Y}$ if $X\to Y$ is laft (Example \ref{ex:finite case tangent}). To see that is preserves cartesian arrows, note that $T^\mm{pre}$ sends
$$\begin{tikzcd}
X'\arrow[r, "f"]\arrow[d] & X\arrow[d] \arrow[rr, mapsto, yshift=-4ex, shorten=1.8pc] &[20pt] & T^\mm{pre}_{X'/Y}\arrow[r]\arrow[d] & T^\mm{pre}_{X/Y}\arrow[d]\\
Y\arrow[r, "\mm{id}"] & Y & & 
T^\mm{pre}_{X'/S}\arrow[r] & T^\mm{pre}_{X/S}
\end{tikzcd}$$
Here the left square defines a cartesian arrow covering $f$ in the domain. To see that its image is cartesian, it suffices to verify that the right square is cartesian in $\cat{E}$; this follows from the fact that the functor $T^\mm{pre}$ from \eqref{eq:Tnaive} preserves pullbacks by Lemma \ref{lem:Tpre pullbacks}.
\end{proof}
We conclude by recording the following property of the induced functors $T_{X/-}$ between the fibres.
\begin{proposition}\label{prop:left adjoint via sqz ext}
Let $X$ be an $S$-prestack with deformation theory and consider the composite
$$\begin{tikzcd}
\Psi\colon (\PrStk_S^{\mm{def}, \mm{laft-map}})_{X/-} \arrow[r, "T_{X/-}"] & (\QC^\vee_X)_{/T_{X/S}}\arrow[r, "\mm{cofib}"] & \QC^\vee_X
\end{tikzcd}$$
where the first functor is induced by the functor $T$ from Proposition \ref{naturalityofT} and the second functor sends $F\to T_{X/S}$ to its cofibre. This functor admits a left adjoint, sending each $F\in \QC^\vee_X$ to a map $X\to X_F$ that is furthermore a nil-isomorphism. In particular, the functor $T_{X/-}$ preserves limits.
\end{proposition}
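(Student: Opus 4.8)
The plan is to obtain the left adjoint from the adjoint functor theorem and then to pin down its essential image by a formal completion argument. First I would factor $\Psi$ as the composite $(\PrStk_S^{\mm{def}, \mm{laft-map}})_{X/}\xrightarrow{T_{X/-}} (\QC^\vee_X)_{/T_{X/S}}\xrightarrow{\cofib} \QC^\vee_X$. Since $\QC^\vee_X$ is stable, the canonical equivalence between the overcategory $(\QC^\vee_X)_{/T_{X/S}}$ and the undercategory $(\QC^\vee_X)_{T_{X/S}/}$ (sending $(\rho\colon G\to T_{X/S})$ to $(T_{X/S}\to\cofib\rho)$, with inverse given by fibre) identifies $\cofib$ with the functor forgetting a coaugmentation; the latter is a right adjoint, and unwinding the equivalence shows its left adjoint is $F\mapsto (F[-1]\xrightarrow{0} T_{X/S})$. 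In particular $\cofib$ preserves limits, and since $T_{X/-}$ preserves limits by \Cref{naturalityofT}, so does $\Psi$. As $\Psi$ is moreover accessible, and both $(\PrStk_S^{\mm{def}, \mm{laft-map}})_{X/}$ (see \cite{GR}) and $\QC^\vee_X$ (a small limit of the presentable $\infty$-categories $\QC^\vee_A$ along colimit-preserving functors) are presentable, the adjoint functor theorem furnishes a left adjoint $L$, and I write $L(F) = (X\to X_F)$.

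To see that $X\to X_F$ is a nil-isomorphism, I would work inside the full subcategory $\modulistk_{X/S}\subseteq (\PrStk_S^{\mm{def}, \mm{laft-map}})_{X/}$ spanned by the nil-isomorphisms $X\to Y$ (that is, the formal moduli stacks under $X$ in the sense of \Cref{DefModuliStack}), with inclusion $\iota$. The formal completion functor $Y\mapsto Y^\wedge_X = X^\sharp\times_{Y^\sharp} Y$ from \Cref{formal_completion} lands in $\modulistk_{X/S}$ by \Cref{lem:formal completion obvious} (together with the direct check that $(Y^\wedge_X)^\sharp\simeq X^\sharp$), and is right adjoint to $\iota$: for $Z\in\modulistk_{X/S}$ the comparison $Z^\sharp\simeq X^\sharp$ — valid since $X\to Z$ is a laft nil-isomorphism, \Cref{rem:nil-iso laft} — forces any map $Z\to Y$ over $X$ to factor uniquely through $Y^\wedge_X$. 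Moreover $\Psi(Y^\wedge_X)\simeq\Psi(Y)$ naturally, because the square-zero extensions $\Spec(A\oplus N)$ of $A$-points of $X$ that compute $T^\mm{pre}_{X/Y}$ satisfy $(A\oplus N)_{\red} = A_{\red}$, so their lifts to $Y$ coincide with their lifts to $Y^\wedge_X$ (a relative version of \Cref{prop:tangent obvious properties}(4)). Hence $\Psi\simeq (\Psi|_{\modulistk_{X/S}})\circ(-)^\wedge_X$. Since $\modulistk_{X/S}$ is coreflective in a presentable $\infty$-category, hence presentable, and $\Psi|_{\modulistk_{X/S}}$ preserves limits and is accessible, it admits a left adjoint $L_0$ by the adjoint functor theorem; and because $\iota$ is left adjoint to $(-)^\wedge_X$, the left adjoint of $\Psi$ is $L\simeq\iota\circ L_0$. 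In particular $X_F = L(F)$ is a formal moduli stack under $X$, so $X\to X_F$ is a nil-isomorphism.

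Finally, the ``in particular'' follows at once: being a right adjoint, $\Psi$ preserves limits, and transporting through the equivalence $(\QC^\vee_X)_{/T_{X/S}}\simeq (\QC^\vee_X)_{T_{X/S}/}$ — under which $T_{X/-}$ corresponds to $\Psi$ equipped with its natural coaugmentation $T_{X/S}\to\Psi(-)$, followed by $\fib$ — recovers that $T_{X/-}$ preserves limits. The main obstacle I anticipate is bookkeeping the presentability hypotheses: limits in $\PrStk_S^{\mm{laft}}$ and in $\modulistk_{X/S}$ are not computed in $\PrStk_S$, so one must take care that ``$\Psi$ preserves limits'' (and the statement of \Cref{naturalityofT}) is read with respect to the intrinsic limits of these categories, and that $(-)^\wedge_X$ genuinely defines a coreflector onto $\modulistk_{X/S}$ and leaves $T^\mm{pre}$ unchanged — both are mild strengthenings of \Cref{lem:formal completion obvious} and \Cref{prop:tangent obvious properties}(4) that must be assembled with some care.
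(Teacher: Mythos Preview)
Your approach via the adjoint functor theorem is structurally sound in outline, but the gap you flag at the end is genuine and fatal in the stated generality: the presentability of $(\PrStk_S^{\mm{def}, \mm{laft-map}})_{X/}$ is not available. This is a non-full subcategory of an undercategory of $\PrStk_S$, and $\PrStk_S$ itself (accessible functors on animated rings) carries no uniform accessibility bound, so is not presentable. Your citation to \cite{GR} does not help: there one works over a base field, so that laft prestacks are determined by their values on the essentially small category of almost-finite-type algebras; here $S$ and $X$ are entirely general prestacks with deformation theory. Your fallback, that $\modulistk_{X/S}$ is presentable because coreflective in a presentable category, inherits the same defect (and would in any case require accessibility of the coreflector, which you do not check).

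The paper avoids all of this by constructing the left adjoint explicitly. Regarding $F\in\QC^\vee_X$ as a functor on the category $\mc{M}_X$ of pairs $(x\colon\Spec A\to X,\ M\in\Mod^+_{A,\geq 0})$, one forms its unstraightening $\cat{F}\to\mc{M}_X$ and sets
\[
X_F=\colim_{(A,M)\in\cat{F}^{\op}}\Spec(A\oplus M)
\]
in convergent $S$-prestacks. The adjunction is then immediate from the definitions: a map $X_F\to Y$ under $X$ unwinds to a natural transformation $F\to f^*T^\mm{pre}_{Y/S}$ in $\cat{D}_X$, i.e.\ a map $F\to\Psi(X\to Y)$. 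The remaining verifications --- that $X_F$ has deformation theory, and that $X\to X_F$ is laft and a nil-isomorphism --- are carried out pointwise after base change to each $\Spec(A)\to X$, where the indexing category becomes filtered and each $\Spec(A\oplus M)$ is visibly nil-isomorphic to $\Spec(A)$; the laft condition reduces to the fact that square-zero extensions by truncations of perfect modules are almost finitely presented (\Cref{lem:sqz is aft}). None of this requires any presentability hypothesis, and as a bonus the ``in particular'' clause about limit-preservation now follows rather than being an input.

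Your formal-completion argument for the nil-isomorphism is a pleasant shortcut \emph{once the left adjoint exists}: from $\Psi(Y)\simeq\Psi(Y^\wedge_X)$ one deduces $\Map(L(F),Y)\simeq\Map(L(F),Y^\wedge_X)$ for all $Y$, whence $L(F)\simeq L(F)^\wedge_X$ directly, without needing $L_0$ separately. But it does not address the construction of $L$ itself.
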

\begin{proof}
Unravelling the definitions, $\Psi(f\colon X\to Y)= (f^*T^\mm{pre}_{Y/S})^{\aft}$. To describe its left adjoint, recall from Construction \ref{con:qcoh etc} that each $F\in \QC^\vee_X$ defines a functor $F\colon \mc{M}_X\to \sS$ on the $\infty$-category of tuples $(x\colon \Spec(A)\to X, \ M\in \Mod_{A, \geq 0}^+)$. Let $\cat{F}\to \mc{M}_X$ be the (cocartesian) unstraightening of this functor. We then define
$$
X_F = \colim_{(A, M)\in \cat{F}^{\op}} \Spec(A\oplus M)
$$
as the colimit in convergent $S$-prestacks. There are natural maps $X\to X_F\to X$. The first map is the natural map from the colimit indexed by the subcategory of $\mc{F}^{\op}$ on those $(A, M)$ where $M\simeq 0$ is the zero module. The second map is the colimit over the objects of $\mc{F}$, corresponding to tuples $$(x\colon \Spec(A)\to X, \ M\in \Mod_{A, \geq 0}^+,\ y\in F(M)),$$ of the maps $\Spec(A\oplus M)\to \Spec(A)\xrightarrow{x} X$. Given a map $f\colon X\to Y$ to an $S$-prestack with deformation theory, a map $X_F\to Y$ of $S$-prestacks under $X$   corresponds  to a natural transformation $F\to f^*T^\mm{pre}_{Y/S}$ between functors $\mc{M}_X\to \sS$, i.e.\ a map $F\to (f^*T^\mm{pre}_{Y/S})^{\aft}=\Psi(f\colon X\to Y)$ in $\QC^\vee_X$. 

Hence, it remains to show that $X\to X_F$ is indeed a laft nil-isomorphism and that $X_F$ has deformation theory. For this it suffices to check that for each $x\colon \Spec(A)\to X$ with $A$ eventually coconnective, the base change $x^*X_F$ is a laft $A$-prestack with deformation theory that is nil-isomorphic to $\Spec(A)$. This base change can be identified as follows. Let $\mc{F}_x$ denote the unstraightening of the restriction $x^*F\colon \Mod_{A, \geq 0}^+\to \sS$ of $F$ to the fibre of $(\mc{M}_X)\times_{\Aff_{/X}} \{x\}$. Then
\begin{equation}\label{eq:colim X_F}
x^*X_F = \colim_{M\in \mc{F}^{\op}_x} \Spec(A\oplus M)
\end{equation}
where the colimit is taken in convergent $A$-prestacks. This is clearly nil-isomorphic to $\Spec(A)$, as every $\Spec(A\oplus M)$ is nil-isomorphic to $\Spec(A)$. The category $\mc{F}^{\op}_x$ is filtered, so that $x^*X_F$ has deformation theory, being a filtered colimit of $A$-prestacks with deformation theory.

To see that $x^*X_F$ is laft, observe that for any eventually coconnective animated $A$-algebra $B$, one can either compute the value $x^*X_F(B)$ using the colimit \eqref{eq:colim X_F}, or as follows. Let us write $\cat{Sqz}_{/B}=\Mod_{A, \geq 0}^+\times_{\SCR_A} \SCR_{A/B}$ for the $\infty$-category of $A$-modules $M\in \Mod_{A, \geq 0}^+$ equipped with a map of animated $A$-algebras $A\oplus M\to B$. Then
$$
x^*X_F(B) = \colim_{M\in \cat{Sqz}_{/B}} x^*F(M).
$$
If $B$ is $n$-coconnective, it suffices to take the colimit over the cofinal subcategory $\cat{Sqz}_{\leq n/B}\subseteq \cat{Sqz}_{/B}$ of $n$-coconnective $A$-modules $M$ equipped with a map $A\oplus M\to B$. Note that $\cat{Sqz}_{\leq n/B}$ is compactly generated with compact objects given by $$(M, A\oplus M\to B),$$ where $M$ is the $n$-truncation of a perfect connective $A$-module; let us refer to such $n$-coconnective modules as \emph{$n$-compact}. Since $x^*F$ preserves colimits of filtered diagrams of $n$-coconnective $A$-modules, one can thus reduce the above colimit to the colimit over the full subcategory $\cat{Sqz}^\omega_{\leq n/B}\subseteq \cat{Sqz}_{\leq n/B}$ of $n$-compact $A$-modules $M$ with a map $A\oplus M\to B$. Equivalently, this means that
$$
x^*X_F(B) = \colim_{M\in \mc{F}_{x, \leq n}^{\omega, \op}} \Map_{\SCR_A}(A\oplus M, B)\simeq \colim_{M\in \mc{F}_{x, \leq n}^{\omega, \op}} \Map_{\tau_{\leq n}\SCR_A}(\tau_{\leq n}A\oplus M, B)
$$
where $\mc{F}_{x, \leq n}^{\omega}\subseteq \mc{F}_x$ is the full subcategory of tuples $(M, y\in x^*F(M))$ where $M$ is   $n$-compact.

We conclude that the restriction of $x^*X_F$ to $\tau_{\leq n}\SCR_A$ is equivalent to a colimit of representables $\Spec(\tau_{\leq n}A\oplus M)$ where $M$ is an $n$-compact $A$-module. By Lemma \ref{lem:sqz is aft}, each square zero extension $\tau_{\leq n}A\oplus M$ by an $n$-compact $A$-module defines a compact object in $\tau_{\leq n}\SCR_A$. It follows that $x^*X_F$ restricts to a functor on $\tau_{\leq n}\SCR_A$ preserving filtered colimits, so that $x^*X_F$ is indeed laft.
\end{proof}

\bibliographystyle{amsalpha}

\bibliography{There}

\providecommand{\bysame}{\leavevmode\hbox to3em{\hrulefill}\thinspace}
\providecommand{\MR}{\relax\ifhmode\unskip\space\fi MR }
\providecommand{\MRhref}[2]{%
  \href{http://www.ams.org/mathscinet-getitem?mr=#1}{#2}
}
\providecommand{\href}[2]{#2}
\begin{thebibliography}{{Sta}19}

\bibitem[BCN]{BCN21}
D.~Lukas~B. Brantner, Ricardo Campos, and Joost~J. Nuiten, \emph{P{D} operads
  and explicit partition lie algebras}, Mem. Amer. Math. Soc., to appear.

\bibitem[BM]{BM19}
D.~Lukas~B. Brantner and Akhil Mathew, \emph{Deformation theory and partition
  {L}ie algebras}, Acta Math., to appear.

\bibitem[BW20]{brantner2020purely}
D.~Lukas~B. Brantner and Joe Waldron, \emph{Purely inseparable {G}alois theory
  {I}: the fundamental theorem}, arXiv preprint arXiv:2010.15707 (2020).

\bibitem[Del]{deligne1986letter}
Pierre Deligne, \emph{Letter to {J}. {M}illson and {W}. {G}oldman (1986)}.

\bibitem[DG02]{dwyer2002complete}
William~G. Dwyer and John P.~C. Greenlees, \emph{Complete modules and torsion
  modules}, Amer. J. Math. \textbf{124} (2002), no.~1, 199--220. \MR{1879003}

\bibitem[Dri]{drinfeld1988letter}
Vladimir~G. Drinfeld, \emph{A letter from {K}harkov to {M}oscow}, EMS Surv.
  Math. Sci. \textbf{1}, no.~2, 241--248, Translated from Russian by Keith
  Conrad. \MR{3285856}

\bibitem[Eke87]{Ekedahl}
Torsten Ekedahl, \emph{Foliations and inseparable morphisms}, Algebraic
  geometry, {B}owdoin, 1985 ({B}runswick, {M}aine, 1985), Proc. Sympos. Pure
  Math., vol. 46, Part 2, Amer. Math. Soc., Providence, RI, 1987, pp.~139--149.
  \MR{927978}

\bibitem[Fu24]{FuThesis}
Jiaqi Fu, \emph{A duality between {L}ie algebroids and infinitesimal
  foliations}, arXiv preprint arXiv:2410.04950 (2024).

\bibitem[Gai11]{GaitsgoryIndcoh}
Dennis Gaitsgory, \emph{Ind-coherent sheaves}, arXiv preprint arXiv:1105.4857
  (2011).

\bibitem[GHN17]{GHN17}
David Gepner, Rune Haugseng, and Thomas Nikolaus, \emph{Lax colimits and free
  fibrations in {$\infty$}-categories}, Doc. Math. \textbf{22} (2017),
  1225--1266. \MR{3690268}

\bibitem[Gla89]{glaz1989coherent}
Sarah Glaz, \emph{Commutative coherent rings}, Lecture Notes in Mathematics,
  vol. 1371, Springer-Verlag, Berlin, 1989. \MR{999133}

\bibitem[GR17a]{GR}
Dennis Gaitsgory and Nick Rozenblyum, \emph{A study in derived algebraic
  geometry}, no. 221, American Mathematical Soc., 2017.

\bibitem[GR17b]{GRII}
\bysame, \emph{A study in derived algebraic geometry. {V}ol. {II}.
  {D}eformations, {L}ie theory and formal geometry}, Mathematical Surveys and
  Monographs, vol. 221, American Mathematical Society, Providence, RI, 2017.
  \MR{3701353}

\bibitem[Har66]{hartshorne1966residues}
Robin Hartshorne, \emph{Residues and duality}, vol.~20, Springer, 1966.

\bibitem[Hen18]{Hennion}
Benjamin Hennion, \emph{Tangent {L}ie algebra of derived {A}rtin stacks}, J.
  Reine Angew. Math. \textbf{741} (2018), 1--45. \MR{3836141}

\bibitem[Hin01]{hinich2001dg}
Vladimir Hinich, \emph{D{G} coalgebras as formal stacks}, J. Pure Appl. Algebra
  \textbf{162} (2001), no.~2-3, 209--250. \MR{1843805}

\bibitem[Hin16]{hinich2013dwyer}
\bysame, \emph{Dwyer-{K}an localization revisited}, Homology Homotopy Appl.
  \textbf{18} (2016), no.~1, 27--48. \MR{3460765}

\bibitem[Hol23]{holeman2023derived}
Adam Holeman, \emph{{D}erived $\delta$-rings and relative prismatic
  cohomology}, arXiv preprint arXiv:2303.17447 (2023).

\bibitem[Kap07]{kapranov2007free}
Mikhail Kapranov, \emph{Free {L}ie algebroids and the space of paths}, Selecta
  Math. (N.S.) \textbf{13} (2007), no.~2, 277--319. \MR{2361096}


\bibitem[HTT]{HTT}
\bysame, \emph{Higher topos theory}, Annals of Mathematics Studies, vol. 170,
  Princeton University Press, Princeton, NJ, 2009. \MR{2522659}

\bibitem[HA]{HA}
\bysame, \emph{Higher algebra}, Preprint from the author's web page (2017).

\bibitem[SAG]{SAG}
\bysame, \emph{Spectral algebraic geometry}, Preprint available from the
  author's web page (2016).

\bibitem[DAG]{lurie2004derived}
Jacob~A. Lurie, \emph{Derived algebraic geometry}, Ph.D. thesis, Massachusetts
  Institute of Technology, 2004.

\bibitem[DAG X]{DAG-X}
\bysame, \emph{Derived algebraic geometry {X}: Formal moduli problems},
  Preprint from the author's web page (2011).



\bibitem[Man09]{manetti2009differential}
Marco Manetti, \emph{Differential graded {L}ie algebras and formal deformation
  theory}, Algebraic geometry---{S}eattle 2005. {P}art 2, Proc. Sympos. Pure
  Math., vol.~80, Amer. Math. Soc., Providence, RI, 2009, pp.~785--810.
  \MR{2483955}

\bibitem[Mao21]{M21}
Zhouhang Mao, \emph{Revisiting derived crystalline cohomology}, arXiv preprint
  arXiv:2107.02921 (2021).

\bibitem[Nui19a]{nuiten2019homotopical}
Joost~J. Nuiten, \emph{Homotopical algebra for {L}ie algebroids}, Applied
  Categorical Structures \textbf{27} (2019), no.~5, 493--534.

\bibitem[Nui19b]{nuiten2019koszul}
\bysame, \emph{{K}oszul duality for {L}ie algebroids}, Advances in Mathematics
  \textbf{354} (2019), 106750.

\bibitem[Pra67]{pradines1967theorie}
Jean Pradines, \emph{Theorie de {L}ie pour les groupoides differentiable}, CR
  Acad. Sci. Paris \textbf{264} (1967), 245--248.

\bibitem[Pri10]{pridham2010unifying}
Jon~P. Pridham, \emph{Unifying derived deformation theories}, Advances in
  Mathematics \textbf{224} (2010), no.~3, 772--826.

\bibitem[Rak20]{R20}
Arpon Raksit, \emph{Hochschild homology and the derived de {R}ham complex
  revisited}, arXiv preprint arXiv:2007.02576 (2020).

\bibitem[{Sta}19]{stacks-project}
The {Stacks project authors}, \emph{The {S}tacks project},
  \url{https://stacks.math.columbia.edu}, 2019.

\bibitem[TV23]{toen2023infinitesimal}
Betrand To{\"e}n and Gabriele Vezzosi, \emph{Infinitesimal derived foliations},
  arXiv preprint arXiv:2305.13010 (2023).

\end{thebibliography}
\end{document}